\def\pp{\partial}
\def\C{\mathbb{C}}
\def\G{\mathbb{G}}
\def\D{\mathbb{D}}
\def\R{\mathbb{R}}
\def\F{\mathbb{F}}
\def\Z{\mathbb{Z}}
\def\T{\mathbb{T}}
\def\P{\mathbb{P}}
\def\dd{\displaystyle}
\def\9{\infty}
\def\vare{\varepsilon}
\def\ov{\overline}
\def\ce{\overset{\circ}{E}}
\def\wt{\widetilde}
\def\de{\delta}
\def\oo{\omega}
\def\delin{{^{-}}\!\!\!\!\!\!\de}
\def\O{\Omega}
\def\a{\alpha}
\def\b{\beta}
\def\g{\gamma}
\def\({\left(}
\def\){\right)}
\def\[{\left[}
\def\]{\right]}
\def\be{\begin{equation}}
\def\ee{\end{equation}}
\def\overunu{\overset{1}}
\def\overdoi{\overset{2}}
\def\overk{\overset{k}}
\def\slk{\Sigma_{L^k}}
\def\sfk{\Sigma_{F^k}}
\def\shk{\Sigma_{H^k}}
\def\circg{{\overset{\circ}g}}
\def\oci{\overset{\circ}}
\def\sil{\Sigma_L}
\begin{document}

\author{Radu MIRON}
\title{Lagrangian and Hamiltonian geometries. Applications to Analytical Mechanics}
\date{}

\maketitle

\frontmatter


%
%

\foreword
The aim of the present monograph is twofold: 1$^\circ$ to provide a Compendium of Lagrangian and Hamiltonian geometries and 2$^\circ$ to introduce and investigate new analytical Mechanics: Finslerian, Lagrangian and Hamiltonian.

One knows (R. Abraham, J. Klein, R. Miron et al.) that the geometrical theory of nonconservative mechanical systems can not be rigourously constructed without the use of the geometry of the tangent bundle of the configuration space.

The solution of this problem is based on the Lagrangian and Hamiltonian geometries. In fact, the construction of these geometries relies on the mechanical principles and on the notion of Legendre transformation.

The whole edifice has as support the sequence of inclusions: $$\{{\cal R}^n\}\subset\{F^n\}\subset\{L^n\}\subset\{GL^n\}$$ formed by Riemannian, Finslerian, Lagrangian, and generalized Lagrangian spaces. The ${\cal L}-$duality transforms this sequence into a similar one formed by Hamiltonian spaces.

Of course, these sequences suggest the introduction of the correspondent Mechanics: Riemannian, Finslerian, Lagrangian, Hamiltonian etc.

The fundamental equations (or evolution equations) of these Mechanics are derived from the variational calculus applied to the integral of action and these can be studied by using the methods of Lagrangian or Hamiltonian geometries.

More general, the notions of higher order Lagrange or Hamilton spaces have been introduced by the present author \cite{mir11}, \cite{mir12}, \cite{mir13} and developed is realized by means of two sequences of inclusions similarly with those of the geometry of order 1. The problems raised by the geometrical theory of Lagrange and Hamilton spaces of order $k\geq 1$ have been investigated by Ch. Ehresmann, W. M. Tulczyjew, A. Kawaguchi, K. Yano, M. Crampin, Manuel de L\'{e}on, R. Miron, M. Anastasiei, I. Buc\u{a}taru et al. \cite{Miron}. The applications lead to the notions of Lagrangian or Hamiltonian Analytical Mechanics of order $k$.

For short, in this monograph we aim to solve some difficult problems:

- The problem of geometrization of classical non conservative mechanical systems;

- The foundations of geometrical theory of new mechanics: Finslerian, Lagrangian and Hamiltonian;

- To determine the evolution equations of the classical mechanical systems for whose external forces depend on the higher order accelerations.

This monograph is based on the terminology and important results taken from the books: Abraham, R., Marsden, J., {\it Foundation of Mechanics}, Benjamin, New York, 1978; Arnold, V.I., {\it Mathematical Methods in Classical Mechanics}, Graduate Texts in Math, Springer Verlag, 1989; Asanov, G.S., {\it Finsler Geometry Relativity and Gauge theories}, D. Reidel Publ. Co, Dordrecht, 1985; Bao, D., Chern, S.S., Shen, Z., {\it An Introduction to Riemann-Finsler Geometry}, Springer Verlag, Grad. Text in Math, 2000; Bucataru, I., Miron. R., {\it Finsler-Lagrange geometry. Applications to dynamical systems}, Ed. Academiei Romane, 2007; Miron, R., Anastasiei, M., {\it The geometry of Lagrange spaces. Theory and applications to Relativity}, Kluwer Acad. Publ. FTPH no. 59, 1994; Miron, R., {\it The Geometry of Higher-Order Lagrange Spaces. Applications to Mechanics and Physics}, Kluwer Acad. Publ. FTPH no. 82, 1997; Miron, R., {\it The Geometry of Higher-Order Finsler Spaces}, Hadronic Press Inc., SUA, 1998; Miron, R., Hrimiuc, D., Shimada, H., Sabau, S., {\it The geometry of Hamilton and Lagrange Spaces}, Kluwer Acad. Publ., FTPH no. 118, 2001.

This book has been received a great support from  Prof. Ovidiu C\^{a}rj\u{a}, the dean of the Faculty of Mathematics from ``Alexandru Ioan Cuza'' University of Iasi.

My colleagues Professors M. Anastasiei, I. Buc\u{a}taru, I. Mihai, M. Postolache, K. Stepanovici made important remarks and suggestions and Mrs. Carmen Savin prepared an excellent print-form of the hand-written text. Many thanks to all of them.

\vspace{\baselineskip}
\begin{flushright}\noindent
Iasi, June 2011\hfill {\it Radu  Miron}\\
\end{flushright}

\newpage


The purpose of the book is a short presentation of the geometrical theory of Lagrange and Hamilton spaces of order 1 or of order $k\geq 1$, as well as the definition and investigation of some new Analytical Mechanics of Lagrangian and Hamiltonian type of order $k\geq 1$.

In the last thirty five years, geometers, mechanicians and physicists from all over the world worked in the field of Lagrange or Hamilton geometries and their applications. We mention only some important names: P.L. Antonelli \cite{Ant2}, M. Anastasiei \cite{Anastas2}, \cite{Anastas3}. G.~S. Asanov \cite{As}, A. Bejancu \cite{Bj1}, I. Buc\u{a}taru \cite{Buc}, M. Crampin \cite{Cr}, R. S. Ingarden \cite{ingarden}, S. Ikeda \cite{IS1}, M. de Leon \cite{leonrodri}, M. Matsumoto \cite{Ma1}, R. Miron \cite{mirana}, \cite{mirana2}, \cite{miranabuc}, H. Rund \cite{rund}, H. Shimada \cite{shimada}, P. Stavrinos \cite{StavBal}, L. Tamassy \cite{Tam1} and S. Vacaru \cite{VS}.

The book is divided in three parts: I. Lagrange and Hamilton spaces; II. Lagrange and Hamilton spaces of higher order; III. Ana\-ly\-ti\-cal Mechanics of Lagrangian and Hamiltonian mechanical systems.

The part I starts with the geometry of tangent bundle $(TM,\pi,M)$ of a differentiable, real, $n-$dimensional manifold $M$. The main geometrical objects on $TM$, as Liouville vector field $\C$, tangent structure $J$, semispray $S$, nonlinear connection $N$ determined by $S$, $N-$metrical structure $D$ are pointed out. It is continued with the notion of Lagrange space, defined by a pair $L^n=(M,L(x,y))$ with $L:TM\to \R$ as a regular Lagrangian and $g_{ij}=\dfrac12\dot{\pp}_i\dot{\pp}_j L$ as fundamental tensor field. Of course, $L(x,y)$ is a regular Lagrangian if the Hessian matrix $\|g_{ij}(x,y)\|$ is nonsingular. In the definition of Lagrange space $L^n$ we assume that the fundamental tensor $g_{ij}(x,y)$ has a constant signature. The known Lagrangian from Electrodynamics assures the existence of Lagrange spaces.

The variational problem associated to the integral of action $$I(c)=\dint^1_0 L(x,\dot{x})dt$$ allows us to determine the Euler--Lagrange equations, conservation law of energy, ${\cal E}_L$, as well as the canonical semispray $S$ of $L^n$. But $S$ determines the canonical nonlinear connection $N$ and the metrical $N-$linear connection $D$, given by the generalized Christoffel symbols. The structure equations of $D$ are derived. This theory is applied to the study of the electromagnetic and gravitational fields of the space $L^n$. An almost K\"{a}hlerian model is constructed. This theory suggests to define the notion of generalized Lagrange space $GL^n=(M,g(x,y))$, where $g(x,y)$ is a metric tensor on the manifold $TM$. The space $GL^n$ is not reducible to a space $L^n$.

A particular case of Lagrange space $L^n$ leads to the known concept of Finsler space $F^n=(M,F(x,y))$. It follows that the geometry of Finsler space $F^n$ can be constructed only by means of Analytical Mechanics principles.

Since a Riemann space ${\cal R}^n=(M\cdot g(x))$ is a particular Finsler space $F^n=(M,F(x,y))$ we get the following sequence of inclusions:
$$\{{\cal R}^n\}\subset\{F^n\}\subset\{L^n\}\subset\{GL^n\}.\leqno({\rm{I}})$$ The Lagrangian geometry is the geometrical study of the sequence (I).

The geometrical theory of the Hamilton spaces can be constructed step by step following the theory of Lagrange spaces. The le\-gi\-ti\-macy of this theory is due to ${\cal L}-$duality (Legendre duality) between a Lagrange space $L^n=(M,L(x,y))$ and a Hamilton space $H^n=(M,H(x,p))$.

Therefore, we begin with the geometrical theory of cotangent bundle $(T^*M,\pi^*,M)$, continue with the notion of Hamilton space $H^n(M,$ $H(x,p))$, where $H:T^*M\to R$ is a regular Hamiltonian, with the variational problem $$I(c)=\dd\int^1_0\left[p_i(t)\dfrac{dx^i}{dt}-\dfrac12 L(x(t),p(t))\right]dt,$$ from which the Hamilton--Jacobi equations are derived, Hamiltonian vector field of $H^n$, nonlinear connection $N$, $N^*-$linear connection etc. The Legendre transformation ${\cal L}:L^n\to H^n$ transforms the fundamental geometrical object fields on $L^n$ into the fundamental geometrical object fields on $H^n$. The restriction of ${\cal L}$ to the Finsler spaces $\{F^n\}$ has as image a new class of spaces ${\cal C}^n=(M,K(x,p))$ called the Cartan spaces. They have the same beauty, symmetry and importance as the Finsler spaces. A pair $GH^n=(M,g^*(x,p))$, where $g^*$ is a metric tensor on $T^*M$ is named a generalized Hamilton space. Remarking that ${\cal R}^{*n}=(M,g^*(x))$, with $g^{*ij}(x)$ the contravariant Cartan space, we obtain a dual sequence of the sequence (I): $$\{{\cal R}^{*n}\}\subset\{{\cal C}^n\}\subset\{H^n\}\subset\{GH^n\}.\leqno({\rm{II}})$$ The Hamiltonian geometry is geometrical study of the sequence II.

The Lagrangian and Hamiltonian geometries are useful for applications in: Variational calculus, Mechanics, Physics, Biology etc.

The part II of the book is devoted to the notions of Lagrange and Hamilton spaces of higher order. We study the geometrical theory of the total space of $k-$tangent bundle $T^kM$, $k\geq 1$, generalizing, step by step the theory from case $k=1$. So, we introduce the Liouville vector fields, study the variational problem for a given integral of action on $T^kM$, continue with the notions of $k-$semispray, nonlinear connection, the prolongation to $T^kM$ of the Riemannian structure defined on the base manifold $M$. The notion of $N-$metrical connections is pointed out, too.

It follows the notion of Lagrange space of order $k\geq 1$. It is defined as a pair $L^{(k)n}=(M,L(x,y^{(1)},y^{(2)},...,y^{(k)}))$, where $L$ is a Lagrangian depending on the (material point) $x=(x^i)$ and on the accelerations $y^{(k)^i}=\dfrac{1}{k!}\dfrac{d^{k}x^i}{dt}$, $k=1,2,...,k$; $i=1,2,...,n$, $n={\rm{dim}}M$. Some examples prove the existence of the spaces $L^{(k)n}$. The canonical nonlinear connection $N$ and metrical $N-$linear connection are pointed out. The Riemannian almost contact model for $L^{(k)n}$ as well as the Generalized Lagrange space of order $k$ end this theory.

The methods used in the construction of the Lagrangian geometry of higher-order are the natural extensions of those used in the theory of Lagrangian geometries of order $k=1$.

Next chapter treats the geometry of Finsler spaces of order $k\geq 1$, $F^{(k)1}=(M,F(x,y^{(1)},...,y^{(k)}))$, $F:T^kM\to R$ being positive $k-$ho\-mo\-ge\-neous on the fibres of $T^k M$ and fundamental tensor has a constant signature. Finally, one obtains the sequences $$\{{\cal R}^{(k)n}\}\subset\{F^{(k)n}\}\subset\{L^{(k)1}\}\subset\{GL^{(k)n}\}.\leqno({\rm{III}})$$
The Lagrangian geometry of order $k\geq 1$ is the geometrical theory of the sequences of inclusions (III).

The notion of Finsler spaces of order $k$, introduced by the present author, was investigated in the book {\it The Geometry of Higher-Order Finsler Spaces}, Hadronic Press, 1998. Here it is a natural extension to the manifold $\widetilde{T^kM}$ of the theory of Finsler spaces given in Section 3. It was developed by H. Shimada and S. Sab\u{a}u \cite{shimada}.

The previous theory is continued with the geometry of $k$ cotangent bundle $T^{*k}M$, defined as the fibered product: $T^{k-1}MX_M T^*M$, with the notion of Hamilton space of order $k\geq 1$ and with the particular case of Cartan spaces of order $k$. An extension to $L^{(k)n}$ and $H^{(k)n}$ of the Legendre transformation is pointed out. The Hamilton--Jacobi equations of $H^{(k)n}$, which are fundamental equations of these spaces, are presented, too.

For the Hamilton spaces of order $k$, $H^{(k)n}=(M,$ $H(x,y)^{(k)}$, ..., $y^{(k-1)},p)$ a similar sequence of inclusions with (III) is introduced. These considerations allow to clearly define what means the Hamiltonian geometry of order $k$ and what is the utility of this theory in applications.

Part III of this book is devoted to applications in Analytical Mechanics. One studies the geometrical theory of scleronomic nonconservative classical mechanical systems $\Sigma_{\cal R}=(M,T,Fe)$, it is introduced and investigated the notion of Finslerian mechanical systems $\Sigma_F=(M,F,Fe)$ and is defined the concept of Lagrangian mechanical system $\Sigma_L=(M,L,Fe)$. In all these theories $M$ is the configuration space, $T$ is the kinetic energy of a Riemannian space ${\cal R}^n=(M,g)$, $F(x,y)$ is the fundamental function of a Finsler space $F^n=(M,F(x,y))$, $L(x,y)$ is a regular Lagrangian and $Fe(x,y)$ are the external forces which depend on the material points $x\in M$ and their velocities $y^i=\dot{x}^i=\dfrac{dx^i}{dt}$.

In order to study these Mechanics we apply the methods from the Lagrangian geometries. The contents of these geometrical theory of mechanical systems $\Sigma_{{\cal R}}$, $\Sigma_F$ and $\Sigma_L$ is based on the geometrical theory of the velocity space $TM$. The base of these investigations are the Lagrange equations. They determine a canonical semispray, which is fundamental for all constructions. For every case of $\Sigma_{\cal R}$, $\Sigma_F$ and $\Sigma_L$ the law of conservation of energy is pointed out. We end with the corresponding almost Hermitian model on the velocity space $TM$.

The dual theory via Legendre transformation, leads to the geometrical study of the Hamiltonian mechanical systems $\Sigma^*_{\cal R}=(M,T^*,Fe^*)$, $\Sigma^*_{\cal C}=(M,K,Fe^*)$ and $\Sigma^*_H=(M,H,Fe^*)$ where $T^*$ is energy, $K(x,p)$ is the fundamental function of a given Cartan space and $H(x,p)$ is a regular Hamiltonian on the cotangent bundle $T^*M$. The fundamental equations in these Hamiltonian mechanical systems are the Hamilton equations $$\dfrac{dx^i}{dt}=-\dfrac{\pp {\cal H}}{\pp p_i};\quad \dfrac{dp_i}{dt}=\dfrac{\pp {\cal H}}{\pp x^i}+\dfrac12 F_i, \({\cal H}=\dfrac12 H\),$$ $F_i(x,p)$ being the covariant components of external forces. The used methods are those given by the sequence of inclusions $\{{\cal R}^*\}\subset\{{\cal C}^n\}\subset\{H^n\}\subset\{GH^n\}$.

More general, the notions of Lagrangian and Hamiltonian mechanical systems of order $k\geq 1$ are introduced and studied. Therefore, to the sequences of inclusions III correspond the analytical mechanics of order $k$ of the Riemannian, Finslerian, Lagrangian type. All these cases are the direct generalizations from case $k=1$. This is the reason why we present here shortly the principal results. Especially, we paid attention to the following question:

{\it Considering a Riemannian $($particularly Euclidian$)$ mechanical system $\Sigma_{\cal R}=(M,T,Fe)$, with $T=\dfrac{1}{2} g_{ij}(x)\dot{x}^i\dot{x}^j$ as energy, but with the external forces $Fe$ depending on material point $(x^i)$ and on higher order accelerations $\left(\dfrac{dx^i}{dt}\right.$, $\dfrac{d^2 x^i}{dt^2}$, .... $\left.\dfrac{d^k x^i}{dt^k}\right)$. What are the evolution equations of the system $\Sigma_{\cal R}$?}

\medskip

\noindent Clearly, the classical Lagrange equations are not valid. In the last part of this book we present the solution of this problem.

Finally, what is new in the present book?
\begin{itemize}
\item[$1^\circ$] A solution of the problem of geometrization of the classical nonconservative mechanical systems, whose external forces depend on velocities, based on the differential geometry of velocity space.
\item[$2^\circ$] The introduction of the notion of Finslerian mechanical system.
\item[$3^\circ$] The definition of Cartan mechanical system.
\item[$4^\circ$] The study of theory of Lagrangian and Hamiltonian mechanical systems by means of the geometry of tangent and cotangent bundles.
\item[$5^\circ$] The geometrization of the higher order Lagrangian and Hamiltonian mechanical systems.
\item[$6^\circ$] The determination of the fundamental equations of the Riemannian mechanical systems whose external forces depend on the higher order accelerations.
\end{itemize}

\newpage

\mainmatter
\begin{partbacktext}
\part{Lagrange and Hamilton Spaces}

\noindent The purpose of the part I is a short presentation of the geometrical theory of Lagrange space and of Hamilton spaces. These spaces are basic for applications to Mechanics, Physics etc. and have been introduced by the present author in 1980 and 1987, respectively.

Therefore we present here the general framework of Lagrange and Hamilton geometries based on the books by R. Miron \cite{mir11}, \cite{mir12}, \cite{mir13}, R. Miron and M. Anastasiei \cite{mirana}, I. Buc\u{a}taru and R. Miron \cite{BuMi} and R. Miron, D. Hrimiuc, H. Shimada and S. Sab\u{a}u \cite{MHS}. Also, we use the papers R. Miron, M. Anastasiei and I. Buc\u{a}taru, {\it The geometry of Lagrange spaces}, Handbook of Finsler Geometry, P. L. Antonelli ed., Kluwer Academic; R. Miron, {\it Compendium on the Geometry of Lagrange spaces}, Handbook of Differential Geometry, vol. II, pp 438--512, Edited by F.~J.~E. Dillen and L.~C.~A. Verstraelen, 2006.

The geometry of Lagrange space starts here with the study of geometrical theory of tangent bundle $(TM,\pi,M)$. It is continued with the notion of Lagrange space $L^n=(M,L(x,y))$ and with an important particular case the Finsler space $F^n=(M,F)$. The Lagrangian geometry is the study of the sequence of inclusions (I) from the Preface.

The geometry of Hamilton spaces follow the same pattern: the geometry of cotangent bundle $(T^*M,\pi^*,M)$, it continues with the notion of Hamilton space $H^n=(M,H(x,p))$, with the concept of Cartan space ${\cal C}=(M,K(x,p))$ and ends with the sequence of inclusions (II) from Introduction.

The relation between the previous sequences are given by means of the Legendre transformation.

In the following, we assume that all the geometrical object fields and mappings are $C^\infty-$differentiable and we express this by words ``differentiable'' or ``smooth''.

\end{partbacktext} 

\newpage

\chapter{The Geometry of tangent manifold}\index{Tangent manifold}

The total space $TM$ of tangent bundle $(TM,\pi,M)$ carries some natural object fields as Liouville vector field $\C$, tangent structure $J$ and vertical distribution $V$. An important object field is the semispray $S$ defined as a vector field $S$ on the manifold $TM$ with the property $J(S)=\C$. One can develop a consistent geometry of the pair $(TM,S)$.

\section{The manifold TM}\index{Bundles! Tangent $TM$}
\label{s1c1p1}

The differentiable structure on $TM$ is induced by that of the
base manifold $M$ so that the natural projection
$\pi:TM\rightarrow M$ is a differentiable submersion and the
triple $(TM,\pi,M)$ is a differentiable vector bundle. Assuming
that $M$ is a real, $n-$dimensional differentiable manifold and
$(U,\varphi=(x^i))$ is a local chart at a point $x\in M$, then any
curve $\sigma:I\rightarrow M$, (Im$\sigma\subset U$) that passes
through $x$ at $t=0$ is analytically represented by $x^i=x^i(t)$,
$t\in I$, $\varphi(x)=(x^i(0))$, $(i,j,...=1,...,n)$. The tangent
vector $[\sigma]_x$ is determined by the real numbers
$$
x^{i}=x^{i}(0),\ y^{i}=\dfrac{dx^{i}}{dt}(0).
$$
Then the pair $(\pi^{-1}(U),\Phi)$, with $\Phi ([\sigma
]_{x})=(x^{i},y^{i})\in \R^{2n}$, $\forall [\sigma]_{x}\in
\pi^{-1}(U)$ is a local chart on $TM$. It will be denoted by
$(\pi^{-1}(U),\phi=(x^i,y^i))$. The set of these ``induced'' local
charts determines a differentiable structure on $TM$ such that
$\pi:TM\rightarrow M$ is a differentiable manifold of dimension
$2n$ and $(TM,\pi, M)$ is a differentiable vector bundle.

A change of coordinate on $M$, $(U,\varphi=x^i)\rightarrow
(V,\psi=\tilde{x}^i)$ given by $\tilde{x}^i=\tilde{x}^i(x^j)$, with rank$\left(\dfrac{\partial
\tilde{x}^{i}}{\partial x^{j}}\right)=n$ has the corresponding
change of coordinates on $TM:(\pi^{-1}(U)$, $\Phi
=(x^{i},y^{i}))\rightarrow(\pi^{-1}(V),\Psi
=(\tilde{x}^{i},\tilde{y}^{i}))$, $(U\cap V\neq\phi)$, given by:
\begin{equation}
\label{1.1.1}
\left\{
\begin{array}{l}
\tilde{x}^{i}=\tilde{x}^{i}(x^{j}),\ \ {\rm rank}\left(\dfrac{\partial \tilde{x}%
^{i}}{\partial x^{j}}\right)=n,\vspace{3mm} \\
\tilde{y}^{i}=\dfrac{\partial \tilde{x}^{i}}{\partial x^{j}}y^{j}.
\end{array}
\right.
\end{equation}

The Jacobian of $\Psi_{\circ}\Phi^{-1}$ is
det$\left(\dfrac{\partial \tilde{x}^{i}}{\partial
x^{j}}\right)^{2}>0$. So the manifold $TM$ is orientable.

The tangent space $T_u TM$ at a point $u\in TM$ to $TM$ is a
$2n-$di\-men\-sional vector space, having the natural basis
$\left\{\dfrac{\partial}{\partial x^{i}},\dfrac{\partial}{\partial
y^{i}}\right\}$ at $u$. A change of coordinates (\ref{1.1.1}) on $TM$
implies the change of natural basis, at point $u$ as follows:
\begin{equation}
\label{1.1.2}
\left\{
\begin{array}{l}
\dfrac{\partial}{\partial x^{i}}=\dfrac{\partial \tilde{x}^{j}}{
\partial x^{i}}\dfrac{\partial}{\partial \tilde{x}^{j}}+\dfrac{
\partial \tilde{y}^{j}}{\partial x^{i}}\dfrac{\partial}{\partial\tilde{y}^{j}}, \vspace{3mm} \\
\dfrac{\partial}{\partial y^{i}}=\dfrac{\partial \tilde{x}^{j}}{
\partial x^{i}}\dfrac{\partial}{\partial \tilde{y}^{j}}.
\end{array}
\right.
\end{equation}
A vector $X_{u}\in T_{u}TM$ is given by
$X=X^{i}(u)\dfrac{\partial}{\partial
x^{i}}+Y^{i}(u)\dfrac{\partial}{\partial y^{i}}$. Then a vector
field $X$ on $TM$ is section $X:TM\rightarrow TTM$ of the projection $\pi_*:
TTM\rightarrow TM$. This is $\pi_*(x,y,X,Y)=(x,y)$.

From the last formula (\ref{1.1.2}) we can see that
$\left(\dfrac{\partial}{\partial y^i}\right)$ at point $u\in TM$
span an $n-$dimensional vector subspace $V(u)$ of $T_u TM$. The mapping $V:u\in TM\rightarrow
V(u)\subset T_u TM$ is an integrable distribution called the
vertical distribution. Then $VTM=\displaystyle\bigcup_{u\in
TM}V(u)$ is a subbundle of the tangent bundle $(TTM,\pi_*(u),TM)$ to
$TM$. As $\pi:TM\rightarrow M$ is a submersion it follows that
$\pi_{*,u}:T_u TM\rightarrow T_{\pi(u)}M$ is an epimorphism of
linear spaces. The kernel of $\pi_{*,u}$ is exactly the vertical
subspaces $V(u)$.

We denote by ${\cal X}^v(TM)$ the set of all vertical vector field
on $TM$. It is a real subalgebra of Lie algebra of vector fields
on $TM$, ${\cal X}(TM)$.

Consider the cotangent space $T^*_u TM$, $u\in TM$. It is the dual
of space $T_u TM$ and $(dx^i,dy^i)_u$ is the natural cobasis and with
respect to (\ref{1.1.1}) we have
\begin{equation}
\label{1.1.3}
d\tilde{x}^{i}=\dfrac{\partial\tilde{x}^{i}}{\partial
x^{j}}dx^{j}, d\tilde{y}^{i}=\dfrac{\partial
\tilde{x}^{i}}{\partial x^{j}}dy^{j}+\dfrac{
\partial\tilde{y}^{i}}{\partial x^{j}}dx^{j}.
\end{equation}

The almost tangent structure of tangent bundle is defined as
\begin{equation}
\label{1.1.4}
J=\dfrac{\partial}{\partial y^{i}}\otimes dx^{i}
\end{equation}

By means of (\ref{1.1.2}) and (\ref{1.1.3}) we can prove that $J$ is globally
defined on $TM$ and that we have
\begin{equation}
\label{1.1.4'}
J\left(\dfrac{\partial}{\partial
x^{i}}\right)=\dfrac{\partial}{\partial y^{i}}, \
J\left(\dfrac{\partial}{\partial y^{i}}\right)=0.\tag{1.1.4'}
\end{equation}

It follows that the following formulae hold: $$J^{2}=J\circ J=0,
Ker J=Im J=VTM.$$

The almost cotangent structure $J^*$ is defined by
$$J^*=dx^i\otimes\dfrac{\partial}{\partial y^i}.$$ Therefore, we
obtain $$J^*(dx^i)=0, J^*(dy^i)=dx^i.$$

The Liouville vector field on $\widetilde{TM}=TM\setminus\{0\}$ is
defined by
\begin{equation}
\label{1.1.5}
\C=y^i\dfrac{\partial}{\partial y^i}.
\end{equation}
It is globally defined on $\widetilde{TM}$ and $\C\neq 0$.

A smooth function $f:TM\rightarrow R$ is called $r\in Z$
homogeneous with respect to the variables $y^i$ if,
$f(x,ay)=a^r f(x,y)$, $\forall a\in \R^{+}$. The Euler
theorem holds: {\it A function $f\in {\cal F}(TM)$ differentiable on
$\widetilde{TM}$ is $r$ homogeneous with respect to $y^i$ if and only if}
\begin{equation}
\label{1.1.6}
{\cal L}_{\C}f=\C f=y^{i}\dfrac{\partial
f}{\partial y^{i}}=rf,
\end{equation}
${\cal L}_{\C}$ being the Lie derivation with respect to
$\C$.

A vector field $X\in {\cal X}(TM)$ is $r-$homogeneous with respect
to $y^i$ if ${\cal L}_{\C}X=(r-1)X$, where ${\cal
L}_{\C}X=[\C,X]$.

Finally an 1-form $\omega\in{\cal X}^*(TM)$ is $r-$homogeneous if
${\cal L}_{\C}\omega=r\omega$.

Evidently, the notion of homogeneity can be extended to a tensor
field $T$ of type $(r,s)$ on the manifold $TM$.

\section{Semisprays on the manifold $TM$}\index{Semisprays! on manifold $TM$}
\label{s2c1p1}
\setcounter{equation}{0}
\setcounter{definition}{0}
\setcounter{theorem}{0}
\setcounter{proposition}{0}

The notion of semispray on the total space $TM$ of the tangent
bundle is strongly related with the second order differential
equations on the base manifold $M$:
\begin{equation}
\label{1.2.1}
\dfrac{d^{2}x^{i}}{dt^{2}}+2G^{i}\left(x,\dfrac{dx}{dt}\right)=0.
\end{equation}

Writing the equation (\ref{1.2.1}), on $TM$, in the equivalent form
\begin{equation}
\label{1.2.2}
\frac{dy^{i}}{dt^{2}}+2G^{i}(x,y)=0, y^i=\dfrac{dx^i}{dt}
\end{equation}
we remark that with respect to the changing of coordinates (\ref{1.1.1})
on $TM$, the functions $G^i(x,y)$ transform according to:
\begin{equation}
\label{1.2.3}
2\widetilde{G^{i}}=\dfrac{\partial \widetilde{x}^{i}}{\partial
x^{j}}2G^{j}- \dfrac{\partial \widetilde{y}^{i}}{\partial
x^{j}}y^{j}.
\end{equation}

But (\ref{1.2.2}) are the integral curve of the vector field:
\begin{equation}
\label{1.2.4}
S=y^{i}\dfrac{\partial }{
\partial x^{i}}-2G^{i}(x,y)\dfrac{\partial }{\partial y^{i}}
\end{equation}

By means of (\ref{1.2.3}) one proves: $S$ is a vector field globally
defined on $TM$. It is called a semispray on $TM$ and $G^i$ are
the coefficients of $S$.

$S$ is homogeneous of degree 2 if and only if its coefficients
$G^i$ are homogeneous functions of degree 2. If $S$ is
2-homogeneous then we say that $S$ is a spray.

If the base manifold $M$ is paracompact, then on $TM$ there exist
semisprays.

\section{Nonlinear connections}\index{Connections! Nonlinear}
\label{s3c1p1}
\setcounter{equation}{0}
\setcounter{definition}{0}
\setcounter{theorem}{0}
\setcounter{proposition}{0}

As we have seen in the first section of this chapter, the vertical
distribution $V$ is a regular, $n$-dimensional, integrable
distribution on $TM$. Then it is naturally to look for a
complementary distribution of $VTM$. It will be called a
horizontal distribution. Such distribution is equivalent with
a nonlinear connection.

Consider the tangent bundle $(TM,\pi,M)$ of the base manifold $M$
and the tangent bundle $(TTM, \pi_{\ast}, TM)$ of the manifold
$TM$. As we know the kernel of $\pi_{\ast}$ is the vertical
subbundle $(VTM,\pi_V,TM)$. Its fibres are the vertical spaces
$V(u)$, $u\in TM$.

For a vector field $X\in{\cal X}(TM)$, in local natural basis, we
can write: $$X=X^i(x,y)\dfrac{\partial}{\partial
x^{i}}+Y^i(x,y)\dfrac{\partial}{\partial y^{i}}.$$ Shorter
$X=(x^i,y^i,X^i,Y^i)$.

The mapping $\pi_*:TTM\rightarrow TM$ has the local form
$\pi_*(x,y,X,Y)=(x,X)$. The points of the submanifold $VTM$ are of
the form $(x,y,0,Y)$.

Let us consider the pull-back bundle
$$
\pi^{\ast}(TM)=TM{\times}_{\pi}TM=\{(u,v)\in TM\times TM|\pi
(u)=\pi(v)\}.
$$

The fibres of $\pi^{\ast}(TM)$, i.e., $\pi_{u}^{\ast}(TM)$ are
isomorphic to $T_{\pi(u)}M$. Then, we can define the following
morphism $\pi!:TTM\longrightarrow \pi^{\ast}(TM)$ by $\pi
!(X_{u})=(u,\pi_{\ast,u}(X_{u}))$. Therefore we have
$$
{\rm ker}\,\pi !={\rm ker}\,\pi_{\ast }=VTM.
$$
We can prove, without difficulties that the following sequence of vector bundles over $TM$ is
exact:
\begin{equation}
\label{1.3.1}
0\longrightarrow VTM{\stackrel{i}{\longrightarrow
}}TTM{\stackrel{\pi !}{ \longrightarrow }}\pi^{\ast
}(TM)\longrightarrow 0
\end{equation}

Thus, we can give
\begin{definition}
A nonlinear connection on the tangent
manifold $TM$ is a left splitting of the exact sequence (\ref{1.3.1}).

Consequently, a nonlinear connection on $TM$ is a vector bundle
morphism $C:TTM\to VTM$, with the property that $C\circ
i=1_{VTM}$.
\end{definition}

The kernel of the morphism $C$ is a vector subbundle of the
tangent bundle $(TTM,\pi_{\ast},TM)$ denoted by $(NTM,\pi_{N},TM)$
and called the {\it horizontal} subbundle. Its fibres $N(u)$
determine a regular $n$-dimensional distribution $N:u\in
TM\rightarrow N(u)\subset T_{u}TM$, complementary to the vertical
distribution $V:u\in TM\rightarrow V(u)\subset T_{u}TM$ i.e.
\begin{equation}
\label{1.3.2}
T_u TM=N(u)\oplus V(u), \forall u\in TM.
\end{equation}
Therefore, a nonlinear connection on $TM$ induces the following
Whitney sum:
\begin{equation}
TTM=NTM\oplus VTM.  \tag{1.3.2'}
\end{equation}
The reciprocal of this property is true, too.

\noindent An adapted local basis to the direct decomposition (\ref{1.3.2}) is of the
form $\left(\dfrac{\delta}{\delta x^{i}},\dfrac{\partial}{\partial
y^{j}}\right)_u$, where
\begin{equation}
\label{1.3.3}
\dfrac{\delta}{\delta x^{i}}=\dfrac{\partial}{\partial
x^{i}}-N^j_{\ i}(x,y)\dfrac{\partial}{\partial y^{j}}
\end{equation}
and $\left.\dfrac{\delta }{\delta x^{i}}\right|_u$, $(i=1,...,n)$ are vector
fields that belong to $N(u)$.

They are $n-$linear independent vector fields and are independent
from the vector fields $\left(\dfrac{\partial }{\partial
y^{i}}\right)_u$ $(i=1,...,n)$ which belong to $V(u)$.

The functions $N^j_i(x,y)$ are called the coefficients of the
nonlinear connection, denoted in the following by $N$.

Remarking that $\pi_{*,u}:T_u TM\to T_{\pi(u)}M$ is an epimorphism
the restriction of $\pi_{*,u}$ to $N(u)$ is an isomorphism from $N(u)$ so
$T_{\pi(u)}M$. So we can take the inverse map
$l_{h,u}$ the horizontal lift determined by the nonlinear
connection $N$.

Consequently, the vector fields $\left.\dfrac{\delta}{\delta x^{i}}\right|_u$
can be given in the form $$\left(\dfrac{\delta }{\delta
x^{i}}\right)_u=l_{h,u}\left(\dfrac{\partial}{\partial
x^{i}}\right)_{\pi(u)}.$$ With respect to a change of local
coordinates on the base manifold $M$ we have
$\dfrac{\partial}{\partial
x^i}=\dfrac{\partial\widetilde{x}^j}{\partial
x^i}\dfrac{\theta}{\partial \widetilde{x}^j}$.

Consequently $\left(\dfrac{\delta}{\delta x^i}\right)_u$ are
changed, with respect to (\ref{1.1.1}), in the form
\begin{equation}
\label{1.3.4}
\dfrac{\delta }{\delta x^{i}}=\dfrac{\partial
\widetilde{x}^{j}}{\partial x^{i}}\dfrac{\delta }{\delta
\widetilde{x}^{j}}.
\end{equation}

It follows, from (\ref{1.3.3}), that the coefficients $N^i_j(x,y)$ of the
non\-li\-near connection $N$, with respect to a change of local
coordinates on the manifold $TM$, (\ref{1.1.1}) are transformed by the
rule:
\begin{equation}
\label{1.3.5}
\dfrac{\partial \widetilde{x}^{j}}{\partial
x^{k}}N_{i}^{k}=\widetilde{N}_{k}^{j} \dfrac{\partial
\widetilde{x}^{k}}{\partial x^{i}}+\dfrac{\partial \widetilde{y}^{j}}{
\partial x^{i}}.
\end{equation}
The reciprocal property is true, too.

We can prove without difficulties that there exists a
nonlinear connection on $TM$ if $M$ is a paracompact manifold. The
validity of this sentence is assured by the following theorem:

\begin{theorem}
\label{t3.1}
If $S$ is a semispray with the coefficients $G^i(x,y)$ then the system of functions
\begin{equation}
\label{1.3.6}
N^i_j(x,y)=\dfrac{\partial G^{i}}{\partial y^{j}}
\end{equation}
is the system of coefficients of a nonlinear connection $N$.
\end{theorem}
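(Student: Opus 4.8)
The plan is to prove the statement by verifying that the functions $N^i_j=\dfrac{\partial G^i}{\partial y^j}$ obey precisely the transformation law (\ref{1.3.5}), since it was noted just after (\ref{1.3.5}) that this rule characterizes the coefficients of a nonlinear connection and that the reciprocal holds as well. The only input needed is the transformation law (\ref{1.2.3}) for the semispray coefficients $G^i$, together with the coordinate change (\ref{1.1.1}) on $TM$.

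First I would rewrite (\ref{1.2.3}) in the form $\widetilde{G}^i=\dfrac{\partial\widetilde{x}^i}{\partial x^j}G^j-\dfrac12\dfrac{\partial\widetilde{y}^i}{\partial x^j}y^j$ and differentiate both sides with respect to the old fibre coordinate $y^l$. On the left-hand side $\widetilde{G}^i$ is a function of the new coordinates $(\widetilde{x},\widetilde{y})$; since $\widetilde{x}$ depends only on the base point and not on $y$, the chain rule gives $\dfrac{\partial\widetilde{G}^i}{\partial y^l}=\dfrac{\partial\widetilde{G}^i}{\partial\widetilde{y}^m}\dfrac{\partial\widetilde{y}^m}{\partial y^l}=\widetilde{N}^i_m\dfrac{\partial\widetilde{x}^m}{\partial x^l}$, where in the last step I read off $\dfrac{\partial\widetilde{y}^m}{\partial y^l}=\dfrac{\partial\widetilde{x}^m}{\partial x^l}$ from (\ref{1.1.1}).

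For the right-hand side I would differentiate term by term. The Jacobian $\dfrac{\partial\widetilde{x}^i}{\partial x^j}$ is independent of $y$, so the first term contributes $\dfrac{\partial\widetilde{x}^i}{\partial x^j}\dfrac{\partial G^j}{\partial y^l}=\dfrac{\partial\widetilde{x}^i}{\partial x^j}N^j_l$. For the second term I would write $\dfrac{\partial\widetilde{y}^i}{\partial x^j}y^j=\dfrac{\partial^2\widetilde{x}^i}{\partial x^j\partial x^k}y^jy^k$, a homogeneous quadratic in $y$ with coefficients symmetric in $j,k$; differentiating it in $y^l$ and using the symmetry produces the factor $2\,\dfrac{\partial\widetilde{y}^i}{\partial x^l}$, so this term contributes $-\dfrac{\partial\widetilde{y}^i}{\partial x^l}$. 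Equating the two sides yields $\widetilde{N}^i_m\dfrac{\partial\widetilde{x}^m}{\partial x^l}=\dfrac{\partial\widetilde{x}^i}{\partial x^j}N^j_l-\dfrac{\partial\widetilde{y}^i}{\partial x^l}$, which after relabelling indices is exactly (\ref{1.3.5}).

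I expect no genuine obstacle here: the whole argument is a careful application of the chain rule to (\ref{1.2.3}). The one point that must be handled cleanly is the bookkeeping of $y$-dependence, namely that the coordinate Jacobian depends only on $x$ while $\widetilde{y}$ depends on both $x$ and $y$, and the factor of two coming from the symmetric quadratic term must be tracked correctly so that the inhomogeneous term $\dfrac{\partial\widetilde{y}^i}{\partial x^l}$ in (\ref{1.3.5}) appears with the right coefficient. Smoothness of $G^i$ on $TM$ guarantees that the $N^i_j=\dfrac{\partial G^i}{\partial y^j}$ are well-defined smooth functions, which completes the verification.
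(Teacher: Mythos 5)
Your proof is correct and follows exactly the route the paper sketches: differentiating the semispray transformation law (\ref{1.2.3}) with respect to the fibre coordinates and using $\dfrac{\partial}{\partial y^{i}}=\dfrac{\partial\widetilde{x}^{j}}{\partial x^{i}}\dfrac{\partial}{\partial \widetilde{y}^{j}}$ to recover the nonlinear connection rule (\ref{1.3.5}). You merely make explicit the chain-rule bookkeeping (including the factor of two from the symmetric quadratic term) that the paper leaves to the reader.
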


Indeed, the formula (\ref{1.2.3}) and $\dfrac{\partial}{\partial
y^{i}}=\dfrac{\partial\widetilde{x}^{j}}{\partial
x^{i}}\dfrac{\partial}{\partial \widetilde{y}^{j}}$ give the rule
of transformation (\ref{1.3.5}) for $N^i_j$ from (\ref{1.3.6}).

The adapted dual basis $\{dx^i,\delta y^i\}$ of the basis
$\left(\dfrac{\delta}{\delta x^{i}},\dfrac{\partial }{\partial
y^{i}}\right)$ has the 1-forms $\delta y^i$ as follows:
\begin{equation}
\label{1.3.7}
\delta y^i=dy^i+N^i_j dx^j.
\end{equation}

With respect to a change of coordinates, (\ref{1.1.1}), we have
\begin{equation}
d\widetilde{x}^i=\dfrac{\partial \widetilde x^i}{\partial
x^j}dx^j,\quad \delta\widetilde{y}^i=\dfrac{\partial \widetilde
x^i}{\partial x^j}\delta y^j.\tag{1.3.7'}
\end{equation}

Now, we can consider the horizontal and vertical projectors $h$
and $v$ with respect to direct decomposition (\ref{1.3.2}):
\begin{equation}
\label{1.3.8}
h=\dfrac{\delta}{\delta x^{i}}\otimes dx^{i},\ \ v=\dfrac{\partial
}{\partial y^{i}}\otimes \delta y^{i}.
\end{equation}

Some remarkable geometric structures, as the almost product
$\P$ and almost complex structure $\F$, are determined
by the nonlinear connection $N$:
\begin{equation}
\label{1.3.9}
\P=\dfrac{\delta}{\delta x^{i}}\otimes
dx^{i}-\dfrac{\partial}{\partial y^{i}}\otimes \delta y^{i}=h-v.
\end{equation}
\begin{equation}
\F=\dfrac{\delta}{\delta x^{i}}\otimes \delta
y^{i}-\dfrac{\partial}{\partial y^{i}}\otimes dx^{i}. \tag{1.3.9'}
\end{equation}

It is not difficult to see that $\P$ and $\F$ are
globally defined on $\widetilde{TM}$ and
\begin{equation}
\label{1.3.10}
\P\circ \P=Id,\ \ \F\circ \F=-Id.
\end{equation}

With respect to (\ref{1.3.2}) a vector field $X\in{\cal X}(TM)$ can be
uniquely written in the form
\begin{equation}
\label{1.3.11}
X=hX+vX=X^H+X^V
\end{equation}
with $X^H=hX$ and $X^V=vX$.

An 1-form $\omega\in{\cal X}^*(TM)$ has the similar form
$$\omega=h\omega+v\omega,$$ where $h\omega(X)=\omega(X^H)$,
$v\omega(X)=\omega(X^V)$.

A $d-$tensor field $T$ on $TM$ of type $(r,s)$ is called a
distinguished tensor field (shortly a $d-$tensor) if $$T(\omega_1,
...,\omega_r, X_1,..., X_s)= T(\varepsilon_1\omega_1,
...,\varepsilon_r\omega_r, \varepsilon_1 X_1,..., \varepsilon_s
X_s)$$ where $\varepsilon_1, ..., \varepsilon_r, ...$ are $h$ or
$v$.

Therefore $hX=X^H$, $vX=X^V$, $h\omega=\omega^H$,
$v\omega=\omega^V$ are $d-$vectors or $d-$covectors. In the
adapted basis $\left(\dfrac{\delta}{\delta x^{i}},\dfrac{\partial
}{\partial y^{i}}\right)$ we have
$$X^H=X^i(x,y)\dfrac{\delta}{\delta x^i},\quad X^V=\dot{X}^i\dfrac{\partial}{\partial
y^i}$$ and $$\omega^H=\omega_j(x,y)dx^j,\quad
\omega^V=\dot{\omega}_j\delta y^j.$$

A change of local coordinates on $TM:(x,y)\to (\widetilde{x},\widetilde{y})$ leads to the change of coordinates
of $X^H,X^V,\omega^H,\omega^V$ by the classical rules of
transformation:
$$\widetilde{X}^i=\dfrac{\partial\widetilde{x}^i}{\partial x^j}X^j,\quad \omega_j=\dfrac{\partial \widetilde{x}^i}{\partial
x^j}\widetilde{\omega}_i\ {\rm{etc}}.$$ So, a $d-$tensor $T$ of
type $(r,s)$ can be written as
\begin{equation}
\label{1.3.12}
T=T_{j_1...j_s}^{i_1...i_r}(x,y)\dfrac{\delta}{\delta
x^{i_1}}\otimes...\otimes \dfrac{\partial}{\partial
y^{i_r}}\otimes dx^{j_1}\otimes...\otimes \delta y^{j_s}.
\end{equation}

A change of coordinates (\ref{1.1.1}) implies the classical rule:
\begin{equation}
\widetilde{T}_{j_1...j_s}^{i_1...i_r}(\widetilde{x},\widetilde{y})=\dfrac{\partial
\widetilde{x}^{i_1}}{\partial x^{h_1}}...\dfrac{\partial
\widetilde{x}^{i_r}}{\partial x^{h_r}}\dfrac{\partial
x^{k_1}}{\partial \widetilde{x}^{j_1}}...\dfrac{\partial
x^{k_s}}{\partial \widetilde{y}^{j_s}}=T^{h_1...h_2}_{k_1...k_s}.
\tag{1.3.12'}
\end{equation}

Next, we shall study the integrability of the nonlinear connection
$N$ and of the structures $\P$ and $\F$.

Since $\left(\dfrac{\delta}{\delta x^{i}}\right)$ $i=1,...,n$ is
an adapted basis to $N$, according to the Frobenius theorem it
follows that $N$ is integrable if and only if the Lie brackets
$\left[\dfrac{\delta}{\delta x^{i}},\dfrac{\delta}{\delta
x^{j}}\right]$, $i,j=1,...,n$ are vector fields in the
distribution $N$.

But we have
\begin{equation}
\label{1.3.13}
\left[\dfrac{\delta}{\delta x^{i}},\dfrac{\delta}{\delta
x^{j}}\right]=R^h_{\ ij}\dfrac{\partial}{\partial y^h}
\end{equation}
where
\begin{equation}
R^h_{\ ij}=\dfrac{\delta N^h_i}{\delta x^j}-\dfrac{\delta
N^h_j}{\delta x^i}.\tag{1.3.13'}
\end{equation}

It is not difficult to prove that $R^h_{ij}$ are the components
of a $d-$tensor field of type (1,2). It is called the {\it
curvature} tensor of the nonlinear connection $N$.

We deduce:

\begin{theorem}
\label{t3.2}
The nonlinear connection $N$ is integrable
if and only if its curvature tensor $R^h_{ij}$ vanishes.
\end{theorem}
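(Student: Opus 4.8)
The plan is to exploit the Frobenius integrability criterion, which has already been reduced—in the paragraph preceding the statement—to the requirement that the Lie brackets $\left[\dfrac{\delta}{\delta x^{i}},\dfrac{\delta}{\delta x^{j}}\right]$ lie in the horizontal distribution $N$ for all $i,j$. Since the adapted basis $\left(\dfrac{\delta}{\delta x^{i}},\dfrac{\partial}{\partial y^{j}}\right)$ spans $T_u TM$ with the horizontal part spanned by the $\dfrac{\delta}{\delta x^{i}}$ and the vertical part by the $\dfrac{\partial}{\partial y^{j}}$, a vector field belongs to $N$ precisely when its vertical component vanishes. Thus the entire statement reduces to reading off the vertical part of the bracket.

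First I would compute the bracket directly. Using the definition $\dfrac{\delta}{\delta x^{i}}=\dfrac{\partial}{\partial x^{i}}-N^h_{\ i}\dfrac{\partial}{\partial y^{h}}$ from (1.3.3), I expand
\begin{equation*}
\left[\dfrac{\delta}{\delta x^{i}},\dfrac{\delta}{\delta x^{j}}\right]
=\left[\dfrac{\partial}{\partial x^{i}}-N^h_{\ i}\dfrac{\partial}{\partial y^{h}},\;\dfrac{\partial}{\partial x^{j}}-N^k_{\ j}\dfrac{\partial}{\partial y^{k}}\right].
\end{equation*}
The coordinate vector fields $\dfrac{\partial}{\partial x^{i}}$ and $\dfrac{\partial}{\partial y^{h}}$ all commute, so only the cross terms arising from the coefficients $N$ survive. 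Carrying out the four bracket terms and collecting, one obtains exactly the expression (1.3.13), namely $R^h_{\ ij}\dfrac{\partial}{\partial y^{h}}$ with $R^h_{\ ij}=\dfrac{\delta N^h_i}{\delta x^j}-\dfrac{\delta N^h_j}{\delta x^i}$ as in (1.3.13'). The key observation, which I would make explicit, is that this bracket is purely vertical: it has no component along any $\dfrac{\delta}{\delta x^{h}}$.

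With this computation in hand the proof closes immediately. The bracket $\left[\dfrac{\delta}{\delta x^{i}},\dfrac{\delta}{\delta x^{j}}\right]$ lies in the horizontal distribution $N$ if and only if its (already purely vertical) expression $R^h_{\ ij}\dfrac{\partial}{\partial y^{h}}$ is zero, and since the $\dfrac{\partial}{\partial y^{h}}$ are linearly independent this happens exactly when all components $R^h_{\ ij}$ vanish. Invoking the Frobenius reduction stated just above, integrability of $N$ is therefore equivalent to $R^h_{\ ij}=0$ for all indices, which is the assertion.

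The main obstacle is not conceptual but the bookkeeping in the bracket computation: one must correctly differentiate the coefficients $N^h_{\ i}(x,y)$, which depend on both $x$ and $y$, so that the action of $\dfrac{\partial}{\partial x^{j}}$ on $N^h_{\ i}$ and of the vertical fields $N^k_{\ j}\dfrac{\partial}{\partial y^{k}}$ on $N^h_{\ i}$ both contribute, and these must be reassembled into the $\delta$-derivatives appearing in (1.3.13'). I would verify that the two ordinary derivatives $\dfrac{\partial N^h_i}{\partial x^j}$ and the $y$-derivative terms $N^k_{\ j}\dfrac{\partial N^h_i}{\partial y^{k}}$ combine precisely into $\dfrac{\delta N^h_i}{\delta x^j}$, so that the antisymmetrized result is the curvature tensor as defined; the fact that $R^h_{\ ij}$ is a genuine $d$-tensor of type $(1,2)$ has already been asserted in the excerpt and may be cited rather than reproved.
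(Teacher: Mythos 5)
Your proposal is correct and follows essentially the same route as the paper: the Frobenius criterion reduces integrability of $N$ to the brackets $\left[\dfrac{\delta}{\delta x^{i}},\dfrac{\delta}{\delta x^{j}}\right]$ lying in $N$, and since by (1.3.13) this bracket equals the purely vertical field $R^h_{\ ij}\dfrac{\partial}{\partial y^{h}}$, it belongs to the horizontal distribution precisely when every $R^h_{\ ij}$ vanishes. Your explicit expansion of the bracket, checking that the $x$- and $y$-derivative contributions reassemble into the $\delta$-derivatives of (1.3.13'), correctly fills in the computation the paper leaves implicit.
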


The weak torsion $t^h_{ij}$ of $N$ is defined by
\begin{equation}
\label{1.3.14}
t^h_{ij}=\dfrac{\partial N^h_{\ i}}{\partial y^i}-\dfrac{\partial
N^h_{\ j}}{\partial y^i}.
\end{equation}
It is a d-tensor field of type $(1,2)$, too. We say that $N$ is a
symmetric nonlinear connection if its weak torsion $t^h_{ij}$
vanishes.

Now is not difficult to prove:

\begin{theorem}
\label{t3.3}
\begin{enumerate} \item[$1^\circ$] The almost product
structure $\P$ is integrable if and only if the nonlinear
connection $N$ is integrable; \item[$2^\circ$] The almost complex
structure $\F$ is integrable if and only if the nonlinear connection $N$ is symmetric and integrable.
\end{enumerate}
\end{theorem}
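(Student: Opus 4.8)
The plan is to reduce the integrability of each structure to the vanishing of its Nijenhuis tensor and then to read off that tensor in the adapted frame. For a $(1,1)$-tensor field $T$ set
\[
\mathcal{N}_T(X,Y)=[TX,TY]-T[TX,Y]-T[X,TY]+T^2[X,Y];
\]
an almost product (respectively almost complex) structure is integrable exactly when $\mathcal{N}_T=0$. Since $\mathcal{N}_T$ is a $d$-tensor of type $(1,2)$, it is enough to evaluate it on the adapted basis $\left(\dfrac{\delta}{\delta x^i},\dfrac{\partial}{\partial y^i}\right)$. Besides the horizontal bracket $\left[\dfrac{\delta}{\delta x^i},\dfrac{\delta}{\delta x^j}\right]=R^h_{ij}\dfrac{\partial}{\partial y^h}$ from (\ref{1.3.13}) and the vanishing vertical bracket $\left[\dfrac{\partial}{\partial y^i},\dfrac{\partial}{\partial y^j}\right]=0$, the only remaining ingredient is the mixed bracket, which (\ref{1.3.3}) gives as $\left[\dfrac{\partial}{\partial y^i},\dfrac{\delta}{\delta x^j}\right]=-\dfrac{\partial N^k_j}{\partial y^i}\dfrac{\partial}{\partial y^k}$.

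For $1^\circ$ I would substitute $\P$, using $\P\dfrac{\delta}{\delta x^i}=\dfrac{\delta}{\delta x^i}$, $\P\dfrac{\partial}{\partial y^i}=-\dfrac{\partial}{\partial y^i}$, $\P^2=\mathrm{Id}$. On a mixed pair the four terms cancel algebraically, and on a vertical pair the bracket itself vanishes, so $\mathcal{N}_\P$ is supported on horizontal pairs, where a short computation gives $\mathcal{N}_\P\!\left(\dfrac{\delta}{\delta x^i},\dfrac{\delta}{\delta x^j}\right)=4R^h_{ij}\dfrac{\partial}{\partial y^h}$. Hence $\mathcal{N}_\P=0$ is equivalent to $R^h_{ij}=0$, which by Theorem \ref{t3.2} is precisely the integrability of $N$. (Equivalently, $\P$ is integrable iff both of its eigendistributions are; the $(-1)$-eigenspace is the vertical distribution, already known to be integrable, so only the integrability of the horizontal distribution $N$ remains to be imposed.)

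For $2^\circ$ I would repeat the evaluation with $\F$, now using $\F\dfrac{\delta}{\delta x^i}=-\dfrac{\partial}{\partial y^i}$, $\F\dfrac{\partial}{\partial y^i}=\dfrac{\delta}{\delta x^i}$, $\F^2=-\mathrm{Id}$. Here the mixed bracket feeds the derivatives $\partial N^k_j/\partial y^i$ into $\mathcal{N}_\F$, and after antisymmetrizing they assemble into the weak torsion $t^h_{ij}=\dfrac{\partial N^h_i}{\partial y^j}-\dfrac{\partial N^h_j}{\partial y^i}$ of (\ref{1.3.14}), while the horizontal bracket again produces $R^h_{ij}$. The horizontal pair yields $\mathcal{N}_\F\!\left(\dfrac{\delta}{\delta x^i},\dfrac{\delta}{\delta x^j}\right)=t^k_{ij}\dfrac{\delta}{\delta x^k}-R^h_{ij}\dfrac{\partial}{\partial y^h}$, and the mixed and vertical pairs are likewise linear combinations of $t^h_{ij}$ and $R^h_{ij}$. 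Reading off the horizontal and vertical projections separately, $\mathcal{N}_\F=0$ forces both $t^h_{ij}=0$ and $R^h_{ij}=0$, i.e. $N$ is symmetric and integrable; conversely these two conditions kill every component.

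The main obstacle is the bookkeeping of the mixed bracket: one must differentiate $N^k_j$ with respect to $y^i$ correctly in (\ref{1.3.3}) and then keep the two orderings $\partial N^k_i/\partial y^j$ and $\partial N^k_j/\partial y^i$ apart, so that their antisymmetric combination is recognized as the weak torsion $t^h_{ij}$ rather than absorbed or dropped. Once this identification is secured, statement $2^\circ$ follows by separating the horizontal and vertical parts of $\mathcal{N}_\F$, and $1^\circ$ is the degenerate case in which the torsion term cannot appear and only the curvature survives.
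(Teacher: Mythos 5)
Your proposal is correct and follows the same route the paper indicates: it tests integrability via the Nijenhuis tensors $N_{\P}$ and $N_{\F}$, evaluated in the adapted basis using the brackets $\left[\dfrac{\delta}{\delta x^i},\dfrac{\delta}{\delta x^j}\right]=R^h_{\ ij}\dfrac{\partial}{\partial y^h}$, $\left[\dfrac{\partial}{\partial y^i},\dfrac{\delta}{\delta x^j}\right]=-\dfrac{\partial N^k_j}{\partial y^i}\dfrac{\partial}{\partial y^k}$ and the vanishing vertical bracket, identifying the surviving components with the curvature $R^h_{\ ij}$ and the weak torsion $t^h_{\ ij}$. You merely carry out in detail the computation the paper leaves as a one-line remark, and your component formulas (in particular $N_{\P}\bigl(\delta/\delta x^i,\delta/\delta x^j\bigr)=4R^h_{\ ij}\,\partial/\partial y^h$ and the splitting of $N_{\F}$ into $t$- and $R$-parts) check out.
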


The proof is simple using the Nijenhuis tensors $N_{\P}$ and
$N_{\F}$. For $N_{\P}$ we have the expression
$$N_{\P}(X,Y)=\P^2[X,Y]+[\P X,\P Y]-\P[\P X,Y]-\P[X,\P Y],\forall
X,Y\in\chi(TM).$$

Also we can see that each structure $\P$ or $\F$
characterizes the nonlinear connection $N$.

\bigskip

{\bf Autoparallel curves of a nonlinear connection} can be
obtained considering the horizontal curves as follows.

A curve $c:t\in I\subset R\to(x^i(t),y^i(t))\in TM$ has the
tangent vector $\dot{c}$ given by
\begin{equation}
\label{1.3.15}
\dot{c}=\dot{c}^H+\dot{c}^V=\dfrac{dx^i}{dt}\dfrac{\delta}{\delta
x^i}+\dfrac{\delta y^i}{dt}\dfrac{\partial}{\partial
y^i}
\end{equation}
where
\begin{equation}
\dfrac{\delta
y^i}{dt}=\dfrac{dy^i}{dt}+N^i_j(x,y)\dfrac{dx^j}{dt}.\tag{1.3.15'}
\end{equation}

The curve $c$ is a {\it horizontal curve} if $\dot{c}^V=0$ or
$\dfrac{\delta y^i}{dt}=0$.

Evidently, if the functions $x^i(t)$, $t\in I$ are given and
$y^i(t)$ are the solutions of this system of differential
equations, then we have an horizontal curve $x^i=x^i(t)$,
$y^i=y^i(t)$ in $TM$ with respect to $N$.

In the case $y^i=\dfrac{dx^i}{dt}$, the horizontal curves are
called the autoparallel curves of the nonlinear connection $N$.
They are characterized by the system of differential equations
\begin{equation}
\label{1.3.16}
\dfrac{d y^{i}}{dt}+N^i_j(x,y)\dfrac{dx^{j}}{dt}=0,\quad
y^i=\dfrac{dx^i}{dt}.
\end{equation}

A theorem of existence and uniqueness can be easy formulated for
the autoparallel curves of a nonlinear connection $N$ given by its
coefficients $N^i_j(x,y)$.

\section{$N$-linear connections}\index{Connections! $N-$linear}
\label{s4c1p1}
\setcounter{equation}{0}
\setcounter{definition}{0}
\setcounter{theorem}{0}
\setcounter{proposition}{0}

An $N$-linear connection on the manifold $TM$ is a special linear connection $D$ on $%
TM $ that preserves by parallelism the horizontal distribution $N$
and the vertical distribution $V$. We study such linear
connections determining the curvature, torsion and structure
equations.

Throughout this section $N$ is an a priori given nonlinear
connection with the coefficients $N^i_j$.

\begin{definition}
\label{d4.1}
A linear connection $D$ on the manifold $TM$
is called a {\it distinguished connection} (a d-connection for
short) if it preserves by parallelism the horizontal distribution
$N$.
\end{definition}

Thus, we have $Dh=0$. It follows that we have also: $Dv=0$ and
$D\P=0$.

If $Y=Y^H+Y^V$ we get $$D_X Y=(D_X Y^H)^H+(D_X Y^V)^V,\quad
\forall X,Y\in \chi(TM).$$ We can easily prove:

\begin{proposition}
\label{p4.1}
For a d-connection the following conditions are equivalent:

$1^{{\rm o}}$ $DJ=0$;

$2^{{\rm o}}$ $D\F=0$.
\end{proposition}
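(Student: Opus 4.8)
The plan is to treat $J$, $\F$ and the horizontal and vertical projectors $h,v$ uniformly as $(1,1)$-tensor fields, i.e. as endomorphisms of $\chi(TM)$, and to use two structural facts. The first is that for any such endomorphisms $A,B$ the covariant derivative obeys the Leibniz rule $D_X(A\circ B)=(D_X A)\circ B+A\circ(D_X B)$. The second is that, $D$ being a d-connection, the horizontal and vertical projectors are parallel: $Dh=Dv=0$, as already observed after Definition~\ref{d4.1}.

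First I would record, by a one-line check in the adapted basis, the two algebraic identities that tie the almost complex structure to the almost tangent structure,
$$\F\circ J=h,\qquad \F\circ h=-J .$$
These follow at once from $J(\delta/\delta x^i)=\partial/\partial y^i$, $J(\partial/\partial y^i)=0$ and $\F(\delta/\delta x^i)=-\partial/\partial y^i$, $\F(\partial/\partial y^i)=\delta/\delta x^i$. I would also note the images $\mathrm{Im}\,J=VTM$ and $\mathrm{Im}\,h=NTM$, together with the Whitney sum $TTM=NTM\oplus VTM$.

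The implication $2^{\mathrm o}\Rightarrow 1^{\mathrm o}$ is then immediate: rewriting the second identity as $J=-\F\circ h$ and applying $D_X$ gives $D_X J=-(D_X\F)\circ h-\F\circ(D_X h)$, which vanishes once $D\F=0$ and $Dh=0$. For $1^{\mathrm o}\Rightarrow 2^{\mathrm o}$ I would differentiate both identities. Since their right-hand sides $h$ and $-J$ are parallel and $D_X J=0$, the Leibniz rule yields
$$(D_X\F)\circ J=0,\qquad (D_X\F)\circ h=0 .$$
The first says that $D_X\F$ annihilates $\mathrm{Im}\,J=VTM$, the second that it annihilates $\mathrm{Im}\,h=NTM$; as these span $TTM$, we conclude $D_X\F=0$ for every $X$, that is $D\F=0$.

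The step I expect to require the most care is this forward implication, and specifically the realization that \emph{two} identities are needed there. The point is that $\F$ cannot be expressed through $J$ and the projectors alone: $J$ records only the horizontal-to-vertical block, whereas $\F$ also encodes the inverse vertical-to-horizontal map. Consequently a single mixed relation such as $\F\circ J=h$ pins down $D_X\F$ only on $VTM$; the companion relation $\F\circ h=-J$ is exactly what is needed to control its action on $NTM$, after which the decomposition $TTM=NTM\oplus VTM$ closes the argument. The routine part, which I have not written out, is the basis verification of the two identities.
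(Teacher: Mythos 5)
Your proposal is correct. Note first that the paper offers no written proof at all: Proposition \ref{p4.1} is prefaced only by ``We can easily prove'', and the computation the author evidently has in mind is the coordinate one implicit in (\ref{1.4.1}). Namely, a d-connection preserves the distributions $N$ and $V$, so in the adapted basis it has the general form $D_{\frac{\delta}{\delta x^j}}\frac{\delta}{\delta x^i}=L^h_{ij}\frac{\delta}{\delta x^h}$, $D_{\frac{\delta}{\delta x^j}}\frac{\partial}{\partial y^i}=\bar L^h_{ij}\frac{\partial}{\partial y^h}$, $D_{\frac{\partial}{\partial y^j}}\frac{\delta}{\delta x^i}=C^h_{ij}\frac{\delta}{\delta x^h}$, $D_{\frac{\partial}{\partial y^j}}\frac{\partial}{\partial y^i}=\bar C^h_{ij}\frac{\partial}{\partial y^h}$, with a priori distinct pairs of coefficients; applying $D_X$ to $J\left(\frac{\delta}{\delta x^i}\right)=\frac{\partial}{\partial y^i}$ shows $DJ=0$ is equivalent to $L=\bar L$, $C=\bar C$, and applying it to $\F\left(\frac{\delta}{\delta x^i}\right)=-\frac{\partial}{\partial y^i}$, $\F\left(\frac{\partial}{\partial y^i}\right)=\frac{\delta}{\delta x^i}$ shows $D\F=0$ is equivalent to exactly the same equalities --- whence (\ref{1.4.1}) for an $N$-linear connection. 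Your route is genuinely different and coordinate-free: the two algebraic identities $\F\circ J=h$ and $\F\circ h=-J$ are correct (a direct check against (1.1.4') and (1.3.9'), using $\delta y^i\left(\frac{\partial}{\partial y^j}\right)=\delta^i_j$), the Leibniz rule for composed $(1,1)$-tensors is valid for any linear connection, and $Dh=Dv=0$ is exactly what the paper records after Definition \ref{d4.1}. Your observation that \emph{both} identities are needed in the direction $DJ=0\Rightarrow D\F=0$ is the right structural point: $(D_X\F)\circ J=0$ kills $D_X\F$ only on $\mathrm{Im}\,J=VTM$, and $(D_X\F)\circ h=0$ handles $NTM$, after which (\ref{1.3.2}) closes the argument; this reflects the fact that $\F$ also contains the inverse isomorphism $V\to N$, which $J$ alone does not determine. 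What each approach buys: the coefficient computation is shorter and exhibits concretely why an $N$-linear connection carries a single pair $(L^h_{ij},C^h_{ij})$, while your invariant argument isolates the exact hypotheses used ($Dh=0$ plus the two identities) and transfers verbatim to analogous structures, e.g.\ the almost product structure $\P=h-v$ or the cotangent-bundle setting of Chapter 4.
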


\begin{definition}
\label{d4.2}
A d-connection $D$ is called an {\it
$N$-linear connection} if the structure $J$ (or $\F$) is
absolute parallel with respect to $D$, i.e. $DJ=0$.
\end{definition}

In an adapted basis an $N$-linear connection has the form:
\begin{equation}
\label{1.4.1}
\left\{\begin{array}{ll}D_{\frac{\delta}{\delta
x^{j}}}\dfrac{\delta}{\delta
x^{i}}=L_{ij}^{h}\dfrac{\delta}{\delta x^{h}}; &
D_{\frac{\delta}{\delta
x^{j}}}\dfrac{\partial}{\partial y^{i}}= L_{ij}^{h}\dfrac{\partial}{\partial y^{h}}, \vspace{3mm} \\
D_{\frac{\partial}{\partial y^{j}}}\dfrac{\delta}{\delta
x^{i}}=C_{ij}^{h}\dfrac{\delta}{\delta x^{h}}; &
D_{\frac{\partial}{
\partial y^{j}}}\dfrac{\partial}{\partial y^{i}}=C_{ij}^{h}\dfrac{\partial}{\partial y^{h}}.
\end{array}\right.
\end{equation}

The set of functions $D\Gamma =(N_{j}^{i}(x,y), L_{ij}^{h}(x,y),
C_{ij}^{h}(x,y))$ are called the local coefficients of the
$N-$linear connection $D$. Since $N$ is fixed we denote sometimes
by $D\Gamma(N)=(L^h_{ij}(x,y),C^h_{ij}(x,y))$ the coefficients of
$D$.

For instance the $B\Gamma(N)=\left(\dfrac{\partial
N^h_{i}}{\partial y^j},0\right)$ are the coefficient of a special
$N-$linear connection, derived only from the nonlinear connection
$N$. It is called the {\bf Berwald connection} of the nonlinear
connection $N$. We can prove this showing that the
system of functions $\left(\dfrac{\partial N^h_{i}}{\partial
y^j}\right)$ has the same rule of transformation, with respect to
(\ref{1.1.1}), as the coefficients $L^h_{ij}$. Indeed, under a change of
coordinates (\ref{1.1.1}) on $TM$ the coefficients $(L^h_{ij},C^h_{ij})$
are transformed by the rules:
\begin{equation}
\label{1.4.2}
\left\{
\begin{array}{l}
\widetilde{L}_{ij}^{h}=\dfrac{\partial \widetilde{x}^{h}}{\partial
x^{s}} L_{pq}^{s}\dfrac{\partial x^{p}}{\partial
\widetilde{x}^{i}}\dfrac{\partial x^{q} }{\partial
\widetilde{x}^{j}}-\dfrac{\partial^{2}\widetilde{x}^{h}}{\partial x^{p}{\
\partial x^{q}}}\dfrac{\partial x^{p}}{\partial
\widetilde{x}^{i}}\dfrac{
\partial x^{q}}{\partial \widetilde{x}^{j}},\vspace{3mm} \\
\widetilde{C}_{ij}^{h}=\dfrac{\partial \widetilde{x}^{h}}{\partial
x^{s}} C_{pq}^{s}\dfrac{\partial x^{p}}{\partial
\widetilde{x}^{i}}\dfrac{\partial x^{q} }{\partial \widetilde{x}^{j}}.
\end{array}
\right.
\end{equation}
So, the horizontal coefficients $L_{ij}^{h}$ of $D$ have the same
rule of transformation of the local coefficients of a linear connection on the base manifold $M$. The vertical coefficients $C_{ij}^{h}$ are the components of a (1,2)-type d-tensor field.

But, conversely, if the set of functions
$(L^i_{jk}(x,y),C^i_{jk}(x,y))$ are given, having the property
(\ref{1.4.2}), then the equalities (\ref{1.4.1}) determine an $N-$linear
connection $D$ on $TM$.

For an $N-$linear connection $D$ on $TM$ we shall associate two
operators of $h-$ and $v-$ covariant derivation on the algebra of
$d-$tensor fields. For each $X\in\chi(TM)$ we set:
\begin{equation}
\label{1.4.3}
D_{X}^HY=D_{X^H}Y,\quad D^H_X f=X^H f,\quad \forall
Y\in\chi(TM),\forall f\in{\cal F}(TM).
\end{equation}

If $\omega \in \chi^{*}(TM)$, we obtain
\begin{equation}
(D_{X}^{H}\omega )(Y)=X^H(\omega(Y))-\omega (D_{X}^{H}Y).
\tag{1.4.3'}
\end{equation}
So we may extend the action of the operator $D_{X}^{H}$ to any
d-tensor field by asking that $D_{X}^{H}$ preserves the type of d-tensor fields, is $\R$-linear, satisfies the Leibniz rule with respect to
tensor product and commutes with the operation of contraction.
$D_{X}^{H}$ will be called the $h${\it-covariant
derivation} operator.

In a similar way, we set:
\begin{equation}
\label{1.4.4}
D_{X}^{V}Y=D_{X^V}Y,\ D_{X}^{V}f=X^V (f),\ \forall Y\in \chi
(TM),\ \forall f\in {\cal F}(TM).
\end{equation}
and $$D_{X}^{V}\omega=X^V(\omega (Y))-\omega(D_{X}^{V}Y),\ \forall
\omega\in \chi (TM).$$

Also, we extend the action of the operator $D_{X}^{V}$ to any
d-tensor field in a similar way as we did for $D_{X}^{H}$.
$D_{X}^{V}$ is called the $v${\it-covariant of
derivation operator}.

Consider now a d-tensor $T$ given by (\ref{1.3.12}). According to (\ref{1.4.1})
its $h-$covariant derivation is given by
\begin{equation}
\label{1.4.5}
D_{X}^{H}T=X^{k}T_{j_{1}\cdots j_{s}|k}^{i_{1}\cdots
i_{r}}\dfrac{\delta}{\delta x^{i_{1}}}\otimes \cdots \otimes
\dfrac{\partial}{\partial y^{i_{r}}}\otimes
dx^{j_{1}}\otimes\cdots\otimes dx^{j_{s}}
\end{equation}
where
\begin{equation}
\begin{array}{ll}
T_{j_{1}\cdots j_{s}|k}^{i_{1}\cdots i_{r}}= & \dfrac{\delta
T_{j_{1}\cdots j_{s}}^{i_{1}\cdots i_{r}}}{\delta
x^{k}}+L_{pk}^{i_{1}}T_{j_{1}\cdots j_{s}}^{pi_{2}\cdots
i_{r}}+\cdots +L_{pk}^{i_{r}}T_{j_{1}\cdots
j_{s}}^{i_{1}\cdots i_{r-1}p}-\vspace{3mm} \\
& -L_{j_{1}k}^{p}T_{pj_{2}\cdots j_{s}}^{i_{1}\cdots i_{r}}-\cdots
-L_{j_{s}k}^{p}T_{j_{1}\cdots j_{s-1}p}^{i_{1}\cdots i_{r}}.
\end{array}
\tag{1.4.5'}
\end{equation}
The $v$-covariant derivative $D_X^V T$ is as follows:
\begin{equation}
\label{1.4.6}
D_{X}^{V}T=X^{k}T_{j_{1}\cdots j_{s}}^{i_{1}\cdots
i_{r}}|_{k}\dfrac{\delta}{\delta x^{i_{1}}}\otimes \cdots \otimes
\dfrac{\partial}{\partial y^{i_{r}}}\otimes
dx^{j_{1}}\otimes\cdots\otimes dx^{j_{s}},
\end{equation}
where
\begin{equation}
\begin{array}{ll}
T_{j_{1}\cdots j_{s}}^{i_{1}\cdots i_{r}}|_{k}= & \dfrac{\partial
T_{j_{1}\cdots j_{s}}^{i_{1}\cdots i_{r}}}{\partial y^{k}}
+C_{pk}^{i_{1}}T_{j_{1}\cdots j_{s}}^{pi_{2}\cdots i_{r}}+\cdots
+C_{pk}^{i_{r}}T_{j_{1}\cdots j_{s}}^{i_{1}\cdots i_{r-1}p}-\vspace{3mm} \\
& -C_{j_{1}k}^{p}T_{pj_{2}\cdots j_{s}}^{i_{1}\cdots i_{r}}-\cdots
-C^{p}_{j_r k}T_{j_{1}\cdots j_{s-1}p}^{i_{1}\cdots i_{r}}.
\end{array}
\tag{1.4.6'}
\end{equation}

For instance, if $g$ is a d-tensor of type (0,2) having the
components $g_{ij}(x,y)$, we have
\begin{equation}
\label{1.4.7}
\begin{array}{ll}
g_{ij|k}=\dfrac{\delta g_{ij}}{\delta x^k}-L^p_{ik}g_{pj}-L^p_{jk}g_{ip},\vspace{3mm}\\
g_{ij}|k=\dfrac{\partial g_{ij}}{\partial
y^k}-C^p_{ik}g_{pj}-C^p_{jk}g_{ip}.
\end{array}
\end{equation}
For the operators ``$_{|}$'' and ``$|$'' we preserve the same
denomination of $h-$ and $v-$covariant derivations.

The torsion $T$ of an $N$-linear is given by:
\begin{equation}
\label{1.4.8}
T(X,Y)=D_{X}Y-D_{Y}X-[X,Y],\ \forall X,Y\in \chi(TM).
\end{equation}

The horizontal part $hT(X,Y)$ and the vertical one $vT(X,Y)$, for
$X\in\{X^H,X^V\}$ and $Y\in\{Y^H,Y^V\}$ determine five d-tensor
fields of torsion $T$:
\begin{equation}
\label{1.4.9}
\left\{
\begin{array}{lr}
hT(X^H,Y^H)=D_{X}^{H}Y^H-D_{Y}^{H}X^H-[X^H,Y^H]^H,\vspace{2mm}\\
vT(X^H,Y^H)=-v[X^H,Y^H]^V,\vspace{2mm}\\
hT(X^H,Y^V)=-D_{Y}^{V}X^H-[X^H,Y^V]^V,\vspace{2mm}\\
vT(X^H,Y^V)=D_{X}^{H}Y^V-[X^H,Y^V]^V,\vspace{2mm}\\
vT(X^V,Y^V)=D_{X}^{V}Y^V-D_{Y}^{V}X^V-[X^V,Y^V]^V.
\end{array}
\right.
\end{equation}

With respect to the adapted basis, the components of torsion are
as follows:
\begin{equation}
\begin{array}{llll}
hT\left(\dfrac{\delta}{\delta x^{i}},\dfrac{\delta}{\delta
x^{j}}\right)=T_{ji}^{k} \dfrac{\delta}{\delta x^{k}};
vT\left(\dfrac{\delta}{\delta x^{i}},\dfrac{\delta}{\delta
x^{j}}\right)=R_{ji}^{k}\dfrac{\partial}{\partial y^{k}};
\vspace{3mm} \\
hT\left(\dfrac{\partial}{\partial y^{i}},\dfrac{\delta}{\delta
x^{j}}\right)=C_{ji}^{k}\dfrac{\delta}{\delta x^{k}};\vspace{3mm} \\
vT\left(\dfrac{\partial}{\partial y^{i}},\dfrac{\delta}{\delta x^{j}}\right)=P_{ji}^{k}\dfrac{\partial}{\partial y^{k}};\vspace{3mm} \\
vT\left(\dfrac{\partial}{\partial y^{i}},\dfrac{\partial}{\partial
y^{j}}\right)=S_{ji}^{k}\dfrac{\partial}{\partial y^{k}},
\end{array}
\tag{1.4.9'}
\end{equation}
where $C_{jk}^i$ are the $v-$coefficients of $D$, $R^i_{jk}$ is
the curvature tensor of the nonlinear connection $N$ and
\begin{equation}
\label{1.4.10}
T^i_{\ jk}=L^i_{jk}-L^i_{kj}, S^i_{\ jk}=C^i_{jk}-C^i_{kj}, P^i_{\
jk}=\dfrac{\partial N^i_j}{\partial y^k}-L^i_{kj}.
\end{equation}

The $N-$linear connection $D$ is said to be symmetric if $T^i_{\
jk}=S^i_{\ jk}=0$.

Next, we study the curvature of an $N$-linear connection $D$:
\begin{equation}
\label{1.4.11}
R(X,Y)Z\hspace*{-0.5mm}=\hspace*{-0.5mm}D_{X}D_{Y}Z-D_{Y}D_{X}Z\hspace*{-0.5mm}-\hspace*{-0.5mm}D_{[X,Y]}Z,\ \forall X,Y,Z\in \chi
(TM).
\end{equation}
As $D$ preserves by parallelism the distribution $N$ and $V$ it
follows that the operator $R(X,Y)=D_X D_Y-D_Y D_X-D_{[X,Y]}$
carries the horizontal vector fields $Z^H$ into horizontal vector
fields and vertical vector fields into verticals. Consequently we
have the formula:
$$R(X,Y)Z=hR(X,Y)Z^H+vR(X,Y)Z^V,\ \forall X,Y,Z\in \chi (TM).$$

Since $R(X,Y)=-R(Y,X)$, we obtain:

\begin{theorem}
\label{t4.1}
The curvature $R$ of a $N$-linear connection $D$ on the tangent manifold $TM$ is completely
determined by the following six d-tensor fields:
\begin{equation}
\label{1.4.12}
\left\{
\begin{array}{l}
R(X^H,Y^H)Z^H=D_{X}^{H}D_{Y}^{H}Z^H-D_{Y}^{H}D_{X}^{H}Z^H-D_{[X^H,Y^H]}Z^H,\vspace{
3mm} \\
R(X^H,Y^H)Z^V=D_{X}^{H}D_{Y}^{H}Z^V-D_{Y}^{H}D_{X}^{H}Z^V-D_{[X^H,Y^H]}Z^V,\vspace{
3mm} \\
R(X^V,Y^H)Z^H=D_{X}^{V}D_{Y}^{H}Z^H-D_{Y}^{H}D_{X}^{V}Z^H-D_{[X^V,Y^H]}Z^H,\vspace{
3mm} \\
R(X^V,Y^H)Z^V=D_{X}^{V}D_{Y}^{H}Z^V-D_{Y}^{H}D_{X}^{V}Z^V-D_{[X^V,Y^H]}Z^V,\vspace{
3mm} \\
R(X^V,Y^V)Z^H=D_{X}^{V}D_{Y}^{V}Z^H-D_{Y}^{V}D_{X}^{V}Z^H-D_{[X^V,Y^V]}Z^H,\vspace{
3mm} \\
R(X^V,Y^V)Z^V=D_{X}^{V}D_{Y}^{V}Z^V-D_{Y}^{V}D_{X}^{V}Z^V-D_{[X^V,Y^V]}Z^V.
\end{array}
\right.
\end{equation}
As the tangent structure $J$ is absolute parallel with respect to
$D$ we have that $$JR(X,Y)Z=R(X,Y)JZ.$$
\end{theorem}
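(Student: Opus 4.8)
The plan is to combine three facts already in place just before the statement: the tensorial (i.e.\ ${\cal F}(TM)$-multilinear) character of $R$, the antisymmetry $R(X,Y)=-R(Y,X)$, and the fact that $D$ preserves both distributions, so that $R(X,Y)$ carries horizontal vector fields into horizontal ones and vertical vector fields into vertical ones.

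First I would exploit tensoriality: since $R(X,Y)Z$ is ${\cal F}(TM)$-linear in each of $X$, $Y$, $Z$, it suffices to know its value after each argument is replaced by its horizontal or vertical component. Writing $X=X^H+X^V$, $Y=Y^H+Y^V$, $Z=Z^H+Z^V$ and expanding multilinearly produces, a priori, $2^2=4$ configurations of the first two slots for each of the two choices of $Z$. The antisymmetry $R(X,Y)=-R(Y,X)$ then identifies the mixed pair $R(X^H,Y^V)$ with $-R(Y^V,X^H)$, so only three independent configurations of the first two arguments survive, namely $(H,H)$, $(V,H)$ and $(V,V)$.

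Second I would invoke the distribution-preserving property recalled above: because $Dh=0$ and $Dv=0$, the operator $R(X,Y)$ sends horizontal fields to horizontal fields and vertical fields to vertical fields, so $R(X,Y)Z^H$ is purely horizontal and $R(X,Y)Z^V$ is purely vertical. Hence for each of the three admissible first-argument types the action on $Z$ splits into exactly the two independent pieces $R(\cdot,\cdot)Z^H$ and $R(\cdot,\cdot)Z^V$, giving $3\times 2=6$ d-tensor fields. Rewriting each piece by means of $D_X^HY=D_{X^H}Y$ and $D_X^VY=D_{X^V}Y$ from (\ref{1.4.3})--(\ref{1.4.4}) reproduces precisely the list (\ref{1.4.12}), and one checks tensoriality of each right-hand side so that they are genuine d-tensors.

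Finally, the identity $JR(X,Y)Z=R(X,Y)JZ$ follows from the defining relation $DJ=0$ (Definition \ref{d4.2}). Since $J$ is absolutely parallel, $(D_XJ)=0$ yields $D_X(JW)=J(D_XW)$ for every $W$, i.e.\ $J$ commutes with each $D_X$; substituting $JZ$ into $R(X,Y)=D_XD_Y-D_YD_X-D_{[X,Y]}$ and pulling $J$ through each covariant derivative then gives $R(X,Y)JZ=JR(X,Y)Z$. The only point needing a little care is the bookkeeping in the multilinear expansion together with the correct use of antisymmetry to discard the redundant mixed configuration; the rest is immediate from the structural properties of $D$.
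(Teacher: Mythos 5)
Your proposal is correct and follows essentially the same route as the paper: the text preceding the theorem notes precisely that $D$ preserves the distributions $N$ and $V$ (so $R(X,Y)$ maps horizontals to horizontals and verticals to verticals), and then invokes the antisymmetry $R(X,Y)=-R(Y,X)$ to cut the configurations of the first two arguments down to $(H,H)$, $(V,H)$, $(V,V)$, yielding the six components of (\ref{1.4.12}), with $JR(X,Y)Z=R(X,Y)JZ$ coming from $DJ=0$ exactly as you argue. Your write-up merely makes explicit the tensoriality bookkeeping that the paper leaves implicit.
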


Then $R(X,Y)Z$ has only three components
$$R(X^H,Y^H)Z^H,R(X^V,Y^H)Z^H, R(X^V,Y^V)Z^H.$$ In the adapted
basis these are:
\begin{equation}
\label{1.4.13}
\begin{array}{l}
R\left(\dfrac{\delta}{\delta x^{k}},\dfrac{\delta}{\delta
x^{j}}\right)\dfrac{\delta}{\delta x^{h}}=R_{h\ kj}^{\
i}\dfrac{\delta}{\delta x^{i}};\vspace{3mm}
\\
R\left(\dfrac{\partial}{\partial y^{k}},\dfrac{\delta}{\delta
x^{j}}\right)\dfrac{\delta}{\delta x^{h}}=P_{h\ kj}^{\
i}\dfrac{\delta}{\delta x^{i}};\vspace{
3mm} \\
R\left(\dfrac{\partial}{\partial y^{k}},\dfrac{\partial}{\partial
y^{j}}\right)\dfrac{\delta}{\delta x^{h}}=S_{h\ kj}^{\
i}\dfrac{\delta}{\delta x^{i}}.
\end{array}
\end{equation}

The other three components are obtained by applying
the operator $J$ to the previous ones. So, we have $R\left(\dfrac{\delta}{\delta
x^h},\dfrac{\delta}{\delta x^j}\right)\dfrac{\partial}{\partial
y^h}=R_{h \ jk}^{\ i}\dfrac{\partial}{\partial y^h}$ etc.
Therefore, the curvature of an $N-$linear connection
$D\Gamma=(N^i_j,L^i_{jk},C^i_{jk})$ has only three local
components $R^{\ i}_{h \ jk}$, $P^{\ i}_{h \ jk}$ and $S^{\ i}_{h
\ jk}$. They are given by
\begin{equation}
\label{1.4.14}
\begin{array}{l}
R_{h\ jk}^{\ i}=\dfrac{\delta L_{hj}^{i}}{\delta
x^{k}}-\dfrac{\delta L_{hk}^{i}}{\delta x^{j}}
+L_{hj}^{s}L_{sk}^{i}-L_{hk}^{s}L_{sj}^{i}+C_{hs}^{i}R_{jk}^{s};\vspace{3mm}
\\
P_{h\ jk}^{\ i}=\dfrac{\partial L_{hj}^{i}}{\partial y^{k}}
-C_{hk|j}^{i}+C_{hs}^{i}P_{jk}^{s};\vspace{3mm} \\
S_{h\ jk}^{\ i}=\dfrac{\partial C_{hj}^{i}}{\partial
y^{k}}-\dfrac{\partial C_{hk}^{i}}{\partial
y^{j}}+C_{hj}^{s}C_{sk}^{i}-C_{hk}^{s}C_{sj}^{i}.
\end{array}
\end{equation}

If $X^{i}(x,y)$ are the components of a $d$-vector field on $TM$
then from (\ref{1.4.12}) we may derive the Ricci identities for $X^{i}$
with respect to an $N$-linear connection $D$. They are:
\begin{equation}
\label{1.4.15}
\begin{array}{l}
X_{|j|k}^{i}-X_{|k|j}^{i}=X^{s}R_{s\ jk}^{\
i}-X_{|s}^{i}T_{jk}^{s}-X^{i}|_{s}R_{\ jk}^{s},\vspace{3mm} \\
X_{|j}^{i}|_{k}-X^{i}|_{k|j}=X^{s}P_{s\ jk}^{\
i}-X_{|s}^{i}C_{jk}^{s}-X^{i}|_{s}P_{\ jk}^{s},\vspace{3mm} \\
X^{i}|_{j}|_{k}-X^{i}|_{k}|_{j}=X^{s}S_{s\ jk}^{\
i}-X^{i}|_{s}S_{\ jk}^{s}.
\end{array}
\end{equation}

We deduce some fundamental identities for the $N-$linear
connection $D$, applying the Ricci identities to the Liouville
vector field $\C=y^{i}\dfrac{\partial}{\partial y^{i}}$.
Considering the d-tensors
\begin{equation}
\label{1.4.16}
D^i_j=y^i_{\ |j}, \quad d^i_j=y^i |_j
\end{equation}
called $h-$ and $v-${\it deflection} tensors of $D$ we obtain:

\begin{theorem}
\label{t4.2}
For any $N$-linear connection $D$ the following identities hold:
\begin{equation}
\label{1.4.17}
\begin{array}{l}
D^{i}{}_{k|h}-D^{i}{}_{h|k}=y^{s}R_{s\ kh}^{\
i}-D^{i}{}_{s}T^{s}{}_{kh}-d^{i}{}_{s}R^{s}{}_{kh},\vspace{3mm} \\
D^{i}{}_{k}|_{h}-d^{i}{}_{h|k}=y^{s}P_{s\ kh}^{\
i}-D^{i}{}_{s}C^{s}{}_{kh}-d^{i}{}_{s}P^{s}{}_{kh},\vspace{3mm} \\
d^{i}{}_{k}|_{h}-d^{i}{}_{h}|_{k}=y^{s}S_{s\ kh}^{\
i}-d^{i}{}_{s}S^{s}{}_{kh}.
\end{array}
\end{equation}
\end{theorem}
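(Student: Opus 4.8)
The plan is to specialize the Ricci identities (\ref{1.4.15}), which already hold for the components $X^i$ of an arbitrary $d$-vector field on $TM$, to the Liouville vector field $\C=y^i\dfrac{\partial}{\partial y^i}$. First I would verify that the functions $y^i$ genuinely constitute a $d$-vector field: under the coordinate change (\ref{1.1.1}) they obey $\widetilde{y}^i=\dfrac{\partial \widetilde{x}^i}{\partial x^j}y^j$, which is exactly the transformation law of a $d$-vector, so the substitution $X^i=y^i$ into (\ref{1.4.15}) is legitimate.

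Next, by the definition (\ref{1.4.16}) of the deflection tensors I would record the identifications $y^i_{\ |j}=D^i{}_j$ and $y^i|_j=d^i{}_j$. Hence the first covariant derivatives of $X^i=y^i$ occurring on both sides of (\ref{1.4.15}) are precisely the two deflection tensors, while the iterated derivatives $X^i_{|j|k}$, $X^i_{|j}|_k$, $X^i|_{k|j}$ and $X^i|_j|_k$ become $D^i{}_{j|k}$, $D^i{}_j|_k$, $d^i{}_{k|j}$ and $d^i{}_j|_k$ respectively. Substituting these into the three lines of (\ref{1.4.15}), replacing each occurrence of $X^s$ by $y^s$, and relabelling the two differentiation indices $(j,k)\to(k,h)$ then reproduces the three identities (\ref{1.4.17}) term by term, with the curvatures $R_{s\ kh}^{\ i}$, $P_{s\ kh}^{\ i}$, $S_{s\ kh}^{\ i}$ and the torsions $T^s{}_{kh}$, $C^s{}_{kh}$, $S^s{}_{kh}$ carried over unchanged.

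Since the argument is a direct substitution, there is no genuine analytic difficulty. The only step that demands care is the bookkeeping in the middle identity, where one must keep the two mixed second-order operators apart---matching $X^i_{|j}|_k$ with the $v$-derivative $D^i{}_j|_k$ of the $h$-deflection tensor and $X^i|_{k|j}$ with the $h$-derivative $d^i{}_{k|j}$ of the $v$-deflection tensor---and then carry the index relabelling through all three lines consistently so that the curvature, torsion and deflection terms land in their stated positions.
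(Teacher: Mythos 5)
Your proposal is correct and is exactly the paper's own argument: the text obtains (\ref{1.4.17}) by applying the Ricci identities (\ref{1.4.15}) to the Liouville vector field $\C$, i.e.\ substituting $X^i=y^i$ and invoking the deflection tensors $D^i{}_j=y^i{}_{|j}$, $d^i{}_j=y^i|_j$ from (\ref{1.4.16}). Your added checks (that $y^i$ transforms as a $d$-vector, and the matching of the mixed second-order derivatives in the middle identity) are sound and merely make explicit what the paper leaves implicit.
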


Others fundamental identities are the Bianchi identities,
which are obtained by writing in the adapted basis the following Bianchi
identities:
\begin{equation}
\label{1.4.18}
\begin{array}{l}
\Sigma \lbrack D_{X}T(Y,Z)-R(X,Y)Z+T(T(X,Y),Z)]=0,\vspace{3mm} \\
\Sigma \lbrack (D_{X}R)(U,Y,Z)+R(T(X,Y),Z)U]=0,
\end{array}
\end{equation}
where $\Sigma$ means cyclic summation over $X,Y,Z$.

\section{Parallelism. Structure equations}\index{Parallelism! on manifold $TM$}\index{Structure equations}
\label{s5c1p1}
\setcounter{equation}{0}
\setcounter{definition}{0}
\setcounter{theorem}{0}
\setcounter{proposition}{0}

Let $D\Gamma(N)=(L_{jk}^{i},C_{jk}^{i})$ be an $N$-linear
connection and the adapted basis $\left(\dfrac{\delta }{\delta
x^{i}},\dfrac{\partial }{\partial y^{i}}\right)$,
$i=\overline{1,n}$.

As we know, a curve $c:t\in I\rightarrow (x^{i}(t),y^{i}(t))\in
TM,$ has the tangent vector $\dot{c}=\dot{c}^{H}+\dot{c}^{V}$
given by (3.15), i.e. $\dot{c}=\dfrac{dx^{i}}{dt}\dfrac{\delta
}{\delta x^{i}}+\dfrac{\delta y^{i}}{dt}\dfrac{\partial }{\partial
y^{i}}$. The curve $c$ is horizontal if $\dfrac{\delta
y^{i}}{dt}=0$, and it is an
autoparallel curve with respect to the nonlinear connection $N$ if $\dfrac{\delta y^{i}}{%
dt }=0,$ $y^{i}=\dfrac{dx^{i}}{dt}.$

We set
\begin{equation}
\label{1.5.1}
\dfrac{DX}{dt}=D_{\dot{c}}X,\ DX=\dfrac{DX}{dt}\,dt,\ \forall X\in
\chi (TM).
\end{equation}
Here $\dfrac{DX}{dt}$ is the covariant differential along the
curve $c$ with respect to the the $N$-linear connection $D$.

Setting $X=X^{H}+X^{V}$, $X^{H}=X^{i}\dfrac{\delta}{\delta
x^{i}}$, $X^{V}= \dot{X}^{i}\dfrac{\partial }{\partial y^{i}}$ we
have
\begin{equation}
\label{1.5.2}
\begin{array}{l}
\dfrac{DX}{dt}=\dfrac{DX^{H}}{dt}+\dfrac{DX^{V}}{dt}=\vspace{3mm} \\
=\left\{X^{i}{}_{|k}\dfrac{dx^{k}}{dt}+X^{i}|_{k}\dfrac{\delta
y^{k}}{dt}\right\}\dfrac{\delta }{\delta x^{i}}+\left\{
\dot{X}^{i}{}_{|k}\dfrac{
dx^{k}}{dt}+\dot{X}^{i}|_{k}\dfrac{\delta y^{k}}{dt}\right\}.
\end{array}
\end{equation}

Consider the {\it connection 1-forms} of $D$:
\begin{equation}
\label{1.5.3}
\omega^{i}{}_{j}=L_{jk}^{i}dx^{k}+C_{jk}^{i}\delta y^{k}.
\end{equation}

Then $\dfrac{DX}{dt}$ takes the form
\begin{equation}
\label{1.5.4}
\dfrac{DX}{dt}=\left\{ \dfrac{dX^{i}}{dt}+X^{s}\dfrac{\omega
^{i}{}_{s}}{dt} \right\} \dfrac{\delta}{\delta x^{i}}+\left\{
\dfrac{d\dot{X}^{i}}{dt}+\dot{X}^{s}\dfrac{\omega
^{i}{}_{s}}{dt}\right\}\dfrac{\partial}{\partial y^{i}}.
\end{equation}

The vector field $X$ on $TM$ is said to be parallel along the
curve $c$, with respect to the $N$-linear connection $D(N)$ if
$\dfrac{DX}{dt}=0.$ Using (\ref{1.5.2}), the equation $\dfrac{DX}{dt}=0$
is equivalent to $\dfrac{DX^{H}}{dt}=0, \dfrac{DX^{V}}{dt}=0$.
According to (\ref{1.5.4}) we obtain:

\begin{proposition}
\label{p5.1}
The vector field
$X=X^{i}\dfrac{\delta}{\delta x^{i}}+\dot{X}^{i}\dfrac{\partial}{\partial y^{i}}$ from $\chi (TM)$ is parallel
along the parametrized curve $c$ in $TM$,
with respect to the $N$-linear connection $D\Gamma(N)=(L_{jk}^{i},C_{jk}^{i})$ if and only if its coefficients $X^{i}(x(t),y(t))$ and $\dot{X }^{i}(x(t),y(t))$ are solutions of the linear system of differential equations
$$\dfrac{dZ^{i}}{dt}+Z^{s}(x(t),y(t))\dfrac{\omega
^{i}{}_{s}(x(t),y(t))}{dt}=0.$$
\end{proposition}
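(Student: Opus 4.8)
The plan is to read the statement directly off the expression (1.5.4) for the covariant differential $\dfrac{DX}{dt}$ along $c$. By the definition of parallelism recalled just above, the field $X$ is parallel along $c$ if and only if $\dfrac{DX}{dt}=0$. Writing $X=X^{H}+X^{V}$ and using that an $N$-linear connection preserves both the horizontal distribution $N$ and the vertical distribution $V$, the two terms $\dfrac{DX^{H}}{dt}$ and $\dfrac{DX^{V}}{dt}$ lie in $N$ and $V$ respectively. Hence, by the Whitney decomposition $T_{u}TM=N(u)\oplus V(u)$ in (1.3.2), the single condition $\dfrac{DX}{dt}=0$ is equivalent to the pair $\dfrac{DX^{H}}{dt}=0$ and $\dfrac{DX^{V}}{dt}=0$.

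First I would invoke (1.5.4), which already displays $\dfrac{DX}{dt}$ in the adapted basis $\left(\dfrac{\delta}{\delta x^{i}},\dfrac{\partial}{\partial y^{i}}\right)$ as
$$\dfrac{DX}{dt}=\left\{\dfrac{dX^{i}}{dt}+X^{s}\dfrac{\omega^{i}{}_{s}}{dt}\right\}\dfrac{\delta}{\delta x^{i}}+\left\{\dfrac{d\dot{X}^{i}}{dt}+\dot{X}^{s}\dfrac{\omega^{i}{}_{s}}{dt}\right\}\dfrac{\partial}{\partial y^{i}}.$$
Since the $2n$ vector fields $\dfrac{\delta}{\delta x^{i}}$ and $\dfrac{\partial}{\partial y^{i}}$ are linearly independent at every point of $c$ (they form the frame adapted to the direct sum above), the vanishing of this vector is equivalent to the simultaneous vanishing of the two bracketed families of coefficients, namely
$$\dfrac{dX^{i}}{dt}+X^{s}\dfrac{\omega^{i}{}_{s}}{dt}=0,\qquad \dfrac{d\dot{X}^{i}}{dt}+\dot{X}^{s}\dfrac{\omega^{i}{}_{s}}{dt}=0,$$
where, upon restriction to $c$, the connection $1$-forms (1.5.3) give $\dfrac{\omega^{i}{}_{s}}{dt}=L^{i}_{sk}\dfrac{dx^{k}}{dt}+C^{i}_{sk}\dfrac{\delta y^{k}}{dt}$.

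To conclude, I would observe that both equations are instances of the single linear system in the statement: putting $Z^{i}=X^{i}$ recovers the first and $Z^{i}=\dot{X}^{i}$ the second. This gives the direct implication, and the converse is immediate, since substituting a solution back into the displayed form of $\dfrac{DX}{dt}$ forces it to vanish. I do not anticipate a genuine obstacle here: the argument is a coefficient comparison, and the only two facts requiring care are already available, namely the linear independence of the adapted frame, which legitimises passing from the vector equation to its components, and the invariance of the distributions $N$ and $V$ under $D$, which is the defining property of an $N$-linear connection and is what permits the horizontal and vertical parts to be treated independently.
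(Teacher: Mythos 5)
Your proposal is correct and follows the paper's own route exactly: the paper likewise defines parallelism by $\dfrac{DX}{dt}=0$, splits it via (1.5.2) into $\dfrac{DX^{H}}{dt}=0$, $\dfrac{DX^{V}}{dt}=0$, and reads the two component systems off the adapted-basis expression (1.5.4), both being instances of the single linear system with $Z^{i}=X^{i}$ and $Z^{i}=\dot{X}^{i}$. Nothing is missing.
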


A theorem of existence and uniqueness for the parallel vector
fields along a curve $c$ on the manifold $TM$ can be formulated on
the classical way.

The {\it horizontal geodesic} of $D$ are the horizontal curves
$c:I\to TM$ with the property $D_{\dot c}\dot c=0$. Taking
$X^{i}=\dfrac{dx^{i}}{dt}$, $\dot{X}^{i}=\dfrac{\delta
y^{i}}{dt}=0$ we get:

\begin{theorem}
\label{t5.1}
The horizontal geodesics of an $N$-linear connection are characterized by the system of differential
equations:
\begin{equation}
\label{1.5.5}
\dfrac{d^{2}x^{i}}{dt^{2}}+L_{jk}^{i}(x,y)\dfrac{dx^{j}}{dt}\dfrac{dx^{k}}{dt}
=0,\ \dfrac{dy^{i}}{dt}+N^{i}{}_{j}(x,y)\dfrac{dx^{j}}{dt}=0.
\end{equation}
\end{theorem}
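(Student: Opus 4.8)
The plan is to read off the claim directly from the covariant-differential formula (\ref{1.5.4}), feeding in the two conditions that define a horizontal geodesic: \emph{horizontality} of the curve $c$, which by $(1.3.15')$ means $\dfrac{\delta y^i}{dt}=0$, and the \emph{geodesic} condition $D_{\dot c}\dot c=0$. No genuinely new computation is needed; everything has been prepared in the preceding formulas, so the work is a substitution followed by separating horizontal and vertical parts.

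First I would record that horizontality, $\dfrac{\delta y^i}{dt}=0$, is by the very definition $(1.3.15')$ of $\dfrac{\delta y^i}{dt}$ equivalent to
\[
\dfrac{dy^i}{dt}+N^i{}_j(x,y)\dfrac{dx^j}{dt}=0,
\]
which is precisely the second equation of (\ref{1.5.5}). It remains only to extract the first equation from $D_{\dot c}\dot c=0$. For a horizontal curve the tangent vector reduces to its horizontal part, $\dot c=\dfrac{dx^i}{dt}\dfrac{\delta}{\delta x^i}$, so in the notation of (\ref{1.5.4}) the components of $X=\dot c$ are $X^i=\dfrac{dx^i}{dt}$ and $\dot X^i=\dfrac{\delta y^i}{dt}=0$.

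Next I would substitute these into (\ref{1.5.4}). The connection $1$-forms (\ref{1.5.3}) evaluated along $c$ give $\dfrac{\omega^i{}_s}{dt}=L^i_{sk}\dfrac{dx^k}{dt}+C^i_{sk}\dfrac{\delta y^k}{dt}$, and the second term drops out because $\dfrac{\delta y^k}{dt}=0$; hence $\dfrac{\omega^i{}_s}{dt}=L^i_{sk}\dfrac{dx^k}{dt}$. The vertical bracket in (\ref{1.5.4}) vanishes identically since $\dot X^i\equiv 0$ forces both $\dfrac{d\dot X^i}{dt}=0$ and $\dot X^s\dfrac{\omega^i{}_s}{dt}=0$, consistent with the fact that a horizontal geodesic stays horizontal. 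The horizontal bracket becomes
\[
\dfrac{dX^i}{dt}+X^s\dfrac{\omega^i{}_s}{dt}=\dfrac{d^2x^i}{dt^2}+L^i_{sk}\dfrac{dx^s}{dt}\dfrac{dx^k}{dt},
\]
and setting $D_{\dot c}\dot c=0$ equates this to zero, which is the first equation of (\ref{1.5.5}) (relabelling $s\to j$).

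The argument has no real obstacle; if anything requires care it is only the observation that the $C^i_{jk}$-contribution to the connection form is killed by horizontality, so that the vertical coefficients play no role in the horizontal-geodesic equations, and that the contraction $L^i_{sk}\dfrac{dx^s}{dt}\dfrac{dx^k}{dt}$ is automatically symmetric in the lower indices, so only the symmetric part of $L$ enters.
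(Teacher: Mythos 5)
Your proposal is correct and follows essentially the same route as the paper, which likewise obtains the theorem by substituting $X^{i}=\dfrac{dx^{i}}{dt}$, $\dot{X}^{i}=\dfrac{\delta y^{i}}{dt}=0$ into the covariant-differential formula (\ref{1.5.4}), with horizontality killing the $C^{i}_{jk}$-term in the connection $1$-forms and the definition $(1.3.15')$ yielding the second equation. Your additional remarks (the vanishing of the vertical block being consistent with horizontality, and the reversibility giving the ``characterized by'' equivalence) only make explicit what the paper leaves tacit.
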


Now we can consider a curve $c_{x_{0}}^{V}$ on the fibre
$T_{x_{0}}M=\pi ^{-1}(x_{0})$. It is represented by $$
x^{i}=x_{0}^{i},\ y^{i}=y^{i}(t),\ t\in I,$$ $c_{x_{0}}^{v}$ is
called a {\it vertical} curve of $TM$ at the point $x_{0}\in M.$

The {\it vertical geodesic} of $D$ are the vertical curves
$c^v_{x_0}$ with the property $D_{\dot c^v_{x_0}}\dot
c^v_{x_0}=0.$

\begin{theorem}
\label{t5.2}
The vertical geodesics at the point $x_{0}\in M,$ of the $N$-linear connection
$D\Gamma(N)=(L_{jk}^{i},C_{jk}^{i})$ are characterized by the
following system of differential equations
\begin{equation}
\label{1.5.6}
x^{i}=x_{0}^{i},\
\dfrac{d^{2}y^{i}}{dt^{2}}+C_{jk}^{i}(x_{0},y)\dfrac{dy^{j}}{
dt}\dfrac{dy^{k}}{dt}=0.
\end{equation}
\end{theorem}

Obviously, the local existence and uniqueness of horizontal or
vertical geodesics are assured if initial conditions are given.

Now, we determine the structure equations of an $N$-linear
connection $D$, considering the connection 1-forms $\omega^i_j$, (\ref{1.5.3}).

First of all, we have:

\begin{lemma}
\label{l5.1}
The exterior differential of $1$-forms
$\delta y^{i}=dy^{i}+N^{i}{}_{j}dx^{j}$ are given by:
\begin{equation}
\label{1.5.7}
d(\delta y^{i})=\dfrac{1}{2}R^{i}{}_{js}dx^{s}\wedge
dx^{j}+B^{i}{}_{js}\delta y^{s}\wedge dx^{j}
\end{equation}
where
\begin{equation}
B^{i}{}_{jk}=\dfrac{\partial N^{i}{}_{j}}{\partial y^{k}}.
\tag{1.5.7'}
\end{equation}
\end{lemma}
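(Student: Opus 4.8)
The plan is to compute the exterior differential $d(\delta y^i)$ directly from the definition $\delta y^i = dy^i + N^i{}_j\,dx^j$ given in (\ref{1.3.7}), and then to re-express the result in the adapted cobasis $\{dx^i, \delta y^i\}$. Since $d$ is linear and $d(dy^i)=0$, the entire contribution comes from $d(N^i{}_j\,dx^j)$. Using $d(dx^j)=0$ and the product rule for forms, I expect $d(\delta y^i)=dN^i{}_j\wedge dx^j$. The main work is to expand $dN^i{}_j$ in the natural cobasis $\{dx^k, dy^k\}$ and then convert everything into the adapted cobasis so that the antisymmetrized coefficients assemble into the curvature tensor $R^i{}_{jk}$ from (1.3.13') and the Berwald-type term $B^i{}_{jk}=\partial N^i{}_j/\partial y^k$.

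\textbf{Key steps.}
First I would write
\[
dN^i{}_j=\frac{\partial N^i{}_j}{\partial x^k}\,dx^k+\frac{\partial N^i{}_j}{\partial y^k}\,dy^k,
\]
and substitute $dy^k=\delta y^k - N^k{}_m\,dx^m$ (inverting (\ref{1.3.7})) to pass to the adapted cobasis. This gives
\[
dN^i{}_j=\left(\frac{\partial N^i{}_j}{\partial x^k}-\frac{\partial N^i{}_j}{\partial y^m}N^m{}_k\right)dx^k+B^i{}_{jk}\,\delta y^k
=\frac{\delta N^i{}_j}{\delta x^k}\,dx^k+B^i{}_{jk}\,\delta y^k,
\]
where the coefficient of $dx^k$ is exactly the horizontal derivative $\delta N^i{}_j/\delta x^k$ from (\ref{1.3.3}). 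Wedging with $dx^j$ then yields
\[
d(\delta y^i)=\frac{\delta N^i{}_j}{\delta x^k}\,dx^k\wedge dx^j+B^i{}_{jk}\,\delta y^k\wedge dx^j.
\]

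\textbf{Assembling the curvature.}
For the first term I would antisymmetrize in the summed indices: since $dx^k\wedge dx^j=-dx^j\wedge dx^k$, the coefficient $\delta N^i{}_j/\delta x^k$ contributes only through its skew part, so
\[
\frac{\delta N^i{}_j}{\delta x^k}\,dx^k\wedge dx^j
=\frac{1}{2}\left(\frac{\delta N^i{}_s}{\delta x^j}-\frac{\delta N^i{}_j}{\delta x^s}\right)dx^j\wedge dx^s
=\frac{1}{2}R^i{}_{js}\,dx^s\wedge dx^j,
\]
after a relabeling of dummy indices, where I recognize $R^i{}_{js}$ from (1.3.13'). Combining with the second term gives precisely (\ref{1.5.7}). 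The only subtlety, and the step I would watch most carefully, is the bookkeeping of index names and wedge-ordering conventions so that the factor $\tfrac12$ and the sign in the definition of $R^i{}_{js}$ match the statement exactly; everything else is the routine observation that substituting $dy^k=\delta y^k-N^k{}_m dx^m$ converts the natural partial derivative into the $\delta$-derivative, which is the whole point of the adapted cobasis.
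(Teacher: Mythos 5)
Your computation is correct: the substitution $dy^k=\delta y^k-N^k{}_m\,dx^m$ turns $\partial N^i{}_j/\partial x^k$ into the horizontal derivative $\delta N^i{}_j/\delta x^k$, and the antisymmetrization in the wedge reproduces exactly the curvature $R^i{}_{js}=\dfrac{\delta N^i{}_j}{\delta x^s}-\dfrac{\delta N^i{}_s}{\delta x^j}$ of (1.3.13') with the correct factor $\tfrac12$ and sign. The paper states this lemma without proof, treating it as routine, and your direct argument is precisely the computation it intends.
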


\begin{remark}
$B^{i}{}_{jk}$ are the coefficients of the Berwald
connection.
\end{remark}

\begin{lemma}
\label{l5.2}
With respect to a change of local coordinate
on the manifold $TM$, the following $2$-forms
$$d(dx^{i})-dx^{s}\wedge \omega^{i}{}_{s}; d(\delta y^{i})-\delta
y^{s}\wedge \omega ^{i}{}_{s}$$ transform as the components
of a $d$-vector field.

The $2$-forms
$$d\omega ^{i}{}_{j}-\omega ^{s}{}_{j}\wedge \omega ^{i}{}_{s}$$ transform as the components of a $d$-tensor field of type
$(1,1)$.
\end{lemma}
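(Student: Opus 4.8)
The plan is to reduce both assertions to the inhomogeneous transformation law satisfied by the connection $1$-forms $\omega^{i}{}_{j}=L^{i}_{jk}\,dx^{k}+C^{i}_{jk}\,\delta y^{k}$, and then to exploit only the product rule and the symmetry of mixed partial derivatives. First I would establish how $\omega^{i}{}_{j}$ transforms. Substituting the rules (1.4.2) for $L^{h}_{ij},C^{h}_{ij}$ and the rules (1.3.7') for $dx^{k},\delta y^{k}$ into (1.5.3), the contractions of the Jacobian against its inverse collapse (using $\dfrac{\partial x^{q}}{\partial\tilde{x}^{k}}\dfrac{\partial\tilde{x}^{k}}{\partial x^{r}}=\delta^{q}_{r}$), while the inhomogeneous piece coming from the second-derivative term of (1.4.2) is rewritten, by differentiating the identity $\dfrac{\partial\tilde{x}^{i}}{\partial x^{p}}\dfrac{\partial x^{p}}{\partial\tilde{x}^{j}}=\delta^{i}_{j}$, to give
\begin{equation}
\tilde{\omega}^{i}{}_{j}=\dfrac{\partial\tilde{x}^{i}}{\partial x^{a}}\dfrac{\partial x^{b}}{\partial\tilde{x}^{j}}\,\omega^{a}{}_{b}+\dfrac{\partial\tilde{x}^{i}}{\partial x^{a}}\,d\!\left(\dfrac{\partial x^{a}}{\partial\tilde{x}^{j}}\right).\tag{$*$}
\end{equation}
This is the only nontrivial preliminary; the rest is bookkeeping built on $(*)$.

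For the first assertion I would note that $d(dx^{i})=0$, so the form in question is simply $-dx^{s}\wedge\omega^{i}{}_{s}$. Computing in the barred chart and inserting $d\tilde{x}^{s}=\dfrac{\partial\tilde{x}^{s}}{\partial x^{a}}dx^{a}$ together with $(*)$, the homogeneous part of $(*)$ reproduces $\dfrac{\partial\tilde{x}^{i}}{\partial x^{b}}(-dx^{c}\wedge\omega^{b}{}_{c})$ after one Jacobian contraction, while the inhomogeneous part collapses to $dx^{b}\wedge d\!\left(\dfrac{\partial\tilde{x}^{i}}{\partial x^{b}}\right)=\dfrac{\partial^{2}\tilde{x}^{i}}{\partial x^{b}\partial x^{e}}\,dx^{b}\wedge dx^{e}$, which vanishes since a symmetric factor is wedged against the skew form $dx^{b}\wedge dx^{e}$. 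For $d(\delta y^{i})-\delta y^{s}\wedge\omega^{i}{}_{s}$ the mechanism is the same but cleaner: since $\delta\tilde{y}^{i}=\dfrac{\partial\tilde{x}^{i}}{\partial x^{b}}\delta y^{b}$ transforms homogeneously, the term $d\!\left(\dfrac{\partial\tilde{x}^{i}}{\partial x^{b}}\right)\wedge\delta y^{b}$ produced by $d(\delta\tilde{y}^{i})$ is cancelled exactly by the inhomogeneous contribution of $(*)$ to $\delta\tilde{y}^{s}\wedge\tilde{\omega}^{i}{}_{s}$, leaving $\dfrac{\partial\tilde{x}^{i}}{\partial x^{b}}\big[d(\delta y^{b})-\delta y^{c}\wedge\omega^{b}{}_{c}\big]$. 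Hence both $2$-forms carry the single upper index tensorially, i.e. transform as a $d$-vector field.

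For the second assertion I would substitute $(*)$ into $\Omega^{i}{}_{j}:=d\omega^{i}{}_{j}-\omega^{s}{}_{j}\wedge\omega^{i}{}_{s}$. Abbreviating $P^{i}_{a}=\dfrac{\partial\tilde{x}^{i}}{\partial x^{a}}$ and $Q^{a}_{j}=\dfrac{\partial x^{a}}{\partial\tilde{x}^{j}}$ with $P^{i}_{a}Q^{a}_{j}=\delta^{i}_{j}$, the homogeneous parts assemble, after the Jacobian contractions, into $P^{i}_{a}Q^{b}_{j}\,(d\omega^{a}{}_{b}-\omega^{s}{}_{b}\wedge\omega^{a}{}_{s})$. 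The remaining work is to show that every term containing a derivative $dP$ or $dQ$ of the Jacobian cancels. Here I would use the relation $dQ^{m}_{j}=-Q^{m}_{i}Q^{a}_{j}\,dP^{i}_{a}$, obtained by differentiating $P^{i}_{a}Q^{a}_{j}=\delta^{i}_{j}$, to convert all $dQ$ factors into $dP$ factors; the cross terms linear in $dP$ then cancel in pairs, and the purely inhomogeneous term $dP^{i}_{a}\wedge dQ^{a}_{j}$ again reduces to a symmetric second-derivative expression wedged against a skew two-form, hence vanishes by the symmetry of mixed partials. This yields $\tilde{\Omega}^{i}{}_{j}=P^{i}_{a}Q^{b}_{j}\,\Omega^{a}{}_{b}$, the transformation law of a $(1,1)$ $d$-tensor field.

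I expect the curvature computation of the second assertion to be the main obstacle. Unlike the first assertion, the inhomogeneous term of $(*)$ enters quadratically through $\omega^{s}{}_{j}\wedge\omega^{i}{}_{s}$ and linearly through $d\omega^{i}{}_{j}$, so several non-tensorial contributions must be matched against one another, and the cancellation only closes after simultaneously using the differentiated inverse-Jacobian identity and the symmetry of second partial derivatives. Once $(*)$ is in hand, however, the argument is entirely mechanical.
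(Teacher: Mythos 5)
The paper itself offers no proof of this lemma (it is invoked as a known computation in the proof of the structure-equations theorem), so the comparison is with the standard verification, which is exactly the route you take. Your transformation law $(*)$ is correct: substituting (1.4.2) and (1.3.7') into (1.5.3) and rewriting the second-derivative term by differentiating $\frac{\partial\tilde{x}^{i}}{\partial x^{p}}\frac{\partial x^{p}}{\partial\tilde{x}^{j}}=\delta^{i}_{j}$ gives precisely $\tilde{\omega}^{i}{}_{j}=P^{i}_{a}Q^{b}_{j}\,\omega^{a}{}_{b}+P^{i}_{a}\,dQ^{a}_{j}$, and your treatment of the two torsion forms is correct in every detail, including the genuine use of the symmetry of mixed partials to kill $dx^{a}\wedge dP^{i}_{a}$, and the exact cancellation of $dP^{i}_{b}\wedge\delta y^{b}$ in the second form.

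There is, however, one misjustified step in the curvature part. The term $dP^{i}_{a}\wedge dQ^{a}_{j}$ does \emph{not} reduce to a symmetric second-derivative expression wedged against a skew two-form: after converting $dQ$ to $dP$ it becomes $-Q^{a}_{m}Q^{b}_{j}\,dP^{i}_{a}\wedge dP^{m}_{b}$, whose coefficient $\frac{\partial^{2}\tilde{x}^{i}}{\partial x^{a}\partial x^{e}}\frac{\partial^{2}\tilde{x}^{m}}{\partial x^{b}\partial x^{f}}\,Q^{a}_{m}Q^{b}_{j}$ is not symmetric in the wedged indices $(e,f)$, so the symmetry-of-mixed-partials argument that worked for the torsion forms fails here, and this $2$-form is in general nonzero. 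What actually happens is simpler: the term occurs twice with the same sign, once as $d(P^{i}_{a}\,dQ^{a}_{j})=dP^{i}_{a}\wedge dQ^{a}_{j}$ inside $d\tilde{\omega}^{i}{}_{j}$, and once as $(P^{s}_{c}\,dQ^{c}_{j})\wedge(P^{i}_{a}\,dQ^{a}_{s})=dP^{i}_{a}\wedge dQ^{a}_{j}$ inside $\tilde{\omega}^{s}{}_{j}\wedge\tilde{\omega}^{i}{}_{s}$ (use $P^{i}_{a}\,dQ^{a}_{s}=-Q^{a}_{s}\,dP^{i}_{a}$ and $P^{s}_{c}Q^{a}_{s}=\delta^{a}_{c}$), so the two copies cancel in the difference $d\tilde{\omega}^{i}{}_{j}-\tilde{\omega}^{s}{}_{j}\wedge\tilde{\omega}^{i}{}_{s}$, exactly as your cross terms linear in $dP$ do. With this repair — pairwise cancellation in place of a symmetry argument — your computation closes and yields $\tilde{\Omega}^{i}{}_{j}=P^{i}_{a}Q^{b}_{j}\,\Omega^{a}{}_{b}$, which is the claimed $(1,1)$ $d$-tensor law.
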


\begin{theorem}
\label{t5.3}
The structure equations of an $N$-linear connection $D\Gamma(N)=(L_{jk}^{i},C_{jk}^{i})$ on the manifold $TM$ are given
by
\begin{equation}
\label{1.5.8}
\begin{array}{l}
d(dx^{i})-dx^{s}\wedge \omega ^{i}{}_{s}=-{\stackrel{(0)}{\Omega
}}{}^{i}
\vspace{3mm} \\
d(\delta y^{i})-\delta y^{s}\wedge \omega
^{i}{}_{s}=-{\stackrel{(1)}{\Omega
} }{}^{i}\vspace{3mm} \\
d\omega ^{i}{}_{j}-\omega ^{s}{}_{j}\wedge \omega
^{i}{}_{s}=-\Omega ^{i}{}_{j}
\end{array}
\end{equation}
where ${\stackrel{(0)}{\Omega }}{}^{i}$ and
${\stackrel{(1)}{\Omega }}{}^{i}$ are the $2$-forms of torsion:
\begin{equation}
\label{1.5.9}
\begin{array}{l}
{\stackrel{(0)}{\Omega }}{}^{i}=\dfrac{1}{2}T_{\
jk}^{i}dx^{j}\wedge
dx^{k}+C_{jk}^{i}dx^{j}\wedge \delta y^{k}\vspace{3mm} \\
{\stackrel{(1)}{\Omega }}{}^{i}=\dfrac{1}{2}R_{\
jk}^{i}dx^{j}\wedge dx^{k}+P_{\ jk}^{i}dx^{j}\wedge \delta
y^{k}+\dfrac{1}{2}S_{\ jk}^{i}\delta y^{j}\wedge \delta y^{k}
\end{array}
\end{equation}
and the $2$-forms of curvature $\Omega^{i}{}_{j}$ are given by
\begin{equation}
\label{1.5.10}
\Omega^{i}{}_{j}=\dfrac{1}{2}R_{j\ kh}^{\ i}dx^{k}\wedge
dx^{h}+P_{j\ kh}^{\ i}dx^{k}\wedge \delta y^{h}+\dfrac{1}{2}S_{j\
kh}^{\ i}\delta y^{j}\wedge \delta y^{h}.
\end{equation}
\end{theorem}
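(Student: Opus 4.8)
The plan is to reduce everything to a coordinate computation in a single chart and then match coefficients in the adapted cobasis $\{dx^i,\delta y^i\}$. By Lemma 5.2 the three left-hand $2$-forms in (1.5.8) are tensorial (the first two transform as a $d$-vector, the third as a $d$-tensor of type $(1,1)$), so each may be written as a combination of $dx^j\wedge dx^k$, $dx^j\wedge\delta y^k$ and $\delta y^j\wedge\delta y^k$ with $d$-tensor coefficients; it therefore suffices to compute those coefficients in one coordinate system. The three tools I would use throughout are: $d(dx^i)=0$; the decomposition of the differential of a function, $df=\dfrac{\delta f}{\delta x^m}\,dx^m+\dfrac{\partial f}{\partial y^m}\,\delta y^m$, obtained by substituting $dy^m=\delta y^m-N^m_p\,dx^p$; and Lemma 5.1 for $d(\delta y^i)$.

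For the first equation I would use $d(dx^i)=0$ and expand $dx^s\wedge\omega^i_s=L^i_{sk}\,dx^s\wedge dx^k+C^i_{sk}\,dx^s\wedge\delta y^k$. Antisymmetrizing the first term produces $\tfrac12 T^i_{jk}\,dx^j\wedge dx^k$ with $T^i_{jk}=L^i_{jk}-L^i_{kj}$, while the second is already $C^i_{jk}\,dx^j\wedge\delta y^k$, giving $-\overset{(0)}{\Omega}{}^i$. For the second equation I would substitute Lemma 5.1 for $d(\delta y^i)$ and expand $\delta y^s\wedge\omega^i_s$, then collect by type: the $dx\wedge dx$ part yields $\tfrac12 R^i_{jk}$; the mixed part combines $B^i_{jk}=\partial N^i_j/\partial y^k$ (from Lemma 5.1) with $-L^i_{kj}$ (from $\delta y^s\wedge\omega^i_s$) into exactly $P^i_{jk}=\partial N^i_j/\partial y^k-L^i_{kj}$ by the definition (1.4.10); and antisymmetrizing $C^i_{sk}\,\delta y^s\wedge\delta y^k$ produces $\tfrac12 S^i_{jk}$ with $S^i_{jk}=C^i_{jk}-C^i_{kj}$. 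Together these reproduce $-\overset{(1)}{\Omega}{}^i$.

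The third equation is the substantial computation. I would write $d\omega^i_j=d(L^i_{jk})\wedge dx^k+d(C^i_{jk})\wedge\delta y^k+C^i_{jk}\,d(\delta y^k)$, expand $dL^i_{jk}$ and $dC^i_{jk}$ by the $\delta$-decomposition above, replace $d(\delta y^k)$ by Lemma 5.1, and separately expand the four products in $\omega^s_j\wedge\omega^i_s$. Grouping all contributions by $dx^k\wedge dx^h$, $dx^k\wedge\delta y^h$ and $\delta y^k\wedge\delta y^h$, I would identify each coefficient with the corresponding curvature $d$-tensor in (1.4.14). The $dx\wedge dx$ coefficient assembles $\dfrac{\delta L^i_{hj}}{\delta x^k}-\dfrac{\delta L^i_{hk}}{\delta x^j}+L^s_{hj}L^i_{sk}-L^s_{hk}L^i_{sj}$ from $dL$ and $\omega\wedge\omega$, together with the term $C^i_{hs}R^s_{jk}$ coming from the $\tfrac12 R^k$-part of $d(\delta y^k)$, giving $R^{\ i}_{h\ jk}$; the $\delta y\wedge\delta y$ coefficient assembles $S^{\ i}_{h\ jk}$ from the vertical derivatives of $C$ and the $C$-$C$ products.

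The main obstacle will be the mixed $dx\wedge\delta y$ coefficient, which must be shown to equal $P^{\ i}_{h\ jk}=\dfrac{\partial L^i_{hj}}{\partial y^k}-C^i_{hk|j}+C^i_{hs}P^s_{jk}$. Here the ordinary derivative $\dfrac{\delta C^i_{hk}}{\delta x^j}$ and several $L$-$C$ products arising from $dC$ and $\omega^s_j\wedge\omega^i_s$ must be reorganized, using (1.4.5'), into the single $h$-covariant derivative $C^i_{hk|j}$, while the $B^k$-part of $d(\delta y^k)$ from Lemma 5.1 combines with the remaining products to furnish the term $C^i_{hs}P^s_{jk}$ via $P^s_{jk}=B^s_{jk}-L^s_{kj}$. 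Keeping the index positions and signs consistent through this reorganization is the only delicate point; once the three coefficients are matched with (1.4.14), the sign conventions on the right of (1.5.8) complete the identification, and the tensorial reduction from Lemma 5.2 guarantees that the resulting equations hold in every chart.
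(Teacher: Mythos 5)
Your proposal is correct and takes essentially the same route as the paper's own proof: invoke Lemma \ref{l5.2} to put the general structure equations of a linear connection into the tensorial form (\ref{1.5.8}), then compute the torsion and curvature $2$-forms directly from the connection $1$-forms (\ref{1.5.3}) and the formula (\ref{1.5.7}) of Lemma \ref{l5.1}. You simply carry out in full the coefficient computation that the paper leaves to the reader, including the delicate reassembly of the mixed $dx\wedge\delta y$ coefficient into $C^{i}_{hk|j}$ and $C^{i}_{hs}P^{s}_{\ jk}$ via (1.4.5') and $P^{s}_{\ jk}=B^{s}_{\ jk}-L^{s}_{kj}$, which checks out against (\ref{1.4.14}).
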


\begin{proof}
By means of Lemma (\ref{l5.2}), the general structure
equations of a linear connection on $TM$ are particularized for an
$N$-linear connection $D$ in the form (\ref{1.5.8}). Using the connection
$1$-forms $\omega^{i}{}_{j}$ (\ref{1.5.3}) and the formula (\ref{1.5.7}), we can
calculate the forms ${\stackrel{(0)}{\Omega}}{}^{i}$, ${\stackrel{(1)}{\Omega}}
{}^{i}$ and $\Omega^{i}{}_{j}$.

Then it is very easy to determine the structure equations (\ref{1.5.9}).
\end{proof}

\begin{remark}
The Bianchi identities of an $N$-linear connection
$D$ can be obtained from (\ref{1.5.8}) by calculating the exterior
differential of (\ref{1.5.8}), modulo the same system (\ref{1.5.8}) and using the
exterior differential of ${\stackrel{(0)}{\Omega }}{}^{i}$
${\stackrel{(1)}{\Omega }}{}^{i}$ and $\Omega_{j}^{i}$.
\end{remark}

\newpage

\chapter{Lagrange spaces}\index{Spaces! Lagrange}

The notion of Lagrange spaces was introduced and studied by the present
author. The term ``Lagrange geometry'' is due to J. Kern, \cite{KJ}. We study the
geometry of Lagrange spaces as a subgeometry of the geometry of
tangent bundle $(TM,\pi, M)$ of a manifold $M$, using the
principles of Analytical Mechanics given by variational problem on
the integral of action of a regular Lagrangian, the law of
conservation, N\"{o}ther theorem etc. Remarking that the Euler -
Lagrange equations determine a canonical semispray $S$ on the
manifold $TM$ we study the geometry of a Lagrange space using
this canonical semi-spray $S$ and following the methods given in
the first chapter.

Beginning with the year 1987 there were published by author, alone or in
collaborations, some books on the Lagrange spaces and the
Hamilton spaces \cite{MHS}, and on the higher-order
Lagrange and Hamilton spaces \cite{mir11}, as well.

\section{The notion of Lagrange space}
\label{s1c2p1}

First we shall define the notion of differentiable Lagrangian over
the tangent manifold $TM$ and $\widetilde{TM}=TM\setminus\{0\}$,
$M$ being a real $n-$di\-men\-si\-onal manifold.

\begin{definition}
\label{d1.1}
A differentiable Lagrangian is a mapping
$L:(x,y)\in TM\rightarrow L(x,y)\in {I\!\!R}$, of class $C^{\infty
}$ on $\widetilde{TM}$ and continuous on the null section
$0:M\rightarrow TM$ of the projection $\pi :TM\rightarrow M$.
\end{definition}

The Hessian of a differentiable Lagrangian $L$, with respect to
$y^{i}$, has the elements:
\begin{equation}
\label{1.1}
g_{ij}=\dfrac{1}{2}\dfrac{\partial^{2}L}{\partial y^{i}\partial
y^{j}}.
\end{equation}

Evidently, the set of functions $g_{ij}(x,y)$ are the components
of a $d$-tensor field, symmetric and covariant of order 2.

\begin{definition}
\label{d1.2}
A differentiable Lagrangian $L$ is called {\it regular} if:
\begin{equation}
\label{1.2}
{\rm rank}(g_{ij}(x,y))=n,\ {\rm on}\widetilde{TM}.
\end{equation}
\end{definition}

Now we can give the definition of a Lagrange space:

\begin{definition}
\label{d1.3}
A Lagrange space is a pair
$L^{n}=(M,L(x,y))$ formed by a smooth, real $n$-dimensional
manifold $M$ and a regular Lagrangian $L(x,y)$ for which the
$d$-tensor $g_{ij}$ has a constant signature over the manifold
$\widetilde{TM}$.

For the Lagrange space $L^n=(M,L(x,y))$ we say that $L(x,y)$ is
the {\it fundamental function} and $g_{ij}(x,y)$ is the {\it
fundamental} (or metric) tensor.
\end{definition}

{\bf Examples.}

\begin{itemize}
\item[$1^\circ$]  Every Riemannian manifold $(M,g_{ij}(x))$
determines a Lagrange space $L^{n}=(M,L(x,y))$, where
\end{itemize}

\begin{equation}
\label{1.3}
L(x,y)=g_{ij}(x)y^{i}y^{j}.
\end{equation}

\begin{itemize}
\item[\ ]  This example allows to say:

If the manifold $M$ is paracompact, then there exist Lagrangians
$L(x,y)$ such that $L^n=(M,L(x,y))$ is a Lagrange space.

\item[$2^\circ$]  The following Lagrangian from electrodynamics
\end{itemize}

\begin{equation}
\label{1.4}
L(x,y)=mc\gamma_{ij}(x)y^i y^j+\dfrac{2e}{m}A_i(x)y^i+{\cal U}(x),
\end{equation}
where $\gamma_{ij}(x)$ is a pseudo-Riemannian metric, $A_i(x)$ a
covector field and ${\cal U}(x)$ a smooth function, $m,c,e$ are physical constants, is a
Lagrange space $L^n$. This is called the Lagrange space of
electrodynamics.

We already have seen that $g_{ij}(x,y)$ from (2.1.1) is a d-tensor field, i.e.
with respect to (1.1.1), we have
$$\widetilde{g}_{ij}(\widetilde{x},\widetilde{y})=\dfrac{\partial x^h}{\partial \widetilde{x}^i}\dfrac{\partial
x^k}{\partial\widetilde{x}^j}g_{hk}(x,y).$$

Now we can prove without difficulties:

\begin{theorem}
\label{t1.1}
For a Lagrange space $L^n$ the following
properties hold:

\begin{itemize} \item[1$^\circ$] The system of
functions
\begin{equation}
p_i=\dfrac{1}{2}\dfrac{\partial L}{\partial y^i}\nonumber
\end{equation}
determines a d-covector field.
\item[2$^\circ$] The functions
\begin{equation}
C_{ijk}=\dfrac{1}{4}\dfrac{\partial^3 L}{\partial y^i\partial
y^j\partial y^k}=\dfrac{1}{2}\dfrac{\partial g_{ij}}{\partial
y^k}\nonumber
\end{equation}
are the components of a symmetric $d-$tensor field of type $(0,3)$.
\item[3$^\circ$] The 1-form
\begin{equation}
\label{1.5}
\oo =p_{i}dx^{i}=\dfrac{1}{2}\dfrac{\partial L}{\partial
y^{i}}dx^{i}
\end{equation}
depend on the Lagrangian $L$ only and are globally defined on the
manifold $\widetilde{TM}$
\item[4$^{\circ}$] The $2$-form
\begin{equation}
\label{1.6}
\theta =d\oo=dp_{i}\wedge dx^{i}
\end{equation}
is globally defined on $\widetilde{TM}$ and defines a symplectic
structure on $\widetilde{TM}$.
\end{itemize}
\end{theorem}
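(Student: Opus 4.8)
The plan is to establish $1^\circ$ through $4^\circ$ in order, since each builds on its predecessor: the tensorial character of $p_i$ feeds the globality of $\oo$, which in turn feeds the globality and closedness of $\theta$. The only genuinely substantive point is the nondegeneracy in $4^\circ$; everything before it is a matter of differentiating known transformation laws.

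For $1^\circ$ I would start from the fact that $L$ is a scalar on $TM$, so $L(x,y)=\wt L(\wt x,\wt y)$ under a coordinate change (1.1.1). Differentiating with respect to $y^i$ and using the second relation of (1.1.2), namely $\dfrac{\pp}{\pp y^i}=\dfrac{\pp\wt x^j}{\pp x^i}\dfrac{\pp}{\pp\wt y^j}$, gives at once $\dfrac{\pp L}{\pp y^i}=\dfrac{\pp\wt x^j}{\pp x^i}\dfrac{\pp\wt L}{\pp\wt y^j}$, i.e. $p_i=\dfrac{\pp\wt x^j}{\pp x^i}\wt p_j$, which is precisely the transformation law of a $d$-covector of type $(0,1)$. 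For $2^\circ$ the total symmetry of $C_{ijk}$ is immediate from the equality of mixed third-order partials of $L$; for the tensorial character I would differentiate the metric law $g_{hk}(x,y)=\dfrac{\pp\wt x^i}{\pp x^h}\dfrac{\pp\wt x^j}{\pp x^k}\wt g_{ij}(\wt x,\wt y)$ with respect to $y^l$. Because the Jacobian factors depend on $x$ alone they survive untouched, while $\dfrac{\pp}{\pp y^l}=\dfrac{\pp\wt x^m}{\pp x^l}\dfrac{\pp}{\pp\wt y^m}$ acts only on $\wt g_{ij}$; multiplying by $1/2$ yields the $(0,3)$ transformation law for $C_{hkl}$.

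For $3^\circ$ the globality of $\oo=p_i\,dx^i$ I would check by a one-line contraction: using $\wt p_i=\dfrac{\pp x^j}{\pp\wt x^i}p_j$ (the inverse of the law from $1^\circ$) together with $d\wt x^i=\dfrac{\pp\wt x^i}{\pp x^k}dx^k$ from (1.1.3), the two Jacobians collapse to $\de^j_k$, so $\wt p_i\,d\wt x^i=p_k\,dx^k=\oo$. That $\oo$ depends on $L$ only is clear, since $p_i$ is built solely from $L$.

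For $4^\circ$ closedness is automatic, $d\theta=d\,d\oo=0$, and globality follows from that of $\oo$. The one real obstacle is nondegeneracy, where the regularity of $L$ must enter. Here I would compute $dp_i=\dfrac{\pp p_i}{\pp x^j}dx^j+g_{ij}\,dy^j$ using $\dfrac{\pp p_i}{\pp y^j}=g_{ij}$, so that $\theta=g_{ij}\,dy^j\wedge dx^i+(\text{a }dx\wedge dx\text{ term})$. The cleanest conclusion is to take the top exterior power: in $\theta^n$ each factor must supply one $dy$, so only the $g_{ij}\,dy^j\wedge dx^i$ part contributes and $\theta^n=\pm\,n!\det(g_{ij})\,dx^1\wedge\cdots\wedge dx^n\wedge dy^1\wedge\cdots\wedge dy^n$. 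The hypothesis $\det(g_{ij})\neq0$ then forces $\theta^n\neq0$, hence $\theta$ is nondegenerate and, being closed, symplectic. Equivalently one reads off the block matrix of $\theta$ in the basis $(dx^i,dy^i)$, whose antidiagonal blocks are $\pm(g_{ij})$ and whose $(dy,dy)$-block vanishes, so its determinant equals $\det(g_{ij})^2\neq0$; isolating this determinant is the crux of the argument.
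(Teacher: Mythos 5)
Your proposal is correct, and all four steps are sound: the covector law for $p_i$ follows exactly as you say from $\dfrac{\partial}{\partial y^i}=\dfrac{\partial \tilde{x}^j}{\partial x^i}\dfrac{\partial}{\partial \tilde{y}^j}$ applied to the scalar $L$; the $(0,3)$-law for $C_{ijk}$ from vertically differentiating the transformation law of $g_{ij}$ (the Jacobians being functions of $x$ alone); the contraction of inverse Jacobians for $\omega$; and closedness of $\theta=d\omega$ is automatic. Your nondegeneracy argument is also correct in both versions: the top power $\theta^n=\pm\,n!\det(g_{ij})\,dx^1\wedge\cdots\wedge dy^n$ works because any term involving the $\dfrac{\partial p_i}{\partial x^j}dx^j\wedge dx^i$ part carries more than $n$ factors $dx$ and dies, and the block-matrix computation indeed gives determinant $\det(g_{ij})^2$.

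The paper itself offers no proof of this theorem (it is asserted with the phrase ``we can prove without difficulties''), so there is no single route to compare against; but the paper's de facto verification of the symplectic property comes later, in Proposition \ref{p4.1} of Section \ref{s4c2p1}, where $\theta$ is rewritten in the basis adapted to the canonical nonlinear connection as $\theta=g_{ij}\,\delta y^i\wedge dx^j$ after checking that the coefficient of $dx^s\wedge dx^i$ vanishes; nondegeneracy is then read off from $\det(g_{ij})\neq 0$. Your argument is the more appropriate one at this point in the text: it works entirely in natural coordinates and does not presuppose the canonical nonlinear connection, which in the paper's logical order is only constructed afterwards (Section \ref{s3c2p1}) from the canonical semispray. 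What the adapted-basis route buys in exchange is a cleaner invariant formula for $\theta$ with no $dx\wedge dx$ contamination at all, which the paper then reuses (e.g.\ to derive the identities of Theorem \ref{t4.2} from $d\theta=0$); your $\theta^n$ computation buys self-containedness and makes explicit where regularity of $L$ enters, which the paper leaves implicit.
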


\section{Variational problem. Euler-Lagrange equations}\index{Euler--Lagrange equations}\index{Semisprays! on Lagrange space $L_n$}
\label{s2c2p1}
\setcounter{equation}{0}
\setcounter{definition}{0}
\setcounter{theorem}{0}
\setcounter{lemma}{0}

The variational problem can be formulated for differentiable
Lagrangians and can be solved in the case when the integral of
action is defined on the parametrized curves.

Let $L:TM\rightarrow R$ be a differentiable Lagrangian and $c:t\in
\lbrack 0,1]\to (x^{i}(t))\in U\subset M$ be a smooth curve, with
a fixed parametrization, having Im$c\subset U$, where $U$ is a
domain of a local chart on the manifold $M$. The curve $c$ can be
extended to $\pi^{-1}(U)\subset\widetilde{TM}$ by
$$\widetilde{c}:t\in \lbrack 0,1]\to
(x^{i}(t),\dfrac{dx^{i}}{dt}(t))\in \pi ^{-1}(U).$$ So,
Im$\widetilde{c}\subset\pi^{-1}(U)$.

The integral of action of the Lagrangian $L$ on the curve $c$ is
given by the functional:
\begin{equation}
\label{2.1}
I(c)=\int_{0}^{1}L\left(x,\dfrac{dx}{dt}\right)dt.
\end{equation}
Consider the curves
\begin{equation}
\label{2.2}
c_{\varepsilon}:t\in \lbrack 0,1]\to (x^{i}(t)+\varepsilon
V^{i}(t))\in M
\end{equation}
which have the same end points $x^{i}(0)$ and $x^{i}(1)$ as the curve $c$, $V^{i}(t)=V^{i}(x^i(t))$ being a regular vector field on the curve
$c$, with the property $V^{i}(0)=V^{i}(1)=0$ and $\varepsilon$ is
a real number, sufficiently small in absolute value, so that ${\rm
Im}c_{\varepsilon}\subset U$.

The extension of a curve $c_{\varepsilon}$ to $\widetilde{TM}$ is
given by $$\widetilde{c}_{\varepsilon}:t\in \lbrack 0,1]\mapsto
\left(x^{i}(t)+\varepsilon V^{i}(t),\dfrac{dx^{i}}{dt}+\varepsilon
\dfrac{dV^{i}}{dt}\right)\in \pi^{-1}(U).$$ The integral of
action of the Lagrangian $L$ on the curve $c_{\varepsilon}$ is
\begin{equation}
\label{2.2.1'}
I(c_{\varepsilon})=\int_{0}^{1}L\left(x+\varepsilon
V,\dfrac{dx}{dt}+\varepsilon\dfrac{dV}{dt}\right)dt.\tag{2.2.1'}
\end{equation}

A necessary condition for $I(c)$ to be an extremal value of $I(c_{\varepsilon })$ is
\begin{equation}
\label{2.3}
\dfrac{dI(c_{\varepsilon})}{d\varepsilon}|_{\varepsilon=0}=0.
\end{equation}

Under our condition of differentiability, the operator $\dfrac{d}{d\varepsilon}$ is permuting with the operator of integration.

From (\ref{2.2.1'}) we obtain
\begin{equation}
\label{2.4}
\dfrac{dI(c_\varepsilon)}{d\varepsilon}=\int_{0}^{1}\dfrac{d}{d\varepsilon}
L(x+\varepsilon V,\dfrac{dx}{dt}+\varepsilon \dfrac{dV}{dt})dt.
\end{equation}
But we have
$$\begin{array}{c}
\dfrac{d}{d\varepsilon}L\left(x+\varepsilon
V,\dfrac{dx}{dt}+\varepsilon \dfrac{dV}{dt}\right)|_{\varepsilon
=0}=\dfrac{\partial L}{\partial x^{i}}V^{i}+\dfrac{
\partial L}{\partial y^{i}}\dfrac{dV^{i}}{dt}=\vspace{3mm} \\
=\left\{ \dfrac{\partial L}{\partial
x^{i}}-\dfrac{d}{dt}\dfrac{\partial L}{
\partial y^{i}}\right\} V^{i}+\dfrac{d}{dt}\left\{\dfrac{\partial L}{
\partial y^{i}}V^{i}\right\},\ y^{i}=\dfrac{dx^{i}}{dt}.
\end{array}$$
Substituting in (\ref{2.4}) and taking into account the fact that
$V^{i}(x(t))$ is arbitrary, we obtain the following theorem.

\begin{theorem}
\label{t2.1}
In order for the functional $I(c)$ to be
an extremal value of $I(c_{\varepsilon})$ it is necessary for the curve $c(t)=(x^{i}(t))$ to satisfy the Euler-Lagrange equations:
\begin{equation}
\label{2.5}
E_{i}(L):=\dfrac{\partial L}{\partial
x^{i}}-\dfrac{d}{dt}\dfrac{\partial L}{
\partial y^{i}}=0,\ y^{i}=\dfrac{dx^{i}}{dt}.
\end{equation}
\end{theorem}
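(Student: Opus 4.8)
The plan is to complete the first-variation argument that the excerpt has already set in motion, so the only work left is a clean rearrangement, a boundary cancellation, and an application of the fundamental lemma of the calculus of variations. Starting from the necessary condition (2.3), namely $\frac{dI(c_\varepsilon)}{d\varepsilon}|_{\varepsilon=0}=0$, I would use (2.4) and the permutability of $\frac{d}{d\varepsilon}$ with the integral to write this as $\int_0^1 \frac{d}{d\varepsilon}L|_{\varepsilon=0}\,dt$, and then substitute the integrand already computed just above the theorem,
$$\frac{d}{d\varepsilon}L|_{\varepsilon=0} = \left\{\frac{\partial L}{\partial x^i} - \frac{d}{dt}\frac{\partial L}{\partial y^i}\right\}V^i + \frac{d}{dt}\left\{\frac{\partial L}{\partial y^i}V^i\right\}.$$
Thus the vanishing condition becomes $\int_0^1 E_i(L)\,V^i\,dt + \int_0^1 \frac{d}{dt}\left(\frac{\partial L}{\partial y^i}V^i\right)dt = 0$.

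Next I would dispose of the second integral. Being the integral of an exact $t$-derivative, it equals the boundary term $\left[\frac{\partial L}{\partial y^i}V^i\right]_0^1$, which vanishes identically because the admissible variation field satisfies $V^i(0)=V^i(1)=0$. Hence the necessary condition collapses to $\int_0^1 E_i(L(x,\dot x))\,V^i(t)\,dt = 0$, required to hold for every admissible $V^i$.

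The decisive step, carrying the actual analytic content, is the \emph{fundamental lemma of the calculus of variations}: if a continuous function $f_i(t)$ satisfies $\int_0^1 f_i(t) V^i(t)\,dt = 0$ for all smooth $V^i$ vanishing at the endpoints, then $f_i\equiv 0$ on $[0,1]$. I would argue this by contradiction, exactly so as to exploit that $V^i$ is arbitrary: if some component of $E_i(L)$ were nonzero at an interior point $t_0$, continuity of $E_i(L)$ along $c$ would keep it of one fixed sign on a neighborhood, and choosing a bump function $V^i$ supported in that neighborhood with the same sign would make the integral strictly nonzero, a contradiction. Applying the lemma to $f_i=E_i(L)$ yields $E_i(L)=0$ along the curve, which is precisely (2.5). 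I expect this lemma — together with the smoothness hypotheses needed to guarantee that $E_i(L)$ is continuous along $c$ so the localized bump-function argument is legitimate — to be the main obstacle; everything else is the algebraic regrouping and the boundary cancellation already prepared in the preceding computation.
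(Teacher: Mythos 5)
Your proof is correct and follows essentially the same route as the paper: the same first-variation computation, the same cancellation of the boundary term via $V^i(0)=V^i(1)=0$, and the same final appeal to the arbitrariness of $V^i$ (which the paper invokes in one line where you spell out the fundamental lemma with the bump-function argument). Your explicit treatment of that last step merely makes rigorous what the paper leaves implicit; there is no substantive difference in approach.
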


For the Euler-Lagrange operator $E_{i}=\dfrac{\partial}{\partial
x^{i}}-\dfrac{d}{dt}\dfrac{\partial}{\partial y^{i}}$ we have:

\begin{theorem}
\label{t2.2}
The following properties hold true:

1$^\circ$ $E_{i}(L)$ is a d-covector field.

2$^\circ$ $E_{i}(L+L^{\prime})=E_{i}(L)+E_{i}(L^{\prime})$.

3$^\circ$ $E_{i}(aL)=aE_{i}(L),a\in \R$.

4$^\circ$ $E_{i}\left(\dfrac{dF}{dt}\right)=0$, $\forall
F\in {\cal F} (TM)$ with $\dfrac{\partial F}{\partial
y^i}=0$.
\end{theorem}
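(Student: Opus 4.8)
The plan is to treat the four assertions in turn, with $2^\circ$ and $3^\circ$ immediate, $4^\circ$ a short direct computation, and $1^\circ$ carrying essentially all the work.

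First, for $2^\circ$ and $3^\circ$, I would simply observe that each of $\dfrac{\partial}{\partial x^i}$, $\dfrac{\partial}{\partial y^i}$ and $\dfrac{d}{dt}$ is $\R$-linear, hence so is the composite operator $E_i=\dfrac{\partial}{\partial x^i}-\dfrac{d}{dt}\dfrac{\partial}{\partial y^i}$; additivity and real homogeneity follow at once.

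The substance is $1^\circ$, the claim that $E_i(L)$ transforms as a $d$-covector, i.e. $E_i(L)=\dfrac{\partial\widetilde{x}^j}{\partial x^i}\,\widetilde{E}_j(L)$ under the change (\ref{1.1.1}), where $\widetilde{E}_j=\dfrac{\partial}{\partial\widetilde{x}^j}-\dfrac{d}{dt}\dfrac{\partial}{\partial\widetilde{y}^j}$. I would compute the two constituent terms separately. Since $L$ is a scalar, differentiating with respect to $x^i$ and using $\dfrac{\partial\widetilde{y}^j}{\partial x^i}=\dfrac{\partial^2\widetilde{x}^j}{\partial x^i\partial x^k}y^k$ gives $\dfrac{\partial L}{\partial x^i}=\dfrac{\partial\widetilde{L}}{\partial\widetilde{x}^j}\dfrac{\partial\widetilde{x}^j}{\partial x^i}+\dfrac{\partial\widetilde{L}}{\partial\widetilde{y}^j}\dfrac{\partial^2\widetilde{x}^j}{\partial x^i\partial x^k}y^k$. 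On the other hand, $\dfrac{\partial\widetilde{y}^j}{\partial y^i}=\dfrac{\partial\widetilde{x}^j}{\partial x^i}$ yields $\dfrac{\partial L}{\partial y^i}=\dfrac{\partial\widetilde{L}}{\partial\widetilde{y}^j}\dfrac{\partial\widetilde{x}^j}{\partial x^i}$, and applying $\dfrac{d}{dt}$ along the lift ($y^k=\dfrac{dx^k}{dt}$) reproduces the same second-derivative term $\dfrac{\partial\widetilde{L}}{\partial\widetilde{y}^j}\dfrac{\partial^2\widetilde{x}^j}{\partial x^i\partial x^k}y^k$ together with $\left(\dfrac{d}{dt}\dfrac{\partial\widetilde{L}}{\partial\widetilde{y}^j}\right)\dfrac{\partial\widetilde{x}^j}{\partial x^i}$.

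The decisive point --- and the only step I expect to need care --- is that the two anomalous terms containing $\dfrac{\partial^2\widetilde{x}^j}{\partial x^i\partial x^k}y^k$ are identical and enter $E_i(L)=\dfrac{\partial L}{\partial x^i}-\dfrac{d}{dt}\dfrac{\partial L}{\partial y^i}$ with opposite signs, so they cancel; what survives is exactly $\dfrac{\partial\widetilde{x}^j}{\partial x^i}\widetilde{E}_j(L)$, establishing the covector law. Finally, for $4^\circ$, the hypothesis $\dfrac{\partial F}{\partial y^i}=0$ means $F=F(x)$, so $\dfrac{dF}{dt}=\dfrac{\partial F}{\partial x^j}y^j$; I would then check directly that both $\dfrac{\partial}{\partial x^i}\left(\dfrac{dF}{dt}\right)$ and $\dfrac{d}{dt}\dfrac{\partial}{\partial y^i}\left(\dfrac{dF}{dt}\right)$ equal $\dfrac{\partial^2F}{\partial x^i\partial x^j}y^j$, whence $E_i\!\left(\dfrac{dF}{dt}\right)=0$.
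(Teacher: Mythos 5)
Your proposal is correct, and it follows the standard route the paper intends: the paper states Theorem~2.2 without supplying a proof, and your computation is exactly the verification left implicit there. In particular, the key cancellation in $1^\circ$ of the two terms $\dfrac{\partial\widetilde{L}}{\partial\widetilde{y}^j}\dfrac{\partial^2\widetilde{x}^j}{\partial x^i\partial x^k}\,y^k$ (one from $\dfrac{\partial\widetilde{y}^j}{\partial x^i}$, one from $\dfrac{d}{dt}\dfrac{\partial\widetilde{x}^j}{\partial x^i}$ along the lift $y^k=\dfrac{dx^k}{dt}$) is the correct and decisive step, and your checks of $2^\circ$--$4^\circ$ are complete as stated.
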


The notion of {\it energy of a Lagrangian $L$} can be introduced
as in the Theoretical Mechanics \cite{andres}, \cite{mirana}, by
\begin{equation}
\label{2.6}
E_L=y^i\dfrac{\partial L}{\partial y^i}-L.
\end{equation}

We obtain, without difficulties:

\begin{theorem}
\label{t2.3}
For every smooth curve $c$ on the base
manifold $M$ the following formula holds:
\begin{equation}
\label{2.7}
\dfrac{dE_L}{dt}=-\dfrac{dx^i}{dt}E_i(L),\ y^i=\dfrac{dx^i}{dt}.
\end{equation}
\end{theorem}

Consequently:

\begin{theorem}
\label{t2.4}
For any differentiable Lagrangian $L(x,y)$
the energy $E_L$ is conserved along every solution curve $c$ of
the Euler-Lagrange equations
$$E_i(L)=0,\quad \dfrac{dx^i}{dt}=y^i.$$
\end{theorem}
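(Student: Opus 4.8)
The plan is to obtain the conservation of energy as an immediate corollary of the energy-variation formula already established in Theorem~\ref{t2.3}. That theorem asserts that along \emph{any} smooth curve $c$ on $M$ one has $\dfrac{dE_L}{dt}=-\dfrac{dx^i}{dt}E_i(L)$ with $y^i=\dfrac{dx^i}{dt}$, so the entire analytic content needed here is already packaged in formula (\ref{2.7}); no fresh computation is required.

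First I would specialize (\ref{2.7}) to a curve $c$ that is a solution of the Euler--Lagrange system, i.e.\ one satisfying $E_i(L)=0$ for every $i=1,\dots,n$ together with $y^i=\dfrac{dx^i}{dt}$. Substituting these vanishing components into the right-hand side of (\ref{2.7}) annihilates every term of the sum $\dfrac{dx^i}{dt}E_i(L)$, yielding $\dfrac{dE_L}{dt}=0$ identically along $c$. Since the $t$-derivative of $E_L$ vanishes at every point of the solution curve, $E_L$ is constant along $c$, which is precisely the assertion that the energy is conserved. I would close by noting that this holds for an arbitrary differentiable Lagrangian, conservative or not, because the only hypothesis invoked is that $c$ solves $E_i(L)=0$.

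There is essentially no obstacle at this stage: the work has been front-loaded into Theorem~\ref{t2.3}, whose proof rests on rewriting $\dfrac{d}{dt}\left(y^i\dfrac{\partial L}{\partial y^i}-L\right)$ and recognizing the Euler--Lagrange operator $E_i$. The only point deserving a word of care is the constraint $y^i=\dfrac{dx^i}{dt}$, which must hold for the chain-rule identity underlying (\ref{2.7}) to be valid; on a genuine solution curve of $E_i(L)=0$ this lift condition is part of the definition of the Euler--Lagrange system, so the specialization is fully legitimate.
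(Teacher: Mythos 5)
Your proposal is correct and coincides with the paper's own argument: the text derives Theorem \ref{t2.4} as an immediate consequence (``Consequently'') of the variation formula (\ref{2.7}) in Theorem \ref{t2.3}, exactly as you do by substituting $E_i(L)=0$, $y^i=\dfrac{dx^i}{dt}$ to get $\dfrac{dE_L}{dt}=0$. Your added remark about the lift condition $y^i=\dfrac{dx^i}{dt}$ is a harmless clarification, not a departure from the paper's route.
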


A Noether theorem can be proved:

\begin{theorem}
\label{t2.5}
For any infinitesimal symmetry on
$M\times\R$ of the Lagrangian $L(x,y)$ and for any smooth
function $\phi(x)$ the following function: $${\cal
F}(L,\phi)=V^i\dfrac{\partial L}{\partial y^i}-\tau E_L-\phi(x)$$
is conserved on every solution curve $c$ of the Euler-Lagrange
equations $E_i(L)=0$, $y^i=\dfrac{dx^i}{dt}$.
\end{theorem}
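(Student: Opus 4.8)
The plan is to reduce the statement to a direct computation of $\dfrac{d{\cal F}}{dt}$ along an extremal, using only the Euler--Lagrange equations $E_i(L)=0$, the energy conservation already established in Theorem~\ref{t2.4}, and the defining identity of an infinitesimal symmetry. First I would make explicit what an infinitesimal symmetry on $M\times\R$ is: it is generated by a pair $(\tau(x),V^i(x))$ producing the transformation $\bar t=t+\varepsilon\tau$, $\bar x^i=x^i+\varepsilon V^i$, and the requirement that the integral of action $I(c)$ be invariant to first order in $\varepsilon$ up to the total time-derivative of the gauge function $\phi(x)$.

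The core step is to compute the first variation of the action under this combined space--time transformation. I would first obtain the induced change of the velocity, $\bar y^i=y^i+\varepsilon(\dot V^i-y^i\dot\tau)+O(\varepsilon^2)$, arising from $\bar y^i=d\bar x^i/d\bar t$ together with $d\bar t=(1+\varepsilon\dot\tau)\,dt$. Substituting into $I(\bar c)=\int L(\bar x,\bar y)\,d\bar t$ and expanding to first order yields the integrand
$$\dfrac{\partial L}{\partial x^i}V^i+\dfrac{\partial L}{\partial y^i}\dot V^i+\left(L-y^i\dfrac{\partial L}{\partial y^i}\right)\dot\tau.$$
Recognizing the last bracket as $-E_L$ by the definition (\ref{2.6}) of the energy, the infinitesimal symmetry condition takes the clean form
$$V^i\dfrac{\partial L}{\partial x^i}+\dot V^i\dfrac{\partial L}{\partial y^i}-\dot\tau\,E_L=\dfrac{d\phi}{dt}.$$

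Finally I would differentiate ${\cal F}(L,\phi)=V^i\dfrac{\partial L}{\partial y^i}-\tau E_L-\phi(x)$ along the extremal. Using the Euler--Lagrange equation in the form $\dfrac{d}{dt}\dfrac{\partial L}{\partial y^i}=\dfrac{\partial L}{\partial x^i}$ to rewrite the term $V^i\dfrac{d}{dt}\dfrac{\partial L}{\partial y^i}$, and invoking $\dfrac{dE_L}{dt}=0$ from Theorem~\ref{t2.4} to annihilate $\tau\,\dfrac{dE_L}{dt}$, the derivative collapses to exactly the left-hand side of the symmetry condition minus $\dfrac{d\phi}{dt}$, hence vanishes. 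Therefore ${\cal F}(L,\phi)$ is constant along $c$. The main obstacle I anticipate is purely bookkeeping at the level of the symmetry identity: since the excerpt does not spell out the definition of infinitesimal symmetry, the delicate point is tracking the time-reparametrization contribution $\dot\tau$ correctly, so that the \emph{energy} $E_L$ (rather than $L$ itself) appears. This is precisely what makes the $\tau E_L$ term in ${\cal F}$ the correct object to cancel, and getting its coefficient right is the crux of the argument.
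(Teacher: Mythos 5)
Your proof is correct, and there is nothing in the paper to diverge from: the author states this Noether theorem without proof (the text only says ``A Noether theorem can be proved''), accompanied solely by the remark defining an infinitesimal symmetry as $x'^i=x^i+\varepsilon V^i(x,t)$, $t'=t+\varepsilon\tau(x,t)$. Your computation is the standard argument from the cited Miron--Anastasiei sources and uses exactly the ingredients the chapter supplies: the first-variation identity from the proof of the Euler--Lagrange equations, the definition $E_L=y^i\dfrac{\partial L}{\partial y^i}-L$, and the conservation of $E_L$ along extremals. Your expansion $\bar y^i=y^i+\varepsilon(\dot V^i-y^i\dot\tau)+O(\varepsilon^2)$, $d\bar t=(1+\varepsilon\dot\tau)\,dt$ is right, the bracket $L-y^i\dfrac{\partial L}{\partial y^i}=-E_L$ is identified correctly, and the final differentiation of ${\cal F}$ closes without gaps. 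Two remarks. First, you assumed $\tau$ and $V^i$ depend on $x$ alone, whereas the paper's remark allows dependence on $t$ as well; your argument survives verbatim provided $\dot V^i$ and $\dot\tau$ are read as total derivatives along the curve, so this is harmless but worth stating. Second, you correctly diagnosed the genuinely delicate point: the theorem, as phrased with an arbitrary $\phi(x)$, is only true if ``infinitesimal symmetry'' means invariance of the action \emph{up to the gauge term} $\varepsilon\,\dfrac{d\phi}{dt}$; under strict invariance one would get $\dfrac{d{\cal F}}{dt}=-\dfrac{d\phi}{dt}\neq 0$. Since the paper never spells this definition out, making it explicit, as you did, is precisely what a complete proof of this statement requires.
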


\noindent{\bf Remark.} An infinitesimal symmetry on $M\times \R$ is
given by $x^{\prime i}=x^i+\varepsilon V^i(x,t)$,
$t'=t+\varepsilon\tau(x,t)$.

\section{Canonical semispray. Nonlinear connection}\index{Connections! Nonlinear}
\label{s3c2p1}
\setcounter{equation}{0}
\setcounter{definition}{0}
\setcounter{theorem}{0}
\setcounter{lemma}{0}

Now we can apply the previous theory in order to study the
Lagrange space $L^n=(M,L(x,y))$. As we shall see that $L^n$ determines
a canonical semispray $S$ and $S$ gives a canonical nonlinear
connection on the manifold $\widetilde{TM}$.

As we know, the fundamental tensor $g_{ij}$ of the space $L^n$ is
nondegenerate, and $E_i(L)$ is a $d$-covector field, so the
equations $g^{ij}E_j(L)=0$ have a geometrical meaning.

\begin{theorem}
\label{t3.1}
If $L^n=(M,L)$ is a Lagrange space, then
the system of differential equations
\begin{equation}
\label{3.1}
g^{ij}E_j(L)=0,\ y^i=\dfrac{dx^i}{dt}
\end{equation}
can be written in the form:
\begin{equation}
\label{3.1'}
\dfrac{d^2x^i}{dt^2} + 2G^i\left(x,\dfrac{dx}{dt}\right)=0,
y^i=\dfrac{dx^i}{dt} \tag{2.3.1'}
\end{equation}
{\it where}
\begin{equation}
\label{3.2}
2G^i(x,y)=\dfrac{1}{2} g^{ij}\left\{\dfrac{\partial^2 L}{\partial
y^j\partial x^k} y^k-\dfrac{\partial L}{\partial x^j}\right\}.
\end{equation}
\end{theorem}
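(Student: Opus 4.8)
The plan is to expand the Euler--Lagrange operator $E_i(L)$ along a curve parametrized so that $y^i=\dfrac{dx^i}{dt}$, and then to solve the resulting relation for the accelerations $\dfrac{d^2x^i}{dt^2}$ using the nondegeneracy of the fundamental tensor. Since $L$ is regular, $g_{ij}$ is invertible, so the system $g^{ij}E_j(L)=0$ is equivalent to $E_j(L)=0$; this lets me pass freely between the two forms once the computation is complete.

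First I would compute the total derivative term in $E_i(L)$. Treating $\dfrac{\partial L}{\partial y^i}$ as a function of $(x(t),y(t))$ with $y^i=\dfrac{dx^i}{dt}$ and applying the chain rule gives
\[
\dfrac{d}{dt}\dfrac{\partial L}{\partial y^i}=\dfrac{\partial^2 L}{\partial y^i\partial x^j}\dfrac{dx^j}{dt}+\dfrac{\partial^2 L}{\partial y^i\partial y^j}\dfrac{d^2x^j}{dt^2}.
\]
The crucial point is that the coefficient of the acceleration is precisely the Hessian, so by the definition (2.1.1) of the fundamental tensor, $\dfrac{\partial^2 L}{\partial y^i\partial y^j}=2g_{ij}$. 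Substituting into $E_i(L)=\dfrac{\partial L}{\partial x^i}-\dfrac{d}{dt}\dfrac{\partial L}{\partial y^i}$ yields
\[
E_i(L)=\dfrac{\partial L}{\partial x^i}-\dfrac{\partial^2 L}{\partial y^i\partial x^j}\dfrac{dx^j}{dt}-2g_{ij}\dfrac{d^2x^j}{dt^2}.
\]

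Setting $E_i(L)=0$ then isolates the accelerations against the metric, $2g_{ij}\dfrac{d^2x^j}{dt^2}=\dfrac{\partial L}{\partial x^i}-\dfrac{\partial^2 L}{\partial y^i\partial x^j}y^j$. Contracting with $g^{ki}$ and using $g^{ki}g_{ij}=\delta^k_j$ gives $\dfrac{d^2x^k}{dt^2}=\dfrac12 g^{ki}\left(\dfrac{\partial L}{\partial x^i}-\dfrac{\partial^2 L}{\partial y^i\partial x^j}y^j\right)$, which rearranges exactly into (2.3.1') with $2G^i$ as in (3.2) after relabelling the free index. I do not expect a genuine obstacle here: the argument is a direct chain-rule expansion followed by one inversion of the metric. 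The only points requiring care are bookkeeping — reading the Hessian coefficient off as $2g_{ij}$ (not $g_{ij}$), so that the factor $\dfrac12$ in $2G^i$ comes out correctly — and noting that contracting $g^{ij}E_j(L)=0$ rather than $E_j(L)=0$ is legitimate, which the regularity hypothesis guarantees.
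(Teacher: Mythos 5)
Your proof is correct and is essentially the paper's own argument: the paper likewise expands $\dfrac{d}{dt}\dfrac{\partial L}{\partial y^i}$ by the chain rule, reads off the acceleration coefficient as $2g_{ij}$ from the definition of the fundamental tensor, and then inverts the metric to obtain (2.3.1') with $2G^i$ as in (3.2). Your remarks on the factor $\tfrac12$ and on the equivalence of $g^{ij}E_j(L)=0$ with $E_j(L)=0$ via regularity are exactly the bookkeeping the paper leaves implicit.
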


\begin{proof}
We have $$E_i(L)=\dfrac{\partial L}{\partial
x^i}-\left\{\dfrac{\partial^2 L}{\partial y^i\partial x^k}+ 2g_{ij}\dfrac{dy^j}{dt}\right\}, \ \ y^i=\dfrac{dx^i}{dt}.$$
So, (\ref{3.1}) implies (\ref{3.1'}), (\ref{3.2}).

The previous theorem tells us that the Euler Lagrange equations
for a Lagrange space are given by a system of $n$ second order
ordinary differential equations. According with theory from
Section 1.2, Ch. 1, it follows that the equations (\ref{3.1}) determine
a semispray with the coefficients $G^i(x,y)$:
\begin{equation}
\label{3.3}
S=y^{i}\dfrac{\partial}{\partial x^{i}}-2G^{i}(x,y)\dfrac{\partial
}{\partial y^{i}}.
\end{equation}
$S$ is called the canonical semispray of the Lagrange space $L^n$.

By means of Theorem 1.3.1, it follows:

\begin{theorem}
\label{t3.2}
Every Lagrange space $L^{n}=(M,L)$ has a
canonical nonlinear connection $N$ which depends only on the
fundamental function $L$. The local coefficients of $N$ are given
by
\begin{equation}
\label{3.4}
N^{i}{}_{j}=\dfrac{\partial G^{i}}{\partial
y^{j}}=\dfrac{1}{4}\dfrac{\partial
}{\partial y^{j}}\left\{g^{ik}\left(\dfrac{\partial^{2}L}{\partial y^{k}\partial x^{h}}
y^{h}-\dfrac{\partial L}{\partial x^{k}}\right)\right\}.
\end{equation}
\end{theorem}

Evidently:

\begin{proposition}
\label{p3.1}
The canonical nonlinear connection $N$
is symmetric, i.e. $t^i_{jk}=\dfrac{\partial N^i_j}{\partial
y^k}-\dfrac{\partial N^i_{\ k}}{\partial y^j}=0$.
\end{proposition}

\begin{proposition}
\label{p3.2}
The canonical nonlinear connection $N$
is invariant with respect to the Carath\'{e}odory transformation
\begin{equation}
\label{2.3.4'}
L^{\prime}(x,y)=L(x,y)+\dfrac{\partial\varphi(x)}{\partial
x^{i}}y^{i}.
\end{equation}
\end{proposition}

Indeed, we have
$$E_{i}(L^{\prime})=E_{i}\left(L(x,y)+\dfrac{d\varphi}{dt}\right)=E_{i}(L).\hfill\qed$$
\end{proof}

So, $E_{i}(L^{\prime}(x,y))=0$ determines the same canonical
semispray as the one determined by $E_{i}(L(x,y))=0$. Thus,
the Carath\'{e}odory transformation (\ref{2.3.4'}) preserves the nonlinear
connection $N$.

\medskip

\noindent{\bf Example.} The Lagrange space of electrodynamics, $L^n=(M,L(x,y))$, where $L(x,y)$ is given by (\ref{1.4}) with $U(x)=0$
has the canonical semispray with the coefficients:
\begin{equation}
\label{3.5}
G^i(x,y)=\dfrac{1}{2}\gamma^i{}_{jk}(x) y^j y^k-g^{ij}(x)F_{jk}(x)
y^k,
\end{equation}
where $\gamma^i{}_{jk}(x)$ are the Christoffel symbols of the metric tensor $g_{ij}(x)=mc\gamma_{ij}(x)$ of the space $L^n$ and $F_{jk}$ is the
electromagnetic tensor
\begin{equation}
\label{3.6}
F_{jk}(x,y)=\dfrac{e}{2m}\left(\dfrac{\partial A_k}{\partial
x^j}-\dfrac{\partial A_j}{\partial x^k}\right).
\end{equation}

Therefore, the integral curves of the Euler-Lagrange equation are
given by the solution curves of the {\it Lorentz equations}:
\begin{equation}
\label{3.7}
\dfrac{d^2x^i}{dt^2}+\gamma^i{}_{jk}(x)\dfrac{dx^j}{dt}\dfrac{dx^k}{dt}=
g^{ij}(x) F_{jk}(x)\dfrac{dx^k}{dt}.
\end{equation}

According to (\ref{3.4}), the canonical nonlinear connection of the
Lagrange space of electrodynamics $L^n$ has the local coefficients
given by
\begin{equation}
\label{3.8}
N^i{}_j(x,y)=\gamma^i_{jk}(x) y^k-g^{ik}(x) F_{kj}(x).
\end{equation}

It is remarkable that the coefficients $N^i{}_j$ from (\ref{3.8}) are
linear with respect to $y^i$.

\begin{proposition}
\label{p3.3}
The Berwald connection of the
canonical nonlinear connection $N$ has the coefficients
$B\Gamma(N)=(\gamma^i_{jk}(x),0)$.
\end{proposition}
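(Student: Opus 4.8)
The plan is to compute the Berwald connection directly from its definition. Recall from Section~1.4 that the Berwald connection of a nonlinear connection $N$ with coefficients $N^i{}_j$ is the special $N$-linear connection $B\Gamma(N)=\left(\dfrac{\partial N^h{}_i}{\partial y^j},0\right)$; that is, its horizontal coefficients are the partial derivatives $\dfrac{\partial N^h{}_i}{\partial y^j}$ and its vertical coefficients vanish identically by construction. Thus the vertical part of the claimed answer, namely $C^i{}_{jk}=0$, is immediate, and the whole content of the proposition reduces to evaluating $\dfrac{\partial N^i{}_j}{\partial y^k}$ for the canonical nonlinear connection of the Lagrange space of electrodynamics.

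First I would substitute the explicit coefficients (\ref{3.8}), $N^i{}_j(x,y)=\gamma^i_{jk}(x)y^k-g^{ik}(x)F_{kj}(x)$, and differentiate term by term with respect to $y^l$. In the first term, $\gamma^i_{jk}(x)$ depends only on the base coordinates $x$, so only the factor $y^k$ is differentiated, yielding $\gamma^i_{jk}(x)\delta^k_l=\gamma^i_{jl}(x)$. The key observation for the second term is that both $g^{ik}(x)$ and the electromagnetic tensor $F_{kj}(x)$ — as displayed in (\ref{3.6}), where it reduces to the curl of the covector field $A_i(x)$ — are functions of $x$ alone; hence their product is independent of $y$, and its $y^l$-derivative is zero. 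Collecting the two contributions gives $\dfrac{\partial N^i{}_j}{\partial y^l}=\gamma^i_{jl}(x)$.

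The only point requiring genuine attention is precisely this independence-of-$y$ remark: although $F_{kj}$ is written with the argument $(x,y)$ in the general formalism, for the electrodynamics Lagrangian its components carry no $y$-dependence, which is what forces the second term to drop out; there is otherwise no real obstacle, the computation being a one-line differentiation of a function linear in $y$. Once this is noted, substituting the computed horizontal coefficients $\gamma^i_{jk}(x)$ together with the vanishing vertical coefficients into the definition of $B\Gamma(N)$ yields $B\Gamma(N)=(\gamma^i_{jk}(x),0)$, completing the proof.
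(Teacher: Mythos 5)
Your proposal is correct and follows exactly the argument the paper intends: the remark immediately preceding the proposition, that the coefficients $N^i{}_j$ in (\ref{3.8}) are linear in $y$, is precisely your observation that $\gamma^i_{jk}(x)y^k$ differentiates to $\gamma^i_{jk}(x)$ while the term $g^{ik}(x)F_{kj}(x)$ is $y$-independent and drops out, so $B\Gamma(N)=\left(\dfrac{\partial N^i{}_j}{\partial y^k},0\right)=(\gamma^i_{jk}(x),0)$. You also correctly isolate the one point of potential confusion — that $F_{kj}$, though written with argument $(x,y)$ in the general formalism, carries no $y$-dependence for the electrodynamics Lagrangian — which is the only thing the one-line computation actually rests on.
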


\begin{proposition}
\label{p3.4}
The solution curves of the Euler-Lagrange equations and the autoparallel curves of the
canonical nonlinear connection $N$ are given by the Lorentz
equations $($\ref{3.7}$)$.
\end{proposition}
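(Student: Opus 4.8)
The plan is to establish the two assertions separately, each by substituting the explicit coefficients already computed in this section into the relevant system of differential equations from the general theory, and then checking that both systems collapse onto the Lorentz equations (\ref{3.7}). Everything reduces to algebra once the canonical semispray coefficients (\ref{3.5}) and the canonical nonlinear connection coefficients (\ref{3.8}) are in hand.

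First I would treat the Euler--Lagrange side. Since the fundamental tensor $g_{ij}$ is nondegenerate, the equations $E_i(L)=0$ with $y^i=\dfrac{dx^i}{dt}$ are equivalent to $g^{ij}E_j(L)=0$, so Theorem \ref{t3.1} applies and the solution curves satisfy $\dfrac{d^2x^i}{dt^2}+2G^i\left(x,\dfrac{dx}{dt}\right)=0$. Inserting $2G^i=\gamma^i{}_{jk}(x)y^jy^k-2g^{ij}(x)F_{jk}(x)y^k$ from (\ref{3.5}) and replacing $y^k$ by $\dfrac{dx^k}{dt}$ produces a second-order ODE whose $2$-homogeneous part is the geodesic operator of $\gamma_{ij}$ and whose lower part is the Lorentz force built from $F_{jk}$, i.e. the form (\ref{3.7}).

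For the autoparallel side I would start from the characterization of the autoparallel curves of a nonlinear connection, $\dfrac{dy^i}{dt}+N^i{}_j(x,y)\dfrac{dx^j}{dt}=0$ with $y^i=\dfrac{dx^i}{dt}$, given in (\ref{1.3.16}). Substituting the linear coefficients (\ref{3.8}), $N^i{}_j=\gamma^i_{jk}(x)y^k-g^{ik}(x)F_{kj}(x)$, contracting with $y^j=\dfrac{dx^j}{dt}$, and relabelling the dummy summation indices in the force term again delivers (\ref{3.7}). Thus each system, taken on its own, reduces to the Lorentz equations.

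The step that needs care, and which I expect to be the main obstacle, is reconciling the numerical coefficient of the electromagnetic term between the two reductions. By Theorem \ref{t3.2} we have $N^i{}_j=\dfrac{\partial G^i}{\partial y^j}$, so the autoparallel contraction equals $\dfrac{\partial G^i}{\partial y^j}y^j$, while the semispray equation carries $2G^i$; by Euler's relation (\ref{1.1.6}) these two agree only on the part of $G^i$ that is $2$-homogeneous in $y$. Here $G^i$ splits into the $2$-homogeneous gravitational piece $\tfrac12\gamma^i{}_{jk}y^jy^k$, on which both reductions coincide automatically, and the $1$-homogeneous electromagnetic piece $-g^{ij}F_{jk}y^k$, on which the two contractions scale differently. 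The delicate bookkeeping is therefore to track the homogeneity degree of each summand of $G^i$ and confirm that, after the substitution $y^i=\dfrac{dx^i}{dt}$, the Lorentz-force coefficient matches the form displayed in (\ref{3.7}) in both systems; once this is verified, one simultaneously reads off that $F_{jk}$ is exactly the tensor governing the deviation from geodesic motion.
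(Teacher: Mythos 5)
Your approach is exactly the paper's (implicit) one: the paper offers no argument for Proposition \ref{p3.4} beyond the computations (\ref{3.5})--(\ref{3.8}) that precede it, so ``substitute (\ref{3.5}) into the semispray form of Theorem \ref{t3.1}, substitute (\ref{3.8}) into the autoparallel system (\ref{1.3.16}), and compare'' is precisely what is intended, and your two reductions are set up correctly.

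The homogeneity issue you flag at the end is genuine, but your stated expectation that careful bookkeeping will make the force coefficients ``match'' in both systems cannot be literally fulfilled, and it is worth being precise about why. Carrying out your own computation: the Euler--Lagrange side gives $\ddot x^i+\gamma^i_{jk}\dot x^j\dot x^k=2g^{ij}F_{jk}\dot x^k$ (since $2G^i=\gamma^i_{jk}y^jy^k-2g^{ij}F_{jk}y^k$ from (\ref{3.5})), while the autoparallel side gives $\ddot x^i+\gamma^i_{jk}\dot x^j\dot x^k=g^{ij}F_{jk}\dot x^k$ (since $N^i_jy^j$ from (\ref{3.8}) reproduces the $2$-homogeneous gravitational part of $2G^i$ but only \emph{half} of the $1$-homogeneous electromagnetic part, exactly as Euler's relation (\ref{1.1.6}) predicts). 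So the two systems differ by a factor $2$ on the Lorentz-force term; note that (\ref{3.7}) as printed carries coefficient $1$ and therefore agrees literally with the autoparallel reduction, whereas the paper has already silently dropped a factor $2$ in passing from Theorem \ref{t3.1} with (\ref{3.5}) to (\ref{3.7}). The honest conclusion — which is how the proposition should be read, given the paper's loose handling of the physical constants in (\ref{3.6}) — is that both families of curves satisfy equations of Lorentz type, coinciding with (\ref{3.7}) up to a constant rescaling of the electromagnetic tensor $F_{jk}$; they are not the identical differential system unless that constant is absorbed into the definition of $F_{jk}$. Your draft correctly isolates this as the crux but should state the resolution (a factor-$2$ discrepancy absorbed by convention) rather than promising a verification of exact equality.
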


In the last part of this section, we underline the following
theorem:

\begin{theorem}
\label{t3.3}
The autoparallel curves of the canonical nonlinear connection $N$ are given by the following system:
$$\dfrac{d^2 x^i}{dt^2}+N^i{}_j\left(x,\dfrac{dx}{dt}\right)\dfrac{dx^j}{dt}=0,
$$ where $N^i{}_j$ are given by $($\ref{3.4}$)$.
\end{theorem}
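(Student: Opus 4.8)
The plan is to obtain this statement as a direct specialization of the general characterization of autoparallel curves of a nonlinear connection, proved in Chapter~1, to the canonical nonlinear connection $N$ of the Lagrange space. Recall that for an arbitrary nonlinear connection with coefficients $N^i{}_j(x,y)$, a horizontal curve whose base and fibre components are tied by $y^i = dx^i/dt$ is by definition an autoparallel curve, and that such curves are characterized by the first-order system (\ref{1.3.16}), namely $\dfrac{dy^i}{dt} + N^i{}_j(x,y)\dfrac{dx^j}{dt} = 0$ together with $y^i = dx^i/dt$.

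First I would invoke Theorem~\ref{t3.2}, which guarantees that $L^n$ carries a globally defined canonical nonlinear connection $N$ with coefficients $N^i{}_j = \pp G^i/\pp y^j$ given explicitly by (\ref{3.4}); because $N$ exists, its autoparallel curves are well defined and the characterization (\ref{1.3.16}) applies to it verbatim. Next I would impose the constraint $y^i = dx^i/dt$, which forces $\dfrac{dy^i}{dt} = \dfrac{d^2 x^i}{dt^2}$. Substituting this into the vertical equation of (\ref{1.3.16}) and inserting the canonical coefficients (\ref{3.4}) converts the first-order system into the second-order system
$$\dfrac{d^2 x^i}{dt^2} + N^i{}_j\left(x,\dfrac{dx}{dt}\right)\dfrac{dx^j}{dt} = 0,$$
which is precisely the asserted system. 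Beyond this substitution no computation is required.

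Since the manipulation is essentially definitional, there is no genuine computational obstacle; the one point I would take care to flag is that the autoparallel curves of $N$ must not be conflated with the extremals of the action. The Euler--Lagrange paths, i.e.\ the integral curves of the canonical semispray (\ref{3.1'}), satisfy $\dfrac{d^2x^i}{dt^2} + 2G^i = 0$, whereas the autoparallels of $N$ satisfy $\dfrac{d^2x^i}{dt^2} + \dfrac{\pp G^i}{\pp y^j}\,\dfrac{dx^j}{dt} = 0$. By Euler's theorem (\ref{1.1.6}) one has $y^j\dfrac{\pp G^i}{\pp y^j} = 2G^i$ exactly when $G^i$ is homogeneous of degree two in $y$, so the two families coincide precisely in the spray (Finsler) case and differ in general. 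Hence the theorem is a statement purely about the nonlinear connection $N$, resting entirely on the characterization (\ref{1.3.16}) together with the explicit coefficients (\ref{3.4}).
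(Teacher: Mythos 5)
Your proof is correct and takes exactly the route the paper intends: the theorem is stated without proof precisely because it is the immediate specialization of the characterization (1.3.16) of autoparallel curves from Chapter 1 to the canonical coefficients $N^i{}_j=\partial G^i/\partial y^j$ of (2.3.4) supplied by Theorem 2.3.2, with the substitution $y^i=dx^i/dt$ turning the first-order system into the stated second-order one. Your closing caution that the autoparallels of $N$ coincide with the integral curves of the canonical semispray only when $G^i$ is $2$-homogeneous is accurate and consistent with the paper's discussion of the Finslerian case, where $S$ is a spray and the two families agree.
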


\section{Hamilton-Jacobi equations}
\label{s4c2p1}
\setcounter{equation}{0}
\setcounter{definition}{0}
\setcounter{theorem}{0}
\setcounter{lemma}{0}

Consider a Lagrange space $L^n=(M,L(x,y))$ and $N(N^i{}_j)$ its
canonical nonlinear connection. The adapted basis
$\left(\displaystyle\frac{\delta}{\delta x^i},
\displaystyle\frac{\partial}{\partial y^i}\right)$ to the
horizontal distribution $N$ and the vertical distribution $V$ has
the horizontal vector fields:
\begin{equation}
\label{4.1}
\dfrac{\delta}{\delta x^i}=\dfrac{\partial}{\partial
x^i}-N^j{}_i\dfrac{
\partial }{\partial y^j}.
\end{equation}
Its dual is $(dx^i,\delta y^i)$, with
\begin{equation}
\label{4.2}
\delta y^i=dy^i+N^i{}_j dx^j.
\end{equation}
Theorem 1.1.1 give us the momenta
\begin{equation}
\label{4.3}
p_i=\dfrac{1}{2} \dfrac{\partial L}{\partial y^i},
\end{equation}
the 1-form
\begin{equation}
\label{4.4}
\omega=p_i dx^i
\end{equation}
and the 2-form
\begin{equation}
\label{4.5}
\theta=d\omega=dp_i\wedge dx^i.
\end{equation}

These geometrical object fields are globally defined on
$\widetilde{TM}$. $\theta$ is a {\it symplectic structure} on the
manifold $\widetilde{TM}$.

\begin{proposition}
\label{p4.1}
In the adapted basis the 2-form $\theta$ is given by
\begin{equation}
\label{4.6}
\theta=g_{ij}\delta y^i\wedge dx^j.
\end{equation}
\end{proposition}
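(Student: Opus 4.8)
The plan is to expand the globally defined $2$-form $\theta=dp_i\wedge dx^i$ in the adapted coframe $\{dx^i,\delta y^i\}$ and to read off its horizontal--horizontal, horizontal--vertical and vertical--vertical components. First I would compute the total differential of the momenta $p_i=\frac12\frac{\partial L}{\partial y^i}$, namely $dp_i=\frac{\partial p_i}{\partial x^j}dx^j+\frac{\partial p_i}{\partial y^j}dy^j$, and use $\frac{\partial p_i}{\partial y^j}=g_{ij}$ (which follows from (2.1.1)). Substituting $dy^j=\delta y^j-N^j{}_k\,dx^k$ from (2.4.2) turns this into
\[
dp_i=\Big(\frac{\partial p_i}{\partial x^j}-g_{ik}N^k{}_j\Big)dx^j+g_{ij}\,\delta y^j .
\]
Wedging with $dx^i$ gives $\theta=\big(\frac{\partial p_i}{\partial x^j}-g_{ik}N^k{}_j\big)dx^j\wedge dx^i+g_{ij}\,\delta y^j\wedge dx^i$; after relabelling and using $g_{ij}=g_{ji}$, the second summand is exactly the asserted $g_{ij}\,\delta y^i\wedge dx^j$. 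Thus the whole proposition reduces to showing that the purely horizontal summand vanishes.

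Since $dx^j\wedge dx^i$ is antisymmetric, that summand vanishes if and only if the coefficient $\frac{\partial p_i}{\partial x^j}-g_{ik}N^k{}_j$ is symmetric in $(i,j)$, i.e. if and only if
\[
g_{ik}N^k{}_j-g_{jk}N^k{}_i=\frac{\partial p_i}{\partial x^j}-\frac{\partial p_j}{\partial x^i}.
\]
The same identity arises from a more invariant route, which I would note as a cross-check: evaluating $\theta=d\omega$ directly on the basis and using $[\frac{\delta}{\delta x^i},\frac{\delta}{\delta x^j}]=R^h{}_{ij}\frac{\partial}{\partial y^h}$ from (1.3.13) (on which $\omega$ vanishes), one gets $\theta(\frac{\delta}{\delta x^i},\frac{\delta}{\delta x^j})=\frac{\delta p_j}{\delta x^i}-\frac{\delta p_i}{\delta x^j}$, while the mixed pairings $\theta(\frac{\delta}{\delta x^i},\frac{\partial}{\partial y^j})$ reproduce $g_{ij}$ and the vertical pairings $\theta(\frac{\partial}{\partial y^i},\frac{\partial}{\partial y^j})$ vanish, delivering the $hv$- and $vv$-parts at once.

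The hard, computational step is establishing that last identity, and I would prove it from the explicit canonical coefficients $N^k{}_j=\partial G^k/\partial y^j$ with $2G^k$ given by (2.3.2). Writing $\tau_i=\frac{\partial^2 L}{\partial y^i\partial x^h}y^h-\frac{\partial L}{\partial x^i}$, the ingredients are: differentiating $g_{il}g^{lk}=\delta^k_i$ to obtain $g_{il}\,\partial g^{lk}/\partial y^j=-g^{kn}\,\partial g_{in}/\partial y^j$; the total symmetry of $C_{ijk}=\frac12\,\partial g_{ij}/\partial y^k$ from Theorem 2.1.1; and the commutativity of the mixed partials of $L$. Carrying these through yields $g_{il}N^l{}_j=-2G^n C_{inj}+\frac14\,\partial\tau_i/\partial y^j$, in which the block $-2G^n C_{inj}$ is symmetric in $(i,j)$ and drops out of the antisymmetrization, as does the third-order piece $\frac{\partial^3 L}{\partial y^i\partial y^j\partial x^h}y^h$ sitting inside $\partial\tau_i/\partial y^j$. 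What survives is precisely
\[
\tfrac14\Big(\frac{\partial\tau_i}{\partial y^j}-\frac{\partial\tau_j}{\partial y^i}\Big)=\tfrac12\Big(\frac{\partial^2 L}{\partial y^i\partial x^j}-\frac{\partial^2 L}{\partial y^j\partial x^i}\Big)=\frac{\partial p_i}{\partial x^j}-\frac{\partial p_j}{\partial x^i},
\]
which is the required identity. The main obstacle is entirely the bookkeeping of this antisymmetrization --- tracking the $\partial g^{lk}/\partial y^j$ term and recognising which blocks are $(i,j)$-symmetric --- rather than any conceptual difficulty; everything else is routine expansion in the adapted frame.
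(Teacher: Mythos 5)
Your proof is correct and takes essentially the same route as the paper's: expand $\theta=dp_i\wedge dx^i$ in the adapted coframe, read off $g_{ij}\,\delta y^i\wedge dx^j$ from the mixed term via $\partial p_i/\partial y^j=g_{ij}$, and reduce everything to the vanishing of the horizontal--horizontal coefficient, i.e.\ the symmetry of $\delta p_i/\delta x^j$ in $(i,j)$. The only difference is that the paper dismisses this last step with ``it is not difficult to see that the coefficient of $dx^s\wedge dx^i$ vanishes,'' whereas you actually verify it from the canonical coefficients (2.3.2): your identity $g_{ik}N^k{}_j-g_{jk}N^k{}_i=\partial p_i/\partial x^j-\partial p_j/\partial x^i$ is exactly what is needed, and your bookkeeping (the $(i,j)$-symmetric block $-2G^nC_{inj}$ coming from $\partial g^{ln}/\partial y^j$ together with the total symmetry of $C_{ijk}$, and the symmetric third-order term inside $\partial\tau_i/\partial y^j$, both dropping out of the antisymmetrization) checks out.
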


Indeed, $\theta=dp_i\wedge dx^i=\dfrac{1}{2}
\left(\dfrac{\delta}{\delta x^s}\dfrac{\partial L}{\partial
y^i}dx^s+\dfrac{\partial}{\partial y^s}\dfrac{\partial L}{\partial y^i}\delta y^s\right)\wedge dx^i$ $=\dfrac{1}{
4}\left(\dfrac{\delta}{\delta x^s}\dfrac{\partial L}{\partial
y^i}-\dfrac{\delta}{\delta x^i}\dfrac{\partial
L}{\partial y^s}\right)dx^s \wedge dx^i+g_{is}\delta y^s\wedge
dx^i$.

But is not difficult to see that the coefficient of $dx^s\wedge
dx^i$ vanishes.

The triple $(\widetilde{TM},\theta,L)$ is called a Lagrangian
system.

The energy $E_{L}$ of the space $L^{n}$ is given by (\ref{2.6}). Denoting ${\cal H}=\dfrac{1}{2}E_{L}$, ${\cal L}=\dfrac{1}{2}L$, then (\ref{2.6}) can be written as:
\begin{equation}
\label{4.7}
{\cal H}=p_{i}y^{i}-{\cal L}(x,y).
\end{equation}
But, along the integral curve of the Euler-Lagrange equations
(\ref{2.5}) we have
$$\dfrac{\partial{\cal H}}{\partial x^{i}}=-\dfrac{\partial{\cal L}}{\partial x^{i}}=-\dfrac{dp_i}{dt}.$$
And from (\ref{4.7}), we get
$$\dfrac{\partial{\cal H}}{\partial p_{i}}=y^{i}=\dfrac{dx^{i}}{dt}.$$
So, we obtain:

\begin{theorem}
\label{t4.1}
Along to integral curves of the Euler-Lagrange
equations the Hamilton-Jacobi equations:
\begin{equation}
\label{4.8}
\dfrac{dx^{i}}{dt}=\dfrac{\partial {\cal H}}{\partial p_{i}};\ \dfrac{dp_{i}}{
dt}=-\dfrac{\partial {\cal H}}{\partial x^{i}},
\end{equation}
where ${\cal H}$ is given by $($\ref{4.7}$)$ and
$p_{i}=\dfrac{1}{2}\dfrac{\partial L}{\partial y^{i}}$, are satisfied.
\end{theorem}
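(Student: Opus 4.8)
The plan is to treat ${\cal H}=p_iy^i-{\cal L}(x,y)$ as a genuine function on the cotangent bundle, i.e. as a function of the canonical coordinates $(x^i,p_i)$ rather than of $(x^i,y^i)$, and then to read off Hamilton's equations from the two Legendre identities together with the Euler--Lagrange equations. The first thing I would do is make the change of variables explicit. Since $p_i=\dfrac{1}{2}\dfrac{\partial L}{\partial y^i}=\dfrac{\partial{\cal L}}{\partial y^i}$ and the Lagrangian is regular, the Jacobian of the map $(x,y)\mapsto(x,p)$ is $\|g_{ij}\|$, which is nonsingular on $\widetilde{TM}$; hence this map is a local diffeomorphism and we may invert it to obtain $y^i=y^i(x,p)$. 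With this, ${\cal H}(x,p)=p_iy^i(x,p)-{\cal L}(x,y(x,p))$ is well defined as a function of $(x,p)$, and the partial derivatives appearing in the statement acquire an unambiguous meaning.

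Next I would compute the two partial derivatives. Differentiating with respect to $p_i$ gives
$$\dfrac{\partial{\cal H}}{\partial p_i}=y^i+\left(p_j-\dfrac{\partial{\cal L}}{\partial y^j}\right)\dfrac{\partial y^j}{\partial p_i}=y^i,$$
and differentiating with respect to $x^i$ gives
$$\dfrac{\partial{\cal H}}{\partial x^i}=-\dfrac{\partial{\cal L}}{\partial x^i}+\left(p_j-\dfrac{\partial{\cal L}}{\partial y^j}\right)\dfrac{\partial y^j}{\partial x^i}=-\dfrac{\partial{\cal L}}{\partial x^i},$$
where in both cases the bracketed terms vanish identically because of the defining relation $p_j=\dfrac{\partial{\cal L}}{\partial y^j}$. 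These are precisely the two Legendre identities quoted just before the statement.

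Finally I would specialize to a solution of the Euler--Lagrange equations. Along the canonical lift $\widetilde{c}$ of such a curve one has $y^i=\dfrac{dx^i}{dt}$, so the identity $\dfrac{\partial{\cal H}}{\partial p_i}=y^i$ immediately yields the first Hamilton equation $\dfrac{dx^i}{dt}=\dfrac{\partial{\cal H}}{\partial p_i}$. For the second, the Euler--Lagrange equation $E_i({\cal L})=\dfrac{\partial{\cal L}}{\partial x^i}-\dfrac{d}{dt}\dfrac{\partial{\cal L}}{\partial y^i}=0$, which holds because $E_i({\cal L})=\dfrac{1}{2}E_i(L)=0$, gives $\dfrac{dp_i}{dt}=\dfrac{d}{dt}\dfrac{\partial{\cal L}}{\partial y^i}=\dfrac{\partial{\cal L}}{\partial x^i}$; combining this with $\dfrac{\partial{\cal H}}{\partial x^i}=-\dfrac{\partial{\cal L}}{\partial x^i}$ produces $\dfrac{dp_i}{dt}=-\dfrac{\partial{\cal H}}{\partial x^i}$.

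The only genuine subtlety, and the step I would be most careful about, is the passage from the variables $(x,y)$ to the canonical variables $(x,p)$: the partial derivatives of ${\cal H}$ in the statement must be understood on $T^*M$, and the cancellation of the cross terms hinges entirely on the regularity hypothesis, which is exactly what both makes the Legendre map invertible and forces $p_j-\partial{\cal L}/\partial y^j=0$. Once this is settled the remaining computation is routine, and one may note as a consistency check that ${\cal H}=\frac{1}{2}E_L$ is then conserved along the solution, in agreement with Theorem~\ref{t2.4}.
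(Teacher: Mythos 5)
Your proposal is correct and follows essentially the same route as the paper: the paper also derives the two Legendre identities $\dfrac{\partial{\cal H}}{\partial p_i}=y^i$ and $\dfrac{\partial{\cal H}}{\partial x^i}=-\dfrac{\partial{\cal L}}{\partial x^i}$ from ${\cal H}=p_iy^i-{\cal L}$ and then substitutes the Euler--Lagrange relation $\dfrac{dp_i}{dt}=\dfrac{\partial{\cal L}}{\partial x^i}$ along solution curves. You simply make explicit the details the paper leaves tacit, namely the local invertibility of the Legendre map via the regularity of $g_{ij}$ and the cancellation of the cross terms $\left(p_j-\dfrac{\partial{\cal L}}{\partial y^j}\right)\dfrac{\partial y^j}{\partial p_i}$ and $\left(p_j-\dfrac{\partial{\cal L}}{\partial y^j}\right)\dfrac{\partial y^j}{\partial x^i}$.
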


These equations are important in applications.

\medskip

\noindent{\bf Example.} For the Lagrange space of Electrodynamics with the
fundamental function $L(x,y)$ from (\ref{1.4}) and $U(x)=0$ we obtain
$${\cal H}=\dfrac{1}{2mc}\gamma^{ij}(x)p_i
p_j-\dfrac{e}{mc^2}A^i(x)p_i+\dfrac{e^2}{2mc^3}A^i(x)A_i(x)$$
$(A^i=\gamma^{ij}A_j)$.

Then, the Hamilton - Jacobi equations can be written without
difficulties.

Now we remark that $\theta$ being a symplectic structure on
$\widetilde{TM}$, exterior differential $d\theta$ vanishes.
But in adapted basis $$d\theta=dg_{ij}\wedge\delta y^i\wedge
dx^j+g_{ij}d\delta y^i\wedge dx^j=0$$ reduces to:
$$\dfrac{1}{2}\left(\dfrac{\delta g_{ij}}{\delta
x^k}-\dfrac{\delta g_{ik}}{\delta x^j}\right)\delta y^i\wedge
dx^j\wedge dx^k+\dfrac{1}{2}\left(\dfrac{\partial g_{ij}}{\partial
x^k}-\dfrac{\partial g_{kj}}{\partial y^i}\right)\delta y^k\wedge
\delta y^i\wedge dx^j+$$ $$+g_{ij}\left(\dfrac{1}{2}R^i_{\
rs}dx^s\wedge dx^r+B^i_{\ rs}\delta y^s\wedge dx^r\right)\wedge
dx^j=0.$$ We obtain

\begin{theorem}
\label{t4.2}
For any Lagrange space $L^n$ the following
identities hold
\begin{equation}
\label{4.9}
g_{ij\|k}-g_{ik\|j}=0,\quad g_{ij}\|_k-g_{ik}\|_j=0.
\end{equation}
\end{theorem}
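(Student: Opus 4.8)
The plan is to turn the closedness of the symplectic form $\theta$ into the two identities by expanding $d\theta$ in the adapted cobasis $(dx^i,\delta y^i)$ and equating each homogeneous component to zero. By Proposition \ref{p4.1} we have $\theta=g_{ij}\,\delta y^i\wedge dx^j$, and by Theorem \ref{t1.1}, $4^\circ$, $\theta$ is symplectic, so $d\theta=0$. First I would compute
$$d\theta=dg_{ij}\wedge\delta y^i\wedge dx^j+g_{ij}\,d(\delta y^i)\wedge dx^j,$$
substituting the decomposition $dg_{ij}=\dfrac{\delta g_{ij}}{\delta x^k}\,dx^k+\dfrac{\partial g_{ij}}{\partial y^k}\,\delta y^k$ of the differential in the adapted cobasis, together with the expression for $d(\delta y^i)$ supplied by Lemma \ref{l5.1}, namely $d(\delta y^i)=\frac12 R^i{}_{js}\,dx^s\wedge dx^j+B^i{}_{js}\,\delta y^s\wedge dx^j$ with $B^i{}_{js}=\partial N^i{}_j/\partial y^s$ the Berwald coefficients.

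Next I would sort the resulting $3$-forms by type. Since the monomials $dx^a\wedge dx^b\wedge dx^c$, $dx^a\wedge dx^b\wedge\delta y^c$ and $dx^a\wedge\delta y^b\wedge\delta y^c$ are linearly independent, the single equation $d\theta=0$ splits into three independent relations. The purely horizontal part, proportional to $g_{ij}R^i{}_{ab}\,dx^b\wedge dx^a\wedge dx^j$, yields a separate relation among the curvature components of $N$ and is not one of the identities (\ref{4.9}) we seek. The part carrying two factors $\delta y$ is $\dfrac{\partial g_{ij}}{\partial y^k}\,\delta y^k\wedge\delta y^i\wedge dx^j$; antisymmetrizing in the two vertical indices and using that $\partial g_{ij}/\partial y^k=2C_{ijk}$ is totally symmetric (Theorem \ref{t1.1}, $2^\circ$) gives exactly the second identity $g_{ij}\|_k-g_{ik}\|_j=0$, where $\|_k$ denotes $v$-covariant derivation with respect to the Berwald connection $B\Gamma(N)=(B^i{}_{jk},0)$, for which the $v$-derivative reduces to $\partial/\partial y^k$.

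Finally, the terms with a single factor $\delta y$, coming from $\dfrac{\delta g_{ij}}{\delta x^k}\,dx^k\wedge\delta y^i\wedge dx^j$ together with $g_{ij}B^i{}_{ab}\,\delta y^b\wedge dx^a\wedge dx^j$, must combine to the first identity $g_{ij\|k}-g_{ik\|j}=0$. The step I expect to be the main obstacle is precisely this recognition: after relabelling the indices and antisymmetrizing in the two horizontal slots, one has to see that the bare derivatives $\delta g_{ij}/\delta x^k$ together with the Berwald terms $g_{pj}B^p{}_{ik}$ reassemble into the $h$-covariant derivative $g_{ij\|k}$ dictated by formula (\ref{1.4.7}) (taken with $L^i{}_{jk}=B^i{}_{jk}$, $C^i{}_{jk}=0$). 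Here the symmetry $B^i{}_{jk}=B^i{}_{kj}$, valid because the canonical nonlinear connection $N$ is symmetric (Proposition \ref{p3.1}), is what makes the symmetric part of $B$ cancel under antisymmetrization, leaving only the covariant-derivative combination. Once this identification is carried out, setting the coefficient of each independent $3$-form to zero produces (\ref{4.9}).
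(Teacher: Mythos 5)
Your proposal is correct and takes essentially the same route as the paper: the paper likewise starts from $d\theta=0$ for $\theta=g_{ij}\,\delta y^i\wedge dx^j$, expands it in the adapted cobasis using the expression for $d(\delta y^i)$ from Lemma \ref{l5.1}, separates the resulting $3$-form by type, and recovers (\ref{4.9}) from the Berwald $h$- and $v$-covariant derivatives together with the symmetries $B^i{}_{jk}=B^i{}_{kj}$ and $\dfrac{\partial g_{ij}}{\partial y^k}=2C_{ijk}$. Your reassembly of the single-$\delta y$ terms into $g_{ij\|k}-g_{ik\|j}$ is precisely the step the paper carries out.
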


Indeed, taking into account the $h-$ and $v-$covariant derivations
of the metric $g_{ij}$ with respect to Berwald connection
$B\Gamma(N)=\left(\dfrac{\partial N^i_j}{\partial y^k},0\right)$,
i.e. $$g_{ij\|k}=\dfrac{\delta g_{ij}}{\delta
x^k}-B^r_{ik}g_{kj}-B^r_{jk}g_{ir}$$ and
$g_{ij}\|_{k}=\dfrac{\partial g_{ij}}{\partial y^k}$, according
with the properties $\dfrac{\partial g_{ij}}{\partial
y^k}=2C_{ijk}$, $B^i_{jk}=B^i_{kj}$, we obtain (\ref{4.9}).

\section{Metrical $N$-linear connections}
\label{s5c2p1}
\setcounter{equation}{0}
\setcounter{definition}{0}
\setcounter{theorem}{0}
\setcounter{lemma}{0}

Let $N(N^i_j)$ be the canonical nonlinear connection of the
Lagrange space $L^n=(M,L)$ and $D$ an $N-$linear connection with
the coefficients $D\Gamma(N)=(L^i_{jk},C^i_{jk})$. Then, the $h-$
and $v-$ covariant derivations of the fundamental tensor $g_{ij}$,
$g_{ij|k}$ and $g_{ij}|_k$ are given by (1.4.7).

Applying the theory of $N-$linear connection from Chapter 1, one
proves without difficulties, the following theorem:

\begin{theorem}
\label{5.1}
\begin{itemize}\item[$1^\circ$] On the
manifold $\widetilde{TM}$ there exist only one $N-$linear
connection $D$ which verifies the following axioms:
\begin{itemize}
\item[$A_1$]\quad  $N$ is canonical nonlinear connection of the space
$L^n$.

\item[$A_2$]\quad  $g_{ij|k}=0$ ($D$ is $h$-metrical);

\item[$A_3$]\quad  $g_{ij}|_{k}=0$ ($D$ is $v$-metrical);

\item[$A_4$]\quad  $T_{\ jk}^{i}=0$ ($D$ is $h$-torsion free);

\item[$A_5$]\quad  $S_{\ jk}^{i}=0$ ($D$ is $v$-torsion free).
\end{itemize}

\item[$2^\circ$]\quad  The coefficients
$D\Gamma(N)=(L_{jk}^{i},C_{jk}^{i})$ of $D$ are expressed by the
following generalized Christoffel symbols:
\end{itemize}

\begin{equation}
\label{5.1}
\begin{array}{l}
L^i_{jk}=\dfrac{1}{2}g^{ir} \left(\dfrac{\delta g_{rk}}{\delta
x^j}+\dfrac{\delta g_{rj}}{\delta x^k}-\dfrac{\delta
g_{jk}}{\delta x^r}\right)\vspace{3mm}
\\
C^i_{jk}=\dfrac{1}{2}g^{ir} \left(\dfrac{\partial g_{rk}}{\partial
y^j} + \dfrac{
\partial g_{rj}}{\partial y^k}- \dfrac{\partial g_{jk}}{\partial y^r}\right)
\end{array}
\end{equation}

\begin{itemize}
\item[$3^\circ$] This connection depends only on the
fundamental function $L(x,y)$ of the Lagrange space $L^{n}$.
\end{itemize}
\end{theorem}

The $N$-linear connection $D$ given by the previous theorem is
called the {\it canonical metric connection} and denoted by
$C\Gamma (N)=(L^i_{jk},C^i_{jk})$.

By means of \S1.5, Ch. 1, the connection 1-forms $\omega
^{i}{}_{j}$ of the $C\Gamma(N)$ are
\begin{equation}
\label{5.2}
\omega^{i}{}_{j}=L_{jk}^{i}dx^{k}+C_{jk}^{i}\delta y^{k},
\end{equation}

\begin{theorem}
\label{5.2}
The canonical metrical connection $C\Gamma
(N)$ satisfies the following structure equations:
\begin{equation}
\label{5.3}
\begin{array}{l}
d(dx^{i})-dx^{k}\wedge\omega^{i}{}_{k}=-{\stackrel{(0)}{\Omega}}{}^{i},
\vspace{3mm} \\
d(\delta y^{i})-\delta y^{k}\wedge\omega
^{i}{}_{k}=-{\stackrel{(1)}{\Omega}}{}^{i},
\vspace{3mm} \\
d\omega^i_j-\omega^k_j\wedge \omega^i_k=-{\Omega}{}^{i}_j
\end{array}
\end{equation}
where 2-forms of torsion ${\stackrel{(0)}{\Omega }}{}^{i}$ and
${\stackrel{(1)}{\Omega}}{}^{i}$ are as follows
\begin{equation}
\label{5.4}
\begin{array}{l}
{\stackrel{(0)}{\Omega}}{}^{i}=C_{jk}^{i}dx^{j}\wedge \delta y^{k},\vspace{%
3mm} \\
{\stackrel{(1)}{\Omega}}{}^{i}=\dfrac{1}{2}R^{i}{}_{jk}dx^{j}\wedge
dx^{k}+P^{i}{}_{jk}dx^{j}\wedge \delta y^{k}
\end{array}
\end{equation}
and the 2-forms of curvature $\Omega ^{i}{}_{j}$ are
\begin{equation}
\label{5.5}
\Omega ^{i}{}_{j}=\dfrac{1}{2}R_{j\ kh}^{\ i}dx^{k}\wedge
dx^{h}+P_{j\ kh}^{\ i}dx^{k}\wedge \delta y^{h}+\dfrac{1}{2}S_{j\
kh}^{\ i}\delta y^{k}\wedge \delta y^{h}.
\end{equation}
\end{theorem}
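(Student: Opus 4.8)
The plan is to derive the structure equations (\ref{5.3})--(\ref{5.5}) as a direct specialization of the general structure-equation theorem for an arbitrary $N$-linear connection (Theorem~\ref{t5.3}) to the canonical metrical connection $C\Gamma(N)$. Indeed, $C\Gamma(N)$ is itself an $N$-linear connection whose connection $1$-forms are $\omega^i_j=L^i_{jk}dx^k+C^i_{jk}\delta y^k$, with $L^i_{jk}$, $C^i_{jk}$ the generalized Christoffel symbols; hence the three identities (\ref{1.5.8}) hold verbatim, together with the torsion $2$-forms (\ref{1.5.9}) and the curvature $2$-forms (\ref{1.5.10}). All the genuine analytic content of these identities --- that $d(dx^i)=0$, the formula (\ref{1.5.7}) for $d(\delta y^i)$ supplied by Lemma~\ref{l5.1}, and the tensorial character of the resulting $2$-forms --- was already discharged in \S1.5, so nothing new needs to be computed from scratch.

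First I would record the two properties of $C\Gamma(N)$ that distinguish it among all $N$-linear connections: by its characterization theorem it is $h$-torsion free and $v$-torsion free, i.e. in the notation of (\ref{1.4.10}) one has $T^i_{\ jk}=L^i_{jk}-L^i_{kj}=0$ (axiom $A_4$) and $S^i_{\ jk}=C^i_{jk}-C^i_{kj}=0$ (axiom $A_5$). Substituting $T^i_{\ jk}=0$ into the first line of (\ref{1.5.9}) annihilates the term $\tfrac12 T^i_{\ jk}dx^j\wedge dx^k$ and leaves ${\stackrel{(0)}{\Omega}}{}^i=C^i_{jk}dx^j\wedge\delta y^k$, which is exactly the first line of (\ref{5.4}). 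Likewise, substituting $S^i_{\ jk}=0$ into the second line of (\ref{1.5.9}) annihilates the term $\tfrac12 S^i_{\ jk}\delta y^j\wedge\delta y^k$ and leaves ${\stackrel{(1)}{\Omega}}{}^i=\tfrac12 R^i_{\ jk}dx^j\wedge dx^k+P^i_{\ jk}dx^j\wedge\delta y^k$, which is the second line of (\ref{5.4}).

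The curvature $2$-forms need no modification: formula (\ref{1.5.10}) carries over unchanged to give (\ref{5.5}), since the curvature components $R_{j\ kh}^{\ i}$, $P_{j\ kh}^{\ i}$, $S_{j\ kh}^{\ i}$ are defined for every $N$-linear connection and are here obtained from the Christoffel coefficients via (\ref{1.4.14}). Thus the three equations (\ref{5.3}) reproduce (\ref{1.5.8}) with the simplified torsion forms, and the argument is complete. I do not expect any real obstacle here: the proof is essentially bookkeeping, and the only point needing attention is to verify that it is precisely the $h$- and $v$-torsion-free conditions $A_4$, $A_5$ that remove the $dx\wedge dx$ term from ${\stackrel{(0)}{\Omega}}{}^i$ and the $\delta y\wedge\delta y$ term from ${\stackrel{(1)}{\Omega}}{}^i$, everything else being inherited directly from \S1.5.
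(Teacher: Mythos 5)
Your proposal is correct and follows essentially the same route as the paper: Theorem 2.5.2 is obtained by specializing the general structure equations of an $N$-linear connection (Theorem 1.5.3, resting on Lemmas 1.5.1 and 1.5.2) to $C\Gamma(N)$, where the axioms $A_4$ and $A_5$ ($T^i_{\ jk}=0$, $S^i_{\ jk}=0$) delete the $\frac12 T^i_{\ jk}\,dx^j\wedge dx^k$ term from ${\stackrel{(0)}{\Omega}}{}^i$ and the $\frac12 S^i_{\ jk}\,\delta y^j\wedge\delta y^k$ term from ${\stackrel{(1)}{\Omega}}{}^i$, the curvature $2$-forms carrying over unchanged. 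You correctly identified the only point requiring attention, namely that the torsion-free axioms are precisely what produce the simplified torsion forms.
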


The $d$-tensors of torsion $R_{\ jk}^{\ i}$, $P_{\ jk}^{\ i}$ are
given by (1.3.13') and (1.4.10), and the d-tensors of curvature
$R_{j\ kh}^{\ i}$, $P_{j\ kh}^{\ i}$, $S_{j\ kh}^{\ i}$ have the
expressions (1.4.14).

Starting from the canonical metrical connection
$C\Gamma(N)=(L^i_{jk},C^i_{jk})$ we can derive other $N$-linear
connections depend only on the space $L^n$: Berwald connection
$B\Gamma (N)=\left(\dfrac{\partial N^{i}{}_{j}}{\partial
y^{k}},0\right)$; Chern-Rund connection $R\Gamma
(N)=(L_{jk}^{i},0)$ and Hashiguchi connection $H\Gamma
(N)=\left(\dfrac{\partial N^{i}{}_{j}}{\partial
y^{k}},C_{jk}^{i}\right)$. For special transformations of these
connections, the following commutative diagram holds:
$$\begin{array}{ccccc} & & R\Gamma(N) & & \\ & \nearrow & & \searrow \\
C\Gamma(N) & &\longrightarrow & & B\Gamma(N)\\ & \searrow & &\nearrow \\ & &
H\Gamma(N) &
&\end{array}$$

Some properties of the canonical metrical connection
$C\Gamma(N)$ are given by:

\begin{proposition}
\label{p5.1}
We have:

\begin{itemize}
\item[$1^\circ$] $\sum_{(ijk)}R_{ijk}=0$,
($R_{ijk}=g_{ih}R^{h}{}_{jk}$).

\item[$2^\circ$] $P_{ijk}=g_{ih}P^{h}{}_{jk}$ is totally
symmetric.

\item[$3^\circ$]  The covariant curvature d-tensors
$R_{ijkh}=g_{jr}R^{\ r}_{i\ \ kh}$, $P_{ijkh}=g_{jr}P^{\ r}_{i\ \
kh}$ and $S_{ijkh}=g_{ir}S^{\ r}_{i\ \ kh}$ are skew-symmetric
with respect to the first two indices.

\item[$4^\circ$]
$S_{ijkh}=C_{iks}C^{s}{}_{jh}-C_{ihs}C^{s}{}_{jk}$.

\item[$5^\circ$] $C_{ikh}=g_{is}C^s_{jh}$.
\end{itemize}
\end{proposition}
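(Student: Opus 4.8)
The plan is to extract all five assertions from the defining data of the canonical metric connection $C\Gamma(N)=(L^i_{jk},C^i_{jk})$: the generalized Christoffel formulas~(2.5.1), the metricity axioms $A_2,A_3$ (so $g_{ij|k}=0$ and $g_{ij}|_k=0$), the torsion-free axioms $A_4,A_5$ (so $L^i_{jk}$ and $C^i_{jk}$ are symmetric in $j,k$), together with the total symmetry of $C_{ijk}=\frac12\,\partial g_{ij}/\partial y^k$ from Theorem~2.1.1. I would begin with $5^\circ$, which is purely definitional: lowering the index in the second formula of~(2.5.1) and writing $\partial g_{ab}/\partial y^c=2C_{abc}$ gives $g_{is}C^s_{jk}=C_{ikj}+C_{ijk}-C_{jki}$, and the total symmetry of $C_{ijk}$ collapses this to $C_{ijk}$, which is $5^\circ$ (with the evident relabelling of indices).

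For $3^\circ$ I would feed the fundamental tensor $g_{ij}$, regarded as a $d$-tensor of type $(0,2)$, into the Ricci identities~(1.4.15) in their $(0,2)$-form. Each of the $hh$-, $hv$- and $vv$-identities equates a commutator of second covariant derivatives of $g$ to $-g_{sj}(\mathrm{curv})^{\,s}_{i\cdots}-g_{is}(\mathrm{curv})^{\,s}_{j\cdots}$ plus torsion terms; by $A_2,A_3$ the left-hand sides vanish, and the torsion terms vanish because each carries a factor $g_{ij|s}$ or $g_{ij}|_s$. What remains is $R_{ijkh}+R_{jikh}=0$, $P_{ijkh}+P_{jikh}=0$ and $S_{ijkh}+S_{jikh}=0$, i.e. the skew-symmetry in the first two indices. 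For $4^\circ$ I would lower the contravariant index in the third formula of~(1.4.14) for $S^{\ i}_{h\ jk}$ and use $v$-metricity to rewrite the two $\partial C/\partial y$ terms through $\partial g/\partial y=2C$; the derivative contributions cancel and leave exactly the quadratic expression $C_{iks}C^s_{jh}-C_{ihs}C^s_{jk}$. Conceptually this is nothing but the observation that $C^i_{jk}$ is the Levi-Civita connection of the fibre metric $g_{ij}(x,\cdot)$ and $S$ is its Riemann tensor.

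For $1^\circ$ I would use that the $2$-form $\theta=g_{ij}\,\delta y^i\wedge dx^j$ of~(2.4.6) is symplectic, hence closed. Computing $d\theta$ with $d(\delta y^i)=\frac12 R^i_{\ rs}\,dx^s\wedge dx^r+B^i_{\ rs}\,\delta y^s\wedge dx^r$ from~(1.5.7), the only purely horizontal $3$-form that appears is $\frac12 g_{ij}R^i_{\ rs}\,dx^s\wedge dx^r\wedge dx^j$, and its vanishing is precisely the cyclic identity $\sum_{(ijk)}R_{ijk}=0$ with $R_{ijk}=g_{ih}R^h_{\ jk}$. The same conclusion follows from the first Bianchi identity~(1.4.18) once $T^i_{\ jk}=0$ is imposed.

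The genuine obstacle is $2^\circ$, the total symmetry of the $hv$-torsion $P_{ijk}=g_{ir}P^r_{\ jk}$, because $P^i_{\ jk}=\partial N^i_j/\partial y^k-L^i_{\ kj}=B^i_{\ jk}-L^i_{\ jk}$ mixes Berwald and Cartan data and has no manifest symmetry beyond the $j\leftrightarrow k$ one inherited from the symmetry of $B^i_{\ jk}$ and $L^i_{\ jk}$ in their lower indices. For the remaining transposition $i\leftrightarrow j$ I would invoke the Berwald identity $g_{ij\|k}=g_{ik\|j}$ of~(2.4.9): writing $\delta_k$ for $\frac{\delta}{\delta x^k}$ and setting $B_{abc}:=g_{ar}B^r_{\ bc}$, one has $g_{ij\|k}=\delta_k g_{ij}-B_{jik}-B_{ijk}$, and using that $B_{abc}$ is symmetric in $b,c$ the identity reduces to $B_{ijk}-B_{jik}=\frac{\delta g_{ik}}{\delta x^j}-\frac{\delta g_{jk}}{\delta x^i}$. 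The right-hand side is exactly $L_{ijk}-L_{jik}$ by the definition of $L^i_{\ jk}$ in~(2.5.1), whence $P_{ijk}=P_{jik}$ and $P_{ijk}$ is totally symmetric. I expect this last step to require the only delicate bookkeeping in the whole proposition, namely keeping the three index slots and their separate symmetries straight while unpacking~(2.4.9); every other part is a one-line substitution.
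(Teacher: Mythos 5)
Your proposal is correct and is essentially an unpacking of the paper's one-sentence proof, which cites exactly the tools you deploy: $d\theta=0$ for $1^\circ$ (and, through the Berwald identities (2.4.9) that the paper itself derives from $d\theta=0$, for the one delicate item $2^\circ$), the Ricci identities applied to $g_{ij}$ together with $g_{ij|k}=0$, $g_{ij}|_k=0$ for $3^\circ$, and direct index-lowering in (2.5.1) and (1.4.14) combined with the total symmetry of $C_{ijk}$ for $4^\circ$ and $5^\circ$. Two small cautions only: your parenthetical claim that $1^\circ$ also follows from the first Bianchi identity (1.4.18) once $T^i_{\ jk}=0$ is doubtful, since (1.4.18) is metric-free while $\sum_{(ijk)}R_{ijk}=0$ involves the lowering by $g_{ih}$ and genuinely uses the variational origin of the canonical $N$ encoded in $d\theta=0$ (for a generic symmetric metrical $N$-linear connection the cyclic identity fails); and in $4^\circ$, with the lowering $S_{ijkh}=g_{jr}S_{i\ \ kh}^{\ \,r}$ your cancellation actually yields $C_{ihs}C^{s}{}_{jk}-C_{iks}C^{s}{}_{jh}$, i.e.\ the printed right-hand side up to an overall sign governed by which slot is lowered and by the skew-symmetry of $3^\circ$ — a convention issue (the paper's own index notation in $3^\circ$ and $5^\circ$ contains typos), not a gap in your argument.
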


These properties can be proved using the property $d\theta=0$,
with $\theta=g_{ij}\delta y^i\wedge dx^j$, the Ricci identities
applied to the fundamental tensor $g_{ij}$ and the equations
$g_{ij|k}=0,$ $g_{ij}|_k=0$.

By the same method we can study the metrical connections with a
priori given $h-$ and $v-$ torsions.

\begin{theorem}
\label{t5.3}
\begin{itemize}
\item[$1^\circ$]  There exists only one $N$-linear connection $\bar{D}
\Gamma (N)=(\bar{L}_{jk}^{i},\bar{C}_{jk}^{i})$ which satisfies
the following axioms:

\begin{itemize}
\item[$A^{\prime}_1$]\quad  $N$ is canonical nonlinear connection
of the space $L^{n}$,

\item[$A^{\prime}_2$]\quad  $g_{ij|k}=0$ ($\bar{D}$ is
$h$-metrical),

\item[$A^{\prime}_3$]\quad  $g_{ij}|_{k}=0$ $(\bar{D}$ is
$v$-metrical),

\item[$A^{\prime}_4$]\quad  The $h$-tensor of torsion
$\bar{T}_{jk}^{i}$ is a priori given.

\item[$A^{\prime}_5$]\quad  The $v$-tensor of torsion
$\bar{S}_{jk}^{i}$ is a priori given.
\end{itemize}

\item[$2^\circ$]\quad  The coefficients
$(\bar{L}_{jk}^{i},\bar{C}_{jk}^{i})$ of $\bar{D}$ are given by
\end{itemize}
\begin{equation}
\label{5.6}
\begin{array}{l}
\bar{L}^i_{jk}=L^i_{jk} + \dfrac{1}{2}
g^{ih}(g_{jr}\bar{T}^r{}_{kh} + g_{kr}
\bar{T}^r{}_{jh} - g_{hr}\bar{T}^r{}_{kj}), \vspace{3mm} \\
\bar{C}^i_{jk}=C^i_{jk} + \dfrac{1}{2}
g^{ih}(g_{jr}\bar{S}^r{}_{kh} + g_{kr} \bar{S}^r{}_{jh} -
g_{hr}\bar{S}^r{}_{kj})
\end{array}
\end{equation}
where $(L^i_{jk}, C^i_{jk})$ are the coefficients of
the canonical metrical connection.
\end{theorem}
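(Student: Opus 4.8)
The plan is to reduce the entire statement to the classical Christoffel/Koszul construction carried out twice — once in the horizontal variables, once in the vertical ones. The crucial structural observation, read off from (1.4.7) and (1.4.10), is that the four conditions on the $h$-coefficients $\bar L^i_{jk}$ (namely $A'_2$, $h$-metricity, and $A'_4$, the prescribed $h$-torsion $\bar T^i_{jk}$) involve only the operators $\frac{\delta}{\delta x^k}$ and the tensor $\bar T$, whereas the conditions on the $v$-coefficients $\bar C^i_{jk}$ (that is $A'_3$ and $A'_5$) involve only $\frac{\partial}{\partial y^k}$ and $\bar S$. Hence the problem decouples into two formally identical algebraic problems, and it suffices to solve one of them. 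Since the canonical metrical connection $C\Gamma(N)=(L^i_{jk},C^i_{jk})$ of Theorem 5.1 already realises the metric with \emph{vanishing} $h$- and $v$-torsion, I would set $\bar L^i_{jk}=L^i_{jk}+B^i_{jk}$ and $\bar C^i_{jk}=C^i_{jk}+B'^i_{jk}$ with $B,B'$ unknown $(1,2)$-type $d$-tensors. This ansatz makes $A'_1$ automatic: the nonlinear connection $N$ is untouched, and because $\bar L$ differs from $L$ by a $d$-tensor it inherits the inhomogeneous transformation law (1.4.2), so $\bar D\Gamma(N)$ is again an $N$-linear connection.

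Next I would translate $A'_2$ and $A'_4$ into linear conditions on $B$. Writing $B_{i,jk}=g_{ir}B^r_{jk}$ and $\bar T_{i,jk}=g_{ir}\bar T^r_{jk}$, and using $g_{ij|k}=0$ and $L^i_{jk}=L^i_{kj}$ for $C\Gamma(N)$, the $h$-metricity becomes the condition $B_{i,jk}+B_{j,ik}=0$, while the prescribed-torsion condition becomes $B_{i,jk}-B_{i,kj}=\bar T_{i,jk}$, where $\bar T_{i,jk}=-\bar T_{i,kj}$. These are exactly the defining relations of the Levi-Civita-with-torsion problem. Applying the cyclic-permutation trick — adding the relations obtained by cycling $i\to j\to k$ and using metricity to cancel the symmetric combinations — yields the unique solution
\[
B_{i,jk}=\tfrac12\bigl(\bar T_{j,ki}+\bar T_{k,ji}-\bar T_{i,kj}\bigr),
\]
which, after raising the index with $g^{ih}$, is precisely the correction term in (5.6); the identical computation with $\bar S$ in place of $\bar T$ produces $B'^i_{jk}$ and the second line of (5.6). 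For existence I would then confirm by direct substitution that this $B$ satisfies both requirements: $B_{i,jk}+B_{j,ik}$ collapses to $\tfrac12(\bar T_{k,ij}+\bar T_{k,ji})=0$, and $B_{i,jk}-B_{i,kj}$ reduces to $\bar T_{i,jk}$.

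For uniqueness I would take the difference $B=\bar L-\bar L'$ of two connections satisfying all the axioms. It obeys the homogeneous system $B_{i,jk}+B_{j,ik}=0$ and $B_{i,jk}-B_{i,kj}=0$, and the symmetry chain
\[
B_{i,jk}=-B_{j,ik}=-B_{j,ki}=B_{k,ji}=B_{k,ij}=-B_{i,kj}=-B_{i,jk}
\]
forces $B_{i,jk}=0$, hence $\bar L=\bar L'$. The vertical coefficients $\bar C^i_{jk}$ are pinned down by the word-for-word analogue with $\bar S$, so $\bar D$ is unique.

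The whole argument is routine index algebra that mirrors the proof of Theorem 5.1 with the torsion-free right-hand sides replaced by the given $\bar T$ and $\bar S$. The one point I would flag as the genuine subtlety — rather than a computational obstacle — is the compatibility between prescribing a nonzero torsion and retaining metricity: the check $B_{i,jk}+B_{j,ik}=0$ succeeds \emph{only} because of the joint use of the antisymmetry $\bar T_{a,bc}=-\bar T_{a,cb}$ and the symmetry of $g$. Keeping straight which index pair is symmetric and which antisymmetric at each step is where care is needed; everything else is transcription.
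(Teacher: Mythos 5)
Your proof is correct and takes essentially the route the paper intends: the paper states this theorem without a written proof, remarking only that it follows ``by the same method'' as the canonical metrical connection theorem, and your argument --- perturbing $C\Gamma(N)$ by contorsion $d$-tensors $B$, $B'$ determined from the prescribed torsions $\bar T$, $\bar S$ via the cyclic-permutation trick, then checking existence by substitution and uniqueness via the homogeneous system --- reproduces formula (5.6) verbatim. The decoupling of the $h$- and $v$-problems and the preservation of axiom $A'_1$ are handled correctly, so there are no gaps.
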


From now on $\bar{T}^i_{\ jk}, \bar{S}^i_{\ jk}$ will be denoted by
$T^i_{\ jk}, S^i_{\ jk}$ and the $N-$linear connection given by
the previous theorem will be called {\it metrical $N-$connection}
of the Lagrange space $L^n$.

Some particular cases can be studied using the expressions of the
coefficients $\bar{L}^i_{jk}$ and $\bar{C}^i_{jk}$. For instance
the semi-symmetric case will be obtained taking $T^i_{\
jk}=\delta^i_j\sigma_k-\delta^i_k\sigma_j$, $S^i_{\
jk}=\delta^i_j\tau_k-\delta^i_k\tau_j$.

\begin{proposition}
\label{p5.2}
The Ricci identities of the metrical
$N$-linear connection $D\Gamma(N)$ are given by:
\begin{equation}
\label{5.6}
\begin{array}{l}
X^i{}_{|j|k}-X^i{}_{|k|j}=X^r R^{\ i}_{r\ jk} -
X^i{}_{|r}T^r{}_{jk}-X^i{}|_r
R^r{}_{jk},\vspace{3mm} \\
X^i{}_{|j}|_k-X^i|_{k|j} =X^r P^{\ i}_{r\ jk} -
X^i{}_{|r}C^r{}_{jk}-X^i{}|_r
P^r{}_{jk},\vspace{3mm} \\
X^i{}|_j|_k - X^i{}|_k|_j=X^r S^{\ i}_{r\ jk} - X^i{}|_r
S^r{}_{jk}.
\end{array}
\end{equation}
\end{proposition}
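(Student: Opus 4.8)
The approach I would take is to recognise that the asserted identities are simply the specialisation, to the metrical $N$-linear connection $D\Gamma(N)=(L^i_{jk},C^i_{jk})$ of the Lagrange space $L^n$, of the general Ricci identities (1.4.15) already derived in Chapter~1 for an arbitrary $N$-linear connection acting on a $d$-vector field $X^i$. The metrical connection introduced just above the proposition is, by the preceding theorem, a genuine $N$-linear connection (its coefficients obey the transformation rules (1.4.2)), so nothing in the derivation of (1.4.15) needs to be repeated. Reading the curvature $d$-tensors $R^{\ i}_{r\ jk}$, $P^{\ i}_{r\ jk}$, $S^{\ i}_{r\ jk}$ off (1.4.14) and the torsion $d$-tensors $T^r_{\ jk}$, $S^r_{\ jk}$, $P^r_{\ jk}$ off (1.4.10), together with the nonlinear-connection curvature $R^r_{\ jk}$ from (1.3.13'), all formed for this particular connection, produces the three identities verbatim. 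I would present this one-line deduction as the actual proof.

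For the reader who wants a self-contained derivation I would instead rebuild the identities from the curvature operator $R(X,Y)=D_XD_Y-D_YD_X-D_{[X,Y]}$ of (1.4.11), applied to the horizontal $d$-vector field $X=X^i\dfrac{\delta}{\delta x^i}$ and evaluated on the three pairs of adapted basis fields $\left(\dfrac{\delta}{\delta x^j},\dfrac{\delta}{\delta x^k}\right)$, $\left(\dfrac{\partial}{\partial y^j},\dfrac{\delta}{\delta x^k}\right)$ and $\left(\dfrac{\partial}{\partial y^j},\dfrac{\partial}{\partial y^k}\right)$. These three cases yield the three lines of the proposition in turn, with the curvature $d$-tensors (1.4.14) emerging as the coefficients of $X^r$. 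Concretely one substitutes the component formulas (1.4.5') and (1.4.6') for the iterated $h$- and $v$-covariant derivatives and collects the commutators of the second derivatives.

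The step that demands care---and the only genuine obstacle---is the bookkeeping of the vertical ``defect'' terms forced by the non-commutativity of the adapted frame. One needs the brackets $\left[\dfrac{\delta}{\delta x^j},\dfrac{\delta}{\delta x^k}\right]=R^s_{\ jk}\dfrac{\partial}{\partial y^s}$ from (1.3.13) and $\left[\dfrac{\delta}{\delta x^j},\dfrac{\partial}{\partial y^k}\right]=\dfrac{\partial N^s_{\ j}}{\partial y^k}\dfrac{\partial}{\partial y^s}$: the first converts a repeated horizontal derivative into a vertical one and is exactly what produces the term $-X^i{}|_r R^r_{\ jk}$ in the first identity, while the second, once combined with the horizontal coefficients $L^i_{jk}$, yields the mixed torsion $P^r_{\ jk}$ appearing in the second identity. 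After these vertical corrections are accounted for and the definitions $T^r_{\ jk}=L^r_{jk}-L^r_{kj}$, $S^r_{\ jk}=C^r_{jk}-C^r_{kj}$ from (1.4.10) are inserted, the three identities follow. Because all of this merely re-establishes the already-proved (1.4.15), I regard the specialisation argument of the first paragraph as the cleanest proof.
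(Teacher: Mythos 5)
Your proposal is correct and coincides with the paper's treatment: the paper itself gives no separate proof of Proposition 2.5.2, since the identities are exactly the general Ricci identities (1.4.15) of Chapter 1, valid for any $N$-linear connection, specialised to the metrical connection $D\Gamma(N)$ with its torsion and curvature $d$-tensors from (1.4.10) and (1.4.14). Your supplementary frame-bracket computation is a sound (if unnecessary) reconstruction of how (1.4.15) was obtained in the first place.
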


Of course these identities can be extended to a $d$-tensor field
of type $(r,s)$.

Denoting
\begin{equation}
\label{5.7}
D^i{}_j=y^i{}_{|j},\ d^i{}_j=y^i{}|_j.
\end{equation}
we have the $h-$ and $v-$ deflection tensors. They have the known
expressions:
\begin{equation}
D^i{}_j=y^s L^i{}_{sj}-N^i{}_j;\
d^i{}_j=\delta^i{}_j+y^sC^i{}_{sj}. \tag{2.5.7'}
\end{equation}

According to Ricci identities (\ref{5.6}) we obtain:

\begin{theorem}
\label{t5.4}
For any metrical $N$-linear connection the
following identities hold:
\begin{equation}
\label{5.8}
\begin{array}{l}
D^i{}_{j|k}-D^i{}_{k|j}=y^s R^{\ i}_{s\ jk}-D^i{}_s T^s_{\
jk}-d^i{}_s R^s_{\ jk},
\vspace{3mm} \\
D^i{}_{j}|_k-d^i{}_{k|j}=y^s P^{\ i}_{s\
jk}-D^i{}_s C^s{}_{jk}-d^i{}_s P^s{}_{jk}, \vspace{3mm} \\
d^i{}_{j}|_k-d^i{}_{k}|_j=y^s S^{\ i}_{s\ jk}-d^i{}_s S^s_{\ jk}.
\end{array}
\end{equation}
\end{theorem}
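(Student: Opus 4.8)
The plan is to obtain the identities (5.8) as a direct specialization of the Ricci identities (5.6) to the Liouville vector field $\C=y^{i}\dfrac{\partial}{\partial y^{i}}$, in exactly the manner used for Theorem 4.2 of Chapter 1. Indeed, since the metrical $N$-linear connection of $L^{n}$ is in particular an $N$-linear connection, the argument of Theorem 4.2 already applies, and the present proof merely reproduces it in this setting. The decisive observation is definition (5.7): the $h$- and $v$-deflection tensors are nothing but the first covariant derivatives of the components $y^{i}$ of $\C$, namely $D^{i}{}_{j}=y^{i}{}_{|j}$ and $d^{i}{}_{j}=y^{i}{}|_{j}$. Consequently the left-hand sides of the three asserted identities are precisely the skew parts of the iterated second covariant derivatives of $y^{i}$.

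First I would record the bookkeeping that makes the substitution legitimate. Because $D^{i}{}_{j}$ and $d^{i}{}_{j}$ are themselves $d$-tensor fields of type $(1,1)$, their $h$- and $v$-covariant derivatives are well defined via (1.4.5') and (1.4.6'), and they coincide with the iterated derivatives of $y^{i}$: explicitly $y^{i}{}_{|j|k}=D^{i}{}_{j|k}$, $y^{i}{}_{|j}|_{k}=D^{i}{}_{j}|_{k}$, $y^{i}|_{k|j}=d^{i}{}_{k|j}$ and $y^{i}|_{j}|_{k}=d^{i}{}_{j}|_{k}$. This is the only point that needs a word of care, and it is immediate from the definitions of the operators $_{|}$ and $|$.

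Then I would apply the three lines of (5.6) with $X^{i}=y^{i}$. Using $y^{i}{}_{|r}=D^{i}{}_{r}$, $y^{i}|_{r}=d^{i}{}_{r}$ and relabelling the contracted index $r\to s$, the first line becomes the $h$--$h$ identity $D^{i}{}_{j|k}-D^{i}{}_{k|j}=y^{s}R^{\ i}_{s\ jk}-D^{i}{}_{s}T^{s}{}_{jk}-d^{i}{}_{s}R^{s}{}_{jk}$, the second line becomes the mixed $h$--$v$ identity, and the third line becomes the $v$--$v$ identity. Reading off each right-hand side reproduces (5.8) verbatim. I do not expect any genuine obstacle: the content is already packaged in the Ricci identities, so the proof is pure substitution of the deflection tensors for the covariant derivatives of $y^{i}$; the only thing to keep straight is the index placement on the curvature $d$-tensors $R^{\ i}_{s\ jk},P^{\ i}_{s\ jk},S^{\ i}_{s\ jk}$ and on the torsions $T^{s}{}_{jk},C^{s}{}_{jk}$, which are carried along unchanged from (5.6).
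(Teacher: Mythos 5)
Your proposal is correct and follows exactly the paper's route: the paper derives Theorem \ref{t5.4} by applying the Ricci identities (\ref{5.6}) to the Liouville vector field, with the deflection tensors $D^i{}_j=y^i{}_{|j}$, $d^i{}_j=y^i|_j$ of (\ref{5.7}) substituted for the covariant derivatives of $y^i$, just as in Theorem \ref{t4.2} of Chapter 1. Your care with the mixed identity (matching $y^i{}_{|j}|_k-y^i|_{k|j}$ to the second line of (\ref{5.6})) is precisely the only bookkeeping point the argument requires.
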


We will apply this theory in a next section taking into
account the canonical metrical connection $C\Gamma(N)$ and taking
$T^i_{\ jk}=0$, $S^i_{\ jk}=0.$

Of course the theory of parallelism of vector fields and the
$h-$ge\-o\-de\-sics or $v-$ge\-o\-de\-sics for the metrical connection
$N-$linear connections can be obtained as a consequence of the
corresponding theory from Ch. 1.

\section{The electromagnetic and gravitational fields}\index{Electromagnetism and gravitational fields}
\label{s6c2p1}
\setcounter{equation}{0}
\setcounter{definition}{0}
\setcounter{theorem}{0}
\setcounter{lemma}{0}

Let us consider a Lagrange spaces $L^n=(M,L)$ endowed with the
canonical nonlinear connection $N$ and with the canonical metrical
$N-$connection $C\Gamma(N)=(L^i_{jk},C^i_{jk})$.

The covariant deflection tensors $D_{ji}$ and $d_{ji}$ are given
by $D_{ij}=g_{is}D^{s}{}_{j},\ d_{ij}=g_{is}d^{s}{}_{j}$. We have:
$$D_{ij|k}=g_{is}D^{s}{}_{j|k},\ d_{ij|k}=g_{is}d^{s}{}_{j|k}$$ etc. So, we have

\begin{proposition}
\label{p6.1}
The covariant deflection tensors
$D_{ij}$ and $d_{ij}$ of the canonical metrical $N$-connection
$C\Gamma (N)$ satisfy the identities:
\begin{equation}
\label{6.1}
\begin{array}{l}
D_{ij|k}-D_{ik|j}=y^{s}R_{sijk}-d_{is}R^{s}{}_{jk},\vspace{3mm} \\
D_{ij}|_{k}-d_{ik|j}=y^{s}P_{sijk}-D_{is}C^{s}{}_{jk}-d_{is}P^{s}{}_{jk},
\vspace{3mm} \\
d_{ij}|_{k}-d_{ik}|_{j}=y^{s}S_{sijk}.
\end{array}
\end{equation}
\end{proposition}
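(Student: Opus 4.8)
The plan is to derive (6.1) directly from the contravariant deflection identities of Theorem 2.5.4 by lowering the free index with the fundamental tensor, exploiting the fact that the canonical metrical connection $C\Gamma(N)$ commutes with raising and lowering of indices.

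First I would specialize the identities (2.5.8) of Theorem 2.5.4 to the canonical metrical connection $C\Gamma(N)$. By Theorem 2.5.1 this connection is $h$-torsion free and $v$-torsion free, that is $T^i_{\ jk}=0$ and $S^i_{\ jk}=0$. Substituting these into (2.5.8) kills the terms $D^i{}_s T^s_{\ jk}$ and $d^i{}_s S^s_{\ jk}$, leaving the three reduced identities $D^i{}_{j|k}-D^i{}_{k|j}=y^s R^{\ i}_{s\ jk}-d^i{}_s R^s_{\ jk}$, then $D^i{}_j|_k-d^i{}_{k|j}=y^s P^{\ i}_{s\ jk}-D^i{}_s C^s_{\ jk}-d^i{}_s P^s_{\ jk}$, and finally $d^i{}_j|_k-d^i{}_k|_j=y^s S^{\ i}_{s\ jk}$, now holding for $C\Gamma(N)$.

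Next I would contract each of these with $g_{li}$, summing over $i$, and relabel $l\to i$ at the end. The essential point is metricity: since $C\Gamma(N)$ satisfies $g_{ij|k}=0$ and $g_{ij}|_k=0$, the fundamental tensor passes freely through both the $h$- and $v$-covariant derivations, so $g_{li}D^i{}_{j|k}=(g_{li}D^i{}_j)_{|k}=D_{lj|k}$ and likewise $g_{li}d^i{}_{j}|_k=d_{lj}|_k$, using $D_{ij}=g_{is}D^s{}_j$ and $d_{ij}=g_{is}d^s{}_j$. On the right-hand sides I would use the covariant-curvature conventions of Proposition 2.5.1, under which lowering the contravariant slot of the mixed curvature tensors gives $g_{li}R^{\ i}_{s\ jk}=R_{sljk}$, $g_{li}P^{\ i}_{s\ jk}=P_{sljk}$ and $g_{li}S^{\ i}_{s\ jk}=S_{sljk}$, while $g_{li}d^i{}_s=d_{ls}$ and $g_{li}D^i{}_s=D_{ls}$; the factors $R^s_{\ jk}$, $P^s_{\ jk}$ and $C^s_{\ jk}$ carry no free $i$ and are left untouched. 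Assembling these substitutions and relabelling reproduces exactly the three asserted identities (6.1).

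The computation is essentially bookkeeping, and the only step demanding care is the index-lowering: one must check that metricity genuinely allows $g$ to commute with both covariant derivations, and confirm that the contraction $g_{li}R^{\ i}_{s\ jk}$ matches the slot placement in the definition $R_{ijkh}=g_{jr}R^{\ r}_{i\ \ kh}$ of Proposition 2.5.1, so that the first two covariant indices $s,i$ appear in the correct order. No new analytic input is needed beyond Theorems 2.5.1 and 2.5.4.
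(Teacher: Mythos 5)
Your proposal is correct and is essentially the paper's own proof: the paper likewise obtains (\ref{6.1}) by lowering the index in the identities (\ref{5.8}) of Theorem \ref{t5.4} via $D_{ij}=g_{is}D^{s}{}_{j}$, $d_{ij}=g_{is}d^{s}{}_{j}$, using the metricity $g_{ij|k}=0$, $g_{ij}|_{k}=0$ of $C\Gamma(N)$ to pass $g$ through both covariant derivations, with the torsion terms dropping out since $T^{i}{}_{jk}=S^{i}{}_{jk}=0$. The paper merely states this in one line, so your explicit check of the slot conventions $R_{sijk}=g_{ir}R^{\ r}_{s\ jk}$ is the same argument written out in full.
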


The Lagrangian theory of electrodynamics lead us to introduce \cite{miradio}, \cite{mitava}, \cite{mitava1},
\cite{Miron}:

\begin{definition}
\label{d6.1}
The d-tensor fields:
\begin{equation}
\label{6.2}
F_{ij}=\dfrac{1}{2}(D_{ij}-D_{ji}),\
f_{ij}=\dfrac{1}{2}(d_{ij}-d_{ji})
\end{equation}
are the $h$- and $v$-{\it electromagnetic tensor} of the Lagrange
space $L^{n}=(M,L)$.
\end{definition}

The Bianchi identities for $C\Gamma(N)$ and the
identities (\ref{6.1}) lead to the following important result:

\begin{theorem}
\label{t6.1}
The following generalized Maxwell equations hold:
\begin{equation}
\label{6.3}
\begin{array}{l}
F_{ij|k} + F_{jk|i} + F_{ki|j}=-\displaystyle\sum_{(ijk)} C_{ios} R^s{}_{jk}, \vspace{3mm} \\
F_{ij}|_k + F_{jk}|_i + F_{ki}|_j=0,
\end{array}
\end{equation}
where $C_{ios}=C_{ijs}y^j$, and $\displaystyle\sum_{(ijk)}$ means cyclic sum.
\end{theorem}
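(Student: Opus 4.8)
The plan is to read both identities in \eqref{6.3} as Bianchi-type consequences of the deflection identities of Proposition~\ref{p6.1}, obtained by cyclically skew-symmetrizing over $(i,j,k)$. Since $F_{ij}=\frac12(D_{ij}-D_{ji})$, I would start from $2F_{ij|k}=D_{ij|k}-D_{ji|k}$ and $2F_{ij}|_k=D_{ij}|_k-D_{ji}|_k$, so that $\sum_{(ijk)}F_{ij|k}$ and $\sum_{(ijk)}F_{ij}|_k$ become cyclic combinations of $h$- and $v$-covariant derivatives of the deflection tensor $D_{ij}$. Two ingredients already in hand drive the computation: the identities \eqref{6.1}, which rewrite such antisymmetrized derivatives in terms of the curvatures $R_{sijk},P_{sijk}$, the torsions $R^{s}{}_{jk},P^{s}{}_{jk},C^{s}{}_{jk}$ and the deflection tensors; and the horizontal Bianchi identity coming from \eqref{1.4.18} with $T^{i}{}_{jk}=0$, which controls the remaining pure-curvature terms. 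I will use repeatedly that, by the deflection formula $(2.5.7')$ and the total symmetry of $C_{ijk}$ (Theorem~\ref{t1.1}), the $v$-deflection tensor $d_{ij}=g_{ij}+y^{s}C_{ijs}$ is symmetric.

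For the first (horizontal) equation I would regroup the six terms of $2\sum_{(ijk)}F_{ij|k}$ into three blocks $D_{ab|c}-D_{ac|b}$ and apply the first line of \eqref{6.1} to each, producing a curvature part $y^{s}\sum_{(ijk)}R_{sijk}$ and a torsion part $-\sum_{(ijk)}d_{is}R^{s}{}_{jk}$. In the torsion part I substitute $d_{is}=g_{is}+y^{r}C_{isr}$; the piece $g_{is}R^{s}{}_{jk}=R_{ijk}$ drops out under the cyclic sum by Proposition~\ref{p5.1}($1^{\circ}$), $\sum_{(ijk)}R_{ijk}=0$, so this block contributes exactly $-\sum_{(ijk)}C_{ios}R^{s}{}_{jk}$. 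For the curvature part I use the skew-symmetry of $R_{sijk}$ in its first two indices (Proposition~\ref{p5.1}($3^{\circ}$)) to write $y^{s}\sum_{(ijk)}R_{sijk}=-y_{r}\sum_{(ijk)}R_{i\ jk}^{\ r}$, and then invoke the horizontal Bianchi identity $\sum_{(ijk)}R_{i\ jk}^{\ r}=\sum_{(ijk)}C_{is}^{r}R^{s}{}_{jk}$; contracting with $y_{r}$ and using the symmetry of $C$ yields a second copy of $-\sum_{(ijk)}C_{ios}R^{s}{}_{jk}$. Adding the two equal copies and dividing by $2$ gives $\sum_{(ijk)}F_{ij|k}=-\sum_{(ijk)}C_{ios}R^{s}{}_{jk}$, the first Maxwell equation.

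For the second (vertical) equation the same skew-symmetrization, now via the second line of \eqref{6.1}, replaces $D_{ij}|_k$ by $d_{ik|j}+y^{s}P_{sijk}-D_{is}C^{s}{}_{jk}-d_{is}P^{s}{}_{jk}$. The symmetric contributions cancel directly: the $d$-derivative terms cancel because $d_{ij}$ is symmetric (so $d_{ik|j}=d_{ki|j}$ and the cyclic differences telescope); the terms $D_{is}C^{s}{}_{jk}$ cancel cyclically because $C^{s}{}_{jk}$ is symmetric in $j,k$; and the terms $d_{is}P^{s}{}_{jk}$ cancel because $P_{ijk}=g_{ih}P^{h}{}_{jk}$ is totally symmetric (Proposition~\ref{p5.1}($2^{\circ}$)), which also makes $P^{s}{}_{jk}$ symmetric in $j,k$. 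What remains is the mixed-curvature term $y^{s}\sum_{(ijk)}(P_{sijk}-P_{sjik})$, so the whole equation reduces to showing $y_{r}\sum_{(ijk)}(P_{i\ jk}^{\ r}-P_{j\ ik}^{\ r})=0$. The hard part will be exactly this last cancellation: it is a genuine mixed ($hv$) Bianchi identity for $C\Gamma(N)$, and verifying it requires feeding in the defining expression \eqref{1.4.14} for $P_{i\ jk}^{\ r}$ together with the total symmetry of $C$ and of the lowered torsion $P^{s}{}_{jk}$, while keeping scrupulous track of the several index conventions for the lowered curvatures. Once this identity is in place, $\sum_{(ijk)}F_{ij}|_k=0$ follows.
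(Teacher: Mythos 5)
Your proposal is correct and is essentially the paper's own argument: the paper proves the theorem with the single remark that the deflection identities (6.1) together with the Bianchi identities of $C\Gamma(N)$ yield the Maxwell equations, and your cyclic skew-symmetrization of (6.1), combined with the first Bianchi identity $\sum_{(ijk)}R_{i\ jk}^{\ r}=\sum_{(ijk)}C^{r}_{is}R^{s}{}_{jk}$ (which follows from (1.4.14) and the symmetry of $L^{i}_{jk}$) and the symmetries listed in Proposition 2.5.1, is exactly that computation carried out in detail. The vertical identity you flag as the hard part does close by the route you indicate: alternating the expression (1.4.14) for $P_{i\ jk}^{\ r}$ contracted with $y_{r}$, every term is killed by the total symmetry of $C_{ijk}$ and of $P_{ijk}$, the relation $y_{a|j}=D_{aj}$, and the symmetry of $y_{a}L^{a}_{ij}$ in $(i,j)$, so there is no gap.
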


\begin{corollary}
\label{c6.1}
If the canonical nonlinear connection
$N$ of the space $L^n$ is integrable then the equations (\ref{6.3})
reduce to:
\begin{equation}
\label{6.3'}
\displaystyle\sum_{(ijk)}F_{ij|k}=0,\
\displaystyle\sum_{(ijk)}F_{ij}|_k=0.\tag{2.6.3'}
\end{equation}
\end{corollary}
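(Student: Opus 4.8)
The plan is to derive the reduced equations (\ref{6.3'}) directly from the generalized Maxwell equations (\ref{6.3}) of Theorem \ref{t6.1}, by observing that the only term distinguishing the two systems is a multiple of the curvature tensor of the nonlinear connection, which is forced to vanish by the integrability hypothesis. In other words, the corollary is a one-line specialization of the preceding theorem rather than an independent computation.

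First I would invoke the integrability criterion established in Chapter 1 (Theorem 1.3.2): the canonical nonlinear connection $N$ is integrable if and only if its curvature $d$-tensor $R^{s}{}_{jk}$, defined in (1.3.13') by $R^{s}{}_{jk}=\dfrac{\delta N^{s}{}_{j}}{\delta x^{k}}-\dfrac{\delta N^{s}{}_{k}}{\delta x^{j}}$, vanishes identically on $\widetilde{TM}$. The symbol $R^{s}{}_{jk}$ occurring on the right-hand side of the first equation of (\ref{6.3}) is precisely this nonlinear-connection curvature (the three-index object of (1.4.10), not one of the four-index curvature $d$-tensors of the metrical connection), so the hypothesis gives $R^{s}{}_{jk}=0$.

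Next I would substitute this into the first generalized Maxwell equation. Its right-hand side is the cyclic sum $-\displaystyle\sum_{(ijk)} C_{ios} R^{s}{}_{jk}$, which consists of three terms, each of the form $C_{\bullet os}R^{s}{}_{\bullet\bullet}$ and hence linear in the nonlinear-connection curvature; since every such factor is zero, the entire right-hand side vanishes term by term. What survives is $F_{ij|k}+F_{jk|i}+F_{ki|j}=0$, that is, $\displaystyle\sum_{(ijk)} F_{ij|k}=0$, the first assertion of (\ref{6.3'}). The second equation of (\ref{6.3}) already carries a zero right-hand side and reads literally $\displaystyle\sum_{(ijk)} F_{ij}|_{k}=0$, so it is retained unchanged as the second assertion of (\ref{6.3'}).

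There is essentially no obstacle in this argument: once the generalized Maxwell equations of Theorem \ref{t6.1} are granted, the corollary follows by simple substitution. The only point demanding care is the correct identification of $R^{s}{}_{jk}$ in (\ref{6.3}) with the curvature tensor of $N$ appearing in the integrability theorem, together with the remark that the obstruction term on the right-hand side is a cyclic sum each of whose summands is proportional to that curvature, so that its vanishing annihilates the whole term.
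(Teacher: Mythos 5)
Your proposal is correct and is exactly the argument the paper intends: the corollary is an immediate specialization of Theorem \ref{t6.1}, since by Theorem 1.3.2 integrability of $N$ forces the curvature $d$-tensor $R^{s}{}_{jk}$ to vanish, killing the cyclic-sum term $-\sum_{(ijk)}C_{ios}R^{s}{}_{jk}$ on the right of the first Maxwell equation, while the second equation already has zero right-hand side. Your care in distinguishing the three-index curvature of the nonlinear connection from the four-index curvature tensors of $C\Gamma(N)$ is precisely the one point worth making explicit.
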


If we put
\begin{equation}
\label{6.4}
F^{ij}=g^{is}g^{jr}F_{sr}
\end{equation}
and
\begin{equation}
\label{6.5}
hJ^{i}=F^{ij}{}_{|j},\ vJ^{i}=F^{ij}{}|_{j},
\end{equation}
then one can prove:

\begin{theorem}
\label{t6.2}
The following laws of conservation hold:
\begin{equation}
\label{6.6}
\begin{array}{l}
hJ^i{}_{|i}=\dfrac{1}{2}\{F^{ij}(R_{ij}-R_{ji}) + F^{ij}{}|_r
R^r{}_{ij} \},
\vspace{3mm} \\
vJ^i{}|_i=0,
\end{array}
\end{equation}
where $R_{ij}$ is the Ricci tensor $R^{\ h}_{i\ jh}$.
\end{theorem}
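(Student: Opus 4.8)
The plan is to derive both conservation laws by contracting the Ricci identities of Proposition 5.2 applied to the antisymmetric $h$-electromagnetic $d$-tensor $F^{ij}=g^{is}g^{jr}F_{sr}$, using throughout that the canonical metrical connection $C\Gamma(N)$ is $h$- and $v$-torsion free, i.e. $T^i{}_{jk}=0$ and $S^i{}_{jk}=0$. First I would extend the identities (5.6), stated there for a $d$-vector $X^i$, to the $(2,0)$ $d$-tensor $F^{ij}$, which simply produces one curvature term for each of the two contravariant indices; the metricity $g_{ij|k}=g_{ij}|_k=0$ guarantees that indices may be raised freely under covariant differentiation, so that $F^{ij}$ is genuinely antisymmetric and the curvature symmetries of Proposition 5.1 apply.

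For the first law I start from $hJ^i{}_{|i}=F^{ij}{}_{|j|i}$. Since $F^{ij}=-F^{ji}$, relabelling the summation indices gives $F^{ij}{}_{|j|i}=-F^{ij}{}_{|i|j}$, whence $F^{ij}{}_{|j|i}=\dfrac12\left(F^{ij}{}_{|j|i}-F^{ij}{}_{|i|j}\right)$; this is the origin of the factor $\dfrac12$ in (6.6). The bracket is exactly the left-hand side of the $h$-Ricci identity for $F^{ij}$ with the differentiation indices taken as $k=j$, $l=i$, and because $T^i{}_{jk}=0$ it equals $F^{sj}R^{\ i}_{s\ ji}+F^{is}R^{\ j}_{s\ ji}-F^{ij}{}|_r R^r{}_{ji}$. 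The decisive step is to read off the two curvature contractions: with the Ricci tensor $R_{ij}=R^{\ h}_{i\ jh}$ one has $R^{\ i}_{s\ ji}=R_{sj}$, while the skew-symmetry of $R^{\ i}_{h\ jk}$ in its last two indices gives $R^{\ j}_{s\ ji}=-R_{si}$, so the first two terms combine to $F^{ij}(R_{ij}-R_{ji})$. Finally the skew-symmetry $R^r{}_{ji}=-R^r{}_{ij}$ of the nonlinear connection curvature rewrites the last term as $+F^{ij}{}|_r R^r{}_{ij}$, yielding precisely the first equation of (6.6).

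For the second law the same antisymmetrisation gives $vJ^i{}|_i=F^{ij}{}|_j|_i=\dfrac12\left(F^{ij}{}|_j|_i-F^{ij}{}|_i|_j\right)$, and the $v$-Ricci identity for $F^{ij}$, with $S^i{}_{jk}=0$, reduces the bracket to $F^{sj}S^{\ i}_{s\ ji}+F^{is}S^{\ j}_{s\ ji}$. Here I would show that the contracted vertical curvature $S^{\ i}_{s\ ji}$ is symmetric in $s,j$: substituting $S_{ijkh}=C_{iks}C^s{}_{jh}-C_{ihs}C^s{}_{jk}$ from Proposition 5.1 and using the total symmetry of $C_{ijk}$, the contraction splits into $C_{sjt}$ times a trace of $C$ plus an inner product of the form $C_{spt}C_j{}^{pt}$, each manifestly symmetric in $s$ and $j$. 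Contracting such a symmetric tensor against the antisymmetric $F^{sj}$ (and likewise $F^{is}$) annihilates both terms, so $vJ^i{}|_i=0$.

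The routine but error-prone part is the index bookkeeping: extending (5.6) to $F^{ij}$ with the correct signs, and in particular distinguishing the two ways the horizontal curvature can be contracted so as to separate $R_{ij}$ from $R_{ji}$ — this is what makes the antisymmetric combination $R_{ij}-R_{ji}$ appear rather than a symmetric Ricci term. The only genuinely structural point, and the main obstacle, is establishing the symmetry of the contracted $S$-curvature in the vertical case, which rests on the total symmetry of $C_{ijk}$; everything else is a careful but mechanical use of the stated symmetries of the curvature $d$-tensors together with the antisymmetry of $F^{ij}$.
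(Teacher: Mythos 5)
Your proof is correct, and it follows precisely the route the paper's own machinery dictates --- the Ricci identities of Proposition 2.5.2 extended to the $d$-tensor $F^{ij}$, the antisymmetrization trick producing the factor $\dfrac12$, the contractions $R^{\ i}_{s\ ji}=R_{sj}$, $R^{\ j}_{s\ ji}=-R_{si}$, and the symmetry of the contracted $S$-curvature via Proposition 2.5.1, $4^\circ$ and the total symmetry of $C_{ijk}$; in fact the paper states Theorem 2.6.2 without any proof (``one can prove''), so your argument correctly supplies the missing details. The one step worth making explicit is that in the vertical case the second term $F^{is}S^{\ j}_{s\ ji}$ must first be rewritten using the skew-symmetry of $S_{s\ jk}^{\ i}$ in its last two indices as $-F^{is}S^{\ h}_{s\ ih}$ before the symmetry of the contracted curvature kills it, which is exactly what your ``likewise'' covers.
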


\begin{remark}
In the Lagrange space of electrodynamics the tensor
$F_{jk}$ is given by (2.3.6). The previous theory one reduces
to the classical theory. Namely $F_{ij}(x)$ satisfy the Maxwell
equations $\displaystyle\sum_{(ijk)}F_{ij|k}=0$ and $F_{ij}|_k=0$,
$hj^i_{|i}=0$, $vj^i=0$.
\end{remark}

Now, considering the lift to $\widetilde{TM}$ of the fundamental
tensor $g_{ij}(x,y)$ of the space $L^n$, given by $${\mathbb G}=g_{ij}dx^i\otimes dx^j+g_{ij}\delta y^i\otimes\delta y^j$$
we can obtain the Einstein equations of the canonical metric
connection $C\Gamma(N)$. The curvature Ricci and scalar
curvatures:
\begin{equation}
\label{6.7}
\begin{array}{l}
R_{ij}=R^{\ h}_{i \ \ jh}, S_{ij}=S^{\ h}_{i \ \ jh}, 'P_{ij}=P^{\ h}_{i \ \ jh}, ^{''}P^{\ h}_{i \ \ hj}\vspace{3mm}\\
R=g^{ij} R_{ij}, S=g^{ij} S_{ij}.\end{array}
\end{equation}

Let us denote by $\buildrel{H}\over{T}_{ij}$,
$\buildrel{V}\over{T}_{ij}$, $\buildrel{1}\over{T}_{ij}$ and
$\buildrel{2}\over{T}_{ij}$ the components in adapted basis
$\left(\dfrac{\delta}{\delta x^i},\dfrac{\partial}{\partial
y^i}\right)$ of the energy momentum tensor on the manifold
$\widetilde{TM}$.

Thus we obtain:
\begin{theorem}
\label{t6.3}
\begin{itemize}
\item[$1^\circ$] The Einstein equations of the Lagrange space
$L^n=(M,L(x,y))$ with respect to the canonical metrical con\-nec\-tion\break
$C\Gamma(N)=$ $(L^i_{{j}{k}},C^i_{{j}{k}})$ are as follows:
\begin{equation}
\label{6.8}
\begin{array}{l}
R_{ij}-\dfrac{1}{2}Rg_{ij}=\kappa\buildrel{H}\over{T}_{ij}, 'P_{ij}=\kappa\buildrel{1}\over{T}_{ij}\vspace{3mm}\\
S_{ij}-\dfrac{1}{2}Sg_{ij}=\kappa\buildrel{V}\over{T}_{{(i)}{(j)}},''P_{ij}=\kappa\buildrel{2}\over{T}{}^i_j,
\end{array}
\end{equation}
where $\kappa$ is a real constant.
\item[$2^\circ$] The energy
momentum tensors $\buildrel{H}\over{T}_{ij}$ and
$\buildrel{V}\over{T}_{ij}$ satisfy the following laws of
conservation
\begin{equation}
\label{6.9}
\kappa\buildrel{H}\over{T}_{ij}=-\dfrac{1}{2}(P^{ih}_{js}R^s_{hi}+2R^s_{ij}P^i_s),
\kappa\buildrel{V}\over{T}^i_j |_i=0.
\end{equation}
\end{itemize}
\end{theorem}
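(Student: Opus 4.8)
The plan is to regard the lift $\G=g_{ij}\,dx^i\otimes dx^j+g_{ij}\,\delta y^i\otimes\delta y^j$ as a pseudo-Riemannian metric on $\widetilde{TM}$ adapted to the splitting $TTM=NTM\oplus VTM$, and to read off its Einstein tensor block by block in the adapted basis $\left(\dfrac{\delta}{\delta x^i},\dfrac{\partial}{\partial y^i}\right)$. Since the canonical metrical connection $C\Gamma(N)$ is $h$- and $v$-metrical and has vanishing $h$- and $v$-torsion, it plays the role of the Levi-Civita connection of $\G$ on each sector, so I would compute the Ricci curvature of $\G$ directly from the curvature $d$-tensors of $C\Gamma(N)$. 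Because $J$ is absolutely parallel, Theorem \ref{t4.1} leaves only the three local curvature tensors $R^{\ i}_{h\ jk}$, $P^{\ i}_{h\ jk}$, $S^{\ i}_{h\ jk}$ of (\ref{1.4.14}); contracting these as in (\ref{6.7}) produces the Ricci tensors $R_{ij}$, $S_{ij}$, $'P_{ij}$, $''P_{ij}$ and the scalars $R$, $S$.

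With these contractions available, the horizontal--horizontal block of $\mathrm{Ric}(\G)-\tfrac12\,\mathrm{Scal}(\G)\,\G$ equals $R_{ij}-\tfrac12 R\,g_{ij}$, the vertical--vertical block equals $S_{ij}-\tfrac12 S\,g_{ij}$, and the two mixed blocks reduce to $'P_{ij}$ and $''P_{ij}$. Part $1^\circ$ then follows by \emph{defining} the four families of energy--momentum components to be precisely these four blocks divided by the universal constant $\kappa$; equations (\ref{6.8}) are thus the statement that the Einstein tensor of $\G$ decomposes into these pieces, and are established by the block computation rather than by any extra hypothesis.

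For part $2^\circ$ I would pass to the Bianchi identities (\ref{1.4.18}) of $C\Gamma(N)$, written out in the adapted basis, and contract on the index pairs that in the Riemannian case yield the twice-contracted identity $\nabla^i G_{ij}=0$. The horizontal contraction gives the $h$-divergence of the horizontal Einstein block, but here the nonlinear-connection curvature $R^s_{jk}$ and the mixed curvature $P^{\ i}_{h\ jk}$ couple the two sectors and leave surplus terms, which I expect to collapse to the right-hand side of the first relation in (\ref{6.9}) after invoking the skew-symmetries and the cyclic identity $\sum_{(ijk)}R_{ijk}=0$ of Proposition \ref{p5.1}. The purely vertical conservation law $\kappa\,\overset{V}{T}{}^{i}_{j}|_{i}=0$ follows from the vertical Bianchi identity together with $g_{ij}|_k=0$ and the total symmetry of $P_{ijk}$ recorded in the same proposition.

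The main obstacle is exactly this bookkeeping in part $2^\circ$: unlike the Riemannian situation, the contracted Bianchi identities do not close up to a bare divergence, since the mixed curvature and the curvature $R^s_{jk}$ of $N$ entangle horizontal and vertical derivatives. Showing that the horizontal surplus is precisely $-\tfrac12\left(P^{ih}_{js}R^s_{hi}+2R^s_{ij}P^i_s\right)$, while the vertical surplus cancels identically, is where the care lies, and I would rely throughout on the algebraic symmetries of the covariant curvature $d$-tensors of $C\Gamma(N)$ from Proposition \ref{p5.1} to eliminate the extra terms.
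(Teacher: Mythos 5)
Your route is, in outline, the intended one: the paper itself gives essentially no computation (the theorem is stated directly after the contractions (\ref{6.7}), the details being left to the Miron--Anastasiei monographs cited there), and the derivation it relies on is what you sketch --- Einstein-type equations for the pair $({\mathbb G},C\Gamma(N))$ written in the adapted frame, with the conservation laws extracted from the contracted Bianchi identities (\ref{1.4.18}). One point of care: $C\Gamma(N)$, although $h$- and $v$-metrical with $T^i_{\ jk}=S^i_{\ jk}=0$, still carries the torsions $R^i_{\ jk}$, $P^i_{\ jk}$, $C^i_{jk}$, so it is emphatically \emph{not} the Levi-Civita connection of ${\mathbb G}$, contrary to your phrasing; it is precisely this torsion that makes the Ricci tensor non-symmetric (hence the two distinct mixed traces $'P_{ij}$ and $''P_{ij}$) and that prevents the contracted Bianchi identities from closing to a bare divergence, producing the curvature terms on the right of (\ref{6.9}) --- which you correctly anticipate, together with the correct use of the symmetries in Proposition \ref{p5.1}.

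There is, however, a concrete gap in your block identification in part $1^\circ$. The scalar curvature of ${\mathbb G}$ with respect to $C\Gamma(N)$ is $R+S$, not $R$ or $S$ separately, so the horizontal--horizontal block of $\mathrm{Ric}-\frac{1}{2}\,\mathrm{Scal}\cdot{\mathbb G}$ is $R_{ij}-\frac{1}{2}(R+S)g_{ij}$ and the vertical--vertical block is $S_{ij}-\frac{1}{2}(R+S)g_{ij}$, not the expressions appearing in (\ref{6.8}). The equations (\ref{6.8}) are the \emph{sector-wise} Einstein equations, in which the $h$-block is formed with the $h$-scalar $R$ alone and the $v$-block with $S$ alone; this is the convention of the paper and of the underlying monographs. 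The discrepancy is not cosmetic, because part $2^\circ$ depends on which blocks define the energy--momentum tensors: with your full-scalar identification the contracted Bianchi identities acquire the extra gradient terms $-\frac{1}{2}S_{|j}$ in the horizontal identity and $-\frac{1}{2}R|_j$ in the vertical one, so in particular $\buildrel{V}\over{T}{}^i_{\ j}|_i=0$ would fail in general, whereas for the sector-wise blocks the vertical identity closes (using $g_{ij}|_k=0$ and the total symmetry of $P_{ijk}$) and the horizontal one leaves exactly $-\frac{1}{2}\bigl(P^{ih}_{js}R^s_{hi}+2R^s_{ij}P^i_s\bigr)$. Note finally that the first formula of (\ref{6.9}) must be read as a divergence identity, i.e.\ with $\kappa\buildrel{H}\over{T}{}^i_{\ j|i}$ on the left-hand side, as your discussion implicitly and correctly does.
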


The physical background of the previous theory is discussed by
Satoshy Ikeda in the last chapter of the book \cite{IS1}.

The previous theory is very simple in the particular Lagrange
spaces $L^n$ having $P^{\ h}_{i\ \ jk}=0$.

We have:

\begin{corollary}
\label{c6.2}
\begin{itemize} \item[$1^\circ$] If the
canonical metrical connection $C\Gamma(N)$ has the property $P^{\
i}_{j\ \ kh}=0$, then the Einstein equations are
\begin{equation}
\label{6.10}
R_{ij}-\dfrac{1}{2}Rg_{ij}=\kappa\buildrel{H}\over{T}_{ij},
S_{ij}-\dfrac{1}{2}Sg_{ij}=\kappa\buildrel{V}\over{T}_{{(i)}{(j)}}
\end{equation}
\item[$2^\circ$] The following laws of conservation hold:
$$\buildrel{H}\over{T}^i_{j|i}=0,\
\buildrel{V}\over{T}^i_{j}|_i=0.$$\end{itemize}
\end{corollary}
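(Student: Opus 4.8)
The plan is to obtain both assertions by specializing the conclusions of Theorem \ref{t6.3} to the hypothesis $P^{\ i}_{j\ \ kh}=0$, with no genuinely new computation required.

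For part $1^\circ$, I would start from the general Einstein equations (\ref{6.8}), which split into four groups: the two purely curvature relations in the $h$- and $v$-sectors, and the two mixed relations $'P_{ij}=\kappa\buildrel{1}\over{T}_{ij}$ and $''P_{ij}=\kappa\buildrel{2}\over{T}{}^i_j$. By the definitions (\ref{6.7}), both $'P_{ij}=P^{\ h}_{i\ \ jh}$ and $''P_{ij}=P^{\ h}_{i\ \ hj}$ are contractions of the single $d$-tensor $P^{\ i}_{j\ \ kh}$, so the hypothesis forces $'P_{ij}=0$ and $''P_{ij}=0$. The two mixed equations then degenerate to $\buildrel{1}\over{T}_{ij}=0$ and $\buildrel{2}\over{T}{}^i_j=0$, carrying no further content, and what survives are exactly the first and third relations of (\ref{6.8}), which is the reduced system (\ref{6.10}).

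For part $2^\circ$, I would return to the divergence identities (\ref{6.9}) of Theorem \ref{t6.3}. The $v$-conservation law $\buildrel{V}\over{T}^i_j|_i=0$ holds with no restriction on $P$ and so transfers verbatim. For the $h$-conservation law, the right-hand side in (\ref{6.9}) is assembled entirely from $P^{\ i}_{j\ \ kh}$ (in raised and contracted forms) paired with the $h$-curvature $R$; under $P^{\ i}_{j\ \ kh}=0$ every such term vanishes, collapsing the identity to $\buildrel{H}\over{T}^i_{j|i}=0$.

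The only point demanding care is confirming that each object entering these right-hand sides is genuinely linear in the single curvature $P^{\ i}_{j\ \ kh}$, so that it vanishes under the hypothesis. This is clear for $'P$ and $''P$, and for the raised tensor $P^{ih}_{js}$ appearing in (\ref{6.9}) it follows from $P_{ijkh}=g_{jr}P^{\ r}_{i\ \ kh}$ together with the symmetry properties recorded in Proposition \ref{p5.1}, since raising indices is a pure metric contraction that preserves vanishing. Once this is noted, the corollary is a direct specialization of Theorem \ref{t6.3}.
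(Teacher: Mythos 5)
Your proposal is correct and follows essentially the paper's own route: the paper gives no separate proof, presenting the corollary as the immediate specialization of Theorem \ref{t6.3} to the case $P^{\ i}_{j\ \ kh}=0$, which is exactly what you carry out (the $h$- and $v$-Einstein equations of (\ref{6.8}) survive unchanged, the mixed ones collapse since $'P_{ij}$ and $''P_{ij}$ are contractions of the vanishing curvature, and the conservation laws follow from (\ref{6.9})). Your two points of care --- reading the first relation of (\ref{6.9}) as the divergence identity $\kappa\buildrel{H}\over{T}{}^i_{\ j|i}=-\frac{1}{2}(P^{ih}_{js}R^s_{hi}+2R^s_{ij}P^i_s)$, correcting an evident misprint, and checking that every term on its right-hand side is a pure metric contraction of the single curvature tensor $P^{\ i}_{j\ \ kh}$ and hence vanishes under the hypothesis --- match the intended content of the theorem being specialized.
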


\begin{remark}
The Lagrange space of Electrodynamics, $L^n$, has\break
$C\Gamma(N)=(\gamma^i_{jk}(x),0)$, $P^{\ i}_{j\ kh}=0$, $S^{\
i}_{j\ kh}=0$. The Einstein equations (\ref{6.10}) reduce to the
classical Einstein equations of the space $L^n$.
\end{remark}

\section{The almost K\"ahlerian model of a Lagrange space $L^n$}\index{Almost Hermitian model}
\label{s7c2p1}
\setcounter{equation}{0}
\setcounter{definition}{0}
\setcounter{theorem}{0}
\setcounter{lemma}{0}

A Lagrange space $L^{n}=(M,L)$ can be thought as an almost
K\"{a}hler space on the manifold
$\widetilde{TM}=TM\setminus\{0\}$, called the geometrical model of
the space $L^n$.

As we know from section 3, Ch. 1 the canonical nonlinear
connection $N$ determines an almost complex structure
${\mathbb F}(\widetilde{TM})$, expressed in (1.3.9'). This is
\begin{equation}
\label{7.1}
{\mathbb F}=\dfrac{\delta}{\delta x^{i}}\otimes\delta
y^i-\dfrac{\partial}{\partial y^{i}}\otimes dx^i.
\end{equation}
${\mathbb F}$ is integrable if and only if $R^i_{jk}=0$.

${\mathbb F}$ is globally defined on $\widetilde{TM}$ and it can be
considered as a ${\cal F}(\widetilde{TM})-$ linear mapping from
$\chi(\widetilde{TM})$ to $\chi(\widetilde{TM})$:
\begin{equation}
{\mathbb F}\left(\dfrac{\delta}{\delta x^i}\right)=-\dfrac{\partial
}{\partial y^{i}},\quad {\mathbb F}\left(\dfrac{\partial}{\partial
y^i}\right)=\dfrac{\delta}{ \delta x^{i}}, (i=1,...,n). \tag{2.7.1'}
\end{equation}

The lift of the fundamental tensor $g_{ij}$ of the space $L^n$
with respect to $N$ is defined by
\begin{equation}
\label{7.2}
{\mathbb G}=g_{ij}dx^i\otimes dx^j+g_{ij}\delta y^i\otimes \delta
y^j.
\end{equation}

Evidently ${\mathbb G}$ is a (pseudo-)Riemannian metric on the
manifold $\widetilde{TM}$.

The following result can be proved without difficulties:

\begin{theorem}
\label{t7.1}
\begin{itemize}
\item[$1^\circ$] The pair $({\mathbb G},{\mathbb F})$ is an
almost Hermitian structure on $\widetilde{TM}$, determined only by
the fundamental function $L(x,y)$ of $L^n$.

\item[$2^\circ$] The almost symplectic structure associated
to the structure $({\mathbb G},{\mathbb F})$ is given by
\end{itemize}

\begin{equation}
\label{7.3}
\theta=g_{ij}\delta y^i\wedge dx^j.
\end{equation}

\begin{itemize}
\item[$3^\circ$] The space
$(\widetilde{TM},{\mathbb G},{\mathbb F})$ is almost K\"{a}hlerian.
\end{itemize}
\end{theorem}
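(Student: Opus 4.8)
The plan is to verify the three claims of Theorem \ref{t7.1} by direct computation in the adapted basis $\left(\dfrac{\delta}{\delta x^i},\dfrac{\partial}{\partial y^i}\right)$, exploiting that both $\mathbb{F}$ and $\mathbb{G}$ are built block-diagonally out of $d$-tensorial data and that their transformation laws under (1.1.1) are the classical ones. Since $\mathbb{F}$ was already shown in Section 1.3 to be globally defined on $\widetilde{TM}$ with $\mathbb{F}\circ\mathbb{F}=-Id$, and $g_{ij}$ is a $d$-tensor of constant signature, the structures $\mathbb{F}$ and $\mathbb{G}$ are themselves globally defined; what remains is to check their mutual compatibility.

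For part $1^\circ$, I would verify the Hermitian compatibility condition $\mathbb{G}(\mathbb{F}X,\mathbb{F}Y)=\mathbb{G}(X,Y)$ by testing it on the adapted basis vectors. Using $(2.7.1')$, namely $\mathbb{F}\left(\dfrac{\delta}{\delta x^i}\right)=-\dfrac{\partial}{\partial y^i}$ and $\mathbb{F}\left(\dfrac{\partial}{\partial y^i}\right)=\dfrac{\delta}{\delta x^i}$, together with the block form $(7.2)$ of $\mathbb{G}$, the check reduces to the single identity $g_{ij}=g_{ij}$ on each of the horizontal and vertical blocks, and the cross terms vanish because $\mathbb{G}$ has no mixed $dx\otimes\delta y$ component. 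Thus $(\mathbb{G},\mathbb{F})$ is almost Hermitian, and since both ingredients depend only on $L(x,y)$ (the metric $g_{ij}$ via $(2.1.1)$ and $N$ via Theorem \ref{t3.2}), so does the structure.

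For part $2^\circ$, the associated almost symplectic $2$-form is $\theta(X,Y)=\mathbb{G}(\mathbb{F}X,Y)$; I would evaluate this on the adapted basis. Applying $\mathbb{F}$ to $\dfrac{\delta}{\delta x^i}$ and $\dfrac{\partial}{\partial y^i}$ and pairing with $\mathbb{G}$ yields exactly $\theta=g_{ij}\,\delta y^i\wedge dx^j$, matching $(4.6)$ obtained earlier from $\theta=dp_i\wedge dx^i$; this consistency is the reassuring cross-check. For part $3^\circ$, almost K\"ahlerian means $\theta$ is closed, $d\theta=0$. Here I would simply invoke the computation already carried out just before Theorem \ref{t4.2}: expanding $d\theta$ in the adapted basis and using Lemma \ref{l5.1} for $d(\delta y^i)$, the three groups of terms vanish precisely because of the symmetry $B^i_{jk}=B^i_{kj}$ of the Berwald coefficients and the metrical identities $(4.9)$, $g_{ij\|k}-g_{ik\|j}=0$ and $g_{ij}\|_k-g_{ik}\|_j=0$.

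The main obstacle is the closedness in $3^\circ$: one must correctly expand $d\theta=dg_{ij}\wedge\delta y^i\wedge dx^j+g_{ij}\,d(\delta y^i)\wedge dx^j$, split $dg_{ij}$ into its $\dfrac{\delta}{\delta x^k}$ and $\dfrac{\partial}{\partial y^k}$ pieces, substitute $(1.5.7)$ for $d(\delta y^i)$, and track which coefficients cancel. The delicate point is recognizing that the apparent $dx^s\wedge dx^i$ and $\delta y^k\wedge\delta y^i\wedge dx^j$ terms vanish by symmetry—exactly the vanishing already established in the proof of Proposition \ref{p4.1} and Theorem \ref{t4.2}. Since that work is done in the text, the proof of $3^\circ$ becomes a citation rather than a fresh computation; the only genuine labor is the routine but careful basis evaluation in $1^\circ$ and $2^\circ$.
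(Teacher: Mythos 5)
Your verifications of $1^\circ$ and $2^\circ$ are correct and coincide with what the paper leaves implicit: testing $\mathbb{G}(\mathbb{F}X,\mathbb{F}Y)=\mathbb{G}(X,Y)$ and $\theta(X,Y)=\mathbb{G}(\mathbb{F}X,Y)$ on the adapted basis is exactly the intended computation, and the block-diagonal form of $\mathbb{G}$ together with $(2.7.1')$ makes both checks immediate.

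The gap is in $3^\circ$, where your argument as written is circular. You propose to prove $d\theta=0$ by expanding $d\theta$ in the adapted basis and invoking the identities $(2.4.9)$, namely $g_{ij\|k}-g_{ik\|j}=0$ and $g_{ij}\|_k-g_{ik}\|_j=0$, together with the symmetry $B^i_{jk}=B^i_{kj}$. But in the text those identities are \emph{consequences} of $d\theta=0$: the computation you cite, just before Theorem 2.4.2, begins with the observation that ``$\theta$ being a symplectic structure on $\widetilde{TM}$, exterior differential $d\theta$ vanishes'' and then reads off $(2.4.9)$ from the vanishing of the coefficients. Citing that passage to establish $d\theta=0$ therefore presupposes the conclusion. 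The paper's actual proof of $3^\circ$ is the one-line invocation of Theorem 2.1.1: $\theta$ is \emph{exact}, $\theta=d\omega$ with $\omega=p_i\,dx^i=\dfrac12\dfrac{\partial L}{\partial y^i}\,dx^i$ globally defined on $\widetilde{TM}$, and Proposition 2.4.1 identifies this exact form with $g_{ij}\,\delta y^i\wedge dx^j$; closedness is then automatic, $d\theta=d^2\omega=0$. You in fact noticed this identification yourself in $2^\circ$ (the ``reassuring cross-check'' against $(4.6)$ and $\theta=dp_i\wedge dx^i$), but you used it only as a consistency check instead of as the proof. Your route could be salvaged by proving $(2.4.9)$ independently of $d\theta=0$ --- the $v$-identity is easy, being the total symmetry of $C_{ijk}=\dfrac12\dfrac{\partial g_{ij}}{\partial y^k}$, but the $h$-identity requires a genuine computation with the canonical coefficients $N^i_j=\dfrac{\partial G^i}{\partial y^j}$ --- and as written that labor is missing.
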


Indeed:
\begin{itemize}
\item[$1^\circ$] $N,{\mathbb G},{\mathbb F}$ are determined only by $L(x,y)$.

We have ${\mathbb G}({\mathbb F} X,{\mathbb F} Y)={\mathbb G}(X,Y)$,
$\forall X,Y\in\chi(\widetilde{TM})$.

\item[$2^\circ$] In the adapted basis $\theta(X,Y)={\mathbb G}({\mathbb F}X,
Y)$ is (\ref{7.3}).

\item[$3^{\circ}$] Taking into account (\ref{t1.1}), it follows
that $\theta$ is a symplectic structure (i.e. $d\theta=0$).
\end{itemize}

The space $H^{2n}=(\widetilde{TM},{\mathbb G}, {\mathbb F})$ is called
{\it the almost K\"ahlerian model} of the Lagrange space $L^n$. It
has a remarkable property:

\begin{theorem}
\label{t7.2}
The canonical metrical connection $D$ with
coefficients $C\Gamma(N)=(L^{i}_{jk},C^i_{jk})$ of the Lagrange
space $L^n$ is an almost K\"{a}hlerian connection, i.e.
\begin{equation}
\label{7.4}
D{\mathbb G}=0,\quad D{\mathbb F}=0.
\end{equation}
\end{theorem}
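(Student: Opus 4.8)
The plan is to establish the two identities $D{\mathbb G}=0$ and $D{\mathbb F}=0$ separately, since each rests on a different feature of the canonical metrical connection $C\Gamma(N)=(L^i_{jk},C^i_{jk})$. Compatibility with ${\mathbb F}$ comes essentially for free from the general theory of $N$-linear connections of Chapter 1, whereas compatibility with ${\mathbb G}$ will follow from the two metricity axioms of Theorem \ref{5.1}. Throughout I work in the adapted basis $\left(\dfrac{\delta}{\delta x^i},\dfrac{\partial}{\partial y^i}\right)$ and its dual $(dx^i,\delta y^i)$, using the Leibniz-type expansion $(D_X{\mathbb G})(Y,Z)=X\bigl({\mathbb G}(Y,Z)\bigr)-{\mathbb G}(D_XY,Z)-{\mathbb G}(Y,D_XZ)$ for the $(0,2)$-tensor ${\mathbb G}$, and $(D_X{\mathbb F})Y=D_X({\mathbb F}Y)-{\mathbb F}(D_XY)$ for the $(1,1)$-tensor ${\mathbb F}$.

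For $D{\mathbb F}=0$, the key observation is that the almost complex structure ${\mathbb F}$ of (\ref{7.1}) is exactly the structure $\F$ attached to the canonical nonlinear connection $N$ by formula (1.3.9'). By Theorem \ref{5.1}, $C\Gamma(N)$ is an $N$-linear connection, so by Definition \ref{d4.2} the tangent structure $J$ is absolutely parallel, $DJ=0$; Proposition \ref{p4.1} then gives the equivalence $DJ=0\iff D\F=0$, whence $D{\mathbb F}=0$ immediately. If one prefers a direct verification, it suffices to note that in (\ref{1.4.1}) the same coefficients $L^h_{ij}$ govern $D_{\delta/\delta x^j}$ on both $\dfrac{\delta}{\delta x^i}$ and $\dfrac{\partial}{\partial y^i}$, and likewise $C^h_{ij}$ for $D_{\partial/\partial y^j}$; feeding this into ${\mathbb F}\left(\dfrac{\delta}{\delta x^i}\right)=-\dfrac{\partial}{\partial y^i}$ and ${\mathbb F}\left(\dfrac{\partial}{\partial y^i}\right)=\dfrac{\delta}{\delta x^i}$ shows $(D_X{\mathbb F})Y=0$ on each pair of basis fields.

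For $D{\mathbb G}=0$, I compute the covariant derivative of ${\mathbb G}=g_{ij}dx^i\otimes dx^j+g_{ij}\delta y^i\otimes\delta y^j$ block by block. Because $C\Gamma(N)$ preserves the horizontal and vertical distributions, it carries $\dfrac{\delta}{\delta x^i}$ into horizontal fields and $\dfrac{\partial}{\partial y^i}$ into vertical ones, so the mixed blocks of ${\mathbb G}$, being zero, contribute nothing. On the horizontal--horizontal block a short computation gives
\[
(D_{\frac{\delta}{\delta x^k}}{\mathbb G})\left(\dfrac{\delta}{\delta x^i},\dfrac{\delta}{\delta x^j}\right)=\dfrac{\delta g_{ij}}{\delta x^k}-L^h_{ik}g_{hj}-L^h_{jk}g_{ih}=g_{ij|k},
\]
and the same expression arises on the vertical--vertical block; applying instead $D_{\partial/\partial y^k}$ produces $g_{ij}|_k$ in both blocks. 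By axioms $A_2$ and $A_3$ of Theorem \ref{5.1} we have $g_{ij|k}=0$ and $g_{ij}|_k=0$, so every component of $D{\mathbb G}$ vanishes.

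I do not expect a genuine obstacle: both identities reduce to facts already secured in the earlier sections, namely the defining property of an $N$-linear connection and the metricity of $C\Gamma(N)$. The only point demanding care is that the adapted frame is non-holonomic, so in the ${\mathbb G}$-computation one must keep the $\dfrac{\delta}{\delta x^k}$- and $\dfrac{\partial}{\partial y^k}$-derivatives of the components straight and recognise the resulting expressions precisely as the $h$- and $v$-covariant derivatives $g_{ij|k}$ and $g_{ij}|_k$ of (\ref{1.4.7}), rather than as naive partial derivatives.
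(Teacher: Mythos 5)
Your proposal is correct and follows essentially the same route as the paper, whose proof is just the remark that (\ref{7.1}) and (\ref{7.2}) in the adapted basis imply (\ref{7.4}); you have merely made explicit what that verification amounts to, namely that $D\mathbb{F}=0$ is the $N$-linear connection property $DJ=0$ via Proposition \ref{p4.1}, and that the components of $D\mathbb{G}$ in the adapted frame are exactly $g_{ij|k}$ and $g_{ij}|_k$, which vanish by axioms $A_2$ and $A_3$ of Theorem \ref{5.1}. Your closing caution about the non-holonomic frame (recognising $\delta$-derivatives as the $h$-covariant derivative of (\ref{1.4.7}) rather than naive partials) is exactly the right point of care.
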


Indeed, (\ref{7.1}), (\ref{7.2}), in the adapted basis imply (\ref{7.4}).

We can use this geometrical model to study the geometry of
Lagrange space $L^n$. For instance, the Einstein equations of the
(pseudo) Riemannian space $(\widetilde{TM},{\mathbb G})$ equipped
with the metrical canonical connection $C\Gamma(N)$ are the
Einstein equations of the Lagrange space studied in the previous
section of this chapter.

G.S. Asanov showed \cite{As} that the metric ${\mathbb G}$ given by the lift (\ref{7.2}) does not satisfies the principle of the
Post-Newtonian calculus because the two terms of ${\mathbb G}$ have not the same physical dimensions. This is the
reason to introduce a new lift which can be used in a gauge theory of physical fields.

Let us consider the scalar field:
\begin{equation}
\label{7.5}
{\cal E}=||y||^{2}=g_{ij}(x,y)y^{i}y^{j}.
\end{equation}
$\varepsilon$ is called the absolute energy of the Lagrange space
$L^n$.

We assume $||y||^2>0$ and consider the following lift of the
fundamental tensor $g_{ij}$:
\begin{equation}
\label{7.6}
\buildrel{0}\over{{\mathbb G}}=g_{ij}dx^{i}\otimes dx^{j}+\dfrac{a^{2}}{||y||^{2}}g_{ij}\delta y^{i}\otimes \delta y^{j}
\end{equation}
where $a>0$ is a constant.

Let us consider also the tensor field on $\widetilde{TM}$:
\begin{equation}
\label{7.7}
\buildrel{0}\over{{\mathbb F}}=-\dfrac{||y||}{a}
\dfrac{\partial}{\partial y^i}\otimes dx^j+\dfrac{a}{||y||}
\dfrac{\delta}{\delta x^i}\otimes \delta y^i,
\end{equation}
and 2-form
\begin{equation}
\label{7.8}
\buildrel{0}\over{\theta}=\dfrac{a}{||y||}\theta,
\end{equation}
where $\theta$ is from (\ref{7.3}).

We can prove:

\begin{theorem}
\label{t7.3}
\begin{itemize}
\item[$1^\circ$]  The pair
$(\buildrel{0}\over{{\mathbb G}},\buildrel{0}\over{{\mathbb F}})$ is
an almost Hermitian structure on the manifold $\widetilde{TM}$,
depending only on the the fundamental function $L(x,y)$ of the
space $L^n$.

\item[$2^\circ$]  The almost symplectic structure
$\buildrel{0}\over{\theta}$ associated to the structure
$(\buildrel{0}\over{{\mathbb G}},\buildrel{0}\over{{\mathbb F}})$ is
given by $($\ref{7.8}$)$.

\item[$3^\circ$]  $\buildrel{0}\over{\theta}$ being
conformal to symplectic structure $\theta$, the pair
$(\buildrel{0}\over{{\mathbb G}},\buildrel{0}\over{{\mathbb F}})$ is
conformal to the almost K\"{a}hlerian structure
$({\mathbb G},{\mathbb F})$.
\end{itemize}
\end{theorem}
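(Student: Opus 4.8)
The plan is to reduce everything to computations in the adapted basis $\left(\dfrac{\delta}{\delta x^i},\dfrac{\partial}{\partial y^i}\right)$, since both $\buildrel{0}\over{{\mathbb G}}$ and $\buildrel{0}\over{{\mathbb F}}$ are built through it, and since all the ingredients (the fundamental tensor $g_{ij}$, the coefficients $N^i{}_j$ hidden in $\dfrac{\delta}{\delta x^i}$ and $\delta y^i$, and the scalar $||y||^2=g_{ij}y^iy^j$) depend only on $L$. First I would read off from (\ref{7.7}) the action of $\buildrel{0}\over{{\mathbb F}}$ on the basis, namely $\buildrel{0}\over{{\mathbb F}}\dfrac{\delta}{\delta x^i}=-\dfrac{||y||}{a}\dfrac{\partial}{\partial y^i}$ and $\buildrel{0}\over{{\mathbb F}}\dfrac{\partial}{\partial y^i}=\dfrac{a}{||y||}\dfrac{\delta}{\delta x^i}$, the analogue for the rescaled structure of the formula (2.7.1') used for ${\mathbb F}$.

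For $1^\circ$ I would first check that $\buildrel{0}\over{{\mathbb F}}$ is an almost complex structure: applying $\buildrel{0}\over{{\mathbb F}}$ twice to each basis field, the factors $\dfrac{||y||}{a}$ and $\dfrac{a}{||y||}$ cancel, so that $\buildrel{0}\over{{\mathbb F}}$ composed with itself equals $-\mathrm{Id}$. Then I would verify the Hermitian compatibility $\buildrel{0}\over{{\mathbb G}}(\buildrel{0}\over{{\mathbb F}}X,\buildrel{0}\over{{\mathbb F}}Y)=\buildrel{0}\over{{\mathbb G}}(X,Y)$ on the three types of basis pairs. On a horizontal pair $\left(\dfrac{\delta}{\delta x^i},\dfrac{\delta}{\delta x^j}\right)$ the factor $\dfrac{||y||^2}{a^2}$ produced by $\buildrel{0}\over{{\mathbb F}}$ meets exactly the factor $\dfrac{a^2}{||y||^2}$ sitting in the vertical block of (\ref{7.6}), returning $g_{ij}$; on a vertical pair the reciprocal cancellation occurs; and mixed pairs give $0$ on both sides. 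This is where the particular powers of $a/||y||$ in (\ref{7.6}) and (\ref{7.7}) are essential, and it is the only place any care is needed.

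For $2^\circ$ I would compute the associated $2$-form $\buildrel{0}\over{\theta}(X,Y)=\buildrel{0}\over{{\mathbb G}}(\buildrel{0}\over{{\mathbb F}}X,Y)$ (the same definition used for $\theta$ in the proof of Theorem \ref{t7.1}) on basis pairs. The horizontal--horizontal and vertical--vertical pairs vanish, while $\buildrel{0}\over{\theta}\left(\dfrac{\partial}{\partial y^i},\dfrac{\delta}{\delta x^j}\right)=\dfrac{a}{||y||}g_{ij}$. Comparing this with $\theta=g_{ij}\delta y^i\wedge dx^j$ of (\ref{7.3}) evaluated on the same pair yields $\buildrel{0}\over{\theta}=\dfrac{a}{||y||}\theta$, which is precisely (\ref{7.8}).

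Finally, $3^\circ$ is immediate: by Theorem \ref{t7.1} the $2$-form $\theta$ is symplectic and $({\mathbb G},{\mathbb F})$ is almost K\"ahlerian, while (\ref{7.8}) exhibits $\buildrel{0}\over{\theta}$ as the conformal rescaling of $\theta$ by the positive function $\dfrac{a}{||y||}$ on $\widetilde{TM}$; hence $(\buildrel{0}\over{{\mathbb G}},\buildrel{0}\over{{\mathbb F}})$ is conformal to $({\mathbb G},{\mathbb F})$. I do not expect a genuine obstacle here: the entire argument is the adapted-basis bookkeeping of step $1^\circ$, and the conformal weights in (\ref{7.6})--(\ref{7.7}) are chosen exactly so that the Hermitian condition survives the rescaling, with the symplectic and almost-K\"ahler conclusions inherited from Theorem \ref{t7.1}.
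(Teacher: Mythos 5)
Your proof is correct, and it follows exactly the adapted-basis verification the paper intends (the paper states Theorem \ref{t7.3} with no written proof, and your computation mirrors the ``Indeed'' sketch given for Theorem \ref{t7.1}): the cancellation of the weights $\dfrac{\|y\|}{a}$ and $\dfrac{a}{\|y\|}$ yields $\buildrel{0}\over{{\mathbb F}}{}^2=-\mathrm{Id}$ and the Hermitian compatibility, the basis evaluation gives $\buildrel{0}\over{\theta}=\dfrac{a}{\|y\|}\theta$ as in (\ref{7.8}), and conformality to the almost K\"ahlerian structure $({\mathbb G},{\mathbb F})$ then follows from Theorem \ref{t7.1}. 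No gaps.
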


\section{Generalized Lagrange spaces}\index{Generalized spaces! Lagrange}
\label{s8c2p1}
\setcounter{equation}{0}
\setcounter{definition}{0}
\setcounter{theorem}{0}
\setcounter{lemma}{0}

A first natural generalization of the notion of Lagrange space is
provided by a notion which we call a generalized Lagrange space.
This notion was introduced by author in the paper \cite{mitava1}.

\begin{definition}
\label{d8.1}
A generalized Lagrange space is a pair $GL^n=(M,$ $g_{ij}(x,y))$, where $g_{ij}(x,y)$ is a d-tensor field on
the manifold $\widetilde{TM}$, of type (0,2), symmetric, of rank
$n$ and having a constant signature on $\widetilde{TM}$.

We continue to call $g_{ij}(x,y)$ the {\it fundamental} tensor on
$GL^n$.
\end{definition}

One easily sees that any Lagrange space $L^n=(M,L(x,y))$ is a
ge\-ne\-ralized Lagrange space with the fundamental tensor
\begin{equation}
\label{8.1}
g_{ij}(x,y)=\dfrac{1}{2}\dfrac{\partial^2 L(x,y)}{\partial
y^i\partial y^j}.
\end{equation}

But not any space $GL^n$ is a Lagrange space $L^n$.

Indeed, if $g_{ij}(x,y)$ is given, it may happen that the system
of partial differential equations (\ref{8.1}) does not admits solutions
in $L(x,y)$.

\begin{proposition}
\label{p8.1}
\begin{itemize}
\item[$1^\circ$] A
necessary condition in order that the system $($\ref{8.1}$)$ admit a
solution $L(x,y)$ is that the d-tensor field $\dfrac{\partial
g_{ij}}{\partial y^k}=2C_{ijk}$ be completely symmetric.
\item[$2^\circ$] If the condition $1^\circ$ is verified and the
functions $g_{ij}(x,y)$ are 0-homogeneous with respect to $y^i$;
then the function
\begin{equation}
\label{8.2}
L(x,y)=g_{ij}(x,y)y^i
y^j+A_i(x)y^i+U(x)
\end{equation}
is a solution of the
system of partial differential equations (\ref{8.1}) for any arbitrary
d-covector field $A_i(x)$ and any arbitrary function $U(x)$, on
the base manifold $M$.
\end{itemize}
\end{proposition}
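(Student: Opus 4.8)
The plan is to treat the two assertions separately, the first being a compatibility (integrability) observation and the second a direct verification that hinges on a single recurring cancellation.

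For $1^\circ$ I would argue by necessity. If some $L(x,y)$ solves $(\ref{8.1})$, then $g_{ij}=\dfrac{1}{2}\partial^2 L/\partial y^i\partial y^j$, and differentiating once more gives
$$\frac{\partial g_{ij}}{\partial y^k}=\frac{1}{2}\frac{\partial^3 L}{\partial y^i\partial y^j\partial y^k}.$$
Since $L$ is smooth, the third-order partial derivatives commute, so the right-hand side is totally symmetric in $(i,j,k)$; hence $\partial g_{ij}/\partial y^k=2C_{ijk}$ is totally symmetric. This is exactly the symmetry already recorded in Theorem \ref{t1.1}, $2^\circ$, now read as a constraint on a prescribed $g_{ij}$, and it needs no further work.

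For $2^\circ$ I would compute $\dfrac{1}{2}\partial^2 L/\partial y^a\partial y^b$ directly for $L=g_{ij}y^iy^j+A_i(x)y^i+U(x)$ and check it equals $g_{ab}$. The affine part $A_i(x)y^i+U(x)$ is annihilated by the second $y$-derivative, so only $\Phi:=g_{ij}(x,y)y^iy^j$ contributes. Using $g_{ij}=g_{ji}$, the first derivative is
$$\frac{\partial\Phi}{\partial y^a}=\frac{\partial g_{ij}}{\partial y^a}\,y^iy^j+2g_{aj}y^j.$$
The decisive step is to discard the first term: by the total symmetry from $1^\circ$ one rewrites $\partial g_{ij}/\partial y^a=\partial g_{aj}/\partial y^i$, whence $(\partial g_{aj}/\partial y^i)\,y^i=0$ by the Euler relation $y^i\,\partial g_{aj}/\partial y^i=0$ valid for the $0$-homogeneous tensor $g_{aj}$ (equation $(\ref{1.1.6})$ with $r=0$). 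Thus $\partial\Phi/\partial y^a=2g_{aj}y^j$, and differentiating once more,
$$\frac{\partial^2\Phi}{\partial y^a\partial y^b}=2\frac{\partial g_{aj}}{\partial y^b}\,y^j+2g_{ab}=2g_{ab},$$
where the term $(\partial g_{aj}/\partial y^b)\,y^j$ is again killed by the same symmetry-plus-homogeneity argument ($\partial g_{aj}/\partial y^b=\partial g_{ab}/\partial y^j$, then apply Euler). Hence $\dfrac{1}{2}\partial^2 L/\partial y^a\partial y^b=g_{ab}$, and the arbitrariness of $A_i$ and $U$ is automatic since they contribute nothing.

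The only subtle point, and the step I would flag as the crux, is this twofold interplay between total symmetry of $C_{ijk}$ and $0$-homogeneity of $g_{ij}$: symmetry is what moves the differentiation index onto a slot contracted with $y$, and only then does Euler's relation force the unwanted terms to vanish. Without hypothesis $1^\circ$ the reindexing is unavailable, and without $0$-homogeneity the surviving derivative terms would spoil the identity; both hypotheses are used essentially, once at each differentiation.
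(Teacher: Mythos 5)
Your proof is correct, and since the paper itself omits the argument (it merely remarks that ``the proof of previous statement is not complicated''), your verification supplies exactly the intended standard reasoning: $1^\circ$ is the commutativity of third-order partial derivatives of $L$, and $2^\circ$ is a direct double differentiation of $g_{ij}y^iy^j$ in which, as you rightly emphasize, the total symmetry of $C_{ijk}$ is needed to move the differentiation index onto a slot contracted with $y$, after which Euler's relation $y^k\,\partial g_{ij}/\partial y^k=0$ for the $0$-homogeneous tensor kills the unwanted terms at each of the two stages.
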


The proof of previous statement is not complicated.

In the case when the system (\ref{8.1}) does not admit solutions in the
functions $L(x,y)$ we say that the generalized Lagrange space
$GL^2=(M,g_{ij(x,y)})$ is not reducible to a Lagrange space.

\begin{remark} The Lagrange spaces $L^n$ with the fundamental
function (\ref{8.2}) give us an important class of Lagrange spaces which
includes the Lagrange space of electrodynamics.
\end{remark}

\noindent{\bf Examples.}
\begin{itemize}
\item[$1^\circ$] The pair $GL^n=(M,g_{ij})$ with the fundamental
tensor field
\begin{equation}
\label{8.3}
g_{ij}(x,y)=e^{2\sigma(x,y)}\gamma_{ij}(x)
\end{equation}
where $\sigma$ is a function on $(\widetilde{TM})$ and
$\gamma_{ij}(x)$ is a pseudo-Riemannian metric on the manifold $M$
is a generalized Lagrange space if the d-covector field
$\dfrac{\partial\sigma}{\partial y^i}$ no vanishes.

It is not reducible to a Lagrange space. R. Miron and R. Tavakol
\cite{mitava1} proved that $GL^n=(M,g_{ij}(x,y))$ defined by (\ref{8.3}) satisfies
the Ehlers - Pirani - Schild's axioms of General Relativity.

\item[$2^\circ$] The pair $GL^n=(M,g_{ij}(x,y))$, with
\begin{equation}
\label{8.4}
g_{ij}(x,y)=\gamma_{ij}(x)+\left(1-\dfrac{1}{n^2(x,y)}\right)y_i
y_j, \quad y_i=\gamma_{ij}(x)y^j
\end{equation}
where $\gamma_{ij}(x)$ is a pseudo-Riemannian metric and
$n(x,y)>1$ is a smooth function ($n$ is a refractive index) give
us a generalized Lagrange space $GL^n$ which is not reducible to a
Lagrange space. This~metric has been called by R. G. Beil, \cite{Beil2} the Miron's metric from Relativistic Optics.
\end{itemize}

The restriction of the fundamental tensor $g_{ij}(x,y)$ (\ref{8.4}) to a
section $S_V:x^i=x^i, y^i=V^i(x)$, ($V^i$ being a vector field) of
the projection $\pi:TM\to M$, is given by $g_{ij}(x,V(x))$. It
gives us the known Synge's metric tensor of the Relativistic
Optics \cite{SJ}.

For a generalized Lagrange space $GL^n=(M,g_{ij}(x,y))$ an
important problem is to determine a nonlinear connection obtained
from the fundamental tensor $g_{ij}(x,y)$. In the particular
cases, given by the previous two examples this is possible. But,
generally no.

We point out a method of determining a nonlinear connection $N$,
strongly connected to the fundamental tensor $g_{ij}$ of the space
$GL^n$, if such kind of nonlinear connection exists.

Consider the {\it absolute energy} ${\cal E}(x,y)$ of space
$GL^n$:
\begin{equation}
\label{8.5}
\varepsilon(x,y)=g_{ij}(x,y) y^i y^j
\end{equation}
$\varepsilon(x,y)$ is a Lagrangian.

The Euler - Lagrange equations of $\varepsilon(x,y)$ are
\begin{equation}
\label{8.6}
\dfrac{\partial\varepsilon}{\partial
x^i}-\dfrac{d}{dt}\dfrac{\partial\varepsilon}{\partial
y^i}=0,\quad y^i=\dfrac{dx^i}{dt}.
\end{equation}
Of course, according the general theory, the energy
$E_\varepsilon$ of the Lagrangian ${\cal E}(x,y)$, is
$E_\varepsilon=y^i\dfrac{\partial \varepsilon}{\partial
y^i}-{\cal E}$ and it is preserved along the integral curves of
the differential equations (\ref{8.6}).

If ${\cal E}(x,y)$ is a regular Lagrangian - we say that the
space $GL^n$ is weakly regular - it follows that the
Euler-Lagrange equations determine a semispray with the
coefficients
\begin{equation}
\label{8.7}
2G^i(x,y)=\buildrel{\vee}\over{g}^{is}\left(\dfrac{\partial^2\varepsilon}{\partial
y^s \partial x^j}y^j-\dfrac{\partial\varepsilon}{\partial
x^s}\right),
\left(\buildrel{\vee}\over{g}_{ij}=\dfrac{1}{2}\dfrac{\partial^2\varepsilon}{\partial
y^i\partial y^j}\right).
\end{equation}

Consequently, the nonlinear connection $N$ with the coefficients
$N^i_j=\dfrac{\partial G^i}{\partial y^j}$ is determined only by
the fundamental tensor $g_{ij}(x,y)$ of the space $GL^n$.

In the case when we can not derive a nonlinear connection from the
fundamental tensor $g_{ij}$, we give a priori a nonlinear
connection $N$ and study the geometry of pair $(GL^n,N)$ by the
methods of the geometry of Lagrange space $L^n$.

For instance, using the adapted basis $\left(\dfrac{\delta}{\delta
x^i},\dfrac{\partial}{\partial y^i}\right)$ to the distributions
$N$ and $V$, respectively and its dual $(dx^i,\delta y^i)$ we can
lift $g_{ij}(x,y)$ to $\widetilde{TM}$:
$$G(x,y)=g_{ij}(x,y)dx^i\otimes dx^j+\dfrac{a}{\|y\|^2}g_{ij}(x,y)\delta y^i\otimes \delta
y^j$$ and can consider the almost complex structure
$${\mathbb F}=-\dfrac{\|y\|}{a}\dfrac{\partial}{\partial y^i}\otimes dx^i+\dfrac{a}{\|y\|}\dfrac{\delta}{\delta x^i}\otimes \delta
y^i$$ with $\varepsilon(x,y)>0$ and
$\|y\|=\varepsilon^{1/2}(x,y)$.

The space $(\widetilde{TM},{\mathbb F})$ is an almost Hermitian space
geometrical associated to the pair $(GL^n,N)$.

J. Silagy, \cite{Szila}, make an exhaustive and interesting study of this difficult problem.

\newpage

\chapter{Finsler Spaces}\index{Spaces! Finsler}
\label{ch3p1}

An important class of Lagrange Spaces is provided by the so-called
Finsler spaces.

The notion of Finsler space was introduced by Paul Finsler in 1918
and was developed by remarkable mathematicians as L. Berwald \cite{berw}, E.
Cartan \cite{cartan}, H. Buseman \cite{buse}, H. Rund \cite{rund}, S.S. Chern  \cite{chern2}, M. Matsumoto \cite{Ma1}, and many others.

This notion is a generalization of Riemann space, which gives an
important geometrical framework in Physics, especially in the geometrical theory of
physical fields, \cite{AIM}, \cite{IS1}, \cite{kondo2}, \cite{Tak}, \cite{Vacaru}.

In the last 40 years, some remarkable books on Finsler geometry
and its applications were published by H. Rund, M. Matsumoto, R.
Miron and M. Anastasiei, A. Bejancu, Abate-Patrizio, D. Bao, S.S.
Chern and Z. Shen, P. Antonelli, R. Ingarden and M. Matsumoto, R.
Miron, D. Hrimiuc, H. Shimada and S. Sab\u{a}u, G.S. Asanov, M.
Crampin, P.L. Antonelli, S. Vacaru, S. Ikeda.

In the present chapter we will study the Finsler spaces
considered as Lagrange spaces and applying the mechanical
principles. This method simplifies the theory of Finsler spaces. So,
we will treat: Finsler metric, Cartan nonlinear connection derived
from the canonical spray, Cartan metrical connection and its
structure equations. Some examples: Randers spaces, Kropina spaces
and some new classes of spaces more general as Finsler spaces: the
almost Finsler Lagrange spaces and the Ingarden spaces will close
this chapter.

\section{Finsler metrics}

\begin{definition}
\label{d3.1}
A Finsler space is a pair $F^n=(M,F(x,y))$
where $M$ is a real $n-$dimensional differentiable manifold and
$F:TM\to R$ is a scalar function which satisfies the following
axioms:
\begin{itemize}
\item[$1^\circ$] $F$ is a differentiable function on
$\widetilde{TM}$ and $F$ is continuous on the null section of the
projection $\pi:TM\to M$.
\item[$2^\circ$] $F$ is a positive
function.
\item[$3^\circ$] $F$ is positively 1-homogeneous with
respect to the variables $y^i$
\item[$4^\circ$] The Hessian of
$F^2$ with the elements:
\begin{equation}
\label{3.1.1}
g_{ij}(x,y)=\dfrac{1}{2}\dfrac{\partial F^2}{\partial y^i\partial
y^j}
\end{equation}
is positively defined on the manifold $\widetilde{TM}$.

Of course, the axiom $4^\circ$ is equivalent with the following:
\item[4$^{\circ\prime}$] The pair $(M,F^2(x,y))=L^n_F$ is a Lagrange space with
positively defined fundamental tensor, $g_{ij}$. $L^n_F$ will be
called the Lagrange space associated to the Finsler space $F^n$. It
follows that all properties of the Finsler space $F^n$ derived
from the fundamental function $F^2$ and the fundamental tensor
$g_{ij}$ of the associated Lagrange space
$L^n_F$.
\end{itemize}
\end{definition}

\medskip

\noindent{\it Remarks}
\begin{itemize}
\item[$1^\circ$] Sometimes we will ask for $g_{ij}$ to be of
constant signature and rank$(g_{ij}(x,y))=n$ on $\widetilde{TM}$.
\item[$2^\circ$] Any Finsler space $F^n=(M,F(x,y))$ is a Lagrange space
$L^n_F=(M,F^2(x,y))$, but not conversely.
\end{itemize}

\medskip

\noindent{\it Examples}
\begin{itemize}
\item[$1^\circ$] A Riemannian manifold $(M,\gamma_{ij}(x))$
determines a Finsler space $F^n=(M,F(x,y))$, where
\begin{equation}
\label{3.1.2}
F(x,y)=\sqrt{\gamma_{ij}(x)y^i y^j}.
\end{equation}

The fundamental tensor is $g_{ij}(x,y)=\gamma_{ij}(x)$.
\item[$2^\circ$] Let us consider, in a preferential local system of
coordinates, the following function:
\begin{equation}
\label{3.1.3}
F(x,y)=\sqrt[4]{(y^1)^4+...+(y^n)^4}.
\end{equation}
Then $F$ satisfy the axioms $1^\circ$-$4^\circ$.
\end{itemize}

\medskip

\begin{remark}This example was given by B. Riemann.
\end{remark}

\begin{itemize}
\item[$3^{\circ}$] Antonelli-Shimada's ecological metric is
given, in a preferential local system of coordinates, on
$\widetilde{TM}$, by
$$F(x,y)=e^{\phi}L,\ \phi =\alpha_{i}x^{i},\ (\alpha _{i}\ {\rm
are\ positive\ constants}),$$ and where
\begin{equation}
\label{3.1.4}
L=\{(y^{1})^{m}+(y^{2})^{m}+\cdots +(y^{n})^{m}\}^{1/m}, m\geq 3,
\end{equation}
$m$ being even.

\item[$4^\circ$] Randers metric is defined by
\begin{equation}
\label{3.1.5}
F(x,y)=\alpha (x,y)+\beta (x,y),
\end{equation}
where $$\alpha^{2}(x,y):=a_{ij}(x)y^{i}y^{j},$$ $(M,a_{ij}(x))$
being a Riemannian manifold and $$\beta (x,y):=b_{i}(x)y^{i}.$$

The fundamental tensor $g_{ij}$ is expressed by:
\begin{equation}
\label{3.1.5'}
\begin{array}{c}
g_{ij}=\dfrac{\alpha +\beta }{\alpha }h_{ij}+d_{i}d_{j},\
h_{ij}:=a_{ij}-\buildrel{0}\over{l}_{i}\buildrel{0}\over{l}_{j},\\ \\
d_{i}=b_{i}+\buildrel{0}\over{l}_{i},\
\buildrel{0}\over{l}_{i}:=\dfrac{\partial \alpha }{\partial
y^{i}}.\tag{3.1.5'}
\end{array}
\end{equation}
\end{itemize}
One can prove that $g_{ij}$ is positively defined under the
condition $b^{2}=a^{ij}b_{i}b_{j}<1$. In this case the pair
$F^n=(M,\alpha+\beta)$ is a Finsler space.

The first example motivates the following theorem:

\begin{theorem}
\label{t3.1}
If the base manifold $M$ is paracompact,
then there exist functions $F:TM \to\R$ such that the pair
$(M,F)$ is a Finsler spaces.
\end{theorem}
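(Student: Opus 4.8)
The plan is to reduce the statement to Example~$1^\circ$ of this section, in which a Riemannian metric $\gamma_{ij}(x)$ on $M$ produces the Finsler function $F(x,y)=\sqrt{\gamma_{ij}(x)y^iy^j}$ with fundamental tensor $g_{ij}=\gamma_{ij}$. Thus it suffices to exhibit a positive definite Riemannian metric on $M$; the desired Finsler structure is then obtained simply by taking the square root of the associated quadratic form.

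First I would construct the Riemannian metric. Since $M$ is paracompact, choose a locally finite atlas $\{(U_\alpha,\varphi_\alpha)\}$ together with a smooth partition of unity $\{\psi_\alpha\}$ subordinate to $\{U_\alpha\}$, so that $\psi_\alpha\geq 0$, $\operatorname{supp}\psi_\alpha\subset U_\alpha$, and $\sum_\alpha\psi_\alpha=1$. On each chart let $\delta^{(\alpha)}_{ij}(x)$ denote the components of the Euclidean metric pulled back through $\varphi_\alpha$, and set
\[
\gamma_{ij}(x)=\sum_\alpha \psi_\alpha(x)\,\delta^{(\alpha)}_{ij}(x).
\]
The sum is locally finite, hence $\gamma_{ij}$ is smooth, and it is symmetric because each $\delta^{(\alpha)}_{ij}$ is. For positive definiteness, fix $x\in M$ and $0\neq\xi\in\R^n$: every term $\psi_\alpha(x)\delta^{(\alpha)}_{ij}(x)\xi^i\xi^j$ is nonnegative, while at least one $\psi_\alpha(x)$ is strictly positive and its quadratic form is strictly positive, so $\gamma_{ij}(x)\xi^i\xi^j>0$. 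Hence $\gamma_{ij}(x)$ is a Riemannian metric on $M$.

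Next I would set $F(x,y)=\sqrt{\gamma_{ij}(x)y^iy^j}$ and verify the four axioms of Definition~\ref{d3.1}. Positive definiteness of $\gamma_{ij}$ gives $F>0$ on $\widetilde{TM}$ (axiom $2^\circ$) and makes $\gamma_{ij}(x)y^iy^j$ a smooth, strictly positive function of $(x,y)$ there, so its square root $F$ is smooth on $\widetilde{TM}$ while $F\equiv 0$ is continuous on the null section (axiom $1^\circ$). The identity $F(x,ay)=\sqrt{a^2\gamma_{ij}y^iy^j}=aF(x,y)$ for $a>0$ yields positive $1$-homogeneity (axiom $3^\circ$). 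Finally, since $F^2(x,y)=\gamma_{ij}(x)y^iy^j$ is a quadratic form in $y$, a direct computation gives $\dfrac{1}{2}\dfrac{\partial^2 F^2}{\partial y^i\partial y^j}=\gamma_{ij}(x)$, which is positively defined; this is axiom $4^\circ$, equivalently the assertion that $(M,F^2)=L^n_F$ is a Lagrange space with positive definite fundamental tensor. Therefore $(M,F)$ is a Finsler space.

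The only genuinely nontrivial step is the construction of the Riemannian metric, where paracompactness enters through the existence of a subordinate partition of unity; the key point is that a convex combination of positive semidefinite forms in which at least one coefficient weights a positive definite form is again positive definite. All remaining checks are the routine homogeneity and Hessian computations already recorded in Example~$1^\circ$.
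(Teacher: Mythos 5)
Your proposal is correct and follows exactly the paper's intended route: the paper justifies Theorem \ref{t3.1} by pointing to Example $1^\circ$ (a Riemannian metric $\gamma_{ij}(x)$ yields $F=\sqrt{\gamma_{ij}(x)y^iy^j}$) together with the classical fact that paracompactness guarantees a Riemannian metric. You have merely filled in the standard details the paper leaves implicit — the partition-of-unity construction of $\gamma_{ij}$ and the verification of axioms $1^\circ$--$4^\circ$ — all of which are carried out correctly.
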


Regarding the axioms $1^\circ-4^\circ$ we can see without
difficulties:

\begin{theorem}
\label{t3.2}
The system of axioms of a Finsler space is minimal.
\end{theorem}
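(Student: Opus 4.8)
The assertion that the axiom system $1^\circ$–$4^\circ$ is minimal means that the four axioms are mutually independent: none of them is a logical consequence of the remaining three. The standard way to establish such independence is to produce, for each axiom, a model — here a concrete scalar function $F$ on $TM$ — that fulfils the other three axioms but violates the one under scrutiny. Exhibiting four such functions shows that no single axiom can be derived from the others, which is precisely what minimality asserts. So the plan is to construct four examples $F_1,\dots,F_4$, where $F_k$ satisfies every axiom except the $k$-th, all of them being mild modifications of the Riemannian example $F=\sqrt{\gamma_{ij}(x)y^iy^j}$.

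For the first three axioms the perturbations are explicit and the verifications routine. To violate only $1^\circ$ I would take $F=\sqrt{a_{ij}(x)y^iy^j}$ with $a_{ij}(x)$ positive definite but merely continuous (not $C^\infty$) in $x$: the Hessian in $y$ of $F^2$ equals $a_{ij}(x)$ and is positive definite, $F$ is positive and $1$-homogeneous, yet $F$ is not differentiable on $\widetilde{TM}$. To violate only $2^\circ$ I would take $F=-\sqrt{\gamma_{ij}(x)y^iy^j}$ with $\gamma$ a smooth Riemannian metric: then $F$ is smooth and $1$-homogeneous, $F^2=\gamma_{ij}y^iy^j$ has positive definite Hessian $\gamma_{ij}$, but $F<0$. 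To violate only $3^\circ$ I would take $F=\sqrt{\gamma_{ij}(x)y^iy^j}+c$ with $c>0$ constant; here the Hessian of $F^2$ equals $\gamma_{ij}$ plus the positive semidefinite term $\frac{c}{\alpha}(\gamma_{ij}-l_il_j)$ coming from the Hessian of the norm $\alpha=\sqrt{\gamma_{ij}y^iy^j}$, hence is positive definite, while $F$ is positive and smooth but no longer $1$-homogeneous.

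The delicate case, and the main obstacle, is the independence of $4^\circ$: I must exhibit a function that is positive, $1$-homogeneous and smooth on all of $\widetilde{TM}$, yet whose Hessian $g_{ij}=\frac12\,\partial^2F^2/\partial y^i\partial y^j$ fails to be positive definite. A quadratic $F^2$ is useless, since a positive quadratic form already has a positive definite Hessian, so $F^2$ must be genuinely non-quadratic. The construction I would use is to define $F$ as the Minkowski gauge of a region whose boundary — the indicatrix $F=1$ — is a smooth, star-shaped, but non-convex hypersurface: positivity and $1$-homogeneity are automatic from the gauge, smoothness on $\widetilde{TM}$ follows from smoothness and star-shapedness of the indicatrix, and the failure of convexity forces $g_{ij}$ to be degenerate or indefinite at the directions of non-positive normal curvature. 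The technical effort lies in writing such an $F$ down explicitly (or invoking the convexity–curvature correspondence for the indicatrix) and in confirming that it still satisfies $1^\circ$–$3^\circ$ everywhere. Once these four models are in place, the minimality of the system follows at once.
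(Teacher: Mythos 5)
The paper itself contains no proof of this theorem: it is stated immediately after the phrase ``Regarding the axioms $1^\circ$--$4^\circ$ we can see without difficulties,'' so there is no argument to compare yours against line by line. Your reading of ``minimal'' as mutual independence of the axioms, to be established by producing, for each axiom, a model satisfying the remaining three, is the standard (really the only) way to substantiate the claim, and your first three models are complete and correct. In particular, the Hessian computation $g_{ij}=\gamma_{ij}+\frac{c}{\alpha}\left(\gamma_{ij}-l_il_j\right)$ for $F=\alpha+c$ is right; and in your first model note that axiom $4^\circ$ remains meaningful and satisfied even though joint smoothness fails, because $F^2=a_{ij}(x)y^iy^j$ is quadratic in $y$, so the $y$-Hessian exists and equals $a_{ij}(x)$. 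Your second model is also fine: $-\alpha$ is positively $1$-homogeneous and smooth off the null section, with the same positive definite Hessian as $\alpha$, violating only $2^\circ$.

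The one place you stop short is the fourth model, but the mechanism you invoke is correct: for a positively $1$-homogeneous $F$ one has $g_{ij}y^iy^j=F^2>0$, so $g_{ij}$ can never be negative definite, and failure of convexity of the indicatrix forces $g_{ij}$ to be indefinite or degenerate at some directions; hence a smooth, star-shaped, non-convex indicatrix violates exactly $4^\circ$. This is easily made explicit, closing the gap: take $M=\R^2$ and, in polar coordinates $(r,\theta)$ on each fibre, set $F(x,y)=r/\rho(\theta)$ with $\rho(\theta)=1+\frac12\cos 4\theta$. Since $\cos 4\theta$ is a polynomial in $y^1/r$, $y^2/r$, the function $F$ is smooth on $\widetilde{TM}$, positive, positively $1$-homogeneous and continuous on the null section, so $1^\circ$--$3^\circ$ hold; but at $\theta=\pi/4$ one computes $\rho=\frac12$, $\rho'=0$, $\rho''=8$, hence $\rho^2+2\rho'^2-\rho\rho''=-\frac{15}{4}<0$, so the indicatrix $r=\rho(\theta)$ has negative curvature there and $g_{ij}$ is not positive definite. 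With this model written down, your argument is a complete proof of the statement that the paper merely asserts.
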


Some properties of Finsler space $F^n$:

\begin{itemize}
\item[$1^\circ$] The components of the fundamental tensor $g_{ij}(x,y)$
are 0-ho\-mo\-geneous with respect to $y^i$.

\item[$2^\circ$] The components of 1-form
\begin{equation}
\label{3.1.6}
p_i=\dfrac{1}{2}\dfrac{\partial F^2}{\partial y^i}
\end{equation}
are 1-homogeneous with respect to $y^i$.

\item[$3^{\circ}$] The components of the Cartan tensor
\begin{equation}
\label{3.1.7}
C_{ijk}=\dfrac{1}{4}\dfrac{\partial^{3}F^2}{\partial y^{i}\partial y^{j}\partial y^{k}}=\dfrac{1}{2}\dfrac{\partial g_{ij}}{\partial y^{k}}
\end{equation}
are $-1-$homogeneous with respect to $y^i$.
\end{itemize}

Consequently we have
\begin{equation}
\label{3.1.8}
C_{oij}=y^s C_{sij}=0.
\end{equation}

If $X^i$ and $Y^i$ are d-vector fields, then

$\|X\|^2:=g_{ij}(x,y)X^i Y^i$ is a scalar field.

$<X,Y>:=g_{ij}(x,y)X^i Y^j$ is a scalar field.

Assuming $\|X\|_u\neq 0$, $\|Y\|_u\neq 0$ the angle
$\varphi=\angle(X,Y)$ at a point $u\in\widetilde{TM}$ is given by
$$\cos\varphi=\dfrac{<X,Y>(u)}{\|X\|_u\cdot\|Y\|_u}.$$

The vectors $X_u,Y_u$ are orthogonal if $\langle X,Y\rangle(u)=0$.

\begin{proposition}
\label{p3.1}
In a Finsler space $F^n$ the following
identities hold:
\begin{itemize}
\item[$1^\circ$]
$F^2(x,y)=g_{ij}(x,y)y^i y^j$
\item[$2^\circ$] $p_i
y^i=F^2$.
\end{itemize}
\end{proposition}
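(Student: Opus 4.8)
The plan is to derive both identities purely from the homogeneity of $F$ by means of the Euler theorem (\ref{1.1.6}). Since $F$ is positively $1$-homogeneous in the fibre variables $y^i$ (axiom $3^\circ$), the function $F^2$ is positively $2$-homogeneous on $\widetilde{TM}$; this single fact drives everything, and I would record it explicitly at the outset, since Euler's identity is only available off the null section, where $F$ is differentiable.

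First I would dispose of $2^\circ$, which is the more elementary of the two. Applying (\ref{1.1.6}) to $F^2$ with $r=2$ gives $y^i\dfrac{\partial F^2}{\partial y^i}=2F^2$. Comparing this with the definition $p_i=\dfrac{1}{2}\dfrac{\partial F^2}{\partial y^i}$ from (\ref{3.1.6}) yields at once $p_i y^i=\dfrac{1}{2}\,y^i\dfrac{\partial F^2}{\partial y^i}=F^2$, which is precisely $2^\circ$.

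For $1^\circ$ I would exploit the homogeneity of $p_i$ rather than differentiate $F^2$ twice by hand. By property $2^\circ$ of the list preceding the proposition, the covector $p_i$ is $1$-homogeneous in $y^i$, so (\ref{1.1.6}) applied to each component gives $y^j\dfrac{\partial p_i}{\partial y^j}=p_i$. But $\dfrac{\partial p_i}{\partial y^j}=\dfrac{1}{2}\dfrac{\partial^2 F^2}{\partial y^i\partial y^j}=g_{ij}$ by (\ref{3.1.1}), whence the intermediate identity $g_{ij}y^j=p_i$. Contracting once more with $y^i$ and invoking $2^\circ$ produces $g_{ij}y^i y^j=p_i y^i=F^2$, which is $1^\circ$. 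Equivalently, one may differentiate the Euler relation $y^i\partial_{y^i}F^2=2F^2$ with respect to $y^j$ and then contract with $y^j$, reaching the same conclusion; I would note this as a cross-check.

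There is essentially no hard step: the argument is two applications of Euler's homogeneity identity together with the definitions of $g_{ij}$ and $p_i$. The only point demanding care is the domain—every homogeneity statement, and hence every use of (\ref{1.1.6}), is valid on $\widetilde{TM}=TM\setminus\{0\}$ only, so I would refrain from asserting the identities on the null section, where $F$ need not be differentiable. The intermediate relation $g_{ij}y^j=p_i$ is worth isolating on its own, as it expresses the Finslerian fact that the metric tensor lowers the supporting direction $y^i$ to the canonical momentum $p_i$, a relation reused repeatedly in the sequel.
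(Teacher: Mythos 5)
Your proof is correct and follows exactly the route the paper intends: the proposition is stated without proof immediately after the homogeneity properties $1^\circ$--$3^\circ$, and the implied argument is precisely your double application of the Euler identity (\ref{1.1.6}) — first to the $2$-homogeneous $F^2$ to get $p_i y^i=F^2$, then to the $1$-homogeneous $p_i$ to get $g_{ij}y^j=p_i$ and hence $g_{ij}y^iy^j=F^2$. Your added care about restricting to $\widetilde{TM}$, and the isolation of the intermediate relation $g_{ij}y^j=p_i$, are both consistent with the paper's conventions.
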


\begin{proposition}
\label{p3.2}
\begin{itemize}
\item[$1^\circ$] The 1-form
\begin{equation}
\label{3.1.9}
\omega=p_i dx^i
\end{equation}
is globally defined on $\widetilde{TM}$.
\item[$2^\circ$] The 2-form
\begin{equation}
\label{3.1.10}
\theta=d\omega=dp_i\wedge dx^i
\end{equation}
is globally defined on $\widetilde{TM}$.
\item[$3^\circ$] $\theta$ is a symplectic structure on $\widetilde{TM}$.
\end{itemize}
\end{proposition}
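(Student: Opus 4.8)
The plan is to reduce everything to the associated Lagrange space. By axiom $4^{\circ\prime}$ of Definition~\ref{d3.1}, $L^n_F=(M,F^2(x,y))$ is a Lagrange space whose fundamental tensor is precisely the $g_{ij}$ of (\ref{3.1.1}), and the momenta (\ref{3.1.6}) are $p_i=\tfrac12\,\partial F^2/\partial y^i$. Hence all three assertions are the special case $L=F^2$ of Theorem~\ref{t1.1}. I will nevertheless trace the mechanism, to make clear exactly where the Finsler axioms intervene.

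For $1^\circ$ I would first show that $p_i$ is a $d$-covector field. Since $F$, and therefore $F^2$, is a scalar, one has $F^2(x,y)=\widetilde F^2(\widetilde x,\widetilde y)$; differentiating in $y^i$ by the chain rule and using $\partial\widetilde y^j/\partial y^i=\partial\widetilde x^j/\partial x^i$, which is read off from (\ref{1.1.1}), gives $p_i=\dfrac{\partial\widetilde x^j}{\partial x^i}\,\widetilde p_j$. Substituting this into (\ref{3.1.9}) and applying (\ref{1.1.3}) makes the Jacobian factors cancel, so $\omega=\widetilde p_j\,d\widetilde x^j$; thus $\omega$ does not depend on the chart and is globally defined on $\widetilde{TM}$. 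Claim $2^\circ$ is then immediate from the naturality of the exterior derivative: $\theta=d\omega$ is globally defined because $\omega$ is, and $d(p_i\,dx^i)=dp_i\wedge dx^i$ since $d(dx^i)=0$, which is (\ref{3.1.10}).

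The substantive step is $3^\circ$. Closedness is automatic, since $\theta=d\omega$ is exact, whence $d\theta=0$. For nondegeneracy I would expand $\theta$ in the natural coframe: writing $dp_i=\dfrac{\partial p_i}{\partial x^k}\,dx^k+g_{ik}\,dy^k$ (here $\partial p_i/\partial y^k=g_{ik}$ by (\ref{3.1.1})), one gets $\theta=\dfrac{\partial p_i}{\partial x^k}\,dx^k\wedge dx^i+g_{ik}\,dy^k\wedge dx^i$. In the ordered basis $(dx^1,\dots,dx^n,dy^1,\dots,dy^n)$ the matrix of $\theta$ has the block form $\begin{pmatrix}B&-g\\ g&0\end{pmatrix}$, with $B$ antisymmetric and $g=(g_{ij})$ symmetric, so its determinant equals $\det(g)^2$ up to sign. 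Hence $\theta$ is nondegenerate exactly when $\det(g_{ij})\neq0$, and this is where the Finsler hypothesis is indispensable: axiom $4^\circ$ forces the Hessian $g_{ij}$ to be positive definite, hence invertible, throughout $\widetilde{TM}$. Together with closedness this shows $\theta$ is a symplectic structure.

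I expect the only real obstacle to be the bookkeeping in $3^\circ$---pinning down the block structure and checking that the vanishing lower-right $yy$-block forces the determinant to collapse to $\det(g)^2$. Once that is settled the conclusion is immediate. A computation-free alternative is to pass to the adapted basis of the canonical nonlinear connection, where $\theta=g_{ij}\,\delta y^i\wedge dx^j$ as in (\ref{4.6}); nondegeneracy of $\theta$ then follows transparently from that of $g$.
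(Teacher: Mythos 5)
Your proposal is correct and takes essentially the same route as the paper: the paper obtains this proposition by invoking axiom $4^{\circ\prime}$ of the definition of a Finsler space, so that everything is the special case $L=F^2$ of Theorem 2.1.1 on Lagrange spaces, exactly as you argue. The chain-rule verification that $p_i$ is a $d$-covector, the exactness argument for $d\theta=0$, and the block-matrix computation giving $\det(\theta)=\pm\det(g_{ij})^2$ simply supply the details that the paper declares provable ``without difficulties,'' and your closing remark that $\theta=g_{ij}\,\delta y^i\wedge dx^j$ in the adapted basis matches the paper's Proposition 2.4.1.
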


\begin{definition}
\label{d1.2}
A Finsler space $F^n=(M,F)$ is called reducible to a Riemann space if its fundamental tensor
$g_{ij}(x,y)$ does not depend on the variable $y^i$.
\end{definition}

\begin{proposition}
\label{p3.3}
A Finsler space $F^n$ is reducible to
a Riemann space if and only if the tensor $C_{ijk}$ is vanishing
on $\widetilde{TM}$.
\end{proposition}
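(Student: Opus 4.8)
The plan is to read the equivalence directly off the second expression for the Cartan tensor recorded in the list of properties, namely $C_{ijk}=\frac{1}{2}\frac{\partial g_{ij}}{\partial y^{k}}$ from equation (3.1.7). This identity reduces the whole statement to the elementary fact that a fibre-dependent tensor field is independent of the variables $y^{i}$ precisely when all of its $y$-derivatives vanish. So the strategy is simply to pass back and forth between ``$g_{ij}$ does not depend on $y$'' (the content of Definition \ref{d1.2}) and ``$\frac{\partial g_{ij}}{\partial y^{k}}=0$'' via this formula.

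First I would dispose of the easy implication. If $F^{n}$ is reducible to a Riemann space, then by Definition \ref{d1.2} the fundamental tensor satisfies $g_{ij}=g_{ij}(x)$, with no dependence on $y$. Consequently $\frac{\partial g_{ij}}{\partial y^{k}}=0$ identically on $\widetilde{TM}$, and substituting into (3.1.7) gives $C_{ijk}=0$ on $\widetilde{TM}$. For the converse, assume $C_{ijk}=0$ on $\widetilde{TM}$. Formula (3.1.7) forces $\frac{\partial g_{ij}}{\partial y^{k}}=0$ for all indices at every point of $\widetilde{TM}$, and the remaining task is to integrate this: a smooth function whose $y$-gradient vanishes on a connected set is constant along that set. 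Fixing $x$, the punctured fibre $\widetilde{T_{x}M}=T_{x}M\setminus\{0\}$ is connected when $n\geq 2$, so $g_{ij}(x,\cdot)$ is constant in $y$; together with the continuity of $g_{ij}$ across the null section (Definition \ref{d3.1}, axiom $1^{\circ}$) this yields a well-defined $g_{ij}(x)$. Hence $g_{ij}$ is independent of $y$ and $F^{n}$ is reducible to a Riemann space.

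The computation is immediate, and I do not expect a genuine obstacle. The only step that merits a word of care is the integration in the converse, where one must invoke connectedness of the punctured fibres to pass from a vanishing $y$-derivative to true $y$-independence; alternatively one may lean on the $0$-homogeneity of $g_{ij}$ already listed among the properties of $F^{n}$, which constrains $g_{ij}$ to be constant along each ray and so complements the vanishing of the radial derivative. Beyond this, the equivalence is essentially built into the definition of the Cartan tensor as the $y$-derivative of the metric, so the proof is a short verification rather than an argument with a hard core.
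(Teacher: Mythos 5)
Your proof is correct and follows the same route the paper intends: Proposition \ref{p3.3} is stated without proof precisely because it is immediate from the identity $C_{ijk}=\tfrac12\,\partial g_{ij}/\partial y^k$ in (3.1.7), with the converse settled by connectedness of the punctured fibres (or $0$-homogeneity of $g_{ij}$), exactly as you argue. One small caution: axiom $1^\circ$ of Definition \ref{d3.1} gives continuity of $F$, not of $g_{ij}$, on the null section, but this appeal is dispensable since reducibility in Definition \ref{d1.2} only requires $y$-independence of $g_{ij}$ on $\widetilde{TM}$, where your connectedness/homogeneity argument already suffices.
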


\section{Geodesics}\index{Geodesics}
\setcounter{equation}{0}
\setcounter{definition}{0}
\setcounter{theorem}{0}
\setcounter{lemma}{0}

In a Finsler space $F^{n}=(M,F(x,y))$ one can define the notion
of arc length of a smooth curve.

Let $c$ be a parametrized curve in the manifold $M$:
\begin{equation}
\label{3.2.1}
c:t\in \lbrack 0,1]\to (x^{i}(t))\in U\subset M
\end{equation}
$U$ being a domain of a local chart in $M$.

The extension $\widetilde{c}$ of $c$ to $\widetilde{TM}$ has the equations
\begin{equation}
\label{3.2.1'}
x^{i}=x^{i}(t),\ y^{i}=\dfrac{dx^{i}}{dt}(t),\ t\in \lbrack
0,1].\tag{3.2.1'}
\end{equation}
Thus the restriction of the fundamental function $F(x,y)$ to
$\widetilde{c}$ is $F(x(t)$, $\dfrac{dx}{dt}(t))$, $t\in [0,1]$.

We define the ``length'' of curve $c$ with extremities $c(0),c(1)$
by the number
\begin{equation}
\label{3.2.2}
{\ell}(c)=\int_{0}^{1}F(x(t),\dfrac{dx}{dt}(t))dt.
\end{equation}

The number ${\ell}(c)$ does not depend on a changing of coordinates on $\widetilde{TM}$ and, by means of 1-homogeneity of function $F$,
${\ell}(c)$ does not depend on the parametrization of the curve
$c$. ${\ell}(c)$ depends on the curve $c$, only.

We can choose a canonical parameter on $c$, considering the
following function $s=s(t)$, $t\in[0,1]$:
$$s(t)=\int_{0}^t F(x(\tau),\dfrac{dx}{dt}(\tau))d\tau.$$

This function is derivable and its derivative is
$$\dfrac{ds}{dt}=F(x(t),\dfrac{dx}{dt}(t))>0,\ t\in (0,1).$$
So the function $s=s(t)$, $t\in [0,1]$, is invertible. Let
$t=t(s)$, $s\in[s_0,s_1]$ be its inverse. The change of parameter
$t\to s$ has the property
\begin{equation}
\label{3.2.3}
F\left(x(s), \dfrac{dx}{ds}(s)\right)=1.
\end{equation}

Variational problem on the functional $\ell$ will gives the curves
on $\widetilde{TM}$ which extremize the arc length. These
curves are geodesics of the Finsler space $F^n$.

So, the solution curves of the Euler-Lagrange equations:
\begin{equation}
\label{3.2.4}
\dfrac{d}{dt}\left(\dfrac{\partial F}{\partial y^i }\right)-\dfrac{\partial F}{\partial x^i}=0,\
y^i=\dfrac{dx^i}{dt}
\end{equation}
are the geodesics of the space $F^n$.

\begin{definition}
\label{d6.1}
The curves $c=(x^i(t))$, $t\in[0,1]$,
solutions of the Euler-Lagrange equations (\ref{3.2.4}) are called the
{\it geodesics} of the Finsler space $F^n$.
\end{definition}

The system of differential equations (\ref{3.2.4}) is equivalent to the
following system $$\dfrac{d}{dt}\dfrac{\partial F^2}{\partial
y^i}-\dfrac{\partial F^2}{\partial x^i}=2\dfrac{dF}{dt}\dfrac{\partial F}{\partial y^i},\quad
y^i=\dfrac{dx^i}{dt}.$$

In the canonical parametrization, according to (\ref{3.2.3}) we have:

\begin{theorem}
\label{2.1}
In the canonical parametrization the
geodesics of the Finsler space $F^{n}$ are given by the system of
differential equations
\begin{equation}
\label{3.2.5}
E_i(F^2):=\dfrac{d}{ds}\dfrac{\partial F^2}{\partial y^i}-\dfrac{\partial F^2}{\partial x^i}=0,
y^i=\dfrac{dx^i}{ds}.
\end{equation}
\end{theorem}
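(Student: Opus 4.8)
The plan is to relate the energy operator $E_i(F^2)$ appearing in \eqref{3.2.5} to the Euler--Lagrange operator of the length integrand $F$, and then to exploit the defining property of the canonical parameter. First I would record the two elementary chain-rule identities $\dfrac{\partial F^2}{\partial y^i}=2F\dfrac{\partial F}{\partial y^i}$ and $\dfrac{\partial F^2}{\partial x^i}=2F\dfrac{\partial F}{\partial x^i}$, which are legitimate on $\widetilde{TM}$ since there $F>0$. Differentiating the first along the extended curve and substituting both into the definition of $E_i(F^2)$ gives
\[
E_i(F^2)=\dfrac{d}{dt}\dfrac{\partial F^2}{\partial y^i}-\dfrac{\partial F^2}{\partial x^i}
=2\dfrac{dF}{dt}\dfrac{\partial F}{\partial y^i}
+2F\left(\dfrac{d}{dt}\dfrac{\partial F}{\partial y^i}-\dfrac{\partial F}{\partial x^i}\right).
\]
This is nothing but the equivalence between \eqref{3.2.4} and the auxiliary system displayed immediately before the theorem, now read as an algebraic identity with the bracket equal to the Euler--Lagrange operator of $F$.

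For the forward direction I would invoke the definition of geodesic as a solution of \eqref{3.2.4}: along such a curve the parenthesised bracket vanishes, so the identity collapses to $E_i(F^2)=2\dfrac{dF}{dt}\dfrac{\partial F}{\partial y^i}$. I then pass to the canonical parameter $s$: by construction \eqref{3.2.3} one has $F\bigl(x(s),\dfrac{dx}{ds}\bigr)\equiv 1$, hence $\dfrac{dF}{ds}=0$. Substituting this into the collapsed identity yields $E_i(F^2)=0$, which is exactly \eqref{3.2.5}. Thus a geodesic, once written in its canonical parametrization, satisfies \eqref{3.2.5}.

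The step I expect to carry the genuine content is the converse reading, together with the question of why the parametrization matters at all. Because $F$ is positively $1$-homogeneous, its Hessian in $y$ is degenerate and the system \eqref{3.2.4} is underdetermined, so it does not by itself fix a parametrization; the extra term $2\dfrac{dF}{dt}\dfrac{\partial F}{\partial y^i}$ is precisely the obstruction that must be killed, and it vanishes exactly in the canonical gauge. For the converse I would observe that $F^2$ is regular and positively $2$-homogeneous, so by Euler's theorem its energy is $E_{F^2}=y^i\dfrac{\partial F^2}{\partial y^i}-F^2=2F^2-F^2=F^2$; by the law of conservation of energy (Theorem \ref{t2.4}) this quantity is constant along every solution of \eqref{3.2.5}, so $F$ is constant there and $\dfrac{dF}{ds}=0$. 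Feeding this back into the identity gives $2F\,\bigl(\dfrac{d}{ds}\dfrac{\partial F}{\partial y^i}-\dfrac{\partial F}{\partial x^i}\bigr)=0$, and dividing by $F>0$ recovers \eqref{3.2.4}. Hence the solutions of \eqref{3.2.5} are precisely the geodesics, and \eqref{3.2.5} simultaneously produces the geodesic and selects its arc-length parametrization.
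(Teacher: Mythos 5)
Your argument is correct and follows essentially the paper's own route: the paper proves the theorem precisely by noting that (\ref{3.2.4}) is equivalent to $\dfrac{d}{dt}\dfrac{\partial F^2}{\partial y^i}-\dfrac{\partial F^2}{\partial x^i}=2\dfrac{dF}{dt}\dfrac{\partial F}{\partial y^i}$ and then killing the right-hand side with $F\equiv 1$ in the canonical parametrization (\ref{3.2.3}), which is exactly your chain-rule identity and gauge argument. Your converse via the conservation law, using $2$-homogeneity to get ${\cal E}_{F^2}=F^2$ and Theorem \ref{t2.4} applied to $L=F^2$, is a sound completion that the paper leaves implicit, with the one small caveat that it only forces $F=\mathrm{const}$ along solutions (a parametrization proportional to arc length) rather than specifically $F=1$.
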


Now, remarking that $F^2=g_{ij}y^i y^j$, the previous equations
can be written in the form:
\begin{equation}
\label{3.2.6}
\dfrac{d^{2}x^{i}}{ds^{2}}+\gamma^{i}_{jk}\left(x,\dfrac{dx}{ds}\right)\dfrac{dx^j}{ds}\dfrac{dx^k}{ds}=0,\ y^{i}=\dfrac{dx^{i}}{ds},
\end{equation}
where $\gamma^i_{jk}$ are the Christoffel symbols of the
fundamental tensor $g_{ij}$:
\begin{equation}
\label{3.2.7}
\gamma^{i}{}_{jk}=\dfrac{1}{2}g^{ir}\left(\dfrac{\partial
g_{rk}}{\partial x^{j}}+\dfrac{\partial g_{jr}}{\partial
x^{k}}-\dfrac{\partial g_{jk}}{\partial x^{r}}\right).
\end{equation}

A theorem of existence and uniqueness of the solution of differential
equations (\ref{3.2.6}) can be formulated in the classical manner.

\section{Cartan nonlinear connection}\index{Cartan! nonlinear connection}
\setcounter{equation}{0}
\setcounter{definition}{0}
\setcounter{theorem}{0}
\setcounter{lemma}{0}

Considering the Lagrange space $L^n_F=(M,F^2)$ associated to the
Finsler space $F^n=(M,F)$ we can obtain some main geometrical
object field of $F^n$.

So, Theorem 2.3.1, affirms:

\begin{theorem}
\label{t3.1}
For the Finsler space $F^n$ the equations
$$g^{ij}E_j(F^2):=g^{ij}\left(\dfrac{d}{dt}\dfrac{\partial F^2}{\partial
y^j}\right)-\dfrac{\partial F^2}{\partial x^j}=0, y^i=\dfrac{dx^i}{dt}$$ can be written in the form
\begin{equation}
\label{3.3.1}
\dfrac{d^{2}x^{i}}{dt^{2}}+2G^{i}\left(x,\dfrac{dx}{dt}\right)=0,
y^i=\dfrac{dx^i}{dt}
\end{equation}
where
\begin{equation}
\label{3.3.1'}
2G^i(x,y)=\gamma^i_{jk}(x,y)y^j y^k\tag{3.3.1'}
\end{equation}
\end{theorem}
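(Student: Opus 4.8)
The plan is to reduce the statement to the semispray form already obtained for an arbitrary regular Lagrangian and then to simplify the coefficients using the homogeneity peculiar to a Finsler metric. Since $F^2$ is a regular Lagrangian (axiom $4^{\circ\prime}$ of Definition~3.1.1), the associated Lagrange space $L^n_F=(M,F^2)$ falls under Theorem~2.3.1 verbatim, with $L$ replaced by $F^2$. Hence the equations $g^{ij}E_j(F^2)=0$ take the form $\dfrac{d^2x^i}{dt^2}+2G^i=0$ with
$$2G^i(x,y)=\dfrac{1}{2}g^{ij}\left\{\dfrac{\partial^2 F^2}{\partial y^j\partial x^k}y^k-\dfrac{\partial F^2}{\partial x^j}\right\}.$$
It then remains only to verify that this expression coincides with $\gamma^i_{jk}(x,y)y^jy^k$, where $\gamma^i_{jk}$ are the formal Christoffel symbols (3.2.7) of the fundamental tensor.

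First I would compute the momenta. Writing $F^2=g_{ab}(x,y)y^ay^b$ (Proposition~3.1.1, $1^\circ$) and differentiating, $\dfrac{\partial F^2}{\partial y^j}=2g_{jb}y^b+\dfrac{\partial g_{ab}}{\partial y^j}y^ay^b$. By the definition (3.1.7) of the Cartan tensor the last term equals $2C_{jab}y^ay^b$, and this vanishes because $C_{sab}y^s=C_{oab}=0$ by the homogeneity relation (3.1.8); this is the single place where the Finsler structure, rather than a generic Lagrangian, is used. Thus $\dfrac{\partial F^2}{\partial y^j}=2g_{jb}y^b$, from which differentiation in $x^k$ and contraction with $y^k$ give $\dfrac{\partial^2 F^2}{\partial y^j\partial x^k}y^k=2\dfrac{\partial g_{jb}}{\partial x^k}y^by^k$, while directly $\dfrac{\partial F^2}{\partial x^j}=\dfrac{\partial g_{ab}}{\partial x^j}y^ay^b$.

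Substituting these into the formula for $2G^i$ yields
$$2G^i=\dfrac{1}{2}g^{ij}\left\{2\dfrac{\partial g_{jh}}{\partial x^k}y^hy^k-\dfrac{\partial g_{hk}}{\partial x^j}y^hy^k\right\}.$$
Since $y^hy^k$ is symmetric in $h$ and $k$, I would symmetrize the first term as $2\dfrac{\partial g_{jh}}{\partial x^k}y^hy^k=\left(\dfrac{\partial g_{jh}}{\partial x^k}+\dfrac{\partial g_{jk}}{\partial x^h}\right)y^hy^k$, so that the brace becomes exactly $\left(\dfrac{\partial g_{jk}}{\partial x^h}+\dfrac{\partial g_{hj}}{\partial x^k}-\dfrac{\partial g_{hk}}{\partial x^j}\right)y^hy^k$ after using $g_{jh}=g_{hj}$. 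Comparing with (3.2.7) identifies the bracket with $2\gamma^i_{hk}$ and gives $2G^i=\gamma^i_{hk}(x,y)y^hy^k$, which is (3.3.1$'$). The only genuinely nonroutine step is the vanishing of $\dfrac{\partial g_{ab}}{\partial y^j}y^ay^b$; I expect this to be the crux, as everything else is symmetric index bookkeeping, and it is precisely this identity that fails for a general Lagrange space.
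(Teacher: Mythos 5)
Your proof is correct and follows essentially the same route as the paper: it specializes Theorem 2.3.1 to the regular Lagrangian $L=F^2$ and then reduces the coefficients $2G^i$ to $\gamma^i_{jk}(x,y)y^jy^k$ using $F^2=g_{ij}y^iy^j$ and the homogeneity identity $C_{0jk}=0$. The paper merely asserts this reduction (having already remarked it in Section 3.2), so your index computation simply supplies the details the paper leaves implicit.
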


Consequently the equations (\ref{3.3.1}) give the integral curves of the
semispray:
\begin{equation}
\label{3.3.2}
S=y^{i}\dfrac{\partial}{\partial
x^{i}}-2G^{i}(x,y)\dfrac{\partial}{\partial y^{i}}.
\end{equation}

Since $G^{i}$ are 2-homogeneous functions with respect to $y^i$ it
follows that $S$ is a {\it spray}.

$S$ determine a canonical nonlinear connection $N$ with the
coefficients
\begin{equation}
\label{3.3.3}
N^i{}_j=\dfrac{\partial G^i}{\partial
y^j}=\dfrac{1}{2}\dfrac{\partial}{\partial
y_j}\{\gamma^i{}_{rs}(x,y) y^r y^s\}.
\end{equation}
$N$ is called the Cartan nonlinear connection of the space $F^n$.

The tangent bundle $T(TM)$, the horizontal distribution $N$ and
the vertical distribution $V$ give us the direct decomposition of
vectorial spaces
\begin{equation}
\label{3.3.4}
T_u(\widetilde{TM})=N(u)\oplus V(u),\quad \forall
u\in\widetilde{TM}.
\end{equation}

The adapted basis to $N$ and $V$ is $\left(\dfrac{\delta}{\delta
x^i},\dfrac{\partial}{\partial y^i}\right)$ and the dual adapted
basis is $(dx,\delta y^i)$, where
\begin{equation}
\label{3.3.4'}
\left\{\begin{array}{l} \dfrac{\delta}{\delta
x^i}=\dfrac{\partial}{\partial
x^i}-N^j_i(x,y)\dfrac{\partial}{\partial y^j} \vspace{3mm}\\
\delta y^i =dy^i+N^i_j(x,y)dx^j.
\end{array}\right.
\tag{3.3.4'}
\end{equation}

One obtains:

\begin{theorem}
\label{t3.2}
\begin{itemize}
\item[$1^\circ$] The
horizontal curves in $F^n$ are given by $$x^i=x^i(t),\quad
\dfrac{\delta y^i}{dt}=0.$$
\item[$2^\circ$] The autoparallel
curves of the Cartan nonlinear connection $N$ coincide to
the integral curves of the spray $S$, $($\ref{3.3.2}$)$.
\end{itemize}
\end{theorem}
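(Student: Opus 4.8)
The plan is to read both assertions as specializations, to the Cartan nonlinear connection $N$ of $F^n$, of the general theory of horizontal and autoparallel curves developed in Section~1.3. Nothing new has to be constructed; the content is to match the relevant systems of differential equations and, for $2^\circ$, to exploit the $2$-homogeneity of the spray coefficients $G^i$.

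For $1^\circ$ I would recall the decomposition of the tangent vector of a curve $c:t\mapsto(x^i(t),y^i(t))$ in $\widetilde{TM}$ with respect to $N$, namely $\dot c=\dot c^H+\dot c^V$ with $\dot c^V=\dfrac{\delta y^i}{dt}\dfrac{\partial}{\partial y^i}$ and $\dfrac{\delta y^i}{dt}=\dfrac{dy^i}{dt}+N^i{}_j\dfrac{dx^j}{dt}$, using the Cartan coefficients $N^i{}_j$ of (\ref{3.3.3}). By definition $c$ is horizontal exactly when its vertical part vanishes, i.e. $\dot c^V=0$, which is precisely $\dfrac{\delta y^i}{dt}=0$ together with the curve equation $x^i=x^i(t)$. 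This reproduces the stated characterization directly from (\ref{1.3.15}).

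For $2^\circ$ I would write the two systems side by side. The integral curves of the spray $S=y^i\dfrac{\partial}{\partial x^i}-2G^i\dfrac{\partial}{\partial y^i}$ of (\ref{3.3.2}) satisfy $\dfrac{dx^i}{dt}=y^i$ and $\dfrac{dy^i}{dt}=-2G^i(x,y)$. The autoparallel curves of $N$ are the horizontal curves with $y^i=\dfrac{dx^i}{dt}$, hence by (\ref{1.3.16}) they satisfy $\dfrac{dy^i}{dt}+N^i{}_j\dfrac{dx^j}{dt}=0$ with $\dfrac{dx^j}{dt}=y^j$, i.e. $\dfrac{dy^i}{dt}+N^i{}_j y^j=0$. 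The key step is the contraction $N^i{}_j y^j=y^j\dfrac{\partial G^i}{\partial y^j}=2G^i$, obtained by substituting $N^i{}_j=\dfrac{\partial G^i}{\partial y^j}$ from (\ref{3.3.3}) and applying Euler's theorem (\ref{1.1.6}) to the $2$-homogeneous functions $G^i$. Substituting this into the autoparallel equation yields $\dfrac{dy^i}{dt}+2G^i=0$, which is identical to the second integral-curve equation, while the first is $y^i=\dfrac{dx^i}{dt}$ in both cases; thus the two families of curves coincide.

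The single nontrivial observation is the identity $N^i{}_j y^j=2G^i$, and the main obstacle is to notice that it rests entirely on the $2$-homogeneity of $G^i$, equivalently on the fact that the canonical semispray of a Finsler space is actually a spray. For a merely nonhomogeneous semispray, Euler's theorem would give $y^jN^i{}_j=y^j\dfrac{\partial G^i}{\partial y^j}\neq 2G^i$ in general, and the autoparallel curves of the induced $N$ would fail to coincide with the integral curves of $S$; so I would make a point of invoking the earlier remark that $G^i$ are $2$-homogeneous at exactly this step. Everything else is routine substitution and needs no further computation.
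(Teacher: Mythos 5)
Your proposal is correct and follows exactly the route the paper intends: the paper states the theorem without written proof, relying (as you do) on the general characterizations (\ref{1.3.15}) and (\ref{1.3.16}) of horizontal and autoparallel curves from Chapter 1, together with the remark immediately preceding the theorem that the $G^i$ of (3.3.1') are $2$-homogeneous, so that Euler's theorem gives the key contraction $N^i{}_j y^j=y^j\dfrac{\partial G^i}{\partial y^j}=2G^i$. This is precisely the identity the paper itself records later (in the form $N^i{}_0=\gamma^i_{00}=2G^i$), and your explicit warning that the coincidence fails for a non-homogeneous semispray correctly isolates the one nontrivial ingredient.
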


\section{Cartan metrical connection}\index{Cartan! metrical connection}
\setcounter{equation}{0}
\setcounter{definition}{0}
\setcounter{theorem}{0}
\setcounter{lemma}{0}

Let $N(N^i_j)$ be the Cartan nonlinear connection of the Finsler
space $F^n$. According to section 5 of chapter 2 one introduces
the canonical metrical $N-$linear connection of the space $F^n$.

But, for these spaces the system of axioms, from Theorem 2.5.1 can be given in the Matsumoto's form \cite{Ma1}, \cite{Mats}.

\begin{theorem}
\label{t4.1}
\begin{itemize}
\item[$1^\circ$] On the
manifold $\widetilde{TM}$, for any Finsler space $F^n=(M,F)$ there
exists only one linear connection $D$, with the coefficients
$C\Gamma=(N^i_j,F^i_{jk},C^i_{jk})$ which verifies the following
axioms:

$A_1.$ The deflection tensor field $D^i_j=y^i_{|j}$ vanishes.

$A_2.$ $g_{ij|k}=0$, ($D$ is $h-$metrical).

$A_3.$ $g_{ij}|_k=0$, ($D$ is $v-$metrical).

$A_4.$ $T^i_{\ jk}=0$, ($D$ is $h-$torsion free).

$A_5.$ $S^i_{\ jk}=0$, ($D$ is $v-$torsion free).

\item[$2^\circ$] The coefficients $(N^i_j,F^i_{jk},C^i_{jk})$ are
as follows:

a. $N^i_{j}$ are the coefficients (3.3.3) of the Cartan nonlinear
connection.

b. $F^i_{jk},C^i_{jk}$ are expressed by the generalized
Christoffel symbols:
\begin{equation}
\label{3.4.1}
\begin{array}{l}
F^i_{jk}=\dfrac{1}{2}g^{is} \left(\dfrac{\delta g_{sk}}{\delta
x^j} + \dfrac{\delta g_{js}}{\delta x^k}-\dfrac{\delta
g_{jk}}{\delta x^s}\right)\vspace{3mm}
\\
C^i_{jk}=\dfrac{1}{2}g^{is} \left(\dfrac{\partial g_{sk}}{\partial
y^j} + \dfrac{\partial g_{js}}{\partial y^k}- \dfrac{\partial
g_{jk}}{\partial y^s}\right)
\end{array}
\end{equation}
\item[$3^\circ$] This connection depends only on the fundamental
function $F$.\end{itemize}
\end{theorem}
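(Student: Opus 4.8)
The plan is to prove the three assertions in turn by decoupling the problem into two layers: first I fix the nonlinear connection and solve the metric/torsion axioms for the linear coefficients $(F^i_{jk},C^i_{jk})$, then I use the deflection axiom $A_1$ together with the $1$-homogeneity of $F$ to pin down the nonlinear connection $N$ itself and identify it with the Cartan one \eqref{3.3.3}, and finally I read off that everything depends on $F$ alone. The first layer is essentially a repetition of Theorem 2.5.1; the genuinely new point is that here $N$ is \emph{not} postulated but forced by $A_1$.

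For the first layer I would argue exactly as in Theorem 2.5.1, but keeping $N=(N^i_j)$ arbitrary. Writing $A_2$, $A_4$ out with $\delta/\delta x^k=\partial/\partial x^k-N^s_k\,\partial/\partial y^s$ gives the system $g_{ij|k}=0$, $F^i_{jk}=F^i_{kj}$, and the standard cyclic-permutation (Koszul) computation solves it uniquely for the first line of \eqref{3.4.1}; likewise $A_3$, $A_5$ yield the second line. Note that $C^i_{jk}$ involves no $N$ at all, and since $\partial g_{sk}/\partial y^j=2C_{sjk}$ with $C_{ijk}$ totally symmetric it collapses to $C^i_{jk}=g^{is}C_{sjk}$. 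Thus, given the metric $g_{ij}$ and \emph{any} nonlinear connection $N$, the pair $(F,C)$ is uniquely determined.

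The heart of the proof is extracting $N$ from $A_1$. Substituting the expression for $\delta/\delta x$ into the formula for $F^i_{jk}$ produces
\[
F^i_{jk}=\gamma^i_{jk}-g^{is}\left(N^r_jC_{skr}+N^r_kC_{jsr}-N^r_sC_{jkr}\right),
\]
with $\gamma^i_{jk}$ the Christoffel symbols \eqref{3.2.7} of $g_{ij}$. The deflection axiom reads $D^i_k=y^jF^i_{jk}-N^i_k=0$, so I contract with $y^j$. Here the homogeneity of the Finsler metric is decisive: since $g_{ij}$ is $0$-homogeneous, $C_{0ij}:=y^sC_{sij}=0$ by \eqref{3.1.8}, so two of the three correction terms drop out and
\[
N^i_k=\gamma^i_{0k}-\left(y^jN^r_j\right)C^i_{rk},\qquad \gamma^i_{0k}:=\gamma^i_{jk}y^j.
\]
Contracting once more with $y^k$ and using $C^i_{r0}=0$ gives $y^jN^i_j=\gamma^i_{jk}y^jy^k=2G^i$; feeding this back yields the closed, $N$-free expression $N^i_k=\gamma^i_{0k}-2G^rC^i_{rk}$, which a short homogeneity computation identifies with $\partial G^i/\partial y^k$ for $2G^i=\gamma^i_{jk}y^jy^k$, i.e. the Cartan nonlinear connection \eqref{3.3.3} / (3.3.1'). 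This simultaneously proves uniqueness of $N$ and computes it.

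For the existence direction I would run the same calculation in reverse: define $N$ by \eqref{3.3.3}, then $(F,C)$ by \eqref{3.4.1}, and check the five axioms — $A_2$--$A_5$ hold by construction of the generalized Christoffel symbols (as in Theorem 2.5.1), and $A_1$ holds because $y^jF^i_{jk}=\gamma^i_{0k}-2G^rC^i_{rk}=N^i_k$ by the identity just established. Assertion $3^\circ$ is then immediate: $C^i_{jk}$ depends only on $g_{ij}$, hence on $F$; the Cartan $N$ depends only on $F$; and $F^i_{jk}$ is assembled from $g$ and $N$, so on $F$ alone. The main obstacle is precisely the third paragraph: in a general Lagrange space one cannot recover $N$ from a deflection condition (which is why Theorem 2.5.1 takes $N$ as given), and what rescues the Finsler case is exactly the $1$-homogeneity of $F$, equivalently $C_{0ij}=0$, collapsing the implicit relation $N^i_k=y^jF^i_{jk}(N)$ into a closed form. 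Verifying that the resulting $N^i_k=\gamma^i_{0k}-2G^rC^i_{rk}$ genuinely equals $\partial G^i/\partial y^k$ requires invoking the relations $C_{0ij}=C_{00j}=0$ several times, and that is the step where I would be most careful.
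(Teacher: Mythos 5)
Your proof is correct, and it is in substance the argument the paper relies on but does not print: for this theorem the text offers no proof of its own (it remarks only that the axioms are Theorem 2.5.1 ``in Matsumoto's form'' and refers the reader to the cited books), so the genuinely new content --- that $A_1$ \emph{forces} $N$ to be the Cartan nonlinear connection rather than taking it as given --- is exactly what you supply. Your two-layer scheme matches the intended route: the Christoffel-trick uniqueness of $(F^i_{jk},C^i_{jk})$ for an arbitrary fixed $N$ is indeed just Theorem 2.5.1 with $\delta/\delta x$ built from that $N$, and your extraction of $N$ is the standard Matsumoto computation. I checked the key steps: expanding $\delta_j g_{sk}=\partial_j g_{sk}-2N^r_jC_{skr}$ gives your formula $F^i_{jk}=\gamma^i_{jk}-g^{is}\bigl(N^r_jC_{skr}+N^r_kC_{jsr}-N^r_sC_{jkr}\bigr)$; contracting with $y^j$ and using $C_{0ij}=0$ (property (3.1.8)) correctly kills two of the three correction terms; the second contraction with $y^k$ gives $y^jN^i_j=\gamma^i_{00}=2G^i$, and feeding back yields the closed form $N^i_k=\gamma^i_{0k}-2G^rC^i_{rk}$. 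This is precisely the equivalent expression (2.1.10') for the Cartan nonlinear connection recorded in Part III, Chapter 2 of the book, whose equivalence with $N^i_j=\partial G^i/\partial y^j$ (i.e.\ (3.3.3)) is the homogeneity identity you rightly flag as the step needing care; it follows by differentiating $2G^i=\gamma^i_{jk}y^jy^k$ and using $C_{0jk}=0$ again, so your caution is well placed but the step goes through. Your reverse direction for existence and the observation that in a general Lagrange space the implicit relation $N^i_k=y^jF^i_{jk}(N)$ cannot be closed (which is why Theorem 2.5.1 postulates $N$) are both accurate and, if anything, make the logical structure clearer than the paper's citation-only treatment.
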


A proof can be find in the books \cite{Ant2}, \cite{mirana}.

The previous connection is named the {\it Cartan metrical connection} of
the Finsler space $F^n$.

Now we can develop the geometry of Finsler spaces, exactly as the
geometry of the associated Lagrange space $L^n_F=(M,F^2)$.

Also, in the case of Finsler space the geometrical model
$H^{2n}=(\widetilde{TM},\G,\F)$ is an almost
K\"{a}hlerian space.

A very good example is provided by the Randers space, introduced by
R.S. Ingarden.

A Randers space is the Finsler space $F^n=(M,\alpha+\beta)$
equipped with Cartan nonlinear connection $N$.

It is denoted by $RF^n=(M,\alpha+\beta,N)$.

The geometry of these spaces was much studied by many
geometers. A good monograph in this respect is the D. Bao, S.S.
Chern and Z. Shen's book \cite{BCS1}.

The Randers spaces $RF^n$ can be generalized considering the
Finsler spaces $F^n=(M,\alpha+\beta)$, where $\alpha(x,y)$ is the
fundamental function of a Finsler space $F^{\prime n}=(M,\alpha)$.
The Finsler space $F^n=(M,\alpha+\beta)$ equipped with the Cartan
nonlinear connection $N$ of the space $F^{\prime n}=(M,\alpha)$ is
a generalized Randers space \cite{mir2}, \cite{BCS1}. Evidently, this notion has
some advantages, since we can take some remarkable Finsler spaces
$F^{\prime n}$, (M. Anastasiei $\beta-$transformation of a Finsler space \cite{AnShi}).

As an application of the previous notions, we define the notion of
Ingarden space $IF^n$, \cite{ingarden2}, \cite{BuMi}. This is the Finsler space
$F^n=(M,\alpha+\beta)$ equipped with the nonlinear connection
$N=\gamma^i_{jk}(x)y^k-F^i_{j}(x)$, $\gamma^i_{jk}(x)$ being the
Christoffel symbols of the Riemannian metric $a_{ij}(x)$, which
defines $\alpha^2=a_{ij}(x)y^i y^j$ and the electromagnetic tensor
$F^i_j(x)$ determined by $\beta=b_i(x)y^i$. While the spaces
$RF^n$ have not the electromagnetic field ${\cal
F}=\dfrac{1}{2}(D_{ij}-D_{ji})$, the Ingarden spaces have such
kind of tensor fields and they give us the remarkable Maxwell
equations, \cite{BuMi}. Also, the autoparallel curve of the nonlinear
connection $N$ are given by the known Lorentz equations.

An example of a special Lagrange space derived from a Finsler one,
\cite{mirana} is as follows:

Let us consider the Lagrange space $L^n=(M,L(x,y))$ with the
fundamental function $$L(x,y)=F^2(x,y)+\beta,$$ where $F$ is the
fundamental function of a priori given Finsler space $F^n=(M,F)$
and $\beta=b_i(x)y^i$.

These spaces have been called {\it the almost Finsler Lagrange
Spaces} (shortly AFL-spaces), \cite{mirana}, \cite{BuMi}. They generalize the
Lagrange space from Electrodynamics.

Indeed, the Euler - Lagrange equations of AFL-spaces are exactly
the Lorentz equations
$$\dfrac{d^2
x^i}{dt^2}+\gamma^i_{{j}{k}}(x,y)\dfrac{dx^j}{dt}\dfrac{dx^k}{dt}=\dfrac{1}{2}F^i_j(x)\dfrac{d
x^j}{dt}.$$

To the end of these three chapter we can do a general remark:

The class of Riemann spaces $\{{\cal R}^n\}$ is a subclass of the
class of Finsler spaces $\{F^n\}$, the class $\{F^n\}$ is a
subclass of class of Lagrange spaces $\{L^n\}$ and this is a
subclass of class of generalized Lagrange spaces $\{GL^n\}$. So,
we have the following sequence of inclusions:
\begin{itemize}
\item[(I)] $\quad \{{\cal
R}^n\}\subset\{F^n\}\subset\{L^n\}\subset\{GL^n\}$.
\end{itemize}

Therefore, we can say: The Lagrange geometry is the geometric
study of the terms of the sequence of inclusions (I).

\newpage

\chapter{The Geometry of Cotangent Manifold}\index{Cotangent manifold}
\label{ch4p1} 

The geometrical theory of cotangent bundle $(T^* M$, $\pi^*,M)$ on a real, finite dimensional manifold $M$ is important in differential geometry. Correlated with that of tangent bundle $(TM,\pi,M)$, introduced in Ch. 1, one obtains a framework for construction of Lagrangian and Hamiltonian mechanical systems, as well as, for the duality between them, via Legendre transformation.

We study here the fundamental geometric objects on $T^*M$, as Liouville vector field $C^*$, Liouville 1-form $\omega$, symplectic structure $\theta=d\oo$, Poisson structure, $N-$linear connection etc.

\section{Cotangent bundle}\index{Bundles! Cotangent $T^*M$}
\label{s1c4}

Let $M$ be a real $n-$dimensional differentiable manifold. Its cotangent bundle $(T^*M$, $\pi^*$, $M)$ can be constructed as the dual of the tangent bundle $(TM,\pi,M)$, \cite{mir4}. If $(x^i)$ is a local coordinate system on a domain $U$ of a chart on $M$, the induced system of coordinates on $\pi^{*-1}(U)$ is $(x^i,p_i)$, $(i,j,k,..=1,2,...,n)$, $p_1,...,p_n$ are called ``momentum variables''. We denote $(x^i,p_i)=(x,p)=u$.

A change of coordinates on $T^*M$ is given by
\begin{equation}
\label{4.1.1}
\left\{\begin{array}{l}
\widetilde{x}^i=\widetilde{x}^i(x^1,...,x^n),\ {\rm{rank}}\left(\dfrac{\pp\widetilde{x}^i}{\pp x^j}\right)=n\\ \\
\widetilde{p}_i=\dfrac{\pp x^j}{\pp\widetilde{x}^i}p_j.
\end{array}\right.
\end{equation}
The natural frame $\left(\dfrac{\pp}{\pp x^i},\dfrac{\pp}{\pp p_i}\right)=(\dot{\pp}_i,\dot{\pp}^i)$ is transformed by (\ref{4.1.1}) in the form
\begin{equation}
\label{4.1.2}
\pp_i=\dfrac{\pp\widetilde{x}^j}{\pp x^i}\widetilde{\pp}_j+\dfrac{\pp\widetilde{p}_j}{\pp x^i}\dot{\pp}^j,\ \dot{\pp}^i=\dfrac{\pp\widetilde{x}^i}{\pp x^j}\dot{\pp}^j.
\end{equation}

The natural coframe $(dx^i,dp_i)$ is changed by the rule
\begin{equation}
\label{4.1.2'}
d\widetilde{x}^i=\dfrac{\pp\widetilde{x}^i}{\pp x^j}dx^j;\ d\widetilde{p}_i=\dfrac{\pp x^j}{\pp\widetilde{x}^i}dp_j+\dfrac{\pp^2 x^j}{\pp\widetilde{x}^i\pp\widetilde{x}^k}p_j d\widetilde{x}^k.\tag{4.1.2'}
\end{equation}

The Jacobian matrix of change of coordinates (\ref{4.1.1}) is $$J(u)=\left(\begin{array}{cc}\dfrac{\pp\widetilde{x}^i}{\pp x^j} & 0\\ \\ \dfrac{\pp\widetilde{p}_j}{\pp x^i} & \dfrac{\pp x^j}{\pp\widetilde{x}^i}\end{array}\right)_u.$$ It follows $$\det J(u)=1{\rm{\ for\ every\ }}u\in T^*M.$$ So we get:
\begin{theorem}
\label{t4.1.1}
The differentiable manifold $T^*M$ is orientable.
\end{theorem}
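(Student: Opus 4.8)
The plan is to appeal to the standard orientability criterion: a differentiable manifold is orientable precisely when it admits an atlas all of whose coordinate transition maps have positive Jacobian determinant at every point. On $T^*M$ the natural candidate atlas is the one induced from an atlas of $M$ via the charts $(\pi^{*-1}(U),(x^i,p_i))$ introduced above, whose transition maps are exactly the changes of coordinates (\ref{4.1.1}). So it suffices to show that the Jacobian determinant of each such transition map is positive at every point $u\in T^*M$.

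First I would exploit the block structure of the Jacobian matrix $J(u)$ displayed before the theorem. Since its upper-right block vanishes, $J(u)$ is block triangular, and hence its determinant factors as the product of the determinants of the two diagonal blocks:
\[
\det J(u)=\det\left(\dfrac{\pp\widetilde{x}^i}{\pp x^j}\right)\cdot\det\left(\dfrac{\pp x^j}{\pp\widetilde{x}^i}\right).
\]
The key observation is then that the matrix $\left(\dfrac{\pp x^j}{\pp\widetilde{x}^i}\right)$ appearing in the lower-right block is the Jacobian of the inverse coordinate change on $M$, i.e. the inverse matrix of $\left(\dfrac{\pp\widetilde{x}^i}{\pp x^j}\right)$. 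Consequently the two determinant factors are reciprocals of one another, and their product is identically $1$.

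Therefore $\det J(u)=1>0$ for every $u\in T^*M$, so the induced atlas orients $T^*M$. There is no serious obstacle here; the only point requiring care is the justification that the two diagonal blocks are mutually inverse, which follows from the chain rule applied to $x^j=x^j(\widetilde{x}(x))$, giving $\dfrac{\pp x^j}{\pp\widetilde{x}^i}\dfrac{\pp\widetilde{x}^i}{\pp x^k}=\delta^j_k$. One should also note that, unlike the tangent bundle case treated in Chapter 1 (where the transition Jacobian equalled the square $\det\left(\dfrac{\pp\widetilde{x}^i}{\pp x^j}\right)^2$), here the momentum fibres transform contragrediently to the base coordinates, and it is exactly this contragredience that forces the determinant to equal $1$ rather than merely being positive.
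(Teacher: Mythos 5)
Your proposal is correct and follows exactly the paper's argument: the induced atlas on $T^*M$ has block-triangular transition Jacobians whose diagonal blocks $\left(\dfrac{\pp\widetilde{x}^i}{\pp x^j}\right)$ and $\left(\dfrac{\pp x^j}{\pp\widetilde{x}^i}\right)$ are mutually inverse, so $\det J(u)=1>0$ everywhere, which is precisely the computation displayed before the theorem in the text. Your explicit chain-rule justification of the inverse-block step, and the contrast with the tangent bundle case where the determinant is a square, are sound elaborations of the same proof.
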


One can prove that if $M$ is a paracompact manifold, then $T^*M$ is paracompact, too.

The kernel of differential $d\pi^*:TT^*M\to T^*M$ is the vertical subbundle $VT^*M$ of the tangent bundle $TT^*M$. The fibres $V_u$ of $VT^*M$, $\forall u\in T^*M$ determine a distribution $V$ on $T^*M$, called the {\it vertical} distribution. It is locally generated by the tangent vector fields $(\dot{\pp}^1,...,\dot{\pp}^n)$. Consequently, $V$ is an integrable distribution of local dimension $n$.

Noticing the formulae (\ref{4.1.2}), (\ref{4.1.2'}) one can introduce the following geometrical object fields:
\begin{equation}
\label{4.1.3}
C^*=p_i\dot{\pp}^i,
\end{equation}
called the Liouville--Hamilton vector field on $T^*M$,
\begin{equation}
\label{4.1.4}
\omega=p_i dx^i,
\end{equation}
called the Liouville 1-form on $T^*M$,
\begin{equation}
\label{4.1.5}
\theta=d\omega=dp_i\wedge dx^i,
\end{equation}
$\theta$ is a symplectic structure on $T^*M$.
All these geometrical object fields do not depend on the change of coordinates (\ref{4.1.1}).

The Poisson brackets $\left\{,\right\}$ on the manifold $T^*M$ are defined by
\begin{equation}
\label{4.1.6}
\{f,g\}=\dfrac{\pp f}{\pp p_i}\dfrac{\pp g}{\pp x^i}-\dfrac{\pp g}{\pp p_i}\dfrac{\pp f}{\pp x^i},\ \forall f,g\in {\cal F}(T^*M).
\end{equation}
Of course, $\{f,g\}\in{\cal F}(T^*M)$ and $\{f,g\}$ does not depend on the change of coordinates (\ref{4.1.1}).

Also, the following properties hold:
\begin{itemize}
\item[$1^\circ$] $\{f,g\}=-\{g,f\}$,
\item[$2^\circ$] $\{f,g\}$ is $R-$linear in every argument,
\item[$3^\circ$] $\{\{f,g\},h\}+\{\{g,h\},f\}+\{\{h,f\},g\}=0$ (Jacobi identity),
\item[$4^\circ$] $\{\cdot,gh\}=\{\cdot,g\}h+\{\cdot,h\}g$.
\end{itemize}
The pair $\{{\cal F}(T^*M),\{,\}\}$ is a Lie algebra, called the Poisson--Lie algebra.

The relation between the structures $\theta$ and $\{,\}$ can be given by means of the notion of Hamiltonian system.

\begin{definition}
\label{d4.1.1}
A differentiable Hamiltonian is a real function $H:T^*M\to R$ which is of $C^\9$ class on $\widetilde{TM^*}=T^*M\setminus\{0\}$  and continuous on $T^*M$.
\end{definition}

An example: Let $g_{ij}(x)$ be a $C^\9-$Riemannian structure on $M$. Then $H=\sqrt{g^{ij}(x)p_i p_j}$ is a differentiable Hamiltonian on $T^*M$.

\begin{definition}
A Hamiltonian system is a triple $(T^*M,\theta,H)$.
\end{definition}

Let us consider the ${\cal F}(T^*M)-$modules $\chi(T^*M)$ (tangent vector fields on $T^* M$), $\chi^*(T^*M)$ (cotangent vector fields on $T^*M$).

The following ${\cal F}(T^*M)-$linear mapping $$S_\theta=\chi(T^*M)\to \chi^*(TM)$$ can be defined by
\begin{equation}
\label{4.1.7}
S_\theta(X)=i_X \theta,\ \ \forall X\in\chi(T^*M).
\end{equation}
One proves, without difficulties:

\begin{proposition}
\label{p4.1.1}
$S_\theta$ is an isomorphism. We have:
\begin{equation}
\label{4.1.6'}
S_\theta\left(\dfrac{\pp}{\pp x^i}\right)=-dp_i,\ S_\theta\left(\dfrac{\pp}{\pp p_i}\right)=dx^i\tag{4.1.6'}
\end{equation}
\begin{equation}
\label{4.1.7'}
S_\theta(C^*)=\omega.\tag{4.1.7'}
\end{equation}
\end{proposition}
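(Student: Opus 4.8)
The plan is to obtain all three formulas by a single direct computation of the interior product $i_X\theta$ in the natural frame, and then to read off the isomorphism property from the result. The only tool needed is the interior-product identity for a decomposable $2$-form: for any $1$-forms $\beta,\gamma$ and any vector field $X$ one has $i_X(\beta\wedge\gamma)=\beta(X)\,\gamma-\gamma(X)\,\beta$, together with the duality relations between the natural frame $\left(\dfrac{\pp}{\pp x^i},\dfrac{\pp}{\pp p_i}\right)$ and the natural coframe $(dx^i,dp_i)$, namely $dx^i\!\left(\dfrac{\pp}{\pp x^j}\right)=\delta^i_j$, $\ dp_i\!\left(\dfrac{\pp}{\pp x^j}\right)=0$, $\ dx^i\!\left(\dfrac{\pp}{\pp p_j}\right)=0$ and $dp_i\!\left(\dfrac{\pp}{\pp p_j}\right)=\delta_i^j$.

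First I would note that $S_\theta$ is $\mathcal{F}(T^*M)$-linear, since $i_{fX}\theta=f\,i_X\theta$; this is precisely what makes $S_\theta$ a morphism of $\mathcal{F}(T^*M)$-modules, and it reduces the whole verification to the values of $S_\theta$ on the frame vectors. Applying the identity above to $\theta=dp_i\wedge dx^i$ gives $i_X\theta=dp_i(X)\,dx^i-dx^i(X)\,dp_i$, so that evaluating at $X=\dfrac{\pp}{\pp x^j}$ yields $-dp_j$ and evaluating at $X=\dfrac{\pp}{\pp p_j}$ yields $dx^j$. These are exactly the two asserted identities $S_\theta\!\left(\dfrac{\pp}{\pp x^i}\right)=-dp_i$ and $S_\theta\!\left(\dfrac{\pp}{\pp p_i}\right)=dx^i$.

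Next, writing $C^*=p_i\dfrac{\pp}{\pp p_i}$ and using $\mathcal{F}(T^*M)$-linearity, I would get $S_\theta(C^*)=p_i\,S_\theta\!\left(\dfrac{\pp}{\pp p_i}\right)=p_i\,dx^i=\omega$, as claimed. For the isomorphism statement, I would observe that $S_\theta$ carries the natural local frame $\left(\dfrac{\pp}{\pp x^i},\dfrac{\pp}{\pp p_i}\right)$ to $(-dp_i,dx^i)$, which is again a local frame of $\chi^*(T^*M)$; hence over each chart $S_\theta$ sends a basis to a basis and is therefore invertible, and the local inverses patch together into a global $\mathcal{F}(T^*M)$-module isomorphism. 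Equivalently, one may invoke the nondegeneracy of the symplectic form $\theta$: the relation $i_X\theta=0$ forces $X=0$ pointwise, and injectivity together with the equality of fibre dimensions ($2n=2n$) gives bijectivity.

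I do not expect a genuine obstacle here, since the statement is in essence a reformulation of the nondegeneracy of the canonical symplectic structure. The only points demanding care are the sign produced by the chosen ordering $\theta=dp_i\wedge dx^i$ (which is what puts the minus sign on $dp_i$) and the bookkeeping of which coframe element is annihilated by which frame vector.
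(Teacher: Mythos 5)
Your computation is correct and is precisely the straightforward verification the paper leaves to the reader (it only remarks ``One proves, without difficulties''): the interior-product identity applied to $\theta=dp_i\wedge dx^i$ gives $i_X\theta=dp_i(X)\,dx^i-dx^i(X)\,dp_i$, which yields both frame formulas, and $\mathcal{F}(T^*M)$-linearity then gives $S_\theta(C^*)=p_i\,dx^i=\omega$. Your two justifications of the isomorphism property (frame-to-frame, or nondegeneracy of $\theta$ plus equal fibre dimensions) are both sound and standard.
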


\begin{theorem}
\label{t4.1.2}
The following properties of the Hamiltonian system $(T^*M,\theta,H)$ hold:
\begin{itemize}
\item[$1^\circ$] There exists a unique vector field $X_H\in{\cal X}(T^* M)$ for which:
\begin{equation}
\label{4.1.8}
i_H\theta=-dH.
\end{equation}
\item[$2^\circ$] The integral curves of the vector field $X_H$ are given by the Hamilton--Jacobi equations:
\begin{equation}
\label{4.1.9}
\dfrac{dx^i}{dt}=\dfrac{\pp H}{\pp p_i},\ \dfrac{dp_i}{dt}=-\dfrac{\pp H}{\pp x^i}.
\end{equation}
\end{itemize}
\end{theorem}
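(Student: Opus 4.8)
The plan is to treat the two assertions in turn: $1^\circ$ will follow at once from the isomorphism recorded in Proposition \ref{p4.1.1}, and $2^\circ$ will be a direct computation in the induced coordinates $(x^i,p_i)$.

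For $1^\circ$ I would argue purely algebraically. By Proposition \ref{p4.1.1} the mapping $S_\theta:\chi(T^*M)\to\chi^*(T^*M)$, $S_\theta(X)=i_X\theta$, is an ${\cal F}(T^*M)$-linear isomorphism. Since $H$ is of class $C^\9$ on $\widetilde{TM^*}$, its differential $dH$ is a $1$-form, so $-dH\in\chi^*(T^*M)$. Applying the inverse of $S_\theta$ then produces
\[
X_H=S_\theta^{-1}(-dH)\in{\cal X}(T^*M),
\]
which satisfies $i_{X_H}\theta=-dH$, and this $X_H$ is the unique such field precisely because $S_\theta$ is injective. No further work is needed for existence and uniqueness.

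For $2^\circ$ I would pass to local coordinates and write the unknown field as $X_H=A^i\dfrac{\pp}{\pp x^i}+B_i\dfrac{\pp}{\pp p_i}$. Using $\theta=dp_i\wedge dx^i$ together with the derivation rule $i_X(\alpha\wedge\beta)=(i_X\alpha)\beta-\alpha(i_X\beta)$ for $1$-forms, and the evaluations $dx^i(X_H)=A^i$, $dp_i(X_H)=B_i$, I would obtain
\[
i_{X_H}\theta=B_i\,dx^i-A^i\,dp_i.
\]
Equating this with $-dH=-\dfrac{\pp H}{\pp x^i}dx^i-\dfrac{\pp H}{\pp p_i}dp_i$ and matching the coefficients of the independent $1$-forms $dx^i$ and $dp_i$ forces $A^i=\dfrac{\pp H}{\pp p_i}$ and $B_i=-\dfrac{\pp H}{\pp x^i}$, whence
\[
X_H=\dfrac{\pp H}{\pp p_i}\dfrac{\pp}{\pp x^i}-\dfrac{\pp H}{\pp x^i}\dfrac{\pp}{\pp p_i}.
\]
Since the integral curves $t\mapsto(x^i(t),p_i(t))$ of $X_H$ are by definition the solutions of $\dfrac{dx^i}{dt}=A^i$, $\dfrac{dp_i}{dt}=B_i$, this immediately yields the Hamilton--Jacobi equations (\ref{4.1.9}).

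The argument is essentially mechanical, and I do not expect a genuine obstacle; the only point demanding care is the bookkeeping of signs in the contraction $i_{X_H}\theta$, because a single sign slip there would interchange the two Hamilton equations. To guard against this I would cross-check the coefficient computation against the reference values $S_\theta\!\left(\dfrac{\pp}{\pp x^i}\right)=-dp_i$ and $S_\theta\!\left(\dfrac{\pp}{\pp p_i}\right)=dx^i$ from (\ref{4.1.6'}), which give $S_\theta(X_H)=B_i\,dx^i-A^i\,dp_i$ independently and thereby confirm the identification of $A^i$ and $B_i$.
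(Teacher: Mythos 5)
Your proposal is correct and follows essentially the same route as the paper: part $1^\circ$ is obtained exactly as in the text, via $X_H=S_\theta^{-1}(-dH)$ with existence and uniqueness granted by the isomorphism $S_\theta$ of Proposition \ref{p4.1.1}, and part $2^\circ$ recovers the paper's local expression (\ref{4.1.11}) (your coordinate contraction of $\theta=dp_i\wedge dx^i$ is just the explicit form of the paper's appeal to the evaluations (\ref{4.1.6'})), from which the Hamilton--Jacobi equations follow as the integral-curve equations. Your sign cross-check against $S_\theta\left(\dfrac{\pp}{\pp x^i}\right)=-dp_i$ and $S_\theta\left(\dfrac{\pp}{\pp p_i}\right)=dx^i$ is sound and, if anything, makes the argument slightly more careful than the paper's.
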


\begin{proof}
\begin{itemize}
\item[$1^\circ$] The existence and uniqueness of the vector field $X_H$ is assured by the isomorphism $S_\theta$:
\begin{equation}
\label{4.1.10}
X_H=S^{-1}_\theta(-dH).
\end{equation}
$X_H$ is called the {\it Hamiltonian vector} field.
\item[$2^\circ$] The local expression of $X_H$ is given (by (\ref{4.1.6})):
\begin{equation}
\label{4.1.11}
X_H=\dfrac{\pp H}{\pp p_i}\dfrac{\pp}{\pp x^i}-\dfrac{\pp H}{\pp x^i}\dfrac{\pp}{\pp p_i}.
\end{equation}
\end{itemize}
Consequently: the integral curves of the vector field $X_H$ are given by the Hamilton--Jacobi equations (\ref{4.1.8}).
\end{proof}

Along the integral curves of $X_H$, we have $$\dfrac{dH}{dt}=\{H,H\}=0.$$ Thus: {\it The differentiable Hamiltonian $H(x,p)$ is constant along the integral curves of the Hamilton vector field $X_H$.}

The structures $\theta$ and $\{,\}$ have a fundamental property:

\begin{theorem}
\label{t4.1.3}
The following formula holds:
\begin{equation}
\label{4.1.12}
\{f,g\}=\theta(X_f,X_g),\ \forall f,g\in{\cal X}^*(T^*M), \forall X\in {\cal X}(T^*M).
\end{equation}
\end{theorem}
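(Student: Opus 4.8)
The plan is to read the statement with $f,g\in{\cal F}(T^*M)$ (the trailing quantifier ``$\forall X\in{\cal X}(T^*M)$'' is vestigial and plays no role), and to verify the identity by tying together the three ingredients already in hand: the Poisson bracket (\ref{4.1.6}), the symplectic form $\theta=dp_i\wedge dx^i$, and the local expression (\ref{4.1.11}) of a Hamiltonian vector field. Concretely, for any smooth $h$ one has $X_h=\dfrac{\pp h}{\pp p_i}\dfrac{\pp}{\pp x^i}-\dfrac{\pp h}{\pp x^i}\dfrac{\pp}{\pp p_i}$, so I would first record the contractions $dx^i(X_h)=\dfrac{\pp h}{\pp p_i}$ and $dp_i(X_h)=-\dfrac{\pp h}{\pp x^i}$, which are the only coordinate facts the rest of the argument needs.

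The cleanest route is the invariant one. Writing the pairing as an interior product, $\theta(X_f,X_g)=(i_{X_f}\theta)(X_g)$, and invoking the defining relation $i_{X_f}\theta=-df$ of (\ref{4.1.8}) applied to $f$, the right-hand side collapses to $-df(X_g)=-X_g(f)$. It then remains to evaluate the directional derivative $X_g(f)$ via the local form of $X_g$, giving $X_g(f)=\dfrac{\pp g}{\pp p_i}\dfrac{\pp f}{\pp x^i}-\dfrac{\pp g}{\pp x^i}\dfrac{\pp f}{\pp p_i}$; comparing with (\ref{4.1.6}) shows $-X_g(f)=\{f,g\}$, which is exactly the claim.

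As a cross-check, or as a self-contained alternative that bypasses (\ref{4.1.8}), I would expand $\theta$ directly on the pair. Using $(dp_i\wedge dx^i)(X_f,X_g)=dp_i(X_f)\,dx^i(X_g)-dp_i(X_g)\,dx^i(X_f)$ and substituting the contractions above produces $\left(-\dfrac{\pp f}{\pp x^i}\right)\dfrac{\pp g}{\pp p_i}-\left(-\dfrac{\pp g}{\pp x^i}\right)\dfrac{\pp f}{\pp p_i}$, which rearranges term by term into (\ref{4.1.6}). The computation is routine, so the only genuine obstacle is bookkeeping of signs and of the summation over $i$: one must respect the chosen orientation $\theta=dp_i\wedge dx^i$ (not $dx^i\wedge dp_i$) together with the sign convention $i_{X_h}\theta=-dh$, since flipping either one would land on $-\{f,g\}$. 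With these conventions held fixed, both routes yield the same identity and the theorem follows.
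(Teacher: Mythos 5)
Your proof is correct and takes essentially the same route as the paper, whose entire argument is the one-line chain $\{f,g\}=X_f g=-X_g f=-df(X_g)=(i_{X_f}\theta)(X_g)=\theta(X_f,X_g)$, i.e.\ your first (invariant) computation read in reverse, with the same conventions $\theta=dp_i\wedge dx^i$ and $i_{X_f}\theta=-df$; you merely make explicit the coordinate verification of $-X_g f=\{f,g\}$ that the paper leaves implicit. Your second, direct expansion of $\theta(X_f,X_g)$ is just a redundant cross-check of the same facts, so there is nothing further to flag.
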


\begin{proof}
From (\ref{4.1.10}): $$\{f,g\}=X_f g=-X_g f=-df(X_g)=(i_{X_f}\theta)(X_g)=\theta(X_f,X_g).$$
\end{proof}

As a consequence, we obtain:
\begin{equation}
\label{4.1.13}
\dfrac{dx^i}{dt}=\{H,x^i\},\ \dfrac{dp_i}{dt}=\{H,p_i\}.
\end{equation}

It is remarkable the Jacobi method for integration of Hamilton--Jacobi equations (\ref{4.1.9}). Namely, we look for a solution curves $\gamma(t)$ in $T^*M$ of the form
\begin{equation}
\label{4.1.14}
x^i=x^i(t),\ p_i=\dfrac{\pp S}{\pp x^i}(x(t))
\end{equation}
where $S\in{\cal F}(M)$. Substituting in (\ref{4.1.9}), we have
\begin{equation}
\label{4.1.15}
\dfrac{dx^i}{dt}=\dfrac{\pp H}{\pp p_i}(x(t))\dfrac{\pp S}{\pp x^i}(x(t));\ \dfrac{dp_i}{dt}=\dfrac{\pp^2 S}{\pp x^i\pp x^j}\dfrac{\pp H}{\pp p_j}=-\dfrac{\pp H}{\pp x^i}.
\end{equation}
It follows $$dH\left(x,\dfrac{\pp S}{\pp x}\right)=\left(\dfrac{\pp H}{\pp x^i}\dfrac{\pp H}{\pp p_i}-\dfrac{\pp H}{\pp p_i}\dfrac{\pp H}{\pp x^i}\right)dt=0.$$ Consequently, $H\left(x,\dfrac{\pp S}{\pp x}\right)=$const, which is called the Hamilton--Jacobi equation of Mechanics. This equation determines the function $S$ and the first equation (\ref{4.1.14}) gives us the curves $\gamma(t)$.

The Jacobi method suggests to obtain the Hamilton--Jacobi equation by the variational principle.

\section{Variational problem. Hamilton--Jacobi equations}\index{Variational problem! Hamilton--Jacobi equations}
\setcounter{equation}{0}
\setcounter{definition}{0}
\setcounter{theorem}{0}
\setcounter{lemma}{0}

The variational problem for the Hamiltonian systems $(T^*M,\theta,H)$ is defined as follows:

Let us consider a smooth curve $c$ defined on a local chart $\pi^{*-1}(U)$ of the cotangent manifold $T^*M$ by: $$c:t\in[0,1]\to (x(t);p(t))\in\pi^{*-1}(U)$$ analytically expressed by
\begin{equation}
\label{4.2.1}
x^i=x^i(t),\ p_i=p_i(t),\ \ t\in[0,1].
\end{equation}
Consider a vector field $V^i(t)$ and a covector field $\eta_i(t)$ on the domain $U$ of the local chart $(U,\varphi)$ on $M$, and assume that we have
\begin{equation}
\label{4.2.2}
\begin{array}{l}
V^i(0)=V^i(1)=0, \ \eta_i(0)=\eta_i(1)=0\\ \\
\dfrac{dV^i}{dt}(0)=\dfrac{dV^i}{dt}(1)=0.
\end{array}
\end{equation}

The variations of the curve $c$ determined by the pair $(V^i(t),\eta_i(t))$ are defined by the curves $\overline{c}(\varepsilon_1,\varepsilon_2)$:
\begin{equation}
\label{4.2.3}
\begin{array}{l}
\overline{x}^i=x^i(t)+\varepsilon_1 V^i(t),\\ \\
\overline{p}_i=p_i(t)+\varepsilon_2\eta_i(t),\ t\in[0,1],
\end{array}
\end{equation}
when $\varepsilon_1$ and $\varepsilon_2$ are constants, small in absolute value such that the image of any curve $\overline{c}$ belongs to the open set $\pi^{*-1}(U)$ in $T^*M$.

The integral of action of Hamiltonian $H(x,p)$ along the curve $c$ is defined by
\begin{equation}
\label{4.2.4}
I(c)=\dint^1_0\left[p_i(t)\dfrac{dx^i}{dt}-H(x(t),p(t))\right]dt.
\end{equation}

The integral of action $I(\overline{c}(\varepsilon_1,\varepsilon_2))$ is:
\begin{equation}
\label{4.2.5}
\begin{array}{c}
I(\overline{c}(\varepsilon_1,\varepsilon_2))=\dint^1_0\left[(p_i+\varepsilon_2\eta_i(t))\left(\dfrac{dx^i}{dt}+
\varepsilon_1\dfrac{dV^i}{dt}\right)-\right.\\ \\
\left.-H(x+\varepsilon_1V,p+\varepsilon_2\eta)\right]dt.
\end{array}
\end{equation}

The necessary conditions in order that $I(c)$ is an extremal value of $I(\overline{c}(\vare_1,\vare_2))$ are
\begin{equation}
\label{4.2.6}
\left.\dfrac{\pp I(\ov{c}(\vare_1,\vare_2))}{\pp \vare_1}\right|_{\vare_1=\vare_2=0}=0, \
\left.\dfrac{\pp I(\ov{c}(\vare_1,\vare_2))}{\pp \vare_2}\right|_{\vare_1=\vare_2=0}=0.
\end{equation}

Under our conditions of differentiability, the operators $\dfrac{\pp}{\pp\vare_1}$, $\dfrac{\pp}{\pp\vare_2}$ and the operator of integration commute. Therefore, from (\ref{4.2.5}) we deduce:
\begin{equation}
\label{4.2.7}
\begin{array}{l}
\dint^1_0\left[p_i(t)\dfrac{dV^i}{dt}(t)-\dfrac{\pp H}{\pp x^i}V^i\right]dt=0\\ \\
\dint^1_0\left(\dfrac{dx^i}{dt}-\dfrac{\pp H}{\pp p_i}\right)\eta_i dt=0.
\end{array}
\end{equation}
Denoting:
\begin{equation}
\label{4.2.8}
\overset{\circ}{E}_i(H)=\dfrac{dp_i}{dt}+\dfrac{\pp H}{\pp x^i}
\end{equation}
and noticing the conditions (\ref{4.2.2}) one obtain that the equations (\ref{4.2.7}) are equivalent to:
\begin{equation}
\label{4.2.9}
\dint^1_0\overset{\circ}{E}_i(H)V^i dt=0;\ \dint^1_0\left[\dfrac{dx^i}{dt}-\dfrac{\pp H}{\pp p_i}\right]\eta_i dt=0.
\end{equation}
But $(V^i,\eta_i)$ are arbitrary. Thus, (\ref{4.2.9}) lead to the following result:
\begin{theorem}
\label{t4.2.1}
In order to the integral of action $I(c)$ to be an extremal value for the functionals $I(\ov{c})$ is necessary that the curve $c$ to satisfy the Hamilton--Jacobi equations:
\begin{equation}
\label{4.2.10}
\dfrac{dx^i}{dt}=\dfrac{\pp H}{\pp p_i},\ \dfrac{dp_i}{dt}=-\dfrac{\pp H}{\pp x^i}.
\end{equation}
\end{theorem}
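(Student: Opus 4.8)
The plan is to carry out the standard calculus-of-variations argument for the Hamiltonian action functional (\ref{4.2.4}), following verbatim the scheme already used for the Lagrangian case in Theorem \ref{t2.1}. I would compute the two first-variation derivatives prescribed by the extremality conditions (\ref{4.2.6}), integrate by parts in the $\varepsilon_1$-direction to throw the derivative off $V^i$, invoke the boundary conditions (\ref{4.2.2}) to kill the boundary term, and then apply the fundamental lemma of the calculus of variations to the arbitrary fields $(V^i,\eta_i)$.

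First I would differentiate (\ref{4.2.5}) with respect to $\varepsilon_1$ and to $\varepsilon_2$ separately, using that the operators $\dfrac{\pp}{\pp\vare_1}$, $\dfrac{\pp}{\pp\vare_2}$ commute with integration; evaluating at $\vare_1=\vare_2=0$ yields precisely the pair of integral identities (\ref{4.2.7}). In the first of these, the term $\dint_0^1 p_i\dfrac{dV^i}{dt}\,dt$ is integrated by parts: the boundary contribution $\left.p_i V^i\right|_0^1$ vanishes because $V^i(0)=V^i(1)=0$ by (\ref{4.2.2}), leaving $-\dint_0^1\dfrac{dp_i}{dt}V^i\,dt$. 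Collecting terms and recalling the notation $\overset{\circ}{E}_i(H)=\dfrac{dp_i}{dt}+\dfrac{\pp H}{\pp x^i}$ from (\ref{4.2.8}) turns the two identities of (\ref{4.2.7}) into the equivalent form (\ref{4.2.9}).

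The concluding step is the fundamental lemma: since the variation fields $V^i(t)$ and $\eta_i(t)$ are arbitrary smooth fields subject only to the vanishing endpoint constraints (\ref{4.2.2}), the vanishing of $\dint_0^1\overset{\circ}{E}_i(H)\,V^i\,dt$ for all admissible $V^i$ forces $\overset{\circ}{E}_i(H)=0$, i.e.\ $\dfrac{dp_i}{dt}=-\dfrac{\pp H}{\pp x^i}$; likewise the vanishing of $\dint_0^1\left[\dfrac{dx^i}{dt}-\dfrac{\pp H}{\pp p_i}\right]\eta_i\,dt$ for all $\eta_i$ forces $\dfrac{dx^i}{dt}=\dfrac{\pp H}{\pp p_i}$. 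Together these are exactly the Hamilton--Jacobi equations (\ref{4.2.10}), completing the proof.

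There is really no deep obstacle here; the one point demanding care is the integration by parts in the $x$-variation, where the momentum $p_i$ is treated as an independent field rather than a function of $\dot x$ — this is what distinguishes the Hamiltonian variational problem from the Lagrangian one and is exactly why the two variations must be taken independently with respect to the two separate parameters $\vare_1,\vare_2$. The extra endpoint condition $\dfrac{dV^i}{dt}(0)=\dfrac{dV^i}{dt}(1)=0$ in (\ref{4.2.2}) is not actually needed for this first-order argument, so I would simply note that the weaker conditions $V^i(0)=V^i(1)=0$ suffice to discard the boundary term.
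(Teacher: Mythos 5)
Your proposal is correct and follows exactly the paper's own argument: the paper derives (\ref{4.2.7}) by differentiating (\ref{4.2.5}) under the integral sign, passes to (\ref{4.2.9}) via the integration by parts you describe (with the boundary term killed by (\ref{4.2.2})), and concludes by the arbitrariness of $(V^i,\eta_i)$. Your closing observation that $V^i(0)=V^i(1)=0$ already suffices, the conditions on $\dfrac{dV^i}{dt}$ being superfluous for this first-order computation, is also accurate.
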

The operator $\ce_i(H)$ has a geometrical meaning:
\begin{theorem}
\label{t4.2.2}
$\ce_i(H)$ is a covector field.
\end{theorem}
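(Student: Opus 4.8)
Theorem 4.2.2 asserts that $\overset{\circ}{E}_i(H)=\dfrac{dp_i}{dt}+\dfrac{\pp H}{\pp x^i}$ is a covector field, i.e. that under the change of coordinates (4.1.1) on $T^*M$ the quantities $\overset{\circ}{E}_i(H)$ transform by the rule $\widetilde{\overset{\circ}{E}}_i(H)=\dfrac{\pp x^j}{\pp\widetilde{x}^i}\overset{\circ}{E}_j(H)$, matching the transformation law of $p_i$ recorded in (4.1.1).

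Let me think about how I would actually attack this.

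The statement is a tensoriality claim, so the natural strategy is a direct coordinate computation. The plan is to write $\overset{\circ}{E}_i(H)$ in the new chart $(\widetilde{x},\widetilde{p})$ and show the pieces reassemble into $\dfrac{\pp x^j}{\pp\widetilde{x}^i}\overset{\circ}{E}_j(H)$. Concretely, I would start from the two ingredients separately. First, differentiate the transformation law $\widetilde{p}_i=\dfrac{\pp x^j}{\pp\widetilde{x}^i}p_j$ from (4.1.1) along the curve $c$, producing
\[
\frac{d\widetilde{p}_i}{dt}=\frac{\pp x^j}{\pp\widetilde{x}^i}\frac{dp_j}{dt}
+\frac{d}{dt}\!\left(\frac{\pp x^j}{\pp\widetilde{x}^i}\right)p_j.
\]
The second term is the obstruction: it is the inhomogeneous piece that would spoil tensoriality, and it involves $\dfrac{\pp^2 x^j}{\pp\widetilde{x}^i\pp\widetilde{x}^k}\dfrac{d\widetilde{x}^k}{dt}\,p_j$ by the chain rule. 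Second, I would transform $\dfrac{\pp H}{\pp x^i}$; since $H$ is a scalar, $\widetilde{H}=H$, and by the chain rule applied to $x=x(\widetilde{x})$ together with the dependence of $p_j$ on $\widetilde{x}$ through (4.1.1), I get a clean covariant part $\dfrac{\pp x^j}{\pp\widetilde{x}^i}\dfrac{\pp H}{\pp x^j}$ plus a term coming from $\dfrac{\pp\widetilde{p}_j}{\pp\widetilde{x}^i}$.

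The key step is then to observe that the inhomogeneous correction generated by differentiating the Jacobian along $c$ is exactly cancelled by the inhomogeneous correction coming from the second slot of $\dfrac{\pp \widetilde H}{\pp\widetilde x^i}$, because both are built from the same second derivative $\dfrac{\pp^2 x^j}{\pp\widetilde{x}^i\pp\widetilde{x}^k}$ contracted with $p_j$ — this is visible already in the coframe rule (4.1.2'), where precisely this second-derivative term appears in $d\widetilde{p}_i$. Since $\dfrac{d\widetilde{x}^k}{dt}=\dfrac{\pp\widetilde{x}^k}{\pp x^\ell}\dfrac{dx^\ell}{dt}$ (again by (4.1.1)), the two correction terms share a common factor $p_j\,\dfrac{dx^\ell}{dt}$ and differ only by a sign, so they annihilate. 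The main obstacle, and the only place where care is needed, is bookkeeping these two second-order Jacobian terms so that the cancellation is manifest rather than merely asserted; everything else is linear and immediate.

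Adding the two transformed pieces, the surviving homogeneous terms combine as $\dfrac{\pp x^j}{\pp\widetilde{x}^i}\left(\dfrac{dp_j}{dt}+\dfrac{\pp H}{\pp x^j}\right)=\dfrac{\pp x^j}{\pp\widetilde{x}^i}\overset{\circ}{E}_j(H)$, which is the required transformation law of a $d$-covector. I would remark that an alternative, coordinate-free route is available: since $-dH$ and $p_i\,dx^i$ are intrinsically defined and the Hamilton--Jacobi operator measures the failure of the curve to be an integral curve of $X_H$ via the symplectic form $\theta$ (Theorem 4.1.2), one can read off invariance from the global definitions of $\theta$ and $\omega$ given in (4.1.4)--(4.1.5); but the direct verification above is the most transparent and is the one I would write out.
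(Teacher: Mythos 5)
Your route is genuinely different from the paper's: the paper never transforms $\ce_i(H)$ pointwise, but argues at the level of the first variation of the action, writing $\int_0^1\wt{\ce}_i(\wt H)\wt V^i\,dt-\int_0^1\ce_j(H)V^j\,dt=0$ and invoking the arbitrariness of $V^j$. Your direct computation would be a cleaner argument if it closed — but it does not close as you state it. Carrying out your own plan: differentiating $\wt p_i=\dfrac{\pp x^j}{\pp\wt x^i}p_j$ along $c$ gives
\[
\frac{d\wt p_i}{dt}=\frac{\pp x^j}{\pp\wt x^i}\frac{dp_j}{dt}
+\frac{\pp^2 x^l}{\pp\wt x^i\pp\wt x^r}\frac{\pp\wt x^r}{\pp x^j}\,p_l\,\frac{dx^j}{dt},
\]
while the chain rule for the scalar $H$ (using $p_l=\dfrac{\pp\wt x^k}{\pp x^l}\wt p_k$ and the identity obtained by differentiating $\dfrac{\pp\wt x^k}{\pp x^l}\dfrac{\pp x^l}{\pp\wt x^i}=\delta^k_i$) gives
\[
\frac{\pp\wt H}{\pp\wt x^i}=\frac{\pp x^j}{\pp\wt x^i}\frac{\pp H}{\pp x^j}
-\frac{\pp^2 x^l}{\pp\wt x^i\pp\wt x^r}\frac{\pp\wt x^r}{\pp x^j}\,p_l\,\frac{\pp H}{\pp p_j}.
\]
Contrary to your claim, the two inhomogeneous terms do \emph{not} share the common factor $p_l\,\dfrac{dx^j}{dt}$: one carries the velocity $\dfrac{dx^j}{dt}$ of the curve, the other carries the gradient $\dfrac{\pp H}{\pp p_j}$ of the Hamiltonian, and these are unrelated along an arbitrary curve. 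Adding, one finds
\[
\wt{\ce}_i(\wt H)=\frac{\pp x^j}{\pp\wt x^i}\,\ce_j(H)
+\frac{\pp^2 x^l}{\pp\wt x^i\pp\wt x^r}\frac{\pp\wt x^r}{\pp x^j}\,p_l
\left(\frac{dx^j}{dt}-\frac{\pp H}{\pp p_j}\right),
\]
so the cancellation you assert holds only along curves satisfying the first Hamilton--Jacobi equation $\dfrac{dx^j}{dt}=\dfrac{\pp H}{\pp p_j}$ --- exactly the second family of conditions in (4.2.9), the ones enforced by the $\eta_i$-variations.

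The same defect infects your closing coordinate-free remark: along $c$ the $1$-form $i_{\dot c}\theta+dH=\ce_i(H)\,dx^i-\left(\dfrac{dx^i}{dt}-\dfrac{\pp H}{\pp p_i}\right)dp_i$ is indeed intrinsic, but by (4.1.2') the differentials $d\wt p_i$ mix into the $dx$-components through the very second-order Jacobian above, so the $dx^i$-components $\ce_i(H)$ transform tensorially only when the $dp_i$-components vanish. To repair the proof you must either (a) add the hypothesis that $c$ satisfies $\dfrac{dx^i}{dt}=\dfrac{\pp H}{\pp p_i}$ --- harmless for the intended conclusion, namely the coordinate-independence of the Hamilton--Jacobi system, and then your computation goes through verbatim --- or (b) follow the paper and work under the integral sign: the invariance of $\left.\dfrac{\pp I(\ov c(\vare_1,\vare_2))}{\pp\vare_1}\right|_{\vare_1=\vare_2=0}$ yields $\int_0^1\left[\wt{\ce}_i(\wt H)\dfrac{\pp\wt x^i}{\pp x^j}-\ce_j(H)\right]V^j\,dt=0$, and the arbitrariness of $V^j$ gives the transformation law. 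Be aware, though, that even the paper's one-line proof tacitly uses the same fact you missed: a variation of $x$ at fixed $p$ induces, in the new chart, a first-order variation of $\wt p$ as well, and the resulting extra term under the integral is paired against $\dfrac{d\wt x^i}{dt}-\dfrac{\pp\wt H}{\pp\wt p_i}$, so it too is disposed of only modulo the momentum equations.
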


\begin{proof}
With respect to a change of local coordinates $(4.1.1)$ on the manifold $T^*M$, we have $$\begin{array}{c}
\dint^1_0\widetilde{\ce_i}(\widetilde{H})\widetilde{V}^i dt-\dint^1_0\ce_i(H)V^i dt=\\ \\
=\dint^1_0\left[\widetilde{\ce_i}(\widetilde{H})\dfrac{\pp\widetilde{x}^i}{\pp x^j}-\ce_j(H)\right]V^j dt=0.\end{array}$$ Since the vector $V^i$ is arbitrary, we obtain: $$\widetilde{\ce_i}(\widetilde{H})\dfrac{\pp\widetilde{x}^i}{\pp x^j}=\ce_j(H).\qed$$
\end{proof}

A consequence of the last theorem: The Hamilton--Jacobi equation have a geometrical meaning on the cotangent manifold $T^*M$.

The notion of homogeneity for the Hamiltonian systems is defined in the classical manner \cite{MHS}.

A smooth function $f:T^*M\to R$ is called $r-$homogeneous $r\in\Z$ with respect to the momenta variables $p_i$ if we have:
\begin{equation}
\label{4.2.11}
{\cal L}_{\C^*}f=\C^* f=p:\dot{\pp}^i f=rf.
\end{equation}
A vector field $X\in\chi(T^*M)$ is $r-$homogeneous if
\begin{equation}
\label{4.2.12}
{\cal L}_{\C^*}X=(r-1)X,
\end{equation}
where ${\cal L}_{\C^*}X=[\C^*;X]$.
So, we have
\begin{itemize}
\item[$1^\circ$] $\dfrac{\pp}{\pp x^i}$, $\dfrac{\pp}{\pp p_i}=\dot{\pp}^i$ are 1- and 0-homogeneous.
\item[$2^\circ$] If $f\in{\cal F}(T^*M)$ is $s-$homogeneous and $X\in\chi(T^*M)$ is $r-$ho\-mo\-ge\-neous then $fX$ is $r+s-$homogeneous.
\item[$3^\circ$] $\C^*=p_i\dot{\pp}^i$ is 1-homogeneous.
\item[$4^\circ$] If $X$ is $r-$homogeneous and $f$ is $s-$homogeneous then $Xf$ is $r+s-1-$homogeneous.
\item[$5^\circ$] If $f$ is $s-$homogeneous, then $\dot{\pp}^i\dot{\pp}f$ is $s-1-$homogeneous.
\end{itemize}
Analogously, for $q-$forms $\omega\in \Lambda^q(T^*M)$. The $q-$form $\omega$ is $r-$ho\-mo\-ge\-neous if
\begin{equation}
\label{4.2.13}
{\cal L}_{\C^*}\omega=r\omega.
\end{equation}
It follows:
\begin{itemize}
\item[$1^\prime$] If $\omega,\omega^\prime$ are $s-$ respectively $s^\prime-$homogeneous, then $\omega\wedge\omega^{\prime}$ is $s+s'-$homogeneous.
\item[$2^\prime$] $dx^i,dp_i$ are $0-$ respectively $1-$homogeneous.
\item[$3^\prime$] The Liouville $1-$form $\omega=p_i dx^i$ is 1-homogeneous.
\item[$4^\prime$] The canonical symplectic structure $\theta$ is 1-homogeneous.
\end{itemize}

\section{Nonlinear connections}\index{Connections! Nonlinear}
\setcounter{equation}{0}
\setcounter{definition}{0}
\setcounter{theorem}{0}
\setcounter{lemma}{0}

On the manifold $T^*M$ there exists a remarkable distribution. It is the vertical distribution $V$. As we know, $V$ is integrable, having $(\dot{\pp}^1,\dot{\pp}^2,...,\dot{\pp}^n)$ as a local adapted basis and dim$V=n=$dim$M$.

\begin{definition}
\label{d4.3.1}
A nonlinear connection $N$ on the manifold $T^*M$ is a differentiable distribution $N$ on $T^*M$ supplementary to the vertical distribution $V$:
\begin{equation}
\label{4.3.1}
T_uT^*M=N_u\oplus V_u,\ \ \forall u\in T^*M.
\end{equation}
$N$ is called the {\sl horizontal} distribution on $T^*M$.
\end{definition}

It follows that dim$N=n$.

When a nonlinear connection $N$ is given we can consider an {\sl adapted} basis $\left(\dfrac{\delta}{\delta x^1},...,\dfrac{\delta}{\delta x^n}\right)$ expressed in the form
\begin{equation}
\label{4.3.2}
\dfrac{\delta}{\delta x^i}=\dfrac{\pp}{\pp x^i}+N_{ij}\dfrac{\pp}{\pp p_j}\ \ (i=1,2,...,n).
\end{equation}

The system of function $N_{ij}(x,p)$ is well determined by (\ref{4.3.1}). It is called system of coefficients of the nonlinear connection $N$. This system defines a geometrical object field on $T^*M$.

The set of vector fields $\left(\dfrac{\delta}{\delta x^i},\dot{\pp}^i\right)$ give us an adapted basis to the direct decomposition (\ref{4.3.1}). Its dual adapted basis is $(dx^i,\delta p_i)$, $(i=1,...,n)$, where
\begin{equation}
\label{4.3.3}
\delta p_i=dp_i-N_{ji}dx^j.
\end{equation}

It is not difficult to prove that if $M$ is a paracompact manifold, then on the cotangent manifold $T^*M$ there exists a nonlinear connection.

Let $N$ be a nonlinear connection with the coefficients $N_{ij}(x,p)$ and define the set of function $\tau_{ij}(x,y)$ by
\begin{equation}
\label{4.3.4}
\tau_{ij}=\dfrac12(N_{ij}-N_{ji}).
\end{equation}
It is not difficult to see that $\tau_{ij}$ is transformed, with respect to (\ref{4.1.1}) as a covariant tensor field on the base manifold $M$. So, it is a distinguished tensor field, shortly a $d-$tensor of {\sl torsion} of the nonlinear connection. $\tau_{ij}=0$ has a geometrical meaning. In this case $N$ is a symmetric nonlinear connection.

With respect to a symmetric nonlinear connection $N$ the symplectic structure $\theta$ can be written in an invariant form:
\begin{equation}
\label{4.3.5}
\theta=\delta p_i\wedge dx^i,
\end{equation}
and the Poisson structure $\{,\}$ can be expressed in the invariant form:
\begin{equation}
\label{4.3.6}
\{f,g\}=\dfrac{\pp f}{\pp p_i}\dfrac{\delta g}{\delta x^i}-\dfrac{\pp g}{\pp p_i}\dfrac{\delta f}{\delta x^i}.
\end{equation}
Of course, we can consider the curvature tensor of $N$ as the $d-$tensor field
\begin{equation}
\label{4.3.7}
R_{ijh}=\dfrac{\delta N_{ji}}{\delta x^h}-\dfrac{\delta N_{hi}}{\delta x^j}.
\end{equation}
It is given by
\begin{equation}
\label{4.3.8}
\left[\dfrac{\delta}{\delta x^j},\dfrac{\delta}{\delta x^h}\right]=R_{ijh}\dot{\pp}^i.
\end{equation}
Therefore $R_{ijh}(x,p)$ is the {\sl integrability} tensor of the horizontal distribution $N$. Thus $N$ is an integrable distribution if and only if the curvature tensor $R_{ijh}(x,p)$ vanishes.

A curve $c:I\subset R\to c(t)\in T^*M$. Im$c\subset\pi^*(U)$ expressed by ($x^i=x^i(t)$, $p_i=p_i(t)$, $t\in I$). The tangent vector $\dfrac{dc}{dt}$ can be written in the adapted basis as: $$\dfrac{dc}{dt}=\dfrac{dx^i}{dt}\dfrac{\delta}{\delta x^i}+\dfrac{\delta p_i}{dt}\dfrac{\pp}{\pp p_i},$$ where $$\dfrac{\delta p_i}{dt}=\dfrac{dp_i}{dt}-N_{ji}(x(t),p(t))\dfrac{dx^j}{dt}.$$ The curve $c$ is horizontal if $\dfrac{\delta p_i}{dt}=0$. We obtain the system of differential equations which characterize the horizontal curves:
\begin{equation}
\label{4.3.9}
x^i=x^i(t),\ \dfrac{dp_i}{dt}-N_{ji}(x,p)\dfrac{dx^j}{dt}=0.
\end{equation}

Let $g_{ij}(x,y)$ be a $d-$tensor by ($d-$means distinguished) with the properties $g_{ij}=g_{ji}$ and det$(g_{ij})\neq 0$ on $T^*M$. Its contravariant $g^{ij}(x,y)$ can be considered, $g_{ij}g^{ts}=\delta_i^s$. As usually, we put $\dfrac{\delta}{\delta x^i}=\delta_i$, $\dfrac{\pp}{\pp p_i}=\dot{\pp}$ and $\dot{\pp}_i=g_{ij}\dot{\pp}^j$. So, one can consider the following ${\cal F}(T^*M)-$linear mapping $\check{\F}:{\cal X}(T^*M)\to{\cal X}(T^*M)$:
\begin{equation}
\label{4.3.10}
\check{\F}(\delta_i)=-\dot{\pp}_i,\ \check{\F}(\dot{\pp}_i)=\delta_i,\ (i=1,...,n).
\end{equation}
It is not difficult to prove, \cite{MHS}:
\begin{theorem}
\label{t4.3.1}
\begin{itemize}
\item[$1^\circ$] The mapping $\check{\F}$ is globally defined on $T^*M$.
\item[$2^\circ$] $\check{\F}$ is a tensor field of type $(1,1)$ on $T^*M$.
\item[$3^\circ$] The local expression of $\check{\F}$ in the adapted basis $(\delta_i,\dot{\pp}^i)$ is
\begin{equation}
\label{4.3.11}
\check{\F}=-g_{ij}\dot{\pp}^i\otimes dx^j+g^{ij}\delta_i\otimes\delta p_j.
\end{equation}
\item[$4^\circ$] $\check{\F}$ is an almost complex structure on $T^*M$ determined by $N$ and by $g_{ij}(x,p)$, i.e.
\begin{equation}
\label{4.3.12}
\check{\F}\circ\check{\F}=-I.
\end{equation}
\end{itemize}
\end{theorem}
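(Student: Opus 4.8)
The plan is to dispatch the four assertions in the order $3^\circ$, then $1^\circ$ together with $2^\circ$, and finally $4^\circ$, because the global and tensorial character of $\check{\F}$ becomes transparent once its matrix in the adapted basis is recorded, after which the almost complex identity is a one-line algebraic consequence. Throughout I will use the $d$-tensor character of $g_{ij}$, the relation $\dot{\pp}_i=g_{ij}\dot{\pp}^j$, and—crucially—the fact that the adapted frame $(\delta_i,\dot{\pp}^i)$ and its dual cobasis $(dx^i,\delta p_i)$ transform with base Jacobians alone, which is precisely the defining property of the coefficients $N_{ij}$ of the nonlinear connection.

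First, to get $3^\circ$ I unfold the definition (\ref{4.3.11}) (stated in (4.3.10)). Since $\check{\F}$ is ${\cal F}(T^*M)$-linear and $\dot{\pp}_i=g_{ij}\dot{\pp}^j$, I read off $\check{\F}(\delta_i)=-\dot{\pp}_i=-g_{ij}\dot{\pp}^j$, while $\check{\F}(\dot{\pp}_i)=g_{ij}\check{\F}(\dot{\pp}^j)=\delta_i$ forces, upon contracting with $g^{ki}$, the companion relation $\check{\F}(\dot{\pp}^i)=g^{ij}\delta_j$. Expanding $\check{\F}$ against the dual adapted cobasis, $\check{\F}=\check{\F}(\delta_i)\otimes dx^i+\check{\F}(\dot{\pp}^i)\otimes\delta p_i$, and relabelling indices yields exactly $\check{\F}=-g_{ij}\dot{\pp}^i\otimes dx^j+g^{ij}\delta_i\otimes\delta p_j$, which is (\ref{4.3.11}).

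Next, $1^\circ$ and $2^\circ$ follow at once by checking that the right-hand side of (\ref{4.3.11}) is invariant under the coordinate change (\ref{4.1.1}); this invariance check is the only real obstacle, and it is pure bookkeeping provided one uses the correct frame laws. I substitute $g_{ij}=\dfrac{\pp\widetilde{x}^a}{\pp x^i}\dfrac{\pp\widetilde{x}^b}{\pp x^j}\widetilde{g}_{ab}$, $g^{ij}=\dfrac{\pp x^i}{\pp\widetilde{x}^a}\dfrac{\pp x^j}{\pp\widetilde{x}^b}\widetilde{g}^{ab}$, together with the frame laws $\dot{\pp}^i=\dfrac{\pp x^i}{\pp\widetilde{x}^c}\dot{\widetilde{\pp}}^c$, $\delta_i=\dfrac{\pp\widetilde{x}^c}{\pp x^i}\widetilde{\delta}_c$ and the cobasis laws $dx^j=\dfrac{\pp x^j}{\pp\widetilde{x}^d}d\widetilde{x}^d$, $\delta p_j=\dfrac{\pp\widetilde{x}^d}{\pp x^j}\delta\widetilde{p}_d$. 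In the first term the four Jacobians contract in pairs, $\dfrac{\pp\widetilde{x}^a}{\pp x^i}\dfrac{\pp x^i}{\pp\widetilde{x}^c}=\delta^a_c$ and $\dfrac{\pp\widetilde{x}^b}{\pp x^j}\dfrac{\pp x^j}{\pp\widetilde{x}^d}=\delta^b_d$, collapsing $-g_{ij}\dot{\pp}^i\otimes dx^j$ into $-\widetilde{g}_{cd}\dot{\widetilde{\pp}}^c\otimes d\widetilde{x}^d$; the second term collapses identically into $\widetilde{g}^{cd}\widetilde{\delta}_c\otimes\delta\widetilde{p}_d$. Thus the two local expressions agree on chart overlaps, so $\check{\F}$ is a globally well-defined tensor field of type $(1,1)$. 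The whole point is that the inhomogeneous terms appearing in the naive transformations (\ref{4.1.2}), (4.1.2') of $\dot{\pp}^i$ and $dp_i$ are absorbed precisely by the nonlinear-connection terms in $\delta_i$ and $\delta p_i$, leaving tensorial transformation laws.

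Finally, for $4^\circ$ I compute $\check{\F}\circ\check{\F}$ on the adapted basis using the two relations from $3^\circ$: $\check{\F}^2(\delta_i)=\check{\F}(-g_{ij}\dot{\pp}^j)=-g_{ij}g^{jk}\delta_k=-\delta_i$ and $\check{\F}^2(\dot{\pp}^i)=\check{\F}(g^{ij}\delta_j)=-g^{ij}g_{jk}\dot{\pp}^k=-\dot{\pp}^i$. Since $(\delta_i,\dot{\pp}^i)$ spans ${\cal X}(T^*M)$ and $\check{\F}$ is ${\cal F}(T^*M)$-linear, this gives $\check{\F}\circ\check{\F}=-I$, so $\check{\F}$ is an almost complex structure on $T^*M$ determined jointly by the nonlinear connection $N$ (through the adapted frame) and the metric $d$-tensor $g_{ij}(x,p)$.
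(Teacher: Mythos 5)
Your proof is correct in all four points, and it follows exactly the route the paper intends: the paper itself prints no argument (it only remarks ``It is not difficult to prove'' and cites \cite{MHS}), relying implicitly on the local expression in the adapted frame, the tensorial transformation laws (4.3.14) and (4.3.13') of $(\delta_i,\dot{\pp}^i)$ and $(dx^i,\delta p_i)$, and the basis computation $\check{\F}^2(\delta_i)=-\delta_i$, $\check{\F}^2(\dot{\pp}^i)=-\dot{\pp}^i$ — precisely the three steps you carry out. Your observation that the inhomogeneous terms in (\ref{4.1.2}), (4.1.2') are absorbed by the $N_{ij}$-terms in $\delta_i$ and $\delta p_i$ is the correct identification of the only nontrivial point, so nothing is missing.
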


Also, nonlinear symmetric connection $N$ being considered, we can define the tensor:
\begin{equation}
\label{4.3.13}
\G(x,p)=g_{ij}(x,p)dx^i\otimes dx^j+g^{ij}\delta p_i\otimes \delta p_j.
\end{equation}

If $d-$tensor $g_{ij}(x,p)$ has a constant signature on $T^*M$ (for instance it is positively defined), then it follows:

\begin{theorem}
\label{t4.3.2}
\begin{itemize}
\item[$1^\circ$] $\G$ is a Riemannian structure on $T^*M$ determined by $N$ and $g_{ij}$.
\item[$2^\circ$] The distributions $N$ and $V$ are orthogonal with respect to $\G$.
\item[$3^\circ$] The pair $(\G,\check{\F})$ is an almost Hermitian structure on $T^*M$.
\item[$4^\circ$] The associated almost symplectic structure to $(\G,\check{\F})$ is the canonical symplectic structure $\theta=\delta p_i\wedge dx^i$.
\end{itemize}
\end{theorem}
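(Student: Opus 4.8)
The plan is to verify all four assertions by direct computation in the adapted frame $(\delta_i,\dot{\pp}^i)$, where $\delta_i=\dfrac{\delta}{\delta x^i}$, together with its dual coframe $(dx^i,\delta p_i)$. The ingredients are already in place: the $d$-tensor $g_{ij}(x,p)$ is symmetric, non-degenerate and of constant (say positive) signature, with inverse $g^{ij}$; by Theorem~\ref{t4.3.1} the structure $\check{\F}$ acts by $\check{\F}(\delta_i)=-g_{ij}\dot{\pp}^j$ and $\check{\F}(\dot{\pp}^i)=g^{ij}\delta_j$ and satisfies $\check{\F}\circ\check{\F}=-I$; and the coframe obeys $d\widetilde{x}^i=\dfrac{\pp\widetilde{x}^i}{\pp x^j}dx^j$, $\delta\widetilde{p}_i=\dfrac{\pp x^j}{\pp\widetilde{x}^i}\delta p_j$. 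With these, each claim becomes a short index manipulation.

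For $1^\circ$ and $2^\circ$: the block $g_{ij}dx^i\otimes dx^j$ is invariant under (\ref{4.1.1}) because $g_{ij}$ transforms covariantly while the $dx^i$ transform contravariantly, and similarly $g^{ij}\delta p_i\otimes\delta p_j$ is invariant; hence $\G$ is a globally defined symmetric $(0,2)$-tensor, and its positive definiteness follows from that of $g_{ij}$ on the horizontal slot and of $g^{ij}$ on the vertical slot, so $\G$ is Riemannian. Orthogonality of $N$ and $V$ is then immediate: evaluating $\G$ on a horizontal--vertical pair gives $\G(\delta_i,\dot{\pp}^j)=g_{kl}dx^k(\delta_i)dx^l(\dot{\pp}^j)+g^{kl}\delta p_k(\delta_i)\delta p_l(\dot{\pp}^j)=0$, since $dx^l(\dot{\pp}^j)=0$ and $\delta p_k(\delta_i)=0$.

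For $3^\circ$: I check $\G(\check{\F}X,\check{\F}Y)=\G(X,Y)$ on the four types of frame pairs, using $\G(\delta_i,\delta_j)=g_{ij}$, $\G(\dot{\pp}^i,\dot{\pp}^j)=g^{ij}$ and the vanishing mixed terms from $2^\circ$. One finds $\G(\check{\F}\delta_i,\check{\F}\delta_j)=g_{ik}g_{jl}g^{kl}=g_{ij}$ and $\G(\check{\F}\dot{\pp}^i,\check{\F}\dot{\pp}^j)=g^{ik}g^{jl}g_{kl}=g^{ij}$, while the mixed terms vanish on both sides; together with $\check{\F}^2=-I$ this establishes that $(\G,\check{\F})$ is an almost Hermitian structure.

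For $4^\circ$: the associated almost symplectic form is $\Theta(X,Y):=\G(\check{\F}X,Y)$, the same convention used in the Lagrange case. Computing on the frame yields $\Theta(\delta_i,\delta_j)=0$, $\Theta(\dot{\pp}^i,\dot{\pp}^j)=0$ and $\Theta(\delta_i,\dot{\pp}^j)=\G(-g_{ik}\dot{\pp}^k,\dot{\pp}^j)=-g_{ik}g^{kj}=-\delta_i^j$, so that $\Theta=\delta p_i\wedge dx^i$. The final step is to identify this with the canonical symplectic structure $\theta$, and here the symmetry hypothesis on $N$ is essential: by (\ref{4.3.5}) the vanishing of $\tau_{ij}$ is exactly what gives $\theta=dp_i\wedge dx^i=\delta p_i\wedge dx^i$, whence $\Theta=\theta$. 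The main obstacle is thus not analytic but bookkeeping --- keeping the raising and lowering of indices by $g_{ij}$ consistent across the four frame-pair computations, and recalling that the clean identity $\theta=\delta p_i\wedge dx^i$ used in the last step rests precisely on the assumed symmetry of the nonlinear connection.
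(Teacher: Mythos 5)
Your proof is correct, and it is the intended argument: the paper states Theorem 4.3.2 without proof, as a direct consequence of the preceding constructions (the lift $\G$ of (4.3.13), Theorem 4.3.1 for $\check{\F}$, and the symmetry remark (4.3.5) giving $\theta=\delta p_i\wedge dx^i$), and your adapted-frame computation simply carries out these verifications explicitly, including the correct observation that the identification $\theta=dp_i\wedge dx^i=\delta p_i\wedge dx^i$ in item $4^\circ$ rests on the assumed symmetry of $N$.
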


Remarking that the vector fields $(\delta_i,\dot{\pp}^i)$ are transformed by (\ref{4.1.1}) in the form
\begin{equation}
\label{4.3.14}
\widetilde{\delta}_i=\dfrac{\pp x^j}{\pp\widetilde{x}^i}\delta_j,\ \widetilde{\dot{\pp}}^i=\dfrac{\pp\widetilde{x}^i}{\pp x^j}\dot{\pp}^j,
\end{equation}
and the 1-forms $(dx^i,\delta p_i)$ are transformed by
\begin{equation}
\label{4.3.13'}
d\widetilde{x}^i=\dfrac{\pp\widetilde{x}^i}{\pp x^j}dx^j,\ \delta\widetilde{p}_i=\dfrac{\pp x^j}{\pp\widetilde{x}^i}\delta p_j,\tag{4.3.13'}
\end{equation}
we can consider the horizontal and vertical projectors with respect to the direct decomposition (\ref{4.3.1}):
\begin{equation}
\label{4.3.14}
h=\dfrac{\de}{\de x^i}\otimes dx^i, \ v=\dot{\pp}^i\otimes\de p_i.
\end{equation}
They have the properties: $$h+v=I,\ h^2=h,\ v^2=v,\ h\circ v+v\circ h=0.$$ We set $$\begin{array}{lll}hX=X^H, & vX=X^V, & \forall X\in{\cal X}(T^*M)\\ \\
\omega^H=\omega\circ h, & \omega^v=\omega\circ v, & \forall \omega\in{\cal X}^*(T^*M).\end{array}$$ Therefore, for every vector field $X$ on $T^*M$, represented in adapted basis in the form $$X=X^i\de_i+\dot{X}_i\dot{\pp}^i$$ we have $$X^H=hX=X^i\dfrac{\de}{\de x^i},\ X^V=vX=\dot{X}_i\dot{\pp}^i$$ where the coefficients $X^i(x,p)$ and $\dot{X}_i(x,p)$ are transformed by (\ref{4.1.1}): $$\widetilde{X}^i=\dfrac{\pp\widetilde{x}^i}{\pp x^j}X^j,\ \widetilde{\dot{X}_i}=\dfrac{\pp x^j}{\pp \widetilde{x}^i}\dot{X}_j.$$ For this reason, $X^i(x,p)$ are the coefficients of a distinguished vector field and $\dot{X}_i(x,p)$ are the coefficients of a distinguished covector field, shortly denoted by $d-$vector (covector) fields.

Analogously, for the 1-form $\omega$: $$\omega=\omega_i dx^i+\dot{\oo}^i\de p_i.$$ Therefore, $$\oo^H=\oo_i dx^i,\ \oo^V=\dot{\oo}^i\de p_i.$$ Then $\oo_i(x,p)$ are component of an one $d-$form on $T^*M$ and $\dot{\oo}^i(x,p)$ are the components of a $d-$vector field on $T^*M$.

On the same way, we can define a distinguished $(d-)$ tensor field.

A $d-$tensor field can be represented in adapted basis in the form
\begin{equation}
\label{4.3.15}
T=T^{i_1...i_r}_{j_1...j_s}(x,p)\de_{i_1}\otimes...\otimes\dot{\pp}^{j_s}\otimes dx^{j_1}\otimes...\otimes\de p_r.
\end{equation}
Its coefficients are transformed by (\ref{4.1.1}) in the form:
\begin{equation}
\label{4.3.15'}
\wt{T}^{i_1...i_r}_{j_1...j_s}(\wt{x},\wt{p})=\dfrac{\pp\wt{x}^{i_1}}{\pp x^{h_1}}...\dfrac{\pp\wt{x}^{i_r}}{\pp x^{h_r}}\dfrac{\pp x^{k_1}}{\pp\wt{x}^{j_1}}...\dfrac{\pp x^{k_2}}{\pp\wt{x}^{j_s}}T^{h_1...h_r}_{k_1...k_s}(x,p).\tag{4.3.15'}
\end{equation}

So, a $d-$tensor field $T$ can be given by the coefficients $T^{i_1...i_r}_{j_1...j_s}(x,p)$ whose rule of transformation, with respect to (\ref{4.1.1}) is the same with that of a tensor of the same type on the base manifold $M$ of cotangent bundle $(T^*M,M,\pi^*)$.

One can speak of the $d-$tensor algebra on $T^*M$, which is not difficult to be defined.

\section{$N-$linear connections}\index{Connections! $N-$linear}
\setcounter{equation}{0}
\setcounter{definition}{0}
\setcounter{theorem}{0}
\setcounter{lemma}{0}

As we know, a nonlinear connection $N$ determines a direct decomposition (\ref{4.3.1}) in respect to which we have
\begin{equation}
\label{4.4.1}
X=X^H+X^V,\ \oo=\oo^H+\oo^V,\ \forall X\in{\cal X}(T^*M),\ \forall\oo\in{\cal X}^*(T^*M).
\end{equation}
Assuming that $N$ is a symmetric nonlinear connection we can give:
\begin{definition}
\label{d4.4.1.}
A linear connection $D$ on the manifold $T^*M$ is called an $N-$linear connection if:
\begin{itemize}
\item[$1^\circ$] $D$ preserves by parallelism the distributions $N$ and $V$.
\item[$2^\circ$] The canonical symplectic structure $\theta=\de p_i\wedge dx^i$ has the associate tensor $\ov{\theta}=\de p_i\otimes dx^i$ parallel with respect to $D$:
\begin{equation}
\label{4.4.2}
D\ov{\theta}=0.
\end{equation}
\end{itemize}
\end{definition}

It follows that:
\begin{equation}
\label{4.4.3}
\begin{array}{l}
D_X h=D_X v=0\\ \\
D_X Y=D_{X^{H}}Y+D_{X^V}Y.
\end{array}
\end{equation}
We obtain new operators of derivations in the algebras of $d-$tensors, defined by
\begin{equation}
\label{4.4.4}
D^H_X=D_{X^H},\ D^V_X=D_{X^V},\ \forall X\in{\cal X}(T^*M).
\end{equation}
We have
\begin{equation}
\label{4.4.5}
\begin{array}{l}
D_X=D_X^H+D_X^V;\ \ D^H_X f=X^F f,\ \ D_X^V f=X^VX\\ \\
D^H_X(fY)=X^H fY+fD^H_XY;\ D^V_X(fY)=X^V fY+fD^V_XY\\ \\
D^H_{fX}=fD^H_X,\ D^V_{fX}=fD^V_X\\ \\
D^H_X\theta=0,\ D^V_X\theta=0.
\end{array}
\end{equation}

The operators $D^H,D^V$ have the property of localization. They have the similarly properties with covariant derivative but they are not covariant derivative. $D^H$ is called $h-$covariant derivative and $D^V$ is called $v-$covariant derivative. They act on the 1-forms $\oo$ on $T^*M$ by the rules:
\begin{equation}
\label{4.4.6}
\begin{array}{l}
(D^H_X\oo)(Y)=X^H\oo(Y)-\oo(D^H_XY)\\ \\
(D^V_X\oo)(Y)=X^V\oo(Y)-\oo(D^V_X Y).
\end{array}
\end{equation}
The extension of these operators to the $d-$tensor fields is immediate.

The torsion $\T$ of an $N-$linear connection has the form
\begin{equation}
\label{4.4.7}
\Pi(X,Y)=D_XY-D_Y X-[X,Y].
\end{equation}
It is characterized by the following vector fields: $$\T(X^H,Y^H),\ \T(X^H,Y^V),\ T(X^V,X^V).$$ Taking the $h-$ and $v-$components we obtain the $d-$tensor of torsion $$T(X^H,Y^H)=h\T(X^H,Y^H)+v\T(X^H,Y^H),\ {\rm{etc.}}$$

\begin{proposition}
\label{p4.4.1}
The $d-$tensors of torsion of an $N-$linear connection $D$ are:
\begin{equation}
\label{4.4.6}
\begin{array}{l}
h\T(X^H,Y^H)=D^H_X Y^H-D^H_Y X^H-[X^H,Y^H]^H\\ \\
h\T(X^H,Y^V)=-D^V_Y X^H+[X^H,Y^V]^H\\ \\
v\T(X^H,Y^H)=-[X^H,Y^H]^V\\ \\
v\T(X^H,Y^V)=D^H_X Y^V-[X^H,Y^V]^V\\ \\
v\T(X^V,Y^V)=D^V_X Y^V-D^V_Y X^V-[X^V,Y^V]^V=0.
\end{array}
\end{equation}
\end{proposition}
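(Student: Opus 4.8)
The plan is to exploit the single structural fact behind an $N$-linear connection: since $D$ preserves the distributions $N$ and $V$ by parallelism, the relations $D_X h = D_X v = 0$ give $D_X(hY) = h(D_X Y)$ and $D_X(vY) = v(D_X Y)$. Hence $D_X Y^H$ is always horizontal and $D_X Y^V$ is always vertical, for every $X\in{\cal X}(T^*M)$. Together with the splitting $D_X = D^H_X + D^V_X$ from (\ref{4.4.4})--(\ref{4.4.5}) and the definition $\T(X,Y) = D_X Y - D_Y X - [X,Y]$ of (\ref{4.4.7}), this reduces the whole proposition to a bookkeeping of which projection annihilates which term.

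First I would treat the three independent argument-types $(X^H,Y^H)$, $(X^H,Y^V)$, $(X^V,Y^V)$, the case $(X^V,Y^H)$ following from $\T(X,Y)=-\T(Y,X)$. For $(X^H,Y^H)$: writing $\T(X^H,Y^H) = D^H_X Y^H - D^H_Y X^H - [X^H,Y^H]$ and noting that both $D^H_X Y^H$ and $D^H_Y X^H$ are horizontal, the $h$-projection leaves these covariant terms untouched and picks out only the horizontal part of the bracket, giving the first line, while the $v$-projection kills both covariant terms and retains $-[X^H,Y^H]^V$, giving the third line. The mixed case $(X^H,Y^V)$ is handled identically: here $D^H_X Y^V$ is vertical and $D^V_Y X^H$ is horizontal, so the $h$-projection retains only $-D^V_Y X^H$ together with the $h$-part of the bracket term, and the $v$-projection retains only $D^H_X Y^V$ together with the $v$-part of the bracket, producing the second and fourth lines. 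Thus lines one through four are pure projection identities, valid for any $N$-linear connection.

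The genuine content --- and the main obstacle --- is the last line, the assertion that $v\T(X^V,Y^V)=0$ rather than merely equalling $D^V_X Y^V - D^V_Y X^V - [X^V,Y^V]^V$ as in the tangent-bundle case (\ref{1.4.9}). This vanishing is not formal; it uses the extra compatibility axiom $D\ov{\theta}=0$ of Definition \ref{d4.4.1.}. The plan here is to pass to the adapted basis. Writing the vertical coefficients of $D$ through $D_{\dot{\pp}^k}\dot{\pp}^j = -C_i{}^{jk}\dot{\pp}^i$ and using $[\dot{\pp}^k,\dot{\pp}^j]=0$, a direct computation of $v\T(X^V,Y^V)$ for $X^V = \dot{X}_k\dot{\pp}^k$, $Y^V = \dot{Y}_j\dot{\pp}^j$ shows that all terms carrying derivatives of the components cancel, leaving exactly $\dot{X}_k\dot{Y}_j(C_i{}^{kj}-C_i{}^{jk})\dot{\pp}^i$. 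Thus the claim is equivalent to the symmetry $C_i{}^{jk}=C_i{}^{kj}$ of the vertical coefficients in their contravariant indices.

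The last step is therefore to derive this symmetry from $D\ov{\theta}=0$, where $\ov{\theta}=\delta p_i\otimes dx^i$. Expanding $D_{\dot{\pp}^k}\ov{\theta}=0$ in the adapted cobasis $(dx^i,\delta p_i)$ and reading off the purely vertical component forces precisely $C_i{}^{jk}=C_i{}^{kj}$ --- equivalently, that the action of $D_{\dot{\pp}^k}$ on $\dot{\pp}^j$ is dual, with the correct symmetry, to its action on $\delta p_j$. Substituting back gives $v\T(X^V,Y^V)=0$ and completes the proof. I expect the only delicate point to be keeping the index placement and sign conventions of the symplectic-compatibility condition consistent throughout; everything else is projection bookkeeping.
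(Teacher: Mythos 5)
Your first four lines are indeed pure projection bookkeeping, and that is essentially all the paper does (the proposition is stated right after the torsion formula with no further argument). The genuine gap is in your treatment of the fifth line. Your reduction of $v\T(X^V,Y^V)=0$ to the symmetry $C_i^{jk}=C_i^{kj}$ is correct, since in the adapted basis $\T(\dot{\pp}^j,\dot{\pp}^k)=D_{\dot{\pp}^j}\dot{\pp}^k-D_{\dot{\pp}^k}\dot{\pp}^j=(C_h^{jk}-C_h^{kj})\dot{\pp}^h$; but your claimed derivation of that symmetry from $D\ov{\theta}=0$ fails. Compute $D_{\dot{\pp}^k}\ov{\theta}$ with $\ov{\theta}=\de p_i\otimes dx^i$ using (4.4.15): since $D_{\dot{\pp}^k}\de p_i=C_i^{mk}\de p_m$ and $D_{\dot{\pp}^k}dx^i=-C_m^{ik}dx^m$, one gets $D_{\dot{\pp}^k}\ov{\theta}=C_i^{mk}\,\de p_m\otimes dx^i-C_i^{mk}\,\de p_m\otimes dx^i=0$ identically, for an arbitrary, not necessarily symmetric, $C_i^{jk}$. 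The actual content of the axiom $D\ov{\theta}=0$ is different: a linear connection merely preserving $N$ and $V$ carries two independent coefficient sets for its actions on $\de_i$ and on $\dot{\pp}^i$, and $D\ov{\theta}=0$ is precisely what forces these to coincide, producing the unified form (4.4.13) --- that is item $1^\circ$ of Theorem 4.4.1. It imposes no symmetry in the upper indices of $C_i^{jk}$, so there is nothing to ``read off'' from the purely vertical component: it vanishes automatically.

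Consequently $v\T(X^V,Y^V)=\dot{X}_j\dot{Y}_k\,S_h^{\ jk}\dot{\pp}^h$ with $S_i^{\ jk}=C_i^{jk}-C_i^{kj}$, which is in general nonzero --- and the chapter itself confirms this: $S_i^{\ jk}$ is listed among the torsion $d$-tensors in (4.4.22), survives in the Ricci identities (4.4.21) and in Proposition 4.4.5, and is only killed later by the explicit axiom $S_i^{\ jk}=0$ imposed on the canonical metrical connection (Theorem 5.3.1, axiom $5^\circ$). So the trailing ``$=0$'' in the fifth displayed line cannot be proved from Definition 4.4.1 at this level of generality; it is an extra hypothesis (compare the analogous list (1.4.9) on $TM$, where no such vanishing is asserted), and your attempt to rescue it does not close the gap. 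A secondary point: careful bookkeeping in the mixed case yields $h\T(X^H,Y^V)=-D^V_YX^H-[X^H,Y^V]^H$, with a minus sign on the bracket; the ``$+$'' in the stated second line is a misprint of the text which you should flag rather than silently match, since your own projection argument produces the minus sign.
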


The curvature $\R$ of an $N-$linear connection $D$ is given by
\begin{equation}
\label{4.4.7}
\R(X,Y)Z=(D_X D_Y-D_Y D_X)Z-D_{[X,Y]}Z.
\end{equation}

Remarking that the vector field $\R(X,Y)Z^H$ is horizontal one and $\R(X,Y)Z^V$ is a vertical one, we have
\begin{equation}
\label{4.4.8}
v(\R(X,Y)Z^H)=0,\ h(\R(X,Y)Z^V)=0.
\end{equation}

We will see that the $d-$tensors of curvature of $D$ are:
\begin{equation}
\label{4.4.9}
\R(X^H,Y^H)Z^H,\ \R(X^H,Y^V)Z^H,\ \R(X^V,Y^V)Z^H.
\end{equation}

By means of (\ref{4.4.7}) one obtains:
\begin{proposition}
\label{p4.4.2}
\begin{itemize}
\item[$1^\circ$] The {\sl Ricci} identities of $D$ are
\begin{equation}
\label{4.4.10}
[D_X,D_Y]Z=\R(X,Y)Z-D_{[X,Y]}Z.
\end{equation}
\item[$2^\circ$] The {\sl Bianchi} identities are given by
\begin{equation}
\label{4.4.11}
\begin{array}{l}
\dd\sum_{(XYZ)}\{(D_X\T)(Y,Z)-\R(X,Y)Z+\T(\T(X,Y),Z)\}=0,\\ \\
\dd\sum_{(X,Y,Z)}\{(D_X\R)(U,Y,Z)-\R(\T(X,Y),Z)U\}=0,
\end{array}
\end{equation}
where $\dd\sum_{(XYZ)}$ means the cyclic sum.
\end{itemize}
\end{proposition}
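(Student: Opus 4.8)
The plan is to treat both families of identities as consequences of the definitions of torsion and curvature together with the Jacobi identity for the Lie bracket of vector fields; the $N$-linear structure plays no role at this level of generality, so I would prove the statements for an arbitrary linear connection $D$ on $T^*M$ and invoke nothing about $N$, $h$, $v$ or the adapted basis. The Ricci identity $1^\circ$ then requires essentially no work: it is merely a rearrangement of the defining formula for the curvature. Writing $[D_X,D_Y]=D_XD_Y-D_YD_X$ and reading the definition $\R(X,Y)Z=(D_XD_Y-D_YD_X)Z-D_{[X,Y]}Z$ backwards yields precisely (\ref{4.4.10}). The genuinely useful content, namely the component Ricci identities, would be extracted afterwards by expanding $Z$ and the operators in the adapted basis $(\delta_i,\dot{\pp}^i)$ and inserting the three curvature $d$-tensors of (\ref{4.4.9}); but the abstract operator identity stated in $1^\circ$ is immediate.

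For the first Bianchi identity I would start from the cyclic sum $\dd\sum_{(XYZ)}\R(X,Y)Z$ and substitute the definition of $\R$. The key algebraic move is to use the torsion definition to replace each $D_XY$ by $\T(X,Y)+D_YX+[X,Y]$, together with the expansion $(D_X\T)(Y,Z)=D_X(\T(Y,Z))-\T(D_XY,Z)-\T(Y,D_XZ)$. After collecting terms, the purely bracket contributions assemble into $\dd\sum_{(XYZ)}[[X,Y],Z]$, which vanishes by the Jacobi identity for vector fields; the surviving terms regroup into the claimed combination $\dd\sum_{(XYZ)}\{(D_X\T)(Y,Z)+\T(\T(X,Y),Z)\}$, giving the first line of (\ref{4.4.11}).

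For the second Bianchi identity the plan is analogous but one order higher. I would apply $D_X$ to $\R(Y,Z)U$, use that $D_X$ is a derivation to expand $(D_X\R)(U,Y,Z)=D_X(\R(Y,Z)U)-\R(D_XY,Z)U-\R(Y,D_XZ)U-\R(Y,Z)D_XU$, and then take the cyclic sum. The cancellations here are driven by the operator Jacobi identity $\dd\sum_{(XYZ)}[[D_X,D_Y],D_Z]=0$, combined with the Ricci identity $1^\circ$ to convert each commutator $[D_X,D_Y]$ into $\R(X,Y)+D_{[X,Y]}$; the residual torsion-bearing terms then combine into $\dd\sum_{(XYZ)}\R(\T(X,Y),Z)U$, with the overall sign dictated by the convention in the definition of $\R$, producing the second line of (\ref{4.4.11}).

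The main obstacle is not conceptual but combinatorial. It is the careful bookkeeping of the many terms generated by expanding the covariant derivatives $(D_X\T)$ and $(D_X\R)$ and by the repeated substitution $D_XY=\T(X,Y)+D_YX+[X,Y]$, and then verifying that exactly the intended terms cancel under cyclic summation through the Jacobi identity while the signs match the stated convention. I expect no difficulty beyond this tracking, since every ingredient required, namely the Jacobi identity for the Lie bracket, the definition of $\T$, and the definition of $\R$ in (\ref{4.4.7}), is already at hand.
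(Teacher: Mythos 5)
Your proposal is correct and coincides with the paper's (entirely implicit) proof: the text asserts the proposition ``by means of (\ref{4.4.7})'' with no computation at all, and the classical argument you outline --- the Ricci identity as a rearrangement of the definition of $\R$, and the two Bianchi identities obtained by expanding $(D_X\T)$ and $(D_X\R)$, substituting $D_XY=\T(X,Y)+D_YX+[X,Y]$, and letting the Jacobi identity kill the bracket terms under cyclic summation --- is exactly the standard content behind that assertion, valid as you note for any linear connection, with the adapted basis entering only afterwards to extract component identities. One correction to your first paragraph: rearranging (\ref{4.4.7}) gives $[D_X,D_Y]Z=\R(X,Y)Z+D_{[X,Y]}Z$, which is the formula you in fact use later when converting commutators in the second Bianchi argument, so the rearrangement does not yield ``precisely'' the printed (\ref{4.4.10}), whose minus sign is a misprint; for the same reason your computation produces $+\R(\T(X,Y),Z)U$ in the second identity, in agreement with the paper's earlier statement (\ref{1.4.18}) on $TM$ rather than with the minus sign printed in (\ref{4.4.11}).
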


The previous formulae can be expressed in local coordinates, taking $\de_i,\dot{\pp}^i$, as the vectors $X,Y,Z,U$. But, first of all, we must introduce the local coefficients of an $N-$linear connection $D$. In the adapted basis $(\de_i,\dot{\pp}^i)$ we take into account the properties:
\begin{equation}
\label{4.4.12}
D_{\de_j}=D^H_{\de_j},\ D_{\dot{\pp}^j}=D^V_{\dot{\pp}^j}.
\end{equation}
Then, the following theorem holds:

\begin{theorem}
\label{t4.4.1}
\begin{itemize}
\item[$1^\circ$] An $N-$linear connection $D$ on $T^*M$ can be uniquely represented in adapted basis $(\de_i,\dot{\pp}^i)$ in the form:
\begin{equation}
\label{4.4.13}
\begin{array}{ll}
D_{\de_j}\de_i=H^h_{ij}\de_h, & D_{\de_j}\dot{\pp}^i=-H^i_{hj}\dot{\pp}^h,\\ \\
D_{\dot{\pp}^j}\de_i=C_i^{hj}\de_h, & D_{\dot{\pp}^j}\dot{\pp}^i=-C_h^{ij}\dot{\pp}^h.
\end{array}
\end{equation}
\item[$2^\circ$] With respect to (4.4.1) on $T^*M$ the coefficients $H^i_{jk}(x,p)$ transform by the rule
\begin{equation}
\label{4.4.14}
\wt{H}^i_{rs}\dfrac{\pp x^r}{\pp\wt{x}^j}\dfrac{\pp x^s}{\pp\wt{x}^k}=\dfrac{\pp\wt{x}^i}{\pp x^r}H^r_{jk}-\dfrac{\pp^2\wt{x}^i}{\pp x^j\pp x^k}
\end{equation}
\end{itemize}
and $C^{jk}_i(x,p)$ is a $d-$tensor of type $(2,1)$.
\end{theorem}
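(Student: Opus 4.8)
The plan is to extract everything from the two defining requirements of an $N-$linear connection: that $D$ preserves the distributions $N$ and $V$ by parallelism, and that the associated tensor $\ov\theta=\de p_i\otimes dx^i$ of the canonical symplectic structure satisfies $D\ov\theta=0$. The first requirement produces the four covariant derivatives in (\ref{4.4.13}) with a priori independent coefficients; the second forces two of these four systems to coincide, up to sign, with the other two, leaving exactly $H^h_{ij}$ and $C^{hj}_i$. The transformation rules in $2^\circ$ then follow by substituting the frame change (\ref{4.3.14}) and expanding by the Leibniz rule.

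For $1^\circ$, I would begin by noting that $(\de_i,\dot\pp^i)$ is a local frame, so every vector field has a unique expansion in it. Since $D$ preserves $N$ and $V$, the vectors $D_{\de_j}\de_i$ and $D_{\dot\pp^j}\de_i$ are horizontal while $D_{\de_j}\dot\pp^i$ and $D_{\dot\pp^j}\dot\pp^i$ are vertical; hence there are unique functions with
$$D_{\de_j}\de_i=H^h_{ij}\de_h,\quad D_{\dot\pp^j}\de_i=C^{hj}_i\de_h,$$
while the two vertical derivatives are, provisionally, $D_{\de_j}\dot\pp^i=A^i_{hj}\dot\pp^h$ and $D_{\dot\pp^j}\dot\pp^i=B^{ij}_h\dot\pp^h$. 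This already settles existence and uniqueness of the coefficient systems; it remains to identify $A$ and $B$.

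The step carrying the real content is the use of $D\ov\theta=0$. With the dual pairings $dx^i(\de_j)=\de^i_j$ and $\de p_i(\dot\pp^j)=\de^j_i$ (the mixed pairings vanishing) one has $\ov\theta(\dot\pp^j,\de_i)=\de^j_i$ and $\ov\theta(\de_i,\dot\pp^j)=0$. Expanding $(D_{\de_k}\ov\theta)(\dot\pp^j,\de_i)=0$ by the Leibniz rule and inserting the expansions above gives $A^j_{ik}+H^j_{ik}=0$, that is $D_{\de_k}\dot\pp^j=-H^j_{hk}\dot\pp^h$; the identical computation with $\dot\pp^k$ in the first slot yields $B^{jk}_i+C^{jk}_i=0$, that is $D_{\dot\pp^k}\dot\pp^j=-C^{jk}_h\dot\pp^h$. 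After relabelling indices these are precisely the second and fourth formulas of (\ref{4.4.13}).

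For $2^\circ$, I would substitute $\wt\de_i=\frac{\pp x^a}{\pp\wt x^i}\de_a$ and $\wt{\dot\pp}^j=\frac{\pp\wt x^j}{\pp x^b}\dot\pp^b$ from (\ref{4.3.14}) into $D_{\wt\de_j}\wt\de_i=\wt H^h_{ij}\wt\de_h$ and $D_{\wt{\dot\pp}^j}\wt\de_i=\wt C^{hj}_i\wt\de_h$, expanding each by the Leibniz rule. The only delicate point, and the one I expect to be the crux, is that the transition coefficients $\frac{\pp x^a}{\pp\wt x^i}$ depend on the base point $x$ only: therefore $\dot\pp^b$ annihilates them, which suppresses any inhomogeneous term in the $C-$computation and leaves the purely tensorial law
$$\wt C^{lj}_i=\frac{\pp\wt x^l}{\pp x^c}\frac{\pp\wt x^j}{\pp x^b}\frac{\pp x^a}{\pp\wt x^i}C^{cb}_a,$$
exhibiting $C^{jk}_i$ as a $d-$tensor of type $(2,1)$; whereas $\de_b$ acts on them as $\pp/\pp x^b$, so the $H-$computation acquires the second-derivative term $-\frac{\pp^2\wt x^i}{\pp x^j\pp x^k}$ and reproduces the affine transformation rule, formally the same as the law (\ref{1.4.2}) already established for the $L-$coefficients on $TM$. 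Matching the coefficients of $\de_c$ and contracting with the inverse Jacobian then delivers the stated rule for $H^i_{jk}$.
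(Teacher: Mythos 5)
Your proof is correct and is precisely the argument the paper intends (the paper omits it with the remark that it ``is not difficult''): preservation of $N$ and $V$ gives four a priori independent coefficient systems in the frame $(\de_i,\dot{\pp}^i)$, the condition $D\ov{\theta}=0$ evaluated on the pair $(\dot{\pp}^j,\de_i)$ forces $A^j_{ik}=-H^j_{ik}$ and $B^{jk}_i=-C^{jk}_i$, and the transformation laws follow from (\ref{4.3.14}) together with the observation that $\dot{\pp}^b$ annihilates the Jacobian factors while $\de_b$ acts on them as $\pp/\pp x^b$. Note only that your claim that the mixed pairings of $\ov{\theta}$ vanish (e.g. $\de p_m(\de_i)=N_{mi}-N_{im}=0$), which ensures $D\ov{\theta}=0$ yields no further conditions from the other slots, uses the standing assumption of the section that $N$ is symmetric, imposed just before Definition \ref{d4.4.1.}.
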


The proof of this theorem is not difficult.

Conversely, if $N$ is an a priori given nonlinear connection and a set of functions $H^i_{jk}(x,p),C_i^{jk}(x,p)$ on $T^*M$, verifying $2^\circ$ is given, then there exists an unique $N-$linear connection $D$ with the property (\ref{4.4.14}).

The action of $D$ on the adapted cobasis $(dx^i,\de p_i)$ is given by
\begin{equation}
\label{4.4.15}
\begin{array}{ll}
D_{\de_j}dx^i=-H^i_{kj}dx^k, & D_{\de_j}\de p_i=H^k_{ij}\de p_k,\\ \\
D_{\dot{\pp}^j}d x^i=-C_k^{ij}dx^k, & D_{\dot{\pp}^j}\de p_i=C_i^{kj}\de p_k.
\end{array}
\end{equation}
The pair $D\Gamma(N)=(H^i_{jk},C^{jk}_i)$ is called the system of coefficients of $D$.

Let us consider a $d-$tensor field $T$ with the local coefficients $T^{i,...,i_r}_{j_1,...,j_s}(x,p)$, (see (4.3.15)) and a horizontal vector field $X=X^H=X^i\de_i$.

By means of previous theorem we obtain for $h-$covariant derivation $D^H_X$ of tensor $T$:
\begin{equation}
\label{4.4.16}
D^H_XT=X^kT^{i_1...i_r}_{j_1...j_s,|k}\de_{i_1}\otimes...\otimes\dot{\pp}^{js}\otimes dx^{j_1}\otimes...\otimes\de p_{i_r},
\end{equation}
where
\begin{equation}
\label{4.4.16'}
\begin{array}{ll}
T^{i_1...i_r}_{j_1...j_s|k}=\de_k T^{i_1...i_r}_{j_1...j_s}+T^{mi_2...i_r}_{j_1...j_s}H^{i_1}_{mk}+...+T^{ir_1...m}_{j_1...j_s}H^{i_r}_{mk}-\\ \\
- T^{i_1...i_r}_{m...j_s}H^{m}_{j_1k}-...-T^{i_1...i_r}_{j_1...m}H^{m}_{j_s k}.
\end{array}\tag{4.4.16'}
\end{equation}
The operator ``$|$'' is called $h-$covariant derivative with respect to $D\Gamma(N)=(H^{i}_{jk},C_i^{jk})$.

Now, taking $X=X^V=X_i\dot{\pp}^i$, the $v-$covariant derivative $D^V_XT$ has the following form
\begin{equation}
\label{4.4.17}
D^V_X T=\dot{X}_k \left.T^{i_1...i_r}_{j_1...j_s}\right|^k\de_i\otimes...\otimes\dot{\pp}^{j_s}\otimes dx^{j_1}\otimes...\otimes \de p_{i_r},
\end{equation}
where
\begin{equation}
\label{4.4.17'}
\begin{array}{ll}
\left.T^{i_1...i_r}_{j_1...j_s}\right|^k=\dot{\pp}^k T^{i_1...i_r}_{j_1...j_s}+T^{m...i_r}_{j_1...j_s}C^{i_1 k}_{m}+...+T^{i_1...m}_{j_1...j_s}C^{i_rk}_{m}-\\ \\
- T^{i_1...i_r}_{m...j_s}C^{mk}_{j_1}-...-T^{i_1...i_r}_{j_1...m}C^{mk}_{j_s}.
\end{array}\tag{4.4.17'}
\end{equation}

The operator ``|'' is called the $v-$covariant derivative.

\begin{proposition}
\label{p4.4.3}
The following properties hold:
\begin{itemize}
\item[$1^\circ$] $T^{i_1...i_r}_{j_1...j_s|k}$ is a $d-$tensor of type $(r,s+1)$.
\item[$2^\circ$] $\left. T^{i_1...i_r}_{j_1...j_s}\right|^k$ is a $d-$tensor of type $(r+1,s)$.
\item[$3^\circ$] $f_{|m}=\dfrac{\de f}{\de x^m}$, $f|^m=\dot{\pp}^m f$.
\begin{equation}
\label{4.4.18}
\begin{array}{lll}
4^\circ & X^i_{|k}=\de_k X^i+X^m H^i_{mk};\  & X^i|^k=\dot{\pp}^k X^i+X^m C^{ik}_m,\\ \\
& \oo_{i|k}=\de_k\oo_i-\oo_m H^m_{ik};\ & \oo_1 |^k=\dot{\pp}^k\oo_i-\oo_m C^{mk}_i.
\end{array}
\end{equation}
\item[$5^\circ$] The operators ``$_|$'' and ``$|$'' are distributive with respect to addition and verify the rule of Leibnitz with respect to tensor product of $d-$tensors.
\end{itemize}
\end{proposition}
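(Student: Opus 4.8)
The plan is to deduce all five properties from a single structural observation: the operators $D^H_X$ and $D^V_X$ are nothing but the linear connection $D$ evaluated on horizontal, respectively vertical, arguments ($D^H_X=D_{X^H}$, $D^V_X=D_{X^V}$), so they inherit the algebraic properties of $D$, while the nonlinear connection $N$ guarantees that they respect the $h/v$-splitting. I would first record the structural facts already collected in (4.4.5): both operators are $\R$-linear, obey the Leibniz rule on products $fY$, and are $\mathcal{F}(T^*M)$-linear in $X$, i.e. $D^H_{fX}=fD^H_X$ and $D^V_{fX}=fD^V_X$. Since $D$ preserves the distributions $N$ and $V$ by parallelism, $D_{X^H}$ and $D_{X^V}$ carry $d$-tensors into $d$-tensors of the same type, and extending them to the whole $d$-tensor algebra so as to commute with contractions and satisfy Leibniz on tensor products is precisely the construction underlying (4.4.16) and (4.4.17). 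Property $5^\circ$ is then immediate: additivity is the $\R$-linearity, and the product rule for the component operators ``$_{|}$'' and ``$|$'' is the coordinate form of the Leibniz rule for $D^H$ and $D^V$ on tensor products.

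For $3^\circ$ and $4^\circ$ I would simply read off components. Taking $f\in\mathcal{F}(T^*M)$, (4.4.5) gives $D^H_Xf=X^Hf$ and $D^V_Xf=X^Vf$; specializing to $X=\de_k$, respectively $X=\dot{\pp}^k$, yields $f_{|k}=\de_k f=\de f/\de x^k$ and $f|^k=\dot{\pp}^k f$, which is $3^\circ$. For $4^\circ$, I apply $D_{\de_j}$ to a $d$-vector $X=X^i\de_i$ using the representation (4.4.13): $D_{\de_j}(X^i\de_i)=(\de_j X^i)\de_i+X^i H^h_{ij}\de_h$, and reading the coefficient of $\de_i$ gives $X^i_{|k}=\de_k X^i+X^m H^i_{mk}$; the same computation with $D_{\dot{\pp}^j}$ and the coefficients $C^{ij}_m$ gives $X^i|^k$, while acting on a $d$-covector $\oo=\oo_i dx^i$ via (4.4.15) produces the two formulas carrying the minus signs. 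Each of these is just the special case $r=1,s=0$ (resp. $r=0,s=1$) of the general expressions (4.4.16') and (4.4.17').

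The substantive statements are $1^\circ$ and $2^\circ$, the tensoriality of the covariant derivatives. The clean argument is invariant: because $D^H_{\de_k}T$ is a genuine $d$-tensor of type $(r,s)$ for each fixed $k$, and because $X\mapsto D^H_X T$ is $\mathcal{F}(T^*M)$-linear, the quantities $T^{\,\ldots}_{\,\ldots\,|k}$ are the components of the single invariant object $D^H T$, whose extra slot pairs with the upper base-index components $X^k$ of a $d$-vector field and therefore adds one \emph{lower} base-index; hence they form a $d$-tensor of type $(r,s+1)$. Dually, $X^V=\dot{X}_k\dot{\pp}^k$ carries a \emph{lower} base-index $\dot{X}_k$, so contracting $T^{\,\ldots}_{\,\ldots}|^k$ against it and obtaining a $d$-tensor forces the free index $k$ to be an \emph{upper} base-index, giving type $(r+1,s)$.

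I expect the only genuine work — and the main obstacle — to lie in justifying the invariance in coordinates, that is, in confirming that the explicit formulas (4.4.16') and (4.4.17') actually coincide with the invariant operators $D_{\de_k}T$ and $D_{\dot{\pp}^k}T$ obtained by expanding them with the Leibniz rule and (4.4.13), (4.4.15). Equivalently, one must check that the inhomogeneous second-derivative term in the transformation law (4.4.14) of $H^i_{jk}$ exactly cancels the non-tensorial term produced when $\de_k$ differentiates the tensor components $T^{\,\ldots}_{\,\ldots}$, leaving the homogeneous rule (4.3.15'). This cancellation is the standard, if slightly tedious, computation for covariant derivatives; the $v$-part requires no such cancellation, since by Theorem~\ref{t4.4.1} the coefficients $C^{jk}_i$ already form a $d$-tensor of type $(2,1)$ and $\dot{\pp}^k$ transforms homogeneously, so the $v$-derivative is tensorial by inspection.
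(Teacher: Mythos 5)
Your proposal is correct, and it uses precisely the machinery the paper itself assembles for this purpose: the structural properties (4.4.5), the local representations (4.4.13) and (4.4.15), the transformation law (4.4.14) for $H^i_{jk}$ together with the $d$-tensor character of $C^{jk}_i$, and the rule (4.3.15'). The paper states Proposition \ref{p4.4.3} without proof, treating it as routine, and your argument — reading off $3^\circ$ and $4^\circ$ from (4.4.13)--(4.4.15), getting $5^\circ$ from $\R$-linearity and Leibniz, and establishing $1^\circ$, $2^\circ$ via ${\cal F}(T^*M)$-linearity in $X$ plus the cancellation of the inhomogeneous term of (4.4.14) against the second-derivative term produced by $\de_k$ (with the $v$-derivative tensorial by inspection) — is exactly the standard verification the author intends.
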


Let us consider the {\sl deflection} tensors of $D\Gamma(N)$:
\begin{equation}
\label{4.4.19}
\Delta_{ij}=p_{i|j},\ \check{\de}^j_i=p_i |^j.
\end{equation}
One gets
\begin{equation}
\label{4.4.19'}
\Delta_{ij}=N_{ij}-p_{m}H^m_{ij}; \delin^i_j=\de^i_j-p_m C^{mj}_i.\tag{4.4.19'}
\end{equation}

\begin{proposition}
\label{p4.4.4}
If $H^i_{jk}(x,p)$ is a system of differentiable functions, verifying $($\ref{4.4.14}$)$, then $N_{ij}(x,p)$ given by
\begin{equation}
\label{4.4.20}
N_{ij}=p_m H^m_{ij}
\end{equation}
determine a nonlinear connection in $T^*M$.
\end{proposition}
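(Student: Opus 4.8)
The plan is to show that the functions $N_{ij}=p_m H^m_{ij}$ obey exactly the transformation law that characterizes the coefficients of a nonlinear connection on $T^*M$; by Definition 4.3.1 this is all that is required. So my first task is to pin down that law, since the excerpt introduces $N_{ij}$ through the adapted frame (4.3.2) without recording its transformation rule explicitly. Writing $\de_i=\pp_i+N_{ij}\dot{\pp}^j$ and demanding that these $n$ fields transform as $\wt{\de}_i=\dfrac{\pp x^j}{\pp\wt{x}^i}\de_j$ (this is (4.3.14), and it is precisely what makes $\mathrm{span}(\de_i)$ a globally defined rank-$n$ distribution complementary to $V$), a short computation using the frame change (4.1.2) together with $\dfrac{\pp}{\pp p_i}=\dfrac{\pp x^i}{\pp\wt{x}^k}\dfrac{\pp}{\pp\wt{p}_k}$ forces
\begin{equation}
\wt{N}_{ij}=\dfrac{\pp x^a}{\pp\wt{x}^i}\dfrac{\pp x^b}{\pp\wt{x}^j}N_{ab}+p_m\dfrac{\pp^2 x^m}{\pp\wt{x}^i\pp\wt{x}^j}. \tag{$*$}
\end{equation}
The same relation drops out of the coframe rule (4.3.13') applied to $\de p_i=dp_i-N_{ji}dx^j$ of (4.3.3), giving an independent check. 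Thus the whole proposition reduces to verifying $(*)$ for $N_{ij}=p_m H^m_{ij}$.

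Next I would invoke the hypothesis that $H^i_{jk}$ verifies $2^\circ$ of Theorem 4.4.1, i.e. the rule (4.4.14): $H^i_{jk}$ transforms exactly as the coefficients of a linear connection on the base manifold $M$. Solving (4.4.14) for the tilded coefficients and rewriting its inhomogeneous term by means of the identity $\dfrac{\pp x^b}{\pp\wt{x}^j}\dfrac{\pp^2 x^c}{\pp x^b\pp\wt{x}^i}=\dfrac{\pp^2 x^c}{\pp\wt{x}^i\pp\wt{x}^j}$ puts it in the form
\[
\wt{H}^m_{ij}=\dfrac{\pp\wt{x}^m}{\pp x^c}\dfrac{\pp x^a}{\pp\wt{x}^i}\dfrac{\pp x^b}{\pp\wt{x}^j}H^c_{ab}+\dfrac{\pp\wt{x}^m}{\pp x^c}\dfrac{\pp^2 x^c}{\pp\wt{x}^i\pp\wt{x}^j}.
\]
I would then contract with $\wt{p}_m=\dfrac{\pp x^d}{\pp\wt{x}^m}p_d$ (the momentum rule (4.1.1)). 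In both terms the Jacobian product $\dfrac{\pp x^d}{\pp\wt{x}^m}\dfrac{\pp\wt{x}^m}{\pp x^c}$ collapses to $\de^d_c$, so the first term becomes $\dfrac{\pp x^a}{\pp\wt{x}^i}\dfrac{\pp x^b}{\pp\wt{x}^j}\,p_c H^c_{ab}=\dfrac{\pp x^a}{\pp\wt{x}^i}\dfrac{\pp x^b}{\pp\wt{x}^j}N_{ab}$, while the second becomes $p_c\dfrac{\pp^2 x^c}{\pp\wt{x}^i\pp\wt{x}^j}$. Their sum is precisely $(*)$, so $\wt{N}_{ij}=\wt{p}_m\wt{H}^m_{ij}$ obeys the required law and $N_{ij}=p_m H^m_{ij}$ defines a nonlinear connection on $T^*M$.

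The delicate point is getting $(*)$ correct and matching the inhomogeneous (second-derivative) terms, since on $T^*M$ the momenta are covariant: the nonlinear-connection coefficients do not transform as on $TM$, their lower indices carrying the inverse Jacobians $\dfrac{\pp x^a}{\pp\wt{x}^i}$, and the non-tensorial term being $p_m\,\pp^2 x^m/\pp\wt{x}^i\pp\wt{x}^j$ rather than the $TM$-type $\pp^2\wt{x}^m/\pp x^i\pp x^j$. The key algebraic manoeuvre is converting the second-derivative term of the $H$-rule between its $\pp^2\wt{x}/\pp x\pp x$ and $\pp^2 x/\pp\wt{x}\pp\wt{x}$ forms via the identity above, and noticing that after contraction with $\wt{p}_m$ it reproduces exactly the inhomogeneous part of $(*)$. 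I would also note that this computation is the exact complement of the deflection relation (4.4.19'), $\Delta_{ij}=N_{ij}-p_m H^m_{ij}$: the fact that $\Delta_{ij}$ is a $d$-tensor is equivalent to $N_{ij}$ and $p_m H^m_{ij}$ sharing the same inhomogeneous transformation term, which is just what the proposition asserts.
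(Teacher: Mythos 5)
Your proof is correct and follows exactly the route the paper intends: the paper states Proposition \ref{p4.4.4} without proof, and its standard method for such statements (cf.\ the proof of Theorem \ref{t3.1} in Chapter 1, where a nonlinear connection on $TM$ is recognized purely by the transformation rule (\ref{1.3.5})) is precisely your check that $N_{ij}=p_mH^m_{ij}$ obeys the characteristic law $\wt{N}_{ij}=\dfrac{\pp x^a}{\pp\wt{x}^i}\dfrac{\pp x^b}{\pp\wt{x}^j}N_{ab}+p_m\dfrac{\pp^2 x^m}{\pp\wt{x}^i\pp\wt{x}^j}$ forced by $\wt{\delta}_i=\dfrac{\pp x^j}{\pp\wt{x}^i}\delta_j$ in (\ref{4.3.14}). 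Your derivation of that law, the contraction of (\ref{4.4.14}) with $\wt{p}_m$, and the closing observation that this is equivalent to the deflection $\Delta_{ij}=N_{ij}-p_mH^m_{ij}$ of (4.4.19$'$) being a $d$-tensor are all accurate.
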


As in the case of tangent bundle, one can prove that if the base manifold $M$ is paracompact, then on $T^*M$ there exist the $N-$linear connections.

Indeed, on $M$ there exists a Riemannian metric $g_{ij}(x)$. Then the pair $D\Gamma(N)=(\gamma^i_{jk},0)$ is an $N-$linear connection on $T^*M$, $\gamma^i_{jk}(x)$ being the Christoffel coefficients and $N_{ij}=p_m\gamma^m_{ij}$, are the coefficients of a nonlinear connection on $T^*M$.

In the adapted basis $(\de_i,\dot{\pp}^i)$, the Ricci identitites (\ref{4.4.10}) can be written in the form
\begin{equation}
\label{4.4.21}
\begin{array}{ll}
X^i_{|j|h}-X^i_{|h|j}=X^m R^{\ i}_{m\ jh}-X^i_{|m}T^m_{jh}-X^i|^m R_{mjh}\\ \\
X^i_{|j}|^h-X^i|^h_{|j}=X^m P_{m\ j}^{\ i\ h}-X^i_{\ |m}C^{mh}_j-X^i |^m P^h_{mj}\\ \\
X^i|^j|^h-X^i|^h|^j=X^m S_m^{\ ijh}-X^i|^m S_m^{\ jh},
\end{array}
\end{equation}
where
\begin{equation}
\label{4.4.22}
T^i_{jh}=H^i_{jh}-H^i_{hj},\ S_i^{\ jh}=C^{jh}_i-C^{hj}_i,\ P_{jh}^i=H^i_{jh}-\dot{\pp}^i N_{hj}
\end{equation}
and $R_{mjh},C^{mh}_i$ are the $d-$tensors of torsion and $R^{\ i}_{k\ jh}$, $P^{\ i\ h}_{k\ j}$, $S_k^{\ ijh}$ are the $d-$tensors of curvature:
\begin{equation}
\label{4.4.22}
\begin{array}{ll}
R^{\ i}_{k\ jh}=\de_h H^i_{kj}-\de_j H^i_{kh}+H^m_{kj}H^i_{mh}-H^m_{kh}H^i_{kh}H^i_{mj}-C_k^{\ im}R_{mjh},\\ \\
P^{\ i\ h}_{k\ j}=\dot{\pp}^h H^i_{kj}-C^{ih}_{k\ |j}+C^{im}_k P^h_{\ mj},
S_{k}^{\ ijh}=\dot{\pp}^hC^{ij}_k-\dot{\pp}^j C^{ih}_k+\\ \\ \quad \qquad +C^{\ mj}_k C^{\ ih}_m-C^{\ mh}_k C_m^{\ ij}.
\end{array}
\end{equation}

Evidently, these $d-$tensors of curvature verify:
\begin{equation}
\label{4.4.23}
\begin{array}{ll}
\R(\de_j\de_h)\de_k=R^{\ i}_{k\ hj}\de_i;\ \R(\de_j,\dot{\pp}^h)\de_k=P^{\ h\ i}_{k\ j}\de_i,\\ \\
\R(\dot{\pp}^j,\dot{\pp}^h)\de_k=S^{ihj}_k\de_i,
\end{array}
\end{equation}
and
\begin{equation}
\label{4.4.23'}
\begin{array}{ll}
\R(\de_j\de_h)\dot{\pp}^k=R^{\ k}_{i\ hj}\dot{\pp}^i;\ \R(\de_j,\dot{\pp}^h)\dot{\pp}^k=-P^{\ k\ h}_{i\ j}\dot{\pp}^i,\\ \\
\R(\dot{\pp}^j,\dot{\pp}^h)\dot{\pp}^k=-S^{\ hkj}_i\dot{\pp}^j.\tag{4.4.23'}
\end{array}
\end{equation}

The Bianchi identities (\ref{4.4.11}), in adapted basis $(\de_i,\dot{\pp}^i)$ can be written without difficulties.

\bigskip

\noindent{\bf Applications:} \begin{itemize} \item[$1^\circ$] For a $d-$ tensor $g^{ij}(x,p)$ the Ricci identities are
\begin{equation}
\label{4.4.24}
\begin{array}{ll}
g^{ij}_{\ |k|h}\hspace*{-0.5mm}-\hspace*{-0.5mm}g^{ij}_{\ |h|k}\hspace*{-0.5mm}=\hspace*{-0.5mm}g^{mj}R_{m\ kh}^{\ i}\hspace*{-0.5mm}+\hspace*{-0.5mm}g^{im}R^{\ j}_{mkh}\hspace*{-0.5mm}-\hspace*{-0.5mm}g^{ij}_{\ |m}T^m_{\ kh}-g^{ij}|^m R_{mkh},\\ \\
g^{ij}_{\ |k}|^h\hspace*{-0.5mm}-\hspace*{-0.5mm}g^{ij}|^h_{\ |k}\hspace*{-0.5mm}=\hspace*{-0.5mm}g^{mj}P_{m\ k}^{\ i\ h}\hspace*{-0.5mm}+\hspace*{-0.5mm}g^{im}P_{m\ k}^{\ j\ h}\hspace*{-0.5mm}-\hspace*{-0.5mm}g^{ij}_{\ |m}C_k^{\ mh}-g^{ij}|^m P^h_{mk},\\ \\
g^{ij}|^k|^h-g^{ij}|^h|^k=g^{mj}S^{\ ikh}_m+g^{im}S_m^{\ jkh}-g^{ij}|^m S^{\ kh}_m.
\end{array}
\end{equation}
In particular, if $g^{ij}$ is covariant constant with respect to $N-$connection $D$, i.e.
\begin{equation}
\label{4.4.25}
g^{ij}_{\ |k}=0,\ \ g^{ij}|^k=0,
\end{equation}
then, from (\ref{4.4.24}) we have:
\begin{equation}
\label{4.4.25'}
\begin{array}{ll}
g^{mj}R^{\ i}_{m\ kh}+g^{im}R^{\ j}_{m\ kh}=0,\\
g^{mj}P^{\ i\ h}_{m\ k}+g^{im}P^{\ j\ h}_{m\ k}=0,\\
g^{mj}S^{\ ikh}_m+g^{im}S_m^{\ jkh}=0.
\end{array}\tag{4.4.25'}
\end{equation}
Such kind of equations will be used for the $N-$linear connections compatible with a metric structure $G$ in (\ref{4.3.13}).
\end{itemize}

The Ricci identities applied to the Liouville--Hamilton vector field $C^*=p_i\dot{\pp}^i$ lead to the important identities, which imply the deflection tensors $\Delta_i$, and ${^{-}}\!\!\!\!\!\!\de^i_j$.

\begin{theorem}
\label{t4.2.2}
Any $N-$linear connection $D$ on $T^*M$ satisfies the following identities:
\begin{equation}
\label{4.2.26}
\begin{array}{l}
\Delta_{ij|k}-\Delta_{ik|j}=-p_m R_{i\ jk}^{\ m}-\Delta_{im}T^{m}_{\ jk}-\delin^m_i R_{mjk},\\
\Delta_{ij}|^k-\delin_{i\ |j}^{\ k}=-p_m P_{i\ j}^{m\ k}-\Delta_im C^{mk}_j-\delin^m_i P^i_{\ mj},\\
\delin_i^j|^k-\delin^k_j |^j=-p_m S_i^{\ mjk}-\delin^m_i S_m^{\ jk}.
\end{array}
\end{equation}
\end{theorem}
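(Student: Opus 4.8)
The plan is to dualize the argument that produced the tangent-bundle deflection identities in Theorem \ref{t4.2}, where those identities were obtained by feeding the Liouville vector field $\C=y^i\dfrac{\pp}{\pp y^i}$ into the Ricci identities of an $N$-linear connection. On $T^*M$ the natural object to insert is the Liouville--Hamilton field $C^*=p_i\dot{\pp}^i$, whose essential data is the covariant family $p_i$. First I would record that $p_i$ is a genuine $d$-covector field: by the change-of-coordinates rule (\ref{4.1.1}) the momenta satisfy $\widetilde{p}_i=\dfrac{\pp x^j}{\pp\widetilde{x}^i}p_j$, which is exactly the transformation law of a covector on the base $M$, hence of a $(0,1)$ $d$-tensor on $T^*M$. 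Consequently the Ricci identities of $D$ may legitimately be applied to $p_i$.

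Next I would write down the Ricci identities for a $d$-covector field $\oo_i$. These are the covariant counterpart of the vector-field identities (\ref{4.4.21}); they follow from the operator identity $[D_X,D_Y]Z=\R(X,Y)Z-D_{[X,Y]}Z$ of Proposition \ref{p4.4.2} by letting it act on $\oo$ through the duality pairing, or equivalently by repeating verbatim the computation that gave (\ref{4.4.21}), the only change being that the curvature now contracts the lower index and enters with the opposite sign. The purely horizontal one reads
$$\oo_{i|j|k}-\oo_{i|k|j}=-\oo_m R^{\ m}_{i\ jk}-\oo_{i|m}T^m_{\ jk}-\oo_i|^m R_{mjk},$$
and the mixed and the purely vertical identities are obtained from the second and third lines of (\ref{4.4.21}) in precisely the same manner, again with curvature terms contracting the lower index with a minus sign.

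Finally I would substitute $\oo_i=p_i$. By the definitions (\ref{4.4.19}) of the deflection tensors, $p_{i|j}=\Delta_{ij}$ and $p_i|^j=\delin^j_i$, so the iterated derivatives collapse into derivatives of the deflection tensors: $p_{i|j|k}=\Delta_{ij|k}$, $p_{i|j}|^k=\Delta_{ij}|^k$, $p_i|^j|^k=\delin^j_i|^k$, while the first-order terms give $p_{i|m}=\Delta_{im}$ and $p_i|^m=\delin^m_i$. Reading these substitutions into the three covector Ricci identities produces the three claimed identities (\ref{4.2.26}) line by line.

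The main obstacle is bookkeeping rather than conceptual: getting the mixed horizontal--vertical identity (the second line) right, with the correct placement of indices on $P^{m\ k}_{i\ j}$ and $P^k_{\ mj}$ and the correct signs, and keeping straight the two distinct $P$-type and the $S$-type curvature $d$-tensors introduced in (\ref{4.4.22}) together with the torsion $d$-tensors $T^m_{\ jk}$, $R_{mjk}$ and $C^{mk}_j$. One should also check that differentiating $p_i$ directly contributes nothing extra, i.e. that the elementary term $\dot{\pp}^k p_i=\de^k_i$ has already been absorbed into $\delin^k_i$ via (\ref{4.4.19'}), so that every right-hand side is expressed solely through the deflection tensors and the curvature and torsion $d$-tensors, as asserted.
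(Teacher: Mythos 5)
Your proposal is correct and follows essentially the same route as the paper, which obtains (\ref{4.2.26}) precisely by applying the Ricci identities to the Liouville--Hamilton vector field $C^*=p_i\dot{\pp}^i$ --- equivalently, the covector form of the Ricci identities (\ref{4.4.21}) applied to the $d$-covector $p_i$ --- and then reading off $p_{i|j}=\Delta_{ij}$, $p_i|^j=\delin^j_i$ from (\ref{4.4.19}). Your sign and index bookkeeping is right (indeed, it corrects evident misprints in the paper's second and third lines, where $P^k_{\ mj}$ and $\delin^j_i|^k-\delin^k_i|^j$ are intended), so there is no gap.
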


One says that are $N-$linear connection $D$ is of Cartan type if $\Delta_{ij}=0$, $\delin^i_j=\de^i_j$.

\begin{proposition}
\label{p4.4.5}
Any $N-$linear connection $D$ of Cartan type satisfies the identities:
\begin{equation}
\label{4.4.27}
p_m R^m_{i\ jk}+R_{ijk}=0,\ p_m P^{\ m\ k}_{i\ j}+P^{k}_{\ ij}=0,\ p_m S_i^{\ mjk}+S_i^{\ jk}=0.
\end{equation}
\end{proposition}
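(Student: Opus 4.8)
The plan is to obtain all three identities directly from the Ricci identities for the Liouville--Hamilton vector field already established in $(\ref{4.2.26})$, by specializing them to the Cartan-type hypotheses $\Delta_{ij}=0$ and $\delin^i_j=\de^i_j$. No fresh computation of curvature is required; everything follows by substitution. First I would write the three relations $(\ref{4.2.26})$ out, recalling that they involve only the deflection tensors $\Delta_{ij}=p_{i|j}$ and $\delin^i_j=p_i|^j$ from $(\ref{4.4.19})$, together with the torsion and curvature $d$-tensors $T^m_{\ jk}$, $R_{mjk}$, $C^{mk}_j$, $R^{\ m}_{i\ jk}$, $P^{m\ k}_{i\ j}$ and $S_i^{\ mjk}$. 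The result should then drop out of the observation that, under the Cartan-type normalization, almost every left-hand term and several right-hand terms vanish.

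Next I would impose the two hypotheses term by term. Since $\Delta_{ij}$ vanishes identically, all of its $h$- and $v$-covariant derivatives and all of its algebraic occurrences disappear: $\Delta_{ij|k}=0$, $\Delta_{ij}|^k=0$, $\Delta_{im}=0$. Since $\delin^i_j$ coincides with the Kronecker $d$-tensor $\de^i_j$, I would argue that its covariant derivatives $\delin^k_{i|j}$ and $\delin^j_i|^k$ also vanish, so that on each right-hand side only the term contracted with $\delin^m_i=\de^m_i$ survives, and that contraction merely renames $m$ to $i$. With these reductions the first identity of $(\ref{4.2.26})$ collapses to $p_m R^m_{i\ jk}+R_{ijk}=0$, the second to $p_m P^{\ m\ k}_{i\ j}+P^k_{\ ij}=0$, and the third to $p_m S_i^{\ mjk}+S_i^{\ jk}=0$, which are exactly the asserted relations.

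The only step calling for genuine care, rather than pure bookkeeping, is the verification that $\de^i_j$ is both $h$- and $v$-covariantly constant; this is what guarantees that the derivative terms $\delin^k_{i|j}$ and $\delin^j_i|^k$ drop out instead of contributing spurious connection terms. I would settle it using the coefficient formulas $(4.4.16')$ and $(4.4.17')$: applied to $\de^i_j$ the two linear-connection contributions cancel, giving $\de^i_{j|k}=H^i_{jk}-H^i_{jk}=0$ and $\de^i_j|^k=C^{ik}_j-C^{ik}_j=0$. I expect no real obstacle beyond this check; the substantive content of the proposition is simply that the Cartan-type conditions $p_{i|j}=0$ and $p_i|^j=\de^j_i$ force algebraic constraints tying the momenta $p_m$ to the three curvature $d$-tensors, and these are read straight off the Liouville Ricci identities once the above vanishings are in place.
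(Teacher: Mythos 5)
Your proposal is correct and follows exactly the route the paper intends: Proposition \ref{p4.4.5} is stated immediately after Theorem \ref{t4.2.2} precisely because it is obtained by substituting the Cartan-type conditions $\Delta_{ij}=0$, $\delin^i_j=\de^i_j$ into the Ricci identities (\ref{4.2.26}) for the Liouville--Hamilton vector field $\C^*=p_i\dot{\pp}^i$. Your added verification that $\de^i_j$ is $h$- and $v$-covariantly constant via (4.4.16') and (4.4.17') is the right (and only nontrivial) bookkeeping step, so nothing is missing.
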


Finally, we remark that we can explicitly write, in adapted basis, the Bianchi identities (\ref{4.4.11}).

\section{Parallelism, paths and structure equations}\index{Parallelism! on manifold $TM$}\index{Structure equations}\index{Paths}
\setcounter{equation}{0}
\setcounter{definition}{0}
\setcounter{theorem}{0}
\setcounter{lemma}{0}

Consider the Hamiltonian systems $(T^*M,H,\theta)$, an $N-$linear connection $D\Gamma(N)=(H^i_{jk},C^{\ jk}_i)$.

A curve $c:I\to T^*M$, locally represented by
\begin{equation}
\label{4.5.1}
x^i=x^i(t),\ p_i=p_i(t),
\end{equation}
has the tangent vector $\dfrac{dc}{dt}$ in adapted basis $(\de_i,\dot{\pp}^i)$:
\begin{equation}
\label{4.5.2}
\dfrac{dc}{dt}=\dfrac{dx^i}{dt}\de_i+\dfrac{\de p_i}{dt}\dot{\pp}^i,
\end{equation}
with $\dfrac{\de p_i}{dt}=\dfrac{dp_i}{dt}-N_{ji}\dfrac{dx^j}{dt}$. The operator of covariant derivative $D$ along the curve $c$ is $$D_{\dot{c}}=D^H_{\dot{c}}+D^V_{\dot{c}}=\dfrac{dx^i}{dt}D_{\de_i}+\dfrac{\de p_i}{dt}D_{\dot{\pp}^i}.$$ If $X=X^i\de_i+\dot{X}_i\dot{\pp}^i$ is a vector field then the operator $X$ of covariant differential acts on the vector fields $D$ by the rule:
\begin{equation}
\label{4.5.3}
DX=(dx^i+X^m\oo^i_m)\de_i+(d\dot{X}_i-\dot{X}_m\oo^m_i)\dot{\pp}^i,
\end{equation}
where
\begin{equation}
\label{4.5.4}
\oo^i_j=H^i_{jk}dx^k+C^{ik}_j\de p_k
\end{equation}
is called {\sl the connection $1-$form}.

Therefore, the covariant differential $\dfrac{DX}{dt}$ is:
\begin{equation}
\label{4.5.5}
\dfrac{DX}{dt}=\left(\dfrac{dX^i}{dt}+X^m\dfrac{\oo^i_m}{dt}\right)\de_i+\left(\dfrac{d\dot{X}_i}{dt}-\dot{X}_m\dfrac{\oo^m_i}{dt}\right)\dot{\pp}^i,
\end{equation}
where $\dfrac{dX^i}{dt}$ is $\dfrac{dX^i}{dt}(x(t),p(t))$, $t\in I$ and
\begin{equation}
\label{4.5.6}
\dfrac{\oo^i_j}{dt}=H^i_{\ jk}(x(t),p(t))\dfrac{dx^k}{dt}+C_j^{\ ik}(x(t),p(t))\dfrac{\de p_k}{dt}.
\end{equation}

As usually, we introduce:
\begin{definition}
\label{d4.5.1}
The vector field $X=X^i(x,p)\de_i+\dot{X}_m(x,p)\dot{\pp}^i$ is parallel, with respect to $D\Gamma(N)$, along smooth curve $c:I\to T^*M$ if $\dfrac{DX}{dt}=0$, $\forall t\in I$.
\end{definition}

From (\ref{4.5.5}) one obtains:
\begin{theorem}
\label{t4.5.1}
The vector field $X=X^i\de_i+\dot{X}_i\dot{\pp}^i$ is parallel, with respect to the $N-$linear connection $D\Gamma(H^i_{jk},C^{jk}_i)$, along of curve $c$ if, and only if, the functions $X^i,\dot{X}_i$, $(i=1,...,n)$ are solutions of the differential equations:
\begin{equation}
\label{4.5.7}
\dfrac{dX^i}{dt}+X^m\dfrac{\oo^i_m}{dt}=0,\ \dfrac{d\dot{X}_i}{dt}-\dot{X}_m\dfrac{\oo_i^m}{dt}=0.
\end{equation}
\end{theorem}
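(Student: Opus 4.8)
The plan is to prove the equivalence by unwinding Definition \ref{d4.5.1} with the help of the coordinate expression (\ref{4.5.5}) for the covariant differential along $c$, and then invoking the linear independence of the adapted basis. By Definition \ref{d4.5.1}, the vector field $X = X^i\de_i + \dot{X}_i\dot{\pp}^i$ is parallel along $c$ precisely when $\dfrac{DX}{dt}=0$ at every $t\in I$. The whole content of the theorem is therefore to rewrite this single vectorial condition as the announced pair of scalar systems of ordinary differential equations.

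The main step is to substitute the already established formula (\ref{4.5.5}), which decomposes the covariant differential in the adapted basis $(\de_i,\dot{\pp}^i)$ as
$$\dfrac{DX}{dt}=\left(\dfrac{dX^i}{dt}+X^m\dfrac{\oo^i_m}{dt}\right)\de_i+\left(\dfrac{d\dot{X}_i}{dt}-\dot{X}_m\dfrac{\oo^m_i}{dt}\right)\dot{\pp}^i,$$
where $\oo^i_j$ is the connection $1$-form (\ref{4.5.4}) restricted to $c$. This formula itself follows from (\ref{4.5.3}) together with the expression $D_{\dot c}=\dfrac{dx^i}{dt}D_{\de_i}+\dfrac{\de p_i}{dt}D_{\dot{\pp}^i}$ for the covariant derivative along the curve, so it may be taken as given at this point.

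The decisive observation is that the horizontal and vertical adapted frame vectors $\{\de_1,\dots,\de_n,\dot{\pp}^1,\dots,\dot{\pp}^n\}$ form a basis of $T_{c(t)}T^*M$ at each point, since they arise from the direct decomposition (\ref{4.3.1}), $T_uT^*M=N_u\oplus V_u$. Consequently the decomposition of $\dfrac{DX}{dt}$ in this frame is unique, and its vanishing is equivalent to the simultaneous vanishing of both bracketed coefficients. This yields exactly
$$\dfrac{dX^i}{dt}+X^m\dfrac{\oo^i_m}{dt}=0,\quad \dfrac{d\dot{X}_i}{dt}-\dot{X}_m\dfrac{\oo^m_i}{dt}=0,$$
the coefficients of $\de_i$ producing the first system and those of $\dot{\pp}^i$ producing the second, which establishes both implications at once.

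There is essentially no genuine obstacle here: the argument rests entirely on the uniqueness of the adapted-basis decomposition, and the only point deserving a word of justification is that $(\de_i,\dot{\pp}^i)$ is indeed a pointwise basis, which is immediate from the supplementarity of $N$ and $V$ expressed in (\ref{4.3.1}). The remaining verification, that $\dfrac{\oo^i_m}{dt}$ in these equations is the quantity (\ref{4.5.6}), is purely a matter of reading off the definition of the connection $1$-form, and requires no computation.
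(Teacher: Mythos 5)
Your proposal is correct and follows exactly the paper's own argument: the paper proves Theorem \ref{t4.5.1} by noting that ``the proof is immediate, by means of (\ref{4.5.5})'', i.e.\ by substituting the adapted-basis expression of $\dfrac{DX}{dt}$ into the condition $\dfrac{DX}{dt}=0$ and reading off the vanishing of the coefficients. Your explicit remark that $(\de_i,\dot{\pp}^i)$ is a pointwise basis by the supplementarity (\ref{4.3.1}), so the decomposition is unique, merely spells out the step the paper leaves implicit.
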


The proof is immediate, by means of (\ref{4.5.5}).

A theorem of existence and uniqueness for the parallel vector fields along a given parametrized curve $c$ in the manifold $T^*M$ can be formulated in the classical manner. Let us consider the case when a vector field $X$ is absolute parallel on $T^*M$, with respect to an $N-$linear connection $D$, i.e. $DX=0$ on $T^*M$, \cite{MHS}.

Using the formula (\ref{4.5.3}), $DX=0$ if and only if $$dX^i+X^m\oo^i_m=0,\ d\dot{X}_i-\dot{X}_m\oo^m_i=0.$$ Remarking (\ref{4.5.4}), and that we have $$\begin{array}{l}dX^i=X^i_{|k}dx^k+X^i|^k\de p_k\\ \\ d\dot{X}_i=\dot{X}_{i\ |k}dx^k+\dot{X}_i|^k\de p_k\end{array}$$ it follows that the equation $DX=0$ along any curve $c$ on $T^*M$ is equivalent to
\begin{equation}
\label{4.5.8}
\begin{array}{ll}
X^i_{|j}=0, & X^i|^j=0\\ \\
\dot{X}_{i|j}=0, & \dot{X}_i|^j=0.
\end{array}
\end{equation}
The differential consequence of the previous system are given by the Ricci identities (\ref{4.4.23}), taken modulo (\ref{4.5.8}). They are
\begin{equation}
\label{4.5.9}
\begin{array}{lll}
X^hR^{\ i}_{h\ jk}=0, & X^h P_{h\ j}^{\ i\ k}=0, & X^h S^{\ ijk}_h=0\\ \\
\dot{X}_{h}R^{\ h}_{i\ jk}=0, & \dot{X}_h P^{\ h\ k}_{i\ j}=0, & \dot{X}_h S^{\ hjk}_i=0.
\end{array}
\end{equation}
But, $\{X^i,\dot{X}^i\}$ being arbitrary, it follows
\begin{theorem}
The $N-$linear connection $D\Gamma(N)$ is with the absolute parallelism of vectors if, and only if, the curvature tensor $\R$ of $D$ vanishes.
\end{theorem}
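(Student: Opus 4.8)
The plan is to reduce the statement ``$D\Gamma(N)$ admits absolute parallelism'' to a question about the complete integrability of the first order system that expresses $DX=0$, and then to read off the obstruction to integrability directly from the Ricci identities. First I would start from the local expression (\ref{4.5.3}) of $DX$ in the adapted basis $(\de_i,\dot{\pp}^i)$ and observe, exactly as in the passage preceding (\ref{4.5.8}), that a vector field $X=X^i\de_i+\dot{X}_i\dot{\pp}^i$ is absolute parallel on $T^*M$ (i.e. $DX=0$ identically) if and only if its components satisfy the total differential system (\ref{4.5.8}), namely $X^i_{|j}=0$, $X^i|^j=0$, $\dot{X}_{i|j}=0$, $\dot{X}_i|^j=0$. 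This recasts the problem as: the system (\ref{4.5.8}) possesses, through every point $u\in T^*M$ and for every prescribed initial value $(X^i(u),\dot{X}_i(u))$, a local solution.

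For the necessity direction I would assume absolute parallelism and apply the Ricci identities (\ref{4.4.21}) to the $d$-vector $X^i$ and, in the dual form, to the $d$-covector $\dot{X}_i$. Substituting the vanishing first order terms $X^i_{|j}=X^i|^j=0$ and $\dot{X}_{i|j}=\dot{X}_i|^j=0$ from (\ref{4.5.8}), every term containing the torsion d-tensors $T^m_{jk}$, $C^{mk}_i$, $R_{mjk}$, $P^h_{mj}$, $S_m^{\ jk}$ drops out, leaving precisely the purely curvature relations (\ref{4.5.9}): $X^h R^{\ i}_{h\ jk}=0$, $X^h P^{\ i\ k}_{h\ j}=0$, $X^h S^{\ ijk}_h=0$ together with their covector analogues $\dot{X}_h R^{\ h}_{i\ jk}=0$, and so on. Because absolute parallelism means the initial data $(X^i(u),\dot{X}_i(u))$ may be chosen arbitrarily at each $u$, these linear relations must hold for all values of $X^h$ and $\dot{X}_h$; hence every component of the three curvature d-tensors $R^{\ i}_{k\ jh}$, $P^{\ i\ h}_{k\ j}$, $S_k^{\ ijh}$ vanishes, and by (\ref{4.4.23}) and (\ref{4.4.23'}) this is exactly $\R=0$.

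For the sufficiency direction I would run the argument in reverse. Assuming $\R=0$, the relations (\ref{4.4.23})--(\ref{4.4.23'}) give $R^{\ i}_{k\ jh}=P^{\ i\ h}_{k\ j}=S_k^{\ ijh}=0$, so the compatibility conditions obtained by cross-differentiating (\ref{4.5.8}) --- which, through the Ricci identities, are exactly (\ref{4.5.9}) --- are satisfied identically. The system (\ref{4.5.8}) is therefore a completely integrable total differential system, and by the Frobenius theorem it admits a unique local solution $X$ for every choice of initial value at a point. This furnishes, through each $u\in T^*M$, parallel vector fields realizing every element of $T_u T^*M$, i.e. $D\Gamma(N)$ is with absolute parallelism of vectors.

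The main obstacle I anticipate is the careful bookkeeping in the necessity step: one must verify that applying the Ricci identities (\ref{4.4.21}) modulo the full system (\ref{4.5.8}) genuinely annihilates all torsion contributions and isolates the curvature terms of (\ref{4.5.9}) with no residual, and one must justify the arbitrariness of the initial data $(X^i(u),\dot{X}_i(u))$ in order to conclude the pointwise vanishing of the curvature d-tensors. On the sufficiency side the delicate point is the precise invocation of the Frobenius complete-integrability theorem for a total differential system on the $2n$-dimensional manifold $T^*M$, ensuring that the integrability conditions coincide with (\ref{4.5.9}) and that solvability for arbitrary initial data indeed follows.
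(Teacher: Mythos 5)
Your proposal is correct and follows essentially the same route as the paper: reduce $DX=0$ to the first order system (\ref{4.5.8}), derive the consequences (\ref{4.5.9}) via the Ricci identities, and use the arbitrariness of $(X^i,\dot{X}_i)$ to force the vanishing of the curvature $d$-tensors, with complete integrability giving the converse. The only difference is one of explicitness: the paper leaves the sufficiency direction implicit in the phrase ``differential consequences,'' whereas you correctly spell out the Frobenius complete-integrability argument that underlies it.
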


\begin{definition}
The curve $c:I\to T^*M$ is called autoparallel for $N-$linear connection $D$ if $D_{\dot{c}}\cdot\dot{c}=0$.
\end{definition}

Taking into account the fact that $\dot{c}=\dfrac{dc}{dt}=\dfrac{dx^i}{dt}\de_i+\dfrac{\de p_i}{dt}\dot{\pp}^i$, one obtains
\begin{theorem}
A curve $c:I\to T^*M$ is autoparallel with respect to $D\Gamma(N)$ if, and only if, the functions $x^i(t), p_i(t)$, $t\in I$, are solutions of the system of differential equations
\begin{equation}
\label{4.5.10}
\dfrac{d^2 x^i}{dt^2}+\dfrac{dx^s}{dt}\dfrac{\oo^i_s}{dt}=0,\ \dfrac{d}{dt}\dfrac{\de p_i}{dt}-\dfrac{\de p_s}{dt}\dfrac{\oo^s_i}{dt}=0.
\end{equation}
\end{theorem}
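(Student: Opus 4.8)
The plan is to evaluate the autoparallel condition $D_{\dot c}\dot c=0$ by means of the explicit formula (\ref{4.5.5}) for the covariant differential along a curve. First I would observe that, by (\ref{4.5.2}), the velocity of $c$ is the vector field along $c$ given by
\[
\dot c=\dfrac{dx^i}{dt}\de_i+\dfrac{\de p_i}{dt}\dot{\pp}^i,
\]
so that $\dot c$ has horizontal components $X^i=\dfrac{dx^i}{dt}$ and vertical components $\dot X_i=\dfrac{\de p_i}{dt}$ in the adapted basis. Since the operator $\dfrac{D}{dt}$ along $c$ is precisely $D_{\dot c}$, the autoparallel condition reads $\dfrac{D\dot c}{dt}=0$, and formula (\ref{4.5.5}) applies verbatim with $X=\dot c$.

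Next I would substitute $X^i=\dfrac{dx^i}{dt}$ and $\dot X_i=\dfrac{\de p_i}{dt}$ into (\ref{4.5.5}). The horizontal part produces the coefficient $\dfrac{d}{dt}\dfrac{dx^i}{dt}+\dfrac{dx^m}{dt}\dfrac{\oo^i_m}{dt}=\dfrac{d^2x^i}{dt^2}+\dfrac{dx^m}{dt}\dfrac{\oo^i_m}{dt}$ of $\de_i$, while the vertical part produces the coefficient $\dfrac{d}{dt}\dfrac{\de p_i}{dt}-\dfrac{\de p_m}{dt}\dfrac{\oo^m_i}{dt}$ of $\dot{\pp}^i$, where $\dfrac{\oo^i_m}{dt}$ is read off from (\ref{4.5.6}).

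Finally, because $(\de_i,\dot{\pp}^i)$ is, at each point, a basis of $T_uT^*M$ adapted to the Whitney decomposition (\ref{4.3.1}) and hence linearly independent, the single vector equation $\dfrac{D\dot c}{dt}=0$ is equivalent to the simultaneous vanishing of both families of coefficients; renaming the dummy index $m$ as $s$ gives exactly the system (\ref{4.5.10}), and this chain of equivalences proves both implications at once. I expect no genuine analytic obstacle here: the only points needing care are the identification $D_{\dot c}\dot c=\dfrac{D\dot c}{dt}$, so that (\ref{4.5.5}) may legitimately be invoked with $X=\dot c$, and the fact that the vertical component of the velocity is the intrinsic object $\dfrac{\de p_i}{dt}$ of (\ref{4.5.2}) rather than $\dfrac{dp_i}{dt}$, so that differentiating it reproduces precisely the term $\dfrac{d}{dt}\dfrac{\de p_i}{dt}$ appearing in (\ref{4.5.10}).
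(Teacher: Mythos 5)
Your proposal is correct and takes essentially the same approach as the paper, which proves the theorem in one line by noting $\dot c=\dfrac{dx^i}{dt}\de_i+\dfrac{\de p_i}{dt}\dot{\pp}^i$ and substituting into the formula (\ref{4.5.5}) for $\dfrac{DX}{dt}$; your version merely makes explicit the two points the paper leaves tacit (the identification $D_{\dot c}\dot c=\dfrac{D\dot c}{dt}$ and the linear independence of the adapted basis $(\de_i,\dot{\pp}^i)$), both of which are handled correctly.
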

Of course, a theorem of existence and uniqueness for the autoparallel curves can be formulated.

\begin{definition}
An horizontal path of $D\Gamma(N)$ is an horizontal autoparallel curve.
\end{definition}

\begin{theorem}
The horizontal paths of $D\Gamma(N)$ are characterized by the differential equations
\begin{equation}
\label{4.5.11}
\dfrac{d^2x^i}{dt^2}+H^i_{jk}(x,p)\dfrac{dx^j}{dt}\dfrac{dx^k}{dt}=0.
\end{equation}
\end{theorem}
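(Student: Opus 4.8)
The plan is to obtain the claim by specializing the autoparallel system to a horizontal curve, imposing the two defining conditions of a horizontal path simultaneously. By the preceding definition, a horizontal path is a curve $c:I\to T^*M$ that is at once \emph{horizontal}, i.e. $\dfrac{\delta p_i}{dt}=0$ for all $i$, and \emph{autoparallel}, i.e. $D_{\dot c}\dot c=0$. Since an autoparallel curve is already characterized by the system (\ref{4.5.10}), my first step is simply to substitute the horizontality relations $\dfrac{\delta p_i}{dt}=0$ into (\ref{4.5.10}) and read off what survives.

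For this I would use the explicit contracted connection $1$-form (\ref{4.5.6}) along $c$, namely $\dfrac{\omega^i_j}{dt}=H^i_{jk}\dfrac{dx^k}{dt}+C^{ik}_j\dfrac{\delta p_k}{dt}$, coming from the connection $1$-form (\ref{4.5.4}). On a horizontal curve the second term is killed by $\dfrac{\delta p_k}{dt}=0$, so that $\dfrac{\omega^i_j}{dt}=H^i_{jk}\dfrac{dx^k}{dt}$. Inserting this into the first (the $h$-)equation of (\ref{4.5.10}) yields $\dfrac{d^2x^i}{dt^2}+H^i_{sk}\dfrac{dx^s}{dt}\dfrac{dx^k}{dt}=0$, which after renaming the summation index $s\to j$ is precisely the announced equation (\ref{4.5.11}).

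It then remains to check that the second (the $v$-)equation of (\ref{4.5.10}) imposes no further constraint. Since $\dfrac{\delta p_i}{dt}=0$ holds identically along $c$, its derivative $\dfrac{d}{dt}\dfrac{\delta p_i}{dt}$ vanishes, and the product $\dfrac{\delta p_s}{dt}\dfrac{\omega^s_i}{dt}$ vanishes as well, so the second relation of (\ref{4.5.10}) collapses to $0=0$ and is automatically satisfied. Thus, for a horizontal curve, autoparallelism reduces to the single equation (\ref{4.5.11}). Conversely, any curve whose momentum component obeys the horizontality relation $\dfrac{\delta p_i}{dt}=0$ and whose base component satisfies (\ref{4.5.11}) meets both equations of (\ref{4.5.10}), hence is autoparallel and horizontal, i.e. a horizontal path.

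There is essentially no analytic obstacle here: the result is a purely algebraic reduction of (\ref{4.5.10}). The only point needing a little care is the bookkeeping that the vertical part of the autoparallel system is \emph{consumed} by the horizontality hypothesis rather than producing an extra differential constraint on the curve; once that is observed, the horizontal part delivers (\ref{4.5.11}) at once.
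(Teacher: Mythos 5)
Your proof is correct and follows exactly the route the paper intends (the paper states this theorem without writing out the argument): substitute the horizontality condition $\dfrac{\delta p_i}{dt}=0$ into the autoparallel system (\ref{4.5.10}) via the contracted connection form $\dfrac{\omega^i_j}{dt}=H^i_{jk}\dfrac{dx^k}{dt}+C^{ik}_j\dfrac{\delta p_k}{dt}$, observe that the vertical equation degenerates to $0=0$, and read off (\ref{4.5.11}) from the horizontal one. Your remark on the converse correctly accounts for the fact that a horizontal path is really characterized by (\ref{4.5.11}) \emph{together with} the horizontality relation $\dfrac{dp_i}{dt}=N_{ji}\dfrac{dx^j}{dt}$ determining the momentum component.
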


The curve $c:I\to T^*M$ is vertical at the point $x^i_0=x^i(t_0)$, $t_0\in I$ of $M$ if $\dot{c}(t_0)\in V$ (vertical distribution).

A {\sl vertical path} at the point $(x^i_0)\in M$ is a vertical autoparallel curve at point $(x^i_0)$.

\begin{theorem}
The vertical paths at a point $x_0=(x^i_0)\in M$ with respect to $D\Gamma(N)$ are solutions of the differential equations
\begin{equation}
\label{4.5.12}
\dfrac{d^2 p_i}{dt^2}-C^{jk}_i(x_0,p)\dfrac{dp_j}{dt}\dfrac{dp_k}{dt}=0,\ x^i=x^i_0.
\end{equation}
\end{theorem}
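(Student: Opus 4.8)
The plan is to obtain a vertical path as the fibre-restriction of the autoparallel curves already characterized in equation (4.5.10). Recall that a vertical path at $(x^i_0)$ is a vertical autoparallel curve at that point, and verticality at $t_0$ means $\dot c(t_0)\in V$, i.e. $\dfrac{dx^i}{dt}(t_0)=0$. First I would show that this condition propagates along the whole curve. Writing the connection $1$-form (4.5.6) and $\dfrac{\de p_k}{dt}=\dfrac{dp_k}{dt}-N_{jk}\dfrac{dx^j}{dt}$, the first equation of (4.5.10) reads $\dfrac{d^2x^i}{dt^2}+\dfrac{dx^s}{dt}\big(H^i_{sk}\dfrac{dx^k}{dt}+C_s^{ik}\dfrac{\de p_k}{dt}\big)=0$, which is linear and homogeneous in the velocities $\dfrac{dx^i}{dt}$ (every term carries at least one factor $\dfrac{dx^s}{dt}$). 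Since $\dfrac{dx^i}{dt}(t_0)=0$, uniqueness for this first-order system forces $\dfrac{dx^i}{dt}\equiv 0$, so the curve stays in the fibre $\pi^{*-1}(x_0)$ and $x^i(t)=x^i_0$ for all $t$. In particular the first equation of (4.5.10) is then identically satisfied, and the whole content of the theorem lies in the second equation.

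Next I would simplify the covariant objects entering that second equation under $x^i\equiv x^i_0$. Because $\dfrac{dx^i}{dt}=0$, the formula $\dfrac{\de p_i}{dt}=\dfrac{dp_i}{dt}-N_{ji}\dfrac{dx^j}{dt}$ collapses to $\dfrac{\de p_i}{dt}=\dfrac{dp_i}{dt}$, whence $\dfrac{d}{dt}\dfrac{\de p_i}{dt}=\dfrac{d^2p_i}{dt^2}$. Likewise the connection $1$-form (4.5.6) loses its horizontal term, so that $\dfrac{\oo^s_i}{dt}=H^s_{ik}\dfrac{dx^k}{dt}+C_i^{sk}\dfrac{\de p_k}{dt}=C_i^{sk}\dfrac{dp_k}{dt}$, all coefficients being evaluated at $(x_0,p)$.

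Substituting these into the second equation $\dfrac{d}{dt}\dfrac{\de p_i}{dt}-\dfrac{\de p_s}{dt}\dfrac{\oo^s_i}{dt}=0$ of (4.5.10) gives
$$\dfrac{d^2p_i}{dt^2}-C_i^{sk}(x_0,p)\dfrac{dp_s}{dt}\dfrac{dp_k}{dt}=0,$$
which is precisely (4.5.12) after relabelling the summation index $s\to j$ and writing $C^{jk}_i$ for $C_i^{jk}$. I would add the remark that only the part of $C^{jk}_i$ symmetric in $j,k$ contributes, since it is contracted against the symmetric product $\dfrac{dp_j}{dt}\dfrac{dp_k}{dt}$; hence the $v$-torsion $S_i^{\,jk}=C^{jk}_i-C^{kj}_i$ drops out and the equation is unambiguous. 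There is no serious obstacle here: the statement is a direct specialization of the autoparallel system, the only points requiring care being the verification that the vertical condition propagates (so that $x^i\equiv x^i_0$ is legitimate) and the bookkeeping of the vanishing horizontal terms together with the index placement carrying $C_i^{sk}$ into $C^{jk}_i$.
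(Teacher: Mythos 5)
Your proof is correct and takes essentially the route the paper intends: specialize the autoparallel system (4.5.10) to a curve lying in the fibre over $x_0$, where $\dfrac{\delta p_i}{dt}=\dfrac{dp_i}{dt}$ and $\dfrac{\omega^s_i}{dt}=C_i^{sk}(x_0,p)\dfrac{dp_k}{dt}$, so the second equation collapses exactly to (4.5.12). Your preliminary step — showing via uniqueness for the linear homogeneous system in the velocities $\dfrac{dx^i}{dt}$ that verticality of $\dot c$ at one instant propagates, so $x^i\equiv x^i_0$ — is a careful addition that the paper leaves implicit by treating vertical curves as fibre curves, as it did for $TM$ in Chapter 1.
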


Now, we can write the structure equations of $D\Gamma(N)=(H^i_{jk},C^{\ jk}_i)$, remarking that the following geometric objects are a $d-$vector field and a $d-$covector field, respectively: $$d(dx^i)-dx^m\wedge\oo^i_m;\ \ d\de p_i+\de p_m\wedge\oo^m_i$$ and $d\oo^i_j-\oo^m_j\wedge\oo^i_m$ is a $d-$tensor of type (1,1). Here $d$ is the operator of exterior differential.

By a straightforward calculus we can prove:
\begin{theorem}
For any $N-$linear connection $D$ with the coefficients $D\Gamma(N)=(H^i_{jk},C^{jk}_i)$ the following structure equations hold:
\begin{equation}
\label{4.5.13}
\begin{array}{ll}
d(dx^i)-dx^m\wedge\oo^i_m=-\Omega^i\\ \\
d(\de p_i)+\de p_m\wedge\oo^m_i=-\dot{\Omega}_i\\ \\
d\oo^i_j-\oo^m_j\wedge\oo^i_m=-\Omega^i_j,
\end{array}
\end{equation}
where $\Omega^i,\dot{\Omega}_i$ are $2-$forms of torsion:
\begin{equation}
\label{4.5.14}
\begin{array}{ll}
\O^i=\dfrac12 T^i_{\ jk}dx^j\wedge dx^k+C^{ik}_j dx^j\wedge \de p_k,\\ \\
\dot{\O}_i=\dfrac12 R_{ijk}dx^j\wedge dx^k+P^k_{\ ij}dx^j\wedge \de p_k+\dfrac12 S_i^{\ jk}\de p_{j}\wedge \de p_k,
\end{array}
\end{equation}
and when $\O^i_j$ is 2-forms of curvature:
\begin{equation}
\label{4.5.15}
\O^i_j\hspace*{-0.5mm}=\hspace*{-0.5mm}\dfrac12 R^{\ i}_{j\ km}dx^k\wedge dx^m\hspace*{-0.5mm}+\hspace*{-0.5mm}P^{\ i\ m}_{j\ k}dx^k\wedge\de p_m\hspace*{-0.5mm}+\hspace*{-0.5mm}\dfrac12 S_i^{\ jkm}\de p_k\wedge \de p_m.
\end{equation}
\end{theorem}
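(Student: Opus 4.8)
The plan is to treat this as the cotangent-bundle analogue of Theorem~\ref{t5.3} (the structure equations on $TM$), proceeding in the same two stages: first secure that the three $2$-forms in (\ref{4.5.13}) are intrinsic objects, then read off their components in the adapted cobasis. The tensoriality has in fact just been recorded before the statement: $d(dx^i)-dx^m\wedge\omega^i_m$ and $d(\delta p_i)+\delta p_m\wedge\omega^m_i$ transform as a $d$-vector and a $d$-covector respectively, while $d\omega^i_j-\omega^m_j\wedge\omega^i_m$ transforms as a $(1,1)$ $d$-tensor. This is what legitimizes naming them $-\Omega^i$, $-\dot{\Omega}_i$, $-\Omega^i_j$ and guarantees that once the components are computed in one chart the equations (\ref{4.5.13}) hold globally. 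Hence the real content is the explicit evaluation of the right-hand sides, using the connection $1$-forms (\ref{4.5.4}) and the action of $D$ on the adapted cobasis (\ref{4.4.15}).

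First I would dispose of $\Omega^i$. Since $dx^i$ is exact, $d(dx^i)=0$, so $\Omega^i=dx^m\wedge\omega^i_m$. Substituting $\omega^i_m=H^i_{mk}dx^k+C^{ik}_m\delta p_k$ and separating the two bidegrees gives a $dx\wedge dx$ part and a $dx\wedge\delta p$ part; antisymmetrizing the first with the definition $T^i_{jk}=H^i_{jk}-H^i_{kj}$ from (\ref{4.4.22}) yields precisely $\Omega^i=\frac12 T^i_{jk}\,dx^j\wedge dx^k+C^{ik}_j\,dx^j\wedge\delta p_k$, which is the first line of (\ref{4.5.14}).

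The second step is a preliminary lemma giving $d(\delta p_i)$, the exact analogue of Lemma~\ref{l5.1}. Writing $\delta p_i=dp_i-N_{ji}dx^j$ and expanding the exact form $dN_{ji}$ in the adapted coframe by the general rule $df=(\delta_k f)\,dx^k+(\dot{\partial}^k f)\,\delta p_k$, one gets $d(\delta p_i)$ as a combination of $dx^j\wedge dx^k$, whose coefficient is the curvature $R_{ijk}$ of $N$ via (\ref{4.3.7})--(\ref{4.3.8}), and of $dx^j\wedge\delta p_k$. Feeding this into $\dot{\Omega}_i=-d(\delta p_i)-\delta p_m\wedge\omega^m_i$ and using the torsion $d$-tensors $P^k_{ij}=H^k_{ij}-\dot{\partial}^k N_{ji}$ and $S_i^{jk}=C^{jk}_i-C^{kj}_i$ of (\ref{4.4.22}) collapses the mixed and the vertical terms into $P$ and $S$, producing $\dot{\Omega}_i$ in the form (\ref{4.5.14}); the precise signs here simply track the conventions fixed in (\ref{4.3.7}) and (\ref{4.4.22}).

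The curvature equation is the main obstacle, as it is the one carrying genuine second-order information. I would compute $d\omega^i_j$ by differentiating $\omega^i_j=H^i_{jk}dx^k+C^{ik}_j\delta p_k$: each of $dH^i_{jk}$ and $dC^{ik}_j$ splits as $\delta_m(\cdot)\,dx^m+\dot{\partial}^m(\cdot)\,\delta p_m$, while the term $C^{ik}_j\,d(\delta p_k)$ is handled by the lemma just proved and supplies the $C\cdot R$ cross-terms. Subtracting $\omega^m_j\wedge\omega^i_m$ and sorting by the three bidegrees $dx\wedge dx$, $dx\wedge\delta p$, $\delta p\wedge\delta p$, the hard point is to check that the resulting coefficients assemble exactly into the curvature $d$-tensors $R^{\ i}_{j\ km}$, $P^{\ i\ m}_{j\ k}$, $S_i^{\ jkm}$ of (\ref{4.4.22}): for the purely horizontal part the $\delta_m H^i_{jk}$ terms, the $HH$ products from $\omega\wedge\omega$, and the $C^{ik}_j R$ cross-term must recombine into the $R$-tensor, and likewise for $P$ and $S$. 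This is a bookkeeping computation identical in spirit to the $TM$ case of Theorem~\ref{t5.3}, and once it checks out the equation $d\omega^i_j-\omega^m_j\wedge\omega^i_m=-\Omega^i_j$ holds with $\Omega^i_j$ as in (\ref{4.5.15}), which completes the proof.
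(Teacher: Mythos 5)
Your proposal follows essentially the same route as the paper: the paper's proof consists precisely of the tensoriality remark recorded before the theorem (legitimizing the three combinations as a $d$-vector, $d$-covector and $(1,1)$ $d$-tensor) followed by the "straightforward calculus" in the adapted cobasis that you carry out, mirroring Lemmas 1.5.1--1.5.2 and Theorem 1.5.3 on $TM$ exactly as intended. Your version is simply a more explicit execution of that same computation, with the sign bookkeeping correctly deferred to the conventions of (4.3.7) and (4.4.22).
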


\noindent{\bf Remarks.} \begin{itemize}
\item[$1^\circ$] The torsion and curvature $d-$tensor forms (\ref{4.5.14}) and (\ref{4.5.15}) are expressed in Section 4 of this chapter.
\item[$2^\circ$] The Bianchi identities can be obtained exterior differentiating the structure equations modulo the same equations.
\end{itemize}

\newpage

\chapter{Hamilton spaces}
\label{ch5p1} 

The notion of Hamilton space was defined and investigated by R. Miron in the papers \cite{mir4}, \cite{MHS}, \cite{Miron}. It was studied by D. Hrimiuc \cite{hrimiuc}, H. Shimada \cite{shimada} et al. It was applied by P.L. Antonelli, S. Vacaru, D. Bao et al. \cite{VS}. The geometry of these spaces is the geometry of a Hamiltonian system $(T^*M,\theta,H)$, where $H(x,p)$ is a fundamental function of an Hamilton space $H^n=(M,H(x,p))$. Consequently, we can apply the geometric theory of cotangent manifold $T^*M$ presented in the previous chapter, establishing the all fundamental geometric objects of the spaces $H^n$.

As we see, the geometry of Hamilton spaces can be studied as dual geometry, via Legendre transformation of the geometry of Lagrange spaces.

In this chapter we study the geometry of Hamilton spaces, combining these two methods. So, we defined the notion of Hamilton space $H^n$, the canonical non-linear connection $N$ of $H^n$, the metrical structure and the canonical metrical linear connection of $H^n$. The fundamental equations of $H^n$  are the Hamilton--Jacobi equations. The notion of parallelism with respect to a $N-$metrical connection and its consequences are studied. As applications we study the notion of Hamilton spaces of the electrodynamics. Also the almost K\"{a}hlerian model of the spaces $H^n$ is pointed out.

\section{Notion of Hamilton space}
\setcounter{equation}{0}
\setcounter{definition}{0}
\setcounter{theorem}{0}
\setcounter{lemma}{0}

Let $M$ be a real differential manifold of dimension $n$ and $(T^*M,\pi^*,M)$ its cotangent bundle. A differentiable Hamiltonian is a mapping $H:T^*M\to R$ differentiable on $\wt{T^*M}=T^*M\setminus\{0\}$ and continuous on the null section.

The Hessian of $H$, with respect to the momenta variable $p_i$ of differential Hamiltonian $H(x,p)$ has the components
\begin{equation}
\label{5.1.1}
g^{ij}=\dfrac12\dfrac{\pp^2 H}{\pp p_i\pp p_j}.
\end{equation}
Of course, the Latin indices $i,j,...$ run on the set $\{1,...,n\}$.

The set of functions $g^{ij}(x,p)$ determine a symmetric contravariant of order 2 tensor field on $\wt{TM}^{*}$. $H(x,p)$ is called regular if
\begin{equation}
\label{5.1.2}
\det(g^{ij}(x,p))\neq 0\ {\rm{\ on\ }}\wt{T^*M}.
\end{equation}

Now, we can give
\begin{definition}
\label{d5.1}
A Hamilton space is a pair $H^n=(M,H(x,p))$ where $M$ is a smooth real, $n-$dimensional manifold, and $H$ is a function on $T^*M$ having the properties
\begin{itemize}
\item[$1^\circ$] $H:(x,p)\in T^*M\to H(x,p)\in R$ is a differentiable function on $\wt{T^*M}$ and it is continuous on the null section of the natural projection $\pi^* T^*M\to M$.
\item[$2^\circ$] The Hessian of $H$ with respect to momenta $p_i$ given by $\|g^{ij}(x,p)\|$, (\ref{5.1.1}) is nondegenerate i.e. (\ref{5.1.2}) is verified on $\wt{T^*M}$.
\item[$3^\circ$] The $d-$tensor field $g^{ij}(x,p)$ has constant signature on the manifold $\wt{T^*M}$.
\end{itemize}
\end{definition}

One can say that the Hamilton space $H^n=(M,H(x,p))$ has as fundamental function a differentiable regular Hamiltonian for which its {\it fundamental} or {\it metric} tensor $g^{ij}(x,p)$ is nonsingular and has constant signature.

\noindent{\bf Examples}
\begin{itemize}
\item[$1^\circ$] If ${\cal R}^n=(M,g_{ij}(x))$ is a Riemannian space then $H^n=(M,H(x,p))$ with
\begin{equation}
\label{5.1.3}
H(x,p)=\dfrac{1}{mc}g^{ij}(x)p_i p_j
\end{equation}
is an Hamilton space.
\item[$2^\circ$] The Hamilton space of electrodynamics is defined by the fundamental function
\begin{equation}
\label{5.1.4}
H(x,p)=\dfrac{1}{mc}g^{ij}(x)p_i p_j-\dfrac{2e}{mc^2}A^i(x)p_i-\dfrac{e^2}{mc^2}A^i(x)A_i(x),
\end{equation}
where $m,c,e$ are the known physical constants, $g_{ij}(x)$ is a pseudo-Riemannian tensor and $A_i(x)$ are a $d-$covector (electromagnetic potentials) and $A^i(x)=g^{ij}A_j$.
\end{itemize}

\begin{remark}
The kinetic energy for a Riemannian metric ${\cal R}^n=(M,g_{ij}(x))$ is given by $$T(x,\dot{x})=\dfrac12 g_{ij}(x)\dot{x}^i\dot{x}^j$$ and for the Hamilton space $H^n\hspace*{-0.5mm}=\hspace*{-0.5mm}(M,H(x,p))$ with $H(x,p)\hspace*{-0.5mm}=\hspace*{-0.5mm}g^{ij}(x)p_i p_j$ it is $$T^*(x,p)=\dfrac12 g^{ij}p_i p_j.$$
\end{remark}

Therefore, generally, for an Hamilton space $H^n=(M,H(x,p))$  is convenient to introduce the energy given by the Hamiltonian
\begin{equation}
\label{5.1.5}
{\cal H}(x,p)=\dfrac12 H(x,p).
\end{equation}
One obtain:
\begin{equation}
\label{5.1.6}
\dot{\pp}^i\dot{\pp}^j{\cal H}=g^{ij}(x,p).
\end{equation}

Consider the Hamiltonian system $(T^*M,\theta,{\cal H})$, where $\theta$ is the natural symplectic structure on $T^*M,\theta=dp_i\wedge dx^i$ (Ch. 4). The isomorphism $S_\theta$, defined by (4.1.7) can be used and Theorem 4.2 can be applied.

One obtain
\begin{theorem}
For any Hamilton space $H^n=(M,H(x,p))$ the following properties hold:
\begin{itemize}
\item[$1^\circ$] There exists a unique vector field $X_H\in{\cal X}(T^*M)$ for which
\begin{equation}
\label{5.1.7}
i_H\theta=-d{\cal H}.
\end{equation}
\item[$2^\circ$] $X_H$ is expressed by
\begin{equation}
\label{5.1.8}
X_H=\dfrac{\pp{\cal H}}{\pp p_i}\dfrac{\pp}{\pp x^i}-\dfrac{\pp{\cal H}}{\pp x^i}\dfrac{\pp}{\pp p_i}.
\end{equation}
\item[$3^\circ$] The integral curves of $X_H$ are given by the Hamilton--Jacobi equations
\begin{equation}
\label{5.1.9}
\dfrac{dx^i}{dt}=\dfrac{\pp{\cal H}}{\pp p_i};\ \dfrac{dp_i}{dt}=-\dfrac{\pp{\cal H}}{\pp x^i}.
\end{equation}
\end{itemize}
\end{theorem}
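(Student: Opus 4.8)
The plan is to recognize this statement as a direct specialization of Theorem \ref{t4.1.2} of the previous chapter to the Hamiltonian system $(T^*M,\theta,{\cal H})$, where ${\cal H}=\frac12 H$ and $\theta=dp_i\wedge dx^i$ is the canonical symplectic structure. First I would note that, since the fundamental function $H$ of the Hamilton space is a differentiable Hamiltonian in the sense of Definition \ref{d5.1}, the half-energy ${\cal H}=\frac12 H$ is again of class $C^\infty$ on $\widetilde{T^*M}$ and continuous on the null section; hence $(T^*M,\theta,{\cal H})$ is a genuine Hamiltonian system and the whole apparatus of the cotangent-bundle chapter applies to it without change.

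For part $1^\circ$, I would invoke Proposition \ref{p4.1.1}, by which the ${\cal F}(T^*M)$-linear mapping $S_\theta(X)=i_X\theta$ is an isomorphism of $\chi(T^*M)$ onto $\chi^*(T^*M)$. Setting $X_H=S_\theta^{-1}(-d{\cal H})$ produces a vector field obeying $i_{X_H}\theta=-d{\cal H}$, and the injectivity of $S_\theta$ guarantees that it is the only such field. This disposes of both existence and uniqueness in (\ref{5.1.7}).

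For part $2^\circ$, I would determine the coefficients of $X_H$ in the natural frame. Writing $X_H=X^i\dfrac{\pp}{\pp x^i}+X_i\dfrac{\pp}{\pp p_i}$ and applying $S_\theta$ through the frame images $S_\theta\!\left(\dfrac{\pp}{\pp x^i}\right)=-dp_i$ and $S_\theta\!\left(\dfrac{\pp}{\pp p_i}\right)=dx^i$ from Proposition \ref{p4.1.1}, one obtains $i_{X_H}\theta=X_i\,dx^i-X^i\,dp_i$. Matching this against $-d{\cal H}=-\dfrac{\pp{\cal H}}{\pp x^i}\,dx^i-\dfrac{\pp{\cal H}}{\pp p_i}\,dp_i$ and equating the coefficients of $dx^i$ and $dp_i$ yields $X^i=\dfrac{\pp{\cal H}}{\pp p_i}$ and $X_i=-\dfrac{\pp{\cal H}}{\pp x^i}$, which is precisely the expression (\ref{5.1.8}).

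For part $3^\circ$, the integral curves $t\mapsto(x^i(t),p_i(t))$ of the field found in $2^\circ$ satisfy $\dfrac{dx^i}{dt}=X^i$ and $\dfrac{dp_i}{dt}=X_i$, and substituting the coefficients just computed gives the Hamilton--Jacobi equations (\ref{5.1.9}). There is essentially no obstacle in the argument: every step is a transcription of the cotangent-bundle theory with ${\cal H}$ in place of a generic Hamiltonian. The only point worth flagging is that the solvability of $S_\theta^{-1}$ rests solely on the nondegeneracy of the canonical form $\theta$, which holds on every $T^*M$; in particular the regularity hypothesis on $g^{ij}$ from Definition \ref{d5.1} is not actually needed for this theorem, even though it is indispensable for the later metrical geometry of $H^n$.
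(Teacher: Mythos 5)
Your proposal is correct and follows exactly the paper's own route: the paper likewise reduces the statement to Theorem 4.1.2 for the Hamiltonian system $(T^*M,\theta,{\cal H})$, obtaining existence and uniqueness from the isomorphism $S_\theta$ via $X_H=S_\theta^{-1}(-d{\cal H})$, then reading off the local expression and the Hamilton--Jacobi equations for the integral curves. Your closing observation that only the nondegeneracy of $\theta$ (not the regularity of $g^{ij}$) is used here is accurate and consistent with the paper's treatment.
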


Since $$\dfrac{d{\cal H}}{dt}=\{{\cal H},{\cal H}\}=0,$$  it follows $\dfrac{dH}{dt}=0.$ Then: {\it The fundamental function} $H(x,p)$ of a Hamilton space $H^n$ is constant along the integral curves of the Hamilton - Jacobi equations (\ref{5.1.9}). The Jacobi method of integration of (\ref{5.1.9}) expound in chapter 4 is applicable and the variational problem, in \S4.2, ch. 4, can be formulated again in the case of Hamiltonian systems $(T^*M,\theta,{\cal H})$.

\section{Nonlinear connection of a Hamilton space}\index{Connections! Nonlinear}
\setcounter{equation}{0}
\setcounter{definition}{0}
\setcounter{theorem}{0}
\setcounter{lemma}{0}

Let us consider a Hamilton space $H^n\subset(M,H(x,p))$. The theory of nonlinear connection on the manifold $T^*M$, in ch. 4, can be applied in the case of spaces $H^n$. We must determine a nonlinear connection $N$ which depend only by the Hamilton space $H^n$, i.e. $N$ must be canonical related to $H^n$, as in the case of canonical nonlinear connection from the Lagrange spaces. To do this, direct method is given by using the Legendre transformation, suggested by Mechanics. We will present, in the end of this chapter, this method.

Now, following a result of R. Miron we enounce without demonstration (which can be found in ch. 2, \S2.3) the following result:

\begin{theorem}
\begin{itemize}
\item[$1^\circ$] The following set of functions
\begin{equation}
\label{5.2.1}
N_{ij}=\dfrac14\{g_{ij},H\}-\dfrac14\left(g_{ik}\dfrac{\pp^2 H}{\pp p_k\pp x^j}+g_{jk}\dfrac{\pp^2 H}{\pp p_k\pp x^i}\right)
\end{equation}
are the coefficients of a nonlinear connection of the Hamilton space $H^n=(M,H(x,p))$.
\item[$2^\circ$] The nonlinear connection $N$ with coefficients $N_{ij}$, (\ref{5.2.1}) depends only on the fundamental function $H(x,p)$.
\end{itemize}
\end{theorem}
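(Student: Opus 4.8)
I would dispose of the easy half $2^\circ$ first: every ingredient entering (\ref{5.2.1}) is manufactured canonically from $H$. Indeed $g^{ij}=\tfrac12\dot{\pp}^i\dot{\pp}^j H$ by (\ref{5.1.1}), its inverse $g_{ij}$ is thereby determined, the bracket $\{\,,\,\}$ is the intrinsic canonical Poisson structure (\ref{4.1.6}) on $T^*M$, and $\dfrac{\pp^2 H}{\pp p_k\pp x^j}$ involves only $H$. No nonlinear connection, no metric on $M$, and no other auxiliary datum is used, so $N_{ij}$ is attached to $H$ alone. Moreover $N_{ij}$ is manifestly symmetric, $N_{ij}=N_{ji}$, whence the resulting connection will automatically be symmetric ($\tau_{ij}=0$ in (\ref{4.3.4})).

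For $1^\circ$ the plan is a direct verification of the transformation law. First I would record what that law must be. Writing the adapted frame as in (\ref{4.3.2}), $\delta_i=\pp_i+N_{ij}\dot{\pp}^j$, and imposing that it span a global distribution, i.e. $\widetilde\delta_i=\dfrac{\pp x^j}{\pp\widetilde x^i}\delta_j$ as in (\ref{4.3.14}), then substituting the frame changes (\ref{4.1.2}) together with $\dot{\pp}^j=\dfrac{\pp x^j}{\pp\widetilde x^m}\widetilde{\dot{\pp}}^m$ and matching the vertical components, a short computation yields the required rule
\[
\widetilde N_{ij}=\dfrac{\pp x^a}{\pp\widetilde x^i}\dfrac{\pp x^b}{\pp\widetilde x^j}N_{ab}+p_c\,\dfrac{\pp^2 x^c}{\pp\widetilde x^i\pp\widetilde x^j}.
\]
As a sanity check, the special case $N_{ij}=p_m\gamma^m_{ij}$ of the end of Section 4.4 satisfies this exactly when $\gamma^m_{ij}$ obeys the affine-connection law. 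Thus the whole problem reduces to showing that the functions (\ref{5.2.1}) obey this inhomogeneous law.

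The second step assembles the elementary transformation behaviours. Since $H$ is a scalar, $\widetilde H=H$; hence $g^{ij}$ is a contravariant $2$-tensor and $g_{ij}$ a covariant one, the quantity $\dfrac{\pp H}{\pp p_k}=\dfrac{\pp x^k}{\pp\widetilde x^m}\dfrac{\pp\widetilde H}{\pp\widetilde p_m}$ transforms as a vector, while the horizontal derivative carries an anomaly,
\[
\dfrac{\pp H}{\pp x^k}=\dfrac{\pp\widetilde x^m}{\pp x^k}\dfrac{\pp\widetilde H}{\pp\widetilde x^m}+\dfrac{\pp\widetilde p_m}{\pp x^k}\dfrac{\pp\widetilde H}{\pp\widetilde p_m},\qquad \dfrac{\pp\widetilde p_m}{\pp x^k}=p_c\,\dfrac{\pp^2 x^c}{\pp\widetilde x^m\pp\widetilde x^s}\dfrac{\pp\widetilde x^s}{\pp x^k}.
\]
Likewise $\dot{\pp}^k g_{ij}$ is tensorial (the vertical derivative meets no $p$-dependence in the Jacobians), whereas $\pp_k g_{ij}$ produces both Christoffel-type terms in $\dfrac{\pp^2\widetilde x}{\pp x\pp x}$ and a $p$-anomaly through $\dfrac{\pp\widetilde p_m}{\pp x^k}$.

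The final step, which I expect to be the genuine obstacle, is the bookkeeping of these non-tensorial (second-derivative) terms. Because $g_{ij}$ is a tensor and not a scalar, $\{g_{ij},H\}$ is not a tensor: by the Leibniz rule $\{g_{ij},H\}=J^a_iJ^b_j\{\widetilde g_{ab},\widetilde H\}+\widetilde g_{ab}\{J^a_iJ^b_j,\widetilde H\}$ with $J^a_i=\dfrac{\pp\widetilde x^a}{\pp x^i}$, and the second summand is an anomaly proportional to $\dfrac{\pp H}{\pp p_m}\cdot\dfrac{\pp^2\widetilde x}{\pp x\pp x}$. The two symmetrized Hessian terms $g_{ik}\dfrac{\pp^2 H}{\pp p_k\pp x^j}$ and $g_{jk}\dfrac{\pp^2 H}{\pp p_k\pp x^i}$ carry anomalies of the same structure, arising from horizontally differentiating the vector $\dfrac{\pp H}{\pp p_k}$. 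I expect the delicate part to be checking that, with the prefactor $\tfrac14$ and the symmetrization built into (\ref{5.2.1}), the tensorial remainders reassemble exactly into $\dfrac{\pp x^a}{\pp\widetilde x^i}\dfrac{\pp x^b}{\pp\widetilde x^j}N_{ab}$ while all anomalous contributions collapse to the single term $p_c\,\dfrac{\pp^2 x^c}{\pp\widetilde x^i\pp\widetilde x^j}$ demanded by the connection law; carrying this cancellation through proves $1^\circ$. As an alternative, conceptual route I would note that the same $N_{ij}$ is obtained by transporting, through the Legendre map, the canonical nonlinear connection of the associated Lagrange space (Theorem 2.3.2), whose well-definedness is already established; this is precisely the derivation deferred to the end of the chapter.
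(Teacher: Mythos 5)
You are correct, and your route coincides with the paper's own: the paper proves $1^\circ$ precisely by the direct check that under the coordinate change (4.1.1) the functions (5.2.1) obey the inhomogeneous transformation rule of nonlinear-connection coefficients on $T^*M$ --- exactly the law $\widetilde N_{ij}=\dfrac{\partial x^a}{\partial\widetilde x^i}\dfrac{\partial x^b}{\partial\widetilde x^j}N_{ab}+p_c\,\dfrac{\partial^2 x^c}{\partial\widetilde x^i\partial\widetilde x^j}$ that you derive from the frame condition $\widetilde\delta_i=\dfrac{\partial x^j}{\partial\widetilde x^i}\delta_j$ --- declaring it ``a straightforward computation,'' and it obtains $2^\circ$ from the same observation you make, namely that every ingredient of (5.2.1) is manufactured from $H$ alone. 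Your explicit derivation of the required law, the Poisson-bracket and Hessian anomaly bookkeeping, and the Legendre-duality alternative are all faithful elaborations of the text, which likewise leaves the cancellation unexecuted and defers the Legendre route to the duality discussion.
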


The brackets $\{,\}$ from (\ref{5.2.1}) are the Poisson brackets $(,)$, \S5.1.

Indeed, by a straightforward computation, it follows that, under a coordinate change $(\ )$, $N_{ij}(x,p)$ from (\ref{5.2.1}) obeys the rule of transformation of the coefficients of a nonlinear connection $N$. Evidently $N_{ij}$ depend on the fundamental function $H(x,p)$ only. $N-$ will be called the canonical nonlinear connection of the Hamilton space $H^n$.

\begin{remark}
If the fundamental function $H(x,p)$ of $H^n$ is globally defined, on $T^*M$ then the horizontal distribution determined by the canonical nonlinear connection $N$ has the same property.
\end{remark}

It is not difficult to prove:

\begin{proposition}
The canonical nonlinear connection $N$ has the properties
\begin{equation}
\label{5.2.2}
\tau_{ij}=\dfrac12(N_{ij}-N_{ji})=0 \ \ ({\rm{it\ is\ symmetric}})
\end{equation}
\begin{equation}
\label{5.2.3}
R_{ijk}+R_{jki}+R_{kij}=0
\end{equation}
\end{proposition}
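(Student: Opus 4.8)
The plan is to treat the two assertions in turn, establishing the symmetry first, since the cyclic identity for the curvature will rest on it. For $\tau_{ij}=\dfrac12(N_{ij}-N_{ji})=0$ I would argue directly from the defining formula (\ref{5.2.1}). Both pieces of $N_{ij}$ are manifestly invariant under the interchange $i\leftrightarrow j$: the fundamental tensor $g_{ij}$ is symmetric, being the inverse of the symmetric Hessian $g^{ij}$ of (\ref{5.1.1}), so the Poisson-bracket term $\dfrac14\{g_{ij},H\}$ is unchanged; and the remaining expression $g_{ik}\dfrac{\pp^2 H}{\pp p_k\pp x^j}+g_{jk}\dfrac{\pp^2 H}{\pp p_k\pp x^i}$ is a sum of two terms that are merely exchanged when $i$ and $j$ are swapped. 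Hence $N_{ij}=N_{ji}$ and $\tau_{ij}=0$; this is a matter of inspection and offers no difficulty.

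For the cyclic identity I would start from the curvature formula (\ref{4.3.7}), $R_{ijk}=\dfrac{\delta N_{ji}}{\delta x^k}-\dfrac{\delta N_{ki}}{\delta x^j}$, where $\dfrac{\delta}{\delta x^\ell}$ is the fixed horizontal operator (\ref{4.3.2}) attached to $N$. Writing out the three cyclically permuted contributions $R_{ijk}$, $R_{jki}$, $R_{kij}$ and adding them, I would regroup the six resulting terms according to which of the operators $\dfrac{\delta}{\delta x^i}$, $\dfrac{\delta}{\delta x^j}$, $\dfrac{\delta}{\delta x^k}$ performs the differentiation. Each operator gathers exactly two terms whose combination is of the form $\dfrac{\delta}{\delta x^\ell}(N_{ab}-N_{ba})$; by the symmetry $N_{ab}=N_{ba}$ from the first step each such combination vanishes, and therefore the whole cyclic sum is zero.

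The computation is short and essentially mechanical, so I do not expect a genuine obstacle; the only points requiring attention are the index bookkeeping in the cyclic sum and the discipline of securing the symmetry of $N$ before invoking it. I would note, finally, that the argument in fact shows the cyclic identity to hold for \emph{any} symmetric nonlinear connection on $T^*M$: the specific canonical form (\ref{5.2.1}) enters only through the symmetry assertion, which supplies exactly the hypothesis the second part needs.
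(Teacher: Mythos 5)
Your proof is correct, and it supplies exactly the computation the paper omits: the text introduces this proposition only with ``It is not difficult to prove'' and gives no argument, and your two steps --- symmetry of $N_{ij}$ by inspection of (\ref{5.2.1}) (the bracket term symmetric since $g_{ij}$ is, the second term explicitly symmetrized), then regrouping the six terms of the cyclic sum of $R_{ijk}=\dfrac{\delta N_{ji}}{\delta x^k}-\dfrac{\delta N_{ki}}{\delta x^j}$ so that each operator $\dfrac{\delta}{\delta x^\ell}$ acts on a difference $N_{ab}-N_{ba}$ that vanishes identically --- are precisely the intended verification. Your closing observation that (\ref{5.2.3}) holds for \emph{every} symmetric nonlinear connection on $T^*M$, the canonical coefficients entering only through (\ref{5.2.2}), is accurate and consistent with the paper's parallel statement for Cartan spaces.
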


Taking into account that $N$ is a horizontal distribution, we have the known direct decomposition:
\begin{equation}
\label{5.2.4}
T_u\wt{T^*M}=N_u\oplus V_u,\ \forall u\in\wt{T^*M},
\end{equation}
it follows that $(\de_i,\dot{\pp}^i)$ is an adapted basis to the previous splitting, where
\begin{equation}
\label{5.2.5}
\de_i={\pp}_i-N_{ij}\dot{\pp}^j
\end{equation}
and the dual basis $(dx^i,\de p_i)$ is:
\begin{equation}
\label{5.2.5'}
\de p_i=dp_i-N_{ij}dx^j.\tag{5.2.5'}
\end{equation}
Therefore we apply the theory to investigate the notion of metric $N-$linear connection determined only by the Hamilton space $H^n$.

\section{The canonical metrical connection of Hamilton space $H^n$}
\setcounter{equation}{0}
\setcounter{definition}{0}
\setcounter{theorem}{0}
\setcounter{lemma}{0}

Let us consider the $N-$linear connection $D\Gamma(N)=(H^i_{jk},C^{jk}_i)$ for which $N$ is the canonical nonlinear connection. By means of theory from chapter 4 we can prove:
\begin{theorem}
\begin{itemize}
In a Hamilton space $H^n=(M,H(x,p))$ there exists a unique $N-$linear connection $D\Gamma(N)=(H^i_{jk},C^{jk}_i)$ verifying the axioms:
\item[$1^\circ$] $N$ is the canonical nonlinear connection.
\item[$2^\circ$] The fundamental tensor $g^{ij}$ is $h-$covariant constant
\begin{equation}
\label{5.3.1}
g^{ij}_{|k}=\de_k g^{ij}+g^{sj}H^i_{sk}+g^{is}H^j_{sk}=0.
\end{equation}
\item[$3^\circ$] The fundamental tensor $g^{ij}$ is $v-$covariant constant
\begin{equation}
\label{5.3.2}
g^{ij}|^k=\dot{\pp}^k C^{ij}+g^{sj}C^{ik}_s+g^{is}C^{jk}_s=0.
\end{equation}
\item[$4$] $D\Gamma(N)$ is $h-$torsion free:
\begin{equation}
\label{5.3.3}
T^{i}_{jk}=H^i_{jk}-H^i_{kj}=0
\end{equation}
\item[$5^\circ$] $D\Gamma(N)$ is $v-$torsion free:
$$S_i^{jk}=C_i^{jk}-C_i^{kj}=0.$$
\end{itemize}
2) The $N-$connectiom $D\Gamma(N)$ which verify the previous axioms has the coefficients $H^i_{jk}$ and $C_i^{jk}$ given by the coefficients $H^i_{jk}$ and $C_i^{jk}$ given by the following generalized Christoffel symbols:
\begin{equation}
\label{5.3.4}
\begin{array}{l}
H^{i}_{jk}=\dfrac12 g^{is}(\de_j g_{sk}+\de_k g_{js}-\de_s g_{jk}),\\ \\
C^{jk}_i=-\dfrac12 g_{is}(\dot{\pp}^j g^{sk}+\dot{\pp}^k g^{js}-\pp^s g^{jk}).
\end{array}
\end{equation}
\end{theorem}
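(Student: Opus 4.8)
The plan is to transcribe, into the dual (cotangent) setting, the classical Christoffel argument that establishes existence and uniqueness of the Levi--Civita connection. By axiom $1^\circ$ the nonlinear connection $N$ is fixed: it is the canonical one with coefficients (5.2.1), so the adapted frame $(\de_i,\dot{\pp}^i)$, the operators $\de_k,\dot{\pp}^k$, and the splitting (5.2.4) are all already determined. Hence only the horizontal coefficients $H^i_{jk}$ and the vertical coefficients $C^{jk}_i$ remain to be found, and the theorem reduces to solving the metricity conditions $2^\circ$, $3^\circ$ subject to the torsion restrictions $4^\circ$, $5^\circ$. A preliminary remark I would record is that, since $g_{is}g^{sj}=\de^j_i$ and the Kronecker $d$-tensor is both $h$- and $v$-parallel, the Leibniz rule gives $g^{ij}_{|k}=0\Leftrightarrow g_{ij|k}=0$ and $g^{ij}|^k=0\Leftrightarrow g_{ij}|^k=0$; this lets me run the horizontal computation on the covariant tensor $g_{ij}$.

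For the horizontal coefficients I would expand $g_{ij|k}=0$ by the covariant rule (4.4.16'), namely
$$\de_k g_{ij}=g_{mj}H^m_{ik}+g_{im}H^m_{jk},$$
and invoke $4^\circ$, i.e. $H^m_{ik}=H^m_{ki}$. Writing this identity for $\de_j g_{sk}$ and $\de_k g_{js}$, subtracting the one for $\de_s g_{jk}$, and cancelling the pairs symmetric in the lower indices of $H$, one is left with
$$\de_j g_{sk}+\de_k g_{js}-\de_s g_{jk}=2g_{sm}H^m_{jk}.$$
Contracting with $\tfrac12 g^{is}$ produces the first line of (5.3.4). This forces the coefficients, so it yields uniqueness of the horizontal part and supplies the candidate at once.

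The vertical part is entirely parallel, with the horizontal operator $\de_k$ replaced by the momentum derivative $\dot{\pp}^k$ and the indices raised. Expanding $g^{ij}|^k=0$ through (4.4.17') gives
$$\dot{\pp}^k g^{ij}=-g^{mj}C^{ik}_m-g^{im}C^{jk}_m,$$
while $5^\circ$ supplies the symmetry $C^{jk}_i=C^{kj}_i$. Forming $\dot{\pp}^j g^{sk}+\dot{\pp}^k g^{js}-\dot{\pp}^s g^{jk}$ and cancelling the terms symmetric in the upper indices of $C$ leaves $-2g^{sm}C^{jk}_m$, so contracting with $-\tfrac12 g_{is}$ delivers the second line of (5.3.4) and the uniqueness of the vertical part.

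It remains to settle existence: one must check that the two expressions in (5.3.4) genuinely are the coefficients of an $N$-linear connection and that they satisfy all five axioms. By Theorem 4.4.1 this means verifying that $H^i_{jk}$ obeys the inhomogeneous law (4.4.14) and that $C^{jk}_i$ is a $d$-tensor of type $(2,1)$; afterwards, substituting (5.3.4) back into $2^\circ$--$5^\circ$ confirms the axioms by the very computations above read in reverse, while $1^\circ$ holds by construction. Moreover the resulting connection depends only on the fundamental function $H$, since every ingredient is built from $g^{ij}=\tfrac12\dot{\pp}^i\dot{\pp}^j H$ and from $N$, both of which $H$ determines. The vertical check is immediate, as $C^{jk}_i$ is assembled from $\dot{\pp}$-derivatives of the $d$-tensor $g^{ij}$, an operation preserving the $d$-tensor character. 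The horizontal transformation law is the only real obstacle: I would feed the rule (4.3.14) for $\de_i$ and the $d$-tensor rule for $g_{ij}$ into the Christoffel combination and show that the second-order derivative arising from differentiating the Jacobian reproduces precisely the term $-\pp^2\wt{x}^i/(\pp x^j\pp x^k)$ of (4.4.14) --- the same computation as for the base-manifold Christoffel symbols, and the point at which the symmetry (5.2.2) of the canonical nonlinear connection is needed.
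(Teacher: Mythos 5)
Your proof is correct and takes essentially the same route the paper intends: the paper omits the argument (deferring to the Chapter 4 machinery), and the standard derivation there is exactly your Christoffel cycling --- reduce the $g^{ij}$-metricity axioms to the covariant tensor via the parallel Kronecker $d$-tensor, cycle the horizontal and vertical metricity identities under the torsion-free symmetries $4^\circ$, $5^\circ$ to force the generalized Christoffel symbols (5.3.4) (which gives uniqueness), and then check the transformation laws of Theorem 4.4.1 for existence. One peripheral remark: the symmetry (5.2.2) of the canonical nonlinear connection enters not in the Jacobian computation for the horizontal law (which never uses it) but as the standing hypothesis of Definition 4.4.1 under which the notion of $N$-linear connection is defined at all.
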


Clearly $C\Gamma(N)$ with the coefficients (\ref{5.3.4}) is determined only by means of the Hamilton space $H^n$. It is called canonical metrical $N-$linear connection.

Now, applying theorems from ch. 4 we have:
\begin{theorem}
With respect to $C\Gamma(N)$ we have the Ricci identities:
\begin{equation}
\label{5.3.5}
\begin{array}{l}
X^i_{|j|k}-X^i_{|k|j}=X^m R_{m\ jk}^{\ i}-X^i|^m R_{mjk},\\ \\
X^i_{|j}|^k-X^i|^k_{\ |j}=X^m P_{m\ j}^{\ i\ k}-X^i_{|m}C^{\ mk}_j-X^i|^m P^{\ k}_{\ mj}\\ \\
X^i|^j|^k-X^i|^k|^j=X^m S^{\ ijk}_m,
\end{array}
\end{equation}
where the $d-$tensors of torsion $R_{ijk},P^{i}_{\ jk}$ and $d$ tensors of curvature $R^{\ i}_{j\ kh}$, $P^{\ j\ k}_{i\ h}$, $S_i^{jkh}$ are expressed in chapter 4.
\end{theorem}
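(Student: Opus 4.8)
The plan is to obtain the three identities as an immediate specialization of the general Ricci identities established in Chapter~4 for an arbitrary $N$-linear connection on the cotangent manifold. Recall that the Ricci identities were proved there in the invariant form (\ref{4.4.10}), $[D_X,D_Y]Z=\R(X,Y)Z-D_{[X,Y]}Z$, and that evaluating them in the adapted basis $(\de_i,\dot{\pp}^i)$ yields the coordinate version (\ref{4.4.21}). Those identities hold for \emph{any} $N$-linear connection $D\Gamma(N)=(H^i_{jk},C^{jk}_i)$ and carry both the torsion $d$-tensors $T^i_{jk}$, $S_i^{jk}$, $R_{mjk}$, $C^{mk}_i$ and the curvature $d$-tensors $R^{\ i}_{m\ jk}$, $P^{\ i\ k}_{m\ j}$, $S_m^{\ ijk}$ computed in Section~4.4.

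First I would observe that, by the theorem of the preceding section, the canonical metrical connection $C\Gamma(N)$ is a genuine $N$-linear connection, with $N$ the canonical nonlinear connection of $H^n$ and coefficients (\ref{5.3.4}). Hence the whole Chapter~4 apparatus applies to it verbatim, and in particular the three lines of (\ref{4.4.21}) are valid for $C\Gamma(N)$ without any modification.

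The remaining step is purely algebraic: I insert the two vanishings forced by the defining axioms of $C\Gamma(N)$, namely the $h$-torsion freeness $T^i_{jk}=H^i_{jk}-H^i_{kj}=0$ and the $v$-torsion freeness $S_i^{jk}=C_i^{jk}-C_i^{kj}=0$. Substituting $T^m_{jk}=0$ deletes the term $-X^i_{|m}T^m_{jk}$ from the first identity, leaving the first line of (\ref{5.3.5}); the second identity contains neither the torsion $T$ nor the torsion $S$, so it is reproduced unchanged; and substituting the torsion $S_m^{\ jk}=0$ deletes $-X^i|^m S_m^{\ jk}$ from the third identity, giving its stated form. The curvature $d$-tensors remaining on the right-hand sides are exactly those of Section~4.4.

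There is essentially no obstacle here: the entire content lies in the general theory of Chapter~4, and the proof reduces to recognizing $C\Gamma(N)$ as an $N$-linear connection and discarding the two torsion terms that its metricity and symmetry axioms annihilate. If a self-contained derivation were wanted, the only mildly laborious part would be re-deriving (\ref{4.4.21}) by commuting the $h$- and $v$-covariant derivatives $D^H$, $D^V$ and collecting coefficients against $\de_i$ and $\dot{\pp}^i$ --- but this is precisely the computation already carried out in Section~4.4 and may simply be quoted.
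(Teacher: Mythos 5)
Your proposal is correct and coincides with the paper's own (very terse) argument: the text simply says ``applying theorems from ch.~4 we have'' the identities (5.3.5), i.e.\ it specializes the general Ricci identities (4.4.21) to the canonical metrical connection $C\Gamma(N)$ exactly as you do. Your substitution of the axioms $T^i_{jk}=0$ and $S_i^{\ jk}=0$ to delete the two torsion terms, leaving the middle identity untouched, is precisely the intended reduction.
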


The structure equations, parallelism, autoparallel curves of the Hamilton spaces are studied exactly as in chapter 4.

\bigskip

\noindent{\bf Example.} The Hamilton space of electrodynamics $H^n=(M,H(x,p))$ where the fundamental function is expressed in (\ref{5.1.4}). The fundamental tensor is $\dfrac{1}{mc}g^{ij}(x)$. The canonical nonlinear connection has the coefficients
\begin{equation}
\label{5.3.6}
N_{ij}=\gamma^h_{ij}(x)p_h+\dfrac{e}{c}(A_{i|k}+A_{k|i})
\end{equation}
where $\gamma^h_{ij}(x)$ are the Christoffel symbols of the covariant tensor of metric tensor $g^{ij}(x)$. Evidently $A_{i|k}=\pp_k A_i-A_s\gamma^s_{ik}$.

Remarking that $\dfrac{\de g_{ij}}{\de x^k}=\dfrac{\pp g_{ij}}{\pp x^k}$ we deduce that: the coefficients of canonical metrical $N-$connection $H\Gamma(N)$ are: $$H^i_{jk}(x)=\gamma^i_{jk}(x),\ C^{jk}_i=0.$$ These geometrical object fields $H, g_{ij}$, $H^i_{jk}$, $C^{jk}_i$ allow to develop the geometry of the Hamilton spaces of electrodynamics.

\section{Generalized Hamilton Spaces $GH^n$}\index{Generalized spaces! Hamilton}

\setcounter{equation}{0}
\setcounter{definition}{0}
\setcounter{theorem}{0}
\setcounter{lemma}{0}

A straightforward generalization of the notion of Hamilton space is that of generalized Hamilton space.

\begin{definition}
 Ageneralized Hamilton space is a pair $GH^n=$ $(M,$ $g^{ij}(x,p))$ where $M$ is a smooth real $n-$dimensional manifold and $g^{ij}(x,p)$ is a $d-$tensor field on $\wt{T^*M}$ of type $(0,2)$ symmetric, nondegenerate and of constant signature.
\end{definition}

The tensor $g^{ij}(x,p)$ is called fundamental. If $M$ a paracompact on $T^*M$ there exist the tensors $g^{ij}(x,p)$ which determine a generalized Hamilton space.

From the definition 5.1.1 it follows that: Any Hamilton space $H^n=(M,H)$ is a generalized Hamilton space.

The contrary affirmation is not true. Indeed, if $\overset{\circ}g^{ij}(x)$ is a Riemannian tensor metric on $M$, then $$g^{ij}(x,p)=e^{-2\sigma(x,p)}\circg^{ij}(x),\ \sigma\in{\cal F}(T^*M)$$ determine a generalized Hamilton space and $g^{ij}(x,p)$ does not the fundamental tensor of an Hamilton space.

So, it is legitime the following definition:

\begin{definition}
A generalized Hamilton space $GH^n=(M,g^{ij}(x,p))$ is called reducible to a Hamilton space if there exists a Hamilton function $H(x,p)$ on $\wt{T^*M}$ such that
\begin{equation}
\label{5.4.2}
g^{ij}(x,p)=\dfrac12\dot{\pp}^i\dot{\pp}^jH.
\end{equation}
\end{definition}

Let us consider the Cartan tensor:
\begin{equation}
\label{5.4.3}
C^{ijk}=\dfrac12\dot{\pp}^i g^{jk}.
\end{equation}
In a similar manner with the case of generalized Lagrange space (ch. 2, \S2.8) we have:
\begin{proposition}
A necessary condition that a generalized Hamilton space $GH^n=(M,g^{ij}(x,p))$ be reducible to a Hamilton one is that the Cartan tensor $C^{ijk}(x,p)$ be totally symmetric.
\end{proposition}

There exists a particular case when the previous condition is sufficient, too.

\begin{theorem}
A generalized Hamilton space $GH^n=(M,g^{ij}(x,p))$ for which the fundamental tensor $g^{ij}(x,p)$ is 0-homogeneous is reducible to a Hamilton space, if and only if the Cartan tensor $C^{ijk}(x,p)$ is totally symmetric.
\end{theorem}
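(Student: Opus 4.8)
The statement is an equivalence, so the plan is to treat its two implications separately; the necessity direction is already in hand, and the real content lies in the sufficiency (converse) direction.

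For the \emph{only if} part, suppose $GH^n$ is reducible, so that by definition there is a Hamiltonian $H$ with $g^{ij}=\frac12\dot{\pp}^i\dot{\pp}^jH$, that is (\ref{5.4.2}). Substituting into the definition (\ref{5.4.3}) of the Cartan tensor gives $C^{ijk}=\frac12\dot{\pp}^ig^{jk}=\frac14\dot{\pp}^i\dot{\pp}^j\dot{\pp}^kH$, which is totally symmetric because the operators $\dot{\pp}^i=\pp/\pp p_i$ commute. This is precisely the necessary-condition Proposition stated just above, and it uses neither the homogeneity hypothesis nor regularity.

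For the \emph{if} part I would proceed constructively, in exact analogy with the Lagrangian reduction carried out in Section~2.8. Assuming that $C^{ijk}$ is totally symmetric and that $g^{ij}$ is $0$-homogeneous in $p$, I propose the candidate
\begin{equation*}
H(x,p)=g^{ij}(x,p)\,p_ip_j,
\end{equation*}
to which one may freely add any terms $A^i(x)p_i+U(x)$ of degree $\le 1$ in $p$ without changing the second $p$-derivatives. The decisive preliminary step is to convert $0$-homogeneity, through the Euler relation $p_k\dot{\pp}^kg^{ij}=0$, into $p_kC^{kij}=0$; total symmetry of $C^{ijk}$ then propagates this to the vanishing of the contraction of $C$ on \emph{any} index, $C^{ijk}p_i=C^{ijk}p_j=C^{ijk}p_k=0$. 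Using $\dot{\pp}^kg^{ij}=2C^{kij}$ and the symmetry of $g^{ij}$, I would then compute $\dot{\pp}^kH=2C^{kij}p_ip_j+2g^{kj}p_j=2g^{kj}p_j$, the quadratic term dropping out by the vanishing contraction, and differentiating once more $\dot{\pp}^l\dot{\pp}^kH=4C^{lkj}p_j+2g^{kl}=2g^{kl}$. Hence $\frac12\dot{\pp}^l\dot{\pp}^kH=g^{kl}$, so this $H$ satisfies (\ref{5.4.2}); since its Hessian is the nondegenerate, constant-signature tensor $g^{ij}$, $H$ is a regular Hamiltonian and $H^n=(M,H)$ is a genuine Hamilton space, proving reducibility.

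The crux — and the only place where care is needed — is the bookkeeping that forces every $C$-term above to vanish. Euler's theorem supplies only the single contraction $p_kC^{kij}=0$ on the first index, while total symmetry is what spreads this to all three slots; neither hypothesis alone suffices, for without homogeneity no contraction vanishes and without symmetry only the first-index one does. I would therefore display these cancellations explicitly, noting in passing that the same reasoning also yields $(\dot{\pp}^lC^{kij})p_ip_j=0$, which is the consistency required if one differentiates the unsimplified expression instead. The remaining verifications — smoothness of $H$ on $\widetilde{T^*M}$ and the irrelevance of additive terms linear in $p$ — are routine.
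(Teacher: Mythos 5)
Your proposal is correct and takes essentially the same route as the paper: the paper's entire proof is the assertion that ``in this case $H(x,p)=g^{ij}(x,p)p_ip_j$ is a solution of the equation (5.4.2)'', and you simply verify this candidate in full detail, correctly using Euler's relation $p_k\dot{\pp}^k g^{ij}=0$ together with total symmetry of $C^{ijk}$ to kill the contracted terms $C^{kij}p_ip_j$ and $C^{lkj}p_j$. The necessity direction is, as you note, exactly the Proposition stated just before the theorem, so nothing further is needed.
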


Indeed, in this case $H(x,p)=g^{ij}(x,p)p_i p_j$ is a solution of the equation (\ref{5.4.2}).

Let $g_{ij}(x,p)$ be the covariant of fundamental tensor $g^{ij}(x,p)$, then the following tensor field
\begin{equation}
\label{5.4.4}
C^{jk}_i=-\dfrac12 g_{is}(\dot{\pp}^{j}g^{sk}+\dot{\pp}^k g^{js}-\dot{\pp}^s g^{sk})
\end{equation}
determine the coefficients of a $v-$covariant derivative, which is metrical, i.e.
\begin{equation}
\label{5.4.5}
g^{ij}|^k=\dot{\pp}^k g^{ij}+C^{ik}_s g^{sj}+C^{jk}_s g^{is}=0.
\end{equation}
The proof is very simple.

We use this $v-$derivation in the theory of canonical metrical connection of the spaces $GH^n$.

In general, we cannot determine a nonlinear connection from the fundamental tensor $g^{ij}$ of $GH^n$. Therefore we study the geometry of spaces $GH^n$ when a nonlinear connection $N$ is a priori given. In this case we can apply the methods used in the construction of geometry of spaces $H^n$.

Finally we remark that the class of spaces $GH^n$ include the class of spaces $H^n$:
\begin{equation}
\label{5.4.6}
\{GH^n\}\supset\{H^n\}.
\end{equation}

\section{The almost K\"{a}hlerian model of a Hamilton space}\index{Almost Hermitian model}

\setcounter{equation}{0}
\setcounter{definition}{0}
\setcounter{theorem}{0}
\setcounter{lemma}{0}

Let $H^n=(M,H(x,p))$ be a Hamilton space and $g^{ij}(x,p)$ its fundamental tensor.

The canonical nonlinear connection $N$ has the coefficients (\ref{5.2.1}). The adapted basis to the distributions $N$ and $V$ is $\left(\dfrac{\de}{\de x_i}=\right.$ $\de_i=\pp_i+$ $N_{ij}\dot{\pp}^j,$ $\left.\dot{\pp}^i\right)$ and its dual basis $(dx^i,\de p_i=dp_i-N_{ji}dx^j)$.

Thus, the following tensor on the cotangent manifold $\wt{T^*M}=T^*M\setminus\{0\}$ can be considered:
\begin{equation}
\label{5.5.1}
\G=g_{ij}(x,p)dx^i\otimes dx^j+g^{ij}\de p_i\otimes \de p_j.
\end{equation}
$\G$ determine a pseudo-Riemannian structure on $\wt{T^*M}$. If the fundamental tensor $g^{ij}(x,p)$ is positive defined, then $\G$ is a Riemannian structure on $T^*M$.
$\G$ is called the $N-$lift of the fundamental tensor $g^{ij}$. Clearly, $\G$ is determined by Hamilton space $H^N$, only.

Some properties of $\G$:
\begin{itemize}
\item[$1^\circ$] $\G$ is uniquely determined by $g^{ij}$ and $N_{ij}$.
\item[$2^\circ$] The distributions $N$ and $V$ are orthogonal.
\end{itemize}

Taking into account the mapping $\check{\F}:{\cal X}(\wt{T^* M})\to{\cal X}(T^*M)$ defined in (4.3.10) or, equivalently by:
\begin{equation}
\label{5.5.2}
\check{\F}=-g_{ij}\dot{\pp}^i\otimes dx^j+g^{ij}\de_i\otimes \de p_j,
\end{equation}
one obtain:
\begin{theorem}
\begin{itemize}
\item[$1^\circ$] $\check{\F}$ is globally defined on the manifold $\wt{T^*M}$.
\item[$2^\circ$] $\check{\F}$ is an almost complex structure on $\wt{T^*M}$:
\begin{equation}
\label{5.5.3}
\check{\F}\cdot\check{\F}=-I.
\end{equation}
\item[$3^\circ$] $\check{\F}$ depends on the Hamilton spaces $H^n$ only.
\end{itemize}
\end{theorem}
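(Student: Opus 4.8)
The plan is to verify the three assertions directly in the adapted basis $(\de_i,\dot{\pp}^i)$, reducing everything to the algebraic relation between the fundamental tensor $g^{ij}$ and its inverse $g_{ij}$, together with the transformation laws for the canonical nonlinear connection already recorded in Chapter 4. The guiding observation is that $\check{\F}$, as written in (5.5.2), is of exactly the form treated in Theorem 4.3.1, so the work is largely a matter of checking that the data supplied by the Hamilton space $H^n$ satisfy that theorem's hypotheses.

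For the global definition (assertion $1^\circ$), I would first note that the canonical nonlinear connection $N$ of $H^n$ is symmetric, $\tau_{ij}=0$, as established by the proposition in Section 5.2; hence the general apparatus of Section 4.3 applies without change. Concretely, $\check{\F}=-g_{ij}\dot{\pp}^i\otimes dx^j+g^{ij}\de_i\otimes\de p_j$ is assembled from the $d$-tensor fields $g_{ij},g^{ij}$ contracted against the adapted frame $(\de_i,\dot{\pp}^i)$ and coframe $(dx^i,\de p_i)$. Under a change of coordinates (4.1.1) the frame transforms by (4.3.14) and the coframe by (4.3.13'), while $g^{ij}$, being the Hessian (5.1.1) of $H$, transforms as a $(2,0)$ $d$-tensor. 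Because each factor transforms tensorially, the contracted expression is coordinate independent; this is precisely Theorem 4.3.1, which I would invoke once the hypotheses are confirmed.

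For $\check{\F}\circ\check{\F}=-I$ (assertion $2^\circ$), I would read off from (5.5.2) the action on the adapted basis, namely $\check{\F}(\de_i)=-\dot{\pp}_i=-g_{ij}\dot{\pp}^j$ and $\check{\F}(\dot{\pp}^i)=g^{ij}\de_j$, and then compose:
$$\check{\F}^2(\de_i)=-g_{ij}\check{\F}(\dot{\pp}^j)=-g_{ij}g^{jk}\de_k=-\de_i,\quad \check{\F}^2(\dot{\pp}^i)=g^{ij}\check{\F}(\de_j)=-g^{ij}g_{jk}\dot{\pp}^k=-\dot{\pp}^i.$$
Since $(\de_i,\dot{\pp}^i)$ spans $T_u\wt{T^*M}$ at each point, this yields $\check{\F}\circ\check{\F}=-I$, i.e. (5.5.3). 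The only fact used is $g_{ij}g^{jk}=\de^k_i$. Finally, for assertion $3^\circ$, I would observe that $g^{ij}$ is determined by $H$ through (5.1.1), its inverse $g_{ij}$ is thereby fixed, and the coefficients $N_{ij}$ of the canonical nonlinear connection are given by (5.2.1) purely in terms of $H$ and $g^{ij}$; hence the adapted frame, the coframe, and therefore $\check{\F}$ depend on the Hamilton space $H^n$ alone.

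The steps are all routine, and the closest thing to an obstacle is the point buried in assertion $1^\circ$: one must confirm that the covariant tensor $g_{ij}$, defined as the matrix inverse of $g^{ij}$, transforms as a genuine $(0,2)$ $d$-tensor. This holds because matrix inversion intertwines with the tensorial law
$$\wt{g}^{ij}=\dfrac{\pp\wt{x}^i}{\pp x^h}\dfrac{\pp\wt{x}^j}{\pp x^k}g^{hk},$$
but since the entire invariance argument rests on it, I would make this verification explicit rather than leave it implicit.
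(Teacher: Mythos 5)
Your proposal is correct and follows essentially the same route as the paper, which obtains Theorem 5.5.1 by particularizing Theorem 4.3.1 of Chapter 4 to the canonical nonlinear connection (5.2.1) and the fundamental tensor (5.1.1) of $H^n$. Your explicit verification of $\check{\F}\circ\check{\F}=-I$ in the adapted basis and of the tensorial character of the inverse $g_{ij}$ merely spells out details the paper leaves implicit.
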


Finally, one obtain a particular form of the Theorem 4.3.2:
\begin{theorem}
The following properties hold:
\begin{itemize}
\item[$1^\circ$] The pair $(\G,\check{\F})$ is an almost Hermitian structure on the manifold $\wt{T^*M}$.
\item[$2^\circ$] The structure $(\G,\F)$ is determined only by the fundamental function $H(x,p)$ of the Hamilton space $H^n$.
\item[$3^\circ$] The associated almost symplectic structure to $(\G,\check{\F})$ is the canonical symplectic structure $\theta=dp_i\wedge dx^i=\de p_i\wedge dx^i$.
\item[$4^\circ$] The space $({\wt{T^*M}},\G,\check{\F})$ is almost K\"{a}hlerian.
\end{itemize}
\end{theorem}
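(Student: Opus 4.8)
The plan is to recognize the statement as the specialization of the general Chapter~4 construction (Theorem 4.3.1 and Theorem 4.3.2) to the \emph{canonical} data attached to $H^n$, and then to supply the one genuinely new ingredient, namely the closedness of the fundamental $2$-form that upgrades \emph{almost Hermitian} to \emph{almost K\"ahlerian}. First I would record the two facts established in \S5.2: the canonical nonlinear connection $N$ of $H^n$ is symmetric ($\tau_{ij}=0$) and depends only on the fundamental function $H$; and the fundamental tensor $g^{ij}=\dot{\pp}^i\dot{\pp}^j{\cal H}$, being the Hessian of ${\cal H}=\tfrac12 H$, likewise depends only on $H$ and has constant signature by the definition of a Hamilton space. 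Since $\G$ in (5.5.1) and $\check{\F}$ in (5.5.2) are assembled solely from $N$, $g^{ij}$ and its inverse $g_{ij}$, this immediately yields $2^\circ$.

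For $1^\circ$ and $3^\circ$ I would invoke the Chapter~4 results verbatim. Theorem 4.3.1 gives that $\check{\F}$ is globally defined on $\wt{T^*M}$ and satisfies $\check{\F}\circ\check{\F}=-I$, so it is an almost complex structure; Theorem 4.3.2 then gives that, because $g^{ij}$ has constant signature, $\G$ is a (pseudo-)Riemannian metric, the pair $(\G,\check{\F})$ is almost Hermitian, the distributions $N$ and $V$ are $\G$-orthogonal, and the associated almost symplectic form $\theta(X,Y)=\G(\check{\F}X,Y)$ equals the canonical symplectic structure $\de p_i\wedge dx^i$. Finally, because $N$ is symmetric, formula (4.3.5) lets me rewrite this invariantly as $\theta=\de p_i\wedge dx^i=dp_i\wedge dx^i$, which is exactly $3^\circ$.

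The only step beyond Chapter~4 is $4^\circ$. An almost Hermitian structure is almost K\"ahlerian precisely when its fundamental $2$-form $\theta$ is closed, so it suffices to check $d\theta=0$. Here I would use that $\theta=dp_i\wedge dx^i$ is the canonical symplectic structure on $T^*M$, which is \emph{exact}: with the Liouville $1$-form $\oo=p_i dx^i$ of (4.1.4) one has $\theta=d\oo$ by (4.1.5), whence $d\theta=d(d\oo)=0$. Thus $\theta$ is a genuine closed, nondegenerate symplectic form compatible with $(\G,\check{\F})$, and the space $(\wt{T^*M},\G,\check{\F})$ is almost K\"ahlerian. I expect no serious obstacle: every analytic identity ($\check{\F}^2=-I$, the compatibility $\G(\check{\F}X,\check{\F}Y)=\G(X,Y)$, and the orthogonality of $N$ and $V$) is already discharged in Chapter~4, and the lone new point is the elementary observation that the associated $2$-form coincides with the exact canonical symplectic form. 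The only place demanding care is the bookkeeping that the symmetry of $N$ is what makes $\de p_i\wedge dx^i$ and $dp_i\wedge dx^i$ agree, so that the closed form used for the K\"ahler conclusion is literally the one produced by the metric-compatible pairing.
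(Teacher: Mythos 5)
Your proposal is correct and follows essentially the same route as the paper, which simply remarks that the proof is analogous to the Lagrange-space case: there (Theorem 2.7.1) the compatibility and the identification of the associated $2$-form are read off from the general adapted-basis results, and closedness comes from exactness of the symplectic form. Your reconstruction via Theorems 4.3.1 and 4.3.2 (using the symmetry $\tau_{ij}=0$ of the canonical nonlinear connection from \S 5.2 so that $\de p_i\wedge dx^i=dp_i\wedge dx^i=d\oo$, hence $d\theta=0$) is exactly the intended argument, spelled out in more detail than the paper itself gives.
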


The proof is similar with that for Lagrange space (cf. Ch. 3).

The space ${\cal H}^2n=({\wt{T^*M}},\G,\check{\F})$ is called the almost K\"{a}hlerian model of the Hamilton space $H^n$. By means of ${\cal H}^{2n}$ we can realize the study of gravitational and electromagnetic fields \cite{MiKa}, \cite{miradio}, \cite{mitava1}, \cite{mirzet}, on $\wt{T^*M}$.

\newpage

\chapter{Cartan spaces}

A particular class of Hamilton space is given by the class of Cartan spaces. It is formed by the spaces $H^n=(M,H(x,p))$ for which the fundamental function $H$ is $2-$homogeneous with respect to momenta $p_i$. It is remarkable that these spaces appear as dual of the Finsler spaces, via Legendre transformations. Using this duality, several important results in Cartan spaces can be obtained: the canonical nonlinear connection, the canonical metrical connection etc. Therefore, the theory of Cartan spaces has the same symmetry and beauty like Finsler geometry. Moreover, it gives a geometrical framework for the Hamiltonian Mechanics or Physics fields.

The modern formulation of the notion of Cartan space is due of R. Miron, but its geometry is based on the investigations of E. Cartan, A. Kawaguchi, H. Rund, R. Miron, D. Hrimiuc, and H. Shimada, P.L. Antonelli, S. Vacaru et. al. This concept is different from the notion of areal space defined by E. Cartan.

In the final part of this chapter we shortly present the notion of duality between Lagrange and Hamilton spaces.

\section{Notion of Cartan space}
\setcounter{equation}{0}
\setcounter{definition}{0}
\setcounter{theorem}{0}
\setcounter{lemma}{0}

\begin{definition}
A Cartan space is a pair $C^n=(M,K(x,p))$ where $M$ is a real $n-$dimensional smooth manifold and $K:T^*M\to R$ is a scalar function which satisfies the following axioms:
\begin{enumerate}
\item $K$ is differentiable on $\wt{T^*M}$ and continuous on the null section of $\pi^*:T^*M\to M$.
\item $K$ is positive on the manifold $T^*M$.
\item $K$ is positive $1-$homogeneous with respect to the momenta $p_i$.
\item The Hessian of $K^2$ having the components
\begin{equation}
\label{6.1.1}
g^{ij}(x,p)=\dfrac12\dot{\pp}^i\dot{\pp}^j K^2
\end{equation}
is positive defined on the manifold $\wt{T^*M}$.
\end{enumerate}
It follows that $g^{ij}(x,p)$ is a symmetric and nonsingular $d-$tensor field of type $(0,2)$.
\end{definition}

So, we have
\begin{equation}
\label{6.1.2}
{\rm{rank}}\|g^{ij}(y,p)\|=n\ {\rm{on\ }}\wt{T^*M}.
\end{equation}

The functions $g^{ij}(x,p)$ are $0-$homogeneous with respect to momenta $p_i$.

For a Cartan space ${\cal C}^n=(M,K(x,p))$ the function $K$ is called fundamental function and $g^{ij}$ the fundamental or metric tensor.

If the base $M$ is paracompact, then on the manifold $T^*M$ there exists real function $K$ such that the pair $(M,K)$ is a Cartan space.

Indeed, on $M$ there exists a Riemann structure $g_{ij}(x)$, $x\in M$. Then
\begin{equation}
\label{6.1.3}
K(x,p)=\{g^{ij}(x)p_i p_j\}^{1/2}
\end{equation}
determine a Cartan space.

Other examples are given by
\begin{equation}
\label{6.1.4}
K=\a^*+\b^*
\end{equation}
\begin{equation}
\label{6.1.5}
K=\dfrac{(\a^*)^2}{\b}
\end{equation}
where
\begin{equation}
\label{6.1.6}
\a^*=\{g^{ij}(x)p_i p_j\}^{1/2},\ \b^*=b^i(x)p_i.
\end{equation}
$K$ from (\ref{6.1.4}) is called a {\sl Randers} metric and $K$ from (\ref{6.1.5}) is called a Kropina metric.

A first and immediate result:
\begin{theorem}
If ${\cal C}^n=(M,K)$ is a Cartan space then the pair $H^n_{{\cal C}}=(M,K^2)$ is an Hamilton space.
\end{theorem}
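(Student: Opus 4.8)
The plan is to verify that the function $H := K^2$ fulfils the three defining axioms of a Hamilton space (Definition \ref{d5.1}), so that the pair $H^n_{\mathcal{C}} = (M, K^2)$ qualifies. The essential observation—and the reason the statement is almost immediate—is that the fundamental tensor of the Cartan space $\mathcal{C}^n$ is defined in $(\ref{6.1.1})$ as precisely the Hessian of $K^2$ with respect to the momenta, which is exactly the quantity entering the definition $(\ref{5.1.1})$ of the metric tensor of a regular Hamiltonian.

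First I would dispatch the regularity and continuity requirement. Since $K$ is of class $C^{\infty}$ on $\wt{T^*M}$ by the first Cartan axiom, its square $K^2$ is likewise differentiable on $\wt{T^*M}$; and since $K$ is continuous on the null section of $\pi^*$, so is $K^2$. This establishes axiom $1^\circ$ of a Hamilton space. Next, comparing $(\ref{5.1.1})$ with $(\ref{6.1.1})$, one reads off that for $H = K^2$ the Hessian $\dfrac12\dot{\pp}^i\dot{\pp}^j H$ coincides identically with the Cartan fundamental tensor $g^{ij}(x,p)$. By the fourth Cartan axiom this $d$-tensor is positive definite on $\wt{T^*M}$; a positive definite symmetric matrix is nonsingular, whence $\det(g^{ij}(x,p)) \neq 0$, which is axiom $2^\circ$.

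Finally, positive definiteness means that at every point $u \in \wt{T^*M}$ all eigenvalues of $\|g^{ij}(x,p)\|$ are strictly positive, so the signature is $(+,\dots,+)$ everywhere and is therefore constant; this yields axiom $3^\circ$. The symmetry of $g^{ij}$ needed throughout is immediate from the symmetry of second partial derivatives.

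There is no genuine obstacle here: the whole argument reduces to recognizing that the Hessian of $K^2$ displayed in $(\ref{6.1.1})$ is the very object demanded by the Hamilton-space definition, together with the elementary linear-algebra fact that positive definiteness forces both nondegeneracy and constant signature. Thus the proof is a short verification rather than a computation. The only point worth emphasizing explicitly is that the Cartan axiom of positive definiteness is strictly stronger than what a general Hamilton space requires—there only constant signature and nonsingularity are assumed—so the passage $\mathcal{C}^n \rightsquigarrow H^n_{\mathcal{C}}$ lands in the distinguished subclass of Hamilton spaces whose metric tensor is positive definite.
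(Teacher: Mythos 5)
Your verification is correct and is exactly the argument the paper intends: the text states this theorem as "a first and immediate result" with no written proof, since the Hessian in the Cartan axiom $(\ref{6.1.1})$ is by definition the tensor required in $(\ref{5.1.1})$, and positive definiteness trivially yields both nondegeneracy and constant signature. Your added remark that the image lands in the positive-definite subclass of Hamilton spaces is accurate and consistent with the paper's later discussion.
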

$H^n_{\cal C}$ is called the associate Hamilton space with ${\cal C}^n$.

This is the reason that the geometry of Cartan space include the geometry of associate Hamilton space. So, we have the sequence of inclusions
\begin{equation}
\label{6.1.7}
\{{\cal R}^n\}\subset\{{\cal C}^n\}\subset\{H^n\}\subset\{GH^n\}
\end{equation}
where $\{{\cal R}^n\}$ is the class of Riemann spaces ${\cal R}^n=(M,g^{ij}(x))$ which give the Cartan spaces with the metric (\ref{6.1.3}).

Now we can apply the theory from previous chapter.

The canonical symplectic structure $\theta$ on $T^*M$:
\begin{equation}
\label{6.1.8}
\theta=dp_i\wedge dx^i
\end{equation}
and ${\cal C}^n$ determine the Hamiltonian system $(T^*M,\theta,K^2)$. Then, setting
\begin{equation}
\label{6.1.9}
{\cal K}=\dfrac12 K^2,
\end{equation}
we have:
\begin{theorem}
For any Cartan space ${\cal C}^n=(M,K(x,p))$, the following properties hold:
\begin{itemize}
\item[$1^\circ$] There exists a unique vector field $X_{K^2}$ on $\wt{T^*M}$ for which
\begin{equation}
\label{6.1.10}
i_{X_K}\theta=-d{\cal K}.
\end{equation}
\item[$2^\circ$] The vector field $X_K$ is expressed by
\begin{equation}
\label{6.1.11}
X_K=\dfrac{\pp{\cal K}}{\pp p_i}\dfrac{\pp}{\pp x^i}-\dfrac{\pp{\cal K}}{\pp x^i}\dfrac{\pp}{\pp p_i}.
\end{equation}
\item[$3^\circ$] The integral curves of the vector field $X_K$ are given by the Hamilton--Jacobi equations:
\begin{equation}
\label{6.1.12}
\dfrac{dx^i}{dt}=\dfrac{\pp{\cal K}}{\pp p_i},\ \dfrac{dp_i}{dt}=-\dfrac{\pp{\cal K}}{\pp x^i}.
\end{equation}
\end{itemize}
\end{theorem}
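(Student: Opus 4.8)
The final statement is the theorem about Cartan spaces asserting the existence and uniqueness of the Hamiltonian vector field $X_K$, its explicit coordinate expression, and the identification of its integral curves with the Hamilton--Jacobi equations. Crucially, this is a direct specialization of results already established in the excerpt: Theorem 4.1.2 (for a general Hamiltonian system $(T^*M,\theta,H)$) and Theorem 5.1.1 (the same for Hamilton spaces). Since Theorem 6.1.1 (not yet proved at this point but stated) already tells us that $H^n_{\mathcal C}=(M,K^2)$ is an Hamilton space whenever $\mathcal C^n=(M,K)$ is a Cartan space, the whole statement reduces to invoking the general machinery with the particular choice $\mathcal{K}=\tfrac12 K^2$.

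The plan is to proceed in three short steps mirroring the three assertions. First, for existence and uniqueness of $X_K$ in part $1^\circ$, I would invoke the isomorphism $S_\theta:\chi(T^*M)\to\chi^*(T^*M)$ established in Proposition 4.1.1, which sends $X\mapsto i_X\theta$. Since $d\mathcal K$ is a well-defined $1$-form (here $\mathcal K=\tfrac12 K^2$ is differentiable on $\widetilde{T^*M}$ because $K$ is, by axiom $1$ of the Cartan space), the equation $i_{X_K}\theta=-d\mathcal K$ has the unique solution
\begin{equation}
X_K=S_\theta^{-1}(-d\mathcal K),\nonumber
\end{equation}
exactly as in formula (4.1.10). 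This settles both existence and uniqueness simultaneously.

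Second, for the explicit expression in part $2^\circ$, I would compute $S_\theta^{-1}(-d\mathcal K)$ in the natural coordinate frame. Using $S_\theta(\partial/\partial x^i)=-dp_i$ and $S_\theta(\partial/\partial p_i)=dx^i$ from (4.1.6'), together with the expansion $d\mathcal K=\tfrac{\partial\mathcal K}{\partial x^i}dx^i+\tfrac{\partial\mathcal K}{\partial p_i}dp_i$, one reads off that
\begin{equation}
X_K=\dfrac{\pp\mathcal K}{\pp p_i}\dfrac{\pp}{\pp x^i}-\dfrac{\pp\mathcal K}{\pp x^i}\dfrac{\pp}{\pp p_i},\nonumber
\end{equation}
which is (6.1.11). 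Third, for part $3^\circ$, the integral curves of $X_K$ are by definition the solutions of $\dot x^i=\tfrac{\partial\mathcal K}{\partial p_i}$, $\dot p_i=-\tfrac{\partial\mathcal K}{\partial x^i}$, giving the Hamilton--Jacobi equations (6.1.12) directly from the coordinate form just obtained.

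There is essentially no hard obstacle here, since the theorem is a transcription of Theorem 5.1.1 with $\mathcal H$ replaced by $\mathcal K=\tfrac12 K^2$. The only point requiring a word of care is the regularity needed to guarantee that $\mathcal K$ is differentiable and that $S_\theta$ applies on $\widetilde{T^*M}$: this is exactly where the Cartan-space axioms enter, ensuring $K$ (and hence $K^2$) is $C^\infty$ on $\widetilde{T^*M}$ with nonsingular Hessian $g^{ij}$. Thus the only substantive step is the appeal to Theorem 6.1.1 to certify that $H^n_{\mathcal C}=(M,K^2)$ is a genuine Hamilton space, after which the proof is the one-line invocation of the isomorphism $S_\theta$; the remaining work is the routine coordinate computation I sketched above.
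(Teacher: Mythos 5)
Your proposal is correct and follows exactly the paper's own route: the text introduces the Hamiltonian system $(T^*M,\theta,K^2)$, notes via its Theorem on $H^n_{\mathcal C}=(M,K^2)$ that one may ``apply the theory from the previous chapter,'' and obtains the statement as the specialization of Theorem 4.1.2 (and its Hamilton-space version) with $\mathcal H$ replaced by $\mathcal K=\frac12 K^2$, i.e.\ existence and uniqueness from the isomorphism $S_\theta$ with $X_K=S_\theta^{-1}(-d\mathcal K)$, the coordinate expression from $S_\theta(\partial/\partial x^i)=-dp_i$, $S_\theta(\partial/\partial p_i)=dx^i$, and the Hamilton--Jacobi equations read off as the integral curves. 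Your added remark on the regularity of $K$ on $\widetilde{T^*M}$ is a harmless refinement of the same argument, so there is nothing further to flag.
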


One deduce $\dfrac{d{\cal K}}{dt}=\{{\cal K},{\cal K}\}=0$. So:
\begin{proposition}
The fundamental function ${\cal K}=\dfrac{1}{2}K^2$ of a Cartan space is constant along the integral curves of the Hamilton--Jacobi equations (\ref{6.1.12}).
\end{proposition}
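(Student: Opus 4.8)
The plan is to differentiate ${\cal K}$ directly along a solution curve of the Hamilton--Jacobi system (\ref{6.1.12}) and to recognize the result as the Poisson bracket $\{{\cal K},{\cal K}\}$, which vanishes identically. Since ${\cal K}=\dfrac12 K^2$ is of class $C^\infty$ on $\wt{T^*M}$ by the axioms of a Cartan space, the composite function $t\mapsto{\cal K}(x(t),p(t))$ is differentiable along any integral curve contained in $\wt{T^*M}$, so that the chain rule is available.

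First I would write, by the chain rule,
$$\dfrac{d{\cal K}}{dt}=\dfrac{\partial{\cal K}}{\partial x^i}\dfrac{dx^i}{dt}+\dfrac{\partial{\cal K}}{\partial p_i}\dfrac{dp_i}{dt}.$$
Next I would substitute the Hamilton--Jacobi equations (\ref{6.1.12}), that is $\dfrac{dx^i}{dt}=\dfrac{\partial{\cal K}}{\partial p_i}$ and $\dfrac{dp_i}{dt}=-\dfrac{\partial{\cal K}}{\partial x^i}$, obtaining
$$\dfrac{d{\cal K}}{dt}=\dfrac{\partial{\cal K}}{\partial x^i}\dfrac{\partial{\cal K}}{\partial p_i}-\dfrac{\partial{\cal K}}{\partial p_i}\dfrac{\partial{\cal K}}{\partial x^i}=0.$$
Comparing the middle expression with the definition (\ref{4.1.6}) of the Poisson brackets, it coincides with $\{{\cal K},{\cal K}\}$, which is zero by the antisymmetry property $1^\circ$, $\{f,g\}=-\{g,f\}$. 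Hence ${\cal K}$ is constant along every integral curve of (\ref{6.1.12}).

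There is essentially no hard step in this argument; the only point requiring care is to restrict attention to $\wt{T^*M}$, where $K$, and therefore ${\cal K}$, is smooth, so that all the partial derivatives entering the Hamilton--Jacobi equations and the chain rule are well defined. As a closing remark, since $K>0$ on $\wt{T^*M}$ and ${\cal K}=\dfrac12 K^2$, the constancy of ${\cal K}$ is equivalent to the constancy of the fundamental function $K$ itself along the integral curves.
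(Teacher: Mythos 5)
Your proof is correct and is precisely the argument the paper uses: the text simply records $\dfrac{d{\cal K}}{dt}=\{{\cal K},{\cal K}\}=0$, which is the chain-rule computation you carried out, with the Hamilton--Jacobi equations (\ref{6.1.12}) substituted and the vanishing recognized via the antisymmetry of the Poisson bracket (\ref{4.1.6}). Your added remarks on smoothness over $\wt{T^*M}$ and on the equivalent constancy of $K$ itself are sound refinements of the same one-line argument, not a different route.
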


The Jacobi method of integration of (\ref{6.1.12}) can be applied.

The Hamilton--Jacobi equations of a Cartan space ${\cal C}^n$ are fundamental for the geometry of ${\cal C}^n$. Therefore, the integral curves of the system of differential equations (\ref{6.1.12}) are called the geodesics of Cartan space ${\cal C}^n$.

Other properties of the space ${\cal C}^n$:
\begin{itemize}
\item[$1^\circ$] $p^i=\dfrac12\dot{\pp}^i K^2$ is a 1-homogeneous $d-$vector field.
\item[$2^\circ$] $g^{ij}=\dot{\pp}^j p^i=\dfrac12\dot{\pp}^i\dot{\pp}^j K^2$ is 0-homogeneous tensor field (the fundamental tensor of ${\cal C}^n$).
\item[$3^\circ$] $C^{ijh}=-\dfrac14\dot{\pp}^i\dot{\pp}^j\dot{\pp}^h K^2$ is $(-1)-$homogeneous with respect to $p_i$.
\end{itemize}

\begin{proposition}
We have the following identities:
\begin{equation}
\label{6.1.13}
p^i=g^{ij}p_j,\ \ p_i=g_{ij}p^j,
\end{equation}
\begin{equation}
\label{6.1.14}
K^2=g^{ij}p_i p_j=p_i p^i,
\end{equation}
\begin{equation}
\label{6.1.15}
C^{ijh}p_h=C^{ihj}p_h=C^{hij}p_h=0.
\end{equation}
\end{proposition}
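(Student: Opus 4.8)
The plan is to derive all three groups of identities from a single principle, namely Euler's theorem on homogeneous functions in the momenta (equation (\ref{4.2.11})), applied successively to $K^2$, to $p^i$, and to $g^{ij}$. The starting observations are precisely the homogeneity facts recorded just above the statement: $p^i=\dfrac12\dot\pp^i K^2$ is $1$-homogeneous (property $1^\circ$), $g^{ij}=\dot\pp^j p^i=\dfrac12\dot\pp^i\dot\pp^j K^2$ is $0$-homogeneous (property $2^\circ$), and $C^{ijh}=-\dfrac14\dot\pp^i\dot\pp^j\dot\pp^h K^2$ (property $3^\circ$), which being a third partial derivative of the scalar $K^2$ is totally symmetric in its three upper indices. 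The regularity axiom guarantees the existence of the covariant tensor $g_{ij}$ with $g_{ij}g^{jk}=\de^k_i$, which I will need for the lowered form.

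First I would prove (\ref{6.1.13}). Since $p^i$ is $1$-homogeneous in $p$, Euler's theorem gives $p_j\dot\pp^j p^i=p^i$. But $\dot\pp^j p^i=g^{ij}$ by property $2^\circ$, so $g^{ij}p_j=p^i$, which is the first identity. The second, $p_i=g_{ij}p^j$, follows at once by contracting with $g_{ij}$ and using $g_{ij}g^{jk}=\de^k_i$.

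Next, for (\ref{6.1.14}), I would apply Euler's theorem to $K^2$, which is $2$-homogeneous because $K$ is positively $1$-homogeneous: this reads $p_i\dot\pp^i K^2=2K^2$. Substituting $\dot\pp^i K^2=2p^i$ yields $p_i p^i=K^2$, and combining with $p^i=g^{ij}p_j$ from the previous step gives $g^{ij}p_i p_j=p_i p^i=K^2$, which is both equalities. Finally, for (\ref{6.1.15}), I would use that $g^{ij}$ is $0$-homogeneous, so Euler's theorem gives $p_h\dot\pp^h g^{ij}=0$; since the total symmetry of the third derivative lets me write $C^{ijh}=-\dfrac12\dot\pp^h g^{ij}$, this becomes $C^{ijh}p_h=0$, and the remaining two contractions agree with it by the total symmetry of $C^{ijh}$. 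Equivalently, and perhaps more transparently, one may work entirely from the single Euler identity $p_i\dot\pp^i K^2=2K^2$: the identity itself gives (\ref{6.1.14}), one differentiation in $p$ reproduces (\ref{6.1.13}), and a second differentiation reproduces (\ref{6.1.15}), each derivative peeling off one factor of $p$.

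I expect no genuine obstacle in this argument; everything reduces to Euler's theorem together with the identifications of $p^i$, $g^{ij}$, and $C^{ijh}$ as successive $\dot\pp$-derivatives of $K^2$. The only point requiring care is the bookkeeping of the numerical factors $\tfrac12$ and $\tfrac14$ and of the homogeneity degrees ($2$, $1$, $0$, $-1$), so that each contraction with $p$ produces exactly the stated multiple of the relevant object and the signs come out correctly.
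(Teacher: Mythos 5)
Your proof is correct and follows essentially the route the paper intends: the proposition is stated right after the homogeneity properties of $p^i$, $g^{ij}$ and $C^{ijh}$ precisely because it is an immediate consequence of Euler's theorem (\ref{4.2.11}) applied to the $2$-, $1$-, $0$-homogeneous derivatives of $K^2$, which is exactly your argument. Your bookkeeping of the factors is also right: $\dot{\pp}^h g^{ij}=-2C^{ijh}$, so $p_h\dot{\pp}^h g^{ij}=0$ yields (\ref{6.1.15}), with total symmetry of $C^{ijh}$ handling the remaining contractions.
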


\begin{proposition}
The Cartan space ${\cal C}^n=(M,K(x,p))$ is Riemannian if and only if the $d-$tensor $C^{ijh}$ vanishes.
\end{proposition}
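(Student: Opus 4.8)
The plan is to read ``Riemannian'' in the sense parallel to the Finsler notion of reducibility (Definition \ref{d1.2} and Proposition \ref{p3.3}): the Cartan space ${\cal C}^n$ is Riemannian, i.e. reducible to a Riemann space, precisely when its fundamental tensor $g^{ij}(x,p)$ does not depend on the momenta $p_i$, so that $g^{ij}=g^{ij}(x)$. With this reading the proposition is the exact $T^*M$-analogue of Proposition \ref{p3.3}, and the whole argument collapses to one differential identity tying the Cartan tensor $C^{ijh}$ to the momentum-derivative of the metric. First I would record that, from $g^{ij}=\dfrac12\dot{\pp}^i\dot{\pp}^j K^2$ in (6.1.1) and $C^{ijh}=-\dfrac14\dot{\pp}^i\dot{\pp}^j\dot{\pp}^h K^2$, commuting the partial derivatives $\dot{\pp}^i$ gives $\dot{\pp}^h g^{ij}=\dfrac12\dot{\pp}^h\dot{\pp}^i\dot{\pp}^j K^2=-2\,C^{ijh}$, equivalently $C^{ijh}=-\dfrac12\dot{\pp}^h g^{ij}$. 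This single relation is the bridge between the two conditions.

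Granted this identity, both implications are essentially immediate and I would dispatch them in turn. For the direct implication: if ${\cal C}^n$ is Riemannian then $g^{ij}=g^{ij}(x)$, whence $\dot{\pp}^h g^{ij}=\partial g^{ij}/\partial p_h=0$ and therefore $C^{ijh}=0$. For the converse I would assume $C^{ijh}=0$; the bridge identity then forces $\dot{\pp}^h g^{ij}=0$ for every $h$, which says exactly that each component $g^{ij}$ is independent of the momenta, i.e. $g^{ij}=g^{ij}(x)$. To turn this $p$-independence of the metric into the genuine statement that the space is Riemannian, I would recover the fundamental function itself through the homogeneity identity $K^2=g^{ij}p_i p_j$ (identity (6.1.14)): once $g^{ij}$ is a function of $x$ alone, this reads $K(x,p)=\{g^{ij}(x)p_i p_j\}^{1/2}$, which is precisely the Cartan metric (6.1.3) attached to the Riemann space ${\cal R}^n=(M,g^{ij}(x))$. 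Hence ${\cal C}^n$ lies in the subclass $\{{\cal R}^n\}$ of (6.1.7).

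The calculation is routine, so what I regard as the one point deserving care --- rather than a true obstacle --- is the final move of the converse: the vanishing of $\dot{\pp}^h g^{ij}$ yields $p$-independence of the metric, but one must still produce an honest $1$-homogeneous fundamental function, and for that the $2$-homogeneity of $K^2$ is indispensable. I would therefore justify identity (6.1.14) by applying the Euler relation (4.2.11) twice: $2$-homogeneity gives $p_i\dot{\pp}^i K^2=2K^2$, while $\dot{\pp}^j K^2$ is $1$-homogeneous so $p_i\dot{\pp}^i\dot{\pp}^j K^2=\dot{\pp}^j K^2$; combining, $g^{ij}p_i p_j=\dfrac12 p_i p_j\dot{\pp}^i\dot{\pp}^j K^2=\dfrac12 p_j\dot{\pp}^j K^2=K^2$. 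This secures the reconstruction of $K$ and closes the equivalence.
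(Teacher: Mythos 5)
Your proof is correct and is essentially the intended argument: the paper states this proposition without proof, as the exact $T^*M$-dual of Proposition \ref{p3.3} for Finsler spaces, and the standard justification is precisely your bridge identity $C^{ijh}=-\dfrac12\dot{\pp}^h g^{ij}$ (which underlies the paper's own formula $C^{jh}_i=-\dfrac12 g_{is}\dot{\pp}^s g^{jh}=g_{is}C^{sjh}$ in (6.1.16)) combined with the homogeneity identity $K^2=g^{ij}p_ip_j$ from (6.1.14). Your care in reconstructing $K=\{g^{ij}(x)p_ip_j\}^{1/2}$ via the doubled Euler relation, rather than stopping at $p$-independence of $g^{ij}$, is exactly the right closing step.
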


Consider $d-$tensor:
\begin{equation}
\label{6.1.16}
C^{jh}_i=-\dfrac12 g_{is}\dot{\pp}^s g^{jh}=g_{is}C^{sjh}.
\end{equation}
Thus $C^{jh}_i$ are the coefficients of a $v-$metric covariant derivation:
\begin{equation}
\label{6.1.17}
\begin{array}{l}
g^{ij}|^h=0,\ \ S^{jh}_i=0\\ \\
p_i|^k=\de^k_i.
\end{array}
\end{equation}

\section{Canonical nonlinear connection of ${\cal C}^n$}\index{Connections! Nonlinear}
\setcounter{equation}{0}
\setcounter{definition}{0}
\setcounter{theorem}{0}
\setcounter{lemma}{0}

The canonical nonlinear connection of a Cartan space ${\cal C}^n=(M,K)$ is the canonical nonlinear connection of the associate Hamilton space $H^n_{{\cal C}}=(M,K^2)$. Its coefficients $N_{ij}$ are given by (5.2.1).

Setting
\begin{equation}
\label{6.2.1}
\gamma^i_{jh}=\dfrac12 g^{is}(\pp_j g_{sh}+\pp_h g_{js}-\pp_s g_{jh})
\end{equation}
for the Christoffel symbols of the covariant of fundamental tensor of ${\cal C}^n$ and using the notations
\begin{equation}
\label{6.2.2}
\gamma^0_{jh}=\g^i_{jh}p_i,\ \ \g^0_{j0}=\g^i_{jh}p_i p^h,
\end{equation}
we obtain:

\begin{theorem}[Miron]
The canonical nonlinear connection of the Cartan space ${\cal C}^n=(M,K(x,p))$ has the coefficients
\begin{equation}
\label{6.2.3}
N_{ij}=\g^0_{ij}-\dfrac12\g^0_{h0}\dot{\pp}^h g_{ij}.
\end{equation}
\end{theorem}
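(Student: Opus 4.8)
The plan is to exploit the fact, recalled just before the statement, that the canonical nonlinear connection of $\mathcal{C}^n=(M,K)$ is by definition that of the associated Hamilton space $H^n_{\mathcal C}=(M,K^2)$, whose coefficients are given by the formula (5.2.1). Thus the whole proof reduces to substituting $H=K^2$ into
$$N_{ij}=\tfrac14\{g_{ij},H\}-\tfrac14\Big(g_{ik}\tfrac{\pp^2 H}{\pp p_k\pp x^j}+g_{jk}\tfrac{\pp^2 H}{\pp p_k\pp x^i}\Big)$$
and simplifying by means of the homogeneity identities (6.1.13), (6.1.14) and the Christoffel symbols (6.2.1), (6.2.2). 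First I would record the elementary consequences of $2$-homogeneity: since $p^i=\tfrac12\dot{\pp}^iK^2$ we have $\dot{\pp}^kH=2p^k$, and differentiating in $x^j$ gives $\pp^2H/\pp p_k\pp x^j=2\,\pp_jp^k$; moreover $H=K^2=g^{ab}p_ap_b$ by (6.1.14). Expanding the Poisson bracket by its definition yields $\{g_{ij},H\}=\dot{\pp}^kg_{ij}\,\pp_kH-2p^k\pp_kg_{ij}$, so that
$$N_{ij}=\tfrac14\dot{\pp}^kg_{ij}\,\pp_kH-\tfrac12 p^k\pp_kg_{ij}-\tfrac12\big(g_{ik}\pp_jp^k+g_{jk}\pp_ip^k\big).$$

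The decisive algebraic step is to rewrite every $x$-derivative through $\gamma^i_{jk}$, using that these symbols are metric compatible in the $x$-variable, with $p$ held fixed: $\pp_kg_{ij}=g_{is}\gamma^s_{kj}+g_{js}\gamma^s_{ki}$ and $\pp_kg^{ab}=-\gamma^a_{kc}g^{cb}-\gamma^b_{kc}g^{ca}$. From the latter together with $\pp_jp^k=(\pp_jg^{km})p_m$ one finds $g_{ik}\pp_jp^k=-g_{ik}\gamma^k_{j0}-\gamma^0_{ji}$, writing $\gamma^k_{j0}=\gamma^k_{js}p^s$. I expect the main work — and the main place to slip on indices — to be here: tracking which slot is contracted with $p^k$ versus $p_k$, and using the symmetry of $\gamma$ in its lower indices together with $\gamma^0_{ij}=\gamma^0_{ji}$. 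When the second term $-\tfrac12 p^k\pp_kg_{ij}$ and the two Hessian cross terms are collected, the contributions of type $g_{is}\gamma^s_{j0}$ cancel exactly against one another, leaving precisely $\gamma^0_{ij}$.

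Finally the first term is handled by computing $\pp_kH=(\pp_kg^{ab})p_ap_b=-2\gamma^a_{kc}p_ap^c=-2\gamma^0_{k0}$ via (6.2.2), whence $\tfrac14\dot{\pp}^kg_{ij}\,\pp_kH=-\tfrac12\gamma^0_{h0}\dot{\pp}^hg_{ij}$. Adding this to $\gamma^0_{ij}$ gives $N_{ij}=\gamma^0_{ij}-\tfrac12\gamma^0_{h0}\dot{\pp}^hg_{ij}$, which is (6.2.3). The only genuine subtlety beyond bookkeeping is justifying the metric-compatibility identities in the $p$-dependent setting — they hold because for each fixed $p$ the functions $\gamma^i_{jk}(\cdot,p)$ are the Levi-Civita symbols of the metric $g_{ij}(\cdot,p)$ on $M$ — and the symmetry $\gamma^0_{ij}=\gamma^0_{ji}$; everything else is forced once the homogeneity relations $\dot{\pp}^kH=2p^k$ and $H=g^{ab}p_ap_b$ are in hand.
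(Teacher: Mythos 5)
Your proposal is correct and takes exactly the route the paper itself indicates: the paper's entire proof is the remark that the result ``is based on the formula (5.2.1)'', i.e., substitution of $H=K^2$ into the Hamilton-space formula for $N_{ij}$, which is precisely what you carry out. Your detailed reduction — expanding the Poisson bracket with the paper's convention, using $\dot{\partial}^kH=2p^k$, the algebraic identities $\partial_k g_{ij}=g_{is}\gamma^s_{kj}+g_{js}\gamma^s_{ki}$ and $\partial_k H=-2\gamma^0_{k0}$ (which follow pointwise from the definition (6.2.1), so your slightly loose ``Levi-Civita at fixed $p$'' justification is harmless), and the cancellation of the $g_{is}\gamma^s_{j0}$ terms leaving $\gamma^0_{ij}$ — checks out and simply fills in the computation the paper leaves to the reader.
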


The proof is based on the formula (5.2.1).

Evidently, the canonical nonlinear connection is symmetric
\begin{equation}
\label{6.2.4}
N_{ij}=N_{ji}.
\end{equation}

Let us consider the adapted basis $(\de_i,\dot{\pp}^i)$ to the distributions $N$ (determined by canonical nonlinear connection) and $V$ (vertical distribution). We have
\begin{equation}
\label{6.2.5}
\de_i=\pp_i+N_{ij}\dot{\pp}^j.
\end{equation}
The adapted dual basis $(dx^i,\de p_i)$ has the forms $\de p_i$:
\begin{equation}
\label{6.2.5'}
\de p_i=dp_i-N_{ij}dx^j.\tag{6.2.5'}
\end{equation}

The $d-$tensor of integrability of horizontal distribution $N$ is
\begin{equation}
\label{6.2.6}
R_{ijh}=\de_h N_{ji}-\de_j N_{hi}.
\end{equation}
By a direct calculus we obtain
\begin{equation}
\label{6.2.7}
R_{ijh}+R_{jki}+R_{kij}=0.
\end{equation}

\begin{proposition}
The horizontal distribution $N$ determined by the canonical nonlinear connection of a Cartan space ${\cal C}^n$ is integrable if and only if the $d-$tensor field $R_{ijk}$ vanishes.
\end{proposition}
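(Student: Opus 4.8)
The final statement to prove is the Proposition characterizing integrability of the horizontal distribution $N$ on a Cartan space: the distribution $N$ determined by the canonical nonlinear connection is integrable if and only if the $d$-tensor field $R_{ijk}$ vanishes.

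The plan is to invoke the Frobenius theorem exactly as was done for the tangent bundle in Chapter 1 (Theorem \ref{t3.2}), now transcribed to the cotangent setting. First I would recall that the adapted basis to the decomposition $T_u\wt{T^*M}=N_u\oplus V_u$ is $(\de_i,\dot{\pp}^i)$, with $\de_i=\pp_i+N_{ij}\dot{\pp}^j$ as in (\ref{6.2.5}). The horizontal distribution $N$ is spanned locally by the $n$ vector fields $\de_i$, and it is a regular $n$-dimensional distribution. By the Frobenius integrability criterion, $N$ is integrable if and only if it is involutive, i.e. the Lie brackets $[\de_i,\de_j]$ remain sections of $N$ for all $i,j$.

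The key computational step is to evaluate $[\de_j,\de_h]$ and show it is purely vertical. Using $\de_i=\pp_i+N_{ij}\dot{\pp}^j$ and the fact that the coordinate vector fields $\pp_i$ and $\dot{\pp}^j$ commute, a direct bracket calculation gives
\begin{equation}
[\de_j,\de_h]=(\de_h N_{ji}-\de_j N_{hi})\dot{\pp}^i=R_{ijh}\dot{\pp}^i,
\end{equation}
where $R_{ijh}$ is precisely the integrability $d$-tensor defined in (\ref{6.2.6}). Thus every bracket of horizontal basis fields lies entirely in the vertical distribution $V$, spanned by the $\dot{\pp}^i$. Since $N$ and $V$ are complementary, $[\de_j,\de_h]$ belongs to $N$ if and only if its vertical component vanishes, that is, if and only if $R_{ijh}=0$ identically.

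The conclusion then follows immediately: $N$ is involutive (hence integrable, by Frobenius) precisely when $R_{ijk}\equiv 0$ on $\wt{T^*M}$. I do not anticipate a serious obstacle here, since the proof is a routine adaptation of the tangent-bundle argument; the only point requiring mild care is confirming that the terms arising from differentiating the coefficients $N_{ij}$ assemble exactly into the antisymmetric combination $\de_h N_{ji}-\de_j N_{hi}=R_{ijh}$ and that no residual horizontal part survives, which is guaranteed by the commutativity of $\pp_i$ with $\dot{\pp}^j$. The cyclic identity (\ref{6.2.7}) is not needed for this characterization but is consistent with the skew-symmetry of $R_{ijh}$ in its last two indices.
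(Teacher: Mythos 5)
Your proof is correct and follows exactly the paper's own route: the paper establishes in Chapter 4 (formulas (4.3.7)--(4.3.8)) that $\left[\dfrac{\delta}{\delta x^j},\dfrac{\delta}{\delta x^h}\right]=R_{ijh}\dot{\pp}^i$ and concludes integrability of $N$ iff $R_{ijh}=0$ via Frobenius, which is precisely your bracket computation transcribed to the canonical connection of ${\cal C}^n$ with $R_{ijh}$ as in (6.2.6). Your closing remarks (the cyclic identity being unneeded, the skew-symmetry in the last two indices) are consistent with the paper, so nothing is missing.
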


\begin{proposition}
The canonical nonlinear connection of a Cartan space ${\cal C}^n(M,K(x,p))$ depends only on the fundamental function $K(x,$ $p)$.
\end{proposition}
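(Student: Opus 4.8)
The final statement to prove is Proposition 6.2.2: the canonical nonlinear connection of a Cartan space $\mathcal{C}^n = (M, K(x,p))$ depends only on the fundamental function $K(x,p)$.

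The plan is to trace the entire chain of constructions that produce the coefficients $N_{ij}$ and verify that each link in the chain is manufactured solely from $K$, introducing no auxiliary choices. First I would recall that by definition the canonical nonlinear connection of $\mathcal{C}^n$ is, by fiat (Section 6.2, opening sentence), the canonical nonlinear connection of the associated Hamilton space $H^n_{\mathcal{C}} = (M, K^2)$, whose coefficients are given by Theorem~5.2.1, formula~(5.2.1). Thus it suffices to show the right-hand side of~(5.2.1) is built only from $K$. The fundamental tensor entering that formula is $g^{ij} = \tfrac12 \dot{\partial}^i \dot{\partial}^j K^2$ (formula~(6.1.1)), which is manifestly a function of $K$ alone; its covariant form $g_{ij}$ is the matrix inverse, hence again determined by $K$. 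The Poisson bracket $\{g_{ij}, H\}$ and the second-order derivatives $\partial^2 H / \partial p_k \partial x^j$ appearing in~(5.2.1) use $H = K^2$ and $g_{ij}$, so they too are functions of $K$ only.

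A cleaner and more explicit route, which I would favor, is to invoke Theorem~6.2.1 (Miron), which rewrites the same coefficients in the closed form $N_{ij} = \gamma^0_{ij} - \tfrac12 \gamma^0_{h0}\, \dot{\partial}^h g_{ij}$ (formula~(6.2.3)). Here every ingredient is visibly constructed from $K$: the Christoffel symbols $\gamma^i_{jh}$ of~(6.2.1) are formed algebraically from $g_{ij}$ and its $x$-derivatives; the contractions $\gamma^0_{ij} = \gamma^s_{ij} p_s$ and $\gamma^0_{h0} = \gamma^s_{ht} p_s p^t$ of~(6.2.2) involve only $\gamma$, the momenta $p_i$ (coordinate functions, not choices), and the raised momenta $p^h = g^{hs} p_s$; and $\dot{\partial}^h g_{ij}$ is a momentum-derivative of the metric. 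Since $g_{ij}$, $g^{ij}$, and $p^h$ are all determined by $K^2$, so is each term in~(6.2.3). I would simply remark that no external datum — no a priori nonlinear connection, no auxiliary metric, no noncanonical lift — enters the formula~(6.2.3), and conclude that $N_{ij}$ is a function of $K$ alone.

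The only genuine subtlety, and the step I expect to require a word of care, is the claim that the construction is \emph{canonical} in the sense of being independent of coordinate representation as well as of auxiliary choices: one should note that under the coordinate change~(4.1.1) the quantities $N_{ij}$ obey the transformation rule of nonlinear-connection coefficients (this is precisely the content of Theorem~5.2.1, part~$1^\circ$, already established), so that the object $N$ is well-defined globally and its dependence on $K$ is intrinsic. Assembling these observations, the proposition follows immediately: since~(6.2.3) exhibits $N_{ij}$ as an explicit expression in $g_{ij}, g^{ij}, \gamma^i_{jh}, p_i, p^h$ and their derivatives — all of which are determined by $K(x,p)$ — the canonical nonlinear connection depends only on the fundamental function $K$, as asserted.
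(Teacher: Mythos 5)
Your proof is correct and follows essentially the same route as the paper: the canonical nonlinear connection of ${\cal C}^n$ is by definition that of the associated Hamilton space $H^n_{\cal C}=(M,K^2)$, so the claim reduces to Theorem 5.2.1 (part $2^\circ$) with $H=K^2$, made explicit by the closed form (6.2.3) of Theorem 6.2.1, every ingredient of which ($g_{ij}$, $g^{ij}$, $\gamma^i_{jh}$, $p_i$, $p^h=g^{hs}p_s$ and their derivatives) is manufactured from $K$ alone. Your added remark on the transformation rule under (4.1.1) guaranteeing that $N$ is a well-defined geometric object is a sound, if brief, supplement to what the paper leaves implicit.
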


\section{Canonical metrical connection of ${\cal C}^n$}\index{Connections! $N-$linear}
\setcounter{equation}{0}
\setcounter{definition}{0}
\setcounter{theorem}{0}
\setcounter{lemma}{0}

Consider the canonical metrical $N-$linear connection of the Hamilton space $H^N_e=(M,K^2)$. It is the canonical metrical $N-$linear connection of the Cartan space ${\cal C}^n=(M,K)$. It is denoted by $C\Gamma(N)=(H^i_{jk},C_i^{jk})$ and shortly is named canonical metrical connection of ${\cal C}^n$.

Then, theorem 5.3.1. implies:
\begin{theorem}
1) In a Cartan space ${\cal C}^n=(M,K(x,p))$ there exists a unique $N-$linear connection $C\Gamma(N)=(H^i_{jk},C^{jk}_i)$ verifying the axioms:
\begin{itemize}
\item[$1^\circ$] $N$ is the canonical nonlinear connection of ${\cal C}^n$.
\item[$2^\circ$] $C\Gamma(N)$ is $h-$metrical
\begin{equation}
\label{6.3.1}
g^{ij}_{|h}=0.
\end{equation}
\item[$3^\circ$] $C\Gamma(N)$ is $v-$metrical:
\begin{equation}
\label{6.3.2}
g^{ij}|^h=0.
\end{equation}
\item[$4^\circ$] $C\Gamma(N)$ is $h-$torsion free: $T^i_{\ jh}=H^i_{jh}-H^i_{jh}=0$.
\item[$5^\circ$] $C\Gamma(N)$ is $v-$torsion free: $S_i^{\ jh}=C_i^{jh}-C_i^{hj}=0$.
\end{itemize}
2) The connection $C\Gamma(N)$ has the coefficients given by the generalized Christoffel symbols:
\begin{equation}
\label{6.3.3}
\begin{array}{l}
H^i_{jh}=\dfrac12 g^{is}(\de_j g_{sh}+\de_h g_{js}-\de_s g_{jh})\\ \\
C^{jh}_i=-\dfrac12 g_{is}(\dot{\pp}^j g^{sh}+\dot{\pp}^h g^{js}-\dot{\pp}^s g^{jh}).
\end{array}
\end{equation}
3) $C\Gamma(N)$ depends only on the fundamental function $K$ of the Cartan space ${\cal C}^n$.

\noindent The connection $C\Gamma(N)$ is called canonical metrical connection of Cartan space ${\cal C}^n$.
\end{theorem}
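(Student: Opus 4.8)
The plan is to deduce the statement from the already-established Theorem 5.3.1 on Hamilton spaces, and only afterwards to record the explicit coefficients and the dependence on $K$. First I would invoke the fact (Theorem 6.1.1) that the associated pair $H^n_{\cal C}=(M,K^2)$ is a genuine Hamilton space, sharing with ${\cal C}^n$ both the fundamental tensor $g^{ij}=\frac12\dot{\pp}^i\dot{\pp}^j K^2$ and the canonical nonlinear connection $N$ whose coefficients were produced in Theorem 6.2.1. The five axioms $1^\circ$–$5^\circ$ imposed here are verbatim the axioms of Theorem 5.3.1 for $H^n_{\cal C}$, so existence and uniqueness of $C\Gamma(N)=(H^i_{jk},C^{jk}_i)$, and the formulas (6.3.3), follow immediately from (5.3.4). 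This is the economical route and it carries no real obstacle; the remaining paragraphs describe the self-contained verification that underlies Theorem 5.3.1 in case one prefers to exhibit it directly.

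For a direct proof I would treat uniqueness by the Christoffel trick. Since $g^{ij}$ is nonsingular its inverse $g_{ij}$ exists, and because the Kronecker tensor is covariant constant, $h$-metricity $g^{ij}_{|h}=0$ is equivalent to $g_{ij|h}=0$, that is
\[
\de_h g_{ij}=g_{mj}H^m_{ih}+g_{im}H^m_{jh}.
\]
Forming the combination $\de_i g_{jh}+\de_j g_{hi}-\de_h g_{ij}$, and using the $h$-torsion-free symmetry $H^m_{ab}=H^m_{ba}$ together with $g_{ij}=g_{ji}$, four of the six terms cancel in pairs and one is left with $2\,g_{mh}H^m_{ij}$. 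Contracting with $g^{sh}$ yields exactly the first line of (6.3.3); this simultaneously proves uniqueness of $H^i_{jk}$ and, read in reverse, its existence, while the manifest symmetry of the expression gives $T^i_{\ jk}=0$. One still checks via the transformation rule (4.4.14) from Chapter 4 that these generalized Christoffel symbols transform as the coefficients of an $N$-linear connection, which is the standard computation.

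The vertical coefficients are handled the same way from $g^{ij}|^h=0$, i.e.
\[
\dot{\pp}^h g^{ij}=-g^{mj}C^{ih}_m-g^{im}C^{jh}_m,
\]
using the $v$-torsion-free symmetry $C^{jh}_i=C^{hj}_i$; the analogous three-term combination $\dot{\pp}^j g^{sh}+\dot{\pp}^h g^{js}-\dot{\pp}^s g^{jh}$ collapses to $-2\,g^{sm}C^{jh}_m$, and contracting with $-\frac12 g_{is}$ reproduces the second line of (6.3.3). Here lies the only place needing genuine care: in the cotangent calculus the raised and lowered index roles are interchanged relative to the tangent-bundle case, so one must track index positions precisely to see which terms cancel — this bookkeeping, rather than any conceptual difficulty, is the main obstacle. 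Finally, claim $3^\circ$ is immediate: $g^{ij}$ comes from $K$ by (6.1.1), $N_{ij}$ comes from $K$ by Theorem 6.2.1, hence the adapted operators $\de_i$ and $\dot{\pp}^i$ do as well, and since $H^i_{jk}$ and $C^{jk}_i$ are assembled from these objects through (6.3.3), the canonical metrical connection depends only on the fundamental function $K$.
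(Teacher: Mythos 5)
Your proposal is correct and follows essentially the same route as the paper: the text introduces the connection as the canonical metrical $N$-linear connection of the associated Hamilton space $H^n_{\cal C}=(M,K^2)$ and then states that Theorem 5.3.1 implies the result, exactly your first paragraph. Your additional self-contained Christoffel-symbol verification (including the index bookkeeping for $C^{jh}_i$) is a correct expansion of the computation underlying Theorem 5.3.1, which the paper leaves to the general theory of Chapter~4.
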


We denote $C\Gamma=(N_{ij},H^i_{jk},C^{jk}_i)$.

Evidently, the $d-$tensor $C_i^{jh}$ has the properties (\ref{6.1.15}) and the coefficients $C\Gamma$ have $1,0,-1$ homogeneity degrees.

\begin{proposition}
The canonical metrical connections has the tensor of deflection:
\begin{equation}
\label{6.3.4}
\Delta_{ij}=p_{i|j}=0,\ \ \delin^i_j=p_j |^i=\de^i_j.
\end{equation}
\end{proposition}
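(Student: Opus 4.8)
The plan is to prove the two deflection identities separately, using the general formulas for the $h$- and $v$-covariant derivatives of a $d$-covector recorded in Chapter~4. By (4.4.18), for a $d$-covector $\oo_i$ one has $\oo_{i|k}=\de_k\oo_i-\oo_m H^m_{ik}$ and $\oo_i|^k=\dot{\pp}^k\oo_i-\oo_m C^{mk}_i$. Applying these to the momentum covector $p_i$, together with the trivial coordinate derivatives $\pp_j p_i=0$ and $\dot{\pp}^k p_i=\de^k_i$ and the expression $\de_j=\pp_j+N_{jk}\dot{\pp}^k$ from (6.2.5), I would first reduce the two deflection tensors to
$$\Delta_{ij}=p_{i|j}=N_{ji}-p_m H^m_{ij},\qquad \delin^i_j=p_j|^i=\de^i_j-p_m C^{mi}_j.$$

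For the vertical deflection I would show $p_m C^{mi}_j=0$. By (6.1.16) we have $C^{mi}_j=g_{js}C^{smi}$, so $p_m C^{mi}_j=g_{js}\,(C^{smi}p_m)$, and the totally symmetric Cartan tensor satisfies $C^{smi}p_m=0$ by the orthogonality relations (6.1.15). Hence $\delin^i_j=\de^i_j$, which is precisely the identity already recorded in (6.1.17); the vertical part therefore requires no new work.

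The horizontal deflection is the substantive step: I must establish $p_m H^m_{ij}=N_{ij}$, after which the symmetry $N_{ij}=N_{ji}$ of the canonical nonlinear connection (6.2.4) gives $\Delta_{ij}=0$. Substituting the generalized Christoffel coefficients (6.3.3) and writing $p^s=g^{sm}p_m$, I would split each metric derivative as $\de_k g_{ab}=\pp_k g_{ab}+N_{kl}\dot{\pp}^l g_{ab}$. The $\pp$-part reproduces $p_m\g^m_{ij}=\g^0_{ij}$ via (6.2.1)--(6.2.2). In the $N$-part, the two terms in which $p^s$ contracts into a vertical derivative $\dot{\pp}^l g_{sj}$ (respectively $\dot{\pp}^l g_{si}$) vanish, because such a contraction reduces to a contraction of the Cartan tensor against $p$, which is zero by (6.1.15); only the trace term survives, in which $p^s$ instead contracts into $N_{sl}$, leaving $-\tfrac12\,(p^s N_{sl})\,\dot{\pp}^l g_{ij}$. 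The argument then closes by identifying the scalar $p^s N_{sl}$ with $\g^0_{l0}$: inserting (6.2.3) into $p^s N_{sl}$ and discarding the term $p^s\dot{\pp}^h g_{sl}=0$ leaves $p^s\g^0_{sl}=\g^k_{sl}p_k p^s=\g^0_{l0}$, using $\g^k_{sl}=\g^k_{ls}$. Comparison with (6.2.3) then yields $p_m H^m_{ij}=\g^0_{ij}-\tfrac12\,\g^0_{l0}\,\dot{\pp}^l g_{ij}=N_{ij}$.

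I expect the main obstacle to be the bookkeeping in this last computation rather than any conceptual difficulty: one must correctly convert the vertical derivatives $\dot{\pp}^l g_{ab}$ of the metric into Cartan-tensor contractions and keep careful track of which contractions against $p_i$ vanish. Once the orthogonality relations (6.1.15) are applied systematically, every term of $p_m H^m_{ij}$ not already present in the defining formula (6.2.3) for $N_{ij}$ cancels, and both deflection identities follow.
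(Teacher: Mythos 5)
Your proof is correct and follows exactly the route the paper's own machinery sets up: the general deflection formulas (4.4.19$'$) reduce everything to the two contractions $p_m C^{mi}_j$ and $p_m H^m_{ij}$, the first vanishing by (6.1.15)--(6.1.17) and the second equalling $N_{ij}$ via (6.2.1)--(6.2.3) together with the symmetry (6.2.4). The paper states the proposition without a printed proof, and your verification---including the key cancellations $p^s\dot{\pp}^l g_{sj}=0$ from the Cartan-tensor orthogonality and the identification $p^s N_{sl}=\g^0_{l0}$---is precisely the intended argument.
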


But, the previous result allows to give a characterization of $C\Gamma$ by a system of axioms of Matsumoto type:
\begin{theorem}
$1^\circ$ For any space ${\cal C}^n=(M,K(x,p))$ there exists a unique linear connection $C\Gamma=(N_{ij},H^i_{jk},C_i^{jk})$ on the manifold $\wt{TM}$ verifying the following axioms:
$$1'.\ \Delta_{ij}=0;\ \ 2'.\ g^{ij}_{|h}=0;\ \ 3'.\ g^{ij}|^h=0;\ \ 4'.\ T^i_{jh}=0;\ \ 5'.\ S^{jh}_i=0.$$

$2^\circ$ The previous metrical connection is exactly the canonical metrical connection $C\Gamma$.
\end{theorem}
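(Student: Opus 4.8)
The plan is to prove both assertions at once using the standard strategy for a Matsumoto-type axiomatics: first check that the canonical metrical connection $C\Gamma=(N_{ij},H^i_{jk},C^{jk}_i)$ built in the preceding theorem already satisfies all five axioms $1'$--$5'$, and then show that $1'$--$5'$ determine a linear connection uniquely. Together these give existence, uniqueness, and the identification demanded in $2^\circ$, since the unique solution is forced to be $C\Gamma$.

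For existence I would merely assemble known facts. By its construction $C\Gamma$ is $h$- and $v$-metrical and $h$- and $v$-torsion free, which are exactly axioms $2'$, $3'$, $4'$, $5'$. The only additional requirement is $1'$, namely $\Delta_{ij}=p_{i|j}=0$; but this is precisely the deflection identity already recorded in (6.3.4). Hence $C\Gamma$ verifies $1'$--$5'$.

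Uniqueness is the substantive part. Let $(\bar N_{ij},\bar H^i_{jk},\bar C^{jk}_i)$ be any linear connection obeying $1'$--$5'$. The vertical coefficients come first: axioms $3'$ and $5'$ involve only the vertical derivatives $\dot{\partial}^i$ and not the nonlinear connection, so the usual cyclic Christoffel argument (permute the slots, add two relations, subtract the third) forces $\bar C^{jk}_i$ to be the symmetric combination in the second line of (6.3.3); in particular $\bar C^{jk}_i=C^{jk}_i$. Next, axioms $2'$ and $4'$ give, by the same symmetrization applied to $\bar\delta_j g_{sk}$, the expression $\bar H^i_{jk}=\dfrac12 g^{is}(\bar\delta_j g_{sk}+\bar\delta_k g_{js}-\bar\delta_s g_{jk})$, where $\bar\delta_j=\partial_j+\bar N_{jk}\dot{\partial}^k$ is built from the still-unknown $\bar N$.

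The heart of the matter, and the step I expect to be the main obstacle, is closing this coupled system: $\bar H$ depends on $\bar N$ through $\bar\delta$, while axiom $1'$ ties $\bar N$ back to $\bar H$. By the formula (4.4.19$'$) axiom $1'$ reads $\bar N_{ij}=p_m\bar H^m_{ij}$. I would substitute the Christoffel expression, contract $p_m g^{ms}=p^s$, expand $\bar\delta_j g_{sk}=\partial_j g_{sk}+\bar N_{jr}\dot{\partial}^r g_{sk}$, and invoke the homogeneity identity $C^{ijh}p_h=0$ of (6.1.15), which is equivalent to $p^i\dot{\partial}^r g_{ij}=0$. This makes the two terms $p^s\bar\delta_i g_{sj}$ and $p^s\bar\delta_j g_{si}$ lose their $\bar N$-dependence, leaving $\bar N_{ij}=\gamma^0_{ij}-\dfrac12(p^s\bar N_{sr})\dot{\partial}^r g_{ij}$ with $\gamma^0_{ij}=p_a\gamma^a_{ij}$ as in (6.2.2). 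A further contraction with $p^i$, once more using $p^i\dot{\partial}^r g_{ij}=0$, isolates the only surviving trace of $\bar N$ as $p^s\bar N_{sr}=\gamma^0_{r0}$. Feeding this back yields exactly $\bar N_{ij}=\gamma^0_{ij}-\dfrac12\gamma^0_{r0}\dot{\partial}^r g_{ij}$, which is the canonical nonlinear connection (6.2.3). With $\bar N=N$ now fixed, the Christoffel formula returns $\bar H^i_{jk}=H^i_{jk}$ of (6.3.3), so $(\bar N,\bar H,\bar C)=C\Gamma$. This establishes uniqueness, and combined with the existence step it shows that the unique connection satisfying $1'$--$5'$ is the canonical metrical connection $C\Gamma$, which is precisely the content of $2^\circ$.
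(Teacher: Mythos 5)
Your proposal is correct and follows exactly the Matsumoto-type argument the paper intends: the paper itself derives existence from the preceding theorem together with the deflection identity $\Delta_{ij}=0$ of (6.3.4), and leaves the uniqueness computation implicit (referring, as in the Finsler case, to the literature), which is precisely the part you work out in full. Your closing of the coupled system is sound — the identity $p^i\dot{\partial}^r g_{ij}=0$ does follow from (6.1.15) via $\dot{\partial}^r g^{ab}=-2C^{rab}$ and lowering of indices, and contracting $\bar N_{ij}=p_m\bar H^m_{ij}$ with $p^i$ correctly isolates $p^s\bar N_{sr}=\gamma^0_{r0}$ and reproduces the canonical nonlinear connection (6.2.3), so the unique solution is indeed $C\Gamma(N)$.
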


The following properties of $C\Gamma(N)$ are immediately
\begin{itemize}
\item[$1^\circ$] $K_{|h}=0$, $K|^h=\dfrac{p^h}{K}$;
\item[$2^\circ$] $K^2_{|h}=2p^h$;
\item[$3^\circ$] $p_{i|j}=0$, $p_i |^j=\de^j_i$;
\item[$4^\circ$] $p^i_{|j}=0$, $p^i |^j=g^{ij}$.
\end{itemize}
And the $d-$tensors of torsion of $C\Gamma(N)$ are
\begin{equation}
\label{6.3.5}
R_{ijh}=\delta_h N_{ij}-\de_j N_{ih}, C^{jh}_i, T^i_{jh}=0, S^{jh}_i=0, P^i_{jh}=H^i_{jh}-\dot{\pp}^iN_{jh}.
\end{equation}
Of course, we have
\begin{equation}
\label{6.3.6}
R_{ijh}=-R_{ihj}, P^i_{jh}=P^i_{hj},\ C^{jh}_i=C^{hj}_i.
\end{equation}

\begin{proposition}
The $d-$tensor of curvature $S^{ijh}_k$ is given by:
\begin{equation}
\label{6.3.7}
S^{ijh}_k=C^{mh}_k C^{ij}_m-C^{mj}_k C^{ih}_m.
\end{equation}
\end{proposition}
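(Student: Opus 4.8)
The plan is to specialize the general expression for the vertical curvature $d$-tensor of an $N$-linear connection, obtained in Section 4.4, to the canonical metrical connection $C\Gamma(N)$ of $\mathcal{C}^n$, and then to show that the two differentiation terms collapse into the stated purely quadratic expression. Since $C\Gamma(N)$ is in particular an $N$-linear connection on $T^*M$, the last formula of (4.4.22) applies verbatim, giving
$$S^{\ ijh}_k = \dot{\pp}^h C^{ij}_k - \dot{\pp}^j C^{ih}_k + C^{mj}_k C^{ih}_m - C^{mh}_k C^{ij}_m.$$
Thus it suffices to prove that the vertical-derivative part equals $2\left(C^{mh}_k C^{ij}_m - C^{mj}_k C^{ih}_m\right)$, for then substituting back and collecting the quadratic terms yields (\ref{6.3.7}).

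First I would record the relevant structure of the Cartan tensor. By (\ref{6.1.16}) we have $C^{jh}_i = g_{is}C^{sjh}$ with $C^{sjh}$ totally symmetric, and differentiating $K^2$ once more shows $\dot{\pp}^h g^{ij} = -2C^{hij}$. Differentiating the identity $g_{ks}g^{sm}=\de^m_k$ with respect to the momenta then produces $\dot{\pp}^h g_{ks} = 2\,g_{ka}g_{sb}C^{abh}$. Expanding
$$\dot{\pp}^h C^{ij}_k = (\dot{\pp}^h g_{ks})C^{sij} + g_{ks}\,\dot{\pp}^h C^{sij},$$
the decisive observation is that the second summand $g_{ks}\dot{\pp}^h C^{sij}$ is totally symmetric in the three upper indices $h,i,j$ (it is essentially a fourth-order symmetric derivative of $K^2$ with one index lowered by $g_{ks}$). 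Hence it is unchanged under the exchange $h\leftrightarrow j$ and cancels in the antisymmetric combination $\dot{\pp}^h C^{ij}_k - \dot{\pp}^j C^{ih}_k$.

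What survives is $2(\dot{\pp}^h g_{ks})C^{sij}$ minus its $h\leftrightarrow j$ counterpart, and lowering indices via $g_{ka}C^{abh}=C^{bh}_k$ and $g_{sb}C^{sij}=C^{ij}_b$ turns $(\dot{\pp}^h g_{ks})C^{sij}$ into $C^{mh}_k C^{ij}_m$. This gives
$$\dot{\pp}^h C^{ij}_k - \dot{\pp}^j C^{ih}_k = 2C^{mh}_k C^{ij}_m - 2C^{mj}_k C^{ih}_m,$$
and adding the quadratic terms $C^{mj}_k C^{ih}_m - C^{mh}_k C^{ij}_m$ from (\ref{4.4.22}) collapses everything to $C^{mh}_k C^{ij}_m - C^{mj}_k C^{ih}_m$, which is (\ref{6.3.7}). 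I expect the main obstacle to be purely the bookkeeping: keeping the index positions straight through the repeated raising and lowering, and confirming rigorously that the troublesome third-derivative term is symmetric enough to vanish under antisymmetrization in $h$ and $j$. A secondary point of care is to use consistently the sign convention $C^{jh}_i = g_{is}C^{sjh}$ fixed in (\ref{6.1.16}), since it differs in sign from the convention adopted in Chapter 5; apart from that, every step is a routine $v$-covariant computation.
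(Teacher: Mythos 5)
Your proof is correct and follows exactly the route the paper intends: the proposition is stated without proof, the surrounding text pointing back to the general curvature formulas of an $N$-linear connection (the last formula of (4.4.22)) specialized to $C\Gamma(N)$, which is precisely your starting point. Your key computation checks out in detail — from $\dot{\pp}^h g^{ij}=-2C^{hij}$ and hence $\dot{\pp}^h g_{ks}=2g_{ka}g_{sb}C^{abh}$, the total symmetry of the fourth-order momenta-derivative of $K^2$ makes the term $g_{ks}\dot{\pp}^h C^{sij}$ cancel under the $h\leftrightarrow j$ antisymmetrization, leaving $\dot{\pp}^h C^{ij}_k-\dot{\pp}^j C^{ih}_k=2\bigl(C^{mh}_k C^{ij}_m-C^{mj}_k C^{ih}_m\bigr)$, which combined with the quadratic terms of (4.4.22) collapses to (\ref{6.3.7}); your caution about the sign convention $C^{jh}_i=g_{is}C^{sjh}$ of (\ref{6.1.16}) versus the opposite-sign Cartan tensor of Chapter 5 is also well taken.
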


Denoting by $$R^{ij}_{\ kh}=g^{is} R^{\ j}_{s\ kh},\ etc.$$ and applying the Ricci identities (5.3.5), one obtains:
\begin{theorem}
The canonical metrical connection $C\Gamma(N)$ of the Cartan space ${\cal C}^n$ satisfies the identities:
\begin{equation}
\label{6.3.8}
R^{ij}_{\ kh}+R^{ji}_{\ kh}=0,\ P^{ij\ h}_{\ k}+P^{ji\ h}_{\ k}=0,\ S^{ijkh}+S^{jikh}=0.
\end{equation}
\begin{equation}
\label{6.3.9}
R^{\ o}_{i\ jk}+R_{ijk}=0,\ P^{\ o\ k}_{i\ j}+P^{k}_{\ ij}=0,\ S^{ojk}_i=0.
\end{equation}
\begin{equation}
\label{6.3.10}
R_{ojk}=0,\ P^k_{\ oj}=0.
\end{equation}
\end{theorem}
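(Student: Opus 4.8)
The plan is to derive all three groups of identities purely from the metricity and torsion-free axioms of $C\Gamma(N)$ together with the Ricci and deflection identities already recorded, so that no fresh computation of the curvature coefficients is needed. I would treat the three displayed lines in turn.

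First I would establish the skew-symmetry relations (6.3.8) by feeding the fundamental tensor $g^{ij}$ into the Ricci identities for a $(2,0)$-tensor. The canonical metrical connection is both $h$- and $v$-metrical, so $g^{ij}_{\ |h}=0$ and $g^{ij}|^h=0$; this is exactly the hypothesis under which the commutation formulas (4.4.24) collapse to (4.4.25'), i.e.\ $g^{mj}R^{\ i}_{m\ kh}+g^{im}R^{\ j}_{m\ kh}=0$ together with the analogous relations for $P$ and $S$. Raising the remaining free index with $g^{is}$ according to the conventions $R^{ij}_{\ kh}=g^{is}R^{\ j}_{s\ kh}$, $P^{ij\ h}_{\ k}=g^{is}P^{\ j\ h}_{s\ k}$, $S^{ijkh}=g^{is}S^{\ jkh}_{s}$ converts each relation into the stated antisymmetry in the first two upper indices, which is precisely (6.3.8).

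Next I would obtain (6.3.9) from the deflection identities. By Proposition 6.3.4 the connection is of Cartan type, namely $\Delta_{ij}=0$ and $\delin^i_j=\delta^i_j$; hence Proposition 4.4.5 applies verbatim and gives $p_mR^{\ m}_{i\ jk}+R_{ijk}=0$ and $p_mP^{\ m\ k}_{i\ j}+P^k_{\ ij}=0$, which are the first two equations of (6.3.9) once the contraction symbol is read as $R^{\ o}_{i\ jk}=p_mR^{\ m}_{i\ jk}$ (consistent with $\gamma^0_{jh}=\gamma^i_{jh}p_i$ in (6.2.2)). For the third equation I would take the corresponding Cartan-type relation $p_mS_i^{\ mjk}+S_i^{\ jk}=0$ and combine it with the $v$-torsion-free axiom $S_i^{\ jk}=0$ (axiom $5^\circ$), which leaves $S^{ojk}_i=p_mS_i^{\ mjk}=0$. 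Finally I would deduce (6.3.10) by contracting the relations of (6.3.9) with the vector $p^i=g^{is}p_s$ and invoking (6.3.8). Writing $R_{ijk}=-p_mR^{\ m}_{i\ jk}$ from (6.3.9) and contracting yields $R_{ojk}=p^iR_{ijk}=-p_sp_mR^{sm}_{\ jk}$, after raising the first index through $p^iR^{\ m}_{i\ jk}=p_sR^{sm}_{\ jk}$; since (6.3.8) makes $R^{sm}_{\ jk}$ skew-symmetric in $s,m$ while $p_sp_m$ is symmetric, the contraction vanishes and $R_{ojk}=0$. The identical argument applied to $P^k_{\ ij}=-p_mP^{\ m\ k}_{i\ j}$ together with the skew-symmetry of $P^{sm\ k}_{\ \ j}$ gives $P^k_{\ oj}=0$.

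The individual steps are short, so the main obstacle is not any deep computation but consistent bookkeeping: pinning down the meaning of the ``$o$'' contraction (a lowered slot contracted with $p^i$, a raised slot with $p_i$), tracking which index is raised when passing between $R^{\ i}_{m\ kh}$ and $R^{ij}_{\ kh}$, and verifying that every covariant-derivative term discarded in the first step is genuinely annihilated by metricity. The one place where a slip would propagate is the application of the $(2,0)$ Ricci identities (4.4.24): a misplaced sign or index there would corrupt both (6.3.8) and, through the contraction argument, (6.3.10), so I would check those index positions carefully before raising.
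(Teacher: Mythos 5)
Your proposal is correct and follows essentially the route the paper intends: the paper's proof consists only of the remark that one raises indices via $R^{ij}_{\ kh}=g^{is}R^{\ j}_{s\ kh}$ and applies the Ricci identities (5.3.5), which amounts exactly to your use of the metricity identities (4.4.25$'$) for (6.3.8) and of the Cartan-type deflection identities (Proposition 4.4.5, applicable since $\Delta_{ij}=0$, ${^{-}}\!\!\!\!\!\!\de^i_j=\de^i_j$) for (6.3.9). Your contraction argument for (6.3.10) — pairing the symmetric factor $p_sp_m$ against the skew-symmetry from (6.3.8) — is the natural completion of the paper's sketch and is carried out with the correct index conventions.
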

Of course, the index ``$o$'' means the contraction by $p_i$ or $p^i$.

The 1-form connections of $C\Gamma(N)$ are:
\begin{equation}
\label{6.3.11}
\oo^i_{\ j}=H^i_{\ jh}dx^h + C^{ih}_j\de p_h.
\end{equation}
Taking into account (\ref{6.2.5'}), one obtains
\begin{theorem}
The structure equations of the canonical metrical connection $C\Gamma(N)$ of the Cartan space ${\cal C}^n=(M,K(x,p))$ are
\begin{equation}
\label{6.3.12}
\begin{array}{l}
d(dx^i)-dx^m\wedge\oo^i_m=-\O^i;\\ \\
d(\de p_i)+\de p_m\wedge\oo^m_i=-\overset{\circ}\O_i;\\ \\
d\oo^i_j-\oo^m_j\wedge\oo^i_m=-\O^i_j,
\end{array}
\end{equation}
$\O^i$, $\overset{\circ}\O_i$ being the 2-forms of torsion:
\begin{equation}
\label{6.3.13}
\begin{array}{l}
\O^i=C^{ik}_j dx^j\wedge\de p_k\\ \\
\overset{\circ}\O_i=\dfrac12 R_{ijk}dx^j\wedge dx^k+P^k_{\ ij}dx^j\wedge\de p_k
\end{array}
\end{equation}
and $\O^i_j$ is 2-form of curvature:
\begin{equation}
\label{6.2.14}
\O^i_j\hspace*{-0.5mm}=\hspace*{-0.5mm}\dfrac12 R^i_{j\ km}dx^k\wedge dx^m+P^{\ i\ m}_{j\ k}dx^k\wedge\de p_m\hspace*{-0.5mm}+\hspace*{-0.5mm}\dfrac12 S^{\ ikm}_j\de p_k\wedge\de p_m.
\end{equation}
\end{theorem}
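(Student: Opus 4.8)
The plan is to derive these equations by specialization rather than by a fresh computation, since the canonical metrical connection $C\Gamma(N)=(H^i_{jk},C^{jk}_i)$ is nothing but a particular $N$-linear connection attached to the Cartan space ${\cal C}^n$, and the structure equations of an \emph{arbitrary} $N$-linear connection on $T^*M$ were already established in Section 4.5. First I would recall the connection $1$-forms $\oo^i_j=H^i_{jh}dx^h+C^{ih}_j\de p_h$ from (\ref{6.3.11}), and note that the three geometric objects $d(dx^i)-dx^m\wedge\oo^i_m$, $d(\de p_i)+\de p_m\wedge\oo^m_i$ and $d\oo^i_j-\oo^m_j\wedge\oo^i_m$ are precisely those shown in Chapter 4 to be, respectively, a $d$-vector, a $d$-covector and a $(1,1)$ $d$-tensor. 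Hence the general theorem of Section 4.5 applies verbatim and yields the three equations of the first display, with right-hand members $-\O^i$, $-\overset{\circ}\O_i$ and $-\O^i_j$ the torsion and curvature $2$-forms of $C\Gamma(N)$.

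Second, I would insert into the general torsion and curvature $2$-forms (\ref{4.5.14})--(\ref{4.5.15}) the specific feature of the canonical metrical connection: by its defining axioms it is $h$-torsion free, $T^i_{jk}=0$, and $v$-torsion free, $S^{jk}_i=0$. Substituting $T^i_{jk}=0$ in the general expression for $\O^i$ kills the $dx^j\wedge dx^k$ term and leaves $\O^i=C^{ik}_j\,dx^j\wedge\de p_k$; substituting $S^{jk}_i=0$ in $\overset{\circ}\O_i$ removes the $\de p_j\wedge\de p_k$ term and leaves $\overset{\circ}\O_i=\frac12 R_{ijk}\,dx^j\wedge dx^k+P^k_{ij}\,dx^j\wedge\de p_k$. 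The curvature $2$-form $\O^i_j$ retains its full three-term shape, now assembled from the curvature $d$-tensors $R^i_{j\ km}$, $P^{\ i\ m}_{j\ k}$ and the $v$-curvature $S^{\ ikm}_j$ of ${\cal C}^n$ recorded in this chapter.

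I expect no genuine analytic obstacle: all the real work is hidden in the general structure theorem of Section 4.5, and the argument here is essentially bookkeeping driven by the two vanishing-torsion axioms. The only step requiring a little care is to verify that the torsion and curvature $d$-tensors entering (\ref{4.5.14})--(\ref{4.5.15}) coincide, for $C\Gamma(N)$, with the quantities $R_{ijk}$, $P^k_{ij}$, $S^{\ ikm}_j$ listed in (\ref{6.3.5}) and (\ref{6.3.7}); this is immediate from their definitions once $T^i_{jk}=0$ and $S^{jk}_i=0$ are imposed. As an independent check one may compute directly, using $d(dx^i)=0$ and $d(\de p_i)=-dN_{ji}\wedge dx^j$ and expanding $dN_{ji}$ in the adapted cobasis $(dx^k,\de p_k)$, which reproduces $R_{ijk}$ through (\ref{6.2.6}) and the coefficient $P^k_{ij}$ through the relation $P^i_{jh}=H^i_{jh}-\dot{\pp}^i N_{jh}$ of (\ref{6.3.5}).
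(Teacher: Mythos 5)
Your proposal is correct and follows essentially the same route as the paper: the text proves this theorem precisely by specializing the general structure equations of an arbitrary $N$-linear connection on $T^*M$ (Theorem of Section 4.5, with the torsion and curvature $2$-forms (4.5.14)--(4.5.15)) to $C\Gamma(N)$, where the axioms $T^i_{\ jk}=0$ and $S_i^{\ jk}=0$ delete the $\frac12 T^i_{\ jk}\,dx^j\wedge dx^k$ and $\frac12 S_i^{\ jk}\,\de p_j\wedge\de p_k$ terms, exactly as you argue. Your closing direct check via $d(\de p_i)=-dN_{ji}\wedge dx^j$ is a harmless extra verification not present in the paper but consistent with it.
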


Applying Proposition 4.4.2, we determine the Bianchi identities of $C\Gamma(N)$.

Now, we can develop the geometry of the associated Hamilton space $H^n_{\cal C}=(M,K^2(x,p))$. Also, in the case of Cartan space, the geometrical model ${\cal K}^{2n}=(\wt{T^*M},\G,\F)$ is an almost K\"{a}hlerian one.

Before finish this chapter is opportune to say some words on the Legendre transformation.

\section{The duality between Lagrange and Hamilton spaces}\index{$L-$duality}
\setcounter{equation}{0}
\setcounter{definition}{0}
\setcounter{theorem}{0}
\setcounter{lemma}{0}

The duality, via Legendre transformation, between Lagrange and Hamilton space was formulated by R. Miron in the papers and it was developed by D. Hrimiuc, P.L. Antonelli, D. Bao, et al. Of course, it was suggested by Theoretical Mechanics.

The theory of Legendre duality is presented here follows the Chapter 7 of the book \cite{MHS}.

Let $L$ be a regular Lagrangian, ${\cal L}=\dfrac12 L$ on a domain ${\cal D}\subset TM$ and let $H$ be a regular Hamiltonian ${\cal H}=\dfrac12 H$, on a domain ${\cal D}^*\subset T^*M$.

Hence for $\dot{\pp}_i=\dfrac{\pp}{\pp y^i}$, $\dot{\pp}^i=\dfrac{\pp}{\pp p_i}$ the matrices with entries:
\begin{equation}
\label{6.4.1}
\begin{array}{l}
g_{ij}(x,y)=\dot{\pp}_i\dot{\pp}_j{\cal L}(x,y),\\ \\
g^{*ij}(x,p)=\dot{\pp}^i\dot{\pp}^j{\cal H}(x,p)
\end{array}
\end{equation}
are nondegenerate on ${\cal D}$ and on ${\cal D}^*$, $(i,j,...=1,2,...,n)$.

Since ${\cal L}\in{\cal F}({\cal D})$ is a differentiable map consider the {\sl fiber derivative} of ${\cal L}$ locally given by
\begin{equation}
\label{6.4.2}
\varphi(x,y)=(x^i,\dot{\pp}_j{\cal L}(x,y))
\end{equation}
which is called the {\it Legendre transformation}.
It is easy to see that: ${\cal L}$ is a regular Lagrangian if and only if $\varphi$ is a local diffeomorphism, \cite{MHS}.
In the same manner, for ${\cal H}\in{\cal F}{\cal D}^*$, the fiber derivative is locally given by
\begin{equation}
\label{6.4.3}
\psi(x,y)=(x^i,\dot{\pp}^i{\cal H}(x,y)),
\end{equation}
which is a local diffeomorphism if and only if ${\cal H}$ is regular.

Now, let us consider $\dfrac12 L(x,y)={\cal L}(x,y)$ the fundamental function of a Lagrange space. Then $\varphi$ defined by (\ref{6.4.2}) is a diffeomorphism between two open set $U\subset{\cal D}$ and $U^*\subset{\cal D}^*$. In this case, we can define
\begin{equation}
\label{6.4.4}
{\cal H}^*(x,p)=p_i y^i-{\cal L}(x,y),
\end{equation}
where $y=(y^i)$ is solution of the equation
\begin{equation}
\label{6.4.4'}
p_i=\dot{\pp}_i{\cal L}(x,y).\tag{6.4.4'}
\end{equation}

\begin{remark}
In the theory of Lagrange space the function ${\cal E}(x,p)=y^i\dot{\pp}_i{\cal L}-{\cal L}(x,y)$ is the energy of Lagrange space $L^n$.
\end{remark}

It follows without difficulties that $H^{*n}=(M,H^*(x,p))$, $H^*=2{\cal H}^*$ is a Hamilton space. Its fundamental tensor $g^{*ij}(x,p)$ is given by ${g^{ij}(x,\varphi^{-1}(x,p))}$.

We set $H^{*n}=Leg\ L^n$ and say that $H^{*n}$ is the dual of $L^n$ via Legendre transformation determined by $L^n$.

\begin{remark}
The mapping $Leg$ was used in chapter 2 to transform the Euler--Lagrange equations (ch. 2, part I) into the Hamilton--Jacobi equations (ch. 2, part I).
\end{remark}

Analogously, for a Hamilton space $H^n=(M,H(x,p))$, ${\cal H}=\dfrac12 H$, consider the function $${\cal L}^*(x,y)=p_i y^j-{\cal H}(x,p),$$ where $p=(p_i)$ is the solution of the equation (\ref{6.4.4}). Thus $L^{*n}=(M,L^*(x,y))$, $L^*=2{\cal L}^*$, is a Lagrange space, dual, via Legendre transformation of the Hamilton space $H^n$. So, $L^{*n}=Leg\ H^n$.

One proves: $Leg(Leg\ L^n)=L^n$, $Leg(Leg\ H^n)=H^n$.

The diffeomorphisms $\varphi$ and $\psi$ have the property $\varphi=\psi^{-1}$. And they transform the fundamental object fields from $L^n$ in the fundamental object fields of $H^{*n}=Leg\ L^n$, and conversely. For instance, the Euler--Lagrange equation of $L^n$ are transformed in the Hamilton--Jacobi equations of $H^n$. The canonical nonlinear connection of $L^n$ is transformed in the canonical nonlinear connection of $H^{*n}$ etc.

\bigskip

\noindent{\bf Examples.}

$1^\circ$ The Lagrange space of Electrodynamics $L^n_0=(M,L_0(x,y))$: $$L_0(x,y)=mc\g_{ij}(x)y^iy^j+\dfrac{2e}{m}A_i(x)y^i$$ where $\g_{ij}(x)$ is a pseudo-Riemannian metric, has the Legendre transformation: $$\varphi:\ x^i=x^i,\ p_i=\dfrac12\dfrac{\pp L_0}{\pp y^i}=mc\g_{ij}(x)y+\dfrac{e}{m}A_i(x)$$ and $$\varphi^{-1}=\psi:\ x^i=x^i,\ y^i=\dfrac1{mc}\g^{ij}(x)\left(p_j-\dfrac{e}{m}A_j(x)\right).$$ The Hamilton space $H^*_0=Leg\ L_0$ has the fundamental function:
\begin{equation}
\label{6.4.6}
H^*_0=\dfrac1{mc}\g^{ij}(x)p_ip_j-\dfrac{2e}{mc^2}A^i(x)p_i+\dfrac{e^2}{mc^3}A^i(x)A_j(x).
\end{equation}

$2^\circ$ The Lagrange space of Electrodynamics $L^n=(M,L(x,y))$ in which $L=L_0+U(x)$, ((2.1.4, Ch. 2)), i.e. $$L(x,y)=mc\g_{ij}(x)y^iy^j+\dfrac{2e}{m}A_i(x)y^i+U(x),$$ has the Legendre transformation: $$\varphi_i:\ x^i=x^i,\ p_i=mc\g_{ij}(x)y^j+\dfrac{e}{m}A_i(x)$$ $$\varphi^{-1}=\psi:\ x^i=x^i,\ y^i=\dfrac1{mc}\g^{ij}(x)\left(p_j-\dfrac{e}{m}A_j\right).$$  And $H^{*n}=Leg\ L^n$ has the fundamental function $H^*(x,p)$ given by $$H^*(x,p)=\dfrac{1}{c(m+\frac{U}{c^2})}\g^{ij}(x)p_ip_j-\dfrac{2e}{c^2(m+\frac{U}{c^2})}p_iA^i(x)+$$ $$+\dfrac{e^2}{c^3(m+\frac{U}{c^2})}A_i(x)A^i(x)-\dfrac{1}{c}\dfrac{U(x)}{c}.$$

$3^\circ$ The class of Finsler space $\{F^n\}$ is inclosed in the calss of Lagrange space $\{L^n\}$, that is $\{F^n\}\subset\{L^N\}$, one can consider the restriction of the Legendre transformation $Leg\ L^n$ to the class of Finsler spaces.

In this case, the mapping $\varphi:(x,y)\in D\to (x,p)\in D^*$ is given by $$\varphi(x,y)=(x,p)\ {\rm{with}}$$
\be
p_i=\dfrac12\dot{\pp}^i F^2
\ee
and one obtains: $Leg(F^n)={\cal C}^{*n}$, ${\cal C}^{*n}=(M,K^*(x,p))$ with $$K^{*2}=2p_i y^i-F^2(x,y)=y^i\dot{\pp}_i F^2(x,y)=F^2(x,y)$$ and $y^i$ is solution of the equation (6.4.6).

\begin{theorem}
The dual, via Legendre transformation, of a Finsler space $F^n=(M,F(x,y))$ is a Cartan space ${\cal C}^{*n}=(M,F(x,\varphi^{-1}(x,p)))$.
\end{theorem}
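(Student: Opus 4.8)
The plan is to apply the general Legendre duality established in this section to the particular regular Lagrangian $L=F^2$, and then to check that the resulting Hamilton space is in fact a Cartan space, i.e. that its fundamental function verifies the four axioms of Section 6.1. Since $F^n=(M,F)$ is a Finsler space, axiom $4^{\circ\prime}$ tells us that $L^n_F=(M,F^2)$ is a Lagrange space with positively defined fundamental tensor $g_{ij}=\dfrac12\dot{\pp}_i\dot{\pp}_j F^2$. Taking ${\cal L}=\dfrac12 F^2$, the fiber derivative $\varphi(x,y)=(x^i,\dot{\pp}_i{\cal L})=(x^i,p_i)$ with $p_i=\dfrac12\dot{\pp}_i F^2$ is a local diffeomorphism precisely because $g_{ij}$ is nonsingular. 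Hence $H^{*n}=Leg\ L^n_F$ is a well-defined Hamilton space with fundamental function $H^*=2{\cal H}^*$, where ${\cal H}^*(x,p)=p_i y^i-{\cal L}(x,y)$ and $y=\varphi^{-1}(x,p)$, and with fundamental tensor $g^{*ij}(x,p)=g^{ij}(x,\varphi^{-1}(x,p))$.

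First I would identify $K^{*2}$ with $H^*$. From ${\cal H}^*=p_iy^i-\dfrac12 F^2$ we obtain $K^{*2}=H^*=2p_iy^i-F^2$. The crucial simplification comes from the $2$-homogeneity of $F^2$ in $y$: by Euler's theorem $y^i\dot{\pp}_i F^2=2F^2$, and since $2p_i=\dot{\pp}_i F^2$ this gives $2p_iy^i=y^i\dot{\pp}_i F^2=2F^2$. Therefore $K^{*2}=2F^2-F^2=F^2(x,\varphi^{-1}(x,p))$, so $K^*=F(x,\varphi^{-1}(x,p))$, which is exactly the claimed fundamental function. It then remains to verify the Cartan axioms for ${\cal C}^{*n}=(M,K^*)$.

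The differentiability of $K^*$ on $\wt{T^*M}$ and its continuity on the null section follow from the corresponding properties of $F$ on $\wt{TM}$ together with the smoothness of $\varphi^{-1}$, and the positivity of $K^*$ follows from that of $F$. I expect the homogeneity axiom to be the delicate step, since it requires tracking how the scaling $p\mapsto\lambda p$, $\lambda>0$, lifts through $\varphi^{-1}$. Because $F^2$ is $2$-homogeneous, $\dot{\pp}_i F^2$ is $1$-homogeneous in $y$, so $p_i(x,\lambda y)=\lambda p_i(x,y)$; equivalently $\varphi(x,\lambda y)=(x,\lambda p)$, whence $\varphi^{-1}(x,\lambda p)=\lambda\,\varphi^{-1}(x,p)$ in the fibre coordinate. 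Combining this with the $1$-homogeneity of $F$ yields $K^*(x,\lambda p)=F(x,\lambda y)=\lambda F(x,y)=\lambda K^*(x,p)$, so $K^*$ is positively $1$-homogeneous in the momenta.

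Finally, for the Hessian axiom I would invoke the identification of fundamental tensors under $Leg$: the fundamental tensor of ${\cal C}^{*n}$ is $g^{*ij}=\dfrac12\dot{\pp}^i\dot{\pp}^j K^{*2}$, which coincides with $g^{ij}(x,\varphi^{-1}(x,p))$, the contravariant of the Finsler metric. As the inverse of a positively defined symmetric matrix is again positively defined, $g^{*ij}(x,p)$ is positively defined on $\wt{T^*M}$, which is axiom $4^\circ$. All four axioms being verified, $(M,K^*)$ is a Cartan space, as asserted. The main obstacle is thus conceptual rather than computational: ensuring that the $y$-scaling is transported correctly through the Legendre map, so that $1$-homogeneity in $y$ on the Finsler side becomes $1$-homogeneity in $p$ on the Cartan side.
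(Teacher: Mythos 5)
Your proof is correct and follows essentially the same route as the paper: restrict the Legendre transformation $Leg$ to $L=F^2$ and use Euler's theorem for the $2$-homogeneous $F^2$ (together with $2p_i=\dot{\pp}_i F^2$) to obtain the key identity $K^{*2}=2p_iy^i-F^2=F^2\bigl(x,\varphi^{-1}(x,p)\bigr)$, which is exactly the computation the paper gives just before the theorem. Your explicit verification of the Cartan axioms — the $1$-homogeneity of $K^*$ in $p$ via $\varphi^{-1}(x,\lambda p)=\lambda\,\varphi^{-1}(x,p)$, and the positive definiteness of $g^{*ij}=g^{ij}\circ\varphi^{-1}$ as the inverse of a positive definite matrix — merely fills in details the paper leaves implicit from its general statement that $H^{*n}=Leg\,L^n$ is a Hamilton space with fundamental tensor $g^{ij}(x,\varphi^{-1}(x,p))$.
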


\newpage

\begin{partbacktext}
\part{Lagrangian and Hamiltonian Spaces of higher order}

\noindent In this part of the book we study, the notions of Lagrange and Hamilton spaces of order $k$. They was introduced and investigated by the author \cite{mir11}. Without explicitly formulated a clear definition of these spaces, a major contributions to edifice of these geometrics have been done by M. Crampin and colab. \cite{Cr}, M. de Leone and colab. \cite{leonrodri}, A. Kawaguchi \cite{kawa2}, I. Vaisman \cite{VI2} etc.

For details, we refer to the books: {\it The Geometry of Higher Order Lagrange Spaces. Applications to Mechanics and Physics}, Kluwer Acad. Publ. FTPH, 82, 1997, \cite{mir11}; {\it The Geometry of Higher--Order Finsler Spaces}, Hadronic Press, Inc. USA, 1998, \cite{mir12}; {\it The Geometry of Higher Order Hamilton Spaces. Applications to Hamiltonian Mechanics}, Kluwer Acad. Publ., FTPH 132, 2003 \cite{mir13}; as well as the papers \cite{mirata}--\cite{mirata6}.

This part contents: The geometry of the manifold of accelerations $T^k M$, Lagrange spaces of order $k$, $L^{(k)n}$, Finsler spaces of order $k$, $F^{(k)n}$ and dual, via Legendre transformation Hamilton spaces $H^{(k)n}$ and Cartan spaces ${\cal C}^{(k)n}$.

\end{partbacktext}

\setcounter{chapter}{0}
\chapter{The Geometry of the manifold $T^kM$}
\label{ch1p1} 


The importance of Lagrange geometries consists of the fact that
the variational problems for Lagrangians have numerous
applications: Mathematics, Physics, Theory
of Dynamical Systems, Optimal Control, Biology, Economy etc.

But, all of the above mentioned applications have imposed also the
introduction of the notions of higher order Lagrange spaces. The
base manifold of this space is the bundle of accelerations of
superior order. The methods used in the construction of the
geometry of higher order Lagrange spaces are the natural
extensions of those used in the edification of the Lagrangian
geometries exposed in chapters 1, 2 and 3.

The concept of higher order Lagrange space was given by author in
the books \cite{mir11}, \cite{mir5}. The problems raised by the geometrization of
Lagrangians systems of order $k>1$ were been investigated by many
scholars: Ch. Ehresmann \cite{ehre2}, P. Libermann \cite{Libermann}, J. Pommaret \cite{pomm}, J. T. Synge \cite{SJ},
M. Crampin \cite{Cr}, P. Saunders \cite{saunders}, G.S. Asanov \cite{As}, D. Krupka \cite{krupka}, M. de L\'{e}on \cite{deLeon},
H. Rund \cite{rund}, A. Kawaguchi \cite{kawa}, K. Yano \cite{YI}, K. Kondo \cite{kondo}, D.
Grigore \cite{grigore}, R. Miron \cite{mir5}, \cite{mir6} et al.

In this chapter we shall present, briefly, the following problems:
\begin{itemize}
\item[$1^\circ$] The geometry of total space of the bundle of
higher order accelerations. \item[$2^\circ$] The definition of
higher order Lagrange space, based on the nondegenerate
Lagrangians of order $k\geq 1$. \item[$3^\circ$] The solving of
the old problem of prolongation of the Riemannian structures,
given on the base manifold $M$, to the Riemannian structures on
the total space of the bundle of accelerations of order $k\geq 1$,
we prove for the first time the existence of Lagrange spaces of
order $k\geq 1$. \item[$4^\circ$] The elaboration of the
geometrical ground for variational calculus involving Lagrangians
which depend on higher order accelerations. \item[$5^\circ$] The
introduction of the notion of higher order energies and proof of
the law of conservation. \item[$6^\circ$] The notion of
$k-$semispray. Nonlinear connection the canonical metrical
connection and the structure equations. \item[$7^\circ$] The
Riemannian $(k-1)n-$almost contact model of a Lagrange space of
order $k$.
\end{itemize}

Evidently, we can not sufficiently develop these subjects. For
much more informations one can see the books \cite{mir11}, \cite{mir12}.

Throughout in this chapter the differentiability of manifolds and
of mappings means the class $C^\infty$.

\section{The bundle of acceleration of order $k\geq 1$}\index{Acceleration of order $T^kM$}
\setcounter{equation}{0}\setcounter{theorem}{0}\setcounter{definition}{0}\setcounter{proposition}{0}\setcounter{remark}{0}

In Analytical Mechanics a real $n-$dimensional differentiable
manifold $M$ is considered as space of configurations of a
physical system. A point $(x^i)\in M$ is called a material point. A
mapping $c:t\in I\rightarrow (x^i(t))\in U\subset M$ is a law of
moving (a law of evolution), $t$ is time, a pair $(t,x)$ is an event and
the $k$-uple $\left(\displaystyle\frac{dx^i}{dt},\cdots,\displaystyle\frac 1{k!}\displaystyle\frac{d^k x^i}{dt^k}\right)$
gives the velocity and generalized accelerations of order $2$,
..., $k-1$. The factors $\displaystyle\frac 1{h!}$ ($h=1,...,k$)
are introduced here for the simplicity of calculus. In this
chapter we omit the word ``generalized'' and say shortly, the
acceleration of order $h$, for $\displaystyle\frac
1{h!}\displaystyle\frac{d^h x^i}{dt^h}$. A law of moving $c:t\in
I\rightarrow c(t)\in U$ will be called a curve parametrized by
time $t$.

In order to obtain the differentiable bundle of accelerations of
order $k$, we use the accelerations of order $k$, by means of
geometrical concept of contact of order $k$ between two curves in
the manifold $M$.

Two curves $\rho,\sigma:I\to M$ in $M$ have at the point $x_0\in
M$, $\rho(0)=\sigma(0)=x_0\in U$ (and $U$ a domain of local chart
in $M$) have a {\it contact} of order $k$ if we have
\begin{equation}
\dfrac{d^\alpha(f\circ\rho)(t)}{dt^\alpha}|_{t=0}=\dfrac{d^\alpha(f\circ\sigma)(t)}{dt^\alpha}|_{t=0},
(\alpha=1,...,k).
\end{equation}

It follows that: the curves $\rho$ and $\sigma$ have at the point
$x_0=\rho(0)=\sigma(0)$ a contact of order $k$ if and only if the
accelerations of order $1,2,...,k$ on the curve $\rho$ at $x_0$
have the same values with the corresponding accelerations on the
curve $\sigma$ at point $x_0$.

The relation ``{\it to have a contact of order $k$}'' is an
equivalence. Let $[\rho]_{x_0}$ be a class of equivalence and
$T^k_{\ x_0}M$ the set of equivalence classes. Consider the set
\begin{equation}
T^k M=\displaystyle\bigcup_{x_0\in M}T^k_{\ x_0}M
\end{equation}
and the mapping:
\begin{equation}
\pi^k:[\rho]_{x_0}\in T^k M\to x_0\in M,\quad
\forall[\rho]_{x_0}.\tag{1.1.2'}
\end{equation}

Thus the triple $(T^k M,\pi^k, M)$ can be endowed with a natural
differentiable structure exactly as in the cases $k=1$, when
$(T^1M,\pi^1,M)$ is the tangent bundle.

If $U\subset M$ is a coordinate neighborhood on the manifold $M$,
$x_0\in U$ and the curve $\rho: I\to U$, $\rho_0=x_0$ is
analytical represented on $U$ by the equations $x^i=x^i(t)$, $t\in
I$, then $T^k_{x_0}M$ can be represented by:
\begin{equation}
x_0^i=x^i(0),\ y^{(1)i}_0=\displaystyle\frac{dx^i}{dt}(0),...,\
y^{(k)i}_0=\displaystyle\frac 1{k!}\displaystyle\frac{d^kx^i}{%
dt^k}(0).
\end{equation}

Setting
\begin{equation}
\phi:([\rho]_{x_0})\in
T^kM\to\phi([\rho]_{x_0})=(x^i_0,y^{(1)i}_0,...,y^{(k)i}_0)\in
R^{(k+1)n},
\end{equation}
it follows that the pair $(\pi^k)^{-1}(U),\phi)$ is a local chart
on $T^kM$ induced by the local chart $(U,\varphi)$ on the manifold
$M$.

So a differentiable atlas of the manifold $M$ determine a
differentiable atlas on $T^kM$ and the triple $(T^kM,\pi^k,M)$ is
a differentiable bundle. Of course the mapping $\pi^k:T^kM\to M$
is a submersion.

$(T^kM,\pi^k,M)$ is called the $k$ accelerations bundle or tangent
bundle of order $k$ or $k-$osculator bundle. A change of
local coordinates $(x^i,y^{(1)i},...,y^{(k)i})\to
(\widetilde{x}^i,\widetilde{y}^{(1)i},...,\widetilde{y}^{(k)i})$
on the manifold $T^k M$, according with (1.1.3), is given by:
\begin{equation}
\left\{
\begin{array}{l}
\widetilde{x}^i=\widetilde{x}^i(x^1,...,x^n),\ rank\left(
\displaystyle
\frac{\partial \widetilde{x}^i}{\partial x^j}\right) =n \\
\\
\widetilde{y}^{(1)i}=\displaystyle\frac{\partial
\widetilde{x}^i}{\partial
x^j}y^{(1)j} \\
\\
2\widetilde{y}^{(2)i}=\displaystyle\frac{\partial
\widetilde{y}^{(1)i}}{
\partial x^j}y^{(1)j}+2\displaystyle\frac{\partial \widetilde{y}^{\left(
1\right)i}}{\partial y^{\left(1\right)j}}y^{\left( 2\right) j} \\
................................................. \\
k\widetilde{y}^{(k)i}\hspace*{-1mm}=\hspace*{-1mm}\displaystyle\frac{\partial
\widetilde{y}^{(k-1)i}}{
\partial x^j}y^{(1)j}\hspace*{-0.8mm}+\hspace*{-0.8mm}2\displaystyle\frac{\partial \widetilde{y}^{\left(
k-1\right)i}}{\partial y^{\left( 1\right) j}}y^{\left(2\right)j}\hspace*{-0.7mm}+\hspace*{-1mm}...\hspace*{-1mm}+\hspace*{-0.5mm}k %
\displaystyle\frac{\partial\widetilde{y}^{\left(k-1\right)
i}}{\partial y^{\left(k-1\right)j}}y^{\left(k\right)j}.
\end{array}
\right.
\end{equation}

And remark that we have the following identities:
\begin{equation}
\begin{array}{l}
\displaystyle\frac{\partial\widetilde{y}^{(\alpha)i}}{\partial x^j}=
\displaystyle\frac{\partial\widetilde{y}^{(\alpha+1)i}}{\partial
y^{(1)j}}=...=\displaystyle\frac{\partial\widetilde{y}^{(k)i}}{\partial y^{(k-\alpha)j}},\\ \\
\hfill (\alpha=0,...,k-1;y^{(0)}=x).
\tag{1.1.5'}
\end{array}
\end{equation}

We denote a point $u\in T^kM$ by $u=(x,y^{(1)},...,y^{(k)})$ and
its coordinates by $(x^i,y^{(1)i},...,y^{(k)i})$.

A section of the projection $\pi^k$ is a mapping $S:M\to T^kM$
with the property $\pi^k\circ S=1_M$. And a local section $S$ has
the property $\pi^k\circ S_{|U}=1_U$.

If $c:I\rightarrow M$ is a smooth curve, locally represented by
$x^i=x^i(t)$, $t\in I$, then the mapping
$\widetilde{c}:I\rightarrow T^kM$ given by:
\begin{equation}
x^i=x^i(t),\ \ y^{\left( 1\right) i}=\displaystyle\frac
1{1!}\displaystyle
\frac{dx^i}{dt}(t),...,\ \ y^{\left( k\right) i}=\displaystyle\frac 1{k!} %
\displaystyle\frac{d^{\left( k\right)}x^{i}}{dt^k}(t),\ t\in I
\end{equation}
is {\it the extension of order} $k$ to $T^kM$ of $c$. We have $\pi
^k\circ \widetilde{c} =c. $

The following property holds:

If the differentiable manifold $M$ is paracompact, then $T^kM$ is
a paracompact manifold.

We shall use the manifold $\widetilde{TM}=T^kM\setminus\{0\}$,
where $0$ is the null section of $\pi^k$.

\section{The Liouville vector fields}
\setcounter{equation}{0}\setcounter{theorem}{0}\setcounter{definition}{0}\setcounter{proposition}{0}\setcounter{remark}{0}

The natural basis at point $u\in T^kM$ of $T_u(T^kM)$ is given by
$$\left(\displaystyle\frac{\partial}{\partial
x^i},\displaystyle\frac{\partial}{\partial
y^{(1)i}},...,\displaystyle\frac{\partial}{\partial
y^{(k)i}}\right)_u.$$ A local coordinate changing (1.1.5) transform
the natural basis by the following rule:
\begin{equation}
\begin{array}{l}
\displaystyle\frac \partial {\partial
x^i}=\displaystyle\frac{\partial \widetilde{x}^j}{\partial
x^i}\displaystyle\frac \partial {\partial
\widetilde{x}^j}+\displaystyle\frac{\partial
\widetilde{y}^{(1)j}}{\partial
x^i}\displaystyle\frac \partial {\partial \widetilde{y}^{(1)j}}+...+ %
\displaystyle\frac{\partial \widetilde{y}^{(k)j}}{\partial x^i}\displaystyle %
\frac \partial {\partial \widetilde{y}^{(k)j}} \\
\\
\displaystyle\frac \partial {\partial y^{(1)i}}=\qquad \qquad
\displaystyle \frac{\partial \widetilde{y}^{(1)j}}{\partial
y^{(1)i}}\displaystyle\frac
\partial {\partial \widetilde{y}^{(1)j}}+...+\displaystyle\frac{\partial
\widetilde{y}^{(k)j}}{\partial y^{(1)i}}\displaystyle\frac
\partial
{\partial \widetilde{y}^{(k)j}} \\
............................................................... \\
\displaystyle\frac \partial {\partial y^{(k)i}}=\qquad \qquad
\qquad \qquad \qquad \qquad \displaystyle\frac{\partial
\widetilde{y}^{(k)j}}{\partial y^{(k)i}}\displaystyle\frac
\partial {\partial \widetilde{y}^{(k)j}},
\end{array}
\end{equation}
calculated at the point $u\in T^kM$.

The natural cobasis $(dx^i,dy^{(1)i},...,dy^{(k)i})_u$ is
transformed by (1.1.5) as follows:
\begin{equation}
\begin{array}{l}
d\widetilde{x}^i=\displaystyle\frac{\partial
\widetilde{x}^i}{\partial x^j}
dx^j, \\
\\
d\widetilde{y}^{\left( 1\right) i}=\displaystyle\frac{\partial
\widetilde{y} ^{(1)i}}{\partial
x^j}dx^j+\displaystyle\frac{\partial \widetilde{y}^{(1)i}}{
\partial y^{(1)j}}dy^{\left( 1\right) j}, \\
............................................................... \\
d\widetilde{y}^{\left( k\right) i}=\displaystyle\frac{\partial
\widetilde{y} ^{(k)i}}{\partial
x^j}dx^j+\displaystyle\frac{\partial \widetilde{y}^{(k)i}}{
\partial y^{(1)j}}dy^{\left( 1\right) j}+...+\displaystyle\frac{\partial
\widetilde{y}^{(k)i}}{\partial y^{(k)j}}dy^{\left( k\right) j}.
\end{array}
\tag{1.2.1'}
\end{equation}

The formulae (1.2.1) and (1.2.1') allow to determine some important
geometric object fields on the total space of accelerations bundle
$T^kM$.

The vertical distribution $V_1$ is local generated by the vector
fields $\left\{\displaystyle\frac{\partial}{\partial
y^{(1)i}},\right.$ $\left.
...,\displaystyle\frac{\partial}{\partial y^{(k)i}}\right\}$,
$i=1,...,n$. $V_1$ is integrable and of dimension $kn$. The
distribution $V_2$ local generated by
$\left\{\displaystyle\frac{\partial}{\partial
y^{(2)i}},...,\displaystyle\frac{\partial}{\partial
y^{(k)i}}\right\}$ is also integrable, of dimension $(k-1)n$ and
it is a subdistribution of $V_1$. And so on.

The distribution $V_k$ local generated by $\left\{\displaystyle\frac
{\partial}{\partial y^{(k)i}}\right\}$ is integrable and of dimension
$n$. It is a subdistribution of the distribution $V_{k-1}$. So we
have the following sequence:
$$V_1\supset V_2\supset ...\supset V_k$$
Using again (1.2.1) we deduce:

\begin{theorem}
The following operators in the algebra of
functions ${\mathcal{F}}(T^kM):$
\begin{equation}
\begin{array}{l}
\stackrel{1}{\Gamma }=y^{\left( 1\right) i}\displaystyle\frac
\partial
{\partial y^{\left( k\right) i}}, \\
\stackrel{2}{\Gamma }=y^{\left( 1\right) i}\displaystyle\frac
\partial {\partial y^{\left( k-1\right) i}}+2y^{\left( 2\right)
i}\displaystyle\frac
\partial {\partial y^{\left( k\right) i}}, \\
.............................................. \\
\stackrel{k}{\Gamma }=y^{\left( 1\right) i}\displaystyle\frac
\partial {\partial y^{\left( 1\right) i}}+2y^{\left( 2\right)
i}\displaystyle\frac
\partial {\partial y^{\left( 2\right) i}}+...+ky^{\left( k\right) i}%
\displaystyle\frac \partial {\partial y^{\left( k\right) i}}
\end{array}
\end{equation}
are the vector fields on $T^kM$. They are independent on the
manifold $\widetilde{T^kM}$ and $\stackrel{1}{\Gamma}\subset V_k$,
$\stackrel{2}{\Gamma}\subset
V_{k-1}$,...,$\stackrel{k}{\Gamma}\subset V_1$.
\end{theorem}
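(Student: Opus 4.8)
The plan is to first put every Liouville field in closed form and then verify its invariance under the coordinate change (1.1.5) by substituting the transformation law (1.2.1) for the natural frame. I would record the uniform expression
$$\overset{\alpha}{\Gamma} = \sum_{\beta=1}^{\alpha}\beta\, y^{(\beta)i}\frac{\partial}{\partial y^{(k-\alpha+\beta)i}}, \qquad \alpha=1,\dots,k,$$
which reproduces the displayed operators ($\alpha=1$ gives the single term ending at $y^{(k)}$, and $\alpha=k$ gives the full diagonal Euler operator). Since only vertical derivatives $\partial/\partial y^{(\gamma)i}$ with $\gamma\ge k-\alpha+1$ occur, the membership claims $\overset{\alpha}{\Gamma}\subset V_{k-\alpha+1}$ (in particular $\overset{1}{\Gamma}\subset V_k$ and $\overset{k}{\Gamma}\subset V_1$) are immediate from the description of the chain $V_1\supset\cdots\supset V_k$.

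The core of the argument is global well-definedness. First I would read off from (1.1.5') the shift relation
$$\frac{\partial \widetilde{y}^{(\gamma)l}}{\partial y^{(\beta)i}} = \frac{\partial \widetilde{y}^{(\gamma-\beta)l}}{\partial x^i} \qquad (0\le\beta\le\gamma,\ y^{(0)}=x),$$
which is just a relabelling of that displayed chain of equalities. Substituting the expansion $\partial/\partial y^{(\gamma)i}=\sum_{\delta\ge\gamma}(\partial\widetilde{y}^{(\delta)l}/\partial y^{(\gamma)i})\,\partial/\partial\widetilde{y}^{(\delta)l}$ coming from (1.2.1) into $\overset{\alpha}{\Gamma}$ and regrouping by the output index $\delta$, the coefficient of $\partial/\partial\widetilde{y}^{(\delta)l}$ becomes, after setting $m=\delta-k+\alpha$ and applying the shift relation,
$$\sum_{\beta=1}^{m}\beta\, y^{(\beta)i}\frac{\partial \widetilde{y}^{(m-\beta)l}}{\partial x^i}.$$
The defining recursion in (1.1.5), rewritten with the same shift relation, says precisely that this sum equals $m\,\widetilde{y}^{(m)l}$. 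Hence $\overset{\alpha}{\Gamma}=\sum_{\delta}(\delta-k+\alpha)\,\widetilde{y}^{(\delta-k+\alpha)l}\,\partial/\partial\widetilde{y}^{(\delta)l}$, which after relabelling is exactly the tilde-form of the same operator; therefore each $\overset{\alpha}{\Gamma}$ is a globally defined vector field on $T^kM$.

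For independence over $\mathcal{F}(\widetilde{T^kM})$, I would suppose $\sum_\alpha f_\alpha\overset{\alpha}{\Gamma}=0$ and read off components in the successive directions $\partial/\partial y^{(1)i},\partial/\partial y^{(2)i},\dots$. This produces a triangular cascade: only $\overset{k}{\Gamma}$ contributes to $\partial/\partial y^{(1)i}$, with coefficient $y^{(1)i}$, forcing $f_k\, y^{(1)i}=0$; then the $\partial/\partial y^{(2)i}$ component forces $f_{k-1}\, y^{(1)i}=0$, and so on. On the open dense set $\{y^{(1)}\neq 0\}$ this yields $f_1=\cdots=f_k=0$, and by continuity the $f_\alpha$ vanish identically. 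This is the precise sense in which the fields are independent on $\widetilde{T^kM}$ (they may of course degenerate pointwise where $y^{(1)}=0$).

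The main obstacle is the bookkeeping in the second paragraph: one must correctly track how the triangular transformation (1.2.1) redistributes the vertical terms, and recognize that the shift identity collapses all the mixed partials into pure $x$-derivatives, so that the defining recursion (1.1.5) for $\widetilde{y}^{(m)l}$ can be invoked to close the computation. Once that combinatorial matching is arranged, every individual step is a routine substitution.
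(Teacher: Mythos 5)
Your proof is correct and follows exactly the route the paper intends: the paper offers no detailed argument, merely stating the theorem after the remark ``Using again (1.2.1) we deduce,'' and your computation — closed form for $\overset{\alpha}{\Gamma}$, the shift identity extracted from (1.1.5$'$), and the recursion (1.1.5) closing the coefficient sum to $m\,\widetilde{y}^{(m)l}$ — is precisely the substitution check the paper leaves to the reader. Your triangular-cascade argument for independence (with the honest caveat that pointwise independence degenerates where $y^{(1)}=0$, so independence holds over $\mathcal{F}(\widetilde{T^kM})$ via density) is a correct and in fact more careful rendering of the paper's unproved independence claim.
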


The vector fields $\stackrel{1}{\Gamma}$, $\stackrel{2}{\Gamma}$,
..., $\stackrel{k}{\Gamma}$ are called the {\it Liouville vector
fields}.

Also we have:

\begin{theorem}
For any function $L\in{\cal
F}(\widetilde{T^kM})$, the following entries are $1 $-form fields
on the manifold $\widetilde{T^kM}$:
\begin{equation}
\begin{array}{l}
d_0L=\displaystyle\frac{\partial L}{\partial y^{\left( k\right) i}}dx^i, \\
\\
d_1L=\displaystyle\frac{\partial L}{\partial y^{\left( k-1\right) i}}dx^i+%
\displaystyle\frac{\partial L}{\partial y^{\left( k\right)
i}}dy^{\left(
1\right) i}, \\
.............................................. \\
d_kL=\displaystyle\frac{\partial L}{\partial x^i}dx^i+\displaystyle\frac{%
\partial L}{\partial y^{\left( 1\right) i}}dy^{\left( 1\right) i}+...+%
\displaystyle\frac{\partial L}{\partial y^{\left( k\right)
i}}dy^{\left( k\right) i}.
\end{array}
\end{equation}
\end{theorem}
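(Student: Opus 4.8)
The plan is to verify directly that each local expression $d_\alpha L$ transforms as a $1$-form under the coordinate change (1.1.5), since being a globally defined $1$-form on $\widetilde{T^kM}$ is precisely the requirement that its two chart expressions agree on overlaps. It is convenient first to record the uniform formula concealed in the displayed list: with the conventions $y^{(0)}=x$ and $dy^{(0)i}=dx^i$, one has
\[
d_\alpha L=\sum_{\beta=0}^{\alpha}\frac{\partial L}{\partial y^{(k-\alpha+\beta)i}}\,dy^{(\beta)i},\qquad \alpha=0,1,\dots,k,
\]
so that $d_0L$ and $d_kL=dL$ are the two extreme members. The case $\alpha=k$ is just the ordinary exterior differential and requires no argument; all the work lies in the intermediate values of $\alpha$, and the uniform formula lets me treat them at once.

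First I would pass to the second chart and expand $\widetilde{d_\alpha L}$, the expression built from the same function $L$ in the coordinates $(\widetilde x,\widetilde y^{(1)},\dots,\widetilde y^{(k)})$. Using the cobasis law (1.2.1') to write $d\widetilde y^{(\beta)j}=\sum_{\gamma=0}^{\beta}\dfrac{\partial \widetilde y^{(\beta)j}}{\partial y^{(\gamma)i}}\,dy^{(\gamma)i}$ and collecting the coefficient of each old differential $dy^{(\gamma)i}$, the claim $\widetilde{d_\alpha L}=d_\alpha L$ reduces, after inserting the rule (1.2.1) for the transformation of $\partial L/\partial y^{(k-\alpha+\gamma)i}$ and reindexing the resulting sum by $\delta=k-\alpha+\beta$, to the term-by-term equality
\[
\frac{\partial \widetilde y^{(\beta)j}}{\partial y^{(\gamma)i}}=\frac{\partial \widetilde y^{(k-\alpha+\beta)j}}{\partial y^{(k-\alpha+\gamma)i}},\qquad \gamma\le \beta\le \alpha .
\]
Both sides are obtained from one another by raising the upper index $\beta\mapsto k-\alpha+\beta$ of $\widetilde y$ and the lower index $\gamma\mapsto k-\alpha+\gamma$ of $y$ by the same amount $k-\alpha\ge 0$, and every index involved stays inside $\{0,\dots,k\}$.

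This last equality is exactly the content of the identities (1.1.5'), which assert that $\partial \widetilde y^{(p)j}/\partial y^{(q)i}$ depends only on the difference $p-q$ and is unaffected when $p$ and $q$ are both increased by the same amount. Hence each coefficient matches, $\widetilde{d_\alpha L}=d_\alpha L$, and every $d_\alpha L$ is a well-defined $1$-form on $\widetilde{T^kM}$. The main obstacle is purely organizational: keeping the double summation over $\beta$ and $\gamma$ and the reindexing straight, so that the Jacobian entries appear in precisely the shifted form covered by (1.1.5'); there is no analytic difficulty once the uniform formula and the shift-invariance of these entries are in hand. As a sanity check, for $\alpha=0$ the double sum collapses to the single identity $\partial\widetilde x^j/\partial x^i=\partial\widetilde y^{(k)j}/\partial y^{(k)i}$, which recovers $d_0L=(\partial L/\partial\widetilde y^{(k)j})\,d\widetilde x^j$ at once and thus matches the elementary computation one expects in the lowest case.
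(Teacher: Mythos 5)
Your proof is correct and follows exactly the route the paper intends: the paper merely asserts the theorem as a consequence of the transformation formulas (1.2.1), (1.2.1') without writing out the computation, and your verification --- reducing well-definedness of each $d_\alpha L$ to the shift-invariance identities (1.1.5') for the Jacobian entries $\partial\widetilde{y}^{(p)j}/\partial y^{(q)i}$ --- is precisely the missing calculation, carried out cleanly via your uniform formula $d_\alpha L=\sum_{\beta=0}^{\alpha}\bigl(\partial L/\partial y^{(k-\alpha+\beta)i}\bigr)\,dy^{(\beta)i}$. The reindexing, the index-range check $k-\alpha+\beta\le k$, and the $\alpha=0$ sanity check are all sound, so nothing needs repair.
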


Evidently, $d_kL=dL$.

In applications we shall use also the following nonlinear operator
\begin{equation}
\Gamma =y^{\left( 1\right) i}\displaystyle\frac \partial {\partial
x^i}+2y^{\left( 2\right) i}\displaystyle\frac \partial {\partial
y^{\left( 1\right) i}}+...+ky^{\left( k\right)
i}\displaystyle\frac \partial {\partial y^{(k-1)i}}.
\end{equation}
$\Gamma$ is not a vector field on $\widetilde{T^kM}$.

\begin{definition}
A $k$-tangent structure $J$ on $T^kM$ is
the $\mathcal{F}(T^kM)$-linear mapping $J:\mathcal{X}(T^kM)\rightarrow \mathcal{X}(T^kM)$, defined by:
\begin{equation}
\begin{array}{l}
J\left(\displaystyle\frac \partial {\partial
x^i}\right)=\displaystyle\frac
\partial {\partial y^{\left( 1\right) i}},\quad
J\left(\displaystyle\frac \partial {\partial y^{\left( 1\right)
i}}\right)=\displaystyle\frac \partial {\partial
y^{\left( 2\right) i}},..., \\
\\
J\left(\displaystyle\frac \partial {\partial y^{\left( k-1\right) i}}\right)= %
\displaystyle\frac \partial {\partial y^{\left( k\right) i}},\quad J\left( %
\displaystyle\frac \partial {\partial y^{\left( k\right)
i}}\right)=0.
\end{array}
\quad
\end{equation}
\end{definition}

It is not difficult to see that $J$ has the properties:

\begin{proposition}
We have:

1$^\circ$. $J$ is globally defined on $T^kM$,

2$^\circ$. $J$ is an integrable structure,

3$^\circ$. $J$ is locally expressed by
\begin{equation}
J=\displaystyle\frac \partial {\partial y^{\left( 1\right) i}}\otimes dx^i+ %
\displaystyle\frac \partial {\partial y^{\left( 2\right)
i}}\otimes dy^{\left( 1\right) i}+...+\displaystyle\frac \partial
{\partial y^{\left( k\right) i}}\otimes dy^{\left( k-1\right) i}
\end{equation}

4$^{\circ}$. Im$J=Ker J,\quad KerJ=V_k$,

5$^{\circ }$. $rankJ=kn$,

6$^{\circ }$. $J\stackrel{k}{\Gamma }=\stackrel{k-1}{\Gamma
},...,$ $J \stackrel{2}{\Gamma }=\stackrel{1}{\Gamma },$
$J\stackrel{1}{\Gamma }=0$,

7$^{\circ }$. $J\circ J\circ...\circ J=0$, ($k+1$ factors).
\end{proposition}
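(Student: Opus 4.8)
The plan is to reduce every one of the seven assertions to the single fact that, read off in the natural frame $\left(\frac{\partial}{\partial x^i},\frac{\partial}{\partial y^{(1)i}},\dots,\frac{\partial}{\partial y^{(k)i}}\right)$, the operator $J$ is the ``shift'' sending $\frac{\partial}{\partial x^i}\mapsto\frac{\partial}{\partial y^{(1)i}}\mapsto\cdots\mapsto\frac{\partial}{\partial y^{(k)i}}\mapsto 0$. Only property $1^\circ$ (global well-definedness) uses the coordinate change; properties $2^\circ$--$7^\circ$ are then local and algebraic, since the coefficients of $J$ in this frame are constant Kronecker symbols. I would first dispose of the local expression $3^\circ$: the tensor $\frac{\partial}{\partial y^{(1)i}}\otimes dx^i+\cdots+\frac{\partial}{\partial y^{(k)i}}\otimes dy^{(k-1)i}$, evaluated on each frame field, reproduces exactly the defining values in the definition of $J$, because $dx^i,dy^{(\alpha)i}$ form the dual cobasis; for instance the only surviving term on $\frac{\partial}{\partial y^{(\alpha-1)j}}$ is $\frac{\partial}{\partial y^{(\alpha)i}}\,dy^{(\alpha-1)i}$, giving $\frac{\partial}{\partial y^{(\alpha)j}}$, while no term carries $dy^{(k)i}$, so $\frac{\partial}{\partial y^{(k)j}}\mapsto 0$.

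The hard part is $1^\circ$. Here I expand $\frac{\partial}{\partial x^i}$ by the transformation rule (1.2.1), apply the tilde-chart definition of $J$ term by term (the last term, carrying $\frac{\partial}{\partial\widetilde{y}^{(k)j}}$, is killed), and must recognise the result as the expansion (1.2.1) of $\frac{\partial}{\partial y^{(1)i}}$. Matching the coefficient of $\frac{\partial}{\partial\widetilde{y}^{(\beta+1)j}}$ requires exactly the chain identities (1.1.5'), namely $\frac{\partial\widetilde{y}^{(\beta)j}}{\partial x^i}=\frac{\partial\widetilde{y}^{(\beta+1)j}}{\partial y^{(1)i}}$ (the case $\beta=0$ using $y^{(0)}=x$). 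The same bookkeeping, shifted by one level, handles $J\left(\frac{\partial}{\partial y^{(\alpha)i}}\right)=\frac{\partial}{\partial y^{(\alpha+1)i}}$ through the remaining equalities in (1.1.5'). I expect this index-matching across all frame fields to be the only genuine obstacle; everything else is formal.

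Finally I would collect $4^\circ$--$7^\circ$ by pure linear algebra on the shift. Writing a general vector in the frame and applying $J$ shows $\ker J=\operatorname{span}\{\frac{\partial}{\partial y^{(k)i}}\}=V_k$ and $\operatorname{Im}J=\operatorname{span}\{\frac{\partial}{\partial y^{(1)i}},\dots,\frac{\partial}{\partial y^{(k)i}}\}=V_1$, whence $\operatorname{rank}J=kn$ ($5^\circ$) and, iterating, the identity $\operatorname{Im}J^{k}=\ker J=V_k$ — the order-$k$ analogue of the tangent-bundle relation $\operatorname{Im}J=\ker J$ recorded in $4^\circ$. Property $6^\circ$ is the direct computation $J\stackrel{s}{\Gamma}=\stackrel{s-1}{\Gamma}$ and $J\stackrel{1}{\Gamma}=0$, obtained by shifting each $y^{(\alpha)i}$-coefficient of $\stackrel{s}{\Gamma}$ up one index: the top term (on $\frac{\partial}{\partial y^{(k)i}}$) dies and the rest reassemble into $\stackrel{s-1}{\Gamma}$. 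Property $7^\circ$ follows because the shift chain has length $k+1$, so $J^{k+1}$ annihilates every frame field.

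For integrability $2^\circ$ I would verify that the Nijenhuis tensor $N_J$ vanishes. Since $J$ carries each commuting coordinate frame field to a coordinate frame field (or to $0$) with constant coefficients, all four brackets in $N_J(e_a,e_b)=J^2[e_a,e_b]+[Je_a,Je_b]-J[Je_a,e_b]-J[e_a,Je_b]$ vanish on frame fields, and hence $N_J\equiv 0$ by tensoriality; this is what is meant by $J$ being an integrable structure.
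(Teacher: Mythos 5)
Your proof is correct, and there is in fact no competing argument in the paper to compare it with: the author introduces the proposition with the phrase ``It is not difficult to see that $J$ has the properties'' and supplies no proof, so what you wrote is precisely the verification left to the reader. Your organization is the right one. The only step with genuine content is $1^\circ$, and you use exactly the needed ingredients: expand $\partial/\partial x^i$ and $\partial/\partial y^{(\alpha)i}$ through the frame transformation (1.2.1), apply the tilde-chart definition of $J$ term by term (the top term is annihilated), and match coefficients by the chain identities (1.1.5'), including the case $\alpha=0$ via $y^{(0)}=x$; the equalities $\partial\widetilde{y}^{(\beta)j}/\partial y^{(\gamma)i}=\partial\widetilde{y}^{(\beta+1)j}/\partial y^{(\gamma+1)i}$ needed for the higher frame fields are indeed contained in the full chain of (1.1.5'). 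Items $3^\circ$, $5^\circ$, $6^\circ$, $7^\circ$ then reduce, as you say, to single-chart linear algebra on the shift, and your computation $J\stackrel{s}{\Gamma}=\stackrel{s-1}{\Gamma}$ (top term dies, the rest reindex) is exactly right. For $2^\circ$, taking integrability to mean $N_J\equiv 0$ and checking it on the commuting coordinate frame, where $J$ has constant coefficients, is the standard argument and is consistent with how the paper itself treats integrability of $\P$ and $\F$ via Nijenhuis tensors in Part I, Chapter 1.

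One point you handled deserves emphasis: item $4^\circ$ as printed, ${\rm Im}\,J=\ker J$, is false for $k>1$, since ${\rm Im}\,J=V_1$ has dimension $kn$ (in agreement with $5^\circ$, ${\rm rank}\,J=kn$) while $\ker J=V_k$ has dimension $n$; the stated equality is a relic of the $k=1$ case from Part I, Chapter 1, where $V_1=V_k=VTM$. You correctly detect this and prove the true statements ${\rm Im}\,J=V_1$, $\ker J=V_k$, and the iterated relation ${\rm Im}\,J^{k}=\ker J$. Your repaired version also matches the paper's own later restatement in Part III, formula (5.1.4), where ${\rm Im}\,J=V$ $(=V_1)$ and $\ker J=V_k$. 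So your proposal not only fills the proof the paper omits; it silently corrects the one misstated item.
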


In the next section we shall use the functions
\begin{equation}
I^1(L)=\mathcal{L}_{\stackrel{1}{\Gamma }}L,...,I^k(L)=\mathcal{L}_{%
\stackrel{k}{\Gamma }}L,  \quad \forall L\in{\cal
F}(T^kM),
\end{equation}
where ${\cal L}_{\stackrel{\alpha}{\Gamma}}$ is the operator of
Lie derivation with respect to the Liouville vector field
$\stackrel{\alpha}{\Gamma}$.

The functions $I^1(L)$, ..., $I^k(L)$ are called the \textit{main
invariants} of the function $L$. They play an important role in
the variational calculus.

\section{Variational Problem}
\setcounter{equation}{0}\setcounter{theorem}{0}\setcounter{definition}{0}\setcounter{proposition}{0}\setcounter{remark}{0}

\begin{definition}
A differentiable Lagrangian of order $k$ is
a mapping $L:(x,y^{(1)},...,y^{(k)})\in T^kM\rightarrow
L(x,y^{(1)},y^{(k)})\in R$, differentiable on $\widetilde{T^kM}$
and continuous on the null section
$0:M\rightarrow\widetilde{T^kM}$ of the projection
$\pi^k:T^kM\rightarrow M$.
\end{definition}

If $c:t\in [0,1]\rightarrow (x^i(t))\in U\subset M$ is a curve,
with extremities $c(0)=(x^i(0))$ and $c(1)=(x^i(1))$ and
$\widetilde{c}:[0,1]\rightarrow\widetilde{T^kM}$ from (1.1.6) is its extension.
Then the integral of action of $L\circ\widetilde{c}$ is defined by
\begin{equation}
I(c)=\displaystyle\int_0^1L\left(x(t),\displaystyle\frac{dx}{dt}(t),...,\dfrac{1}{k!}\dfrac{d^kx}{dt^k}(t)\right)
dt.
\end{equation}

\begin{remark} One proves that if $I(c)$ does not depend on
the parametrization of curve $c$ then the following Zermelo
conditions holds:
\begin{equation}
I^1(L)=...=I^{k-1}(L)=0,\quad I^k(L)=L.
\end{equation}
\end{remark}

Generally, these conditions are not verified.

The variational problem involving the functional $I(c)$ from (1.3.1)
will be studied as a natural extension of the theory expounded in
\S2.2, Ch. 2.

On the open set $U$ we consider the curves
\begin{equation}
c_\varepsilon:t\in [0,1]\rightarrow (x^i(t)+\varepsilon V^i(t))\in
M,
\end{equation}
where $\varepsilon$ is a real number, sufficiently small in
absolute value such that Im$c_\varepsilon\subset U$,
$V^i(t)=V^i(x(t))$ being a regular vector field on $U$, restricted
to $c$. We assume all curves $c_\varepsilon$ have the same end
points $c(0)$ and $c(1)$ and their osculator spaces of order
$1,2,...,k-1$ coincident at the points $c(0)$, $c(1)$. This means:
\begin{equation}
\begin{array}{l}
V^i(0)=V^i(1)=0;\quad\dfrac{d^\alpha
V^i}{dt^\alpha}(0)=\dfrac{d^\alpha V^i}{dt^\alpha}(1)=0,\\ \\
\hfill (\alpha=1,...,k-1).
\end{array}
\tag{1.3.3'}
\end{equation}

The integral of action $I(c_\varepsilon)$ of the Lagrangian $L$
is:
\begin{equation}
I(c_\varepsilon)\hspace*{-0.7mm}=\hspace*{-0.7mm}\displaystyle\int_0^1\hspace*{-0.7mm} L\left(x+\varepsilon
V,\displaystyle\frac{dx}{dt}+\varepsilon\dfrac{dV}{dt},...,\dfrac{1}{k!}\left(\dfrac{d^kx}{dt^k}+
\varepsilon\dfrac{d^kV}{dt^k}\right)\right) dt.
\end{equation}

A necessary condition for $I(c)$ to be an extremal value for
$I(c_\varepsilon)$ is
\begin{equation}
\dfrac{dI(c_\varepsilon)}{d\varepsilon}|_{\varepsilon=0}=0.
\end{equation}

Thus, we have
$$
\dfrac{dI(c_\varepsilon)}{d\varepsilon}=\displaystyle\int_0^1\dfrac{d}{d\varepsilon}L\left(x+\varepsilon
V,\displaystyle\frac{dx}{dt}+\varepsilon\dfrac{dV}{dt},...,\dfrac{1}{k!}\left(\dfrac{d^kx}{dt^k}+
\varepsilon\dfrac{d^kV}{dt^k}\right)\right) dt.
$$

The Taylor expansion of $L$ for $\varepsilon=0$, gives:
\begin{equation}
\dfrac{dI(c_\varepsilon)}{d\varepsilon}|_{\varepsilon=0}\hspace*{-1mm}=\hspace*{-1mm}\displaystyle\int_0^1\hspace*{-1mm}
\left(\hspace*{-0.7mm}\dfrac{\partial L}{\partial x^i}V^i\hspace*{-0.7mm}+\hspace*{-0.7mm}\displaystyle\frac{\partial L}{\partial
y^{(1)i}}\dfrac{dV^i}{dt}\hspace*{-0.7mm}+\hspace*{-0.7mm}...\hspace*{-0.7mm}+\hspace*{-0.7mm}\dfrac{1}{k!}\dfrac{\partial
L}{\partial y^{(k)i}}\dfrac{d^kV^i}{dt^k}\hspace*{-0.7mm}\right)dt.
\end{equation}

Now, with notations
\begin{equation}
\stackrel{\circ }{E}_i(L):=\dfrac{\partial L}{\partial
x^i}-\dfrac{d}{dt}\dfrac{\partial L}{\partial
y^{(1)i}}+...+(-1)^k\dfrac{1}{k!}\dfrac{d^k}{dt^k}\dfrac{\partial
L}{\partial y^{(k)i}}
\end{equation}
and
\begin{equation}
\begin{array}{l}
I^1_V L=V^i\dfrac{\partial L}{\partial y^{(k)i}},\quad
I^2_V(L)=V^i\dfrac{\partial L}{\partial
y^{(k-1)i}}+\dfrac{dV^i}{dt}\dfrac{\partial L}{\partial
y^{(k)i}},...,\vspace{3mm}\\ I^k_V =V^i\dfrac{\partial L}{\partial
y^{(1)i}}+\dfrac{dV^i}{dt}\dfrac{\partial L}{\partial
y^{(2)i}}+...+\dfrac{1}{(k-1)!}\dfrac{d^{k-1}V^v}{dt^{k-1}}\dfrac{\partial
L}{\partial y^{(k)i}}
\end{array}
\end{equation}
we obtain an important identity:
\begin{equation}
\begin{array}{l}
\dfrac{\partial L}{\partial x^i}V^i+\displaystyle\frac{\partial
L}{\partial
y^{(1)i}}\dfrac{dV^i}{dt}+...+\dfrac{1}{k!}\dfrac{\partial
L}{\partial y^{(k)i}}\dfrac{d^kV^i}{dt^k}=\stackrel{\circ
}{E}_i(L)+\vspace{3mm}\\+
\dfrac{d}{dt}\left\{I^k_V(L)\hspace*{-0.7mm}-\hspace*{-0.7mm}\dfrac{1}{2!}\dfrac{d}{dt}I^{k-1}_V(L)+\hspace*{-0.7mm}...\hspace*{-0.7mm}+
(-1)^{k-1}\dfrac{1}{k!}\dfrac{d^{k-1}}{dt^{k-1}}I^1_V(L)\right\}
\end{array}
\end{equation}

Now, applying (1.3.7) and taking into account (1.3.3')
with $$I^\alpha_V(L)(c(0))=I^\alpha_V(L)c(1)=0,\quad
(\alpha=1,2,...,k)$$ we obtain
\begin{equation}
\dfrac{dI(c_\varepsilon)}{d\varepsilon}|_{\varepsilon=0}=\dd\int^1_0\stackrel{0}{E}_i(L)V^i
dt.
\end{equation}
But $V^i(t)$ is an arbitrary vector field. Therefore the
equalities (1.3.5) and (1.3.10) lead to the following result:

\begin{theorem}
In order that the integral of action
$I(c)$ be an extremal value for the functionals
$I(c_\varepsilon)$, $(1.3.4)$ is necessary that the following Euler -
Lagrange equations hold: \begin{equation}
\left\{\begin{array}{l}\stackrel{0}{E}_i(L):=\hspace*{-0.7mm}\dfrac{\partial
L}{\partial x^i}-\dfrac{d}{dt}\dfrac{\partial L}{\partial
y^{(1)i}}+\hspace*{-0.7mm}...\hspace*{-0.7mm}+(-1)^k\dfrac{1}{k!}\dfrac{d^k}{dt^k}\dfrac{\partial
L}{\partial y^{(k)}}=0,\vspace{3mm}\\
y^{(1)i}=\dfrac{dx^i}{dt},\cdots,y^{(k)i}=\dfrac{1}{k!}\dfrac{d^k
x^i}{dt^k}.\end{array}\right.\end{equation}
\end{theorem}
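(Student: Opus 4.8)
The plan is to establish the Euler--Lagrange equations (1.3.11) by computing the first variation of the integral of action $I(c_\varepsilon)$ and using the freedom of the variation vector field $V^i$. The whole argument is a higher-order analogue of the derivation already carried out in \S2.2 (Theorem 2.2.1), so I would keep that computation as a guide. First I would start from the expression already obtained in (1.3.6) for $\dfrac{dI(c_\varepsilon)}{d\varepsilon}|_{\varepsilon=0}$, namely the integral of
$$\dfrac{\partial L}{\partial x^i}V^i+\dfrac{\partial L}{\partial y^{(1)i}}\dfrac{dV^i}{dt}+\cdots+\dfrac{1}{k!}\dfrac{\partial L}{\partial y^{(k)i}}\dfrac{d^kV^i}{dt^k}.$$
The goal is to rewrite this integrand so that all the derivatives of $V^i$ are transferred onto the coefficients $\dfrac{\partial L}{\partial y^{(\alpha)i}}$, leaving a term proportional to $V^i$ alone plus a total time-derivative.

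The key step is the integration-by-parts identity (1.3.9). I would prove it by repeatedly applying the Leibniz rule: each term $\dfrac{1}{\alpha!}\dfrac{\partial L}{\partial y^{(\alpha)i}}\dfrac{d^\alpha V^i}{dt^\alpha}$ is integrated by parts $\alpha$ times, producing the factor $(-1)^\alpha\dfrac{1}{\alpha!}\dfrac{d^\alpha}{dt^\alpha}\dfrac{\partial L}{\partial y^{(\alpha)i}}$ acting on $V^i$, which is exactly the $\alpha$-th summand of $\stackrel{\circ}{E}_i(L)$ in (1.3.7), together with a boundary contribution. The bookkeeping of these boundary contributions is what assembles the expression in braces in (1.3.9); one checks that the partial derivatives of $L$ combine with the successive derivatives of $V^i$ precisely into the quantities $I^\alpha_V(L)$ defined in (1.3.8). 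I expect this term-by-term reorganization — verifying that the partial sums of the boundary terms telescope into $I^k_V(L)-\tfrac12\tfrac{d}{dt}I^{k-1}_V(L)+\cdots$ — to be the main technical obstacle, since it requires carefully tracking the combinatorial coefficients $\dfrac{1}{\alpha!}$ and the alternating signs across all $k$ levels.

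Once the identity (1.3.9) is in hand, the remainder is straightforward. I would integrate (1.3.9) over $[0,1]$: the total time-derivative term integrates to the difference of the brace evaluated at the endpoints $c(1)$ and $c(0)$. Here I invoke the boundary conditions (1.3.3$'$), which force $V^i$ and its derivatives $\dfrac{d^\alpha V^i}{dt^\alpha}$ up to order $k-1$ to vanish at $t=0,1$; hence every $I^\alpha_V(L)$ vanishes at both endpoints, and the boundary term drops out. This yields (1.3.10), i.e. $\dfrac{dI(c_\varepsilon)}{d\varepsilon}|_{\varepsilon=0}=\dint^1_0\stackrel{\circ}{E}_i(L)V^i\,dt$.

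Finally, combining (1.3.10) with the extremality condition (1.3.5) gives $\dint^1_0\stackrel{\circ}{E}_i(L)V^i\,dt=0$ for every admissible variation $V^i$. Since $V^i(t)$ is an arbitrary regular vector field along $c$ subject only to the vanishing endpoint conditions, the fundamental lemma of the calculus of variations applies and forces $\stackrel{\circ}{E}_i(L)=0$ identically along $c$. Together with the definitional relations $y^{(1)i}=\dfrac{dx^i}{dt},\dots,y^{(k)i}=\dfrac{1}{k!}\dfrac{d^kx^i}{dt^k}$ coming from the extension $\widetilde{c}$, this establishes the Euler--Lagrange equations (1.3.11), completing the proof.
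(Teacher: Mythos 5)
Your proposal is correct and takes essentially the same route as the paper: starting from the first-variation formula (1.3.6), establishing the Leibniz-rule identity (1.3.9) that isolates $\stackrel{\circ}{E}_i(L)V^i$ plus a total time-derivative built from the $I^\alpha_V(L)$, using the endpoint conditions $(1.3.3')$ (vanishing of $V^i$ and its derivatives up to order $k-1$) to kill the boundary contribution and obtain (1.3.10), and concluding via the arbitrariness of $V^i$ and the fundamental lemma of the calculus of variations. The only difference is that you sketch an explicit proof of the identity (1.3.9), which the paper merely asserts, and your reasoning there (term-by-term transfer of derivatives with tracking of the factors $1/\alpha!$ and alternating signs) is the correct way to verify it.
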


One proves \cite{mir11} that $\stackrel{0}{E}_i(L)$ is a covector field.
Consequence the equation $\stackrel{0}{E}_i(L)=0$ has a
geometrical meaning.

Consider the scalar field
\begin{equation}
{\cal
E}^k(L)\hspace*{-0.7mm}=\hspace*{-0.7mm}I^k(L)-\hspace*{-0.7mm}\dfrac{1}{2!}\dfrac{d}{dt}I^{k-1}(L)+\hspace*{-0.7mm}...\hspace*{-0.7mm}
+\hspace*{-0.7mm}(-1)^{k-1}\dfrac{1}{k!}\dfrac{d^{k-1}}{dt^{k-1}}I^1(L)\hspace*{-0.7mm}-\hspace*{-0.7mm}L.
\end{equation}
It is called the {\it energy of order $k$} of the Lagrangian $L$.

The next result is known, \cite{mir11}:

\begin{theorem}
For any Lagrangian
$L(x,y^{(1)},\cdots,y^{(k)})$ the energy of order $k$, ${\cal
E}^k(L)$ is conserved along every solution curve of the Euler -
Lagrange equations $\stackrel{0}{E}_i(L)=0$,
$y^{(1)i}=\dfrac{dx^i}{dt},\cdots,y^{(k)i}=\dfrac{1}{k!}\dfrac{d^k
x^i }{dt^k}$.
\end{theorem}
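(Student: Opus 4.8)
The plan is to show that the energy of order $k$, namely ${\cal E}^k(L)$, is constant along solution curves of the Euler--Lagrange equations by computing its total time derivative $\dfrac{d}{dt}{\cal E}^k(L)$ directly and showing it vanishes modulo $\stackrel{0}{E}_i(L)=0$. The crucial tool is the fundamental identity (1.3.9) derived in the variational calculus, which I would specialize to the case $V^i = y^{(1)i} = \dfrac{dx^i}{dt}$. This choice is natural because the Liouville vector fields $\stackrel{\alpha}{\Gamma}$ and the main invariants $I^\alpha(L)$ encode exactly the contractions of $\dfrac{\partial L}{\partial y^{(\beta)i}}$ with the successive accelerations, and the operators $I^\alpha_V(L)$ from (1.3.8) reduce to the $I^\alpha(L)$ precisely when $V$ is the velocity field.

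First I would recall identity (1.3.9): for an arbitrary field $V^i$,
\begin{equation}
\dfrac{\partial L}{\partial x^i}V^i+\dfrac{\partial L}{\partial y^{(1)i}}\dfrac{dV^i}{dt}+\cdots+\dfrac{1}{k!}\dfrac{\partial L}{\partial y^{(k)i}}\dfrac{d^kV^i}{dt^k}=\stackrel{0}{E}_i(L)V^i+\dfrac{d}{dt}\left\{I^k_V(L)-\dfrac{1}{2!}\dfrac{d}{dt}I^{k-1}_V(L)+\cdots+(-1)^{k-1}\dfrac{1}{k!}\dfrac{d^{k-1}}{dt^{k-1}}I^1_V(L)\right\}.\nonumber
\end{equation}
Setting $V^i=y^{(1)i}=\dfrac{dx^i}{dt}$, the successive derivatives become $\dfrac{dV^i}{dt}=2y^{(2)i}$, and more generally $\dfrac{d^\alpha V^i}{dt^\alpha}$ reproduces $(\alpha+1)!\,y^{(\alpha+1)i}/\alpha!$ along the extension $\widetilde{c}$. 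The key observation is that the left-hand side then collapses to the total derivative $\dfrac{dL}{dt}$ along the curve, since it is exactly the chain rule expansion of $\dfrac{d}{dt}L(x,\dfrac{dx}{dt},\ldots)$. Simultaneously, under this substitution $I^\alpha_V(L)$ becomes $I^\alpha(L)$, because contracting with $y^{(1)i}$ and its derivatives matches the defining contractions of the main invariants in (1.2.8).

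Consequently the identity reads $\dfrac{dL}{dt}=\stackrel{0}{E}_i(L)\dfrac{dx^i}{dt}+\dfrac{d}{dt}\left\{I^k(L)-\dfrac{1}{2!}\dfrac{d}{dt}I^{k-1}(L)+\cdots+(-1)^{k-1}\dfrac{1}{k!}\dfrac{d^{k-1}}{dt^{k-1}}I^1(L)\right\}$. Rearranging and comparing with the definition (1.3.12) of ${\cal E}^k(L)$, I would obtain the analogue of Theorem 2.2.3, namely $\dfrac{d}{dt}{\cal E}^k(L)=-\stackrel{0}{E}_i(L)\dfrac{dx^i}{dt}$. From this the theorem is immediate: along any solution curve of the Euler--Lagrange equations we have $\stackrel{0}{E}_i(L)=0$, hence $\dfrac{d}{dt}{\cal E}^k(L)=0$, so ${\cal E}^k(L)$ is conserved.

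The main obstacle, and the step requiring the most care, is verifying rigorously that the substitution $V^i=y^{(1)i}$ turns $I^\alpha_V(L)$ into $I^\alpha(L)$ and that the left-hand side of (1.3.9) becomes precisely $\dfrac{dL}{dt}$. This is a bookkeeping check involving the homogeneity structure of the accelerations $y^{(\alpha)i}=\dfrac{1}{\alpha!}\dfrac{d^\alpha x^i}{dt^\alpha}$ and the normalizing factors $\dfrac{1}{h!}$; one must track how the repeated time derivatives of $V^i=\dfrac{dx^i}{dt}$ regenerate the higher accelerations with the correct combinatorial coefficients. Once this identification is settled, the conservation law follows by a single application of the product-rule identity with no further analytic input.
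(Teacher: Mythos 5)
Your proposal is correct and is essentially the intended argument: the paper states this theorem without a written proof (citing the reference), but the machinery it sets up --- the identity (1.3.9) and the operators $I^\alpha_V(L)$ of (1.3.8) --- is designed for exactly your substitution $V^i=y^{(1)i}$, under which $\dfrac{1}{(\beta-1)!}\dfrac{d^{\beta-1}V^i}{dt^{\beta-1}}=\beta y^{(\beta)i}$ turns each $I^\alpha_V(L)$ into the main invariant $I^\alpha(L)$ and the left-hand side into $\dfrac{dL}{dt}$, giving $\dfrac{d}{dt}{\cal E}^k(L)=-\stackrel{0}{E}_i(L)\dfrac{dx^i}{dt}$, the order-$k$ analogue of Theorem 2.2.3 which the paper proves the same way for $k=1$. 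The only point of care is a typographical slip in the paper's displayed identity (1.3.9), where the term $\stackrel{0}{E}_i(L)$ appears without its factor $V^i$; your reading $\stackrel{0}{E}_i(L)V^i$ is the correct one, as the subsequent formula (1.3.10) confirms.
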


\begin{remark} Introducing the notion of energy of order
$1,2,\cdots,k-1$ we can prove a N\"{o}ther theorem for the
Lagrangians of order $k$.
\end{remark}

Now we remark that for any $C^\infty-$function $\phi(t)$ and any
differentiable Lagrangian $L(x,y^{(1)},\cdots,y^{(k)})$ the
following equality holds:
\begin{equation}
\stackrel{0}{E}_i(\phi
L)=\phi\stackrel{0}{E_i}(L)+\displaystyle\frac{d\phi}{dt}\stackrel{1}{E_i}(L)+\cdots
+\displaystyle\frac{d^k\phi }{
dt^k}\stackrel{k}{E_i}(L),
\end{equation}
where $\stackrel{1}{E_i}(L),\cdots,\stackrel{k}{E_i}(L)$ are
$d$-covector fields - called Graig - Synge covectors, \cite{craig}. We
consider the covector $\stackrel{k-1}{E_i}(L)$:
\begin{equation}
\stackrel{k-1}{E}_i(L)=(-1)^{k-1}\dfrac{1}{(k-1)!}\left(\displaystyle\frac{\partial
L}{\partial y^{(k-1)i}}-\displaystyle\frac{d}{ dt}\dfrac{\partial
L}{\partial y^{(k)i}}\right),
\end{equation}

It is important in the theory of $k-$semisprays from the Lagrange
spaces of order $k$.

The Hamilton - Jacobi equations, of a space $L^n=(M,L(x,y))$
introduced in the section 4 of Chapter 2 can be extended in the
higher order Lagrange spaces by using the Jacobi - Ostrogradski
momenta. Indeed, the energy of order $k$, ${\cal E}^k(L)$ from
(1.3.13) is a polynomial function in
$\dfrac{dx^i}{dt},\cdots,\dfrac{d^k x^i}{dt^k}$ given by
\begin{equation}
{\mathcal{E}}^k(L)=p_{(1)i}\displaystyle\frac{dx^i}{dt}+p_{(2)i}\displaystyle
\frac{d^2x^i}{dt^2}+\cdots
+p_{(k)i}\displaystyle\frac{d^kx^i}{dt^k}-L,
\end{equation}
where
\begin{equation}
\begin{array}{l}
p_{(1)i}=\displaystyle\frac{\partial L}{\partial y^{(1)i}}-\displaystyle %
\frac 1{2!}\displaystyle\frac d{dt}\displaystyle\frac{\partial
L}{\partial y^{(2)i}}\hspace*{-0.7mm}+\hspace*{-0.7mm}...\hspace*{-0.7mm}+\hspace*{-0.7mm}(-1)^{k-1}\displaystyle\frac
1{k!}\displaystyle\frac{
d^{k-1}}{dt^{k-1}}\displaystyle\frac{\partial L}{\partial y^{(k)i}}, \\
\\
p_{(2)i}\hspace*{-0.7mm}=\hspace*{-0.7mm}\displaystyle\frac 1{2!}\displaystyle\frac{\partial
L}{\partial y^{(2)i}}\hspace*{-0.7mm}-\hspace*{-0.7mm}\displaystyle\frac 1{3!}\displaystyle\frac
d{dt}\displaystyle\frac{
\partial L}{\partial y^{(3)i}}\hspace*{-0.7mm}+\hspace*{-0.7mm}...\hspace*{-0.7mm}+\hspace*{-0.7mm}(-1)^{k-2}\displaystyle\frac 1{k!}
\displaystyle\frac{d^{k-2}}{dt^{k-2}}\displaystyle\frac{\partial
L}{\partial
y^{(k)i}}, \\
.......................................................................
\\
p_{(k)i}=\displaystyle\frac 1{k!}\displaystyle\frac{\partial
L}{\partial y^{(k)i}}.
\end{array}
\end{equation}
$p_{(1)i}$, ..., $p_{(k)i}$ are called \textit{the
Jacobi-Ostrogradski momenta.}

\medskip

The following important result has been established by M. de
L\'eon and others, \cite{mir11}:

\begin{theorem}
Along the integral curves of Euler
Lagrange equations $\stackrel{0}{E_i}(L)=0$ the following Hamilton
- Jacobi - Ostrogradski equations holds:
$$
\begin{array}{l}
\displaystyle\frac{\partial {\mathcal{E}}^k(L)}{\partial p_{(\alpha )i}}=%
\displaystyle\frac{d^\alpha x^i}{dt^\alpha },\quad (\alpha =1,...,k), \\
\\
\displaystyle\frac{\partial {\mathcal{E}}^k(L)}{\partial
x^i}=-\displaystyle
\frac{dp_{(1)i}}{dt}, \\
\\
\dfrac{1}{\alpha!}\displaystyle\frac{\partial {\mathcal{E}}^k(L)}{\partial y^{(\alpha )i}}%
=-\displaystyle\frac{dp_{(\alpha +1)i}}{dt},\quad (\alpha
=1,...,k-1).
\end{array}
$$
\end{theorem}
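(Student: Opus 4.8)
The plan is to establish the Hamilton--Jacobi--Ostrogradski equations by combining the explicit form of the energy of order $k$ given in (1.3.16) with the defining formulae (1.3.17) for the Jacobi--Ostrogradski momenta $p_{(\alpha)i}$, working along a solution of the Euler--Lagrange equations $\stackrel{0}{E_i}(L)=0$. The key observation is that $\mathcal{E}^k(L)$, viewed through (1.3.16), is a function of the arguments $x^i$, $y^{(\alpha)i}$ and the momenta $p_{(\alpha)i}$, so the three families of equations to be proved are really statements about its partial derivatives. First I would treat $\mathcal{E}^k(L)$ as written in the canonical form
\begin{equation}
\mathcal{E}^k(L)=p_{(1)i}\dfrac{dx^i}{dt}+p_{(2)i}\dfrac{d^2x^i}{dt^2}+\cdots+p_{(k)i}\dfrac{d^kx^i}{dt^k}-L,\nonumber
\end{equation}
treating the $p_{(\alpha)i}$ as independent variables conjugate to the derivatives $\dfrac{d^\alpha x^i}{dt^\alpha}$.

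The first family, $\dfrac{\partial \mathcal{E}^k(L)}{\partial p_{(\alpha)i}}=\dfrac{d^\alpha x^i}{dt^\alpha}$, is the most direct: differentiating the canonical expression with respect to $p_{(\alpha)i}$ picks out precisely the coefficient $\dfrac{d^\alpha x^i}{dt^\alpha}$, since $L$ does not depend on the momenta. This is essentially the Legendre-transform structure already seen in (4.4) and (6.4.4) of the earlier chapters, so I would invoke that parallel rather than recompute. The remaining two families are where the Euler--Lagrange equations enter. For $\dfrac{\partial \mathcal{E}^k(L)}{\partial x^i}=-\dfrac{dp_{(1)i}}{dt}$, I would differentiate $\mathcal{E}^k(L)$ with respect to $x^i$ at fixed momenta and fixed higher derivatives, obtaining $-\dfrac{\partial L}{\partial x^i}$, and then show this equals $-\dfrac{dp_{(1)i}}{dt}$ by differentiating the defining relation for $p_{(1)i}$ in (1.3.17) term by term and recognizing the telescoping combination as exactly $\stackrel{0}{E_i}(L)=0$ rearranged.

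The genuine work lies in the third family,
\begin{equation}
\dfrac{1}{\alpha!}\dfrac{\partial \mathcal{E}^k(L)}{\partial y^{(\alpha)i}}=-\dfrac{dp_{(\alpha+1)i}}{dt},\quad(\alpha=1,\dots,k-1),\nonumber
\end{equation}
and this is the step I expect to be the main obstacle. Here the plan is to differentiate $\mathcal{E}^k(L)=\sum_\beta p_{(\beta)i}\dfrac{d^\beta x^i}{dt^\beta}-L$ with respect to $y^{(\alpha)i}=\dfrac{1}{\alpha!}\dfrac{d^\alpha x^i}{dt^\alpha}$, producing $\alpha! \, p_{(\alpha)i}-\dfrac{\partial L}{\partial y^{(\alpha)i}}$ after accounting for the identification of $y^{(\alpha)i}$ with the scaled $\alpha$-th derivative. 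The crux is then to verify, using the explicit recursive structure of the momenta in (1.3.17), that $\dfrac{1}{\alpha!}\bigl(\alpha!\,p_{(\alpha)i}-\dfrac{\partial L}{\partial y^{(\alpha)i}}\bigr)=-\dfrac{dp_{(\alpha+1)i}}{dt}$. This amounts to showing that $p_{(\alpha)i}$ and $p_{(\alpha+1)i}$ satisfy the ladder relation $\dfrac{\partial L}{\partial y^{(\alpha)i}}=\alpha!\,p_{(\alpha)i}+\alpha!\,\dfrac{dp_{(\alpha+1)i}}{dt}$, which one reads off directly from (1.3.17) by comparing the truncated alternating sums defining consecutive momenta.

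The delicate bookkeeping is keeping the factorial weights and the alternating signs $(-1)^{k-\alpha}$ consistent across the recursion, and correctly separating the partial derivative with respect to $y^{(\alpha)i}$ (momenta held fixed) from the total time derivative $\dfrac{d}{dt}$ appearing in the momenta. I would handle this by first proving the single ladder identity between $p_{(\alpha)i}$ and $p_{(\alpha+1)i}$ as a lemma, purely from (1.3.17), independent of any equation of motion, and only afterwards feed in $\stackrel{0}{E_i}(L)=0$ to close the $x^i$-derivative case. Since all these are the computations carried out by M.~de L\'eon and the author in \cite{mir11}, I would present the derivation of the ladder relation in detail and then assemble the three families mechanically, remarking that the covariant (geometric) meaning of each equation follows from the tensorial character of $\stackrel{0}{E_i}(L)$ and of the momenta already established earlier in this section.
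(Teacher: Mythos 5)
Your proposal is correct: the ladder identity $\dfrac{1}{\alpha!}\dfrac{\partial L}{\partial y^{(\alpha)i}}=p_{(\alpha)i}+\dfrac{dp_{(\alpha+1)i}}{dt}$ does follow directly from the defining alternating sums for the momenta, the Euler--Lagrange equations give exactly $\stackrel{0}{E}_i(L)=\dfrac{\partial L}{\partial x^i}-\dfrac{dp_{(1)i}}{dt}$ so the $x^i$-family closes as you say, and writing ${\cal E}^k(L)=\sum_\beta \beta!\,p_{(\beta)i}y^{(\beta)i}-L$ with momenta held fixed yields the remaining two families (the chain-rule term from eliminating $y^{(k)}$ vanishes because $k!\,p_{(k)i}=\partial L/\partial y^{(k)i}$, which also explains why the third family stops at $\alpha=k-1$). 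The paper itself gives no proof of this theorem, attributing it to M. de L\'eon and others with a citation, and your Legendre-transform plus ladder-lemma derivation is precisely the standard argument from that source, with the factorial weights and signs checking out.
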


\begin{remark} The Jacobi - Ostrogradski momenta allow the
introduction of the 1-forms:
$$\begin{array}{l}
p_{(1)}=p_{(1)i}dx^i+p_{(2)i}dy^{(1)i}+\cdots +p_{(k)i}dy^{(k-1)i}, \\
\\
p_{(2)}=p_{(2)i}dx^i+p_{(3)i}dy^{(1)i}+\cdots +p_{(k)i}dy^{(k-2)i}, \\
..............................................................
\\
p_{(k)}=p_{(k)i}dx^i.
\end{array}$$
\end{remark}

\section{Semisprays. Nonlinear connections}\index{Semisprays! on the manifold $T^kM$}
\setcounter{equation}{0}\setcounter{theorem}{0}\setcounter{definition}{0}\setcounter{proposition}{0}\setcounter{remark}{0}

A vector field $S\in\chi(T^kM)$ with the property
\begin{equation}
JS=\stackrel{k}{\Gamma}
\end{equation}
is called a $k-$semispray on $T^kM$. $S$ can be uniquely written
in the form:
\begin{equation}
S=y^{\left( 1\right) i}\displaystyle\frac \partial {\partial
x^i}+...+ky^{\left( k\right) i}\displaystyle\frac \partial
{\partial
y^{(k-1)i}}-(k+1)G^i(x,y^{\left( 1\right) },...,y^{\left( k\right) }) %
\displaystyle\frac \partial {\partial y^{\left( k\right) i}},
\end{equation}
or shortly
\begin{equation}
S=\Gamma -(k+1)G^i\displaystyle\frac\partial {\partial y^{\left(
k\right) i}}.  \tag{1.4.2'}
\end{equation}

The set of functions $G^i$ is the set of \textit{coefficient}s of
$S.$ With respect to (1.1.5) Ch. 4 $G^i$ are transformed as
following:
\begin{equation}
(k+1)\widetilde{G}^i=(k+1)G^j\displaystyle\frac{\partial
\widetilde{x}^i}{
\partial x^j}-\Gamma \widetilde{y}^{\left( k\right) i}.
\end{equation}

A curve $c:I\rightarrow M$ is called a $k$\textit{-path} on $M$
with respect to $S$ if its extension $\widetilde{c}$ is an
integral curve of $S.$ A $k$-path is characterized by the $(k+1)-$
differential equations:
\begin{equation}
\displaystyle\frac{d^{k+1}x^i}{dt^{k+1}}+(k+1)G^i\left(x,\displaystyle\frac{dx}{dt},...,
\dfrac{1}{k!}\displaystyle\frac{d^kx}{dt^k}\right)=0.
\end{equation}

We shall show that a $k-$semispray determine the main geometrical
object fields on $T^kM$ as: the nonlinear connections $N$, the
$N-$linear connections $D$ and them structure equations.
Evidently, $N$ and $D$ are basic for the geometry of manifold
$T^kM$.

\begin{definition} A subbundle $HT^kM$ of the tangent bundle
$(TT^kM,$ $d\pi^k$, $T^kM)$ which is supplementary to the vertical
subbundle $V_1T^kM$:
\begin{equation}
TT^kM=HT^kM\oplus V_1T^kM
\end{equation}
is called a {\it nonlinear connection}.
\end{definition}

The fibres of $HT^kM$ determine a horizontal distribution
$$N:u\in T^kM\to N_u=H_u T^kM\subset T_u T^kM,\quad \forall u\in T^kM$$ supplementary to the vertical distribution $V_1$, i.e.
\begin{equation}
T_uT^kM=N_u\oplus V_{1,u},\quad \forall u\in T^kM.\tag{1.4.5'}
\end{equation}

If the base manifold $M$ is paracompact on $T^kM$ there exist the
nonlinear connections.

The local dimension of $N$ is $n=\dim M$.

Consider a nonlinear connection $N$ and denote by $h$ and $v$ the
horizontal and vertical projectors with respect to $N$ and $V_1$:
$$h+v=I,\quad hv=vh=0,\quad h^2=h,\quad v^2=v.$$ As usual we denote $$X^H=hX,\quad X^V=vX,\quad \forall X\in\chi(T^kM).$$

An horizontal lift, with respect to $N$ is a
${\mathcal{F}}(M)$-linear mapping $l_h:{\mathcal{X}}(M)\rightarrow
{\mathcal{X}}(T^{k}M)$ which has the properties
$$v\circ l_h=0,\ d\pi^{k}\circ l_h=I_d.$$

There exists an unique local basis adapted to the horizontal
distribution $N$. It is given by
\begin{equation}
\displaystyle\frac \delta {\delta x^i}=l_h\left(\displaystyle\frac
\partial {\partial x^i}\right),(i=1,...,n).
\end{equation}

The linearly independent vector fields of this basis can be
uniquely written in the form:
\begin{equation}
\displaystyle\frac \delta {\delta x^i}=\displaystyle\frac \partial
{\partial x^i}-\underset{(1)}{N_i^j}\displaystyle\frac \partial
{\partial y^{\left( 1\right)
j}}-...-\underset{(k)}{N_i^j}\displaystyle\frac \partial {\partial
y^{\left( k\right) j}}.
\end{equation}

The systems of differential functions on
$T^kM:(\underset{(1)}{N_i^j},...,\underset{(k)}{ N_i^j})$, gives
the \textit{coefficients} of the nonlinear connection $N$.

By means of (1.4.6) it follows:

\begin{proposition}
With respect of a change of local
coordinates on the manifold $T^kM$ we have
\begin{equation}
\displaystyle\frac \delta {\delta x^i}=\displaystyle\frac{\partial
{\widetilde{x}}^j}{\partial x^i}\displaystyle\frac \delta {\delta
{\widetilde{x}} ^j} , \tag{1.4.7'}
\end{equation}
and
\begin{equation}
\begin{array}{l}
\underset{(1)}{\widetilde{N}_m^i}\displaystyle\frac{\partial \widetilde{x}%
^m}{\partial x^j}=\underset{(1)}{N_j^m}\displaystyle\frac{\partial
\widetilde{x}^i}{\partial x^m}-\displaystyle\frac{\partial \widetilde{y}%
^{(1)i}}{\partial x^j}, \\
\\
.............................................................. \\
\underset{(k)}{\widetilde{N}_m^i}\displaystyle\frac{\partial \widetilde{x%
}^m}{\partial
x^j}=\underset{(k)}{N_j^m}\displaystyle\frac{\partial
\widetilde{x}^i}{\partial x^m}+...+\underset{(1)}{N_j^m}%
\displaystyle\frac{\partial \widetilde{y}^{(k-1)i}}{\partial x^m}-%
\displaystyle\frac{\partial \widetilde{y}^{(k)i}}{\partial x^j}, \\
\end{array}
\end{equation}
\end{proposition}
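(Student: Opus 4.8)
The plan is to obtain both assertions from a single comparison of the two ways of writing $\delta/\delta x^i$ in the two coordinate systems. First I would establish the transformation law (1.4.7') for the adapted horizontal fields. Since $\delta/\delta x^i = l_h(\partial/\partial x^i)$ by the definition (1.4.6) and $l_h$ is ${\cal F}(M)$-linear, while on the base manifold $M$ one has $\partial/\partial x^i = (\partial\widetilde{x}^j/\partial x^i)\,\partial/\partial\widetilde{x}^j$ with coefficients $\partial\widetilde{x}^j/\partial x^i$ depending on $x$ alone, I would apply $l_h$ to this base relation and pull the base functions out of $l_h$. This gives $\delta/\delta x^i = (\partial\widetilde{x}^j/\partial x^i)\,\delta/\delta\widetilde{x}^j$, which is the first claimed identity.

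Next I would write $\delta/\delta x^i$ in two ways and equate them. On one hand, the local expression (1.4.7) of the adapted basis gives $\delta/\delta x^i = \partial/\partial x^i - \sum_{\alpha=1}^{k}\underset{(\alpha)}{N_i^l}\,\partial/\partial y^{(\alpha)l}$ in the $(x,y)$ chart; substituting the transformation rule (1.2.1) for each natural field $\partial/\partial x^i$ and $\partial/\partial y^{(\alpha)l}$ re-expresses this entirely in the tilde natural basis. On the other hand, (1.4.7') together with the same local expression written in the tilde chart gives $\delta/\delta x^i = (\partial\widetilde{x}^m/\partial x^i)\bigl(\partial/\partial\widetilde{x}^m - \sum_{\alpha=1}^{k}\underset{(\alpha)}{\widetilde{N}_m^l}\,\partial/\partial\widetilde{y}^{(\alpha)l}\bigr)$. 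Because $\{\partial/\partial\widetilde{x}^m,\partial/\partial\widetilde{y}^{(1)m},\dots,\partial/\partial\widetilde{y}^{(k)m}\}$ is a basis of $T_u(T^kM)$, I would then compare coefficients term by term.

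The coefficient of $\partial/\partial\widetilde{x}^m$ matches automatically, both sides equalling $\partial\widetilde{x}^m/\partial x^i$. For the coefficient of $\partial/\partial\widetilde{y}^{(\alpha)m}$ the decisive ingredient is the chain of osculator identities (1.1.5'), which permit replacing $\partial\widetilde{y}^{(\alpha)m}/\partial y^{(\beta)l}$ by $\partial\widetilde{y}^{(\alpha-\beta)m}/\partial x^l$ for $\beta\le\alpha$ (the terms with $\beta>\alpha$ vanishing, since $\widetilde{y}^{(\alpha)}$ does not depend on $y^{(\beta)}$ there). Carrying this out for $\alpha=k$, the contribution from the first expression becomes $\partial\widetilde{y}^{(k)m}/\partial x^i - \sum_{\beta=1}^{k}\underset{(\beta)}{N_i^l}\,\partial\widetilde{y}^{(k-\beta)m}/\partial x^l$, while the second expression contributes $-(\partial\widetilde{x}^s/\partial x^i)\,\underset{(k)}{\widetilde{N}_s^m}$. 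Equating the two and relabelling the free indices ($i\to j$, $m\to i$, with the summation indices renamed to $m$) yields exactly the last displayed transformation rule; the intermediate orders $\alpha=1,\dots,k-1$, and in particular the first line, arise from the identical argument applied to the coefficient of $\partial/\partial\widetilde{y}^{(\alpha)m}$.

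The bulk of the work is routine index bookkeeping, and the only genuine subtlety — the step I would single out as the crux — is the systematic use of (1.1.5') to collapse the mixed derivatives $\partial\widetilde{y}^{(\alpha)m}/\partial y^{(\beta)l}$ into pure $x$-derivatives of the lower-order $\widetilde{y}$'s. This is precisely what makes the coefficient of $\partial/\partial\widetilde{y}^{(\alpha)m}$ assemble into the stated triangular pattern $\underset{(\alpha)}{N_j^m}(\partial\widetilde{x}^i/\partial x^m) + \cdots + \underset{(1)}{N_j^m}(\partial\widetilde{y}^{(\alpha-1)i}/\partial x^m) - \partial\widetilde{y}^{(\alpha)i}/\partial x^j$, rather than into an unstructured combination; without these identities the terms would not organize themselves into the form that exhibits $(\underset{(1)}{N_i^j},\dots,\underset{(k)}{N_i^j})$ as the coefficients of a well-defined geometric object field.
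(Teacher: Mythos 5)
Your proof is correct and is precisely the argument the paper intends when it says the proposition follows ``by means of (1.4.6)'': you derive (1.4.7') from the $\mathcal{F}(M)$-linearity of the horizontal lift $l_h$, then equate the two expressions of $\delta/\delta x^i$ in the tilde natural basis, using the osculator identities (1.1.5') to reduce the mixed derivatives $\partial\widetilde{y}^{(\alpha)m}/\partial y^{(\beta)l}$ to $\partial\widetilde{y}^{(\alpha-\beta)m}/\partial x^l$ and so obtain the triangular transformation rules (1.4.8). Your identification of (1.1.5') as the crux is exactly right; no gaps.
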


\begin{remark} The equalities (1.4.8) characterize a nonlinear
connection $N$ with the coefficients
$\underset{(1)}{N_i^j},\cdots,\underset{(k)}{N_i^j}$.
\end{remark}

These considerations lead to an important result, given by:

\begin{theorem}
[I. Buc\u{a}taru \cite{Buc}] If $S$ if a
$k-$semispray on $T^kM$, with the coefficients $G^i$, then the
following system of functions:
\begin{equation}
\underset{(1)}{N_j}^i=\displaystyle\frac{\partial G^i}{\partial
y^{(k)i}},\underset{(2)}{N_j}^i=\displaystyle\frac{\partial
G^i}{\partial y^{(k-1)i}},\cdots,
\underset{(k)}{N_j}^i=\displaystyle\frac{\partial G^i}{\partial
y^{(1)j}}\tag{4.9}
\end{equation} gives the coefficients of a
nonlinear connection $N$.
\end{theorem}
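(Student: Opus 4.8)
The plan is to invoke the characterization recorded in the Remark stating that a system of functions is the set of coefficients of a nonlinear connection on $T^kM$ if and only if, under a coordinate change (1.1.5), it obeys the transformation laws (1.4.8). Hence it suffices to verify that the functions $\underset{(\alpha)}{N_j^i}=\dfrac{\partial G^i}{\partial y^{(k+1-\alpha)j}}$, for $\alpha=1,\dots,k$, satisfy (1.4.8). The only ingredients needed are the transformation rule (1.4.3) for the semispray coefficients, $(k+1)\widetilde{G}^i=(k+1)G^l\dfrac{\partial\widetilde{x}^i}{\partial x^l}-\Gamma\widetilde{y}^{(k)i}$, together with the identities (1.1.5').

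First I would extract from (1.1.5') the shift rule $\dfrac{\partial\widetilde{y}^{(\beta)m}}{\partial y^{(\delta)j}}=\dfrac{\partial\widetilde{y}^{(\beta-\delta)m}}{\partial x^j}$ (with the convention $y^{(0)}=x$, the expression vanishing for $\beta<\delta$), obtained by iterating the chain of equalities there. I would then differentiate (1.4.3) with respect to the untilded fibre variable $y^{(k+1-\alpha)j}$. Since $\dfrac{\partial\widetilde{x}^i}{\partial x^l}$ does not depend on the fibre coordinates, its term produces exactly $(k+1)\underset{(\alpha)}{N_j^l}\dfrac{\partial\widetilde{x}^i}{\partial x^l}$, the homogeneous part of the right-hand side of (1.4.8). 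On the left, applying the chain rule to $\widetilde{G}^i$, recognising $\dfrac{\partial\widetilde{G}^i}{\partial\widetilde{y}^{(\beta)m}}=\underset{(k+1-\beta)}{\widetilde{N}_m^i}$, and using the shift rule, one obtains $(k+1)\sum_{\gamma=1}^{\alpha}\underset{(\gamma)}{\widetilde{N}_m^i}\,\dfrac{\partial\widetilde{y}^{(\alpha-\gamma)m}}{\partial x^j}$. The top term $\gamma=\alpha$ of this sum is precisely $(k+1)\underset{(\alpha)}{\widetilde{N}_m^i}\dfrac{\partial\widetilde{x}^m}{\partial x^j}$, the left-hand side of (1.4.8).

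The argument then proceeds by induction on $\alpha$. For $\alpha=1$ only the top term survives on the left, while on the right one must check that $-\dfrac{1}{k+1}\dfrac{\partial(\Gamma\widetilde{y}^{(k)i})}{\partial y^{(k)j}}=-\dfrac{\partial\widetilde{y}^{(1)i}}{\partial x^j}$, which gives the base case. For the inductive step I would move the lower-order tilded terms $\gamma=1,\dots,\alpha-1$ to the right, substitute the already-proved laws (1.4.8) of those orders to re-express the tilded coefficients $\underset{(\gamma)}{\widetilde{N}}$ through the untilded $\underset{(\gamma)}{N}$, and show that the combination of these contributions with the single remaining term $-\dfrac{1}{k+1}\dfrac{\partial(\Gamma\widetilde{y}^{(k)i})}{\partial y^{(k+1-\alpha)j}}$ reproduces exactly the cross terms $\underset{(\alpha-1)}{N_j^m}\dfrac{\partial\widetilde{y}^{(1)i}}{\partial x^m}+\dots+\underset{(1)}{N_j^m}\dfrac{\partial\widetilde{y}^{(\alpha-1)i}}{\partial x^m}$ and the inhomogeneous term $-\dfrac{\partial\widetilde{y}^{(\alpha)i}}{\partial x^j}$ of (1.4.8).

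The genuine obstacle is this last matching, which rests on computing $\dfrac{\partial(\Gamma\widetilde{y}^{(k)i})}{\partial y^{(k+1-\alpha)j}}$. Writing $\Gamma\widetilde{y}^{(k)i}=\sum_{\beta=1}^{k}\beta\,y^{(\beta)m}\dfrac{\partial\widetilde{y}^{(k)i}}{\partial y^{(\beta-1)m}}$ and differentiating termwise produces an explicit part (from the factor $y^{(\beta)m}$) and an implicit part (from the second-order derivatives of $\widetilde{y}^{(k)i}$); the shift rule must be applied repeatedly so that, after reindexing, the unwanted contributions cancel and the survivors telescope into exactly the required combination. I expect the careful bookkeeping of the weights $\beta$, the index shifts, and these cancellations to be the hard part; once the identity is verified for every $\alpha$, dividing the differentiated form of (1.4.3) by $k+1$ yields (1.4.8), and the quoted characterization completes the proof.
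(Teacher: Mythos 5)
Your strategy---differentiate the transformation rule (1.4.3) with respect to $y^{(k+1-\alpha)j}$, use the identities (1.1.5'), and compare with a transformation-law characterization of connection coefficients---is the natural one (the paper offers no proof here, citing \cite{Buc}; this is how the $k=1$ case, Theorem 1.3.1 of Part I, is handled). Your expansion of the left-hand side and your base case $\alpha=1$ are correct. Moreover, the derivative you flag as the genuine obstacle collapses with no bookkeeping at all: from the commutation rule $\dfrac{\partial}{\partial y^{(\delta)j}}(\Gamma f)=\Gamma\Bigl(\dfrac{\partial f}{\partial y^{(\delta)j}}\Bigr)+\delta\,\dfrac{\partial f}{\partial y^{(\delta-1)j}}$, your shift rule, $\bigl[\Gamma,\partial/\partial x^j\bigr]=0$, and $\Gamma\widetilde{y}^{(\alpha-1)i}=\alpha\,\widetilde{y}^{(\alpha)i}$ (which is (1.1.5) itself), one gets $\dfrac{\partial(\Gamma\widetilde{y}^{(k)i})}{\partial y^{(k+1-\alpha)j}}=(k+1)\dfrac{\partial\widetilde{y}^{(\alpha)i}}{\partial x^j}$ for every $\alpha$, so no induction is needed.

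The genuine gap is the matching you promise at the end. Inserting the collapsed term into the differentiated (1.4.3) yields, for each $\alpha$,
\begin{equation*}
\underset{(\alpha)}{\widetilde{N}{}_m^i}\frac{\partial\widetilde{x}^m}{\partial x^j}
+\underset{(\alpha-1)}{\widetilde{N}{}_m^i}\frac{\partial\widetilde{y}^{(1)m}}{\partial x^j}
+\cdots
+\underset{(1)}{\widetilde{N}{}_m^i}\frac{\partial\widetilde{y}^{(\alpha-1)m}}{\partial x^j}
=\underset{(\alpha)}{N_j^m}\frac{\partial\widetilde{x}^i}{\partial x^m}
-\frac{\partial\widetilde{y}^{(\alpha)i}}{\partial x^j},
\end{equation*}
which is not (1.4.8) but exactly the transformation law (1.5.4) of the \emph{dual} coefficients: the cross terms carry the tilded coefficients contracted with $\partial\widetilde{y}^{(\alpha-\gamma)m}/\partial x^j$, not the untilded ones contracted with $\partial\widetilde{y}^{(\alpha-\gamma)i}/\partial x^m$. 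The two laws are genuinely inequivalent for $\alpha\geq 2$, so your inductive step cannot close: already for $n=1$, $k=2$, $G\equiv 0$ in the untilded chart and $\widetilde{x}=f(x)$, a direct computation gives $\underset{(2)}{\widetilde{N}}\,f'=-f''y^{(2)}-\frac{1}{2}f'''(y^{(1)})^2+\frac{(f'')^2}{f'}(y^{(1)})^2$, while the right-hand side of (1.4.8) equals $-f''y^{(2)}-\frac{1}{2}f'''(y^{(1)})^2$; the discrepancy $\frac{(f'')^2}{f'}(y^{(1)})^2\neq 0$ shows that the functions $\partial G^i/\partial y^{(k+1-\alpha)j}$ do \emph{not} satisfy (1.4.8), so the Remark you invoke is the wrong characterization to target and no bookkeeping will save the cancellation you anticipate. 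The repair is conceptual rather than computational: your own calculation proves these functions obey (1.5.4), i.e.\ they are the dual coefficients $\underset{(\alpha)}{M_j^i}$ of a nonlinear connection; since the paper states right after (1.5.4) that those transformations characterize $N$, and the triangular system (1.5.3) recovers the primal coefficients uniquely from the dual ones, the theorem follows in that sense---via Section 1.5, not via (1.4.8).
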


The $k-$tangent structure $J$, defined in (1.2.5), Ch. 4. applies
the horizontal distribution $N_0=N$ into a vertical distribution
$N_1\subset V_1$ of dimension $n$, supplementary to the
distribution $V_2$. Then it applies the distribution $N_1$ in
distribution $N_2\subset V_2$, supplementary to the distribution
$V_3$ and so on. Of course we have $\dim N_0=\dim N_1=\cdots=\dim
N_{k-1}=n$.

Therefore we can write:
\begin{equation}
N_1=J(N_0),\quad N_2=J(N_1),\cdots,N_{k-1}=J(N_{k-2})
\end{equation}
and we obtain the direct decomposition:
\begin{equation}
T_uT^kM=N_{0,u}\oplus N_{1,u}\oplus\cdots\oplus N_{k-1,u}\oplus
V_{k,u},\quad \forall u\in T^kM.
\end{equation}

An adapted basis to $N_0$, $N_1$, ..., $N_{k-1}$, $V_k$ is given
by:
\begin{equation}
\left\{\displaystyle\frac \delta {\delta x^i},\displaystyle\frac
\delta {\delta y^{(1)i}},\cdots,\displaystyle\frac \delta {\delta
y^{(k-1)i}},\dfrac{\partial}{\partial y^{(k)i}}\right\},
(i=1,...,n),
\end{equation}
where $\displaystyle\frac \delta {\delta x^i}$ is in (1.4.7) and
\begin{equation}
\left\{
\begin{array}{l}
\displaystyle\frac \delta {\delta y^{(1)i}}=\displaystyle\frac
\partial {\partial y^{(1)i}}-\underset{(1)}{N_i^j}\displaystyle\frac
\partial {\partial y^{(2)i}}-\cdots
-\underset{(k-1)}{N_i^j}\displaystyle\frac \partial
{\partial y^{(k)i}}, \\
\\
.................................................................. \\
\displaystyle\frac \delta {\delta y^{(k-1)i}}=\displaystyle\frac
\partial {\partial
y^{(k-1)i}}-\underset{(1)}{N_i^j}\displaystyle\frac \partial
{\partial y^{(k)j}}.
\end{array}
\right.
\end{equation}

With respect to (1.1.5), Ch. 4 we have:
\begin{equation}
\displaystyle\frac\delta{\delta y^{(\alpha
)i}}=\displaystyle\frac{
\partial x^j}{\partial \widetilde{x}^i}\displaystyle\frac \delta {\delta
\widetilde{y}^{(\alpha )j}},\quad \left(\alpha
=0,1,...,k;y^{(0)i}=x^i, \dfrac{\delta}{\delta
y^{(k)i}}=\dfrac{\partial}{\partial y^{(k)i}}\right).
\end{equation}

Let $h,v_1,...,v_k$ be the projectors determined by (1.4.10):
$$
h+\dd\sum_1^k v_\alpha =I,h^2=h,v_\alpha v_\alpha =v_\alpha,
hv_\alpha =0;$$ $$v_\alpha h=0, v_\alpha v_\beta=v_\beta v_\alpha=0,
(\alpha \neq \beta).
$$

If we denote
\begin{equation}
X^H=hX,\ X^{V_\alpha }=v_\alpha X,\ \forall X\in
{\mathcal{X}}(T^kM)
\end{equation}
we have, uniquely,
\begin{equation}
X=X^H+X^{V_1}+...+X^{V_k}.
\end{equation}

In the adapted basis (1.4.12) we have:
$$
X^H=X^{\left( 0\right) i}\displaystyle\frac \delta {\delta
x^i},X^{V\;_\alpha }=X^{\left( \alpha \right) i}\displaystyle\frac
\delta {\delta y^{\left( \alpha \right) i}},(\alpha =1,...,k).
$$

A first result on the nonlinear connection $N$ is as follows:

\begin{theorem}
The nonlinear connection $N$ is integrable
if, and only if: $$[X^H,Y^H]^{V_\alpha}=0,\quad \forall
X,Y\in\chi(T^kM), \quad(\alpha = 1,\cdots k).$$
\end{theorem}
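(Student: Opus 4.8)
The plan is to characterize integrability of the nonlinear connection $N$ via the Frobenius theorem applied to the horizontal distribution, exactly as was done for $TM$ in Theorem 3.2 of Chapter 1. The horizontal distribution $N = N_0$ is spanned by the adapted vector fields $\dfrac{\delta}{\delta x^i}$ given in (1.4.7), so by Frobenius $N$ is integrable if and only if the distribution is involutive, that is, if and only if $\left[\dfrac{\delta}{\delta x^i},\dfrac{\delta}{\delta x^j}\right]$ lies in $N$ for all $i,j$. First I would observe that a bracket of two horizontal vector fields $X^H,Y^H$ has, with respect to the direct decomposition (1.4.11), a horizontal component and components in each $N_1,\dots,N_{k-1},V_k$; the involutivity condition is precisely that all the non-horizontal components vanish.

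The key step is to note that the decomposition (1.4.11) bundles the distributions $N_1,\dots,N_{k-1},V_k$ together into the vertical subbundle $V_1$ of (1.4.5), and that the projectors $v_1,\dots,v_k$ onto these summands together reconstruct the projection onto $V_1$. Hence asking that $[X^H,Y^H]$ be horizontal is equivalent to asking that its projection onto $V_1$ vanish, which in turn is equivalent to the vanishing of each of the $V_\alpha$-components, i.e. $[X^H,Y^H]^{V_\alpha}=0$ for every $\alpha=1,\dots,k$. Thus I would establish the chain: $N$ integrable $\iff$ $N$ involutive $\iff$ $[X^H,Y^H]\in N$ for all $X,Y$ $\iff$ $v_\alpha[X^H,Y^H]=0$ for all $\alpha$ and all $X,Y$. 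Since $hX=X^H$ for any $X\in\chi(T^kM)$ and the horizontal vectors are spanned by the $\dfrac{\delta}{\delta x^i}$, it suffices to test this on the adapted frame, but the frame-free formulation $[X^H,Y^H]^{V_\alpha}=0$ is the cleanest statement and follows at once.

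I would then make explicit that the integrability of $N$ here means precisely integrability of the horizontal distribution $N_0$ as a distribution (its maximal integral manifolds foliate $T^kM$), which is exactly the Frobenius condition; the equivalence with the stated bracket condition is then immediate from the remarks above. The main obstacle, such as it is, is largely bookkeeping: one must be careful that in the higher-order setting the ``vertical'' complement to $N_0$ is $V_1$, which is itself the sum $N_1\oplus\cdots\oplus N_{k-1}\oplus V_k$, so that demanding horizontality of a bracket forces one condition per summand $V_\alpha$. Once this identification of $V_1$ with $\bigoplus_\alpha N_\alpha$ (equivalently, of the total vertical projector with $\sum_\alpha v_\alpha$) is recorded, the proof is a direct transcription of the order-one argument. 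No curvature computation is needed for this particular statement; writing the bracket in the adapted basis and reading off a curvature-type $d$-tensor (as in (1.3.13')) would instead furnish the explicit local criterion, which I would leave as a subsequent remark rather than part of this proof.
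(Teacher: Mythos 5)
Your proof is correct and is essentially the argument the paper intends: the paper states this theorem without printed proof, and your route — Frobenius applied to the horizontal distribution $N_0$, together with the observation that the vertical complement $V_1$ splits as $N_1\oplus\cdots\oplus N_{k-1}\oplus V_k$, so horizontality of $[X^H,Y^H]$ is exactly the vanishing of each $v_\alpha$-component — is the direct transcription of the paper's own order-one treatment (the bracket formula (1.3.13) and Theorem 1.3.2). Your remarks on reducing to the adapted frame and on deferring the curvature-tensor expression are also consistent with the paper's presentation, so nothing is missing.
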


\section{The dual coefficients of a nonlinear connection}
\setcounter{equation}{0}\setcounter{theorem}{0}\setcounter{definition}{0}\setcounter{proposition}{0}\setcounter{remark}{0}

Consider a nonlinear connection $N$, with the coefficients
$(\underset{\left(1\right)}{N_j^i},...,\underset{\left( k\right)
}{N_j^i}).$ The dual basis, of the adapted basis (1.4.12) is of the
form
\begin{equation}
(\delta x^i,\delta y^{(1)i},\cdots,\delta y^{(k)i})
\end{equation}
where
\begin{equation}
\left\{
\begin{array}{l}
\delta x^i=dx^i, \\
\\
\delta y^{\left(1\right)i}=dy^{\left(1\right)i}+\underset{\left(
1\right)}{M_j^i}dx^j, \\
............................................. \\
\delta y^{\left(k\right)i}=dy^{\left(k\right)i}+\underset{\left(
1\right)}{M_j^i}dy^{\left(k-1\right)j}+...+\underset{\left(
k-1\right)}{M_j^i}dy^{\left(1\right)j}+\underset{\left(k\right)
}{M_j^i}dx^j,
\end{array}
\right.
\end{equation}
and where
\begin{equation}
\left\{
\begin{array}{l}
\underset{\left(1\right)}{M_j^i}=\underset{\left( 1\right)
}{N_j^i},\ \underset{\left( 2\right) }{M_j^i}=\underset{\left(
2\right) }{N_j^i}+ \underset{\left( 1\right)
}{N_j^m}\underset{\left( 1\right) }{M_m^i}
,...., \\
\\
\underset{\left( k\right) }{M_j^i}=\underset{\left( k\right)
}{N_j^i}+ \underset{\left( k-1\right) }{N_j^m}\underset{\left(
1\right) }{M_m^i} +....+\underset{\left( 1\right)
}{N_j^m}\underset{\left( k-1\right) }{ M_m^i}.
\end{array}
\right.
\end{equation}

The system of functions ($\underset{\left( 1\right) }{M_j^i},...,
\underset{\left( k\right) }{M_j^i})$ is called the system of dual
coefficients of the nonlinear connection $N.$ If the dual
coefficients of $N$ are given, then we uniquely obtain from (1.5.3)
the {\it primal} coefficients $(\underset{\left(1\right)}{N_j^i}
,...,\underset{\left(k\right)}{N_j^i},)$ of $N$.

With respect to (4.1.4), Ch. 4 the dual coefficients of $N$ are
transformed by the rule
\begin{equation}
\begin{array}{l}
\underset{(1)}{M_j^m}\displaystyle\frac{\partial
\widetilde{x}^i}{\partial
x^m}=\underset{(1)}{\widetilde{M}_m^i}\displaystyle\frac{\partial
\widetilde{x}^m}{\partial x^j}+\displaystyle\frac{\partial
\widetilde{y}
^{(1)i}}{\partial x^j}, \\
............................................. \\
\underset{(k)}{M_j^m}\displaystyle\frac{\partial
\widetilde{x}^i}{\partial
x^m}=\underset{(k)}{\widetilde{M}_m^i}\displaystyle\frac{\partial
\widetilde{x}^m}{\partial x^j}+\underset{(k-1)}{\widetilde{M}_m^i} %
\displaystyle\frac{\partial \widetilde{y}^{(1)m}}{\partial
x^j}+\cdots +
\underset{(1)}{\widetilde{M}_m^i}\displaystyle\frac{\partial
\widetilde{y} ^{(k-1)m}}{\partial x^j}+\displaystyle\frac{\partial
\widetilde{y}^{(k)i}}{
\partial x^j}.
\end{array}
\end{equation}

These transformations of the dual coefficients characterize the
nonlinear connection $N$. This property allows to prove an
important result:

\begin{theorem}
[R. Miron]  For any
$k-$semispray $S$ with the coefficients $G^i$ the following system
of functions \begin{equation}
\begin{array}{l}\underset{(1)}{M_j^i}=\displaystyle\frac{\partial
G^i}{\partial
{y}^{(k)j}},\underset{(2)}{M_j^i}=\dfrac{1}{2}(S\underset{(1)}{M_j^i}+\underset{(1)}{M_m^i}\underset{(1)}{M_j^m}),\cdots,
\vspace{3mm}\\
\underset{(k)}{M_j^i}=\dfrac{1}{k}(S\underset{(k-1)}{M_j^i}+\underset{(1)}{M_m^i}\underset{(k-1)}{M_j^m})\end{array}
\end{equation} gives the system of dual coefficients of a nonlinear connection, which depend on the $k-$semispray $S$,
only.
\end{theorem}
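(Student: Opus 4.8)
For any $k$-semispray $S$ with coefficients $G^i$, the system of functions given by $(1.5.6)$ gives the dual coefficients of a nonlinear connection depending only on $S$.

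Let me think about how to prove this.

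We want to show that the functions defined in (1.5.6) transform according to the rule (1.5.5), which by the theorem following (1.5.5) characterizes the dual coefficients of a nonlinear connection. So the whole proof reduces to a verification of transformation laws.

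The base case is $\underset{(1)}{M^i_j} = \partial G^i/\partial y^{(k)j}$. By Buc\u{a}taru's theorem (1.4.9), this is exactly $\underset{(1)}{N^i_j}$, the first primal coefficient coming from $S$, and from (1.5.3) we have $\underset{(1)}{M^i_j} = \underset{(1)}{N^i_j}$. So the first dual coefficient already transforms correctly — namely, I would differentiate the coefficient transformation rule (1.4.3) for $G^i$ with respect to $y^{(k)j}$, use the chain rule together with the identities (1.1.5'), and read off the first line of (1.5.5). This is the anchor of an induction.

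The inductive step is the heart of the argument. Assuming that $\underset{(1)}{M^i_j}, \dots, \underset{(\alpha-1)}{M^i_j}$ satisfy the transformation rule (1.5.5) up to order $\alpha - 1$, I want to show that $\underset{(\alpha)}{M^i_j} = \tfrac{1}{\alpha}\big(S\,\underset{(\alpha-1)}{M^i_j} + \underset{(1)}{M^i_m}\,\underset{(\alpha-1)}{M^m_j}\big)$ obeys the rule at order $\alpha$. The plan is to apply the $k$-semispray operator $S = \Gamma - (k+1)G^i\,\partial/\partial y^{(k)i}$ to the transformation law for $\underset{(\alpha-1)}{M^i_j}$, which is a functional identity on $\widetilde{T^k M}$. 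Since $S$ is a genuine vector field on $T^kM$, it acts as a derivation and commutes with the algebraic operations, so applying $S$ to $\underset{(\alpha-1)}{M^m_j}\,\partial\widetilde x^i/\partial x^m = \underset{(\alpha-1)}{\widetilde M^i_m}\,\partial\widetilde x^m/\partial x^j + (\text{lower } \widetilde M\text{ terms}) + \partial\widetilde y^{(\alpha-1)i}/\partial x^j$ produces, on the left, $S(\underset{(\alpha-1)}{M^m_j})$ times the Jacobian plus correction terms involving $S$ applied to $\partial\widetilde x^i/\partial x^m$. The combination with the $\underset{(1)}{M^i_m}\underset{(\alpha-1)}{M^m_j}$ term, together with the factor $\tfrac{1}{\alpha}$, is engineered precisely so that the unwanted derivative terms cancel and what survives is the rule (1.5.5) at level $\alpha$, i.e. $\underset{(\alpha)}{M^m_j}\,\partial\widetilde x^i/\partial x^m = \underset{(\alpha)}{\widetilde M^i_m}\,\partial\widetilde x^m/\partial x^j + \cdots + \partial\widetilde y^{(\alpha)i}/\partial x^j$.

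The main obstacle, and the step deserving the most care, is bookkeeping the action of $S$ on the coordinate derivatives $\partial\widetilde y^{(\beta)i}/\partial x^j$ and on the Jacobian $\partial\widetilde x^i/\partial x^m$. The crucial computational fact I expect to need is that $S$ (equivalently $\Gamma$, since the vertical $\partial/\partial y^{(k)}$ piece annihilates the relevant objects) carries $\partial\widetilde y^{(\beta)i}/\partial x^j$ into a specific combination of the $\partial\widetilde y^{(\beta+1)i}/\partial x^j$ and terms built from lower $\widetilde M$'s — this is what couples consecutive orders and matches the telescoping structure of (1.5.2)–(1.5.3). Verifying that the factor $\tfrac{1}{\alpha}$ exactly balances the coefficient arithmetic (the integer $\alpha$ arising from differentiating the degree-$\alpha$ homogeneous structure of the accelerations) is where an error would most easily hide. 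Once this cancellation is confirmed at a general order $\alpha$, the induction closes, the transformation rule (1.5.5) holds for all orders, and since the formulas (1.5.6) involve only $G^i$ and $S$ (itself determined by $G^i$), the resulting dual coefficients depend on the $k$-semispray alone, as claimed. $\hfill\qed$
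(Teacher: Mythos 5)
Your proposal is correct and follows essentially the same route the paper indicates: immediately before the theorem the text notes that the transformation rules (1.5.4) \emph{characterize} the dual coefficients of a nonlinear connection, so the proof reduces to checking, inductively from the rule (1.4.3) for $G^i$, that the functions (1.5.6) obey (1.5.4) at each order $\alpha$ — exactly your plan, including the base case via Theorem 1.4.1 and the use of $S$ as a derivation with the factor $\tfrac{1}{\alpha}$ absorbing the identities of type $\Gamma\widetilde{y}^{(\alpha-1)i}=\alpha\,\widetilde{y}^{(\alpha)i}$. The paper itself does not display the cancellation computation either (it defers it to \cite{mir11}, with Atanasiu's remark for $k=2$), so your sketch matches, and in the bookkeeping discussion slightly exceeds, the paper's own level of detail.
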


\begin{remark} Gh. Atanasiu \cite{mirata2} has a real contribution in demonstration of this theorem for $k=2$.

\end{remark}

As an application we can prove:

\begin{theorem}
\begin{enumerate}
\item[1)] In the adapted basis (4.4.12) Ch. 4, the Liouville vector
fields $\stackrel{1}{\Gamma}$, ..., $\stackrel{k}{\Gamma}$ can be
expressed in the form
\begin{equation}
\begin{array}{l}
\stackrel{1}{\Gamma }=z^{(1)i}\displaystyle\frac \delta {\delta
y^{(k)i}}, \quad \stackrel{2}{\Gamma}=z^{(1)i}\displaystyle\frac
\delta {\delta
y^{(k-1)i}}+2z^{(2)i}\displaystyle\frac \delta {\delta y^{(k)i}}, \\
................................................................. \\
\stackrel{k}{\Gamma }=z^{(1)i}\displaystyle\frac \delta {\delta
y^{(1)i}}+2z^{(2)i}\displaystyle\frac \delta {\delta
y^{(2)i}}+\cdots +kz^{(k)i}\displaystyle\frac \delta {\delta
y^{(k)i}},
\end{array}
\end{equation}
where
\begin{equation}
\left\{
\begin{array}{l}
z^{(1)i}=y^{(1)i}, \quad
2z^{(2)i}=2y^{(2)i}+\underset{(1)}{M_m^i}y^{(1)m},...,
\vspace{3mm}\\
kz^{(k)i}=ky^{(k)i}+(k-1)\underset{(1)}{M_m^i}y^{(k-1)m}+\cdots +%
\underset{(k-1)}{M_m^i}y^{(1)m}
\end{array}\right.
\end{equation}

\item[2)] With respect to (1.1.4) we have:
\begin{equation}
\widetilde{z}^{(\alpha)i}=\displaystyle\frac{\partial \widetilde{x}^i}{%
\partial x^j}z^{(\alpha)j},\quad (\alpha =1,...,k).  \tag{1.5.7'}
\end{equation}
\end{enumerate}
\end{theorem}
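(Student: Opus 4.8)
The plan is to prove part 1) by exploiting the duality between the adapted basis $\left(\frac{\delta}{\delta x^i},\frac{\delta}{\delta y^{(1)i}},\dots,\frac{\delta}{\delta y^{(k-1)i}},\frac{\partial}{\partial y^{(k)i}}\right)$ and its dual cobasis $(\delta x^i,\delta y^{(1)i},\dots,\delta y^{(k)i})$ from (1.5.1)--(1.5.2). Since any $X$ decomposes as $X=\delta x^i(X)\frac{\delta}{\delta x^i}+\sum_{\alpha}\delta y^{(\alpha)i}(X)\frac{\delta}{\delta y^{(\alpha)i}}$, it suffices to evaluate each dual $1$-form on the Liouville field, written in the natural basis as $\overset{p}{\Gamma}=\sum_{\alpha=1}^{p}\alpha\, y^{(\alpha)i}\frac{\partial}{\partial y^{(k-p+\alpha)i}}$ (the expression from the theorem of \S1.2). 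First I would note that $\delta x^i(\overset{p}{\Gamma})=dx^i(\overset{p}{\Gamma})=0$, so the horizontal component vanishes and $\overset{p}{\Gamma}$ is genuinely vertical, as expected.

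The main computation is then to substitute $\overset{p}{\Gamma}$ into $\delta y^{(\gamma)i}=dy^{(\gamma)i}+\underset{(1)}{M^i_j}dy^{(\gamma-1)j}+\cdots+\underset{(\gamma)}{M^i_j}dx^j$. Because $dy^{(\beta)j}(\overset{p}{\Gamma})$ is nonzero only when $\beta\in\{k-p+1,\dots,k\}$, a short index count shows $\delta y^{(\gamma)i}(\overset{p}{\Gamma})=0$ for $\gamma\le k-p$, while for $\gamma=k-p+\alpha$ with $1\le\alpha\le p$ the surviving terms assemble into $\alpha\, y^{(\alpha)i}+\sum_{\beta=1}^{\alpha-1}(\alpha-\beta)\underset{(\beta)}{M^i_j}\,y^{(\alpha-\beta)j}$, which is precisely $\alpha\, z^{(\alpha)i}$ by the definition of $z^{(\alpha)i}$. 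This gives $\overset{p}{\Gamma}=\sum_{\alpha=1}^{p}\alpha\, z^{(\alpha)i}\frac{\delta}{\delta y^{(k-p+\alpha)i}}$, i.e. the asserted forms for $\overset{1}{\Gamma},\dots,\overset{k}{\Gamma}$. The only delicate point is the index bookkeeping: matching the weight $(\alpha-\beta)$ carried by $dy^{(\alpha-\beta)j}(\overset{p}{\Gamma})$ against the graded coefficients in the definition of $z^{(\alpha)i}$. I expect this purely combinatorial matching to be the main obstacle, though it is routine once the pairing is set up.

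For part 2) I would avoid a direct manipulation of the transformation laws (1.1.5) for $y^{(\alpha)i}$ and (1.5.5) for the dual coefficients $\underset{(\beta)}{M^i_j}$, and argue invariantly instead. The Liouville fields $\overset{1}{\Gamma},\dots,\overset{k}{\Gamma}$ are globally defined on $\widetilde{T^kM}$ (\S1.2), so $\overset{p}{\Gamma}$ is a single object in either chart. Writing its part 1) expression in both coordinate systems and using that the vertical frame vectors transform tensorially, $\frac{\delta}{\delta y^{(\alpha)i}}=\frac{\partial\widetilde{x}^j}{\partial x^i}\frac{\delta}{\delta\widetilde{y}^{(\alpha)j}}$ (the analogue at each order $\alpha$ of (1.4.7')), I obtain the identity $\sum_{\alpha}\alpha\, z^{(\alpha)i}\frac{\partial\widetilde{x}^j}{\partial x^i}\frac{\delta}{\delta\widetilde{y}^{(k-p+\alpha)j}}=\sum_{\alpha}\alpha\,\widetilde{z}^{(\alpha)j}\frac{\delta}{\delta\widetilde{y}^{(k-p+\alpha)j}}$. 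Since the vectors $\frac{\delta}{\delta\widetilde{y}^{(k-p+\alpha)j}}$ are linearly independent, comparing coefficients yields $\widetilde{z}^{(\alpha)i}=\frac{\partial\widetilde{x}^i}{\partial x^j}\,z^{(\alpha)j}$, which is exactly (1.5.7'); hence each $z^{(\alpha)}$ is a $d$-vector field. This intrinsic route makes part 2) essentially a corollary of part 1) together with the tensorial behaviour of the $\delta$-frame, and it sidesteps the heavy direct computation entirely.
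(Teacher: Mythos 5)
Your proposal is correct. Note first that the paper itself prints no proof of this theorem (it is introduced only by ``As an application we can prove''), so there is no authorial argument to match; your route is the natural verification and it checks out. For part 1), the pairing computation is sound: $dx^j(\overset{p}{\Gamma})=0$ kills both the horizontal component and the $\underset{(\gamma)}{M^i_j}\,dx^j$ term of each $\delta y^{(\gamma)i}$; one has $dy^{(\beta)j}(\overset{p}{\Gamma})=(\beta-k+p)\,y^{(\beta-k+p)j}$ for $k-p+1\le\beta\le k$ and zero otherwise; and for $\gamma=k-p+\alpha$ the surviving terms give $\alpha\,y^{(\alpha)i}+\sum_{\beta=1}^{\alpha-1}(\alpha-\beta)\,\underset{(\beta)}{M^i_j}\,y^{(\alpha-\beta)j}$, which is exactly the graded definition of $\alpha\,z^{(\alpha)i}$ in (1.5.7) (the coefficient of $\underset{(\beta)}{M}$ in $\alpha z^{(\alpha)i}$ being $\alpha-\beta$, with last term weight $1$). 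For part 2), your invariant argument is a genuine shortcut relative to the pedestrian alternative of pushing $z^{(\alpha)i}$ through the transformation laws for the $y^{(\alpha)}$'s and for the dual coefficients $\underset{(\beta)}{M^i_j}$; it uses only two inputs, both available independently of the $z$'s: the chart-independence of $\overset{1}{\Gamma},\dots,\overset{k}{\Gamma}$ (Theorem 1.2.1) and the tensorial law for the vertical adapted frame. One caution there: the paper's printed version of that frame law carries the inverse Jacobian $\dfrac{\partial x^j}{\partial\widetilde{x}^i}$, whose indices do not even contract consistently with the tilde frame; you correctly use $\dfrac{\delta}{\delta y^{(\alpha)i}}=\dfrac{\partial\widetilde{x}^j}{\partial x^i}\dfrac{\delta}{\delta\widetilde{y}^{(\alpha)j}}$, which follows from the identities (1.1.5') and matches the pattern of (1.4.7'), so there is no circularity, and taking $p=k$ in the coefficient comparison yields (1.5.7') for all orders $\alpha=1,\dots,k$ at once.
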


This is reason in which we call $z^{(1)i},...,z^{(k)i}$
\textit{the distinguished Liouville vector fields} (shortly,
$d$\textit{-vector fields}). These vectors are important in the
geometry of the manifold $T^kM$.

A field of $1$-forms $\omega \in \mathcal{X}^{*}(T^kM)$ can be
uniquely written as
$$
\omega =\omega^H+\omega^{V_1}+\cdots +\omega ^{V_k}
$$
where
$$
\omega^H=\omega \circ h, \omega^{V_\alpha }=\omega \circ
v_\alpha,\quad (\alpha =1,...,k).
$$

For any function $f\in {\cal F}(T^kM)$, the 1-form $df$ is
$$df=(df)^H+(df)^{V_1}+\cdots+(df)^{V_k}.$$

In the adapted cobasis we have:
\begin{equation}
(df)^H=\displaystyle\frac{\delta f}{\delta x^i}dx^i,(df)^{V_\alpha }= %
\displaystyle\frac{\delta f}{\delta y^{(\alpha )i}}\delta
y^{(\alpha )i},\quad (\alpha =1,...,k).
\end{equation}

Let $\gamma :I\rightarrow T^kM$ be a parametrized curve, locally
expressed by
$$
x^i=x^i(t),\quad y^{(\alpha )i}=y^{(\alpha )i}(t),\quad (t\in
I),\quad (\alpha =1,...,k).
$$

The tangent vector field $\displaystyle\frac{d\gamma}{dt}$ is
given by:
$$
\displaystyle\frac{d\gamma }{dt}=\left(
\displaystyle\frac{d\gamma}{dt}\right)^H+\left(\displaystyle\frac{d\gamma
}{dt}\right) ^{V_1}+\cdots +\left( \displaystyle\frac{d\gamma
}{dt}\right) ^{V_k}=
$$
$$
=
\displaystyle\frac{dx^i}{dt}\displaystyle\frac \delta {\delta x^i}+ %
\displaystyle\frac{\delta y^{(1)i}}{dt}\displaystyle\frac \delta
{\delta y^{(1)i}}+\cdots +\displaystyle\frac{\delta
y^{(k)i}}{dt}\displaystyle\frac \delta {\delta y^{(k)i}}.
$$

The curve $\gamma$ is called horizontal if
$\displaystyle\frac{d\gamma }{dt}=\left(\displaystyle\frac{d\gamma}{dt}\right)^H$. It is characterized by
the system of differential equations
\begin{equation}
x^i=x^i(t), \displaystyle\frac{\delta y^{(1)i}}{dt}=0,...,
\displaystyle\frac{\delta y^{(k)i}}{dt}=0.
\end{equation}

A horizontal curve $\gamma$ is called {\bf autoparallel} curve of
the nonlinear connection if $\gamma=\widetilde{c}$, where
$\widetilde{\widetilde{c}}$ is the extension of a curve $c:I\to
M$.

The {\it autoparallel} curves of the nonlinear connection $N$ are
characterized by the system of differential equations
\begin{equation}
\begin{array}{l}
\displaystyle\frac{\delta y^{(1)i}}{dt}=0,\cdots
,\displaystyle\frac{\delta y^{(k)i}}{dt}=0,\vspace{3mm}\\
y^{(1)i}=\displaystyle\frac{dx^i}{dt},\cdots
,y^{(k)i}=\displaystyle\frac
1{k!}\displaystyle\frac{d^kx^i}{dt^k}.
\end{array}
\tag{1.5.9'}
\end{equation}

\section{Prolongation to the manifold $T^kM$ of the Rie\-man\-nian structures given on the base manifold $M$}\index{Prolongations of ${\cal R}^n$}
\sectionmark{Prolongation to the manifold $T^kM$ of the Riemannian structures}
\setcounter{equation}{0}\setcounter{theorem}{0}\setcounter{definition}{0}\setcounter{proposition}{0}\setcounter{remark}{0}

Applying the previous theory of the notion of nonlinear connection
on the total space of acceleration bundle $T^kM$ we can solve the
old problem of the prolongation of the Riemann (or pseudo Riemann)
structure $g$ given on the base manifold $M$. This problem was
formulated by L. Bianchi and was studied by several remarkable
mathematicians as: E. Bompiani, Ch. Ehresmann, A. Morimoto, S.
Kobayashi. But the solution of this problem, as well as the
solution of the prolongation to $T^kM$ of the Finsler or Lagrange
structures were been recently given by R. Miron \cite{mirata6}. We will expound it here with very few demonstrations.

Let ${\cal R}^n=(M,g)$ be a Riemann space\index{Spaces! Riemann}, $g$ being a Riemannian
metric defined on the base manifold $M$, having the local
coordinate $g_{ij}(x)$, $x\in U\subset M$. We extend $g_{ij}$ to
$\pi^{-1}(U)\subset T^kM$, setting
$$(g_{ij}\circ\pi^k)(u)=g_{ij}(x),\quad \forall u\in
\pi^{-1}(U),\pi^k(u)=x.$$$ g_{ij}\circ \pi^k$ will be denoted by
$g_{ij}$.

The problems of prolongation of the Riemannian structure $g$ to
$T^kM$ can be formulated as follows:

The Riemannian structure $g$ on the manifold $M$ being a priori
given, determine a Riemannian structure $G$ on $T^kM$ so that $G$
be provided only by structure $g$.

As usually, we denoted by $\gamma^i_{\ jk}(x)$ the Christoffel
symbols of $g$ and prove, according theorem 1.5.1:

\begin{theorem}
There exists nonlinear connections $N$ on
the manifold $\widetilde{T^kM}$ determined only by the given
Riemannian structure $g(x)$. One of them has the following dual
coefficients
\begin{equation}
\begin{array}{l}
\underset{(1)}{M_j^i}=\gamma _{jm}^i(x)y^{(1)m}, \\
\\
\underset{(2)}{M_j^i}=\displaystyle\frac 12\left\{
\Gamma \underset{(1)}{M_j^i}+\underset{(1)}{M_m^i}\underset{(1)}{M_j^m}%
\right\} , \\
......................................... \\
\underset{(k)}{M_j^i}=\displaystyle\frac 1k\left\{
\Gamma \underset{(k-1)}{M_j^i}+\underset{(1)}{M_m^i}\underset{(k-1)}{%
M_j^m}\right\},
\end{array}
\end{equation}
where $\Gamma$ is the operator $(1.2.4)$, Ch. 4.
\end{theorem}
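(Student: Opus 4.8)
The plan is to deduce the statement from Miron's Theorem 1.5.1, which guarantees that for \emph{any} $k$-semispray $S$ with coefficients $G^i$ the functions produced by the recursion (1.5.5) are automatically the dual coefficients of a nonlinear connection on $\widetilde{T^kM}$. Thus it suffices to attach to $g$ one $k$-semispray whose leading dual coefficient is $\underset{(1)}{M_j^i}=\gamma^i_{jm}(x)y^{(1)m}$, and then to show that for such a semispray the recursion (1.5.5) collapses onto the $\Gamma$-recursion displayed in (1.6.1). Since $\gamma^i_{jk}$ and the operator $\Gamma$ of (1.2.4) are built from $g$ alone, the clause ``determined only by $g$'' will follow once this identification is established.

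First I would handle the leading term. Let $S$ be a $k$-semispray whose coefficients satisfy $\partial G^i/\partial y^{(k)j}=\gamma^i_{jm}(x)y^{(1)m}$; the geodesic spray $2\overset{1}{G}{}^i=\gamma^i_{jk}(x)y^{(1)j}y^{(1)k}$ of $(M,g)$ prolongs to such an $S$, and (as noted below) only its existence, not its precise form, will matter. By Theorem 1.5.1 the first dual coefficient is then $\underset{(1)}{M_j^i}=\gamma^i_{jm}(x)y^{(1)m}$. That this is a legitimate first dual coefficient can even be verified directly: contracting the classical transformation law of the Christoffel symbols with $y^{(1)l}$, and using $\widetilde{y}^{(1)q}=(\partial\widetilde{x}^q/\partial x^l)y^{(1)l}$ together with $\partial\widetilde{y}^{(1)i}/\partial x^j=(\partial^2\widetilde{x}^i/\partial x^j\partial x^l)y^{(1)l}$, reproduces exactly the first line of the transformation rule (1.5.4). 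This confirms the base case and displays its dependence on $g$ alone.

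The decisive observation is a degree/independence argument that replaces $S$ by $\Gamma$ in the recursion. I would prove by induction on $\alpha$ that $\underset{(\alpha)}{M_j^i}$ is a function of $(x,y^{(1)},\dots,y^{(\alpha)})$ only. The base case holds because $\underset{(1)}{M_j^i}$ is linear in $y^{(1)}$ with coefficients in $x$. For the inductive step, $\Gamma=\sum_{\beta=0}^{k-1}(\beta+1)y^{(\beta+1)i}\,\partial/\partial y^{(\beta)i}$ (with $y^{(0)}=x$) raises the top order by exactly one, while the product $\underset{(1)}{M_m^i}\,\underset{(\alpha-1)}{M_j^m}$ preserves the top order, so each new coefficient gains at most one order. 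Consequently, for every $\alpha\le k-1$ the function $\underset{(\alpha)}{M_j^i}$ does not involve $y^{(k)}$; since $S=\Gamma-(k+1)G^i\,\partial/\partial y^{(k)i}$, the term carrying $G^i$ annihilates such functions, whence $S\underset{(\alpha)}{M}=\Gamma\underset{(\alpha)}{M}$. Substituting this into (1.5.5) turns its recursion into precisely (1.6.1). In particular the higher dual coefficients never feel the ambiguous part of $G^i$, so they are assembled from $\gamma^i_{jk}$ and $\Gamma$ alone and depend only on $g$, as asserted; for $k=1$ the claim is the base case with no recursion.

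The main obstacle is the single step I have glossed over: producing a $k$-semispray $S$ determined by $g$ with the prescribed leading dual coefficient $\partial G^i/\partial y^{(k)j}=\gamma^i_{jm}(x)y^{(1)m}$, and checking that its $G^i$ obey the semispray transformation law (1.4.3). The natural route is to prolong the canonical geodesic spray of the Riemann space to $T^kM$, and I expect the bookkeeping of the jet-transformation formulas (1.1.5) and of the homogeneity degrees of the successive $y^{(\alpha)}$ to be the genuinely technical part. I would stress, however, that the independence argument above makes the final answer insensitive to any freedom in $G^i$ beyond its $y^{(k)}$-derivative, so only the \emph{existence} of one admissible semispray — guaranteed for paracompact $M$ — is needed to finish.
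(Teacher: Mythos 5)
Your proposal is correct and follows essentially the paper's own route: the paper also derives the theorem from Theorem 1.5.1 and disposes of the $S$-versus-$\Gamma$ issue exactly as you do, via its remark that $\Gamma$ may be substituted for any $k$-semispray $S$ because $\underset{(1)}{M^i_j},\dots,\underset{(k-1)}{M^i_j}$ do not depend on $y^{(k)}$ (your induction on the top order of the $y$-variables is the honest proof of that remark, and your direct check of the transformation law (1.5.4) for $\underset{(1)}{M^i_j}=\gamma^i_{jm}(x)y^{(1)m}$ settles the base case the paper leaves implicit). Since the paper itself states the result "with very few demonstrations," your write-up is, if anything, more complete than the original.
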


\begin{remark} $\Gamma$ can be substituted with any $k-$semispray
$S$, since $\underset{(1)}{M_j^i},...,\underset{(k-1)}{M_j^i}$ do
not depend on the variables $y^k$.\end{remark}

One proves, also: $N$ is integrable if and only if the Riemann
space ${\cal R}^n=(M,g)$ is locally flat.

Let us consider the adapted cobasis $(\delta x^i,$ $\delta
y^{(1)i},$ ..., $\delta y^{(k)i})$ (1.5.2) to the nonlinear connection $N$ and
to the vertical distributions
$\underset{(1)}{N}$, ..., $\underset{(k-1)}{N}$, $V_k$. It depend on
the dual coefficients (1.6.1). So it depend on the structure $g(x)$
only.

Now, on $T^kM$ consider the following {\it lift} of $g(x)$:
\begin{equation}
\begin{array}{l}
G=g_{ij}(x)dx^i\otimes dx^j+g_{ij}(x)\delta y^{(1)i}\otimes \delta
y^{(1)j}+\\ \\ \quad \quad +g_{ij}(x)\delta y^{(k)i}\otimes \delta y^{(k)j}
\end{array}
\end{equation}

Finally, we obtain:

\begin{theorem}
The pair Prol$^k{\cal
R}^n=(\widetilde{T^kM},G)$ is a Riemann space of dimension
$(k+1)n$, whose metric $G$, $(1.6.2)$ depends on the a priori given
Riemann structure $g(x)$, only.
\end{theorem}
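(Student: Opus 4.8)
The pair $\mathrm{Prol}^k\mathcal{R}^n = (\widetilde{T^kM}, G)$, with $G$ given by (1.6.2), is a Riemann space of dimension $(k+1)n$ whose metric $G$ depends only on the a priori given Riemann structure $g(x)$.

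Let me sketch how I would prove this.

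The plan is to verify three things in sequence: that $G$ is a well-defined global tensor field of type $(0,2)$ on $\widetilde{T^kM}$, that it is symmetric and nondegenerate (hence a pseudo-Riemannian metric, in fact Riemannian when $g$ is positive definite), and that its construction draws on nothing beyond the given $g(x)$. The dimension count is immediate: $\widetilde{T^kM}$ has local coordinates $(x^i, y^{(1)i}, \dots, y^{(k)i})$, so $\dim \widetilde{T^kM} = (k+1)n$, and a nondegenerate symmetric $(0,2)$-tensor on it is by definition a (pseudo-)Riemannian metric.

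First I would establish that the adapted cobasis $(\delta x^i, \delta y^{(1)i}, \dots, \delta y^{(k)i})$ entering (1.6.2) is globally defined and depends only on $g(x)$. This is where I invoke the earlier machinery: Theorem 1.6.1 produces a nonlinear connection $N$ whose dual coefficients $\underset{(1)}{M^i_j}, \dots, \underset{(k)}{M^i_j}$ are built from the Christoffel symbols $\gamma^i_{jk}(x)$ of $g$ via the recursion (1.6.1). By the transformation rule (1.5.4) for dual coefficients, together with formula (1.5.2), each $\delta y^{(\alpha)i}$ transforms tensorially, i.e. $\delta \widetilde{y}^{(\alpha)i} = \dfrac{\partial \widetilde{x}^i}{\partial x^j}\,\delta y^{(\alpha)j}$. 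Thus each $\delta y^{(\alpha)i}$ behaves under (1.1.5) exactly as $dx^i$ does, namely as the components of a $d$-covector field.

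The main step is then the global well-definedness and nondegeneracy of $G$. For well-definedness, I would compute how each summand $g_{ij}(x)\,\delta y^{(\alpha)i}\otimes \delta y^{(\alpha)j}$ transforms: since $g_{ij}(x)$ transforms as a covariant $2$-tensor on $M$ (via $\partial x/\partial\widetilde{x}$) while each $\delta y^{(\alpha)i}$ picks up the inverse factor $\partial\widetilde{x}/\partial x$, the two Jacobian factors cancel and each term is invariant; summing over $\alpha = 0,1,\dots,k$ (with $\delta y^{(0)i} := dx^i$) gives a globally defined expression. Symmetry of $G$ follows at once from the symmetry of $g_{ij}$. For nondegeneracy, the key observation is that in the adapted basis $\left(\dfrac{\delta}{\delta x^i}, \dfrac{\delta}{\delta y^{(1)i}}, \dots, \dfrac{\partial}{\partial y^{(k)i}}\right)$, the matrix of $G$ is block-diagonal with $(k+1)$ identical diagonal blocks, each equal to $\|g_{ij}(x)\|$; hence $\det G = (\det g)^{k+1} \neq 0$, and $G$ inherits the signature of $g$ on every block. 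This block-diagonal structure is exactly what makes the distributions $N_0, N_1, \dots, N_{k-1}, V_k$ mutually $G$-orthogonal. Finally, the dependence-only-on-$g$ claim is assembled from the pieces already in place: the cobasis depends only on $g$ by Theorem 1.6.1, and the coefficients in (1.6.2) are literally $g_{ij}(x)$, so nothing else enters.

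The hard part will not be any single computation — each is routine once the transformation rule (1.5.4) is granted — but rather making sure the adapted cobasis genuinely transforms tensorially, since this rests on the nontrivial recursion (1.6.1) and the compatibility of the dual coefficients' transformation law with the tensorial behavior of the $\delta y^{(\alpha)i}$. I would therefore devote the bulk of the argument to verifying $\delta\widetilde{y}^{(\alpha)i} = \dfrac{\partial\widetilde{x}^i}{\partial x^j}\delta y^{(\alpha)j}$ carefully for a general $\alpha$, proceeding by induction on $\alpha$ using (1.5.3)–(1.5.4); once that is secured, global well-definedness, symmetry, and nondegeneracy of $G$ all drop out immediately from the cancellation of Jacobian factors and the block-diagonal form of the metric matrix.
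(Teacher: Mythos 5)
Your proposal is correct and follows essentially the same route as the paper, which gives only a sketch: the paper likewise reduces everything to Theorem 1.6.1 (the dual coefficients (1.6.1), hence the adapted cobasis, are determined by $g(x)$ alone) and then reads off the theorem from the lift (1.6.2). Your additional verifications — the inductive check that $\delta\widetilde{y}^{(\alpha)i}=\dfrac{\partial\widetilde{x}^i}{\partial x^j}\delta y^{(\alpha)j}$, the Jacobian cancellation, and the block-diagonal nondegeneracy argument giving $\det G=(\det g)^{k+1}\neq 0$ — are exactly the routine details the paper omits, and they are all sound.
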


The announced problem is solved.

Some remarks:
\begin{enumerate}
    \item[$1^\circ$] The $d-$Liouville vector fields
    $z^{(1)i},\cdots,z^{(k)i}$, from (1.5.7) are constructed only by
    means of the Riemannian structure $g$;
    \item[$2^\circ$] The following function
    \begin{equation}
L(x,y^{(1)},...,y^{(k)}) = g_{ij}(x)z^{(k)i}z^{(k)j}
\end{equation}
is a regular Lagrangian which depend only on the Riemann structure
$g_{ij}(x)$.
    \item[$3^\circ$] The previous theory, for pseudo-Riemann
    structure $g_{ij}(x)$, holds.
\end{enumerate}

\section{$N-$linear connections on $T^kM$}
\setcounter{equation}{0}\setcounter{theorem}{0}\setcounter{definition}{0}\setcounter{proposition}{0}\setcounter{remark}{0}

The notion of $N-$linear connection on the manifold $T^kM$ can be
studied as a natural extension of that of $N-$linear connection on
$TM$, given in the section 1.4.

Let $N$ be a nonlinear connection on $T^kM$ having the primal
coefficients $\underset{(1)}{N^i_j},\cdots,\underset{(k)}{N^i_j}$
and the dual coefficients
$\underset{(1)}{M^i_j},\cdots,\underset{(k)}{M^i_j}$.

\begin{definition} A linear connection $D$ on the manifold
$T^kM$ is called distinguished if $D$ preserves by parallelism the
horizontal distribution $N$. It is an $N-$connection if has the
following property, too:
\begin{equation}
DJ=0.
\end{equation}
\end{definition}

We have:

\begin{theorem}
A linear connection $D$ on $T^kM$ is an
$N$-linear connection if and only if
\begin{equation}
\left\{
\begin{array}{l}
\left( D_XY^H\right)^{V_\alpha }=0,\ (\alpha=1,\cdots,k),\
(D_XY^{V_\alpha})^H=0;\\ \\ \hfill (D_X Y^{V_\alpha})^{V_\beta}=0\
(\alpha\neq\beta)\vspace{3mm}
\\
D_X(JY^H)=JD_XY^H;\ D_X(JV^\alpha)=JD_XV^\alpha.
\end{array}
\right.
\end{equation}
\end{theorem}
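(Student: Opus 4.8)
The statement to prove is a characterization theorem: a linear connection $D$ on $T^kM$ is an $N$-linear connection if and only if it satisfies the four displayed conditions in (7.2). The plan is to argue both implications by unpacking the two defining requirements — that $D$ preserves the horizontal distribution $N$ by parallelism, and that $DJ=0$ — and translating them into statements about how $D$ acts on the horizontal and vertical components of vector fields relative to the direct decomposition $T_uT^kM=N_{0,u}\oplus N_{1,u}\oplus\cdots\oplus N_{k-1,u}\oplus V_{k,u}$ established in (1.4.11). The essential tool throughout is the family of projectors $h,v_1,\dots,v_k$ together with the relations $J(N_0)=N_1,\ J(N_1)=N_2,\dots$ from (1.4.10), and the tangent-structure properties $\operatorname{Im}J=\operatorname{Ker}J$, $\operatorname{Ker}J=V_k$ recorded in the proposition of \S1.2.

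First I would treat the forward implication. Assume $D$ is an $N$-linear connection. Since $D$ preserves $N$ by parallelism, for any $X$ and any horizontal $Y^H$ the derivative $D_XY^H$ is again horizontal, so all its vertical components vanish: this gives $(D_XY^H)^{V_\alpha}=0$ for every $\alpha$. Next, the condition $DJ=0$ means $D_X(JZ)=J(D_XZ)$ for all $Z$; applying this with $Z=Y^H$ yields immediately $D_X(JY^H)=JD_XY^H$, and with $Z$ in a vertical distribution $N_{\alpha-1}$ gives the corresponding formula $D_X(JV^\alpha)=JD_XV^\alpha$. To obtain $(D_XY^{V_\alpha})^H=0$ and the cross-terms $(D_XY^{V_\beta})^{V_\beta}=0$ for $\alpha\neq\beta$, I would use the chain $N_{\alpha}=J(N_{\alpha-1})$ repeatedly: a section of $N_\alpha$ is $J$ applied to a section of $N_{\alpha-1}$, so its $D$-derivative is $J$ applied to a derivative, and since $J$ shifts the grading by one step (sending $N_0$ horizontal part to $N_1$, etc.) one reads off that the horizontal and off-diagonal vertical parts must vanish. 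This step requires careful bookkeeping of which subdistribution each $J$-image lands in, but it is purely a matter of iterating (1.4.10).

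For the converse, I would assume the four conditions and verify the two defining properties of an $N$-linear connection. The first batch of conditions, $(D_XY^H)^{V_\alpha}=0$, says precisely that $D_XY^H$ has no vertical component, hence lies in $N$; combined with the fact that the vertical derivatives also respect the grading, this shows $D$ preserves $N$ by parallelism, i.e.\ $D$ is a $d$-connection in the sense of the Definition of \S1.7. Then the two commutation formulas $D_X(JY^H)=JD_XY^H$ and $D_X(JV^\alpha)=JD_XV^\alpha$, checked on each piece of the decomposition (since every vector field splits uniquely as $X=X^H+X^{V_1}+\cdots+X^{V_k}$ by (1.5) of \S1.4), assemble by $\mathcal F(T^kM)$-linearity into the single identity $D_X(JY)=J(D_XY)$ for arbitrary $Y$, which is exactly $DJ=0$.

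The main obstacle I anticipate is the grading argument in the forward direction: because $J$ maps $N_{\alpha-1}$ isomorphically onto $N_\alpha$ and kills $V_k$, one must track the index shift carefully to deduce that $(D_XY^{V_\alpha})^{V_\beta}=0$ for $\alpha\neq\beta$ rather than merely for a single pair, and to ensure no information is lost when $J$ annihilates the top vertical distribution $V_k$. Handling the boundary case $\alpha=k$, where $JY^{V_k}=0$, needs separate attention so that the commutation relation $D_X(JV^\alpha)=JD_XV^\alpha$ is applied only where $J$ is injective. Once the index arithmetic is organized, the remaining verifications are routine applications of the projector identities and the uniqueness of the decomposition (1.4.11).
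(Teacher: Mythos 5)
Your proof is correct and follows the natural argument: conditions $(D_XY^H)^{V_\alpha}=0$ and the commutation with $J$ recover the two defining properties (preservation of $N$ and $DJ=0$), while the remaining conditions follow in the forward direction by iterating $N_\alpha=J(N_{\alpha-1})$, with the boundary case $\alpha=k$ (where $\ker J=V_k$) handled exactly as you indicate. The paper states this theorem without proof, leaving it as the standard extension of the $k=1$ case from Part I, and your plan is precisely that intended argument, so there is nothing to contrast.
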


Of course, for any $N-$linear connection $D$ we have $$Dh=0,\
Dv^\alpha=0, \ (\alpha=1,\cdots,k).$$

Since $$D_XY=D_{X^H}Y+D_{X^{V_1}}Y+\cdots +D_{X^{V_k}}Y,$$ setting
$$D_X^H=D_{X^H},D^{V_\alpha}_X=D_{X^{V_\alpha}},\
(\alpha=1,\cdots,k),$$ we can write:
\begin{equation}
D_XY=D_X^HY+D_X^{V_1}Y+\cdots+D^{V_k}_XY.
\end{equation}

The operators $D^H,D^{V_\alpha}$ are not the covariant derivations
but they have similar properties with the covariant derivations.
The notion of $d-$tensor fields ($d-$means ``distinguished'') can be
introduced and studied exactly as in the section 1.3.

In the adapted basis (1.4.12) and in adapted cobasis (1.5.1) we represent a $d-$tensor field of type $(r,s)$ in the form
\begin{equation}
T=T_{j_1...j_s}^{i_1...i_r}\displaystyle\frac \delta {\delta
x^{i_1}}\otimes \cdots \otimes \displaystyle\frac \delta {\delta
y^{(k)i_r}}\otimes dx^{j_1}\otimes \cdots \otimes \delta
y^{(k)j_s}.
\end{equation}

A transformation of coordinates (1.1.5), Ch. 4, has as effect the
following rule of transformation:
\begin{equation}
\widetilde{T}_{j_1...j_s}^{i_{1...}i_r}=\displaystyle\frac{\partial
\widetilde{x}^{i_1}}{\partial x^{h_1}}\cdots
\displaystyle\frac{\partial \widetilde{x}^{i_r}}{\partial
x^{hr}}\displaystyle\frac{\partial x^{k_1}}{
\partial x^{j_1}}\cdots \displaystyle\frac{\partial x^{k_s}}{
\partial {x}^{j_s}}T_{k_1...k_s}^{h_{1...}h_r}.  \tag{1.7.3'}
\end{equation}

So, $\left\{1,\dfrac{\delta}{\delta
x^i},\cdots,\dfrac{\delta}{\delta y^{(k)i}}\right\}$ generate the
tensor algebra of $d-$tensor fields.

The theory of $N-$linear connection described in the chapter 1 for
case $k=1$ can be extended step by step for the $N-$linear
connection on the manifold $T^kM$.

In the adapted basis (1.4.11) an $N-$linear connection $D$ has
the following form:
\begin{equation}
\begin{array}{l}
D_{\frac \delta {\delta x^j}}\displaystyle\frac \delta {\delta
x^i}\hspace*{-0.7mm}=\hspace*{-0.7mm}L_{ij}^m\displaystyle\frac \delta {\delta x^m},\ D_{\frac
\delta {\delta x^j}}\displaystyle\frac \delta {\delta y^{(\alpha
)i}}\hspace*{-0.7mm}=\hspace*{-0.7mm}L_{ij}^m\displaystyle\frac \delta {\delta y^{(\alpha
)m}},(\alpha=1,\cdots,k), \\
\\
D_{\frac \delta {\delta y^{(\beta )j}}}\displaystyle\frac\delta
{\delta x^i}=\underset{(\beta)}{C_{\ ij}^m}\displaystyle\frac
\delta{\delta x^m},\ D_{\frac \delta {\delta y^{(\beta )j}}}\displaystyle\frac\delta {\delta y^{(\alpha)i}}=
\underset{(\beta)}{C_{\ ij}^m}\displaystyle\frac\delta {\delta y^{(\alpha
)m}},\\ \\ \hfill (\alpha ,\beta = 1,\cdots,k).
\end{array}
\end{equation}

The system of functions
\begin{equation}
D\Gamma(N)=(L_{\ ij}^m,\underset{(1)}{C_{\ ij}^m},...,
\underset{(k)}{C_{\ ij}^m})
\end{equation}
represents \textit{the coefficients} of $D$.

With respect to (1.1.5), $L_{\ ij}^m$ are transformed by the
same rule as the coefficients of a linear connection defined on
the base manifold $M$. Others coefficients $\underset{(\alpha
)}{C_{\ ij}^h},$ $(\alpha =1,..,k)$ are transformed like
$d$-tensors of type $(1,2)$.

If $T$ is a $d$-tensor field of type $(r,s)$, given by (1.7.4) and
$X=X^H=X^i\dfrac{\delta}{\delta x^i}$, then, by means of (1.7.5),
$D^H_XT$ is:
\begin{equation}
D_X^HT=X^mT_{j_1...j_s|m}^{i_1...i_r}\displaystyle\frac \delta
{\delta x^{i_1}}\otimes \cdots \otimes \displaystyle\frac \delta
{\delta y^{(k)i_r}}\otimes dx^{j_1}\otimes \cdots \otimes \delta
y^{(k)j_s},
\end{equation}
where
\begin{equation}
T_{j_1...j_s|m}^{i_1...i_r}=\displaystyle\frac{\delta
T_{j_1...j_s}^{i_1...i_r}}{\delta x^m}
+L_{hm}^{i_1}T_{j_1...j_s}^{hi_2...i_r}+\cdots
-L_{j_sm}^hT_{j_1...h}^{i_1...i_r}.
\end{equation}

The operator ``$_{\mid}$'' will be called \textit{the
}$h$\textit{-covariant derivative}.

Consider the $v_\alpha $-covariant derivatives $D_X^{V_\alpha }$,
for $X=\stackrel{(\alpha)}{X^i}\displaystyle\frac \delta {\delta
y^{(\alpha )i}}$, $(\alpha=1,$ ..., $k)$. Then, (1.7.3) and (1.7.5) lead
to:
\begin{equation}
D_X^{V_\alpha }T=\stackrel{(\alpha
)}{X^m}T_{j_1...j_s}^{i_1...i_r}\stackrel{ (\alpha )}{\mid
}_m\displaystyle\frac \delta {\delta x^{i_1}}\otimes \cdots
\otimes \displaystyle\frac \delta {\delta y^{(k)i_r}}\otimes
dx^{j_1}\otimes \cdots \otimes dx^j_s,
\end{equation}
where
$$T_{j_1...j_s}^{i_1...i_r}\stackrel{(\alpha )}{\mid }_m=\displaystyle\frac{ \delta T_{j_1...j_s}^{i_1...i_r}}{\delta y^{(\alpha
)m}}+\underset{(\alpha
)}{C_{hm}^{i_1}}T_{j_1...j_s}^{hi_2...i_r}+\cdots
-\underset{(\alpha )}{ C_{j_sm}^h}T_{j_1...j_{s-1}h}^{i_1...i_r},
\ (\alpha=1,..,k).$$

The operators ``$\stackrel{(\alpha )}{\mid }$'', in number of $k$
are called $v_\alpha $\textit{-covariant derivatives}.

Each of operators ``$_\mid$'' and ``$\stackrel{(\alpha )}{\mid
}$'' has the usual properties with respect to sum of $d$-tensor or
them tensor product.

Now, in the adapted basis (1.4.11) we can determine the
torsion $T$ and curvature $R$ of an $N-$linear connection $D$,
follows the same method as in the case $k=1$.

We remark the following of $d-$tensors of torsion:
\begin{equation}
T_{\ jk}^i=L_{jk}^i-L_{kj}^i,\ \underset{(\alpha)}{S_{\
jk}^i}=\underset{(\alpha)}{C_{\ jk}^i}-\underset{(\alpha)}{C_{\
kj}^i}=0, \ (\alpha =1,..,k)
\end{equation}
and $d-$tensors of curvature
\begin{equation}
R_{h\ jm}^{\ i},\quad \underset{(\alpha)}{P}_{h\ jm}^{\ i},\
\underset{(\beta\alpha)}{S}_{h\ jm}^{\ i},\ \
(\alpha,\beta=1,\cdots,k)
\end{equation}

The 1-forms connection of the $N-$linear connection $D$ are:
\begin{equation}
\omega_{\ j}^i=L_{jh}^i dx^h+\underset{(1)}{C_{\ jh}^{i}}\delta
y^{(1)h}+\cdots +\underset{(k)}{C_{\ jh}^i}\delta
y^{(k)h}.
\end{equation}
The following important theorem holds:

\begin{theorem}
The structure equations of an $N-$linear
connection $D$ on the manifold $T^kM$ are given by:
\begin{equation}
\begin{array}{l}
d(dx^i)-dx^m\wedge\omega_{\ m}^i=-\stackrel{(0)}{\Omega^i}\, \\
\\
d(\delta y^{(\alpha)i})-\delta y^{(\alpha)m}\wedge\omega_{\ m}^i=-\stackrel{(\alpha)}{\Omega^i}, \\
\\
d\omega_{\ j}^i-\omega_{\ j}^m\wedge\omega_{\ m}^i=-\Omega_{\
j}^i,
\end{array}
\end{equation}
where $\stackrel{(0)}{\Omega^i}$, $\stackrel{(\alpha)}{\Omega^i}$ are the $2$-forms of torsion and
where $\Omega^{i}_{\ j}$ are the 2-forms of curvature:
$$\Omega _{\ j}^i\ =\displaystyle\frac 12 R_{j\ pq}^{\ i}dx^p\wedge dx^q+
$$
$$+\sum\limits_{\alpha =1}^k\underset{(\alpha )}{P_{j\ pq}^{\ i}}
dx^p\wedge \delta y^{(\alpha )q}+\sum\limits_{\alpha ,\beta =1}^k
\underset{(\alpha \beta )}{S_{j\ pq}^{\ i}}\delta y^{(\alpha
)p}\wedge \delta y^{(\beta )q}.$$
\end{theorem}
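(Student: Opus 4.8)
The plan is to establish the structure equations by the same mechanism used in the case $k=1$ (Theorem \ref{t5.3} of Chapter 1), namely to start from the general structure equations satisfied by \emph{any} linear connection on $T^kM$ and then to specialize them to an $N$-linear connection $D$ using the adapted frame. First I would recall that for an arbitrary linear connection, the exterior differentials of the coframe $1$-forms and of the connection $1$-forms produce torsion and curvature $2$-forms; the whole content of the theorem is that, once we write these in the adapted coframe $(dx^i,\delta y^{(1)i},\dots,\delta y^{(k)i})$ of (1.5.1)--(1.5.2) and use the defining property $DJ=0$ together with $Dh=0$, $Dv^\alpha=0$, the resulting $2$-forms organize exactly into the stated torsion objects $\stackrel{(0)}{\Omega^i},\dots,\stackrel{(k)}{\Omega^i}$ and the single curvature $2$-form $\Omega^i_{\ j}$ carrying the three families of $d$-tensors $R^{\ i}_{j\ pq}$, $\underset{(\alpha)}{P^{\ i}_{j\ pq}}$, $\underset{(\alpha\beta)}{S^{\ i}_{j\ pq}}$ from (1.7.11).

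The key steps, in order, are the following. I would first prove an auxiliary lemma, analogous to Lemma \ref{l5.1}, computing $d(\delta y^{(\alpha)i})$ for each $\alpha=1,\dots,k$. Because $\delta y^{(\alpha)i}=dy^{(\alpha)i}+\underset{(1)}{M^i_j}\,dy^{(\alpha-1)j}+\dots+\underset{(\alpha)}{M^i_j}\,dx^j$, applying $d$ and re-expressing $dy^{(\beta)j}$ back in terms of the adapted coframe yields a combination of wedges $dx^p\wedge dx^q$ and $\delta y^{(\beta)p}\wedge dx^q$ whose coefficients are precisely the structural functions built from the dual coefficients $\underset{(\beta)}{M^i_j}$. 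Second, using the connection $1$-forms $\omega^i_{\ j}$ from (1.7.12), I would form the combinations $d(dx^i)-dx^m\wedge\omega^i_{\ m}$ and $d(\delta y^{(\alpha)i})-\delta y^{(\alpha)m}\wedge\omega^i_{\ m}$ and check, via Lemma \ref{l5.2} extended to $T^kM$, that they transform as the components of a $d$-vector field; this identifies them as genuine torsion $2$-forms $-\stackrel{(0)}{\Omega^i}$ and $-\stackrel{(\alpha)}{\Omega^i}$. Third, I would compute $d\omega^i_{\ j}-\omega^m_{\ j}\wedge\omega^i_{\ m}$, show it transforms as a $(1,1)$ $d$-tensor, and expand it in the adapted cobasis; collecting terms of type $dx\wedge dx$, $dx\wedge\delta y$, and $\delta y\wedge\delta y$ produces the three curvature families and hence the displayed formula for $\Omega^i_{\ j}$.

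The single most important simplification I would exploit is that for an $N$-linear connection the mixed and same-level vertical-vertical pieces collapse: the conditions $(D_XY^{V_\alpha})^{V_\beta}=0$ for $\alpha\neq\beta$ and $D_X(JY)=JD_XY$ from Theorem 1.7.1 force the \emph{same} horizontal coefficient $L^m_{ij}$ to govern $\delta/\delta x^i$ and every $\delta/\delta y^{(\alpha)i}$, and likewise a single $\underset{(\beta)}{C^m_{ij}}$ across all levels, exactly as encoded in (1.7.5). This is what allows one connection $1$-form $\omega^i_{\ j}$ per pair $(i,j)$ rather than a separate form at each level, and it is the reason the curvature $2$-form is indexed only by $(i,j)$.

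The main obstacle, and where I would spend the real effort, is the bookkeeping in the auxiliary lemma for $d(\delta y^{(\alpha)i})$ when $\alpha\geq 2$: unlike the $k=1$ case where only $R^i_{\ js}$ and $B^i_{\ js}$ appear, here the differential of the higher-order Pfaffian forms mixes all dual coefficients $\underset{(1)}{M},\dots,\underset{(\alpha)}{M}$ and their $\delta$-derivatives, and one must verify that after substituting the defining relations (1.5.3) between primal and dual coefficients everything regroups into tensorial objects. I expect this to reduce, after careful use of the transformation rules (1.4.7'), (1.4.8), (1.5.4) and the commutation identities (1.1.5'), to the clean torsion structure asserted, with the vanishing $\underset{(\alpha)}{S^i_{\ jk}}=0$ of (1.7.10) removing the would-be $\delta y^{(\alpha)}\wedge\delta y^{(\beta)}$ terms from the torsion forms; but confirming that regrouping is genuinely the computational heart of the proof. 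Once the three transformation claims (two $d$-vector, one $(1,1)$ $d$-tensor) are in hand, the structure equations (1.7.13) follow by definition, in complete parallel with Theorem \ref{t5.3}.
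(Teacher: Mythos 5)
Your proposal is correct and follows essentially the same route as the paper: the paper states this theorem as the step-by-step extension to $T^kM$ of Theorem 1.5.3 from Chapter 1, whose proof proceeds exactly as you describe — particularize the general structure equations of a linear connection via the analogues of Lemma 1.5.1 (computing $d(\delta y^{(\alpha)i})$) and Lemma 1.5.2 (tensoriality of the combinations), then expand in the adapted coframe using the single family of connection $1$-forms $\omega^i_{\ j}$ forced by $DJ=0$. Your identification of the bookkeeping in $d(\delta y^{(\alpha)i})$ for $\alpha\geq 2$ as the computational heart, and of the collapse to one connection form per pair $(i,j)$ as the key simplification, matches the paper's intent.
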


Now, the Bianchi identities of $D$ can be derived from (1.7.13).

The nonlinear connection $N$ and the $N-$linear connection $D$
allow to study the geometrical properties of the manifold $T^kM$
equipped with these two geometrical object fields.

\newpage

\chapter{Lagrange Spaces of Higher--order}\index{Higher--order spaces! Lagrange}
\label{ch1p1} 


The concept of higher - order Lagrange space was introduced and
studied by the author of the present monograph, \cite{mir5}, \cite{mir11}.

A Lagrange space of order $k$ is defined as a pair
$L^{(k)n}=(M,L)$ where $L:T^kM\to R$ is a differentiable regular
Lagrangian having the fundamental tensor of constant signature.
Applying the variational problem to the integral of action of $L$
we determine: a canonical $k-$semispray, a canonical nonlinear
connection and a canonical metrical connection. All these are
basic for the geometry of space $L^{(k)n}$.

\setcounter{section}{0}
\section{The spaces $L^{(k)n}=(M,L)$}
\setcounter{equation}{0}\setcounter{theorem}{0}\setcounter{definition}{0}\setcounter{proposition}{0}\setcounter{remark}{0}

\begin{definition} A Lagrange space of order $k\geq 1$ is a
pair $L^{(k)m}=(M,L)$ formed by a real $n-$dimensional manifold
$M$ and a differentiable Lagrangian
$L:(x,y^{(1)},\cdots,y^{(k)})\in T^kM\to
L(x,y^{(1)},\cdots,y^{(k)})\in R$ for which the Hessian with the
elements:
\begin{equation}
g_{ij}=\displaystyle\frac
12\displaystyle\frac{\partial^2L}{\partial y^{(k)i}\partial
y^{(k)j}}
\end{equation}
has the property
\begin{equation}
rank(g_{ij})=n\ \ on\ \ \widetilde{T^kM}
\end{equation}
and the $d-$tensor $g_{ij}$ has a constant signature.
\end{definition}

Of course, we can prove that $g_{ij}$ from (2.1.1) is a $d-$tensor
field, of type (0,2), symmetric. It is called the {\it
fundamental} (or {\it metric}) tensor of the space $L^{(k)n}$,
while $L$ is called its {\it fundamental function}.

The geometry of the manifold $T^kM$ equipped with
$L(x,y^{(1)},\cdots,y^{(k)})$ is called the geometry of the space
$L^{(k)n}$. We shall study this geometry using the theory from the
last chapter. Consequently, starting from the integral of action
$I(c)=\dd\int^1_0
L\left(x,\dfrac{dx}{dt},\cdots,\dfrac{1}{k!}\dfrac{d^kx}{dt^k}\right)dt$
we determine the Euler - Lagrange equations
$\stackrel{0}{E}_i(L)=0$ and the Craig - Synge covectors
$\stackrel{1}{E}_i(L),\cdots,\stackrel{k}{E}_i(L)$. According to
(3.1.14) one remarks that we have:

\begin{theorem}
The equations
$g^{ij}\stackrel{k-1}{E}_i(L)=0$ determine a $k-$semi\-spray
\begin{equation}
S=y^{(1)i}\displaystyle\frac \partial {\partial x^i}+2y^{(2)i}\displaystyle %
\frac \partial {\partial y^{(1)i}}+\cdots
+ky^{(k)i}\displaystyle\frac
\partial {\partial y^{(k-1)i}}-(k+1)G^i\displaystyle\frac \partial {\partial
y^{(k)i}}
\end{equation}
where the coefficients $G^i$ are given by
\begin{equation}
(k+1)G^i=\displaystyle\frac 1{2}g^{ij}\left\{ \Gamma \left( %
\displaystyle\frac \partial {\partial y^{(k)i}}\right)
-\displaystyle\frac
\partial {\partial y^{(k-1)i}}\right\}
\end{equation}
$\Gamma$ being the operator (1.2.4).
\end{theorem}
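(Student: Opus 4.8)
The plan is to verify that the Euler--Lagrange-type equations $g^{ij}\stackrel{k-1}{E}_i(L)=0$ can be solved for the top-order term $\frac{d^{k+1}x^i}{dt^{k+1}}$, and that the resulting second-order expression is precisely the integral-curve condition for the vector field $S$ in (2.1.3). First I would write out the Craig--Synge covector $\stackrel{k-1}{E}_i(L)$ explicitly from (3.1.14), namely
\begin{equation}
\stackrel{k-1}{E}_i(L)=(-1)^{k-1}\dfrac{1}{(k-1)!}\left(\dfrac{\partial L}{\partial y^{(k-1)i}}-\dfrac{d}{dt}\dfrac{\partial L}{\partial y^{(k)i}}\right).
\end{equation}
The key observation is that the total derivative $\dfrac{d}{dt}\dfrac{\partial L}{\partial y^{(k)i}}$ is the only term carrying the highest-order acceleration: expanding it along an extension $\widetilde c$ gives $\dfrac{d}{dt}\dfrac{\partial L}{\partial y^{(k)i}}=\Gamma\!\left(\dfrac{\partial L}{\partial y^{(k)i}}\right)+2g_{ij}\dfrac{d^{k+1}x^j}{dt^{k+1}}\cdot\dfrac{1}{?}$, where $\Gamma$ is the nonlinear operator (1.2.4) collecting all lower-order contributions and the remaining piece isolates $\frac{d^{k+1}x^j}{dt^{k+1}}$ through the fundamental tensor $g_{ij}$ from (2.1.1).

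The main computational step is therefore to carefully track the chain-rule expansion of $\dfrac{d}{dt}\dfrac{\partial L}{\partial y^{(k)i}}$ along a curve whose jet is $\bigl(x,\tfrac{dx}{dt},\dots,\tfrac{1}{k!}\tfrac{d^kx}{dt^k}\bigr)$. Every term in which $t$-differentiation hits a variable $x,y^{(1)},\dots,y^{(k-1)}$ reproduces the action of $\Gamma$ on $\partial L/\partial y^{(k)i}$, while the single term where it hits $y^{(k)i}$ produces $\dfrac{\partial^2 L}{\partial y^{(k)i}\partial y^{(k)j}}\dfrac{d\,y^{(k)j}}{dt}=2g_{ij}\dfrac{1}{k!}\dfrac{d^{k+1}x^j}{dt^{k+1}}$. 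Substituting this decomposition into $\stackrel{k-1}{E}_i(L)=0$ and discarding the nonzero scalar factor $(-1)^{k-1}/(k-1)!$, the equation becomes $\dfrac{\partial L}{\partial y^{(k-1)i}}-\Gamma\!\left(\dfrac{\partial L}{\partial y^{(k)i}}\right)-\dfrac{2}{k!}g_{ij}\dfrac{d^{k+1}x^j}{dt^{k+1}}=0$.

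Next I would contract with the inverse metric $g^{\ell i}$, which is legitimate precisely because of the regularity hypothesis (2.1.2) guaranteeing $g^{ij}$ exists. This isolates $\dfrac{d^{k+1}x^\ell}{dt^{k+1}}$ as a half of $g^{\ell i}\bigl\{\Gamma(\partial L/\partial y^{(k)i})-\partial L/\partial y^{(k-1)i}\bigr\}$ up to the combinatorial constant, giving exactly $\dfrac{d^{k+1}x^\ell}{dt^{k+1}}+(k+1)G^\ell=0$ with $(k+1)G^\ell$ as defined in (2.1.4). Finally I would invoke the characterization of $k$-paths, equation (1.4.4), which states that integral curves of a $k$-semispray with coefficients $G^i$ satisfy $\dfrac{d^{k+1}x^i}{dt^{k+1}}+(k+1)G^i=0$; matching coefficients confirms that $S$ in (2.1.3) is indeed a $k$-semispray with the stated $G^i$. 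The main obstacle is bookkeeping: keeping the factorial normalizations $\tfrac{1}{k!}$ and the sign $(-1)^{k-1}$ consistent, and verifying that $\Gamma$ genuinely captures \emph{all} lower-order terms so that the highest-order term appears cleanly and linearly through $g_{ij}$. The transformation rule (1.4.3) for $G^i$ then guarantees $S$ is globally well defined, though that is a separate, already-established fact.
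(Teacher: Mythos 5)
Your proposal is correct and follows essentially the same route the paper intends: the paper states this theorem as an immediate remark from the Craig--Synge covector formula (1.3.14), and the computation you spell out --- expanding $\dfrac{d}{dt}\dfrac{\partial L}{\partial y^{(k)i}}$ along the $k$-jet so that $\Gamma$ collects exactly the lower-order contributions while the top-order term enters as $\dfrac{2}{k!}\,g_{ij}\dfrac{d^{k+1}x^j}{dt^{k+1}}$ via the fundamental tensor, then contracting with the inverse metric --- is precisely the intended argument, and your appeal to the $d$-covector character of $\stackrel{k-1}{E}_i(L)$ to secure global well-definedness of $S$ is the right supplementary point.

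One bookkeeping correction, which you half-anticipated with your ``up to the combinatorial constant'' hedge: your own intermediate display $\dfrac{\partial L}{\partial y^{(k-1)i}}-\Gamma\!\left(\dfrac{\partial L}{\partial y^{(k)i}}\right)-\dfrac{2}{k!}\,g_{ij}\dfrac{d^{k+1}x^j}{dt^{k+1}}=0$, contracted with $g^{\ell i}$, yields $\dfrac{d^{k+1}x^{\ell}}{dt^{k+1}}+(k+1)!\,G^{\ell}=0$, not $\dfrac{d^{k+1}x^{\ell}}{dt^{k+1}}+(k+1)G^{\ell}=0$; the extra $k!$ comes from $\dfrac{dy^{(k)i}}{dt}=\dfrac{1}{k!}\dfrac{d^{k+1}x^i}{dt^{k+1}}$, and the same factor appears when one integrates the first $k$ equations of the integral curves of $S$. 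This matches the paper's later Theorem 5.2.1 (Part III), where the path equation correctly carries $(k+1)!\,G^i$; the equation (1.4.4) you matched against omits the $k!$ and is internally inconsistent with Theorem 5.2.1. The slip does not affect the identification of the coefficients $G^i$ in (2.1.4) nor the conclusion that $S$ is the canonical $k$-semispray, but in a careful write-up you should match against the $(k+1)!$ form of the path equation rather than (1.4.4) as printed.
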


The semispray $S$ depend on the fundamental function $L$, only.
$S$ is called canonical. It is globally defined on the manifold
$\widetilde{T^kM}$.

Taking into account Theorem 1.5.1, we have:

\begin{theorem}
The systems of functions
\begin{equation}
\begin{array}{l}
\underset{(1)}{M_j^i}=\displaystyle\frac{\partial G^i}{\partial y^{(k)j}},~%
\underset{(2)}{M_j^i}=\displaystyle\frac 12\left(S\underset{(1)}{M_j^i}+%
\underset{(1)}{M_m^i}\underset{(1)}{M_j^m}\right) ,..., \\
\\
\underset{(k)}{M_j^i}=\displaystyle\frac 1k\left( S\underset{(k-1)}{M_j^i%
}+\underset{(1)}{M_m^i}\underset{(k-1)}{M_j^m}\right)
\end{array}
\end{equation} are the dual coefficients of a nonlinear connection $N$ determined only on the fundamental function $L$ of the space $L^{(k)n}$.
\end{theorem}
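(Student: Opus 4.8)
The theorem asserts that for a Lagrange space of order $k$, $L^{(k)n}=(M,L)$, the systems of functions $\underset{(1)}{M_j^i},\dots,\underset{(k)}{M_j^i}$ defined by the recursion in (2.1.5) furnish the dual coefficients of a nonlinear connection $N$ which depends only on the fundamental function $L$. Let me sketch how I would attack this.

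Let me lay out the plan.

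The plan is to reduce the theorem entirely to results already established in the previous chapter, so that almost no new computation is needed. First I would observe that by the preceding Theorem 2.1.1, the space $L^{(k)n}$ canonically determines a $k$-semispray $S$ with coefficients $G^i$ given by (2.1.4), and that $S$ depends only on the fundamental function $L$. This is the crucial input: everything downstream will inherit dependence on $L$ alone. Next I would invoke Theorem 1.5.1 (the theorem of R. Miron on the dual coefficients attached to a $k$-semispray), which states that for \emph{any} $k$-semispray $S$ with coefficients $G^i$, the functions
\begin{equation}
\underset{(1)}{M_j^i}=\frac{\partial G^i}{\partial y^{(k)j}},\quad
\underset{(2)}{M_j^i}=\frac12\left(S\underset{(1)}{M_j^i}+\underset{(1)}{M_m^i}\underset{(1)}{M_j^m}\right),\ \dots,\
\underset{(k)}{M_j^i}=\frac1k\left(S\underset{(k-1)}{M_j^i}+\underset{(1)}{M_m^i}\underset{(k-1)}{M_j^m}\right)\nonumber
\end{equation}
are precisely the dual coefficients of a nonlinear connection depending on $S$ only. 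Since the recursion in the present statement (2.1.5) is term-for-term identical to the one in Theorem 1.5.1, the two constructions coincide. Hence the $\underset{(\alpha)}{M_j^i}$ of (2.1.5) form the dual coefficients of a nonlinear connection $N$ on $\widetilde{T^kM}$.

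Having identified the objects, the remaining work is to verify the dependence claim and, if one wishes a self-contained argument, to confirm the transformation rule. For the dependence: since $G^i$ depends only on $L$ (Theorem 2.1.1) and each $\underset{(\alpha)}{M_j^i}$ is built from $G^i$ and the canonical $S$ by the universal recursion, the whole family depends on $L$ alone; this is a one-line consequence and not an obstacle. The only genuine verification, were it not already packaged in Theorem 1.5.1, would be to check that the $\underset{(\alpha)}{M_j^i}$ transform according to the rule (1.5.4) characterizing dual coefficients of a nonlinear connection. I would emphasize that this transformation computation is exactly what Theorem 1.5.1 already established for an arbitrary semispray, so I may cite it rather than redo it.

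The main obstacle, such as it is, is not analytic but logical: one must make sure that the $k$-semispray produced by Theorem 2.1.1 is genuinely a $k$-semispray in the sense required by Theorem 1.5.1 (i.e.\ that its coefficients $G^i$ obey the transformation law (1.4.3)), so that Theorem 1.5.1 applies verbatim. This is guaranteed because (2.1.4) is derived from the invariant equations $g^{ij}\stackrel{k-1}{E}_i(L)=0$ and the covector character of the Craig--Synge objects $\stackrel{k-1}{E}_i(L)$, which forces the correct semispray transformation. Once that point is secured, the proof is simply the composition of the two cited theorems, and I would present it as such: apply Theorem 2.1.1 to get the canonical $S$, then apply Theorem 1.5.1 to $S$ to obtain the dual coefficients, and finally note the $L$-dependence propagates through. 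The most delicate part to state cleanly is therefore the invocation chain, not any single calculation.
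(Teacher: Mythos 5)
Your proposal is correct and matches the paper's own argument exactly: the paper proves this theorem by the single remark ``Taking into account Theorem 1.5.1'' applied to the canonical $k$-semispray $S$ of Theorem 2.1.1, whose coefficients $G^i$ depend only on $L$, so the dual coefficients produced by the universal recursion depend only on $L$ as well. Your additional check that $S$ genuinely satisfies the semispray transformation law (via the $d$-covector character of the Craig--Synge covector $\stackrel{k-1}{E}_i(L)$) is a point the paper leaves implicit in Theorem 2.1.1, so your write-up is, if anything, slightly more careful than the original.
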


$N$ is the canonical nonlinear connection of $L^{(k)n}$.

The adapted basis $\left\{\displaystyle\frac \delta {\delta
x^i},\displaystyle\frac \delta {\delta y^{(1)i}},\cdots,\displaystyle\frac \delta {\delta y^{(k)i}}\right\}$ has its dual
$\{\delta x^i,\delta y^{(1)i}$, ..., $\delta y^{(k)i}\}$. They are
constructed by the canonical nonlinear connection. So, the
horizontal curves are characterized by the system of differential
equations Part 2, Ch. 2, and the autoparallel curves of $N$ are
given by Part II, Ch. 1.

The condition that $N$ be integrable is expressed by
$\left[\dfrac{\delta}{\delta x^i},\dfrac{\delta}{\delta
x^j}\right]^{V_\alpha}=0$, $(\alpha=1,\cdots,k)$.

\section{Examples of spaces $L^{(k)n}$}
\setcounter{equation}{0}\setcounter{theorem}{0}\setcounter{definition}{0}\setcounter{proposition}{0}\setcounter{remark}{0}

\begin{itemize}
  \item[$1^\circ$] Let us consider the Lagrangian:
  \begin{equation}
L(x,y^{(1)},...,y^{(k)})=g_{ij}(x)z^{(k)i}z^{(k)j}
\end{equation}
where $g_{ij}(x)$ is a Riemannian (or pseudo Riemannian) metric on
the base manifold $M$ and the $z^{(k)i}$ is the Liuoville
$d-$vector field:
\begin{equation}
kz^{(k)i}=ky^{(k)i}+(k-1)\underset{(1)}{M_m^i}y^{(k-1)m}+\cdots +%
k\underset{(k-1)}{M_m^i}y^{(1)m}
\end{equation}
constructed by means of the dual coefficients (Part II, Ch. 1) of the
canonical nonlinear connection $N$ from the problem of
prolongation to $T^kM$ of $g_{ij}(x)$. So that the Lagrangian
(2.2.1) depend on $g_{ij}(x)$ only.

The pair $L^{(k)n}=(M,L)$, (2.2.1) is a Lagrange space of order
$k$. Its fundamental tensor is $g_{ij}(x)$, since the $d-$vector
$z^{(k)i}$ is linearly in the variables $y^{(k)}$.
  \item[$2^\circ$] Let $\stackrel{\circ}{L}(x,y^{(1)})$ be the Lagrangian from electrodynamics
\begin{equation}
\stackrel{\circ }{L}(x,y^{(1)})=mc\gamma
_{ij}(x)y^{(1)i}y^{(1)j}+\displaystyle\frac{2e}mb_i(x)y^{(1)i}
\end{equation}
Let $N$ be the nonlinear connection given by the theorem Part II, Ch. 1, from the problem of prolongations to $T^kM$ of the Riemannian (or pseudo Riemannian) structure $\gamma_{ij}(x)$ and the
Liouville tensor $z^{(k)i}$ constructed by means of $N$. Then the
pair $L^{(k)n}=(M,L)$, with
\begin{equation}
L(x,y^{\left( 1\right) },...,y^{\left( k\right) })=mc\gamma
_{ij}(x)z^{\left( k\right) i}z^{\left( k\right)
j}+\displaystyle\frac{2e} mb_i(x)z^{\left( k\right) i}
\end{equation}
is a Lagrange space of order $k$. It is the prolongation to the
manifold $T^kM$ of the Lagrangian $\stackrel{\circ}{L}$ (2.2.3) of
electrodynamics.

These examples prove the existence of the Lagrange spaces of order
$k$.
\end{itemize}

\section{Canonical metrical $N-$connection}
\setcounter{equation}{0}\setcounter{theorem}{0}\setcounter{definition}{0}\setcounter{proposition}{0}\setcounter{remark}{0}

Consider the canonical nonlinear connection $N$ of a Lagrange
space of order $k$, $L^{(k)n}=(M,L)$.

An $N-$linear connection $\D$ with the coefficients
$D\Gamma(N)=(L^{i}_{jk},\underset{(1)}{C_{\ jh}^i},$
$\cdots,\underset{(k)}{C_{\ jh}^i})$ is called metrical with
respect to metric tensor $g_{ij}$ if
\begin{equation}
g_{ij|h}=0,\quad g_{ij}\stackrel{(\alpha)}{|}_h=0,\quad
(\alpha=1,\cdots,k).
\end{equation}

Now we can prove the following theorem:

\begin{theorem}
The following properties hold:

1) There exists a unique $N$-linear connection $D$ on $\widetilde{T^kM}$
verifying the axioms:
\begin{enumerate}
\item[$1^\circ$] $N-$ is the canonical nonlinear connection of
space $L^{(k)n}$. \item[$2^\circ$] $g_{ij|h}=0$, ($D$ is
$h-$metrical) \item[$3^\circ$] $g_{ij}\stackrel{(\alpha)}{|}_h=0$,
$(\alpha=1,\cdots,k)$, ($D$ is $v_\alpha-$metrical)
\item[$4^\circ$] $T_{\ jh}^i=0$, ($D$ is $h-$torsion free)
\item[$5^\circ$] $\underset{(\alpha)}{S_{\ jh}^i}=0$,
$(\alpha=1,\cdots,k)$, ($D$ is $v_\alpha-$torsion free).
\end{enumerate}

2) The coefficients $C\Gamma (N)=(L_{\ ij}^h,\underset{(1)}{C_{\ ij}^h},...,%
\underset{(k)}{C_{\ ij}^h})$ of $D$ are given by the generalized
Christoffel symbols:
\begin{equation}
\begin{array}{l}
L_{\ ij}^h=\displaystyle\frac 12 g^{hs}\left(\displaystyle\frac{\delta g_{is}}{%
\delta x^j}+\displaystyle\frac{\delta g_{sj}}{\delta x^i}-\displaystyle\frac{%
\delta g_{ij}}{\delta x^s}\right), \\
\\
\underset{\left(\alpha \right)}{C_{\ ij}^h}=\displaystyle\frac 12 g^{hs}\left(%
\displaystyle\frac{\delta g_{is}}{\delta y^{\left( \alpha \right) j}}+%
\displaystyle\frac{\delta g_{sj}}{\delta y^{\left( \alpha \right) i}}-%
\displaystyle\frac{\delta g_{ij}}{\delta y^{\left( \alpha \right)
s}}\right),\ (\alpha =1,...,k).
\end{array}
\end{equation}

3) $D$ depends only on the fundamental function $L$ of the space
$L^{\left( k\right) n}$.
\end{theorem}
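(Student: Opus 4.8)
The plan is to prove this theorem by following the same method used in the order $k=1$ case (Theorem 2.5.1 and Theorem 6.3.1 in the excerpt), establishing first uniqueness and then existence. The structure of the argument is the classical one for metrical connections: the metricity axioms together with the torsion-freeness axioms over-determine the connection coefficients just enough to pin them down uniquely via the Christoffel-symbol construction.

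First I would prove uniqueness. Assume $D$ is an $N$-linear connection satisfying all five axioms. Using the $h$-metrical condition $g_{ij|h}=0$ written out via formula (1.7.7) gives
\begin{equation}
\frac{\delta g_{ij}}{\delta x^h}=L^p_{ih}g_{pj}+L^p_{jh}g_{ip}.\nonumber
\end{equation}
Performing the standard cyclic permutation of the indices $i,j,h$, adding two of the resulting identities and subtracting the third, and then invoking the $h$-torsion-free axiom $T^i_{\ jh}=L^i_{jh}-L^i_{hj}=0$ (which makes $L^i_{jh}$ symmetric in its lower indices) isolates $L^h_{\ ij}$ and yields the first formula in (2.3.2). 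Exactly the same Christoffel trick applied to each of the $v_\alpha$-metrical conditions $g_{ij}\stackrel{(\alpha)}{|}_h=0$, using the $v_\alpha$-torsion-free axioms $\underset{(\alpha)}{S^i_{\ jh}}=0$ to symmetrize $\underset{(\alpha)}{C^h_{\ ij}}$, gives the expressions for $\underset{(\alpha)}{C^h_{\ ij}}$, $(\alpha=1,\cdots,k)$. This proves uniqueness: any connection obeying the axioms must have coefficients (2.3.2).

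Next I would prove existence by a converse (verification) argument. I would \emph{define} the coefficients $(L^h_{\ ij},\underset{(1)}{C^h_{\ ij}},\cdots,\underset{(k)}{C^h_{\ ij}})$ by the formulae (2.3.2) and check two things. One, that these objects transform correctly: $L^h_{\ ij}$ must transform like the coefficients of a linear connection on $M$ (the inhomogeneous rule, cf. (1.4.2)/the transformation in Part II Ch.~1) and each $\underset{(\alpha)}{C^h_{\ ij}}$ like a $(1,2)$ $d$-tensor; this follows because $g_{ij}$ is a $d$-tensor, the operators $\delta/\delta x^j$ and $\delta/\delta y^{(\alpha)j}$ transform tensorially under the canonical nonlinear connection, and the symmetrized Christoffel combination produces precisely the required inhomogeneous term. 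Two, that the resulting $D$ indeed satisfies axioms $A_2$–$A_5$: metricity follows by direct substitution of (2.3.2) back into (1.7.7) and its $v_\alpha$-analogues, and the torsion-freeness is immediate from the manifest symmetry of (2.3.2) in the lower indices. Axiom $A_1$ holds by construction since $N$ is taken to be the canonical nonlinear connection.

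For part (3), I would observe that the canonical nonlinear connection $N$, hence its adapted frame $\{\delta/\delta x^i,\delta/\delta y^{(\alpha)i}\}$ and the dual coefficients, depends only on $L$ (Theorem 2.1.3), and $g_{ij}$ depends only on $L$ by (2.1.1); therefore the right-hand sides of (2.3.2) are built solely from $L$, so $D$ depends only on the fundamental function. The main obstacle I anticipate is bookkeeping in the transformation check of the $L^h_{\ ij}$: unlike the $k=1$ case, the adapted operators $\delta/\delta y^{(\alpha)i}$ carry the full tower of dual coefficients $\underset{(1)}{M},\cdots,\underset{(k)}{M}$, so I must be careful that the second-derivative inhomogeneous terms cancel correctly and that no mixing between the distributions $N_0,N_1,\dots,N_{k-1}$ spoils the tensorial behavior of the $\underset{(\alpha)}{C}$'s. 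The genuinely new work is therefore purely verificational; the algebraic skeleton is identical to the established lower-order theory.
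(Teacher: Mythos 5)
Your proposal is correct and follows essentially the route the paper intends: Theorem 2.3.1 is stated there without a written proof, as the direct order-$k$ analogue of Theorem 2.5.1 of Part I, and the expected argument is precisely your two-step scheme — uniqueness by the Christoffel trick (cyclic permutation of the metricity identities $g_{ij|h}=0$, $g_{ij}\stackrel{(\alpha)}{|}_h=0$ combined with the torsion-free symmetries $T^i_{\ jh}=0$, $\underset{(\alpha)}{S^i_{\ jh}}=0$), then existence by verifying that the formulae (2.3.2) define an $N$-linear connection satisfying the axioms, with part (3) following since $N$ and $g_{ij}$ are built from $L$ alone. Your closing bookkeeping point — that the adapted operators $\dfrac{\delta}{\delta x^i}$ and $\dfrac{\delta}{\delta y^{(\alpha)i}}$ transform tensorially and the vertical ones annihilate the Jacobian factors (functions of $x$ only), so each $\underset{(\alpha)}{C^h_{\ ij}}$ is a genuine $(1,2)$ $d$-tensor while only $L^h_{\ ij}$ acquires the inhomogeneous second-derivative term — is exactly what makes the $k=1$ skeleton carry over unchanged.
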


The connection $D$ from the previous theorem is called
\textit{canonical metrical} $N$-{\it connection} and its
coefficients (2.3.2) are denoted by $C\Gamma(N)$.

Now, the geometry of the Lagrange spaces $L^{(k)n}$ can be
developed by means of these two canonical connection $N$ and $D$.

\section{The Riemannian $(k-1)n-$contact model of the space $L^{(k)n}$}
\setcounter{equation}{0}\setcounter{theorem}{0}\setcounter{definition}{0}\setcounter{proposition}{0}\setcounter{remark}{0}

The almost K\"{a}hlerian model of the Lagrange spaces $L^n$
expound in the section 7, Ch. 2, can be extended in a
corresponding model of the higher order Lagrange spaces. But now,
it is a Riemannian almost $(k-1)n-$contact structure on the
manifold $\widetilde{T^kM}$.

The canonical nonlinear connection $N$ of the space
$L^{(k)n}=(M,L)$ determines the following ${\cal
F}(\widetilde{T^kM})-$linear mapping $\F:{\cal
X}(\widetilde{T^kM})\to{\cal X}(\widetilde{T^kM})$ defined on the
adapted basis to $N$ and to $N_\alpha$, by
\begin{equation}
\begin{array}{l}
\F\left(\displaystyle\frac{\delta}{\delta
x^i}\right)=-\displaystyle\frac{\partial}{\partial y^{(k)i}}\\
\\
F\left(\displaystyle\frac{\delta}{\delta
y^{(\alpha)i}}\right)=0,\quad (\alpha=1,\cdots,k-1)\\
\\
F\left(\displaystyle\frac{\partial}{\partial
y^{(k)i}}\right)=\displaystyle\frac{\delta}{\delta x^i},\quad
(i=1,\cdots,n)
\end{array}
\end{equation}

We can prove:

\begin{theorem}
We have: \begin{enumerate}
\item[1$^{\circ}$] $\F$ is globally defined on
$\widetilde{T^kM}$.

\item[2$^{\circ}$] $\F$ is a tensor field of type $(1,1)$ on
$\widetilde{T^kM}$

\item[3$^\circ$] Ker$\F=N_1\oplus N_2\oplus\cdots\oplus
N_{k-1}$, Im$\F=N_0\oplus V_k$

\item[4$^\circ$] rank$\|\F\|=2n$

\item[5$^\circ$] $\F^3+\F=0$.\end{enumerate}

Thus $\F$ is an almost $(k-1)n-$contact structure on
$\widetilde{T^kM}$ determined by $N$.
\end{theorem}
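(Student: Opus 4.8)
The goal is to establish the five properties of the tensor field $\F$ defined by (4.1) in the adapted basis. The plan is to verify each claim in the listed order, since the later properties (rank and the cubic relation) depend on the earlier structural facts (global definition and the kernel/image decomposition).

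First I would prove that $\F$ is globally defined and is a $(1,1)$-tensor field. The defining relations (4.1) prescribe $\F$ on the adapted basis $\left(\dfrac{\delta}{\delta x^i},\dfrac{\delta}{\delta y^{(1)i}},\cdots,\dfrac{\delta}{\delta y^{(k-1)i}},\dfrac{\partial}{\partial y^{(k)i}}\right)$. Since this basis is attached to the canonical nonlinear connection $N$ and its induced distributions $N_0,N_1,\cdots,N_{k-1},V_k$, and since these vector fields transform, under a coordinate change $(1.1.5)$ of Chapter 1, by the rules $(1.4.7')$ and $(1.4.14)$ — namely each $\dfrac{\delta}{\delta y^{(\alpha)i}}$ transforms tensorially with the Jacobian $\dfrac{\partial x^j}{\partial\widetilde{x}^i}$ (and $\dfrac{\partial}{\partial y^{(k)i}}$ likewise) — one checks that the expression $-\dfrac{\partial}{\partial y^{(k)i}}\otimes dx^i+\dfrac{\delta}{\delta x^i}\otimes\delta y^{(k)i}$ is invariant. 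The key point is that $dx^i$ and $\delta y^{(k)i}$ are the dual $1$-forms transforming inversely, so the tensor is well defined on overlaps and hence globally on $\widetilde{T^kM}$; this simultaneously gives the $(1,1)$-type assertion.

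Next I would read off the kernel and image directly from (4.1). Since $\F$ sends every $\dfrac{\delta}{\delta y^{(\alpha)i}}$ for $\alpha=1,\cdots,k-1$ to zero and sends the $2n$ vectors $\dfrac{\delta}{\delta x^i}$, $\dfrac{\partial}{\partial y^{(k)i}}$ to $\mp$ one another, the kernel contains $N_1\oplus\cdots\oplus N_{k-1}$ and the image lies in $N_0\oplus V_k$. Because $\F$ is bijective on the $2n$-dimensional subspace $N_0\oplus V_k$ (it swaps the two factors up to sign), we get exactly $\mathrm{Ker}\,\F=N_1\oplus\cdots\oplus N_{k-1}$ and $\mathrm{Im}\,\F=N_0\oplus V_k$, using the direct decomposition $(1.4.10')$ of $T_u\widetilde{T^kM}$. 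The rank statement $\mathrm{rank}\|\F\|=2n$ is then immediate from $\dim(N_0\oplus V_k)=2n$. Finally, for $\F^3+\F=0$, I would compute $\F^2$ on each basis block: on $N_0\oplus V_k$ one has $\F^2=-\mathrm{Id}$ (the familiar almost-complex behavior, since $\F\F\,\dfrac{\delta}{\delta x^i}=\F\left(-\dfrac{\partial}{\partial y^{(k)i}}\right)=-\dfrac{\delta}{\delta x^i}$), while $\F^2=0$ on the kernel; hence $\F^3=\F\cdot\F^2$ equals $-\F$ on $N_0\oplus V_k$ and $0$ on the kernel, giving $\F^3+\F=0$ throughout.

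The main obstacle is the first step: carefully verifying global well-definedness via the transformation rules of the adapted (co)basis, because this requires tracking how the mixed terms in $(1.4.14)$ cancel. Once invariance is secured, the remaining properties are essentially linear-algebraic evaluations on the adapted basis and follow routinely from the block structure of $\F$. The conclusion that $\F$ is an almost $(k-1)n$-contact structure is then a matter of invoking the standard definition, the kernel having dimension $(k-1)n$.
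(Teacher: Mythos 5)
Your proposal is correct and is exactly the direct verification the paper intends: the theorem is stated there with only the remark ``We can prove:'', and the intended argument is precisely yours, namely writing $\F=-\dfrac{\partial}{\partial y^{(k)i}}\otimes dx^i+\dfrac{\delta}{\delta x^i}\otimes\delta y^{(k)i}$, checking invariance under the coordinate changes of the adapted (co)basis, and then reading off $\mathrm{Ker}\,\F$, $\mathrm{Im}\,\F$, the rank, and $\F^3+\F=0$ block by block. Your kernel/image and homogeneity-of-blocks computations are sound, so nothing is missing.
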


Let
$\left(\underset{1a}{\xi},\underset{2a}{\xi},...,\underset{(k-1)a}{\xi_a}\right)
$, $(a=1,...,n)$ be a local basis adapted to the direct
decomposition $N_1\oplus \cdots \oplus N_{k-1}$ and
$\left(\stackrel{1a}{\eta},\stackrel{2a}{\eta},...,\stackrel{(k-1)a}{\eta}\right)$,
its dual.

Thus the set
\begin{equation}
\left(\F,\underset{1a}{\xi},...,\underset{(k-1)a}{\xi},\stackrel{1a}{\eta},...,\stackrel{(k-1)a}{\eta}\right),
\quad (a=1,\cdots,n-1)
\end{equation}
is a $(k-1)n$ almost contact structure.

Indeed, (2.4.1) imply:
$$\F(\underset{\alpha a}{\xi})=0,\stackrel{\alpha a}{\eta}(\underset{\beta b}{\xi})=\left\{\begin{array}{ccc}
\delta^a_{\ b}& {\rm{\ for\ }} & \alpha=\beta \\ 0 & {\rm{\ for\ }}& \alpha\neq\beta,
(\alpha,\beta=1,\cdots,(k-1)).\end{array}\right.$$

$$\F^2(X)=-X+\dd\sum^n_{a=1}\dd\sum^{k-1}_{\alpha=1}\stackrel{\alpha a}{\eta}(X)\underset{\alpha a}{\xi},\quad
\forall X\in{\cal X}(T^kM),\stackrel{\alpha
a}{\eta}\circ\F=0.$$

Let $N_\F$ be the Nijenhuis tensor of the structure
$\F$.
$$N_\F(X,Y)=[\F X,\F Y]+\F^2[X,Y]-\F[\F X,Y]-
\F[X,\F Y].$$ The structure (2.4.2) is said to be normal
if:
$$N_{\F}(X,Y)+\dd\sum^n_{a=1}\dd\sum^{k-1}_{\alpha=1}d\stackrel{\alpha
a}{\eta}(X,Y)=0,\quad \forall X,Y\in{\cal X}(T^kM).$$ So we obtain
a characterization of the normal structure $\F$ given by the
following theorem:

\begin{theorem}
The almost $(k-1)n-$contact structure
(2.4.2) is normal if and only if for any $X,Y\in{\cal
X}(\widetilde{T^kM})$ we have:
$$N_{\F}(X,Y)+\dd\sum^n_{a=1}\dd\sum^{k-1}_{\alpha=1}d(\delta y^{(\alpha)a})\ (X,Y)=0.$$
\end{theorem}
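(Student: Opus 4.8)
The plan is to prove the normality characterization by comparing the general normality condition, stated in terms of the Nijenhuis tensor $N_{\F}$ and the differentials $d\stackrel{\alpha a}{\eta}$ of the dual $1$-forms, with the condition written using the adapted coframe forms $\delta y^{(\alpha)a}$. The entire content of the theorem is that, for the particular almost $(k-1)n$-contact structure $\F$ determined by the canonical nonlinear connection $N$, the abstract $1$-forms $\stackrel{\alpha a}{\eta}$ can be replaced by the concrete contact $1$-forms $\delta y^{(\alpha)a}$ coming from the adapted cobasis (1.5.1)–(1.5.2). First I would make precise the identification of the two families of $1$-forms: the local basis $\left(\underset{1a}{\xi},\dots,\underset{(k-1)a}{\xi}\right)$ adapted to the decomposition $N_1\oplus\cdots\oplus N_{k-1}$ is, up to the frame labelling $a=1,\dots,n$, exactly the intermediate-vertical part of the adapted basis, namely the vectors $\dfrac{\delta}{\delta y^{(\alpha)i}}$ for $\alpha=1,\dots,k-1$. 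Correspondingly its dual $\stackrel{\alpha a}{\eta}$ is, under the same identification, the coframe $1$-form $\delta y^{(\alpha)i}$.

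The key steps, in order, are as follows. I would first record that $\mathrm{Ker}\,\F=N_1\oplus\cdots\oplus N_{k-1}$ and $\mathrm{Im}\,\F=N_0\oplus V_k$ from Theorem 2.4.1, so that the structure forms $\stackrel{\alpha a}{\eta}$ annihilate $N_0\oplus V_k$ and reproduce the components along $N_1,\dots,N_{k-1}$; this is precisely the defining property of the dual cobasis elements $\delta y^{(1)i},\dots,\delta y^{(k-1)i}$ restricted to those subdistributions. Thus under the frame identification we have the pointwise equality $\stackrel{\alpha a}{\eta}=\delta y^{(\alpha)a}$ as sections of $\mathcal{X}^*(\widetilde{T^kM})$ for each $\alpha=1,\dots,k-1$. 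Second, I would substitute this identification directly into the general normality condition
\begin{equation}
N_{\F}(X,Y)+\dd\sum^n_{a=1}\dd\sum^{k-1}_{\alpha=1}\underset{\alpha a}{\xi}\, d\stackrel{\alpha a}{\eta}(X,Y)=0,\nonumber
\end{equation}
which is the standard definition of a normal almost contact structure, and observe that the term $d\stackrel{\alpha a}{\eta}$ becomes $d(\delta y^{(\alpha)a})$. This yields exactly the stated condition
\begin{equation}
N_{\F}(X,Y)+\dd\sum^n_{a=1}\dd\sum^{k-1}_{\alpha=1}d(\delta y^{(\alpha)a})\,(X,Y)=0,\nonumber
\end{equation}
for all $X,Y\in\mathcal{X}(\widetilde{T^kM})$, establishing the equivalence in both directions since the substitution is an identity.

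The main obstacle I anticipate is the bookkeeping in the frame identification: one must verify that the abstract adapted basis $\left(\underset{\alpha a}{\xi}\right)$ and its dual $\left(\stackrel{\alpha a}{\eta}\right)$ can genuinely be taken to coincide with the intermediate coframe pieces $\delta y^{(\alpha)i}$, and that the Liouville-type terms $\underset{\alpha a}{\xi}\,d\stackrel{\alpha a}{\eta}$ attached in the definition of normality match the summation over $\alpha$ appearing in the theorem. This requires care with the transformation rules (1.5.4) for the dual coefficients and with the exterior derivatives of $\delta y^{(\alpha)i}$, since the $d(\delta y^{(\alpha)i})$ involve the curvature $d$-tensors $R^{\ i}_{h\ jm}$ and the mixed $\underset{(\alpha)}{P}$, $\underset{(\beta\alpha)}{S}$ objects of the $N$-linear connection. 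However, no new computation of these tensors is needed for the equivalence itself; the reduction is formal once the identification $\stackrel{\alpha a}{\eta}=\delta y^{(\alpha)a}$ is justified, and the explicit expansion of $d(\delta y^{(\alpha)a})$ would only be required if one wished to write out the integrability conditions in coordinates rather than to prove the equivalence. I would therefore present the identification carefully and then let the substitution close the argument.
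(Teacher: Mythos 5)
Your proposal is correct and coincides with the paper's (essentially implicit) argument: the paper states the theorem immediately after defining normality by $N_{\F}(X,Y)+\sum_{a=1}^{n}\sum_{\alpha=1}^{k-1}d\overset{\alpha a}{\eta}(X,Y)=0$, so the entire content is exactly your identification of the adapted frame for $N_1\oplus\cdots\oplus N_{k-1}$ as $\underset{\alpha a}{\xi}=\dfrac{\delta}{\delta y^{(\alpha)a}}$ with dual $\overset{\alpha a}{\eta}=\delta y^{(\alpha)a}$, after which the substitution closes both directions. You are also right that the explicit expansion of $d(\delta y^{(\alpha)a})$ in terms of curvature tensors is not needed for the equivalence itself.
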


The lift of fundamental tensor $g_{ij}$ of the space $L^{(k)n}$
with respect to $N$ is defined by:
\begin{equation}
\G=g_{ij}dx^i\otimes dx^j+g_{ij}\delta y^{(1)i}\otimes
\delta y^{(1)j}+\cdots +g_{ij}\delta y^{(k)i}\otimes \delta
y^{(k)j}.
\end{equation}

Evidently, $\G$ is a pseudo-Riemannian structure on the
manifold $\widetilde{T^kM}$, determined only by space $L^{(k)n}$.

Now, it is not difficult to prove:

\begin{theorem}
The pair $(\G,\F)$ is a
Riemannian $(k-1)n$-almost contact structure on
$\widetilde{T^kM}$.
\end{theorem}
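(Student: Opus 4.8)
The plan is to verify the two defining conditions of a Riemannian almost $(k-1)n$-contact structure, namely the compatibility of the metric $\G$ with the tensor field $\F$ together with the structure equations already recorded in Theorem 2.4.1 and in the normality criterion of the preceding theorem. Since Theorem 2.4.1 has already established that $\F^3 + \F = 0$, that $\mathrm{rank}\,\F = 2n$, and that $\mathrm{Ker}\,\F = N_1 \oplus \cdots \oplus N_{k-1}$, the essential new content here is purely metric: I must show that $\G$ and $\F$ are adapted to one another. Concretely, the first thing I would do is compute $\G(\F X, \F Y)$ for $X,Y$ ranging over the adapted basis $\left(\dfrac{\delta}{\delta x^i}, \dfrac{\delta}{\delta y^{(\alpha)i}}, \dfrac{\partial}{\partial y^{(k)i}}\right)$.

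First I would use the action of $\F$ given in (2.4.1), together with the expression (2.4.3) for $\G$. On the horizontal block, $\F\left(\dfrac{\delta}{\delta x^i}\right) = -\dfrac{\partial}{\partial y^{(k)i}}$ and $\F\left(\dfrac{\partial}{\partial y^{(k)i}}\right) = \dfrac{\delta}{\delta x^i}$, while $\F$ annihilates the intermediate vertical directions $N_1, \dots, N_{k-1}$. Evaluating (2.4.3) on these images, the key observation is that $\G$ assigns the same coefficient $g_{ij}$ to the $dx^i \otimes dx^j$ term and to the $\delta y^{(k)i} \otimes \delta y^{(k)j}$ term. Hence $\G\left(\F\dfrac{\delta}{\delta x^i}, \F\dfrac{\delta}{\delta x^j}\right) = g_{ij} = \G\left(\dfrac{\delta}{\delta x^i}, \dfrac{\delta}{\delta x^j}\right)$, and symmetrically for the $V_k$ block, so that the Hermitian-type compatibility $\G(\F X, \F Y) = \G(X,Y)$ holds on $N_0 \oplus V_k$. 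On the kernel directions $N_1 \oplus \cdots \oplus N_{k-1}$ one has $\F X = 0$, so compatibility there must instead be phrased through the contact one-forms $\stackrel{\alpha a}{\eta}$; the correct statement is $\G(\F X, \F Y) = \G(X,Y) - \sum_{a,\alpha} \stackrel{\alpha a}{\eta}(X)\stackrel{\alpha a}{\eta}(Y)$, which is exactly the metric compatibility axiom for an almost contact metric structure of corank $(k-1)n$. I would verify this correction term by choosing the adapted basis $\left(\underset{\alpha a}{\xi}\right)$ of $N_1 \oplus \cdots \oplus N_{k-1}$ to be $\G$-orthonormal (possible since $\G$ restricted to each $N_\alpha$ is nondegenerate, being isometric to $g_{ij}$), so that $\G(\underset{\alpha a}{\xi}, \underset{\beta b}{\xi}) = \delta_{\alpha\beta}\delta_{ab}$ matches the product of the dual forms.

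The remaining items are then bookkeeping. I would confirm that $\stackrel{\alpha a}{\eta}(X) = \G(X, \underset{\alpha a}{\xi})$, i.e. that the contact forms are metrically dual to the characteristic vector fields, which again follows from the orthogonality of $N$ and the $N_\alpha$ with respect to $\G$ and from the $\G$-orthonormality of the $\underset{\alpha a}{\xi}$. Together with the purely algebraic relations $\F^2(X) = -X + \sum_{a,\alpha}\stackrel{\alpha a}{\eta}(X)\underset{\alpha a}{\xi}$, $\F(\underset{\alpha a}{\xi}) = 0$, and $\stackrel{\alpha a}{\eta}\circ\F = 0$ already displayed before the theorem, the full set of Riemannian almost $(k-1)n$-contact axioms is met, and the theorem follows.

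The main obstacle I anticipate is purely notational rather than conceptual: one must carefully track the correction terms involving the $(k-1)n$ characteristic directions, since the naive identity $\G(\F X,\F Y)=\G(X,Y)$ fails on the kernel of $\F$ and is only recovered after subtracting $\sum_{a,\alpha}\stackrel{\alpha a}{\eta}(X)\stackrel{\alpha a}{\eta}(Y)$. The computation itself reduces, on each block of the adapted basis, to the single fact that $\G$ uses the \emph{same} fundamental tensor $g_{ij}$ in every slot of (2.4.3), so the isometry property of $\F$ between the $N_0$ and $V_k$ blocks is immediate; the delicate part is merely organizing the corank and the choice of an adapted orthonormal frame so that the metric-compatibility axiom takes its standard contact form. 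As the excerpt itself signals, this is a routine extension of the $k=1$ almost Kählerian case of Theorem 2.7.1, so I would explicitly invoke that parallel and present only the verification of the block computations above.
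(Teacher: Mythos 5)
Your verification is correct in substance, but you prove a different (and stronger) compatibility axiom than the one the paper actually checks. The paper offers essentially no computation: it invokes the algebraic facts of the preceding theorem and then records, as the metric compatibility defining the Riemannian $(k-1)n$-almost contact structure, the skew-symmetry condition $\G(\F X,Y)=-\G(\F Y,X)$ for all $X,Y$, which follows in one line from the block structure of $\G$ and $\F$ in the adapted basis (the only nonzero cross pairing is $\G\bigl(\F\frac{\delta}{\delta x^i},\frac{\partial}{\partial y^{(k)j}}\bigr)=-g_{ij}=-\G\bigl(\F\frac{\partial}{\partial y^{(k)j}},\frac{\delta}{\delta x^i}\bigr)$). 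You instead establish the standard contact-metric identity $\G(\F X,\F Y)=\G(X,Y)-\sum_{a,\alpha}\eta(X)\,\eta(Y)$, which requires choosing the characteristic frame $\xi$ of $N_1\oplus\cdots\oplus N_{k-1}$ to be $\G$-orthonormal. That is a legitimate and more informative route -- it pins down the full almost-contact-metric package rather than only skew-symmetry -- and your core computation (that $\G$ carries the \emph{same} tensor $g_{ij}$ on the $N_0$ and $V_k$ blocks, so $\F$ is an isometry between them and the intermediate blocks are killed) is exactly the content underlying the paper's one-line claim. One caution, however: the paper explicitly notes that $\G$ is in general only \emph{pseudo}-Riemannian, since the fundamental tensor $g_{ij}$ of $L^{(k)n}$ is merely of constant signature; in the indefinite case your normalization $\G(\xi,\xi')=\delta_{\alpha\beta}\delta_{ab}$ is unavailable and your identity must be amended with signs $\epsilon_a=\pm 1$, i.e. $\G(\F X,\F Y)=\G(X,Y)-\sum_{a,\alpha}\epsilon_a\,\eta(X)\,\eta(Y)$. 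The paper's skew-symmetry formulation is signature-independent and frame-independent, which is precisely what it buys by settling for the weaker condition; your block computation already yields it without any orthonormalization, so your argument covers the paper's claim in all cases even where the stronger normalized identity needs adjusting.
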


In this case, the next condition holds:
$$\G(\F X,Y)=-\G(\F Y,X),\quad \forall X,Y\in{\cal
X}(\widetilde{T^kM}).$$ Therefore the triple
$(\widetilde{T^kM},\G,\F)$ is an metrical
$(k-1)n$-almost contact space named the {\it geometrical model of
the Lagrange space of order $k$, $L^{(k)n}$}.

Using this space we can study the electromagnetic and
gravitational fields in the spaces $L^{(k)n}$, \cite{mir11}.

\section{The generalized Lagrange spaces of order $k$}\index{Generalized spaces! Lagrange}
\setcounter{equation}{0}\setcounter{theorem}{0}\setcounter{definition}{0}\setcounter{proposition}{0}\setcounter{remark}{0}

The notion of generalized Lagrange space of higher order is a
natural extension of that studied in chapter 2.

\begin{definition} A generalized Lagrange space of order $k$ is
a pair $GL^{(k)n}=(M,g_{ij})$ formed by a real differentiable
$n-$dimensional manifold $M$ and $a$ $C^\infty-$covariant of type
(0,2), symmetric $d-$tensor field $g_{ij}$ on $\widetilde{T^kM}$,
having the properties:
\begin{enumerate}
\item[a.] $g_{ij}$ has a constant signature on $\widetilde{T^kM}$;
\item[b.] rank$(g_{ij})=n$ on $\widetilde{T^kM}$.
\end{enumerate}
$g_{ij}$ is called the \textit{fundamental tensor} of $GL^{(k)n}$.
\end{definition}

Evidently, any Lagrange space of order $k$, $L^{(k)n}=(M,L)$
determines a space $GL^{(k)n}$ with fundamental tensor
\begin{equation}
g_{ij}=\displaystyle\frac 12\displaystyle\frac{\partial
^2L}{\partial y^{(k)i}\partial y^{(k)j}}.
\end{equation}

But not and conversely. If $g_{ij}(x,y^{(1)},...,y^{(k)})$ is a
priori given, it is possible that the system of differential
partial equations (2.5.1) does not admit any solution in
$L(x,y^{(1)},...,y^{(k)})$. A necessary condition that the system
(2.5.1) admits solutions in the function $L$ is that $d$-tensor
field
\begin{equation}
\underset{(k)}{C}_{\ ijh}=\displaystyle\frac 12\displaystyle\frac{
\partial g_{ij}}{\partial y^{(k)h}}
\end{equation}
be completely symmetric.

If the system (2.5.1) has solutions, with respect to $L$ we say that
the space $GL^{(k)n}$ is reducible to a Lagrange space of order
$k$. If this property is not true, then $GL^n$ is said to be
nonreducible to a Lagrange space $L^{(k)n}$.

\bigskip

{\bf Examples}
\begin{enumerate}
\item[$1^\circ$] Let ${\cal R}=(M,\gamma_{ij}(x))$ be a Riemannian
space and $\sigma \in{\cal F}(T^kM)$. Consider the $d$-tensor
field:
\begin{equation}
g_{ij}=e^{2\sigma}(\gamma_{ij}\circ \pi ^k).
\end{equation}
If $\dfrac{\partial\sigma}{\partial y^{(k)h}}$ is a nonvanishes
$d-$covector on the manifold $\widetilde{T^kM}$, then the pair
$GL^{(k)n}=(M,g_{ij})$ is a generalized Lagrange space of order
$k$ and it is not reducible to a Lagrange space $L^{(k)n}$.

\item[$2^\circ$] Let ${\cal R}^n=(M,\gamma_{ij}(x))$ be a Riemann
space and Prol$^k{\cal R}^n$ be its prolongation of order $k$ to
$\widetilde{T^kM}$.

Consider the Liouville $d-$vector field $z^{(k)i}$ of Prol$^k{\cal
R}^n$. It is expressed in the formula (2.2.2). We can
introduce the $d-$covector field $z^{(k)}_{\
i}=\gamma_{ij}z^{(k)j}$.

We assume that exists a function $n(x,y^{(1)},\cdots,y^{(k)})\geq
1$ on $\widetilde{T^kM}$.

Thus
\begin{equation}
g_{ij}=\gamma_{ij}+\left( 1-\displaystyle\frac 1{n^2}\right)
z_i^{(k)}z_j^{(k)}
\end{equation}
is the fundamental tensor of a space $GL^{(k)n}$. Evidently this
space is not reducible to a space $L^{(k)n}$, if the function
$n\neq 1$.
\end{enumerate}

These two examples prove the existence of the generalized Lagrange
space of order $k$.

In the last example, $k=1$ leads to the metric Part I, Ch. 2 of the
Relativistic Optics, ($n$ being the refractive index).

In a generalized Lagrange space $GL^{(k)n}$ is difficult to find a
nonlinear connection $N$ derived only by the fundamental tensor
$g_{ij}$. Therefore, assuming that $N$ is a priori given , we
shall study the pair $(N, GL^{(k)n})$. Thus of theorem of the
existence and uniqueness metrical $N-$linear connection holds:

\begin{theorem}
We have: \begin{enumerate}
\item[$1^\circ$] There exists an unique $N$-linear connection $D$
for which
$$
\begin{array}{l}
g_{ij|h}=0,\ g_{ij}\stackrel{(\alpha)}{|}_h=0,\ (\alpha =1,...,k), \\
\\
T_{\ jk}^i=0,\ \underset{(\alpha)}{S_{\ jk}^i}=0,\ (\alpha
=1,...,k).
\end{array}
$$
\item[$2^\circ$] The coefficients of $D$ are given by the
generalized Christoffel symbols Part II, Ch. 2.

\item[$3^\circ$] $D$ depends on $g_{ij}$ and $N$ only.
\end{enumerate}
\end{theorem}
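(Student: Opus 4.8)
The final theorem asserts the existence and uniqueness of a metrical $N$-linear connection $D$ on $\widetilde{T^kM}$ for a generalized Lagrange space $GL^{(k)n}=(M,g_{ij})$ with a fixed nonlinear connection $N$, satisfying $g_{ij|h}=0$, $g_{ij}\stackrel{(\alpha)}{|}_h=0$, $T_{\ jk}^i=0$, $\underset{(\alpha)}{S_{\ jk}^i}=0$ for all $\alpha=1,\dots,k$, together with the claim that the coefficients are given by generalized Christoffel symbols and depend only on $g_{ij}$ and $N$.

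The plan is to imitate exactly the argument that produced the canonical metrical connection in the Lagrange case (Theorem 2.3.1 of Part II, Ch.~1) and, before that, the order-one Theorem 2.5.1. First I would write out what the torsion-freeness and metricity conditions mean in the adapted basis (1.4.12) in terms of the unknown coefficients $(L^h_{\ ij},\underset{(1)}{C^h_{\ ij}},\dots,\underset{(k)}{C^h_{\ ij}})$. Using the $h$-covariant derivative formula (1.7.8) and the $v_\alpha$-covariant derivative formulas following (1.7.9), the conditions $g_{ij|h}=0$ and $g_{ij}\stackrel{(\alpha)}{|}_h=0$ become
\begin{equation}
\frac{\delta g_{ij}}{\delta x^h}=L^m_{\ ih}g_{mj}+L^m_{\ jh}g_{im},\quad
\frac{\delta g_{ij}}{\delta y^{(\alpha)h}}=\underset{(\alpha)}{C^m_{\ ih}}g_{mj}+\underset{(\alpha)}{C^m_{\ jh}}g_{im}.
\tag{$\ast$}
\end{equation}

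The heart of the proof is a Christoffel-process argument, carried out independently for the horizontal part and for each vertical index $\alpha$. For the horizontal coefficients I would take the three cyclic permutations of the first equation of $(\ast)$ in the indices $i,j,h$, form the combination $(\text{sum of two}) - (\text{third})$, impose the symmetry $T^i_{\ jk}=L^i_{\ jk}-L^i_{\ kj}=0$, and contract with $\tfrac12 g^{hs}$; this yields uniquely the generalized Christoffel symbols
\begin{equation}
L_{\ ij}^h=\frac12 g^{hs}\left(\frac{\delta g_{is}}{\delta x^j}+\frac{\delta g_{sj}}{\delta x^i}-\frac{\delta g_{ij}}{\delta x^s}\right).
\end{equation}
For each $\alpha$ the identical manipulation on the second equation of $(\ast)$, using $\underset{(\alpha)}{S^i_{\ jk}}=0$, gives $\underset{(\alpha)}{C^h_{\ ij}}=\tfrac12 g^{hs}\big(\tfrac{\delta g_{is}}{\delta y^{(\alpha)j}}+\tfrac{\delta g_{sj}}{\delta y^{(\alpha)i}}-\tfrac{\delta g_{ij}}{\delta y^{(\alpha)s}}\big)$. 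Existence follows because these explicit formulas, substituted back, satisfy all five axioms; uniqueness follows because the Christoffel process forces these values. Part $3^\circ$ is then immediate: the right-hand sides involve only $g_{ij}$, its inverse $g^{hs}$, and the frame $\{\delta/\delta x^i,\delta/\delta y^{(\alpha)i}\}$, which is built solely from $N$.

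The one point requiring genuine care — the main obstacle — is verifying that the object so defined is actually a well-defined $N$-linear connection, i.e. that $L^h_{\ ij}$ transforms like a linear connection on $M$ and each $\underset{(\alpha)}{C^h_{\ ij}}$ transforms as a $(1,2)$ $d$-tensor under the coordinate change (1.1.5) of Ch.~1. This is not automatic from the formulas; it must be checked against the transformation rule (1.7.3$'$) of $d$-tensors and the known transformation law of the adapted frame. I would handle this by noting that the $\delta_k g_{ij}$ combination carries the same inhomogeneous term that the Christoffel construction produces on the base manifold, exactly as in the order-one computation of Theorem 2.5.1, so the inhomogeneous parts cancel correctly for $L^h_{\ ij}$ while the vertical operators $\delta/\delta y^{(\alpha)i}$ transform homogeneously (they carry no second-derivative term), making each $\underset{(\alpha)}{C^h_{\ ij}}$ tensorial. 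Since the excerpt explicitly states that the geometry of $N$-linear connections on $T^kM$ extends ``step by step'' from the case $k=1$, I would invoke Theorem 2.5.1 and its proof as the template and merely indicate that the verification is the order-$k$ replica, the only new feature being that the single vertical derivative is replaced by the $k$ derivatives $\delta/\delta y^{(\alpha)i}$, each treated identically.
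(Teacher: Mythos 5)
Your proposal is correct and follows exactly the route the paper intends: the paper itself gives no written proof for this theorem, merely pointing to the generalized Christoffel symbols of Theorem 2.3.1 (Part II, Ch.~2), which are obtained by the same Christoffel process in the adapted basis $\left(\dfrac{\delta}{\delta x^i},\dfrac{\delta}{\delta y^{(\alpha)i}}\right)$ that you carry out, with the cyclic-permutation trick applied once horizontally and once for each vertical index $\alpha$. Your tensoriality check is also the right one and correctly placed: since the adapted frame transforms homogeneously, the inhomogeneous term in the law for $L^h_{\ ij}$ arises from $\delta_k$ hitting the Jacobian factors in the transformation of $g_{ij}$, while $\dfrac{\delta g_{ij}}{\delta y^{(\alpha)k}}$ is a genuine $d$-tensor, making each $\underset{(\alpha)}{C^h_{\ ij}}$ tensorial.
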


Using this theorem it is not difficult to study the geometry of
Generalized Lagrange spaces.

\newpage

\chapter{Higher-Order Finsler spaces}\index{Higher--order spaces! Finsler}
\label{ch1p1} 


The notion of Finsler spaces of order $k$, introduced by the
author of this monograph and presented in the book {\it The
Geometry of Higher-Order Finsler Spaces}, Hadronic Press, 1998, is
a natural extension to the manifold $\widetilde{T^kM}$ of the
theory of Finsler spaces given in the Part I, Ch. 3. A substantial
contribution in the studying of these spaces have H. Shimada and
S. Sab\u{a}u \cite{shimada}.

The impact of this geometry in Differential Geometry, Variational
Calculus, Analytical Mechanics and Theoretical Physics is
decisive. Finsler spaces play a role in applications to Biology,
Engineering, Physics or Optimal Control. Also, the introduction of
the notion of Finsler space of order $k$ is demanded by the
solution of problem of prolongation to $T^kM$ of the Riemannian or
Finslerian structures defined on the base manifold $M$.

\setcounter{section}{0}
\section{Notion of Finsler space of order $k$}
\setcounter{equation}{0}

In order to introduce the Finsler space of order $k$ are necessary
some considerations on the concept of homogeneity of functions on
the manifold $T^kM$, \cite{mir11}.

A function $f:T^kM\to R$ of $C^\infty-$class on $\widetilde{T^kM}$
and continuous on the null section of $\pi^k:T^kM\to M$ is called
homogeneous of degree $r\in Z$ on the fibres of $T^kM$ (briefly
$r-$homogeneous) if for any $a\in R^+$ we have
$$f(x,ay^{(1)},a^2 y^{(2)},\cdots,a^k y^{(k)})=a^r
f(x,y^{(1)},\cdots,y^{(k)}).$$

An Euler Theorem hold:

{\it A function $f\in{\cal F}(T^kM)$, differentiable on
$\widetilde{T^kM}$ and continuous on the null section of $\pi^k$
is $r-$homogeneous if and only if
\begin{equation}
{\cal L}_{\stackrel{k}{\Gamma }}f=rf,
\end{equation}
${\cal L}_{\stackrel{k}{\Gamma }}$ being the Lie derivative with
respect to the Lioville vector field $\stackrel{k}{\Gamma}$.}

A vector field $X\in{\cal X}(T^kM)$ is $r-$homogeneous if
\begin{equation}
{\cal L}_{\stackrel{k}{\Gamma }}X=(r-1)X\tag{3.1.1'}
\end{equation}

\begin{definition}
A Finsler space of order $k$, $k$ $\geq 1$,
is a pair $F^{\left( k\right) n}=(M,F)$ determined by a real
differentiable manifold $M$ of dimension $n$ and a function
$F:T^kM\rightarrow R$ having the following properties:

\begin{enumerate}
\item[1$^{\circ }$] $F$ is differentiable on $\widetilde{T^kM}$
and continuous on the null section on $\pi^k$.

\item[2$^{\circ}$] $F$ is positive.

\item[3$^{\circ}$] $F$ is $k$-homogeneous on the fibres of the
bundle $T^kM$.

\item[4$^{\circ}$] The Hessian of $F^2$ with the elements:
\begin{equation}
g_{ij}=\displaystyle\frac 12\displaystyle\frac{\partial
^2F^2}{\partial y^{\left( k\right) i}\partial y^{\left( k\right)
j}}
\end{equation}
is positively defined on $\widetilde{T^kM}.$
\end{enumerate}
\end{definition}

From this definition it follows that {\it the fundamental tensor
$g_{ij}$} is nonsingular and 0-homogeneous on the fibres of
$T^kM$.

Also, we remark: Any Finlser space $F^{(k)n}$ can be considered as
a Lagrange space $L^{(k)n}=(M,L)$, whose fundamental function $L$
is $F^2$.

By means of the solution of the problem of prolongation of a
Finsler structure $F(x,y^{(1)})$ to $T^kM$ we can construct some
important examples of spaces $F^{(k)n}$.

A Finsler space with the property $g_{ij}$ depend only on the
points $x\in M$ is called a Riemann space of order $k$ and denoted
by ${\cal R}^{(k)n}$.

Consequently, we have the following sequence of inclusions,
similar with that from Ch. 3:
\begin{equation}
\left\{{\cal R}^{(k)n}\right\} \subset \left\{ F^{(k)n}\right\}
\subset \left\{ L^{(k)n}\right\} \subset \left\{
GL^{(k)n}\right\}.
\end{equation}

So, the Lagrange geometry of order $k$ is the geometrical theory
of the sequence (1.1.3).

Of course the geometry of $F^{(k)n}$ can be studied as the
geometry of Lagrange space of order $k$, $L^{(k)n}=(M,F^2)$. Thus
the canonical nonlinear connection $N$ is {\it the Cartan
nonlinear connection of $F^{(k)n}$} and the metrical $N-$linear
connection $D$ is the Cartan $N-$metrical connection of the space
$F^{(k)n}$, \cite{mir11}.

\newpage

\chapter{The Geometry of $k-$cotangent bundle}
\label{ch1p1} 


\setcounter{section}{0}
\section{Notion of $k-$cotangent bundle, $T^{*k}M$}\index{$k-$cotangent}
\setcounter{equation}{0}\setcounter{theorem}{0}\setcounter{definition}{0}\setcounter{proposition}{0}

The $k-$cotangent bundle $(T^{*k}M,\pi^{*k},M)$ is a natural extension of that of cotangent bundle $(T^*M,\pi^*,M)$. It is basic for the Hamilton spaces of order $k$. The manifold $T^{*k}M$ must have some important properties:
\begin{enumerate}
  \item[$1^\circ$] $T^{*1}M=T^*M$;
  \item[$2^\circ$] dim$T^{*k}M=$dim$T^kM=(k+1)n$;
  \item[$3^\circ$] $T^{*k}M$ carries a natural Poisson structure;
  \item[$4^\circ$] $T^{*k}M$ is local diffeomorphic to $T^kM$.
\end{enumerate}
These properties are satisfied by considering the differentiable bundle $(T^{*k}M,\pi^{*k},M)$ as the fibered bundle $(T^{k-1}M x_M T^*M,\pi^{k-1}x_M\pi^*,M)$. So we have
\begin{equation}
T^{*k}M=T^{k-1}Mx_M T^*M,\ \ \pi^{*k}=\pi^{k-1}x_M\pi^*.
\end{equation}
A point $u\in T^{*k}M$ is of the form $u=(x,y^{(1)},...,y^{(k-1)},p)$. It is determined by the point $x=(x^i)\in M$, the acceleration $y^{(1)i}=\dfrac{dx^i}{dt}$, ..., $y^{(k-1)i}=\dfrac{1}{(k-1)!}\dfrac{d^{k-1}x^i}{dt^{k-1}}$ and the momenta $p=(p_i)$. The geometries of the manifolds $T^kM$ and $T^{*k}M$ are dual via Legendre transformation. Then, $(x^i,y^{(1)i},...,y^{(k-1)i},p_i)$ are the local coordinates of a point $u\in T^{*k}M$.

The change of local coordinates on $T^{*k}M$ is:
\begin{equation}
\begin{array}{l}
\wt{x}^i=\wt{x}^i(x^1,...,x^n),\ \det\(\dfrac{\pp\wt{x}^i}{\pp x^j}\)\neq 0,\\ \\
\wt{y}^{(1)i}=\dfrac{\pp\wt{x}^i}{\pp x^j}y^{(1)j},\\ \\
...........\\
(k-1)\wt{y}^{(k-1)i}=\dfrac{\pp\wt{y}^{(k-1)i}}{\pp x^j}y^{(1)j}+...+(k-1)\dfrac{\pp\wt{y}^{(k-2)i}}{\pp y^{(k-2)j}}y^{(k-1)j}\\ \\
\wt{p}_i=\dfrac{\pp{x}^j}{\pp\wt{x}^i}p_j
\end{array}
\end{equation}
where the following equalities hold
\begin{equation}
\dfrac{\pp\wt{y}^{(\a)i}}{\pp x^j}=\dfrac{\pp\wt{y}^{(\a+1)i}}{\pp y^{(1)j}}=...=\dfrac{\pp\wt{y}^{(k-1)i}}{\pp y^{(k-1-\a)j}};\ (\a=0,...,k-2; y^{(0)}=x).
\end{equation}

The Jacobian matrix $J_k$ of (4.1.2) have the property
\begin{equation}
\det J_k(u)=\[\det\(\dfrac{\pp\wt{x}^j}{\pp x^i}(u)\)\]^{k-1}.
\end{equation}
The vector fields $\dot{\pp}^i=(\dot{\pp}^1,..,\dot{\pp}^n)=\dfrac{\pp}{\pp p_i}$ generate a vertical distribution $W_k$, while $\left\{\dfrac{\pp}{\pp y^{(k-1)i}}\right\}$ determine a vertical distribution $V_{k-1}$, ..., $\left\{\dfrac{\pp}{\pp y^{(1)i}},...,\dfrac{\pp}{\pp y^{(k-1)i}}\right\}$ determine a vertical distribution $V_1$. We have the sequence of inclusions: $$V_{k-1}\subset V_{k-2}\subset ...\subset V_1\subset V,$$ $$V_u=V_{1,u}\oplus W_{k,u},\ \forall u\in T^{*k}M.$$ We obtain without difficulties:

\begin{theorem}
\begin{enumerate}
\item[$1^\circ$] The following operators in the algebra of functions on $T^{*k}M$ are the independent vector field on $T^{*k}M$:
\begin{equation}
\begin{array}{l}
\overset{1}\Gamma=y^{(1)i}\dfrac{\pp}{\pp y^{(k-1)i}}\\ \\
\overset{2}\Gamma=y^{(1)i}\dfrac{\pp}{\pp y^{(k-2)i}}+2y^{(2)i}\dfrac{\pp}{\pp y^{(k-1)i}}\\ \\
...........\\
\overset{k-1}\Gamma=y^{(1)i}\dfrac{\pp}{\pp y^{(1)i}}+...+(k-1)y^{(k-1)i}\dfrac{\pp}{\pp y^{(k-1)i}}\\ \\
\C^*=p_i\dot{\pp}^i.
\end{array}
\end{equation}
\item[$2^\circ$] The function
\begin{equation}
\varphi=p_i y^{(1)i}
\end{equation}
is a scalar function on $T^{*k}M$.
\end{enumerate}
$\overset{1}\Gamma,...,\overset{k-1}\Gamma$ are called the {\bf Liouville vector fields}.
\end{theorem}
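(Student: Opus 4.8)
The plan is to prove both assertions by reducing them to the two coordinate–invariance facts already established in the earlier chapters — the behaviour of the Liouville vector fields on $T^{k-1}M$ (the theorem on the Liouville vector fields of $T^kM$, Theorem 1.2.1) and of the Liouville–Hamilton field $\C^*$ on $T^*M$ (Chapter 4, where $\C^*=p_i\dot{\pp}^i$ is shown to be independent of the change of coordinates) — and then supplementing this with a direct transformation check using the change of coordinates (4.1.2) and the chain–rule identities (4.1.3). The essential observation is that $T^{*k}M=T^{k-1}M\times_M T^*M$ splits the coordinate set into the block $(x,y^{(1)},\dots,y^{(k-1)})$, on which (4.1.2) acts exactly as the change of coordinates on $T^{k-1}M$, and the block $(x,p)$, on which it acts exactly as on $T^*M$; moreover $\wt{p}_i$ is independent of the $y^{(\a)}$ while $\wt{x}^i$ and $\wt{y}^{(\a)i}$ are independent of $p$.

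First I would treat $\overset{1}\Gamma,\dots,\overset{k-1}\Gamma$. Since each of these operators contains only the derivations $\pp/\pp y^{(\a)i}$ with $1\le\a\le k-1$ and coefficients built from $y^{(1)},\dots,y^{(k-1)}$, and since the first $k$ lines of (4.1.2) together with (4.1.3) are verbatim the coordinate change on $T^{k-1}M$, the claim that each $\overset{\a}\Gamma$ is a globally defined vector field on $T^{*k}M$ follows immediately from Theorem 1.2.1 applied with $k$ replaced by $k-1$: these fields are the natural prolongations to $T^{*k}M$ of the Liouville fields on $T^{k-1}M$. Alternatively one verifies this directly by writing the old natural basis in terms of the new one by the chain rule, substituting, and using (4.1.3) to collapse the mixed partial derivatives; this is the same telescoping computation that proves Theorem 1.2.1.

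Next I would verify $\C^*=p_i\dot{\pp}^i$. From $\wt{p}_i=\dfrac{\pp x^j}{\pp\wt{x}^i}p_j$ one gets, because $\wt{x}$ depends only on $x$, the transformation $\dot{\pp}^i=\dfrac{\pp x^i}{\pp\wt{x}^a}\dot{\wt{\pp}}^a$ of the momentum derivations; substituting, $\wt{p}_a\dot{\wt{\pp}}^a=\dfrac{\pp x^j}{\pp\wt{x}^a}p_j\dot{\wt{\pp}}^a=p_i\dot{\pp}^i$, so $\C^*$ is invariant — exactly the Liouville–Hamilton field of Chapter 4, now read on $T^{*k}M$ (the $y^{(\a)}$–derivatives play no role since $\wt{p}$ does not depend on them). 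For linear independence of the whole system $\{\overset{1}\Gamma,\dots,\overset{k-1}\Gamma,\C^*\}$ I would use the filtration $V_{k-1}\subset\cdots\subset V_1$ together with $W_k$: one checks $\overset{\a}\Gamma\in V_{k-\a}$ with a leading term $y^{(1)i}\pp/\pp y^{(k-\a)i}$ in a direction absent from every $\overset{\b}\Gamma$ with $\b<\a$ (those reach only $\pp/\pp y^{(m)i}$ with $m\ge k-\b>k-\a$), so the matrix of components is staircase–shaped and nonsingular wherever $y^{(1)}\ne 0$, while $\C^*=p_i\dot{\pp}^i$ lives in the complementary block $W_k$ and is nonzero wherever $p\ne 0$. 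Thus the $k$ fields are independent on $\widetilde{T^{*k}M}$.

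Finally, assertion $2^\circ$ is a one–line check: $\wt{p}_i\wt{y}^{(1)i}=\dfrac{\pp x^j}{\pp\wt{x}^i}p_j\cdot\dfrac{\pp\wt{x}^i}{\pp x^m}y^{(1)m}=\delta^j_m p_j y^{(1)m}=p_j y^{(1)j}$, so $\varphi=p_i y^{(1)i}$ is coordinate independent, hence a globally defined scalar function on $T^{*k}M$. The only genuinely delicate point in the whole argument is the bookkeeping of the mixed transformation of the natural basis in the second step, namely confirming that the $p$–block and the $y$–block decouple under (4.1.2) and (4.1.3); once that decoupling is recorded, everything reduces to the already–proven one–factor statements.
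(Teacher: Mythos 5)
Your proof is correct and follows essentially the route the paper leaves implicit behind its ``We obtain without difficulties'': a direct transformation check using the change of coordinates (4.1.2) and the identities (4.1.3), exploiting the fibered-product structure $T^{*k}M=T^{k-1}M\times_M T^*M$ so that the $y$-block reduces to the Liouville-field theorem on the acceleration bundle and the $p$-block to the invariance of $\C^*$ on $T^*M$. Your explicit remark that the staircase independence argument needs $y^{(1)}\neq 0$ (and $p\neq 0$ for $\C^*$) is in fact slightly more careful than the paper's blanket statement of independence on $\widetilde{T^{*k}M}$.
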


\begin{theorem}
\begin{enumerate}
\item[$1^\circ$] For any differentiable function $H:\wt{T^{*k}M}\to R$, $\wt{T^*kM}=T^{*k}M\setminus\{0\}$, $d_0H,...,d_{k-2}H$ defined by
\begin{equation}
\begin{array}{l}
d_0H=\dfrac{\pp H}{\pp y^{(k-1)i}}dx^i\\ \\
d_1H=\dfrac{\pp H}{\pp y^{(k-2)i}}dx^i+\dfrac{\pp H}{\pp y^{(k-1)i}}dy^{(1)i}\\ \\
...........\\
d_{k-2}H=\dfrac{\pp H}{\pp y^{(1)i}}dx^i+\dfrac{\pp H}{\pp y^{(2)i}}dy^{(1)i}+...+\dfrac{\pp H}{\pp y^{(k-1)i}}dy^{(k-2)i}
\end{array}
\end{equation}
are fields of $1-$forms on $\wt{T^{*k}M}$.
\item[$2^\circ$] While
\begin{equation}
  d_{k-1}H=\dfrac{\pp H}{\pp x^i}dx^i+...+\dfrac{\pp H}{\pp y^{(k-1)i}}dy^{(k-1)i}
\end{equation}
is not a field of $1-$form.
\item[$3^\circ$] We have
\begin{equation}
dH=d_{k-1}H+\dot{\pp}^i H p_i.
\end{equation}
\end{enumerate}
\end{theorem}

If $H=\varphi=p_i y^{(1)i}$, then $d_0H=...=d_{k-1}H=0$ and
\begin{equation}
\oo=d_{k-2}\varphi=p_i dx^i
\end{equation}
$\oo$ is called the Liouville $1-$form. Its exterior differential is expressed by
\begin{equation}
\theta=d\oo=dp_i\wedge dx^i.
\end{equation}
$\theta$ is a $2-$form of rank $2n<(k+1)n=\dim T^{*k}M$, for $k>1$. Consequence $\theta$ is a presymplectic structure on $T^{*k}M$.

Let us consider the tensor field of type $(1,1)$ on $T^{*k}M$:
\begin{equation}
J=\dfrac{\pp}{\pp y^{(1)i}}\otimes dx^i+\dfrac{\pp}{\pp y^{(2)i}}\otimes dy^{(1)i}+...+\dfrac{\pp}{\pp y^{(k-1)i}}\otimes dy^{(k-2)i}.
\end{equation}

\begin{theorem}
We have:
\begin{enumerate}
  \item[$1^\circ$] $J$ is globally defined.
  \item[$2^\circ$] $J$ is integrable.
  \item[$3^\circ$] $J\circ J\circ...\circ J=J^k=0$.
  \item[$4^\circ$] $\ker J=V_{k-1}\oplus W_k$.
  \item[$5^\circ$] ${\rm{rank}}J=(k-1)n$.
  \item[$6^\circ$] $J(\overset{1}\Gamma)=0$, $J(\overset{2}\Gamma)=\overset{1}\Gamma$, ..., $J(\overset{k-1}\Gamma)=\overset{k-2}\Gamma$, $J(\C^*)=0$.
\end{enumerate}
\end{theorem}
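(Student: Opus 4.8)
The plan is to verify each of the six properties of the tensor field $J$ defined by
$$J=\dfrac{\pp}{\pp y^{(1)i}}\otimes dx^i+\dfrac{\pp}{\pp y^{(2)i}}\otimes dy^{(1)i}+\cdots+\dfrac{\pp}{\pp y^{(k-1)i}}\otimes dy^{(k-2)i}$$
directly from its local expression, exactly as in the tangent-bundle case of Part I, Chapter 1 (formula (1.1.4) and following). The essential technical ingredient is the transformation rule (4.1.2)--(4.1.3) for the natural frame on $T^{*k}M$, together with the chain-of-identities (4.1.3) which guarantees $\pp\wt{y}^{(\a)i}/\pp x^j=\pp\wt{y}^{(\a+1)i}/\pp y^{(1)j}=\cdots$. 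For the global definedness $1^\circ$, I would compute how each term $\dfrac{\pp}{\pp y^{(\a)i}}\otimes dy^{(\a-1)i}$ transforms under (4.1.2) and check that the mixed (off-diagonal) terms arising from the frame change cancel against the terms produced by the coframe change, leaving $J$ invariant; this is the same cancellation phenomenon that makes the almost tangent structure well-defined on $TM$.

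The action of $J$ on the adapted basis is the key auxiliary computation from which most other claims follow. From the local formula I would read off directly that
$$J\Big(\dfrac{\pp}{\pp x^i}\Big)=\dfrac{\pp}{\pp y^{(1)i}},\quad J\Big(\dfrac{\pp}{\pp y^{(\a)i}}\Big)=\dfrac{\pp}{\pp y^{(\a+1)i}}\ (\a=1,\dots,k-2),$$
while $J$ annihilates $\dfrac{\pp}{\pp y^{(k-1)i}}$ and all the momentum directions $\dot{\pp}^i$. Property $3^\circ$, $J^k=0$, is then immediate by iterating: $J$ shifts the index $\a$ up by one, so after $k$ applications starting from any basis vector one reaches (and passes) $\dfrac{\pp}{\pp y^{(k-1)i}}$, which is killed. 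Property $6^\circ$ follows by applying these elementary rules to the explicit expansions (4.1.6) of the Liouville vector fields $\overset{\a}\Gamma$ and to $\C^*=p_i\dot{\pp}^i$; the shift $\overset{\a}\Gamma\mapsto\overset{\a-1}\Gamma$ and the vanishing $J(\overset{1}\Gamma)=0$, $J(\C^*)=0$ come out term by term.

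For properties $4^\circ$ and $5^\circ$ I would identify the kernel and rank from the same basis action. Since $J$ sends the $(k-1)n$ vectors $\{\pp/\pp x^i,\pp/\pp y^{(1)i},\dots,\pp/\pp y^{(k-2)i}\}$ to the linearly independent set $\{\pp/\pp y^{(1)i},\dots,\pp/\pp y^{(k-1)i}\}$, the rank is exactly $(k-1)n$; and since the remaining basis vectors $\{\pp/\pp y^{(k-1)i},\dot{\pp}^i\}$ are precisely those annihilated, the kernel is $V_{k-1}\oplus W_k$, recovering $4^\circ$ and the dimension count $5^\circ$. The integrability claim $2^\circ$ is the step I expect to require the most care: I would exhibit the image distribution (equivalently, compute the Nijenhuis tensor $N_J$ and show it vanishes) by checking that the relevant Lie brackets of the coordinate frame close, relying on the fact that the structure functions vanish because the adapted frame here is the holonomic natural frame. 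As in the tangent-bundle model, the Nijenhuis tensor of $J$ vanishes identically because $J$ is built from the natural coordinate frame and coframe, so $[JX,JY]-J[JX,Y]-J[X,JY]+J^2[X,Y]=0$ on basis fields; the main obstacle is organizing this bracket computation cleanly across the nested vertical distributions, but no new idea beyond the $k=1$ case is needed.
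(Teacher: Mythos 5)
Your proposal is correct, and it is exactly the argument the paper intends: the theorem is stated without proof, and the route is the coordinate verification mirroring the $k=1$ tangent-bundle case — invariance of $J$ under (4.1.2) via the chain identities (4.1.3), the basis action $J(\pp/\pp x^i)=\pp/\pp y^{(1)i}$, $J(\pp/\pp y^{(\a)i})=\pp/\pp y^{(\a+1)i}$, $J(\pp/\pp y^{(k-1)i})=J(\dot{\pp}^i)=0$ (which also kills the momentum term $\dfrac{\pp\wt{p}_j}{\pp x^i}\wt{\dot{\pp}}^j$ in the frame change, as it must), from which $3^\circ$--$6^\circ$ follow by inspection, and integrability from the vanishing of the Nijenhuis tensor of a constant-coefficient tensor in a holonomic frame. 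The only blemishes are referencing slips (the Liouville fields are (4.1.5), not (4.1.6)), not mathematical gaps.
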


\begin{theorem}
\begin{enumerate}
\item[$1^\circ$] For any vector field $X\in{\cal X}(T^{*k}M)$, $\overset{1}X,...,\overset{k-1}X$ given by
\begin{equation}
\overset{1}X=J(X),\ \overset{2}X=J^2 X,...,\overset{k-1}X=J^{k-1}X
\end{equation}
are vector fields.
\item[$2^\circ$] If $X=\overset{(0)i}X\dfrac{\pp}{\pp x^i}+\overset{(1)i}X\dfrac{\pp}{\pp y^{(1)i}}+...+\overset{(k-1)i}X\dfrac{\pp}{\pp y^{(k-1)i}}+X_i\dot{\pp}^i$. Then
\begin{equation}
\overset{1}X=J(X)=\overset{(0)i}X\dfrac{\pp}{\pp y^{(1)i}}+...+\overset{(k-2)i}X\dfrac{\pp}{\pp y^{(k-1)i}},...,\overset{k-1}X=\overset{0}X^i\dfrac{\pp}{\pp y^{(k-1)i}}.
\end{equation}
\end{enumerate}
\end{theorem}

On the manifold $T^{*k}M$ there exists a {\it Poisson structure} given by
\begin{equation}
\begin{array}{l}
\{f,g\}_0=\dfrac{\pp f}{\pp x^i}\dfrac{\pp g}{\pp p_i}-\dfrac{\pp f}{\pp p_i}\dfrac{\pp g}{\pp x^i}\\ \\
\{f,g\}_\a=\dfrac{\pp f}{\pp y^{(\a)i}}\dfrac{\pp g}{\pp p_i}-\dfrac{\pp f}{\pp p_i}\dfrac{\pp g}{\pp y^{(\a)i}}\ (\a=1,...,k-1).
\end{array}
\end{equation}
It is not difficult to study the properties of this structure.

For details see the book \cite{mir13}.

\newpage

\begin{partbacktext}
\part{Analytical Mechanics of Lagrangian and Hamiltonian Mechanical Systems}
This part is devoted to applications of the Lagrangian and Hamiltonian geometries of order $k=1$ and $k>1$ to Analytical Mechanics.

Firstly, to classical Mechanics of Riemannian mechanical systems $\Sigma_{\cal R}=(M,T,Fe)$ for which the external forces $Fe$ depend on the material point $x\in M$ and on the velocities $\dfrac{dx^i}{dt}$. So, in general, $\Sigma_{\cal R}$ are the nonconservative systems. Then we are obliged to take $Fe$ as a vertical field on the phase space $TM$ and apply the Lagrange geometry for study the geometrical theory of $\Sigma_{\cal R}$. More general, we introduce the notion on Finslerian mechanical system $\Sigma_F=(M,F,Fe)$, where $M$ is the configuration space, $TM$ is the velocity space, $F$ is the fundamental function of a given Finsler space $F^n=(M,F(x,y))$ and $Fe\(x,\dfrac{dx}{dt}\)$ are the external forces defined as vertical vector field on the velocity space $TM$. The fundamental equations of $\Sigma_F$ are the {\it Lagrange} equations $$\dfrac{d}{dt}\dfrac{\pp F^2}{\pp y^i}-\dfrac{\pp F^2}{\pp x^i}=F_i(x,y),\ \ y^i=\dfrac{dx^i}{dt},$$ $F^2$ being the energy of Finsler space $F^n$.

More general, consider a triple $\Sigma_L=(M,L,Fe)$, where $M$ is space of configurations, $L^n=(M,L)$ is a Lagrange space, and $Fe$ are the external forces. The fundamental equations ate the Lagrange equations, too.

The dual theory leads to the Cartan and Hamiltonian mechanical systems and is based on the Hamilton equations.

Finally, we remark the extension of such kind of analytical mechanics, to the higher order. Some applications will be done.

These considerations are based on the paper \cite{Miron} and on the papers of J. Klein \cite{Klein1}, M. Crampin \cite{Cr}, Manuel de Leon \cite{leonrodri}. Also, we use the papers of R. Miron, M. Anastasiei, I. Bucataru \cite{miranabuc}, and of R. Miron, H. Shimada, S. Sabau and M. Roman \cite{MHS}, \cite{MSSR}.
\end{partbacktext}

\setcounter{chapter}{0}
\chapter{Riemannian mechanical systems}
\setcounter{section}{0}
\section{Riemannian mechanical systems}\index{Analytical Mechanics of! Riemannian systems}
\setcounter{equation}{0}\setcounter{theorem}{0}\setcounter{proposition}{0}\setcounter{definition}{0}



Let $g_{ij}(x)$ be Riemannian tensor field on the configuration space $M$. So its kinetic energy is
\begin{equation}
T=\dfrac12 g_{ij}(x)y^i y^j,\ \ y^i=\dfrac{dx^i}{dt}=\dot{x}^i.
\end{equation}
Following J. Klein \cite{Klein1}, we can give:
\begin{definition}
A Riemannian Mechanical system (shortly RMS) is a triple $\Sigma_{\cal R}=(M,T,Fe)$, where
\begin{itemize}
  \item[$1^\circ$] $M$ is an $n-$dimensional, real, differentiable manifold (called configuration space).
  \item[$2^\circ$] $T=\dfrac12 g_{ij}(x)\dot{x}^i\dot{x}^j$ is the kinetic energy of an a priori given Riemannian space ${\cal R}^n=(M,g_{ij}(x))$.
  \item[$3^\circ$] $Fe(x,y)=F^i(x,y)\dfrac{\pp}{\pp y^i}$ is a vertical vector field on the velocity space $TM$ ($Fe$ are called external forces).
\end{itemize}
\end{definition}
Of course, $\Sigma_{\cal R}$ is a scleronomic mechanical system. The covariant components of $Fe$ are:
\begin{equation}
F_i(x,y)=g_{ij}(x)F^i(x,y).
\end{equation}

{\bf Examples:}

\begin{enumerate}
  \item RMS - for which $Fe(x,y)=a(x,y)\C$, $a\neq 0$. Thus $F^i=a(x,y)y^i$ and $\Sigma_{\cal R}$ is called a Liouville RMS.
  \item The RMS $\Sigma_{\cal R}$, where $Fe(x,y)=F^i(x)\frac{\partial}{\partial y^i}$, and $F_i(x)=grad_i f(x)$, called conservative systems.
  \item The RMS $\Sigma_{\cal R}$, where $Fe(x,y)=F^i(x)\frac{\partial}{\partial y^i}$, but $F_i(x)\neq grad_i f(x)$, called non-conservative systems.
\end{enumerate}

\begin{remark}
\begin{enumerate}
\item A conservative system $\Sigma_{\cal R}$ is called by J. Klein \cite{Klein1} {\it a Lagrangian system}.
\item One should pay attention to not make confusion of this kind of mechanical systems with the ``Lagrangian mechanical systems'' $\Sigma_L=(M,L(x,y),Fe(x,y))$ introduced by R. Miron \cite{Miron}, where $L:TM\to\R$ is a regular Lagrangian.
\end{enumerate}
\end{remark}

Starting from Definition 1.1.1, in a very similar manner as in the geometrical theory of mechanical systems, one introduces

\vspace{.3cm}

\noindent{\bf Postulate.} The evolution equations of a RSM $\Sigma_{\cal R}$ are the {\it Lagrange equations}:
\begin{equation}
\dfrac{d}{dt}\dfrac{\partial L}{\partial y^i}-\dfrac{\partial L}{\partial x^i}=F_i(x,y),\ \ \ \ y^i=\dfrac{d x^i}{dt},\ L=2T.
\end{equation}

This postulate will be geometrically justified by the existence of a semispray $S$ on $TM$ whose integral curves are given by the equations (1.1.3). Therefore, the integral curves of Lagrange equations will be called the {\it evolution curves} of the RSM $\Sigma_{\cal R}$.

The Lagrangian $L=2T$ has the fundamental tensor $g_{ij}(x)$.

\begin{remark}
In classical Analytical Mechanics, the coordinates $(x^i)$ of a material point $x\in M$ are denoted by $(q^i)$, and the velocities $y^i=\dfrac{d x^i}{dt}$ by $\dot q^i=\dfrac{d q^i}{dt}$. However, we prefer to use the notations $(x^i)$ and $(y^i)$ which are often used in the geometry of the tangent manifold $TM$.
\end{remark}

The external forces $Fe(x,y)$ give rise to the one-form
\begin{equation}
\sigma=F_i(x,y)dx^i.
\end{equation}

Since $Fe$ is a vertical vector field it follows that $\sigma$ is semibasic one form. Conversely, if $\sigma$ from (1.1.4) is semibasic one form, then $Fe=F^i(x,y)\frac{\partial}{\partial y^i}$, with $F^i=g^{ij}F_j$, is a vertical vector field on the manifold $TM$. J Klein introduced the the external forces by means of a one-form $\sigma$, while R. Miron \cite{Miron} defined $Fe$ as a vertical vector field on $TM$.

The RMS $\Sigma_{\cal R}$ is a  regular mechanical system because the Hessian matrix with elements $\dfrac{\partial^2 T}{\partial y^i \partial y^j}=g_{ij}(x)$ is nonsingular.

We have the following important result.
\begin{proposition}
The system of evolution equations $(1.1.3)$ are equivalent to the following second order differential equations:
\begin{equation}
\dfrac{d^2 x^i}{dt^2}+\gamma^i{}_{jk}(x)\dfrac{dx^j}{dt}\dfrac{dx^k}{dt}=\dfrac12 F^i(x,\dfrac{dx}{dt}),
\end{equation}
where $\gamma_{ij}^k(x)$ are the Christoffel symbols of the metric tensor $g_{ij}(x)$.
\end{proposition}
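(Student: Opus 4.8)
The plan is to start from the Lagrange equations (1.1.3) with $L = 2T = g_{ij}(x)y^i y^j$ and simply carry out the derivatives explicitly, then recognize the resulting expression as the geodesic operator plus the force term. This is a direct computation rather than anything requiring a clever idea, so the main work is organizing the partial derivatives correctly.

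First I would compute the two pieces of the left-hand side of (1.1.3). Since $L = g_{jk}(x)y^j y^k$ and $g_{jk}$ depends only on $x$, the momentum-type term is
\[
\frac{\partial L}{\partial y^i} = 2 g_{ij}(x)y^j,
\]
and differentiating along the curve with $y^i = \dfrac{dx^i}{dt}$ gives
\[
\frac{d}{dt}\frac{\partial L}{\partial y^i}
= 2 g_{ij}\frac{d^2 x^j}{dt^2} + 2\frac{\partial g_{ij}}{\partial x^k}\frac{dx^k}{dt}\frac{dx^j}{dt}.
\]
The positional term is
\[
\frac{\partial L}{\partial x^i} = \frac{\partial g_{jk}}{\partial x^i}y^j y^k.
\]
Substituting into (1.1.3) yields
\[
2 g_{ij}\frac{d^2 x^j}{dt^2}
+\left(2\frac{\partial g_{ij}}{\partial x^k} - \frac{\partial g_{jk}}{\partial x^i}\right)\frac{dx^j}{dt}\frac{dx^k}{dt}
= F_i(x,y).
\]

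Next I would symmetrize the coefficient of $\dfrac{dx^j}{dt}\dfrac{dx^k}{dt}$ in $j,k$ (permissible since it is contracted against a symmetric object), writing $2\dfrac{\partial g_{ij}}{\partial x^k}$ as $\dfrac{\partial g_{ij}}{\partial x^k}+\dfrac{\partial g_{ik}}{\partial x^j}$, so that the bracket becomes $\dfrac{\partial g_{ij}}{\partial x^k}+\dfrac{\partial g_{ik}}{\partial x^j}-\dfrac{\partial g_{jk}}{\partial x^i}$, which is exactly twice the Christoffel symbol of the first kind. Then I would contract with $\tfrac12 g^{li}$, use $g^{li}g_{ij}=\delta^l_j$ and the definition $\gamma^l_{jk}=\tfrac12 g^{li}\left(\dfrac{\partial g_{ij}}{\partial x^k}+\dfrac{\partial g_{ik}}{\partial x^j}-\dfrac{\partial g_{jk}}{\partial x^i}\right)$ appearing in (3.2.7). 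Dividing through by $2$ and recalling $F^i = g^{ij}F_j$ from (1.1.2) delivers (1.1.5). The one point requiring a word of care — the likely main obstacle in presentation rather than substance — is the symmetrization step: one must justify replacing the unsymmetric derivative term by its symmetric part, which holds precisely because it is contracted with the symmetric velocity product $\dfrac{dx^j}{dt}\dfrac{dx^k}{dt}$. Equivalence of (1.1.3) and (1.1.5) is immediate since every manipulation is reversible (multiplication by the invertible $g$), so the two systems have identical solution curves.
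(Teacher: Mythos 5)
Your proof is correct and is exactly the computation the paper intends: the paper states Proposition 1.1.1 without writing out the details, and where the analogous facts are proved (the proof of Theorem 2.3.1 in Part I and Theorem 2.2.1 for $\Sigma_F$), it is this same direct expansion of the Lagrange equations for $L=2T=g_{jk}(x)y^jy^k$, symmetrization against $\dfrac{dx^j}{dt}\dfrac{dx^k}{dt}$, and contraction with $\tfrac12 g^{li}$. Your factor bookkeeping (the $\tfrac12$ on $F^i$) and the reversibility remark establishing genuine equivalence are both correct.
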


In general, for a RMS $\Sigma_{\cal R}$, the system of differential equations (1.1.5) is not autoadjoint, consequently, it can not be written as the Euler-Lagrange equations for a certain Lagrangian.

In the case of conservative RMS, with $\dfrac12 F_i(x)=-\dfrac{\partial U(x)}{\partial x^i}$, here $U(x)$ a potential function, the equations (1.1.2) can be written as Euler-Lagrange equations for the Lagrangian $T+U$. They have $T+U=$constant as a prime integral.

This is the reason that the nonconservative RMS $\Sigma_{\cal R}$, with $Fe$ depending on $y^i=\dfrac{dx^i}{dt}$ cannot be studied by the methods of classical mechanics. A good geometrical theory of the RMS $\Sigma_{\cal R}$ should be based on the geometry of the velocity space $TM$.

From (1.1.4) we can see that in the canonical parametrization $t=s$ ($s$ being the arc length in the Riemannian space ${\cal R}^n$), we obtain the following result:
\begin{proposition}
If the external forces are identically zero, then the evolution curves of the system $\Sigma_{\cal R}$ are the geodesics of the Riemannian space ${\cal R}^n$.
\end{proposition}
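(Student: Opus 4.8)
The plan is to start from the evolution equations in the second-order form established in Proposition 1.1.3, namely
\begin{equation}
\dfrac{d^2 x^i}{dt^2}+\gamma^i{}_{jk}(x)\dfrac{dx^j}{dt}\dfrac{dx^k}{dt}=\dfrac12 F^i\left(x,\dfrac{dx}{dt}\right),\nonumber
\end{equation}
and simply impose the hypothesis $Fe\equiv 0$, i.e. $F^i(x,y)=0$ identically. This immediately collapses the right-hand side to zero, leaving
\begin{equation}
\dfrac{d^2 x^i}{dt^2}+\gamma^i{}_{jk}(x)\dfrac{dx^j}{dt}\dfrac{dx^k}{dt}=0,\nonumber
\end{equation}
which is the classical geodesic equation of the Riemannian space ${\cal R}^n=(M,g_{ij}(x))$ written in an arbitrary parameter. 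So the bulk of the work is already done by the earlier proposition; the remaining content of the statement is the interpretation of this equation as that of geodesics in the canonical (arc-length) parametrization.

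First I would invoke Proposition 1.1.3 to replace the Lagrange equations (1.1.3) by the equivalent system above, then set $F^i=0$. Second, I would recall that the Christoffel symbols $\gamma^i{}_{jk}(x)$ appearing here are exactly those of the Levi-Civita connection of $g_{ij}(x)$, as defined in (1.1.5) and already used throughout for the kinetic energy $T=\tfrac12 g_{ij}(x)y^iy^j$; hence the displayed equation is by definition the equation of the geodesics (autoparallel curves of the Levi-Civita connection) of ${\cal R}^n$. Third, to match the wording of the proposition, which refers to the canonical parameter $t=s$ with $s$ the arc length, I would note that the geodesic equation with vanishing right-hand side is precisely the Euler--Lagrange equation for $T$ (equivalently for the arc-length functional in its energy form), and that along its solutions $T$ is constant, so $\|\dot c\|$ is constant and the affine parameter coincides up to affine reparametrization with arc length; thus one may take $t=s$ without loss of generality.

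The only genuinely delicate point, and the one I would treat most carefully, is the parametrization issue: the evolution equations are posed for an arbitrary time parameter $t$, whereas ``geodesics'' in the metric sense are often characterized in terms of arc length $s$. The resolution is the observation, flagged just before Proposition 1.1.4 in the excerpt, that in the canonical parametrization $t=s$ the right-hand term involving $dF/dt$ (which in the Finsler/Lagrange setting obstructs passing between the $F$- and $F^2$-forms of the equations) drops out. In the present Riemannian case the geodesic equation is already affinely parametrized by construction of the Levi--Civita connection, so conservation of energy, $\tfrac{d}{dt}(g_{ij}\dot x^i\dot x^j)=0$ along solutions (a direct consequence of metric compatibility $g_{ij\,;k}=0$), guarantees that $t$ is an affine parameter and may be identified with $s$. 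I would record this conservation computation as the supporting lemma.

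In summary, the proof is short: apply Proposition 1.1.3, substitute $Fe\equiv 0$ to obtain the homogeneous geodesic equation, identify $\gamma^i{}_{jk}$ as the Levi--Civita Christoffel symbols so that this is the defining equation of geodesics of ${\cal R}^n$, and finally use the constancy of $T$ along solutions to justify the passage to the arc-length (canonical) parametrization $t=s$. The main obstacle is purely expository rather than technical, namely making the parametrization identification rigorous; everything else is immediate from the earlier results.
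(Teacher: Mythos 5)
Your proof is correct and follows essentially the same route as the paper, which simply substitutes $F^i\equiv 0$ into the second-order form of the Lagrange equations to obtain the homogeneous geodesic equation $\dfrac{d^2x^i}{dt^2}+\gamma^i{}_{jk}(x)\dfrac{dx^j}{dt}\dfrac{dx^k}{dt}=0$ of ${\cal R}^n$. The only addition on your side is that you spell out the arc-length parametrization point (constancy of $T$ along solutions, hence $t=s$ up to affine change), which the paper merely flags with the phrase ``in the canonical parametrization $t=s$'' and leaves implicit; that is a sound and welcome clarification, not a different method.
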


In the following we will study how the evolution equations change when the space ${\cal R}^n=(M,g)$ is replaced by another Riemannian space $\bar{\cal R}^n=(M,\bar g)$ such that:

$1^{\circ}$ ${\cal R}^n$ and $\bar{\cal R}^n$ have the same parallelism of directions;

$2^{\circ}$ ${\cal R}^n$ and $\bar{\cal R}^n$have same geodesics;

$3^{\circ}$ $\bar{\cal R}^n$ is conformal to ${\cal R}^n$.

\noindent In each of these cases, the Levi-Civita connections of these two Riemmanian spaces are transformed by the rule:

$1^{\circ}$ $\bar\gamma^i{}_{jk}(x)=\gamma^i{}_{jk}(x)+\delta^i_j\alpha_k(x)$;

\bigskip

$2^{\circ}$ $\bar\gamma^i{}_{jk}(x)=\gamma^i{}_{jk}(x)+\delta^i_j\alpha_k(x)+\delta^i_k\alpha_j(x)$;

\bigskip

$3^{\circ}$ $\bar\gamma^i{}_{jk}(x)=\gamma^i{}_{jk}(x)+\delta^i_j\alpha_k(x)+\delta^i_k\alpha_j(x)-g_{jk}(x)\alpha^i(x)$,

\noindent where $\alpha_k(x)$ is an arbitrary covector field on $M$, and $\alpha^i(x)=g^{ij}(x)\alpha_j(x)$.

It follows that the evolution equations (1.1.3) change to the evolution equations of the system $\Sigma_{\bar{\cal R}}$ as follows:

$1^{\circ}$ In the first case we obtain
\begin{equation}
\dfrac{d^2 x^i}{dt^2}+\gamma^i{}_{jk}(x)\dfrac{dx^j}{dt}\dfrac{dx^k}{dt}= -\alpha\dfrac{dx^i}{dt}+\dfrac12 F^i(x,\dfrac{dx}{dt}); \alpha = \alpha_k(x)\dfrac{dx^k}{dt}
\end{equation}
Therefore, even though $\Sigma_{{\cal R}}$ is a conservative system, the mechanical system $\Sigma_{\bar{\cal R}}$ is nonconservative system having the external forces $$\bar Fe=\left(-\alpha(x,y)y^i+\dfrac12 F^i(x,y)\right)\dfrac{\partial}{\partial y^i},\quad \alpha = \alpha_k(x)\dfrac{dx^k}{dt}.$$

$2^{\circ}$ In the second case we have
\begin{equation}
\dfrac{d^2 x^i}{dt^2}+\gamma^i{}_{jk}(x)\dfrac{dx^j}{dt}\dfrac{dx^k}{dt}=-2\alpha\dfrac{dx^i}{dt}+\dfrac12 F^i\(x,\dfrac{dx}{dt}\); \quad \alpha=\alpha_k(x)\dfrac{dx^k}{dt}
\end{equation}
and
$$\bar Fe=\left(-2\alpha(x,y)y^i+\dfrac12 F^i(x,y)\right)\dfrac{\partial}{\partial y^i}, \quad \alpha = \alpha_k(x)\dfrac{dx^k}{dt}.$$

$3^{\circ}$ In the third case $\bar Fe$ is $$\bar Fe=\left\{2(-\alpha y^i+T\alpha^i)+\dfrac12 F^i\right\} \dfrac{\partial}{\partial y^i}, \quad \alpha = \alpha_k(x)\dfrac{dx^k}{dt},$$ $$\hfill \alpha^i(x)=g^{ij}(x)\alpha_j(x).$$

The previous properties lead to examples with very interesting properties.


\section{Examples of Riemannian mechanical systems}
\setcounter{equation}{0}\setcounter{theorem}{0}\setcounter{proposition}{0}\setcounter{definition}{0}

Recall that in the case of classical {\it conservative mechanical systems} we have
$2Fe=grad\ \cal U,$ where ${\cal U}(x)$ is a potential function. Therefore, the Lagrange equations are given by
$$\dfrac{d}{dt}\dfrac{\partial}{\partial y^i}(T+{\cal U})-\dfrac{\partial}{\partial x^i}(T+{\cal U})=0.$$

We obtain from here a prime integral $T+{\cal U}=h$ (constant) which give us {\it the energy conservation law}.

In the nonconservative case we have numerous examples suggested by $1^\circ$, $2^\circ$, $3^\circ$ from the previous section, where we take $F^i(x,y)=0$.

Other examples of RMS can be obtained as follows

\begin{enumerate}
\item
\begin{equation}
2Fe=-\beta(x,y)y^i\dfrac{\partial}{\partial y^i},
\end{equation}
where $\beta =\beta_i(x)y^i$ is determined by the electromagnetic potentials $\beta_i(x)$, $(i=1,..,n)$.

\item
\begin{equation}
Fe=(T-\beta)y^i\dfrac{\partial}{\partial y^i},
\end{equation}
where $\beta =\beta_i(x)y^i$ and $T$ is the kinetic energy.

\item In the three-body problem, M. B\u arbosu \cite{barb} applied the following conformal transformation:
$$d\bar s^2 = (T+{\cal U})ds^2$$ to the classic Lagrange equations and had obtained a nonconservative mechanical system with external force field
$$Fe=(T+{\cal }U)y^i\dfrac{\partial}{\partial y^i}.$$

\item The external forces $Fe=F^i(x)\dfrac{\partial}{\partial y^i}$ lead to
 classical nonconservative Riemannian mechanical systems. For instance, for $F_i=-grad_i\ {\cal U}+R_i(x)$ where $R_i(x)$ are the resistance forces, and the configuration space $M$ is $\R^3$.

\item If $M=\R^3,$ $T=\dfrac{1}{2}m\delta_{ij}y^iy^j$ and $Fe=2F^i(x)\dfrac{\partial}{\partial y^i}$, then the evolution equations are
$$m\dfrac{d^2x^i}{dt^2}=F^i(x),$$ which is {\it the Newton's law}.

\item The harmonic oscillator.

$M=\R^n,\ \ g_{ij}=\delta_{ij},\ \ 2F_i=-\omega_i^2x^i$ (the summation convention is not applied) and $\omega_i$ are positive numbers, $ (i=1,..,n)$.

The functions
$$h_i=(x^i)^2+\omega_i^2x^i,\ \ {\rm and}\ \ H=\sum_{i=1}^n h_i$$ are prime integrals.

\item Suggested by the example $6^{\circ}$, we consider a system $\Sigma_{\cal R}$ with $Fe=-2\omega(x)\C$, where $\omega(x)$ is a positive function and $\C$ is the Liouville vector field.

The evolution equations, in the case $M=\R^n,$ are given by
$$\dfrac{d^2 x^i}{dt^2}+\omega(x)\dfrac{dx^i}{dt}=0.$$
Putting $y^i=\dfrac{dx^i}{dt},$ we can write $$\dfrac{dy^i}{dt}+\omega(x)y^i=0,\ \ \ \ \ \ (i=1,..,n).$$
So, we obtain $y^i=C^ie^{-\int\omega(x(t))dt}$ and therefore $x^i=C_0^i+$\break $C^i\displaystyle\int e^{-\int\omega(x(t))dt}dt$.

\item We can consider the systems $\Sigma_{\cal R}$ having $$Fe=2a^i_{jk}(x)y^jy^k\dfrac{\partial}{\partial y^i},$$ where $a^i_{jk}(x)$ is a symmetric tensor field on $M$. The external force field $Fe$ has homogeneous components of degree 2 with respect to $y^i$.

\item Relativistic nonconservative mechanical systems can be obtained for a Minkowski metric in the space-time $\R^4.$

\item A particular case of example $1^{\circ}$ above \cite{shimada} is the case when the external force field coefficients $F^i(x,y)$ are linear in $y^i$, i.e.
$$Fe=2F^i(x,y)\frac{\partial}{\partial y^i}=2Y^i_k(x)y^k\frac{\partial}{\partial y^i},$$ where $Y:TM\to TM$ is a fiber diffeomorphism called {\it Lorentz force}, namely for any $x\in M$, we have
$$Y_x:T_xM\to T_xM,\qquad Y_x\(\frac{\partial}{\partial x^i}\)=Y_i^j(x)\frac{\partial}{\partial x^j}.$$

Let us remark that in this case, formally, we can write the Lagrange equations of this RMS in the form
$$\nabla_{\dot\gamma}{\dot\gamma}=Y(\dot\gamma),$$ where $\nabla$ is the Levi-Civita connection of the Riemannian space $(M,g)$ and $\dot\gamma$ is the tangent vector along the evolution curves $\gamma:[a,b]\to M$.

This type of RMS is important because of the global behavior of its evolution curves.

Let us denote by $S$ the evolutionary semispray, i.e. $S$ is a vector field on $TM$ which is tangent to the canonical lift $\hat \gamma=(\gamma,\dot \gamma)$ of the evolution curves (see the following section for a detailed discussion on the evolution semispray).

We will denote by $T^a$ the energy levels of the Riemannian metric $g$, i.e.
$$T^a=\left\{(x,y)\in TM:T(x,y)=\frac{a^2}{2}\right\},$$
where $T$ is the kinetic energy of $g$, and $a$ is a positive constant. One can easily see that $T^a$ is the hypersurface in $TM$ of constant Riemannian length vectors, namely for any $X=(x,y)\in T^a$, we must have $|X|_g=a$, where $|X|_g$ is the Riemannian length of the vector field $X$ on $M$.

If we restrict ourselves for a moment to the two dimensional case, then it is known that for sufficiently small values of $c$ the restriction of the flow of the semispray $S$ to $T^c$ contains no less than two closed curves when $M$ is the 2-dimensional sphere, and at least three otherwise. These curves projected to the base manifold $M$ will give closed evolution curves for the given Riemannian mechanical system.
\end{enumerate}


\section{The evolution semispray of the mechanical system $\Sigma_{\cal R}$}\index{Semisprays! canonical mechanical system $\Sigma_{\cal R}$}
\setcounter{equation}{0}\setcounter{theorem}{0}\setcounter{proposition}{0}\setcounter{definition}{0}

Let us assume that $Fe$ is global defined on $M$, and consider the mechanical system $\Sigma_{\cal R}=(M,T,Fe)$. We have

\begin{theorem} [\cite{Miron}]
The following properties hold good:

$1^{\circ}$  The quantity $S$ defined by
\begin{equation}
\left\{
\begin{array}{l}
S=y^i\dfrac{\partial}{\partial x^i}-(2\stackrel{\circ}{G^i}-\dfrac12 F^i)\dfrac{\partial}{\partial y^i},\\
\\
2\stackrel{\circ}{G^i}=\gamma^i{}_{jk}y^jy^k,
\end{array} \right.
\end{equation}
is a vector field on the velocity space $TM$.

$2^{\circ}$  $S$ is a semispray, which depends on $\Sigma_{\cal R}$ only.

$3^{\circ}$  The integral curves of the semispray $S$ are the evolution curves of the system $\Sigma_{\cal R}$.
\end{theorem}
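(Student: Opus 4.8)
<br>

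The plan is to verify the three assertions in turn, all of which follow by direct computation once I fix the local expression for $S$ and appeal to the already-established theory of semisprays on $TM$ from Part I, Chapter 1.

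First I would establish that $S$ is a well-defined vector field on $TM$, i.e. that the coefficients transform correctly under the coordinate change $(\ref{1.1.1})$. The natural candidate for the total coefficient is $2G^i := 2\stackrel{\circ}{G^i} - \dfrac12 F^i$. Since $\stackrel{\circ}{G^i} = \dfrac12\gamma^i{}_{jk}(x)y^jy^k$ are the geodesic coefficients of the Riemannian metric $g_{ij}$, they already satisfy the semispray transformation rule $(\ref{1.2.3})$, this being a standard fact about the canonical spray of a Riemann space. For the correction term, I would use that $Fe = F^i\dfrac{\partial}{\partial y^i}$ is a \emph{vertical} vector field on $TM$; equivalently $F^i = g^{ij}F_j$ where $F_i$ are the covariant components forming the semibasic $1$-form $\sigma = F_i\,dx^i$ of $(\ref{1.1.4})$. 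Because $\sigma$ is semibasic, $F_i$ transforms as a $d$-covector ($\widetilde{F}_i = \dfrac{\partial x^j}{\partial\widetilde{x}^i}F_j$), and $g^{ij}$ transforms contravariantly, so $F^i$ transforms as a $d$-vector: $\widetilde{F}^i = \dfrac{\partial\widetilde{x}^i}{\partial x^j}F^j$. A vector of this type added to $2\stackrel{\circ}{G^i}$ preserves rule $(\ref{1.2.3})$, since the inhomogeneous term there comes only from $\dfrac{\partial\widetilde{y}^i}{\partial x^j}y^j$ and is untouched by adding a genuine $d$-vector. Hence the combination $2G^i$ obeys $(\ref{1.2.3})$ and $S$ is globally defined.

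The second assertion, that $S$ is a semispray depending only on $\Sigma_{\cal R}$, is then immediate: the horizontal part of $S$ is $y^i\dfrac{\partial}{\partial x^i}$, so $J(S) = \mathbb{C}$ by $(\ref{1.1.4'})$ and the definition of a semispray in Section 1.2 is satisfied. Dependence only on $\Sigma_{\cal R}$ follows because $\stackrel{\circ}{G^i}$ is built solely from $g_{ij}$ (hence from $T$) and $F^i$ is built from $Fe$ and $g_{ij}$; no auxiliary choices enter. For the third assertion I would write out the integral-curve system $\dfrac{dx^i}{dt} = y^i$, $\dfrac{dy^i}{dt} = -(2\stackrel{\circ}{G^i} - \dfrac12 F^i)$, substitute $y^i = \dfrac{dx^i}{dt}$ and $2\stackrel{\circ}{G^i} = \gamma^i{}_{jk}\dfrac{dx^j}{dt}\dfrac{dx^k}{dt}$, and obtain exactly $\dfrac{d^2x^i}{dt^2} + \gamma^i{}_{jk}\dfrac{dx^j}{dt}\dfrac{dx^k}{dt} = \dfrac12 F^i$, which is the second-order form $(1.1.5)$ of the Lagrange equations $(\ref{1.1.3})$ as proved in Proposition in Section 1.1. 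Thus integral curves of $S$ coincide with the evolution curves.

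The only genuine subtlety, and the step I would be most careful with, is the transformation argument in the first part: one must confirm that the verticality of $Fe$ is exactly what guarantees $F^i$ is a $d$-vector, so that the added term does not spoil the semispray rule. This hinges on recognizing $\sigma$ as semibasic and tracking the metric's role in raising the index; everything else is routine verification using the formulas already collected in the preceding chapters.
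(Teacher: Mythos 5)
Your proof is correct and follows essentially the same route as the paper's: Miron's proof simply writes $S=\stackrel{\circ}{S}+\dfrac12 Fe$, observes that the sum of the canonical semispray $\stackrel{\circ}{S}$ and half the vertical field $Fe$ is a vector field on $TM$ with $J(S)=J(\stackrel{\circ}{S})=\mathbb{C}$, and reads off the integral-curve system $\dfrac{dx^i}{dt}=y^i$, $\dfrac{dy^i}{dt}+2\stackrel{\circ}{G}{}^i=\dfrac12 F^i$ — your coordinate check of the transformation rule (1.2.3) is just the local unpacking of that global decomposition. The only superfluous step is your detour through the semibasic form $\sigma$ to establish that $F^i$ is a $d$-vector: in Definition 1.1.1 the external forces $Fe=F^i\dfrac{\partial}{\partial y^i}$ are postulated to be a vertical vector field on $TM$, so $F^i$ transforms as a $d$-vector by hypothesis and no index-raising argument is needed.
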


\begin{proof}
$1^{\circ}$ Writing $S$ in the form
\begin{equation}
S=\stackrel{\circ}{S}+\dfrac12 Fe,
\end{equation}
where $\stackrel{\circ}{S}$ is the canonical semispray with the coefficients $\stackrel{\circ}{G^i}$,
we can see immediately that $S$ is a vector field on $TM$.

$2^{\circ}$ Since $\stackrel{\circ}{S}$ is a semispray and $Fe$ a vertical vector field, it follows $S$ is a semispray. From (1.3.1) we can see that $S$ depends on $\Sigma_{\cal R}$, only.

$3^{\circ}$ The integral curves of $S$ are given by
\begin{equation}
\dfrac{dx^i}{dt}=y^i;\ \ \ \  \dfrac{dy^i}{dt}+2\stackrel{\circ}{G^i}(x,y)=\dfrac12 F^i(x,y).
\end{equation}
Replacing $y^i$ in the second equation we obtain (1.1.2)
\end{proof}

$S$ will be called {\it the evolution} or {\it canonical semispray} of the nonconservative Riemannian mechanical system $\Sigma_{\cal R}$. In the terminology of J. Klein \cite{Klein1}, $S$ is the dynamical system of $\Sigma_{\cal R}$.

Based on $S$ we can develop the geometry of the mechanical system $\Sigma_{\cal R}$ on $TM$.

Let us remark that $S$ can also be written as follows:
\begin{equation}
S=y^i\dfrac{\partial}{\partial x^i}-2{G^i}(x,y)\dfrac{\partial}{\partial y^i},\tag{1.3.1'}
\end{equation}
with the coefficients
\begin{equation}
2G^i=2\stackrel{\circ}{G^i}-\dfrac12 F^i.
\end{equation}

We point out that $S$ is homogeneous of degree 2 if and only if $F^i(x,y)$ is 2-homogeneous with respect to $y^i.$ This property is not satisfied in the case $\dfrac{\partial F^i}{\partial y^j}\equiv 0$, and it is satisfied for examples $1$ and $7$ from section 1.2.

We have
\begin{theorem}
The variation of the kinetic energy $T$ of a mechanical system $\Sigma_{\cal R},$ along the evolution curves {\rm (1.1.2)}, is given by:
\begin{equation}
\dfrac{dT}{dt}=\dfrac12 F_i \dfrac{dx^i}{dt}.
\end{equation}
\end{theorem}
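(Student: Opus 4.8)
The plan is to compute the total derivative of the kinetic energy $T=\frac12 g_{ij}(x)y^iy^j$ along an evolution curve, substitute the evolution equations, and collect terms so that the Christoffel contributions cancel against the derivative of the metric. Since $T$ depends on $x$ both explicitly through $g_{ij}(x)$ and implicitly through $y^i=\dfrac{dx^i}{dt}$, I would first write
\begin{equation}
\dfrac{dT}{dt}=\dfrac12\dfrac{\partial g_{ij}}{\partial x^k}\dfrac{dx^k}{dt}y^iy^j+g_{ij}y^i\dfrac{dy^j}{dt}.\nonumber
\end{equation}
The first step is therefore purely the chain rule, using $y^i=dx^i/dt$ throughout.

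Next I would eliminate $\dfrac{dy^j}{dt}$ using the evolution equations in the form (1.1.5), namely $\dfrac{dy^j}{dt}=-\gamma^j{}_{kl}(x)y^ky^l+\dfrac12 F^j$. Substituting this into the second term gives $g_{ij}y^i\left(-\gamma^j{}_{kl}y^ky^l+\frac12 F^j\right)$. The plan is then to lower the index on the Christoffel symbol, writing $g_{ij}\gamma^j{}_{kl}=\gamma_{i,kl}$ (the Christoffel symbol of the first kind), and to recall the standard identity $\dfrac{\partial g_{ij}}{\partial x^k}=\gamma_{i,jk}+\gamma_{j,ik}$, which follows from the definition (2.3.7) of the Christoffel symbols specialized to the $x$-dependent metric. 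Using $F_i=g_{ij}F^j$ from (1.1.2) on the remaining term turns $\frac12 g_{ij}y^iF^j$ into $\frac12 F_i y^i=\frac12 F_i\dfrac{dx^i}{dt}$, which is exactly the asserted right-hand side.

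The main obstacle, and the only genuinely computational part, is verifying that the two cubic-in-$y$ terms cancel: the term $\frac12\dfrac{\partial g_{ij}}{\partial x^k}y^ky^iy^j$ coming from the chain rule must cancel the term $-g_{ij}\gamma^j{}_{kl}y^iy^ky^l=-\gamma_{i,kl}y^iy^ky^l$ coming from the evolution equation. Because these are contracted against the fully symmetric product $y^iy^jy^k$, I would symmetrize the Christoffel-of-first-kind expression over its free summation indices; the identity $\dfrac{\partial g_{ij}}{\partial x^k}=\gamma_{i,jk}+\gamma_{j,ik}$ then shows that $\frac12\dfrac{\partial g_{ij}}{\partial x^k}y^iy^jy^k$ symmetrizes to $\gamma_{i,jk}y^iy^jy^k$ (the two halves being equal after relabeling), matching the term to be cancelled. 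Once this cancellation is confirmed, only the $\frac12 F_i\dfrac{dx^i}{dt}$ term survives, establishing (1.3.6).

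I would emphasize that no appeal to homogeneity of $F^i$ is needed here; the formula holds for arbitrary external forces, which is consistent with the fact that (1.3.6) is stated for a general Riemannian mechanical system $\Sigma_{\cal R}$. This computation also makes transparent why, in the conservative case with $\frac12 F_i=-\partial_i{\cal U}$, one recovers the energy conservation law $T+{\cal U}=\text{const}$, since then $\dfrac{dT}{dt}=-\partial_i{\cal U}\,\dfrac{dx^i}{dt}=-\dfrac{d{\cal U}}{dt}$.
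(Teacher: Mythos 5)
Your proof is correct, but it follows a genuinely different route from the paper's. The paper never touches the Christoffel symbols: it applies the chain rule $\frac{dT}{dt}=\frac{\partial T}{\partial x^i}\frac{dx^i}{dt}+\frac{\partial T}{\partial y^i}\frac{dy^i}{dt}$, substitutes the Lagrange equations (1.1.3) in the form $\frac{\partial T}{\partial x^i}=\frac{d}{dt}\frac{\partial T}{\partial y^i}-\frac12 F_i$, recombines the two surviving terms into the total derivative $\frac{d}{dt}\left(y^i\frac{\partial T}{\partial y^i}\right)$, and then invokes Euler's theorem for the $2$-homogeneous function $T$ to get $\frac{dT}{dt}=2\frac{dT}{dt}-\frac12 F_i\frac{dx^i}{dt}$, from which the result follows by solving for $\frac{dT}{dt}$. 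You instead start from the explicit second-order form (1.1.5) of the evolution equations and verify the cancellation of the cubic-in-$y$ terms via the identity $\partial_k g_{ij}=\gamma_{i,jk}+\gamma_{j,ik}$; your symmetrization argument is sound (all three contractions $\partial_k g_{il}\,y^iy^ky^l$, $\partial_l g_{ik}\,y^iy^ky^l$, $\partial_i g_{kl}\,y^iy^ky^l$ coincide after relabeling, so $\gamma_{i,kl}y^iy^ky^l=\frac12\partial_k g_{ij}\,y^ky^iy^j$). What each approach buys: yours is more elementary and makes geometrically transparent that the Levi-Civita part of the dynamics does no work on $T$ (this is exactly metric compatibility, $\nabla g=0$), and it leads naturally to the conservative-case corollary you note at the end; the paper's argument is shorter and, more importantly, does not use that $g_{ij}$ is independent of $y$ — it only needs the Lagrange equations and $2$-homogeneity of the energy, which is why essentially the same computation is reused verbatim for the Finslerian systems $\Sigma_F$ (Theorem 2.2.3), where your Christoffel cancellation would require additional terms involving $\partial g_{ij}/\partial y^k$ and the identities $C_{0jk}=0$. (Two trivial slips: your citations "(2.3.7)" and "(1.3.6)" should be the paper's (3.2.7) and (1.3.5), respectively; neither affects the argument.)
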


\begin{proof}
A straightforward computation gives
$$\begin{array}{l}
\dfrac{dT}{dt}=\dfrac{\partial T}{\partial x^i}\dfrac{dx^i}{dt}+
\dfrac{\partial T}{\partial y^i}\dfrac{dy^i}{dt}=
\left(\dfrac{d}{dt}\dfrac{\partial T}{\partial y^i}-\dfrac12 F_i\right)\dfrac{dx^i}{dt}+
\dfrac{\partial T}{\partial y^i}\dfrac{dy^i}{dt}=\\
\\
=\dfrac{d}{dt}\left(y^i\dfrac{\partial T}{\partial y^i}\right)-\dfrac12 F_i\dfrac{dx^i}{dt}=2\dfrac{dT}{dt}-\dfrac12 F_i\dfrac{dx^i}{dt},
\end{array}
$$
and the relation (1.3.5) holds good.
\end{proof}

\begin{corollary}
$T=$constant along the evolution curves if and only if the Liouville vector $\C$ and the external force $Fe$ are orthogonal vectors along the evolution curves of $\Sigma$.
\end{corollary}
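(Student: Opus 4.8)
The plan is to read off the result directly from Theorem 1.3.2, which gives the variation of the kinetic energy along the evolution curves as
\[
\dfrac{dT}{dt}=\dfrac12 F_i\dfrac{dx^i}{dt}.
\]
Since the corollary is an immediate specialization of this formula, the whole proof reduces to interpreting the right-hand side geometrically as an inner product of two vector fields along the evolution curve. First I would recall that along an evolution curve $c$ we have $y^i=\dfrac{dx^i}{dt}$, so that $\dfrac{dx^i}{dt}$ are precisely the components of the Liouville vector field $\C=y^i\dfrac{\partial}{\partial y^i}$ (read through the tangent structure, i.e. via the canonical identification along the lifted curve $\widetilde c$). Thus the sum $F_i\dfrac{dx^i}{dt}=F_i\,y^i$ is nothing but the contraction of the covariant components $F_i=g_{ij}F^j$ of the external force $Fe$ with the velocity $y^i$.

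The key step is to rewrite $F_i\,y^i$ as a Riemannian scalar product. Using $F_i=g_{ij}F^j$ from (1.1.1) (the covariant components of the external forces), I would write
\[
F_i\,y^i=g_{ij}F^j y^i=\langle Fe,\C\rangle,
\]
where $\langle\,,\,\rangle$ denotes the scalar product induced by the metric tensor $g_{ij}(x)$ and where $Fe=F^i\dfrac{\partial}{\partial y^i}$ and $\C=y^i\dfrac{\partial}{\partial y^i}$ are regarded as vertical vectors (equivalently, as vectors on $M$ along $c$ via the natural identification of the fibre $V(u)$ with $T_{\pi(u)}M$). Then Theorem 1.3.2 becomes
\[
\dfrac{dT}{dt}=\dfrac12\langle Fe,\C\rangle,
\]
evaluated along the evolution curves.

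From this the corollary is immediate in both directions. If $T$ is constant along every evolution curve, then $\dfrac{dT}{dt}=0$, whence $\langle Fe,\C\rangle=0$, i.e. $Fe$ and $\C$ are orthogonal along the evolution curves; conversely, orthogonality $\langle Fe,\C\rangle=0$ forces $\dfrac{dT}{dt}=0$, so $T$ is constant along those curves. I would present this as a one-line equivalence chain $\dfrac{dT}{dt}=0\iff\tfrac12\langle Fe,\C\rangle=0\iff\langle Fe,\C\rangle=0$.

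There is essentially no analytic obstacle here, since all the labor is already contained in Theorem 1.3.2; the only point requiring a little care is the bookkeeping of indices and the precise meaning of ``orthogonal,'' namely that the orthogonality is asserted pointwise \emph{along} the evolution curves rather than globally on $TM$, and that the pairing is taken with respect to the fundamental tensor $g_{ij}(x)$. I would therefore state explicitly that $\C$ and $Fe$ are compared through the identification of the vertical distribution $V$ with the pulled-back tangent spaces, so that $\langle\C,Fe\rangle=g_{ij}y^iF^j$ is well defined, and then the equivalence follows at once.
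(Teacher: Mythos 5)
Your proposal is correct and follows exactly the route the paper takes: the corollary is read off from Theorem 1.3.2 by recognizing $F_i\dfrac{dx^i}{dt}=g_{ij}F^jy^i=\langle Fe,\C\rangle$ along the evolution curves, so that $\dfrac{dT}{dt}=0$ is equivalent to the orthogonality of $\C$ and $Fe$ with respect to $g_{ij}(x)$. Your extra care about identifying the vertical vectors with tangent vectors along the curve and about the pointwise nature of the orthogonality is a sound spelling-out of what the paper leaves implicit, but it is not a different argument.
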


\begin{corollary}
If $F_i=grad_i\ {\cal U}$ then $\Sigma_{\cal R}$ is conservative and $T+{\cal U}=h$ (constant) on the evolution curves of $\Sigma_{\cal R}$.
\end{corollary}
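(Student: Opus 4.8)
The final statement to prove is Corollary 1.3.2, which asserts that if $F_i = \mathrm{grad}_i\,{\cal U}$, then $\Sigma_{\cal R}$ is conservative and the quantity $T + {\cal U} = h$ is constant along the evolution curves. The natural plan is to invoke Theorem 1.3.1, which already established the variation law $\dfrac{dT}{dt} = \dfrac12 F_i \dfrac{dx^i}{dt}$ along the evolution curves, and then to show that under the hypothesis $F_i = \mathrm{grad}_i\,{\cal U}$ the right-hand side becomes a total derivative.

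The first step is to clarify what the hypothesis $F_i = \mathrm{grad}_i\,{\cal U}$ means in coordinates. I would read this as $F_i(x) = \dfrac{\partial {\cal U}}{\partial x^i}$, so that $F_i$ depends on the position $x$ only, which is precisely the situation of a conservative system as defined in the examples of Section 1.1. With this reading, the quantity $F_i \dfrac{dx^i}{dt}$ along an evolution curve $c(t) = (x^i(t))$ is
\begin{equation}
F_i\dfrac{dx^i}{dt} = \dfrac{\partial {\cal U}}{\partial x^i}\dfrac{dx^i}{dt} = \dfrac{d{\cal U}}{dt},
\end{equation}
using the chain rule for ${\cal U}(x(t))$. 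This is the key computation, and it is the only nontrivial point: the passage from a position-dependent covector field $F_i$ that is a gradient to the recognition that its pairing with the velocity is an exact time derivative.

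Substituting this into the variation law from Theorem 1.3.1 would give $\dfrac{dT}{dt} = \dfrac12\dfrac{d{\cal U}}{dt}$ along the evolution curves. At this stage I must reconcile the factor $\tfrac12$ with the stated conclusion $T + {\cal U} = h$. Looking back at the conservative example in Section 1.1, where the Lagrange equations with $\tfrac12 F_i(x) = -\dfrac{\partial U}{\partial x^i}$ yield the prime integral $T + U = $ const, and at Corollary 1.3.1 where the conservation law is written $T + {\cal U} = h$, the consistent convention is that ${\cal U}$ is normalized so that $\tfrac12 F_i = \dfrac{\partial {\cal U}}{\partial x^i}$ (equivalently ${\cal U}$ absorbs the factor $\tfrac12$ in its definition as a potential). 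Under that normalization the computation gives $\dfrac{dT}{dt} = \dfrac{d{\cal U}}{dt}$, whence $\dfrac{d}{dt}(T - {\cal U}) = 0$; I would then fix the sign of the potential so that the conserved quantity reads $T + {\cal U}$, matching Corollary 1.3.1.

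The concluding step is to integrate: since $\dfrac{d}{dt}(T + {\cal U}) = 0$ along every evolution curve, the function $T + {\cal U}$ is constant, equal to some value $h$ depending on the initial data, which is the asserted energy conservation law; and the system is conservative by definition since $Fe$ derives from the potential ${\cal U}$. The main obstacle here is not analytic but bookkeeping: getting the factor of $\tfrac12$ and the sign of the potential to agree across Theorem 1.3.1, the conservative example of Section 1.1, and Corollary 1.3.1, since the excerpt uses the factor-$\tfrac12$ convention inconsistently between $L = 2T$ and the various definitions of the potential. Once the normalization $\tfrac12 F_i = \mathrm{grad}_i\,{\cal U}$ (with the appropriate sign) is fixed, the proof is a one-line consequence of Theorem 1.3.1.
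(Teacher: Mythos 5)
Your proof is correct and follows essentially the paper's own (implicit) route: Corollary 1.3.2 is an immediate consequence of Theorem 1.3.2, obtained by recognizing $F_i\,\dfrac{dx^i}{dt}$ as an exact time derivative via the chain rule and integrating. You are also right that the factor $\tfrac12$ and the sign of the potential are used inconsistently in the source (compare $\tfrac12 F_i=-\partial U/\partial x^i$ in Section 1.1 with $F_i=\mathrm{grad}_i\,{\cal U}$ here), and your normalization $\tfrac12 F_i=-\mathrm{grad}_i\,{\cal U}$ is exactly what makes the stated conclusion $T+{\cal U}=h$ come out.
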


If the external forces $Fe$ are dissipative, i.e. $\langle\C,Fe\rangle<0$, then from the previous theorem, it follows a result of Bucataru-Miron (see \cite{BuMi}):

\begin{corollary}
The kinetic energy $T$ decreases along the evolution curves if and only if the external forces $Fe$ are dissipative.
\end{corollary}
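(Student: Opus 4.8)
The final statement to prove is Corollary 1.3.3: the kinetic energy $T$ decreases along the evolution curves if and only if the external forces $Fe$ are dissipative, i.e. $\langle\C,Fe\rangle<0$.

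The plan is to read off this result directly from Theorem 1.3.2, which has just been established. That theorem gives the fundamental variation formula
$$
\dfrac{dT}{dt}=\dfrac12 F_i\dfrac{dx^i}{dt},\quad y^i=\dfrac{dx^i}{dt},
$$
valid along any evolution curve of $\Sigma_{\cal R}$. So the whole argument reduces to reinterpreting the right-hand side as an inner product of the Liouville vector field $\C$ with the external force field $Fe$. First I would recall that along an evolution curve the velocity satisfies $y^i=\dfrac{dx^i}{dt}$, so that $F_i\dfrac{dx^i}{dt}=F_i y^i$. Then the key step is to identify this contraction with the Riemannian pairing $\langle\C,Fe\rangle$: since $Fe=F^i\dfrac{\partial}{\partial y^i}$ with $F_i=g_{ij}F^j$ by (1.1.1'), and since $\C=y^i\dfrac{\partial}{\partial y^i}$, the natural inner product induced by $g_{ij}$ on the vertical distribution gives $\langle\C,Fe\rangle=g_{ij}\,y^i F^j=F_i y^i$. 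Hence
$$
\dfrac{dT}{dt}=\dfrac12\langle\C,Fe\rangle
$$
along every evolution curve.

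With this identity in hand the equivalence is immediate. The factor $\tfrac12$ is positive, so $\dfrac{dT}{dt}<0$ at every point of an evolution curve precisely when $\langle\C,Fe\rangle<0$ there, which is exactly the definition of $Fe$ being dissipative. Conversely, if $Fe$ is dissipative then $\dfrac{dT}{dt}<0$ and $T$ is strictly decreasing along the evolution curves. This gives both implications and completes the proof; one may also note in passing the boundary case, consistent with Corollary 1.3.1, that $T$ is constant exactly when $\langle\C,Fe\rangle=0$, i.e.\ when $\C$ and $Fe$ are orthogonal along the evolution curves.

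I do not anticipate a genuine obstacle here, since the statement is essentially a restatement of Theorem 1.3.2 in metric-geometric language. The only point requiring a modicum of care is the identification $F_i\dfrac{dx^i}{dt}=\langle\C,Fe\rangle$: one must be explicit that the pairing $\langle\cdot,\cdot\rangle$ is the one determined by the fundamental tensor $g_{ij}$ acting on the vertical vectors, and that the covariant components $F_i$ are obtained from $F^i$ through $g_{ij}$ as in (1.1.1'), together with the fact that we are evaluating along a curve where $y^i=\dfrac{dx^i}{dt}$. Once this bookkeeping is made precise, the corollary follows without any further computation.
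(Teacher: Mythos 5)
Your proof is correct and follows the paper's own route exactly: the corollary is read off from the variation formula $\dfrac{dT}{dt}=\dfrac12 F_i\dfrac{dx^i}{dt}$ of Theorem 1.3.2, after identifying $F_i\dfrac{dx^i}{dt}=g_{ij}y^iF^j=\langle\C,Fe\rangle$ along evolution curves where $y^i=\dfrac{dx^i}{dt}$. Your explicit bookkeeping of this identification (via $F_i=g_{ij}F^j$ and the vertical pairing) merely spells out what the paper leaves implicit, so there is nothing to correct.
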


Since the energy of $\Sigma_{\cal R}$ is $T$ (the kinetic energy), the Theorem 1.3.2 holds good in this case. The variation of $T$ is given by (1.3.5) and hence we obtain: $T$ is conserved along the evolution curves of $\Sigma_{\cal R}$ if and only if the vector field $Fe$ and the Liouville vector field $\C$ are orthogonal.


\section{The nonlinear connection of $\Sigma_{\cal R}$}\index{Connections! Nonlinear}
\setcounter{equation}{0}\setcounter{theorem}{0}\setcounter{proposition}{0}\setcounter{definition}{0}

Let us consider the evolution semispray $S$ of $\Sigma_{\cal R}$ given by
\begin{equation}
S=y^i\dfrac{\partial}{\partial x^i}-(2\stackrel{\circ}G^i-\dfrac12 F^i)\dfrac{\partial}{\partial y^i}
\end{equation}
with the coefficients
\begin{equation}
2G^i=2\stackrel{\circ}{G^i}-\dfrac12 F^i.
\end{equation}
Consequently, the evolution nonlinear connection $N$ of the mechanical system $\Sigma_{\cal R}$ has the coefficients:
\begin{equation}
N^i{}_j=\stackrel{\circ}{N^i}\!{}_j-\dfrac{1}{4}\dfrac{\partial F^i}{\partial y^j}=\gamma_{jk}^iy^k-\dfrac{1}{4}\dfrac{\partial F^i}{\partial y^j}.
\end{equation}

If the external forces $Fe$ does not depend by velocities $y^i=\dfrac{dx^i}{dt},$ then $N=\stackrel{\circ}{N}.$

Let us consider the {\it helicoidal vector field} (see Bucataru-Miron, \cite{BuMi}, \cite{BuMi1})
\begin{equation}
P_{ij}=\dfrac{1}{2} \(\dfrac{\partial F_i}{\partial y^j}-\dfrac{\partial F_j}{\partial y^i}\)
\end{equation}
and the symmetric part of tensor $\dfrac{\partial F_i}{\partial y^j}:$
\begin{equation}
Q_{ij}=\dfrac{1}{2} \(\dfrac{\partial F_i}{\partial y^j}+\dfrac{\partial F_j}{\partial y^i} \).
\end{equation}

On $TM,$ $P$ gives rise to the $2$-form:
$$P=P_{ij}\ dx^i\wedge dx^j\leqno(1.4.4')$$
and $Q$ is the symmetric vertical tensor:
$$Q=Q_{ij}\ dx^i\otimes dx^j.\leqno(1.4.5')$$
Denoting by $\nabla$ the dynamical derivative with respect to the pair $(S,N),$ one proves the theorem of Bucataru-Miron:

\begin{theorem}
For a Riemannian mechanical system $\Sigma_{\cal R}=(M,T,Fe)$ the evolution nonlinear connection is the unique nonlinear connection that satisfies the following conditions:
$$(1)\nabla g=-\dfrac{1}{2} Q$$
$$(2)\theta_L(hX,hY)=\dfrac{1}{2}P(X,Y),\ \ \forall X,Y\in \chi(TM),$$
where
\begin{equation}
\theta_L=4g_{ij}\stackrel{\circ}{\delta}y^j\wedge dx^i
\end{equation}
is the symplectic structure determined by the metric tensor $g_{ij}$ and the nonlinear connection
$\stackrel{\circ}{N}$.
\end{theorem}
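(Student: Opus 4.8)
The plan is to prove the statement in two stages. First I would verify that the evolution nonlinear connection $N$ with coefficients $N^i{}_j=\overset{\circ}{N}{}^i{}_j-\frac14\frac{\partial F^i}{\partial y^j}$ of (1.4.1) satisfies both (1) and (2); then I would establish uniqueness by showing that (1) and (2) together determine the full deviation of any admissible nonlinear connection from the canonical Levi--Civita one $\overset{\circ}{N}{}^i{}_j=\gamma^i{}_{jk}y^k$. Throughout I work in the natural chart, using the $N$-adapted horizontal frame $\delta_i=\frac{\partial}{\partial x^i}-N^j{}_i\frac{\partial}{\partial y^j}$ and its Levi--Civita analogue $\overset{\circ}{\delta}_i$, noting the key relation $\delta_i=\overset{\circ}{\delta}_i+\bigl(\overset{\circ}{N}{}^j{}_i-N^j{}_i\bigr)\frac{\partial}{\partial y^j}=\overset{\circ}{\delta}_i+\frac14\frac{\partial F^j}{\partial y^i}\frac{\partial}{\partial y^j}$.

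For condition (1), the plan is to evaluate $\nabla g_{ij}=S(g_{ij})-N^k{}_i g_{kj}-N^k{}_j g_{ik}$ directly. Since $g_{ij}=g_{ij}(x)$ does not depend on the fibre coordinates, $S(g_{ij})=y^k\frac{\partial g_{ij}}{\partial x^k}$. Substituting $N^k{}_i=\gamma^k{}_{ir}y^r-\frac14\frac{\partial F^k}{\partial y^i}$ and invoking the metric-compatibility identity $\frac{\partial g_{ij}}{\partial x^k}=g_{lj}\gamma^l{}_{ik}+g_{il}\gamma^l{}_{jk}$, the Levi--Civita part cancels $y^k\frac{\partial g_{ij}}{\partial x^k}$ identically, leaving only the force terms $\frac14 g_{kj}\frac{\partial F^k}{\partial y^i}+\frac14 g_{ik}\frac{\partial F^k}{\partial y^j}=\frac14\bigl(\frac{\partial F_j}{\partial y^i}+\frac{\partial F_i}{\partial y^j}\bigr)$, which is a scalar multiple of $Q_{ij}$. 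Thus (1) reduces to the metricity bookkeeping of $\overset{\circ}{N}$ plus identification of the symmetric part of $\frac{\partial F_i}{\partial y^j}$ with $Q$.

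For condition (2), the plan is to expand $\theta_L(\delta_k,\delta_l)$ bilinearly. Since $\theta_L=4g_{ij}\overset{\circ}{\delta}y^j\wedge dx^i$ is built from the Levi--Civita objects, one computes $\theta_L(\overset{\circ}{\delta}_k,\frac{\partial}{\partial y^m})=-4g_{km}$, while $\theta_L$ vanishes on any pair of horizontals or any pair of verticals. Using $\delta_k=\overset{\circ}{\delta}_k+\frac14\frac{\partial F^m}{\partial y^k}\frac{\partial}{\partial y^m}$, only the cross terms survive, giving $\theta_L(\delta_k,\delta_l)=g_{lm}\frac{\partial F^m}{\partial y^k}-g_{km}\frac{\partial F^m}{\partial y^l}=\frac{\partial F_l}{\partial y^k}-\frac{\partial F_k}{\partial y^l}$, which is exactly the antisymmetric part of $\frac{\partial F}{\partial y}$, i.e. a multiple of $P_{kl}$. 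Matching the numerical constant then yields $\theta_L(hX,hY)=\frac12 P(X,Y)$.

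For uniqueness, I would let $N$ be an arbitrary nonlinear connection satisfying (1) and (2) and set $B^i{}_j=\overset{\circ}{N}{}^i{}_j-N^i{}_j$, a $d$-tensor of type $(1,1)$, with lowered form $B_{ij}=g_{ik}B^k{}_j$. Repeating the two computations above with $B$ in place of $\frac14\frac{\partial F}{\partial y}$ shows that condition (1) gives $\nabla g_{ij}=B_{ij}+B_{ji}$, hence constrains only the symmetric part $B_{(ij)}$, while condition (2) gives $\theta_L(\delta_k,\delta_l)=4(B_{lk}-B_{kl})$, hence constrains only the antisymmetric part $B_{[ij]}$. Because a $(0,2)$-tensor is the sum of its symmetric and antisymmetric parts, conditions (1) and (2) fix $B_{(ij)}$ and $B_{[ij]}$ completely, so $B_{ij}$ — and therefore $B^i{}_j$ and $N$ — is unique; moreover the fixed values reassemble as $\frac14(Q_{ij}+P_{ij})=\frac14\frac{\partial F_i}{\partial y^j}$, recovering precisely (1.4.1). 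The main obstacle I expect is not conceptual but the consistent tracking of numerical factors and signs — the coefficient $4$ in $\theta_L$, the halves in the definitions of $P$ and $Q$, and above all the sign convention in the dynamical derivative $\nabla$ — so that the symmetric and antisymmetric halves of $B$ combine exactly into $\frac14\frac{\partial F_i}{\partial y^j}$ and reproduce the stated right-hand sides $-\frac12 Q$ and $\frac12 P$.
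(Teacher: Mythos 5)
The paper offers no proof of this theorem: it is quoted as the ``theorem of Bucataru--Miron'' with a pointer to \cite{BuMi}, \cite{BuMi1}, so your proposal can only be measured against that cited argument --- and it reproduces its structure faithfully. Your two verification computations are correct under the paper's definitions: since $g_{ij}=g_{ij}(x)$, the dynamical derivative gives $\nabla g_{ij}=y^k\partial_k g_{ij}-g_{sj}N^s{}_i-g_{is}N^s{}_j$, the Levi--Civita identity $\partial_k g_{ij}=g_{sj}\gamma^s{}_{ik}+g_{is}\gamma^s{}_{jk}$ annihilates the $\overset{\circ}{N}$-part, and only the force terms $\frac14\bigl(\partial F_i/\partial y^j+\partial F_j/\partial y^i\bigr)$ survive; likewise $\theta_L(\delta_k,\delta_l)$ collapses to the cross terms $g_{lm}\partial F^m/\partial y^k-g_{km}\partial F^m/\partial y^l=\partial F_l/\partial y^k-\partial F_k/\partial y^l$, exactly as you compute. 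Your uniqueness mechanism --- set $B^i{}_j=\overset{\circ}{N}{}^i{}_j-N^i{}_j$, observe that condition (1) reads $\nabla g_{ij}=B_{ij}+B_{ji}$ and condition (2) reads $\theta_L(\delta_k,\delta_l)=4(B_{lk}-B_{kl})$, so the two conditions fix the symmetric and antisymmetric parts of $B_{ij}$ separately and hence $B$ itself --- is precisely how the cited proof works, and it is convention-independent.

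One caution: the deferred ``matching of numerical constants'' does not in fact come out with the theorem's literal normalizations, and you should not promise in the write-up that the stated right-hand sides are recovered verbatim. Your own intermediate formulas give $\nabla g_{ij}=+\frac12 Q_{ij}$ (not $-\frac12 Q_{ij}$), and with $\theta_L=4g_{ij}\overset{\circ}{\delta}y^j\wedge dx^i$ and the determinant convention for the wedge one gets $\theta_L(\delta_k,\delta_l)=-2P_{kl}$, whereas $\frac12 P(\delta_k,\delta_l)=P_{kl}$. These discrepancies live in the statement rather than in your method: note that the Finslerian analogue, Theorem 2.5.1, uses the factor $2$ in its symplectic form $\overset{\circ}{\omega}=2g_{ij}\overset{\circ}{\delta}y^j\wedge dx^i$ where the present chapter writes $4$, so the source's own conventions are not consistent across chapters. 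The honest way to finish is to carry your two computations through once, fix a consistent normalization of $\theta_L$, $P$, $Q$ and the sign convention in $\nabla$ (or record the corrected constants), and then conclude; the existence-and-uniqueness content of your argument is unaffected either way.
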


The adapted basis of the distributions $N$ and $V$ is given by $\(\dfrac{\delta}{\delta x^i},\dfrac{\partial}{\partial y^i}\)$,
where
\begin{equation}
\dfrac{\delta }{\delta x^i}=\dfrac{\partial }{\partial x^i}-N^j{}_i\dfrac{\partial }{\partial y^j}=\dfrac{\stackrel{\circ}{\delta}}{\delta x^i}+\frac{1}{4}\dfrac{\partial F^j}{\partial y^i}\dfrac{\partial }{\partial y^j}
\end{equation}
and its dual basis $(dx^i,\delta y^i)$ has the 1-forms $\delta y^i$
expressed by
\begin{equation}
\delta y^i=dy^i+N^i{}_j dx^j=\stackrel{\circ}{\delta}\!\!y^i-\frac{1}{4}\dfrac{\partial F^i}{\partial y^j}dx^j.
\end{equation}
It follows that the curvature tensor ${\cal R}^i{}_{jk}$ of $N$ (from (1.4.3) is
\begin{equation}
{\cal R}^k{}_{ij}=\dfrac{\delta N^k\!{}_i}{\delta x^j}-\dfrac{\delta N^k\!{}_j}{\delta x^i}=\(\dfrac{\delta}{\delta x^j}\dfrac{\partial }{\partial y^i}-\dfrac{\delta}{\delta x^i}\dfrac{\partial }{\partial y^j}\)(\stackrel{\circ}{G^k}-\frac{1}{4}F^k)
\end{equation}
and the torsion tensor of $N$ is:
\begin{equation}
t^k{}_{ij}=\dfrac{\partial N^k\!{}_i}{\partial y^j}-\dfrac{\partial N^k\!{}_j}{\partial y^i}=\(\dfrac{\partial}{\partial y^j}\dfrac{\partial }{\partial y^i}-\dfrac{\partial}{\partial y^i}\dfrac{\partial }{\partial y^j}\)(\stackrel{\circ}{G^k}-\frac{1}{4}F^k)=0.
\end{equation}

These formulas have the following consequences.
\begin{enumerate}
\item The evolution nonlinear connection $N$ of $\Sigma_{\cal R}$ is integrable if and only if  the curvature tensor ${\cal R}^i{}_{jk}$ vanishes.
\item The nonlinear connection is torsion free, i.e. $t_{ij}^k=0$.
\end{enumerate}

The autoparallel curves of the evolution nonlinear connection $N$ are given by the system of differential equations
$$\dfrac{d^2x^i}{dt^2}+N^i\!{}_j\(x,\frac{dx}{dt}\)\dfrac{dx^j}{dt}=0,$$
which is equivalent to
\begin{equation}
\dfrac{d^2x^i}{dt^2}+\stackrel{\circ}{N^i}\!\!{}_j\(x,\frac{dx}{dt}\)\dfrac{dx^j}{dt}= \frac{1}{4}\dfrac{\partial F^i}{\partial y^j}\dfrac{dx^j}{dt}.
\end{equation}

In the initial conditions $\(x_0,\(\dfrac{dx}{dt}\)_0\)$, locally one uniquely determines the autoparallel curves of $N$.

If $F^i$ is $2$-homogeneous with respect to $y^i,$ then the previous system coincides with the Lagrange equations (1.1.4).

Therefore, we have:

\begin{theorem}
If the external forces $Fe$ are $2$-homogeneous with respect velocities $y^i=\dfrac{dx^i}{dt},$ then the evolution curves of $\Sigma_{\cal R}$ coincide to the autoparallel curves of the evolution nonlinear connection $N$ of $\Sigma_{\cal R}$.
\end{theorem}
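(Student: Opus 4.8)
The plan is to compare directly the two systems of differential equations: the Lagrange equations (1.1.4) governing the evolution curves of $\Sigma_{\cal R}$, and the autoparallel equations (1.4.10) of the evolution nonlinear connection $N$. Both are already available in the excerpt, so the proof reduces to showing that under the homogeneity hypothesis they coincide. First I would recall that the evolution curves are characterized by
\begin{equation}
\dfrac{d^2x^i}{dt^2}+\stackrel{\circ}{N^i}{}_j\left(x,\dfrac{dx}{dt}\right)\dfrac{dx^j}{dt}=\dfrac12 F^i\left(x,\dfrac{dx}{dt}\right),
\end{equation}
which is the form (1.1.4) rewritten using $2\stackrel{\circ}{G^i}=\gamma^i{}_{jk}y^jy^k=\stackrel{\circ}{N^i}{}_j y^j$, while the autoparallel curves satisfy (1.4.10), namely the same left-hand side equated instead to $\dfrac14\dfrac{\partial F^i}{\partial y^j}\dfrac{dx^j}{dt}$.

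Thus the whole statement hinges on the identity
\begin{equation}
\dfrac14\dfrac{\partial F^i}{\partial y^j}(x,y)\,y^j=\dfrac12 F^i(x,y),\quad y^i=\dfrac{dx^i}{dt},
\end{equation}
holding along the curves. The key step is therefore to invoke the Euler theorem for homogeneous functions on $TM$, stated in the excerpt as formula (1.1.6): a function that is $r$-homogeneous in $y$ satisfies $y^j\dfrac{\partial}{\partial y^j}(\cdot)=r(\cdot)$. Since $F^i(x,y)$ is assumed $2$-homogeneous with respect to $y^i$, applying (1.1.6) with $r=2$ gives $y^j\dfrac{\partial F^i}{\partial y^j}=2F^i$, so $\dfrac14 y^j\dfrac{\partial F^i}{\partial y^j}=\dfrac12 F^i$, which is exactly the required identity. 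Substituting this into (1.4.10) turns its right-hand side into $\dfrac12 F^i$, reproducing (1.1.4) verbatim.

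With the two systems now identical, I would conclude that a curve $c(t)=(x^i(t))$ is an evolution curve of $\Sigma_{\cal R}$ if and only if it is an autoparallel curve of $N$, which is the assertion of the theorem. One should note the converse direction is automatic here, because the computation is an equivalence of differential equations rather than a mere implication: both the Euler identity and the substitution are reversible. I would also remark, for completeness, that this matches the earlier observation (made just after formula (1.3.4$'$)) that $S$ is homogeneous of degree $2$ precisely when $F^i$ is $2$-homogeneous, so the spray structure is what underlies the coincidence.

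The main obstacle is essentially bookkeeping rather than conceptual: one must be careful that the coefficient $\dfrac14$ appearing in the nonlinear connection (1.4.3) and in the autoparallel equation (1.4.10), and the coefficient $\dfrac12$ appearing in the Lagrange equations (1.1.4), are reconciled correctly through the factor $r=2$ supplied by Euler's theorem. Any slip in these numerical factors would break the identification, so verifying that $\dfrac14\cdot 2=\dfrac12$ is the precise arithmetic that makes homogeneity of degree exactly $2$ (and no other degree) the sharp hypothesis. I expect no genuine difficulty beyond confirming that (1.1.6) applies componentwise to each $F^i$ viewed as a function on $\widetilde{TM}$.
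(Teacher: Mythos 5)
Your proposal is correct and takes essentially the same route as the paper: the paper likewise writes the autoparallel equations of $N$ in the form $\dfrac{d^2x^i}{dt^2}+\stackrel{\circ}{N}{}^i{}_j\left(x,\dfrac{dx}{dt}\right)\dfrac{dx^j}{dt}=\dfrac14\dfrac{\partial F^i}{\partial y^j}\dfrac{dx^j}{dt}$ and then simply observes that $2$-homogeneity of $F^i$ makes this system coincide with the Lagrange equations of $\Sigma_{\cal R}$. The only difference is that you spell out the step the paper leaves implicit, namely the componentwise Euler identity $y^j\,\partial F^i/\partial y^j=2F^i$ and the factor check $\dfrac14\cdot 2=\dfrac12$.
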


In order to proceed further, we need the exterior differential of $1$-forms $\delta y^i$.

One obtains
\begin{equation}
d(\delta y^i)=dN^i\!{}_j\wedge dx^j=\dfrac{1}{2}R^i\!{}_{kj} dx^j\wedge dx^k+B^i\!{}_{kj}
\delta y^j\wedge d x^k,
\end{equation}
where
\begin{equation}
B^i\!{}_{kj}=B^i\!{}_{jk}=\dfrac{\partial^2 G^i}{\partial y^k \partial y^j}
\end{equation}
are the coefficients of the Berwald connection determined by the nonlinear connection $N$.


\section{The canonical metrical connection $C\Gamma(N)$}\index{Connections! $N-$linear}
\setcounter{equation}{0}\setcounter{theorem}{0}\setcounter{proposition}{0}\setcounter{definition}{0}

The coefficients of the canonical metrical connection $C\Gamma(N)=\left(F^i\!{}_{jk}\right.$, $\left.C^i\!{}_{jk}\right)$ are given by the generalized Christoffel symbols \cite{miranabuc}:
\begin{equation}
\left\{
\begin{array}{l}
F^i\!{}_{jk}=\dfrac{1}{2}g^{is}\(\dfrac{\delta g_{sk}}{\delta x^j}+
\dfrac{\delta g_{js}}{\delta x^k}-\dfrac{\delta g_{jk}}{\delta x^s}
\),\\ \\
C^i\!{}_{jk}=\dfrac{1}{2}g^{is}\(\dfrac{\partial g_{sk}}{\partial y^j}+
\dfrac{\partial g_{js}}{\partial y^k}-\dfrac{\partial g_{jk}}{\partial y^s}\)
\end{array}
\right.
\end{equation}
where $g_{ij}(x)$ is the metric tensor of $\Sigma_{\cal R}$.

On the other hand, we have $\dfrac{\delta g_{jk}}{\delta x^i}=\dfrac{\partial g_{jk}}{\partial x^i}$ and $\dfrac{\partial g_{jk}}{\partial y^i}=0$, and therefore we obtain:

\begin{theorem}
The canonical metrical connection $C\Gamma(N)$ of the mechanical system $\Sigma_{\cal R}$ has the coefficients
\begin{equation}
F^i\!{}_{jk}(x,y)=\gamma^i\!{}_{jk}(x),\ \ \ \ \ \ C^i\!{}_{jk}(x,y)=0.
\end{equation}
\end{theorem}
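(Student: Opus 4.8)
The plan is to compute the coefficients of the canonical metrical connection $C\Gamma(N)=(F^i{}_{jk},C^i{}_{jk})$ directly from the general formula (1.5.1), using the specific structure of the Riemannian mechanical system $\Sigma_{\cal R}$. The essential observation is that the fundamental tensor of $\Sigma_{\cal R}$ is $g_{ij}(x)$, the metric tensor of the underlying Riemannian space ${\cal R}^n=(M,g)$, which depends on the position $x$ only and not on the velocities $y^i$. This single fact will drive the entire argument.

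First I would recall the formulas (1.5.1) for the horizontal and vertical coefficients in terms of the $\delta$- and $\partial$-derivatives of $g_{ij}$, namely
\begin{equation}
F^i{}_{jk}=\dfrac{1}{2}g^{is}\left(\dfrac{\delta g_{sk}}{\delta x^j}+\dfrac{\delta g_{js}}{\delta x^k}-\dfrac{\delta g_{jk}}{\delta x^s}\right),\quad C^i{}_{jk}=\dfrac{1}{2}g^{is}\left(\dfrac{\partial g_{sk}}{\partial y^j}+\dfrac{\partial g_{js}}{\partial y^k}-\dfrac{\partial g_{jk}}{\partial y^s}\right).
\end{equation}
The vertical part is immediate: since $g_{jk}=g_{jk}(x)$ is independent of $y$, every vertical derivative $\dfrac{\partial g_{jk}}{\partial y^i}$ vanishes identically, so $C^i{}_{jk}(x,y)=0$. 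For the horizontal part I would invoke the definition (1.4.6) of the adapted frame vector $\dfrac{\delta}{\delta x^i}=\dfrac{\partial}{\partial x^i}-N^j{}_i\dfrac{\partial}{\partial y^j}$ and note that when it acts on a function of $x$ alone, the vertical correction term $N^j{}_i\dfrac{\partial}{\partial y^j}$ contributes nothing. Hence $\dfrac{\delta g_{jk}}{\delta x^i}=\dfrac{\partial g_{jk}}{\partial x^i}$, and the formula for $F^i{}_{jk}$ reduces exactly to the classical Christoffel symbols $\gamma^i{}_{jk}(x)$ of the metric $g_{ij}(x)$ as given in (3.2.7).

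The two auxiliary identities $\dfrac{\delta g_{jk}}{\delta x^i}=\dfrac{\partial g_{jk}}{\partial x^i}$ and $\dfrac{\partial g_{jk}}{\partial y^i}=0$ are precisely what the statement isolates as its analytic content, and both are trivial consequences of $g_{ij}=g_{ij}(x)$; no genuine obstacle arises here. The only point demanding mild care is conceptual rather than computational: one should confirm that the canonical metrical connection of Theorem 1.5.1 (Chapter 2) applies verbatim to $\Sigma_{\cal R}$, i.e.\ that the evolution nonlinear connection $N$ of (1.4.3) is the nonlinear connection entering the hypotheses $A_1$--$A_5$. This is guaranteed because $C\Gamma(N)$ is constructed with respect to whatever nonlinear connection is fixed, and in the mechanical setting that connection is the evolution nonlinear connection $N$ determined by $S$. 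With this identification secured, substituting into (1.5.1) yields $F^i{}_{jk}(x,y)=\gamma^i{}_{jk}(x)$ and $C^i{}_{jk}(x,y)=0$, completing the proof. I expect the argument to be short and essentially bookkeeping; the ``hard part,'' if any, is merely verifying that the horizontal derivative collapses to the ordinary partial derivative despite the presence of the nontrivial velocity-dependent coefficients $N^i{}_j$ in (1.4.3), which it does precisely because $g_{ij}$ carries no $y$-dependence.
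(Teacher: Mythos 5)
Your proposal is correct and follows essentially the same route as the paper: the paper's proof likewise substitutes the velocity-independent metric $g_{ij}(x)$ into the generalized Christoffel symbols (1.5.1), observing that $\dfrac{\delta g_{jk}}{\delta x^i}=\dfrac{\partial g_{jk}}{\partial x^i}$ (the term $N^j{}_i\dfrac{\partial}{\partial y^j}$ annihilates functions of $x$ alone) and $\dfrac{\partial g_{jk}}{\partial y^i}=0$, whence $F^i{}_{jk}=\gamma^i{}_{jk}(x)$ and $C^i{}_{jk}=0$. Your additional remark that the connection is taken with respect to the evolution nonlinear connection $N$ of $\Sigma_{\cal R}$ is exactly the paper's implicit standing hypothesis in that section, so nothing is missing.
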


Let $\omega^i\!{}_{j}$ be the connection forms of $C\Gamma(N)$:
\begin{equation}
\omega^i\!{}_{j}=F^i\!{}_{jk}dx^k+C^i\!{}_{jk}\delta y^k=\gamma^i\!{}_{jk}(x)dx^k.
\end{equation}

Then, we have (\cite{miranabuc}):
\begin{theorem}
The structure equation of $C\Gamma(N)$ can be expressed by
\begin{equation}
\left\{
\begin{array}{ll}
d(dx^i)-dx^k\wedge \omega^i\!{}_k & =-\stackrel{1}{\Omega}\!{}^i,\\
d(\delta y^i)-\delta y^k\wedge \omega^i\!{}_k & =-\stackrel{2}{\Omega}\!{}^i,\\
d \omega^i\!{}_j\ \  -\ \omega^k\!{}_j\wedge\omega^i\!{}_k & =-\stackrel{}{\Omega}\!{}_j^i,
\end{array}
\right.
\end{equation}
where the $2$-forms of torsion $\stackrel{1}{\Omega}\!{}^i,\ \stackrel{2}{\Omega}\!{}^i$ are as follows
\begin{equation}
\begin{array}{ll}
\stackrel{1}{\Omega}\!{}^i=C^i\!{}_{jk}dx^j\wedge \delta y^k=0,\\
\\
\stackrel{2}{\Omega}\!{}^i= R^i\!{}_{jk}dx^j\wedge dx^k+P^i\!{}_{jk}dx^j\wedge \delta y^k.
\end{array}
\end{equation}
Here $R^i\!{}_{jk}$ is the curvature tensor of $N$ and $P^i\!{}_{jk}=\gamma^i\!{}_{jk}-\gamma^i\!{}_{kj}=0.$
\end{theorem}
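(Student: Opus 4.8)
The plan is to derive these structure equations not from scratch but as a specialization of the general structure equations of an arbitrary $N$-linear connection on $TM$, established in Part I, Chapter 1. The canonical metrical connection $C\Gamma(N)$ of $\Sigma_{\mathcal{R}}$ is, by Theorem 1.5.1, a genuine $N$-linear connection whose nonlinear connection $N$ has the coefficients (1.4.3); hence the three general equations
$d(dx^i)-dx^k\wedge\omega^i{}_k=-\stackrel{(0)}{\Omega}{}^i$, $d(\delta y^i)-\delta y^k\wedge\omega^i{}_k=-\stackrel{(1)}{\Omega}{}^i$, $d\omega^i{}_j-\omega^k{}_j\wedge\omega^i{}_k=-\Omega^i{}_j$ hold verbatim for this connection. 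The whole task then reduces to evaluating the general torsion and curvature $2$-forms on the concrete data $F^i{}_{jk}=\gamma^i{}_{jk}$, $C^i{}_{jk}=0$ and $\omega^i{}_j=\gamma^i{}_{jk}(x)dx^k$ supplied by (1.5.2)--(1.5.3).

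First I would substitute these coefficients into the general torsion $d$-tensors $T^i{}_{jk}=F^i{}_{jk}-F^i{}_{kj}$, $S^i{}_{jk}=C^i{}_{jk}-C^i{}_{kj}$. The symmetry $\gamma^i{}_{jk}=\gamma^i{}_{kj}$ of the Christoffel symbols gives $T^i{}_{jk}=0$, and $C^i{}_{jk}=0$ gives $S^i{}_{jk}=0$ outright. Feeding these into the general torsion $2$-forms collapses the first one to $\stackrel{1}{\Omega}{}^i=\tfrac12 T^i{}_{jk}dx^j\wedge dx^k+C^i{}_{jk}dx^j\wedge\delta y^k=0$, and strips the $\tfrac12 S^i{}_{jk}\delta y^j\wedge\delta y^k$ term from the second, leaving $\stackrel{2}{\Omega}{}^i=R^i{}_{jk}dx^j\wedge dx^k+P^i{}_{jk}dx^j\wedge\delta y^k$, where $R^i{}_{jk}$ is the nonlinear-connection curvature (1.4.9). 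These steps are routine once the coefficients are inserted.

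The one genuinely non-routine point — and the main obstacle — is the asserted vanishing of the mixed torsion $P^i{}_{jk}=\partial N^i{}_j/\partial y^k-F^i{}_{kj}=B^i{}_{jk}-\gamma^i{}_{kj}$, where $B^i{}_{jk}$ is the Berwald coefficient (1.4.13) of $N$. Writing $N^i{}_j=\gamma^i{}_{jm}y^m-\tfrac14\,\partial F^i/\partial y^j$ from (1.4.3) and differentiating, the Riemannian part yields $\partial(\gamma^i{}_{jm}y^m)/\partial y^k=\gamma^i{}_{jk}$, so that $B^i{}_{jk}=\gamma^i{}_{jk}-\tfrac14\,\partial^2F^i/\partial y^j\partial y^k$ and therefore $P^i{}_{jk}=\gamma^i{}_{jk}-\gamma^i{}_{kj}-\tfrac14\,\partial^2F^i/\partial y^j\partial y^k$. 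By symmetry of the Christoffel symbols this equals the claimed $\gamma^i{}_{jk}-\gamma^i{}_{kj}=0$ precisely when the correction term $-\tfrac14\,\partial^2F^i/\partial y^j\partial y^k$ drops out, i.e. when the external forces $Fe$ are affine in the velocities (independent of or linear in $y$, which is exactly the regime of examples such as those in Section 1.2). I would make this hypothesis explicit, since in full generality $P^i{}_{jk}$ carries the residual second velocity-derivative of $F^i$; recording this term is the only place where the identity could fail.

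Finally, the curvature $2$-form $\Omega^i{}_j$ is obtained by the same substitution into the general formula $\Omega^i{}_j=\tfrac12 R_{j}{}^{i}{}_{kh}dx^k\wedge dx^h+P_{j}{}^{i}{}_{kh}dx^k\wedge\delta y^h+\tfrac12 S_{j}{}^{i}{}_{kh}\delta y^k\wedge\delta y^h$; because $\omega^i{}_j=\gamma^i{}_{jk}(x)dx^k$ depends on $x$ only and $C^i{}_{jk}=0$, the $S$-type curvature term vanishes and the $P$-type term simplifies, which is a purely mechanical computation. Assembling the three reduced equations then reproduces exactly the stated system, with $\stackrel{1}{\Omega}{}^i=0$ and $\stackrel{2}{\Omega}{}^i=R^i{}_{jk}dx^j\wedge dx^k+P^i{}_{jk}dx^j\wedge\delta y^k$, completing the proof.
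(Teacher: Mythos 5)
Your route coincides with the paper's: Theorem 1.5.2 is stated there without proof (with a citation to \cite{miranabuc}), and the intended argument is exactly your specialization of the general structure equations of an $N$-linear connection (Part I, Ch.\ 1, \S 1.5) to the data $F^i{}_{jk}=\gamma^i{}_{jk}(x)$, $C^i{}_{jk}=0$ supplied by Theorem 1.5.1; substituting these kills $T^i{}_{jk}$ and $S^i{}_{jk}$, giving $\stackrel{1}{\Omega}{}^i=0$ and stripping the $\delta y\wedge\delta y$ term from $\stackrel{2}{\Omega}{}^i$, just as you argue. Where you genuinely depart from the paper is on the mixed torsion, and you are right to do so: by the general definition $P^i{}_{jk}=\partial N^i{}_j/\partial y^k-F^i{}_{kj}$ and the evolution connection (1.4.3), one gets $P^i{}_{jk}=-\tfrac14\,\partial^2F^i/\partial y^j\partial y^k$, which is consistent with the paper's own Berwald coefficients $B^i{}_{jk}=\gamma^i{}_{jk}-\tfrac14\,\partial^2 F^i/\partial y^j\partial y^k$ from (1.4.12)--(1.4.13) and vanishes only when the $F^i$ are affine in the velocities. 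The paper's bald assertion $P^i{}_{jk}=\gamma^i{}_{jk}-\gamma^i{}_{kj}=0$ silently suppresses exactly this residual term, so your explicit hypothesis is the correct repair of the statement, not a defect of your proof. Two smaller points: carry the factor $\tfrac12$ on the $R$-term, i.e.\ $\stackrel{2}{\Omega}{}^i=\tfrac12 R^i{}_{jk}\,dx^j\wedge dx^k+P^i{}_{jk}\,dx^j\wedge\delta y^k$, as in the general torsion forms of Part I (the statement's omission of $\tfrac12$ is a slip, cf.\ the analogous theorem for $\Sigma_F$ in Ch.\ 2 of Part III); and note that the vanishing of the $P$- and $S$-type curvature terms in $\Omega^i{}_j$ follows, as you say, mechanically from $\omega^i{}_j=\gamma^i{}_{jk}(x)\,dx^k$ and $C^i{}_{jk}=0$, independently of whether the torsion $P^i{}_{jk}$ vanishes.
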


The curvature $2$-form $\Omega^i_j$ is given by
\begin{equation}
\Omega^i_j=\dfrac{1}{2}R_j^{\ i}\!{}_{kh}\ dx^k\wedge dx^h+P_j^{\ i}\!{}_{kh}\ dx^k\wedge \delta y^h+\dfrac{1}{2}S_j^{\ i}\!{}_{kh}\ \delta y^k\wedge \delta y^h,
\end{equation}
where
\begin{equation}
\begin{array}{ll}
R_h^{\ i}{}_{jk}&=\dfrac{\delta F^i\!{}_{hj}}{\delta x^k}-\dfrac{\delta F^i\!{}_{hk}}{\delta x^j}+F^s\!{}_{hj}F^i\!{}_{sk}-F^s\!{}_{hk}F^i\!{}_{sj}+
C^i\!{}_{hs}R^s\!{}_{jk}=\\
\\
&=\dfrac{\partial \gamma^i\!{}_{hj}}{\partial x^k}-\dfrac{\partial \gamma^i\!{}_{hk}}{\partial x^j}+\gamma^s\!{}_{hj}\gamma^i\!{}_{sk}-\gamma^s\!{}_{hk}\gamma^i\!{}_{sj}=
{\mathbf r}_h^{\ i}{}_{jk}
\end{array}
\end{equation}
is the Riemannian tensor of curvature of the Levi-Civita connection $\gamma_{jk}^i(x)$ and the curvature tensors $P_j^{\ i}{}_{kh},S_j^{\ i}{}_{kh}$ vanish.

Therefore, the tensors of torsion of $C\Gamma(N)$ are
\begin{equation}
R^{ i}{}_{jk},\ T^{ i}{}_{jk}=0, \ S^{ i}{}_{jk}=0,\ P^{ i}{}_{jk}=0,\ C^{ i}{}_{jk}=0
\end{equation}
and the curvature tensors of $C\Gamma(N)$ are
\begin{equation}
R_j^{\ i}{}_{kh}(x,y)={\bf r}_j^{\ i}{}_{kh}(x),\ P_j^{\ i}{}_{kh}(x,y)=0,\ S_j^{\ i}{}_{kh}(x,y)=0.
\end{equation}

The Bianchi identities can be obtained directly from (1.5.4), taking into account the conditions (1.5.8) and (1.5.9).

The $h$- and $v$-covariant derivatives of $d$-tensor fields with respect to $C\Gamma(N)=(\gamma^i\!{}_{jk},0)$ are expressed, for instance, by
$$\begin{array}{l}
\nabla_k t_{ij}=\dfrac{\delta t_{ij}}{\delta x^k}-\gamma^s\!{}_{ik}t_{sj}-\gamma^s\!{}_{jk}t_{is},\\
\\
\dot\nabla_k t_{ij}=\dfrac{\partial t_{ij}}{\partial y^k}-C^s\!{}_{ik}t_{sj}-C^s\!{}_{jk}t_{is}=\dfrac{\partial t_{ij}}{\partial y^k}.
\end{array}$$
Therefore, $C\Gamma(N)$ being a metric connection with respect to $g_{ij}(x),$ we have
\begin{equation}
\nabla_k\ g_{ij}=\overset{\circ}{\nabla}_k g_{ij}=0,
\end{equation}
$(\stackrel{\circ}{\nabla}$ is the covariant derivative with respect to Levi-Civita connection of $g_{ij})$ and
\begin{equation}
\dot{\nabla}{}_k\ g_{ij}=0.\tag{1.5.10'}
\end{equation}

The deflection tensors of $C\Gamma(N)$ are
$$D^i\!{}_j=\nabla_jy^i=\dfrac{\delta y^i}{\delta x^j}+y^s\gamma^i\!{}_{sj}=-N^i\!{}_j+y^s\gamma^i\!{}_{sj}$$
and $$d^i\! {}_j=\dot \nabla_j\ y^i=\delta^i\!{}_j.$$

The evolution nonlinear connection of a Riemannian mechanical system $\Sigma_{\cal R}$ given by (1.4.3) implies
$$D^i\!{}_j=-\stackrel{\circ}{N}_j^i+\frac{1}{4}\frac{\partial F^i}{\partial y^j}+y^s\gamma_{sj}^i=\frac{1}{4}\frac{\partial F^i}{\partial y^j},$$
where we have used $\stackrel{\circ}{D}_jy^i=\stackrel{\circ}{D}_j^i=0$. It follows

\begin{proposition}
For a Riemannian mechanical system the deflection tensors $D^i\!{}_j$ and $d^i\! {}_j$ of the connection $C\Gamma(N)$ are expressed by
\begin{equation}
D^i\! {}_j=\dfrac{1}{4}\dfrac{\partial F^i}{\partial y^j},\quad d^i\! {}_j=\delta^i_j.
\end{equation}
\end{proposition}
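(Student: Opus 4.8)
The plan is to obtain the two formulas as a direct specialization of the known expressions for the deflection tensors of an $N$-linear connection. Recall the expression (2.5.7'): for an $N$-linear connection with coefficients $(L^i{}_{jk},C^i{}_{jk})$ the $h$- and $v$-deflection tensors of the Liouville $d$-vector field $y^i$ are
\begin{equation}
D^i{}_j=y^s L^i{}_{sj}-N^i{}_j,\qquad d^i{}_j=\delta^i_j+y^s C^i{}_{sj}.\nonumber
\end{equation}
Here the ingredients are already fixed: the theorem of \S1.5 giving the coefficients of $C\Gamma(N)$ states that $L^i{}_{jk}=F^i{}_{jk}=\gamma^i{}_{jk}(x)$ and $C^i{}_{jk}=0$, while the evolution nonlinear connection of $\Sigma_{\cal R}$ has, by (1.4.3), the coefficients $N^i{}_j=\gamma^i{}_{jk}(x)y^k-\dfrac14\dfrac{\partial F^i}{\partial y^j}$.

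First I would dispose of $d^i{}_j$, which is immediate: since $C^i{}_{jk}=0$, the formula above collapses to $d^i{}_j=\delta^i_j$. Equivalently, the $v$-covariant derivative reduces to the ordinary operator $\partial/\partial y^j$ because the vertical coefficients vanish, and $\partial y^i/\partial y^j=\delta^i_j$. Next I would compute $D^i{}_j$. Writing the $h$-covariant derivative along the adapted frame gives $D^i{}_j=\dfrac{\delta y^i}{\delta x^j}+y^s\gamma^i{}_{sj}$; since $\dfrac{\delta}{\delta x^j}=\dfrac{\partial}{\partial x^j}-N^s{}_j\dfrac{\partial}{\partial y^s}$ and $\partial y^i/\partial x^j=0$ (the $x$ and $y$ being independent coordinates on $TM$), we have $\dfrac{\delta y^i}{\delta x^j}=-N^i{}_j$, so indeed $D^i{}_j=y^s\gamma^i{}_{sj}-N^i{}_j$. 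Substituting (1.4.3) yields
\begin{equation}
D^i{}_j=y^s\gamma^i{}_{sj}-\gamma^i{}_{jk}y^k+\dfrac14\dfrac{\partial F^i}{\partial y^j},\nonumber
\end{equation}
and, using the symmetry $\gamma^i{}_{sj}=\gamma^i{}_{js}$ of the Christoffel symbols of $g_{ij}(x)$, the two Christoffel contributions cancel after relabelling the summation index, leaving $D^i{}_j=\dfrac14\dfrac{\partial F^i}{\partial y^j}$, the announced formula.

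The computation has no genuine obstacle; the only point deserving care is the exact cancellation of the Riemannian pieces. This hinges on recording that the evolution nonlinear connection differs from the Levi-Civita nonlinear connection $\stackrel{\circ}{N}{}^i_j=\gamma^i{}_{jk}y^k$ only by the external-force term $-\dfrac14\dfrac{\partial F^i}{\partial y^j}$, and that $\stackrel{\circ}{N}{}^i_j$ is precisely the quantity produced by $y^s\gamma^i{}_{sj}$. To make the cancellation transparent I would state explicitly, as the surrounding text does, the intermediate vanishing $\stackrel{\circ}{D}{}^i_j=y^s\gamma^i{}_{sj}-\stackrel{\circ}{N}{}^i_j=0$ for the Riemannian background, so that the entire $h$-deflection is carried by the force term alone. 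With this remark in place, both identities follow in one line each.
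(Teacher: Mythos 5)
Your proof is correct and follows essentially the same route as the paper: both use the general deflection formulas $D^i{}_j=y^s L^i{}_{sj}-N^i{}_j$, $d^i{}_j=\delta^i_j+y^sC^i{}_{sj}$, substitute the coefficients $C\Gamma(N)=(\gamma^i{}_{jk}(x),0)$ and the evolution connection $N^i{}_j$ from (1.4.3), and invoke the vanishing $\stackrel{\circ}{D}{}^i_j=y^s\gamma^i{}_{sj}-\stackrel{\circ}{N}{}^i_j=0$ of the Riemannian background to cancel the Christoffel contributions. Nothing further is needed.
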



\section{The electromagnetism in the theory of the Riemannian mechanical systems $\Sigma_{\cal R}$}\index{Electromagnetism and gravitational fields}
\setcounter{equation}{0}\setcounter{theorem}{0}\setcounter{proposition}{0}\setcounter{definition}{0}

In a Riemannian mechanical system $\Sigma_{\cal R}=\(M,T,Fe\)$ whose external forces $Fe$ depend on the point $x$ and on the velocity $y^i=\dfrac{dx^i}{dt},$ the electromagnetic phenomena appears because the deflection tensors $D^i\! {}_j$ and $d_j^i$ from (1.5.11) nonvanish. Hence the $d$-tensors $D_{ij}=g_{ih}D^h\! {}_j,\ d_{ij}=g_{ih}\delta^h_j=g_{ij}$ determine the $h$-electromagnetic tensor ${\cal F}_{ij}$ and $v$-electromagnetic tensor $f_{ij}$ by the formulas, \cite{miranabuc}:
\begin{equation}
{\cal F}_{ij}=\dfrac{1}{2}(D_{ij}-D_{ji}),\ \ \ \ f_{ij}=\dfrac{1}{2}(d_{ij}-d_{ji}).
\end{equation}

By means of (1.5.11), we have
\begin{proposition}
The $h$- and $v$-tensor fields ${\cal F}_{ij}$ and $f_{ij}$ are given by
\begin{equation}
{\cal F}_{ij}=\dfrac{1}{4} P_{ij},\ \ \ \ f_{ij}=0.
\end{equation}
where $P_{ij}$ is the helicoidal tensor $(1.4.4)$ of $\Sigma_{\cal R}$.
\end{proposition}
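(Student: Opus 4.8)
The plan is to reduce both identities to the explicit deflection-tensor formula of Proposition~$(1.5.11)$ and then substitute into the defining relations $(1.6.1)$ for ${\cal F}_{ij}$ and $f_{ij}$. First I would lower the contravariant index on $D^h{}_j=\dfrac{1}{4}\dfrac{\partial F^h}{\partial y^j}$ using the fundamental tensor, obtaining $D_{ij}=g_{ih}D^h{}_j=\dfrac{1}{4}g_{ih}\dfrac{\partial F^h}{\partial y^j}$. The crucial — and essentially the only substantive — observation is that for a Riemannian mechanical system $\Sigma_{\cal R}$ the fundamental tensor is $g_{ij}=g_{ij}(x)$, depending on the position alone, so it is annihilated by $\dfrac{\partial}{\partial y^j}$ and may be carried through the vertical derivative. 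Together with the definition $F_i=g_{ih}F^h$ of the covariant external forces from $(1.1.2)$, this gives $g_{ih}\dfrac{\partial F^h}{\partial y^j}=\dfrac{\partial}{\partial y^j}\left(g_{ih}F^h\right)=\dfrac{\partial F_i}{\partial y^j}$, whence $D_{ij}=\dfrac{1}{4}\dfrac{\partial F_i}{\partial y^j}$.

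The antisymmetric part then falls out immediately. Substituting into $(1.6.1)$ I would write ${\cal F}_{ij}=\dfrac{1}{2}(D_{ij}-D_{ji})=\dfrac{1}{8}\left(\dfrac{\partial F_i}{\partial y^j}-\dfrac{\partial F_j}{\partial y^i}\right)$, and comparing with the definition $(1.4.4)$ of the helicoidal tensor $P_{ij}=\dfrac{1}{2}\left(\dfrac{\partial F_i}{\partial y^j}-\dfrac{\partial F_j}{\partial y^i}\right)$ yields exactly ${\cal F}_{ij}=\dfrac{1}{4}P_{ij}$, as claimed.

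For the vertical electromagnetic tensor the computation is even shorter: since $(1.5.11)$ gives $d^h{}_j=\delta^h_j$, the relation $d_{ij}=g_{ih}\delta^h_j=g_{ij}$ noted in the text shows that $d_{ij}$ coincides with the symmetric metric tensor, so its skew part vanishes, $f_{ij}=\dfrac{1}{2}(d_{ij}-d_{ji})=\dfrac{1}{2}(g_{ij}-g_{ji})=0$. There is no genuine obstacle in this proof — the whole content of the statement is the interplay between the purely positional dependence of $g_{ij}$ and the vertical differentiation, which turns the deflection-tensor expression into the helicoidal tensor. The only point that demands care is bookkeeping of the numerical factors, namely the $\tfrac14$ coming from the deflection tensor, the $\tfrac12$ from the antisymmetrization in $(1.6.1)$, and the $\tfrac12$ built into the definition of $P_{ij}$, so that they combine correctly to produce the coefficient $\tfrac14$.
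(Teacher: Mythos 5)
Your proof is correct and takes essentially the same approach as the paper: substitute the deflection tensors $D^i{}_j=\frac{1}{4}\frac{\partial F^i}{\partial y^j}$, $d^i{}_j=\delta^i_j$ from $(1.5.11)$, lower the index using the fact that for $\Sigma_{\cal R}$ the metric $g_{ij}=g_{ij}(x)$ passes through $\frac{\partial}{\partial y^j}$, and compare with the definition $(1.4.4)$ of $P_{ij}$ and the symmetry of $d_{ij}=g_{ij}$. Your factor bookkeeping $\frac{1}{2}\cdot\frac{1}{4}=\frac{1}{8}=\frac{1}{4}\cdot\frac{1}{2}$ is in fact more careful than the paper's one-line display, which contains a typographical slip (it writes $\frac{1}{4}(D_{ij}-D_{ji})$ where the definition $(1.6.1)$ prescribes the coefficient $\frac{1}{2}$).
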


Indeed, we have
$${\cal F}_{ij}=\dfrac{1}{4}(D_{ij}-D_{ji})=\dfrac{1}{8}\(\dfrac{\partial F^i}{\partial y^j}-\dfrac{\partial F^j}{\partial y^i}\)=\dfrac{1}{4}P_{ij}.$$

If we denote $R_{ijk}:=g_{ih}R^h\!{}_{jk}$, then we can prove:

\begin{theorem}
The electromagnetic tensor ${\cal F}_{ij}$ of the mechanical system $\Sigma_{\cal R}=\(M,T,Fe\)$ satisfies the following generalized Maxwell equations:
\begin{equation}
\nabla_k {\cal F}_{ji}+\nabla_i {\cal F}_{kj}+\nabla_j {\cal F}_{ik}=-(R_{kji}+R_{ikj}+R_{jik}).
\end{equation}
\begin{equation}
\dot\nabla_k {\cal F}_{ji}=0.\tag{1.6.3'}
\end{equation}
\end{theorem}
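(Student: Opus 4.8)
The plan is to reduce both equations to the deflection-tensor identities of the canonical metrical connection $C\Gamma(N)$, exactly as the generalized Maxwell equations of a Lagrange space were obtained in Part~I. Recall that for $\Sigma_{\cal R}$ the electromagnetic tensor is ${\cal F}_{ij}=\tfrac12(D_{ij}-D_{ji})$ with $D_{ij}=g_{ih}D^h{}_j$, while the $v$-deflection is $d_{ij}=g_{is}d^s{}_j=g_{ij}$. The essential inputs are the data collected in Theorem~1.5.3: the torsions $T^i{}_{jk}=S^i{}_{jk}=P^i{}_{jk}=C^i{}_{jk}=0$, the curvature reduction $R_j{}^{\,i}{}_{kh}={\bf r}_j{}^{\,i}{}_{kh}$, $P_j{}^{\,i}{}_{kh}=S_j{}^{\,i}{}_{kh}=0$, together with the metricity $\nabla_k g_{ij}=0$, $\dot\nabla_k g_{ij}=0$. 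I would first apply the Ricci identities to the Liouville field $\C$ to obtain the covariant deflection identities in the present setting (this is the verbatim analogue of Proposition~2.6.1 of Part~I), and then specialise them using the above vanishings.

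For the $h$-equation, the first covariant deflection identity, after lowering the free index with $g$ and using $T^s{}_{jk}=0$ and $d_{is}=g_{is}$, collapses to $D_{ij|k}-D_{ik|j}=y^s R_{sijk}-R_{ijk}$, where $R_{sijk}=g_{il}{\bf r}_s{}^{\,l}{}_{jk}$ is the covariant Levi--Civita curvature and $R_{ijk}=g_{il}R^l{}_{jk}$ is the covariant curvature of the nonlinear connection $N$. I would then form the cyclic combination $\nabla_k{\cal F}_{ji}+\nabla_i{\cal F}_{kj}+\nabla_j{\cal F}_{ik}$, expand each term through $2{\cal F}_{ab}=D_{ab}-D_{ba}$, and regroup the six resulting $h$-derivatives by common first index so that each pair is of the form $D_{a b|c}-D_{a c|b}$ and hence replaceable by the deflection identity. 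The $y^s$-weighted Riemannian terms $y^s(-R_{sijk}+R_{sjik}+R_{skji})$ vanish by the \emph{first Bianchi identity} of ${\bf r}$ (its covariant form is skew in the first two and in the last two indices with cyclic identity on the last three). The surviving terms, built solely from $R_{ijk}$, reassemble into $-(R_{kji}+R_{ikj}+R_{jik})$ after invoking the skew-symmetry $R^i{}_{jk}=-R^i{}_{kj}$ and the cyclic identity $\sum_{(ijk)}R_{ijk}=0$ (Proposition~5.1.1$^\circ$ of Part~I). This yields the announced $h$-Maxwell equation.

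For the $v$-equation I would use the \emph{second} covariant deflection identity $D_{ij}|_k-d_{ik|j}=y^sP_{sijk}-D_{is}C^s{}_{jk}-d_{is}P^s{}_{jk}$. For $\Sigma_{\cal R}$ the curvature $P_{sijk}=0$ and the torsions $C^s{}_{jk}=P^s{}_{jk}=0$, while $d_{ik|j}=g_{ik|j}=0$ by $h$-metricity; hence $D_{ij}|_k=0$ identically. Consequently $\dot\nabla_k{\cal F}_{ji}=\tfrac12\big(D_{ji}|_k-D_{ij}|_k\big)=0$, which is precisely $(1.6.3')$; this is also consistent with $f_{ij}=0$ from Proposition~1.6.1.

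The main obstacle I anticipate is purely combinatorial: keeping the index bookkeeping in the cyclic sum correct and confirming that the $y^s$-Riemann contributions cancel exactly by the Bianchi identity rather than merely simplifying. A cleaner but equivalent route, which I would keep in reserve, is to start from ${\cal F}_{ij}=\tfrac14 P_{ij}$ (Proposition~1.6.1) and feed it into the Bianchi identities of $C\Gamma(N)$ obtained from the structure equations of Theorem~1.5.3; the torsion data then forces the same two identities, with the right-hand side $-(R_{kji}+R_{ikj}+R_{jik})$ appearing as the only nonvanishing curvature contribution.
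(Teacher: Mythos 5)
Your proposal follows essentially the same route as the paper: the paper also applies the Ricci identities to the Liouville vector field to obtain the covariant deflection identities $\nabla_i D_{kj}-\nabla_j D_{ki}=y^h{\bf r}_{hkij}-R_{kij}$ and $\dot\nabla_j D_{ki}=0$ (the latter from $\nabla_i d^k{}_j-\dot\nabla_j D^k{}_i=0$ with $d^k{}_j=\delta^k_j$), then takes the cyclic sum and cancels the $y^h$-weighted Riemannian terms with the first Bianchi identity ${\bf r}_{hijk}+{\bf r}_{hjki}+{\bf r}_{hkij}=0$, exactly as you describe. Your only caveat is the one you yourself flag — the precise indexing of the cyclic combination of ${\bf r}$-terms must be checked so that they fall into the Bianchi cycle on the last three indices — and this is indeed how the cancellation works out.
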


\begin{proof}
Applying the Ricci identities to the Liouville vector field, we obtain
$$\nabla_i D^k\! {}_j-\nabla_j D^k\! {}_i=y^h{\bf r}^{\ k}_{h\ ij}-R^k_{ij},\ \nabla_i d^k\! {}_j-\dot \nabla_j D^k\! {}_i=0$$
and this leads to
\begin{equation}
\nabla_i D_{kj}-\nabla_j D_{ki}=y^h{\bf r}_{hkij}-R_{kij},
\end{equation}
\begin{equation}
\dot \nabla_jD_{ki}=0.\tag{1.6.4'}
\end{equation}
By taking cyclic permutations of the indices i,k,j and adding in (1.6.4), by taking
into account the identity ${\bf r}_{hijk}+{\bf r}_{hjki}+{\bf r}_{hkij}=0$ we deduce (1.6.3), and analogously (1.6.3').
\end{proof}

From equations (1.6.2) and (1.6.4') we obtain as consequences:

\begin{corollary}
The electromagnetic tensor ${\cal F}_{ij}$ of the mechanical system $\Sigma_{\cal R}=\(M,T,Fe\)$ does not depend on the velocities $y^i=\dfrac{dx^i}{dt}.$
\end{corollary}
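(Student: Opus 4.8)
The plan is to read the corollary directly off the two displayed facts (1.6.2) and (1.6.4') that were already produced while proving the generalized Maxwell equations, the essential extra input being that the canonical metrical connection of $\Sigma_{\cal R}$ has vanishing vertical coefficients. First I would recall from Theorem 1.5.1 that $C\Gamma(N)=(\gamma^i{}_{jk}(x),0)$, so that $C^i{}_{jk}=0$. For this reason the $v$-covariant derivative of an arbitrary $d$-tensor collapses to an ordinary fibre derivative: for a $(0,2)$ $d$-tensor $t_{ij}$ one has
$$\dot\nabla_k t_{ij}=\dfrac{\partial t_{ij}}{\partial y^k}-C^s{}_{ik}t_{sj}-C^s{}_{jk}t_{is}=\dfrac{\partial t_{ij}}{\partial y^k}.$$
This is the observation that turns a covariant-constancy statement into a genuine independence-of-velocity statement.

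Next I would invoke the identity (1.6.4'), namely $\dot\nabla_j D_{ki}=0$, which was obtained by applying the Ricci identities to the Liouville vector field $\C$ (using $d^i{}_j=\delta^i_j$, the vanishing of the curvature $d$-tensors $P_j^{\ i}{}_{kh}$ and $S_j^{\ i}{}_{kh}$ from (1.5.9), and $\dot\nabla_k g_{ij}=0$ from (1.5.10'), so that index lowering commutes with $\dot\nabla$). Combining this with the previous remark gives $\partial D_{ki}/\partial y^j=0$, i.e. the covariant deflection tensor $D_{ki}$ is independent of the velocities.

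Finally I would antisymmetrise. By the definition (1.6.1), ${\cal F}_{ij}=\dfrac12(D_{ij}-D_{ji})$, hence
$$\dfrac{\partial {\cal F}_{ij}}{\partial y^k}=\dfrac12\left(\dfrac{\partial D_{ij}}{\partial y^k}-\dfrac{\partial D_{ji}}{\partial y^k}\right)=0,$$
so ${\cal F}_{ij}$ does not depend on $y^i=dx^i/dt$. Equivalently, via (1.6.2) one has ${\cal F}_{ij}=\dfrac14 P_{ij}$ and $\dot\nabla_k{\cal F}_{ij}=\dfrac14\dot\nabla_k P_{ij}=0$, which yields the same conclusion for the helicoidal tensor. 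The argument is short and presents no real obstacle; the only points requiring care are the bookkeeping of index lowering and raising — legitimate precisely because $g_{ij}$ is $v$-covariant constant — and making explicit that $C^i{}_{jk}=0$ is exactly what allows the identity $\dot\nabla_k=\partial/\partial y^k$ converting the covariant-constancy $\dot\nabla_j D_{ki}=0$ into genuine velocity-independence.
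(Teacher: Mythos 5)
Your proposal is correct and takes essentially the same route as the paper: the paper simply reads off $\dot\nabla_j {\cal F}_{ik}=\dfrac{\partial {\cal F}_{ik}}{\partial y^j}=0$ from the $v$-Maxwell equation (1.6.3') together with the fact that $C^i{}_{jk}=0$ makes $\dot\nabla_k$ coincide with $\partial/\partial y^k$. You arrive at the same conclusion by antisymmetrizing the covariant-constancy identity $\dot\nabla_j D_{ki}=0$ of (1.6.4') before invoking that same collapse of the $v$-derivative, which is merely a reordering of the paper's two ingredients (indeed (1.6.3') is itself obtained from (1.6.4') by antisymmetrization), so there is no gap.
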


Indeed, by means of (1.6.3') we have $\dot\nabla_j {\cal F}_{ik}=\dfrac{\partial{\cal F}_{ik}}{\partial y^j}=0$.

In other words, the helicoidal tensor $P_{ij}$ of $\Sigma_{\cal R}$ does not depend on the velocities $y^i=\dfrac{dx^i}{dt}.$

We end the present section with a remark: this theory has applications to the mechanical systems given by example $1^{\circ}$ in Section 1.2.


\section{The almost Hermitian model of the RMS $\Sigma_{\cal R}$}\index{Almost Hermitian model}
\setcounter{equation}{0}\setcounter{theorem}{0}\setcounter{proposition}{0}\setcounter{definition}{0}

Let us consider a RMS $\Sigma_{\cal R}=(M,T(x,y),Fe(x,y))$ endowed with the evolution nonlinear connection with coefficients $N_j^i$ from (1.4.3) and with the canonical $N$-metrical connection $C\Gamma(N)=(\gamma_{jk}^i(x),0)$. Thus, on the velocity space $\widetilde{TM}=TM\setminus\{0\}$ we can determine an almost Hermitian structure $H^{2n}=(\widetilde{TM}, \G, \F)$ which depends on the RMS only.

Let $\(\dfrac{\delta}{\delta x^i},\dfrac{\partial}{\partial y^i}\)$ be the adapted basis to the distributions $N$ and $V$ and its adapted cobasis $(dx^i,\delta y^i)$, where
\begin{equation}
\begin{array}{l}
\dfrac{\delta}{\delta x^i}=\stackrel{\circ}{\dfrac{\delta }{\delta x^i}} +\frac{1}{4}\frac{\partial F^s}{\partial y^i}\frac{\partial}{\partial y^s}\\
\\
\delta y^i= \stackrel{\circ}{\delta} y^i-\dfrac{1}{4}\frac{\partial F^i}{\partial y^s}dx^s
\end{array}
\end{equation}

The lift of the fundamental tensor $g_{ij}(x)$ of the Riemannian space ${\cal R}^n=(M,g_{ij}(x))$ is defined by
\begin{equation}
\G=g_{ij}dx^i\otimes dx^j+g_{ij}\delta y^i\otimes \delta y^j,
\end{equation}
and the almost complex structure $\F$, determined by the nonlinear connection $N$, is expressed by
\begin{equation}
\F=-\dfrac{\partial}{\partial y^i}\otimes dx^i+\dfrac{\delta}{\delta x^i}\otimes {\delta y^i}.
\end{equation}

Thus, the following theorems hold good.

\begin{theorem}
We have:
\begin{enumerate}
\item The pair $(\widetilde{TM},\G)$ is a pseudo-Riemannian space.
\item The tensor $\G$ depends on $\Sigma_{\cal R}$ only.
\item The distributions $N$ and $V$ are orthogonal with respect to $\G$.
\end{enumerate}
\end{theorem}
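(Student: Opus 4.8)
The plan is to verify the three assertions of Theorem~1.7.1 directly in the adapted basis, since each one reduces to a routine but structured computation involving the lift $\G$ from (1.7.2). First I would address assertion $1^\circ$, that $(\widetilde{TM},\G)$ is a pseudo-Riemannian space. The key observation is that $\G=g_{ij}dx^i\otimes dx^j+g_{ij}\delta y^i\otimes \delta y^j$ is manifestly symmetric because $g_{ij}(x)=g_{ji}(x)$, and its nondegeneracy follows block-diagonally: in the adapted cobasis $(dx^i,\delta y^i)$ the matrix of $\G$ is $\mathrm{diag}(g_{ij},g_{ij})$, whose determinant is $(\det g_{ij})^2\neq 0$ on $\widetilde{TM}$. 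Smoothness is immediate since $g_{ij}(x)$ and the adapted coframe are smooth on $\widetilde{TM}$. The only subtlety worth flagging is that one must check $\G$ is globally well-defined, i.e.\ invariant under coordinate changes (1.1.1); this follows from the transformation rules for $dx^i$ and for $\delta y^i$ (the latter transforming tensorially as in (1.3.7')), so that the two quadratic blocks are each invariant.

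Next I would prove assertion $2^\circ$, that $\G$ depends only on $\Sigma_{\cal R}$. Here the content is that every ingredient entering (1.7.2) is canonically attached to the data $(M,T,Fe)$. The fundamental tensor $g_{ij}(x)$ is the metric of the Riemannian space ${\cal R}^n$ appearing in the definition of $\Sigma_{\cal R}$, hence intrinsic. The forms $\delta y^i$ are built from the coefficients $N^i{}_j$ of the evolution nonlinear connection, and by (1.4.3) these coefficients are $N^i{}_j=\gamma^i_{jk}(x)y^k-\tfrac14\dfrac{\partial F^i}{\partial y^j}$, which are determined entirely by $g_{ij}$ (through its Christoffel symbols $\gamma^i_{jk}$) and by the external force field $Fe$. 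Since both $g_{ij}$ and $Fe$ are part of the data of $\Sigma_{\cal R}$, and since the evolution semispray $S$ and thence $N$ are canonically produced from $\Sigma_{\cal R}$ by Theorem~1.3.1, the tensor $\G$ carries no auxiliary choices. I would simply assemble these already-established facts rather than recompute anything.

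Finally, for assertion $3^\circ$, that the horizontal distribution $N$ and the vertical distribution $V$ are $\G$-orthogonal, the plan is to evaluate $\G$ on the adapted basis vectors. One computes $\G\!\left(\dfrac{\delta}{\delta x^i},\dfrac{\partial}{\partial y^j}\right)$: since the cobasis $(dx^i,\delta y^i)$ is dual to $\left(\dfrac{\delta}{\delta x^i},\dfrac{\partial}{\partial y^i}\right)$, we have $dx^k\!\left(\dfrac{\partial}{\partial y^j}\right)=0$ and $\delta y^k\!\left(\dfrac{\delta}{\delta x^i}\right)=0$, so each of the two terms of $\G$ annihilates one of the two arguments, giving $\G\!\left(\dfrac{\delta}{\delta x^i},\dfrac{\partial}{\partial y^j}\right)=0$. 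Because $N$ is spanned by $\left\{\dfrac{\delta}{\delta x^i}\right\}$ and $V$ by $\left\{\dfrac{\partial}{\partial y^j}\right\}$, orthogonality of the two distributions follows by bilinearity.

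The computations are all short and the real care is bookkeeping: the main obstacle is checking invariance under coordinate change in assertion~$1^\circ$, ensuring that the horizontal block $g_{ij}dx^i\otimes dx^j$ and the vertical block $g_{ij}\delta y^i\otimes\delta y^j$ transform correctly given the inhomogeneous transformation law of the natural objects. Once one uses that $\delta y^i$ transforms as a $d$-tensor (i.e.\ $\delta\widetilde{y}^i=\dfrac{\partial\widetilde x^i}{\partial x^j}\delta y^j$, formula (1.3.7')), both blocks reduce to genuine tensorial contractions with $g_{ij}$, and the verification closes. The parallel with the Lagrange-space construction of Section~2.7 (Theorem~2.7.1) makes clear that no new phenomenon arises here beyond the modified coframe (1.7.1) induced by the $Fe$-dependent nonlinear connection.
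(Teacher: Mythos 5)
Your proof is correct and follows essentially the route the paper intends: the paper states this theorem without written proof (deferring to the same adapted-basis verification given for the almost K\"ahlerian model of a Lagrange space in Part I, Section 2.7), and your three steps --- block-diagonal form $\mathrm{diag}(g_{ij},g_{ij})$ of $\G$ in the cobasis $(dx^i,\delta y^i)$ together with the tensorial transformation law (1.3.7') for well-definedness, the canonical dependence of $g_{ij}$ and of $N^i{}_j=\gamma^i_{jk}y^k-\tfrac14\,\partial F^i/\partial y^j$ on the data $(M,T,Fe)$, and the duality relations $dx^k\!\left(\dfrac{\partial}{\partial y^j}\right)=0$, $\delta y^k\!\left(\dfrac{\delta}{\delta x^i}\right)=0$ for orthogonality --- are exactly that standard verification. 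No gaps.
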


\begin{theorem}
\begin{enumerate}
\item The pair $(\widetilde{TM},\F)$ is an almost complex space.
\item The almost complex structure $\F$ depends on $\Sigma_{\cal R}$ only.
\item $\F$ is integrable on the manifold $\widetilde{TM}$ if and only if the d-tensor field
$R^i_{jk}(x,y)$ vanishes.
\end{enumerate}
\end{theorem}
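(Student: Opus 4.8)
The statement to prove is Theorem 1.7.2, which asserts three properties of the almost complex structure $\F$ defined in (1.7.3): that $(\widetilde{TM},\F)$ is an almost complex space, that $\F$ depends on $\Sigma_{\cal R}$ only, and that $\F$ is integrable on $\widetilde{TM}$ if and only if the $d$-tensor field $R^i_{jk}(x,y)$ vanishes.

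The plan is to dispatch the three claims in order of increasing difficulty. First I would verify the almost complex condition $\F\circ\F=-\mathrm{Id}$. Working in the adapted basis, I would apply $\F$ twice using (1.7.3), namely $\F\left(\dfrac{\delta}{\delta x^i}\right)=-\dfrac{\partial}{\partial y^i}$ and $\F\left(\dfrac{\partial}{\partial y^i}\right)=\dfrac{\delta}{\delta x^i}$, obtaining $\F^2\left(\dfrac{\delta}{\delta x^i}\right)=-\dfrac{\delta}{\delta x^i}$ and $\F^2\left(\dfrac{\partial}{\partial y^i}\right)=-\dfrac{\partial}{\partial y^i}$. This is the same computation as in (1.3.10) for the structure $\F$ associated to a nonlinear connection, so the algebra is immediate once the adapted frame for the evolution nonlinear connection $N$ is fixed.

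Second, for the dependence claim, I would observe that $\F$ is built entirely from the adapted frame $\left(\dfrac{\delta}{\delta x^i},\dfrac{\partial}{\partial y^i}\right)$ and its dual $(dx^i,\delta y^i)$, and these are determined by the evolution nonlinear connection $N$ with coefficients $N^i{}_j$ from (1.4.3). Since Theorem 1.3.1 and the construction in Section 1.4 show that $N$ is canonically attached to $\Sigma_{\cal R}$ (through the evolution semispray $S$ of Theorem 1.3.1, which itself depends only on $T$ and $Fe$), it follows that $\F$ depends on $\Sigma_{\cal R}$ only. The key supporting facts are (1.7.1) expressing the adapted frame in terms of $F^i$, and the global well-definedness of $\F$, which follows from the transformation rules (1.3.7') for $dx^i$ and $\delta y^i$ exactly as in Section 1.3.

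The third claim is the genuine content and will be the main obstacle. The plan is to compute the Nijenhuis tensor $N_{\F}(X,Y)=\F^2[X,Y]+[\F X,\F Y]-\F[\F X,Y]-\F[X,\F Y]$ and show it vanishes if and only if $R^i{}_{jk}=0$. By the Newlander--Nirenberg theorem, $\F$ is integrable precisely when $N_{\F}\equiv 0$. I would evaluate $N_{\F}$ on the adapted frame vectors, the crucial case being $N_{\F}\left(\dfrac{\delta}{\delta x^j},\dfrac{\delta}{\delta x^k}\right)$, where the bracket $\left[\dfrac{\delta}{\delta x^j},\dfrac{\delta}{\delta x^k}\right]=R^h{}_{jk}\dfrac{\partial}{\partial y^h}$ from (1.3.13) feeds the curvature tensor directly into the computation; the remaining brackets involve the Berwald coefficients $B^i{}_{kj}$ from (1.5.13) and the torsion $t^k{}_{ij}$, which vanishes by the torsion-freeness established in Section 1.4. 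The expected difficulty is bookkeeping: one must show the $B$-terms and mixed horizontal-vertical contributions cancel, leaving only the $R^h{}_{jk}$ contribution, so that $N_{\F}=0 \iff R^i{}_{jk}=0$. This parallels part $2^\circ$ of Theorem 1.3.3, and I would invoke that computation as a template, adapting it to the evolution nonlinear connection whose weak torsion vanishes by (1.4.11).
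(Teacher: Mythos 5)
Your proposal is correct and follows essentially the same route as the paper: the paper states Theorem 1.7.2 without a written proof, deferring to the general criterion of Theorem 1.3.3 of Part I (integrability of $\F$ iff $N$ is symmetric and integrable, proved via the Nijenhuis tensor), and you reproduce exactly that argument, correctly noting that the weak torsion of the evolution nonlinear connection vanishes because $N^i{}_j=\partial G^i/\partial y^j$ comes from a semispray, so the criterion reduces to $R^i{}_{jk}=0$. Only trivial citation slips: the vanishing torsion is (1.4.10), not (1.4.11), and the Berwald coefficients are (1.4.13), not (1.5.13).
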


Also, it is not difficult to prove

\begin{theorem}
We have
\begin{enumerate}
\item The triple $H^{2n}=(\widetilde{TM}, \G, \F)$ is an almost Hermitian space.
\item The space  $H^{2n}$ depends on $\Sigma_{\cal R}$ only.
\item The almost symplectic structure of $H^{2n}$ is
\begin{equation}
\theta=g_{ij}\delta y^i\wedge dx^j.
\end{equation}
\end{enumerate}
\end{theorem}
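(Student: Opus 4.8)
The plan is to verify the three assertions in turn, noting that Theorems 1.7.1 and 1.7.2 already supply most of the ingredients. By Theorem 1.7.1 the lift $\G$ is a (pseudo-)Riemannian metric on $\widetilde{TM}$ under which the horizontal distribution $N$ and the vertical distribution $V$ are orthogonal, and by Theorem 1.7.2 the tensor $\F$ is an almost complex structure, $\F^2=-I$. To conclude that $H^{2n}=(\widetilde{TM},\G,\F)$ is almost Hermitian it then remains only to establish the compatibility relation $\G(\F X,\F Y)=\G(X,Y)$ for all $X,Y\in\chi(\widetilde{TM})$. Assertion 2 follows at once: since both $\G$ and $\F$ are determined by $\Sigma_{\cal R}$ alone (again by Theorems 1.7.1 and 1.7.2), so is the triple $H^{2n}$.

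First I would record the action of $\F$ on the adapted basis $\left(\dfrac{\delta}{\delta x^i},\dfrac{\partial}{\partial y^i}\right)$ read off directly from its defining expression, namely $\F\left(\dfrac{\delta}{\delta x^i}\right)=-\dfrac{\partial}{\partial y^i}$ and $\F\left(\dfrac{\partial}{\partial y^i}\right)=\dfrac{\delta}{\delta x^i}$. Using these together with the block form of $\G$ in the same basis, that is $\G\left(\dfrac{\delta}{\delta x^i},\dfrac{\delta}{\delta x^j}\right)=\G\left(\dfrac{\partial}{\partial y^i},\dfrac{\partial}{\partial y^j}\right)=g_{ij}$ with the mixed terms vanishing by orthogonality, the compatibility check reduces to three elementary cases on pairs of basis fields. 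Since $\F$ interchanges the horizontal and vertical blocks up to a sign that squares away, $\G(\F\,\cdot\,,\F\,\cdot\,)$ reproduces $g_{ij}$ in every entry; bilinearity then extends the identity to arbitrary $X,Y$. This finishes parts 1 and 2.

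For part 3, the plan is to compute the almost symplectic structure associated to $(\G,\F)$ from its definition $\theta(X,Y)=\G(\F X,Y)$ in the adapted basis. Evaluating on pairs of basis fields and again invoking the orthogonality of $N$ and $V$, one finds $\theta\left(\dfrac{\delta}{\delta x^i},\dfrac{\delta}{\delta x^j}\right)=\theta\left(\dfrac{\partial}{\partial y^i},\dfrac{\partial}{\partial y^j}\right)=0$, while $\theta\left(\dfrac{\partial}{\partial y^i},\dfrac{\delta}{\delta x^j}\right)=g_{ij}$ and $\theta\left(\dfrac{\delta}{\delta x^i},\dfrac{\partial}{\partial y^j}\right)=-g_{ij}$. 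Comparing with the value of $g_{ij}\,\delta y^i\wedge dx^j$ on the same pairs then yields $\theta=g_{ij}\,\delta y^i\wedge dx^j$, which is precisely the claimed expression for the almost symplectic structure.

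I do not expect a genuine obstacle here: the entire argument is a coordinate computation in the adapted basis, and the only structural inputs, namely that $\G$ is a metric, that $\F^2=-I$, and that $N\perp V$ with respect to $\G$, are already guaranteed by the two preceding theorems. The single point requiring care is the bookkeeping of the signs produced by $\F$ exchanging the horizontal and vertical distributions, since it is exactly this sign pattern that both forces the Hermitian compatibility $\G(\F X,\F Y)=\G(X,Y)$ and makes $\theta$ skew-symmetric of the stated form.
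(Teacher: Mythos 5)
Your proposal is correct and follows essentially the same route as the paper, which leaves the verification implicit here ("it is not difficult to prove") but sketches exactly this argument for the analogous Lagrange-space result (Theorem 2.7.1, Part I): checking $\G(\F X,\F Y)=\G(X,Y)$ and evaluating $\theta(X,Y)=\G(\F X,Y)$ on the adapted basis $\left(\dfrac{\delta}{\delta x^i},\dfrac{\partial}{\partial y^i}\right)$, with dependence on $\Sigma_{\cal R}$ inherited from Theorems 1.7.1 and 1.7.2. Your sign bookkeeping under $\F\left(\dfrac{\delta}{\delta x^i}\right)=-\dfrac{\partial}{\partial y^i}$, $\F\left(\dfrac{\partial}{\partial y^i}\right)=\dfrac{\delta}{\delta x^i}$ is consistent and correctly yields $\theta=g_{ij}\,\delta y^i\wedge dx^j$.
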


If the almost symplectic structure $\theta$ is a symplectic one (i.e. $d\theta=0$), then the space
$H^{2n}$ is almost K\"ahlerian.

On the other hand, using the formulas (1.7.4), (1.4.12) one obtains
$$\begin{array}{l}
d\theta=\dfrac{1}{3!}(R_{ijk}+R_{jki}+R_{kij})dx^i\wedge dx^j\wedge dx^k\\ \\
\qquad +\dfrac{1}{2}(g_{is}B^s_{jk}-g_{js}B_{ik}^s)\delta y^k\wedge dx^j\wedge dx^i.
\end{array}$$

Therefore, we deduce

\begin{theorem}
The almost Hermitian space $H^{2n}$ is almost K\"ahlerian if and only if the following relations hold good
\begin{equation}
R_{ijk}+R_{jki}+R_{kij}=0,\quad g_{is}B^s_{jk}-g_{js}B^s_{ik}=0.
\end{equation}
\end{theorem}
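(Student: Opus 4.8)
The plan is to compute $d\theta$ explicitly in the adapted cobasis and extract the conditions under which it vanishes, since by definition $H^{2n}$ is almost K\"ahlerian precisely when the almost symplectic structure $\theta$ is closed. Starting from $\theta=g_{ij}\delta y^i\wedge dx^j$ with $g_{ij}=g_{ij}(x)$ depending on $x$ alone, I would write
\begin{equation}
d\theta=dg_{ij}\wedge\delta y^i\wedge dx^j+g_{ij}\,d(\delta y^i)\wedge dx^j.\nonumber
\end{equation}
The first step is to expand $dg_{ij}=\dfrac{\partial g_{ij}}{\partial x^k}dx^k=\dfrac{\delta g_{ij}}{\delta x^k}dx^k$ (using that $g_{ij}$ is $y$-independent so $\dfrac{\partial g_{ij}}{\partial y^k}=0$ and the horizontal and partial $x$-derivatives coincide). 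The second step is to substitute the already-derived formula (1.4.12) for the exterior differential of the connection $1$-forms,
\begin{equation}
d(\delta y^i)=\tfrac12 R^i{}_{kj}\,dx^j\wedge dx^k+B^i{}_{kj}\,\delta y^j\wedge dx^k,\nonumber
\end{equation}
where $B^i{}_{kj}$ are the Berwald coefficients from (1.4.13).

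Once these substitutions are made, I would collect the resulting $3$-forms by type. The terms in $dx^i\wedge dx^j\wedge dx^k$ arise solely from the $g_{ij}\tfrac12 R^i{}_{kj}$ contribution (the $dg_{ij}\wedge\delta y^i\wedge dx^j$ term is of mixed type and contributes nothing purely horizontal), and after lowering an index via $R_{ijk}=g_{ih}R^h{}_{jk}$ and antisymmetrizing, these assemble into the cyclic combination $R_{ijk}+R_{jki}+R_{kij}$. The terms containing exactly one factor $\delta y$, i.e. of type $\delta y^k\wedge dx^j\wedge dx^i$, come from both the $dg_{ij}\wedge\delta y^i\wedge dx^j$ piece and the $g_{ij}B^i{}_{kj}\,\delta y^j\wedge dx^k\wedge dx^j$ piece; using metricity of $C\Gamma(N)$ (that is $g_{ij|k}=0$, equivalently $\dfrac{\delta g_{ij}}{\delta x^k}=g_{is}\gamma^s_{jk}+g_{js}\gamma^s_{ik}$ from the Christoffel expression (1.5.1)) the $dg$-contribution should reorganize, and what survives is the stated skew-symmetrized Berwald expression $g_{is}B^s_{jk}-g_{js}B^s_{ik}$. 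This matches exactly the display for $d\theta$ given just before the theorem statement.

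With $d\theta$ written as the sum of these two independent homogeneous pieces,
\begin{equation}
d\theta=\tfrac{1}{3!}(R_{ijk}+R_{jki}+R_{kij})\,dx^i\wedge dx^j\wedge dx^k+\tfrac12(g_{is}B^s_{jk}-g_{js}B^s_{ik})\,\delta y^k\wedge dx^j\wedge dx^i,\nonumber
\end{equation}
the final step is the logical extraction: since $\{dx^i\wedge dx^j\wedge dx^k\}$ and $\{\delta y^k\wedge dx^j\wedge dx^i\}$ are linearly independent families of $3$-forms on $\widetilde{TM}$ (horizontal-only versus mixed type), $d\theta=0$ holds if and only if both coefficient expressions vanish separately, giving the two announced relations. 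This separation is clean because the vertical and horizontal cobasis elements are linearly independent by construction of the adapted cobasis.

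The step I expect to be the main obstacle is the bookkeeping in the mixed $\delta y\wedge dx\wedge dx$ sector: I must correctly combine the $dg_{ij}$ term with the Berwald term and verify that the non-Berwald contributions cancel by metricity, so that only the symmetric combination $g_{is}B^s_{jk}-g_{js}B^s_{ik}$ remains. Care is needed because $B^s_{jk}$ is symmetric in $jk$ (from (1.4.13)) while the wedge $\delta y^k\wedge dx^j\wedge dx^i$ antisymmetrizes $i,j$, so the antisymmetrization must be tracked index by index; a sign or index-placement error here would spoil the identification with the stated result. The purely horizontal sector is comparatively routine, reducing at once to the first Bianchi-type cyclic sum of the curvature $R_{ijk}$ of the nonlinear connection.
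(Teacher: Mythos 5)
Your proposal follows the paper's proof essentially verbatim: the paper likewise expands $d\theta$ for $\theta=g_{ij}\,\delta y^i\wedge dx^j$ using formula (1.4.12) for $d(\delta y^i)$, arrives at exactly the displayed decomposition of $d\theta$ into a purely horizontal piece with coefficient $\tfrac{1}{3!}(R_{ijk}+R_{jki}+R_{kij})$ and a mixed piece with coefficient $\tfrac12(g_{is}B^s_{jk}-g_{js}B^s_{ik})$, and concludes by the linear independence of the two families of adapted $3$-forms. Your additional care in the mixed sector (combining the $dg_{ij}$ contribution with the Berwald term via the metricity of $C\Gamma(N)$) is precisely the bookkeeping the paper leaves implicit, so the argument is the same in substance and in detail.
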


The space $H^{2n}=(\widetilde{TM},\G,\F)$ is called the {\it almost Hermitian model} of the
RMS $\Sigma_{\cal R}$.

One can use the almost Hermitian model $H^{2n}$ to study the geometrical theory of the mechanical system $\Sigma_{\cal R}$. For instance the Einstein equations of the RMS $\Sigma_{\cal R}$ are the Einstein equations of the pseudo-Riemannian space $(\widetilde{TM},\G)$ (cf. Chapter 6, part 1).

\begin{remark} The previous theory can be applied without difficulties to the examples 1-8 in Section 1.2.
\end{remark}

\newpage

\chapter{Finslerian Mechanical systems}\index{Analytical Mechanics of! Finslerian systems}

The present chapter is devoted to the Analytical Mechanics of the Finslerian Mechanical systems. These systems are defined by a triple $\Sigma_F=(M,F^2,Fe)$ where $M$ is the configuration space, $F(x,y)$ is the fundamental function of a semidefinite Finsler space $F^n=(M,F(x,y))$ and $Fe(x,y)$ are the external forces. Of course, $F^2$ is the kinetic energy of the space. The fundamental equations are the Lagrange equations: $$E_i(F^2)\equiv \dfrac{d}{dt}\dfrac{\pp F^2}{\pp \dot{x}^i}-\dfrac{\pp F^2}{\pp x^i}=F_i(x,\dot{x}).$$ We study here the canonical semispray $S$ of $\Sigma_F$ and the geometry of the pair $(TM,S)$, where $TM$ is velocity space.

One obtain a generalization of the theory of Riemannian Mechanical systems, which has numerous applications and justifies the introduction of such new kind of analytical mechanics.

\setcounter{section}{0}
\section{Semidefinite Finsler spaces}
\setcounter{equation}{0}\setcounter{theorem}{0}\setcounter{definition}{0}\setcounter{proposition}{0}\setcounter{remark}{0}

\begin{definition}
A Finsler space with semidefinite Finsler metric is a pair $F^n=(M,F(x,y))$ where the function $F:TM\to\R$ satisfies the following axioms:
\begin{itemize}
  \item[$1^\circ$] $F$ is differentiable on $\wt{TM}$ and continuous on the null section of $\pi:TM\to M$;
  \item[$2^\circ$] $F\geq 0$ on $TM$;
  \item[$3^\circ$] $F$ is positive 1-homogeneous with respect to velocities $\dot{x}^i=y^i$.
  \item[$4^\circ$] The fundamental tensor $g_{ij}(x,y)$
  \begin{equation}
  g_{ij}=\dfrac12\dfrac{\pp^2 F^2}{\pp y^i\pp y^j}
  \end{equation}
  has a constant signature on $\wt{TM}$;
  \item[$5^\circ$] The Hessian of fundamental function $F^2$ with elements $g_{ij}(x,y)$ is nonsingular:
  \begin{equation}
  \det(g_{ij}(x,y))\neq 0\ {\rm{on\ }}\wt{TM}.
  \end{equation}
\end{itemize}
\end{definition}

\noindent{\bf Example.} If $g_{ij}(x)$ is a semidefinite Riemannian metric on $M$, then
\begin{equation}
F=\sqrt{|g_{ij}(x)y^i y^j|}
\end{equation}
is a function with the property $F^n=(M,F)$ is a semidefinite Finsler space.

Any Finsler space $F^n=(M,F(x,y))$, in the sense of definition 3.1.1, part I, is a definite Finsler space. In this case the property $5^\circ$ is automatical verified.

But, these two kind of Finsler spaces have a lot of common properties. Therefore, we will speak in general on Finsler spaces. The following properties hold:
\begin{itemize}
  \item[$1^\circ$] The fundamental tensor $g_{ij}(x,y)$ is 0-homogeneous;
  \item[$2^\circ$] $F^2=g_{ij}(x,y)y^i y^j$;
  \item[$3^\circ$] $p_i=\dfrac12\dfrac{\pp F^2}{\pp y^i}$ is $d-$covariant vector field;
  \item[$4^\circ$] The Cartan tensor
  \begin{equation}
  C_{ijk}=\dfrac14\dfrac{\pp^3 F^2}{\pp y^i\pp y^j\pp y^k}=\dfrac12\dfrac{\pp g_{ij}}{\pp y^k}
  \end{equation}
  is totally symmetric and
  \begin{equation}
  y^i C_{ijk}=C_{0jk}=0.
  \end{equation}
  \item[$5^\circ$] $\oo=p_i dx^i$ is 1-form on $\wt{TM}$ (the Cartan 1-form);
  \item[$6^\circ$] $\theta=d\oo=dp_i\wedge dx^i$ is 2-form (the Cartan 2-form);
  \item[$7^\circ$] The Euler-Lagrange equations of $F^n$ are
  \begin{equation}
  E_i(F^2)=\dfrac{d}{dt}\dfrac{\pp F^2}{\pp y^i}-\dfrac{\pp F^2}{\pp x^i}=0,\ y^i=\dfrac{dx^i}{dt}
  \end{equation}
  \item[$8^\circ$] The energy ${\cal E}_F$ of $F^n$ is
  \begin{equation}
  {\cal E}_F=y^i\dfrac{\pp F^2}{\pp y^i}-F^2=F^2
  \end{equation}
  \item[$9^\circ$] \quad The energy ${\cal E}_F$ is conserved along to every integral curve of Euler-Lagrange equations (2.1.6);
  \item[$10^\circ$] \quad In the canonical parametrization, the equations (2.1.6) give the geodesics of $F^n$;
  \item[$11^\circ$] \quad The Euler-Lagrange equations (2.1.6) can be written in the equivalent form
\begin{equation}
\dfrac{d^2x^i}{dt^2}+\gamma^i_{jk}(x,\frac{dx}{dt})\dfrac{dx^j}{dt}\dfrac{dx^k}{dt}=0,
\end{equation}
where $\gamma^i_{jk}\(x,\dfrac{dx}{dt}\)$ are the Christoffel symbols of the fundamental tensor $g_{ij}(x,y)$.
\item[$12^\circ$] \quad The canonical semispray $S$ is
\begin{equation}
S=y^i\dfrac{\partial }{\partial x^i}-2G^i(x,y)\dfrac{\partial }{\partial y^i}
\end{equation}
with the coefficients:
\begin{equation}
2G^i(x,y)=\gamma^i_{jk}(x,y)y^jy^k=\gamma^i_{00}(x,y),\tag{2.1.9'}
\end{equation}
(the index ``0'' means the contraction with $y^i$).
\item[$13^\circ$] \quad The canonical semispray $S$ is 2-homogeneous with respect to $y^i$. So, $S$ is a spray.
\item[$14^\circ$] \quad The nonlinear connection $N$ determined by $S$ is also canonical and it is exactly the famous Cartan nonlinear connection of the space $F^n$. Its coefficients are
\begin{equation}
N^i{}_j(x,y)= \dfrac{\partial G^i(x,y)}{\partial y^j} =\dfrac{1}{2}\dfrac{\partial}{\partial y^j}\gamma^i_{00}(x,y).
\end{equation}
An equivalent form for the coefficients $N^i_j$ is as follows
\begin{equation}
N^i{}_j=\gamma^i_{j0}(x,y)-C^i_{jk}(x,y)\gamma^k_{00}(x,y).
\tag{2.1.10'}
\end{equation}
Consequently, we have
\begin{equation}
N^i{}_0=\gamma^i_{00}=2G^i.
\end{equation}
Therefore, we can say: The semispray $S'$ determined by the Cartan nonlinear connection $N$ is the canonical spray $S$ of space $F^n$.
\item[$15^\circ$] \quad The Cartan nonlinear connection $N$ determines a splitting of vector space $T_u TM,$ $\forall u\in TM$ of the form:
\begin{equation}
T_uTM=N_u\oplus V_u,\ \ \ \forall u\in TM
\end{equation}
Thus, the adapted basis $\left(\dfrac{\delta}{\delta x^i},\dfrac{\partial}{\partial y^i} \right)$, $(i=1,..,n),$ to the previous splitting has the local vector fields $\dfrac{\delta}{\delta x^i}$ given by:
\begin{equation}
\dfrac{\delta}{\delta x^i}=\dfrac{\partial}{\partial x^i}-N^j{}_i(x,y)\dfrac{\partial}{\partial y^j},\ \ \ i=1,..,n,
\end{equation}
with the coefficients $N^i{}_j(x,y)$ from (2.1.6).

Its dual basis is $\left(dx^i,\delta y^i\right)$, where
\begin{equation}
\delta y^i=dy^i+N^i{}_j(x,y)dx^j.
\end{equation}

The autoparallel curves of the nonlinear connection $N$ are given by, \cite{miranabuc},
\begin{equation}
\dfrac{d^2 x^i}{dt^2}+N^i{}_j( x,\dfrac{dx}{dt})\dfrac{dx^j}{dt}=0.
\end{equation}

Using the dynamic derivative $\nabla$ defined by $N$, the equations (2.1.11) can be written as follows
\begin{equation}
\nabla (\dfrac{dx^i}{dt})=0.
\tag{2.1.11'}
\end{equation}
\item[$16^\circ$] \quad The variational equations of the autoparallel curves (2.1.11) give the Jacobi equations:
\begin{equation}
\nabla^2 \xi^i+\left( \dfrac{\partial N^i{}_j}{\partial y^k}\dfrac{dx^j}{dt}-N^i{}_k\right) \nabla \xi^k+R^i_{jk}\dfrac{dx^j}{dt}\dfrac{dx^k}{dt}=0.
\end{equation}

The vector field $\xi^i(t)$ along a solution $c(t)$ of the equations (2.1.11) and which verifies the previous equations is called a Jacobi field. In the Riemannian case, $\dfrac{\partial g_{ij}}{\partial y^k}=0$, the Jacobi equations (2.1.12) are exactly the classical Jacobi equations:
\begin{equation}
\nabla^2 \xi^i+R^i_{jlk}(x)\dfrac{dx^l}{dt}\dfrac{dx^j}{dt}\xi^k=0
\end{equation}
\item[$17^\circ$] \quad A distinguished metric connections $D$ with the coefficients\break $C\Gamma(N)=\left(F^i_{jk},C^i_{jk}\right)$ is defined as a $N$-linear connection on $TM$, metric with respect to the fundamental tensor $g_{ij}(x,y)$ of Finsler space $F^n$, i.e. we have
\begin{equation}
\begin{array}{l}
g_{ij|k}=\dfrac{\delta g_{ij}}{\delta x^k}-F^s_{ik}g_{sj}-F^s_{jk}g_{is}=0,\\
\\
g_{ij}|_{k}=\dfrac{\partial g_{ij}}{\partial y^k} -C^s_{ik}g_{sj}-C^s_{jk}g_{is}=0.\\
\end{array}
\end{equation}

\item[$18^\circ$] \quad The following theorem holds:

\begin{theorem}
\begin{itemize}
\item[$1^{\circ}$] There is an unique $N$-linear connection $D$, with coefficients $C\Gamma(N)$ which satisfies the following system of axioms:
$A_1.$ $N$ is the Cartan nonlinear connection of Finsler space $F^n$.

$A_2.$ $D$ is metrical, $($i.e. $D$ satisfies $(2.1.14))$.

$A_3.$ $T^i_{jk}=F^i_{jk}-F^i_{kj}=0$, $S^i_{jk}=C^i_{jk}-C^i_{kj}=0$.

\item[$2^{\circ}$] The metric $N$-linear connection $D$ has the coefficients $C\Gamma(N)=\left(F^i_{jk},C^i_{jk}\right)$ given by the generalized Christoffel symbols
\end{itemize}
\begin{equation}
\begin{array}{l}
F^i_{jk}=\dfrac{1}{2}g^{is}\left(\dfrac{\delta g_{sj}}{\delta x^k}+\dfrac{\delta g_{sk}}{\delta x^j}-\dfrac{\delta g_{jk}}{\delta x^s}\right),\\
\\
C^i_{jk}=\dfrac{1}{2}g^{is}\left(\dfrac{\partial g_{sj}}{\partial y^k}+\dfrac{\partial g_{sk}}{\partial y^j}-\dfrac{\partial g_{jk}}{\partial y^s}
\right).
\end{array}
\end{equation}
\end{theorem}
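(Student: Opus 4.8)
This is the fundamental existence-and-uniqueness theorem for the Cartan metrical connection of a Finsler space, and it is the direct analogue of Theorem 2.5.1 (for Lagrange spaces) and Theorem 5.1 (for Hamilton spaces). Since the statement asks to characterize the connection by axioms $A_1$--$A_3$ and then to exhibit its coefficients, the plan is to proceed in two movements: first uniqueness via a determination of the coefficients from the axioms, which simultaneously yields the explicit formula \eqref{3.4.1}, and then verification that the formulas indeed satisfy all axioms.

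The plan is to start from the metricity axiom $A_2$. Writing out $g_{ij|k}=0$ and $g_{ij}|_k=0$ in the adapted basis gives the two systems
\begin{equation}
\dfrac{\delta g_{ij}}{\delta x^k}=F^s_{ik}g_{sj}+F^s_{jk}g_{is},\quad
\dfrac{\partial g_{ij}}{\partial y^k}=C^s_{ik}g_{sj}+C^s_{jk}g_{is}.
\end{equation}
First I would treat the horizontal coefficients $F^i_{jk}$. Applying the standard Christoffel permutation trick --- cyclically permuting the indices $i,j,k$ in the first equation, adding two of the resulting relations and subtracting the third, and invoking the symmetry $T^i_{jk}=F^i_{jk}-F^i_{kj}=0$ from $A_3$ to collapse the antisymmetric parts --- isolates $F^s_{jk}g_{is}$, and contraction with $g^{ir}$ produces exactly the first formula in \eqref{3.4.1}. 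The same Koszul-type computation applied verbatim to the second (vertical) system, using the vertical torsion-freeness $S^i_{jk}=C^i_{jk}-C^i_{kj}=0$, yields the formula for $C^i_{jk}$. This argument shows that any connection satisfying $A_1$--$A_3$ must have precisely these coefficients, which is the uniqueness half; the $h$- and $v$-derivatives are formally identical so the two cases run in parallel.

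Next I would verify existence: the functions defined by \eqref{3.4.1} must actually transform as the coefficients of an $N$-linear connection, i.e. $F^i_{jk}$ must obey the inhomogeneous rule \eqref{1.4.2} and $C^i_{jk}$ must be a $(1,2)$ $d$-tensor. For $F^i_{jk}$ this follows because $\dfrac{\delta}{\delta x^k}$ transforms tensorially up to the extra term coming from the change of the nonlinear connection coefficients, and the generalized Christoffel combination absorbs that extra term exactly as in the Riemannian case; for $C^i_{jk}$, since $\dfrac{\partial}{\partial y^k}$ and $g_{ij}$ transform tensorially, the combination is manifestly a $d$-tensor. Then axioms $A_2$ and $A_3$ are checked directly: metricity holds because the symmetric Koszul construction was built to reproduce the derivatives of $g_{ij}$, and the torsion conditions $T^i_{jk}=0$, $S^i_{jk}=0$ are immediate from the symmetry of the defining brackets in $j,k$. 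Finally, statement $3^\circ$ --- dependence on $F$ alone --- follows since $N$, $g_{ij}$ and hence $F^i_{jk},C^i_{jk}$ are all built canonically from $F^2$.

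The main obstacle I anticipate is not the uniqueness computation (which is routine Koszul algebra) but the careful bookkeeping in the existence/transformation step: one must confirm that the operator $\dfrac{\delta}{\delta x^k}$ formed with the Cartan nonlinear connection interacts with the change of coordinates \eqref{1.1.1} so that the nontensorial $\dfrac{\partial^2\tilde x^i}{\partial x^p\partial x^q}$ term appearing in \eqref{1.4.2} is produced with exactly the right coefficient. This is where the specific structure of $N^i{}_j$ (property $14^\circ$) is genuinely used, and it is the step most easily gotten wrong; I would handle it by reducing to the known transformation law of the $\dfrac{\delta}{\delta x^k}$ frame, \eqref{1.3.4}, rather than by a brute-force second-derivative chase. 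The remaining verifications are mechanical, so I would simply remark that they follow as in the proofs of Theorem 2.5.1 and cite the books \cite{Ant2}, \cite{mirana} for the full details.
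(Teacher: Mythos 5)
Your proposal is correct and follows exactly the route the paper intends: the paper itself states this theorem without proof (remarking only that "a proof can be find in the books" \cite{Ant2}, \cite{mirana}, and that it is the analogue of Theorem 2.5.1, proved "without difficulties" from the theory of $N$-linear connections of Chapter 1), and your Koszul-permutation derivation of the generalized Christoffel symbols from $A_2$--$A_3$ for uniqueness, plus the verification via the tensorial transformation law (1.3.4) of the adapted frame for existence, is precisely that standard argument. Your identification of the transformation-law bookkeeping for $F^i_{jk}$ (the inhomogeneous second-derivative term in (1.4.2)) as the only delicate step, and its resolution through (1.3.4), is also the right emphasis.
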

\item[$20^\circ$] \quad By means of this theorem, it is not difficult to see that we have
\begin{equation}
C^i_{jk}=g^{is}C_{sjk}
\end{equation}
and
\begin{equation}
y^i{}_{|k}=0.
\end{equation}
\item[$21^\circ$] \quad The Cartan nonlinear connection $N$ determines on $\widetilde{TM}$ an almost complex structure $\F$, as follows:
\begin{equation}
\F(\dfrac{\delta}{\delta x^i})=-\dfrac{\partial}{\partial y^i},\ \ \
\F(\dfrac{\partial}{\partial y^i})=\dfrac{\delta}{\delta x^i}, \ \ \ \ i=1,..,n.
\end{equation}

But one can see that $\F$ is the tensor field on $\widetilde{TM}$:
\begin{equation}
\F=-\dfrac{\partial}{\partial y^i}\otimes dx^i+\dfrac{\delta}{\delta x^i}\otimes \delta y^i,
\tag{2.1.22'}
\end{equation}
with the $1$-forms $\delta y^i$ and the vector field $\dfrac{\delta}{\delta x^i}$ given by (2.1.10), (2.1.9), (2.1.6).

It is not difficult to prove that: The almost complex structure $\F$ is integrable if and only if the distribution $N$ is integrable on $TM$.

\item[$22^\circ$] \quad The Sasaki-Matsumoto lift of the fundamental tensor $g_{ij}$ of Fin\-sler space $F^n$ is
\begin{equation}
\G (x,y)=g_{ij}(x,y)dx^i\otimes dx^j+g_{ij}(x,y)\delta y^i\otimes \delta y^j.
\end{equation}
The tensor field $\G$ determines a pseudo-Riemannian structure on $TM$.

\item[$23^\circ$] \quad The following theorem is known:
\begin{theorem}
$1^{\circ}$ The pair $(\G,\F)$ is an almost Hermitian structure on $\widetilde{TM}$ determined only by the Finsler space $F^n$.

$2^{\circ}$ The symplectic structure associate to the structure $(\G,\F)$ is the Cartan 2-form:
\begin{equation}
\theta=2g_{ij}\delta y^i \wedge dx^j.
\end{equation}

$3^{\circ}$ The space $(\widetilde{TM},\G,\F)$ is almost K\"ahlerian.
\end{theorem}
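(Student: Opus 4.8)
The statement to prove is Theorem 2.1.2, asserting three facts about the Sasaki--Matsumoto lift $\G$ and the almost complex structure $\F$ of a semidefinite Finsler space $F^n$: that $(\G,\F)$ is an almost Hermitian structure on $\widetilde{TM}$ depending only on $F^n$, that the associated symplectic $2$-form is the Cartan $2$-form $\theta=2g_{ij}\delta y^i\wedge dx^j$, and that $(\widetilde{TM},\G,\F)$ is almost K\"ahlerian. The plan is to verify compatibility, then identify the fundamental $2$-form by direct computation in the adapted frame, and finally check closedness $d\theta=0$.

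For part $1^\circ$, I would first recall that both $\G$ in (2.1.23) and $\F$ in (2.1.22') are built entirely from the fundamental tensor $g_{ij}(x,y)$ and the Cartan nonlinear connection $N$, which by property $14^\circ$ is canonical and determined by $F$ alone; this gives the dependence on $F^n$ only. The compatibility condition $\G(\F X,\F Y)=\G(X,Y)$ I would check on the adapted basis $\left(\dfrac{\delta}{\delta x^i},\dfrac{\partial}{\partial y^i}\right)$: since $\F\left(\dfrac{\delta}{\delta x^i}\right)=-\dfrac{\partial}{\partial y^i}$ and $\F\left(\dfrac{\partial}{\partial y^i}\right)=\dfrac{\delta}{\delta x^i}$, the structure simply swaps the horizontal and vertical summands (up to sign), and because $\G$ has the same block $g_{ij}$ on both the horizontal $dx^i\otimes dx^j$ part and the vertical $\delta y^i\otimes\delta y^j$ part, the two evaluations agree. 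Together with $\F^2=-I$ (property $21^\circ$) this yields the almost Hermitian structure.

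For part $2^\circ$, I would compute the fundamental $2$-form $\theta(X,Y)=\G(\F X,Y)$ on pairs of adapted basis vectors. On horizontal-horizontal and vertical-vertical pairs $\F$ maps into the complementary distribution, which by the orthogonality built into $\G$ gives zero; the only surviving terms come from mixed pairs $\left(\dfrac{\delta}{\delta x^i},\dfrac{\partial}{\partial y^j}\right)$, producing $\pm g_{ij}$. Assembling these gives $\theta=2g_{ij}\delta y^i\wedge dx^j$, matching the Cartan $2$-form; here I would invoke the relation between $\G$, $\F$ and $\theta$ already used in the Lagrangian case (Theorem 2.7.1, part I).

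For part $3^\circ$ I must show $d\theta=0$. Expanding $d\theta=d(2g_{ij}\delta y^i\wedge dx^j)$ in the adapted cobasis and using $dg_{ij}=\dfrac{\delta g_{ij}}{\delta x^k}dx^k+\dfrac{\partial g_{ij}}{\partial y^k}\delta y^k$ together with the exterior differential formula $d(\delta y^i)=\dfrac12 R^i{}_{js}dx^s\wedge dx^j+B^i{}_{js}\delta y^s\wedge dx^j$ (Lemma 1.5.1, part I), the closedness reduces to the same Finslerian identities that appear for Lagrange spaces: the cyclic symmetry $\sum_{(ijk)}R_{ijk}=0$, the total symmetry of the Cartan tensor $C_{ijk}$ (property $4^\circ$), and the metricity relations $g_{ij|k}=0$, $g_{ij}|_k=2C_{ijk}$ coming from (2.1.14). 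The main obstacle is precisely this last step: organizing the $d\theta=0$ computation so that the coefficients of the $dx\wedge dx\wedge dx$, $\delta y\wedge dx\wedge dx$ and $\delta y\wedge\delta y\wedge dx$ terms each vanish, which requires carefully matching Berwald coefficients $B^i{}_{jk}$ against the symmetric Cartan data. I expect this to follow by the same mechanism as in the Lagrangian almost K\"ahlerian model (Theorem 2.7.1, part I), since $F^n$ is a Lagrange space with $L=F^2$, so the cleanest route is to invoke that theorem for the associated Lagrange space $L^n_F$ and observe that all objects ($N$, $C\Gamma(N)$, $\G$, $\F$, $\theta$) coincide, whence $d\theta=0$ is inherited.
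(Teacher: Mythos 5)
Your proposal is correct and takes essentially the paper's own route: parts $1^\circ$ and $2^\circ$ are checked on the adapted basis exactly as you do, and for $3^\circ$ the paper likewise treats $F^n$ as the associated Lagrange space $L^n_F=(M,F^2)$ and inherits the almost K\"ahlerian property from Theorem 2.7.1 of Part I, as in your closing paragraph. One remark: the laborious $d\theta=0$ computation you anticipate (matching Berwald coefficients $B^i{}_{jk}$ against the Cartan data) is unnecessary, since $\theta=d\omega$ with $\omega=p_i\,dx^i$ the Cartan $1$-form (Proposition 3.1.2, Part I), so closedness is immediate from $d\theta=d^2\omega=0$ --- which is precisely the mechanism behind the cited Lagrangian theorem.
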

The space $H^{2n}=(\widetilde{TM},\G,\F)$ is called {\it the almost K\"ahlerian model} of the Finsler space $F^n$.
\end{itemize}

G.S. Asanov in the paper \cite{As} proved that the metric $\G$ from (2.1.23) does not satisfies the principle of the Post-Newtonian Calculus. This is due to the fact that the horizontal and vertical terms of $\G$ do not have the same physical dimensions.

This is the reason for R. Miron to introduce a new lift of the fundamental tensor $g_{ij},$ \cite{miranabuc}, in the form:
$$
\widetilde{\G}(x,y)=g_{ij}(x,y)dx^i\otimes dx^j+\dfrac{a^2}{||y||^2}g_{ij}(x,y) \delta y^i\otimes \delta y^j
$$
where $a>0$ is a constant imposed by applications in Theoretical Physics and where $\|y\|^2=g_{ij}(x,y)y^iy^j=F^2$ has the property $F^2>0$. The lift $\G$ is 2-homogeneous with respect to $y^i$. The Sasaki-Matsumoto lift $\G$ has not the property of homogeneity.

\bigskip

\noindent{\bf Two examples:}~1. {\it Randers spaces.} They have been defined by R. S. Ingarden as a triple $RF^n=(M,\alpha+\beta,N)$, where $\alpha+\beta$ is a Randers metric and $N$ is the Cartan nonlinear connection of the Finsler space $F^n=(M,\alpha+\beta)$, \cite{Miron}.

2. {\it Ingarden spaces.}  These spaces have been defined by R. Miron, \cite{miranabuc}, as a triple $IF^n=(M,\alpha+\beta,N_L)$, where $\alpha+\beta$ is a Randers metric and $N_L$ is the Lorentz nonlinear connection of $F^n=(M,\alpha+\beta)$ having the coefficients
\begin{equation}
N^i_j(x,y)=\stackrel{\circ}{\gamma}\!{}^i_{jk}(x)y^k-\stackrel{\circ}{F}\!{}^i_j(x),\ \ \ \ \ \stackrel{\circ}{F}\!{}^i_j=\dfrac{1}{2}a^{is}(x) \(\dfrac{\partial b_s}{\partial x^j}-\dfrac{\partial b_j}{\partial x^s}\).
\end{equation}

The Christoffel symbols are constructed with the Riemannian metric tensor $a_{ij}(x)$ of the Riemann space $(M,\alpha^2)$ and $\stackrel{\circ}{F}\!{}^i_j(x)$ is the electromagnetic tensor determined by the electromagnetic form $(\alpha+\beta)$.

\section{The notion of Finslerian mechanical system}
\setcounter{equation}{0}\setcounter{theorem}{0}\setcounter{definition}{0}\setcounter{proposition}{0}\setcounter{remark}{0}

As we know from the previous chapter, the Riemannian mechanical systems $\Sigma_{\cal R}=(M,T,Fe)$ is defined as a triple
in which $M$ is the configuration space, $T$ is the kinetic energy and $Fe$ are the external forces, which depend on the material point $x\in M$ and depend on velocities $y^i=\dfrac{dx^i}{dt}$.

Extending the previous ideas, we introduce the notion of Finslerian Mechanical System, studied by author in the paper \cite{Miron}. The shortly theory of this analytical mechanics can be find in the joint book {\it Finsler-Lagrange Geometry. Applications to Dynamical Systems}, by Ioan Bucataru and Radu Miron, Romanian Academy Press, Bucharest, 2007.

In a different manner, M. de Leon and colab. \cite{leonrodri}, M. Crampin et colab. \cite{cr2}, have studied such kind of new Mechanics.

A Finslerian mechanical system $\Sigma_F$ is defined as a triple
\begin{equation}
\Sigma_F=(M,{\cal E}_{F^2}, Fe)
\end{equation}
where $M$ is a real differentiable manifold of dimension $n$, called {\it the configuration space}, ${\cal E}_{F^2}$ is {\it the energy} of an a priori given Finsler space $F^n=(M,F(x,y))$, which can be positive defined or semidefined, and $Fe(x,y)$ are the external forces given as a vertical vector field on the tangent manifold $TM$. We continue to say that $TM$ is the velocity space of $M$.

Evidently, the Finslerian mechanical system $\Sigma_F$ is a straightforward generalization of the known notion of Riemannian mechanical system $\Sigma_{\cal R}$ obtained for ${\cal E}_{F^2}$ as kinetic energy of a Riemann space ${\cal R}^n=(M,g)$.

Therefore, we can introduce the evolution (or fundamental) equations of $\Sigma_F$ by means of the following Postulate:

\bigskip

\noindent{\bf Postulate.} {\it The evolution equations of the Finslerian mechanical system $\Sigma_F$ are the Lagrange equations:
\begin{equation}
\dfrac{d}{dt}\dfrac{\partial {\cal E}_{F^2}}{\partial y^i}-
\dfrac{\partial {\cal E}_{F^2}}{\partial x^i}=F_i(x,y),\ \ \ \ y^i=\dfrac{dx^i}{dt}
\end{equation}
where the energy is
\begin{equation}
{\cal E}_{F^2}=y^i\dfrac{\partial {F^2}}{\partial y^i}-F^2=F^2,
\end{equation}
and $F_i(x,y),\ (i=1,..,n)$, are the covariant components of the external forces $Fe$:
\begin{equation}
\left\{
\begin{array}{l}
Fe(x,y)=F^i(x,y)\dfrac{\partial}{\partial y^i}\\
\\
F_i(x,y)=g_{ij}(x,y)F^i(x,y),
\end{array}\right.
\end{equation}
and
\begin{equation}
g_{ij}(x,y)=\dfrac{1}{2}\dfrac{\partial^2 F^2}{\partial y^i \partial y^j},\ \ \ \det(g_{ij}(x,y))\neq 0,
\end{equation}
is the fundamental (or metric) tensor of Finsler space $F^n=(M,F(x,y))$.}

Finally, the Lagrange equations of the Finslerian mechanical system are:
\begin{equation}
\dfrac{d}{dt}\dfrac{\partial {F^2}}{\partial y^i}-
\dfrac{\partial {F^2}}{\partial x^i}=F_i(x,y),\ \ \ \ y^i=\dfrac{dx^i}{dt}.
\end{equation}

A more convenient form of the previous equations is given by:
\begin{theorem}
The Lagrange equations $(2.2.6)$ are equivalent to the second order differential equations:
\begin{equation}
\dfrac{d^2x^i}{dt^2}+\gamma^i_{jk}(x,\dfrac{dx}{dt})
\dfrac{dx^j}{dt}\dfrac{dx^k}{dt}=\dfrac{1}{2}F^i(x,\dfrac{dx}{dt}),
\end{equation}
where $\gamma^i_{jk}(x,y)$ are the Christoffel symbols of the metric tensor $g_{ij}(x,y)$ of the Finsler space $F^n$.
\end{theorem}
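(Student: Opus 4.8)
The plan is to reduce the statement to a direct, reversible computation along the lifted curve $\widetilde{c}$, exactly in the spirit of the geodesic case (2.1.8) and of Proposition 1.1.1 for Riemannian systems. First I would fix a curve $c(t)=(x^i(t))$ with $y^i=dx^i/dt$ and differentiate the momentum term. Since $\partial F^2/\partial y^i$ is a function of $(x,y)$, the chain rule gives $\frac{d}{dt}\frac{\partial F^2}{\partial y^i}=\frac{\partial^2 F^2}{\partial x^k\partial y^i}\frac{dx^k}{dt}+\frac{\partial^2 F^2}{\partial y^k\partial y^i}\frac{d^2x^k}{dt^2}$. Using $\frac{\partial^2 F^2}{\partial y^i\partial y^k}=2g_{ik}$ from the definition (2.2.5) of the fundamental tensor, the Lagrange equation (2.2.6) becomes $2g_{ik}\frac{d^2x^k}{dt^2}+\big(\frac{\partial^2 F^2}{\partial x^k\partial y^i}y^k-\frac{\partial F^2}{\partial x^i}\big)=F_i$.

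Next I would identify the bracketed term with the canonical semispray coefficients. By the general formula (2.3.2) for the semispray of a regular Lagrangian, applied with $L=F^2$, one has $2G^j=\frac12 g^{ji}\big(\frac{\partial^2 F^2}{\partial y^i\partial x^k}y^k-\frac{\partial F^2}{\partial x^i}\big)$, so that the bracket equals $4g_{ij}G^j$. Substituting this and contracting with $\frac12 g^{li}$—a legitimate operation precisely because $g_{ij}$ is nonsingular by regularity—yields $\frac{d^2x^l}{dt^2}+2G^l=\frac12 g^{li}F_i=\frac12 F^l$, where I use $F^l=g^{li}F_i$ from (2.2.4). It then remains only to rewrite $2G^l$ as a Christoffel contraction.

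The one step that is not mere bookkeeping, and hence the part deserving care, is the passage $2G^l=\gamma^l_{jk}y^jy^k$. For a generic Lagrange space the semispray coefficients do \emph{not} reduce to the Christoffel symbols of the metric; this reduction rests on the $2$-homogeneity of the Finsler energy $F^2$ and is exactly the content of the earlier Finsler result (3.3.1)--(3.3.1'), which I would invoke directly. Feeding $2G^l=\gamma^l_{jk}\!\left(x,\frac{dx}{dt}\right)\frac{dx^j}{dt}\frac{dx^k}{dt}$ into the last relation produces precisely the system (2.2.7). Finally, since every manipulation—the chain-rule expansion, the contraction with $g^{li}$, and the raising and lowering of indices by $g_{ij}$—is reversible owing to $\det(g_{ij})\neq 0$, the two systems (2.2.6) and (2.2.7) are genuinely equivalent rather than one merely implying the other. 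Thus the main obstacle is not analytic difficulty but the correct structural citation: making explicit that the Finsler homogeneity is what collapses the abstract term $2G^l$ into $\gamma^l_{00}$.
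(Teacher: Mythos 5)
Your proposal is correct and follows essentially the paper's own route: the paper's one-line proof (write $F^2=g_{ij}(x,y)y^iy^j$ and compute) implicitly rests on exactly the ingredients you make explicit, namely the chain-rule expansion with $\dot{\partial}_i\dot{\partial}_j F^2=2g_{ij}$, the semispray formula (2.3.2) applied to $L=F^2$, and the Finsler reduction $2G^i=\gamma^i_{jk}y^jy^k$ of (3.3.1'), which indeed hinges on the $2$-homogeneity of $F^2$ (via $C_{0jk}=0$). Your added remarks on reversibility via $\det(g_{ij})\neq 0$ and on the structural role of homogeneity are accurate elaborations, not deviations.
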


\begin{proof}
Writing the kinetic energy $F^2(x,y)$ in the form:
\begin{equation}
F^2(x,y)=g_{ij}(x,y)y^iy^j,
\end{equation}
the equivalence of the systems of equations (2.2.6) and (2.2.7) is not difficult to establish.
\end{proof}

But, the form (2.2.7) is very convenient in applications. So, we obtain a first result expressed in the following theorems:
\begin{theorem}
 The trajectories of the Finslerian mechanical system $\Sigma_F$, without external forces $(Fe\equiv 0)$, are the geodesics of the Finsler space $F^n$.
\end{theorem}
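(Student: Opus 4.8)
The plan is to reduce the evolution equations of $\Sigma_F$ to a form that visibly coincides with the geodesic equations of $F^n$ already obtained in Part~I, and then to invoke the equivalence proved in Theorem~2.2.1. First I would recall that, by definition, the trajectories of $\Sigma_F$ are the solution curves of the Lagrange equations (2.2.6). By Theorem~2.2.1 these are equivalent to the second order system (2.2.7),
\[
\frac{d^2 x^i}{dt^2}+\gamma^i_{jk}\!\left(x,\frac{dx}{dt}\right)\frac{dx^j}{dt}\frac{dx^k}{dt}=\frac{1}{2}F^i\!\left(x,\frac{dx}{dt}\right),
\]
so no fresh computation with the derivatives of $F^2$ is required; that work was carried out once and for all in establishing (2.2.7) via the identity $F^2=g_{ij}(x,y)y^iy^j$.

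The decisive step is simply to impose the hypothesis $Fe\equiv 0$, which by (2.2.4) means $F^i(x,y)\equiv 0$ (hence also $F_i\equiv 0$). Substituting this into the right-hand side of (2.2.7) annihilates it and leaves
\[
\frac{d^2 x^i}{dt^2}+\gamma^i_{jk}\!\left(x,\frac{dx}{dt}\right)\frac{dx^j}{dt}\frac{dx^k}{dt}=0,\quad y^i=\frac{dx^i}{dt}.
\]
I would then observe that this is precisely the system (2.1.8) characterizing the geodesics of $F^n$ (equivalently (3.2.6) of Part~I), the coefficients $\gamma^i_{jk}$ being the Christoffel symbols of the fundamental tensor $g_{ij}$. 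Thus the trajectories of $\Sigma_F$ with vanishing external forces satisfy exactly the geodesic equations of the Finsler space, which is the assertion.

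A remark on parametrization would complete the picture. To be certain that these solution curves are genuine geodesics of $F^n$, and not merely solutions of an affinely reparametrized version of the geodesic system, I would invoke the conservation of the energy ${\cal E}_{F^2}=F^2$ along the trajectories (property $9^\circ$ of Section~2.1, which applies here since $Fe\equiv 0$). Constancy of $F^2=F^2(x,\dot{x})$ forces $F$ to be constant along each trajectory, so the time parameter $t$ is proportional to the arc length $s$ of $F^n$, and the trajectories are geodesics in a canonical (affine) parametrization. This matching of parametrizations is the only point demanding any care, and hence the main obstacle: the geodesics of Part~I are written in the canonical parameter $s$ while the evolution equations are written in the time parameter $t$. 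Everything else is a direct substitution into identities already proved.
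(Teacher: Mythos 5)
Your proposal is correct and follows essentially the same route as the paper, whose entire proof is the observation that $F^i(x,y)\equiv 0$ substituted into the SODE (2.2.7) yields the geodesic equations of $F^n$. Your closing remark on parametrization (using conservation of ${\cal E}_{F^2}=F^2$ to identify $t$ with an affine multiple of arc length) is a sound extra precaution that the paper silently omits, since it identifies geodesics with solutions of $E_i(F^2)=0$ in the Lagrange-space sense, but it does not change the argument.
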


Indeed, $F^i(x,y)\equiv 0$ and the SODE (2.2.7) imply the equations (2.2.4) of geodesics of space $F^n$.

A second important result is a consequence of the Lagrange equations, too.
\begin{theorem}
The variation of kinetic energy ${\cal E}_{F^2}=F^2$ of the mechanical system $\Sigma_F$ along the evolution curves (2.2.6) is given by
\begin{equation}
\dfrac{d {\cal E}_{F^2}}{dt}=\dfrac{dx^i}{dt}F_i.
\end{equation}
\end{theorem}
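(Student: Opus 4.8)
The plan is to compute the total derivative of the kinetic energy $\mathcal{E}_{F^2} = F^2$ along an evolution curve and substitute the Lagrange equations to isolate the contribution of the external forces. Since $F^2(x,y)$ depends on both $x^i$ and $y^i = \dfrac{dx^i}{dt}$, the chain rule along a solution curve $c(t) = (x^i(t))$ gives
\begin{equation}
\dfrac{d\mathcal{E}_{F^2}}{dt} = \dfrac{\partial F^2}{\partial x^i}\dfrac{dx^i}{dt} + \dfrac{\partial F^2}{\partial y^i}\dfrac{dy^i}{dt}, \quad y^i = \dfrac{dx^i}{dt}.
\end{equation}
This is exactly the same scheme used in the Riemannian case (Theorem 1.3.2 of this part), so I expect the argument to run parallel to that proof.

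First I would recall that the energy $\mathcal{E}_{F^2}$ is itself built from $F^2$ via $\mathcal{E}_{F^2} = y^i \dfrac{\partial F^2}{\partial y^i} - F^2$, which by the $2$-homogeneity of $F^2$ in $y$ equals $F^2$. The key manipulation is to rewrite the first term in the chain-rule expression using the Lagrange equations (2.2.6): from $\dfrac{d}{dt}\dfrac{\partial F^2}{\partial y^i} - \dfrac{\partial F^2}{\partial x^i} = F_i$ we solve for $\dfrac{\partial F^2}{\partial x^i} = \dfrac{d}{dt}\dfrac{\partial F^2}{\partial y^i} - F_i$ and substitute. Then
\begin{equation}
\dfrac{d\mathcal{E}_{F^2}}{dt} = \left(\dfrac{d}{dt}\dfrac{\partial F^2}{\partial y^i} - F_i\right)\dfrac{dx^i}{dt} + \dfrac{\partial F^2}{\partial y^i}\dfrac{dy^i}{dt}.
\end{equation}

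Next I would recognize that the two terms involving $\dfrac{\partial F^2}{\partial y^i}$ assemble, via the product rule, into the total derivative of $y^i \dfrac{\partial F^2}{\partial y^i}$: indeed $\dfrac{d}{dt}\left(y^i \dfrac{\partial F^2}{\partial y^i}\right) = \dfrac{dy^i}{dt}\dfrac{\partial F^2}{\partial y^i} + y^i \dfrac{d}{dt}\dfrac{\partial F^2}{\partial y^i}$, and since $y^i = \dfrac{dx^i}{dt}$ this is precisely the sum of the two $\dfrac{\partial F^2}{\partial y^i}$-terms above. Hence the expression collapses to
\begin{equation}
\dfrac{d\mathcal{E}_{F^2}}{dt} = \dfrac{d}{dt}\left(y^i\dfrac{\partial F^2}{\partial y^i}\right) - F_i\dfrac{dx^i}{dt}.
\end{equation}
Using $y^i \dfrac{\partial F^2}{\partial y^i} = 2F^2 = 2\mathcal{E}_{F^2}$ (Euler's theorem on the $2$-homogeneous function $F^2$), the first term equals $2\dfrac{d\mathcal{E}_{F^2}}{dt}$, so transposing gives $\dfrac{d\mathcal{E}_{F^2}}{dt} = 2\dfrac{d\mathcal{E}_{F^2}}{dt} - F_i\dfrac{dx^i}{dt}$, which yields the claimed formula $\dfrac{d\mathcal{E}_{F^2}}{dt} = \dfrac{dx^i}{dt}F_i$.

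The computation is entirely routine; the only subtle point — and the step I would be most careful about — is the correct bookkeeping of the homogeneity relation $y^i\dfrac{\partial F^2}{\partial y^i} = 2F^2$, since $F$ is $1$-homogeneous but $F^2$ is $2$-homogeneous, so Euler's theorem (formula (1.1.6) of Part I) must be applied with $r=2$ to $F^2$ rather than to $F$. There is no genuine analytic obstacle here: the result follows formally from the Lagrange equations and Euler's identity, exactly mirroring the Riemannian computation, and holds whether the Finsler metric is positive definite or merely semidefinite since only the homogeneity of $F^2$ and the form of the Lagrange equations are used.
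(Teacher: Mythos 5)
Your proof is correct and is essentially the paper's own argument: the paper states this theorem without a written proof precisely because it repeats verbatim the computation given for Theorem 1.3.2 in the Riemannian case (chain rule along the evolution curve, substitution of $\dfrac{\partial F^2}{\partial x^i}$ from the Lagrange equations, regrouping into $\dfrac{d}{dt}\left(y^i\dfrac{\partial F^2}{\partial y^i}\right)$, and the Euler identity $y^i\dfrac{\partial F^2}{\partial y^i}=2F^2$ to transpose), which is exactly what you carried out. Your remark on applying Euler's theorem with $r=2$ to $F^2$ rather than to $F$ is the right point of care and is consistent with the paper's relation ${\cal E}_{F^2}=y^i\dfrac{\partial F^2}{\partial y^i}-F^2=F^2$.
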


Consequently:
\begin{theorem}
The kinetic energy ${\cal E}_{F^2}$ of the system $\Sigma_F$ is conserved along the evolution curves (2.2.6) if the external forces $Fe$ are orthogonal to the evolution curves.
\end{theorem}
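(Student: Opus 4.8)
The plan is to obtain this conservation law as an immediate corollary of the variation formula established in the preceding theorem, so that no fresh computation is required beyond interpreting the orthogonality hypothesis. First I would recall that, along any evolution curve of $\Sigma_F$ (that is, along any solution of the Lagrange equations (2.2.6)), the preceding theorem gives
\begin{equation*}
\dfrac{d{\cal E}_{F^2}}{dt}=\dfrac{dx^i}{dt}F_i,
\end{equation*}
where $F_i=g_{ij}(x,y)F^j$ are the covariant components of the external force $Fe$ and $y^i=\dfrac{dx^i}{dt}$ along the lift $\widetilde{c}$ of the curve.

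Second, I would make precise what is meant by the external forces being orthogonal to the evolution curves. The external force is the vertical vector field $Fe=F^i\dfrac{\partial}{\partial y^i}$, while the tangent direction of the evolution curve is carried by the Liouville vector field $\C=y^i\dfrac{\partial}{\partial y^i}$ evaluated along $\widetilde{c}$, since there $y^i=\dfrac{dx^i}{dt}$. Using the fundamental tensor $g_{ij}(x,y)$ of the Finsler space $F^n$ to measure inner products on the fibres, the orthogonality of $Fe$ and $\C$ reads
\begin{equation*}
g_{ij}(x,y)\,y^i F^j=y^i F_i=\dfrac{dx^i}{dt}F_i=0 .
\end{equation*}
This is exactly the assertion that the right-hand side of the variation formula vanishes along the evolution curve; it is the same orthogonality condition that appears in the Riemannian case treated in Chapter~1.

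Finally, substituting this vanishing into the variation formula yields $\dfrac{d{\cal E}_{F^2}}{dt}=0$ along every evolution curve satisfying the hypothesis, whence ${\cal E}_{F^2}=F^2$ is constant, i.e. conserved. The only delicate point — and it is one of interpretation rather than of analysis — is to fix the meaning of ``orthogonal to the evolution curve'': one must agree that orthogonality is taken with respect to the fibre metric $g_{ij}(x,y)$ read along the lift $\widetilde{c}$, identifying the velocity direction with the Liouville field $\C$. Once this is granted the conclusion is immediate, and in particular it requires no homogeneity assumption on the external forces $Fe$.
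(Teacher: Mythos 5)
Your proof is correct and follows exactly the paper's route: the paper derives this theorem as an immediate consequence ("Consequently") of the variation formula $\dfrac{d{\cal E}_{F^2}}{dt}=\dfrac{dx^i}{dt}F_i$, with orthogonality of $Fe$ and the velocity direction (the Liouville field $\C$, measured by $g_{ij}(x,y)$) meaning precisely $F_i\dfrac{dx^i}{dt}=0$. Your added remark pinning down the meaning of orthogonality matches the convention the paper already uses in the Riemannian case (Corollary 1.3.1) and in the definition of dissipative forces via $\langle\C,Fe\rangle$.
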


The external forces $Fe$ are called {\it dissipative} if the scalar product $\langle\C,Fe\rangle$ is negative, \cite{Miron}.

The formula (2.2.9) leads to the following property expressed by:
\begin{theorem}
The kinetic energy $ {\cal E}_{F^2} $ decreases along the evolution curves of the Finslerian mechanical system $\Sigma_F$ if and only if the external forces $Fe$ are dissipative.
\end{theorem}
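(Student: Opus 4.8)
The final statement (Theorem 2.2.6) asserts that the kinetic energy $\mathcal{E}_{F^2}$ decreases along the evolution curves of $\Sigma_F$ if and only if the external forces $Fe$ are dissipative, i.e. $\langle\C,Fe\rangle<0$. The plan is to derive this as an immediate corollary of the energy variation formula (2.2.9), namely $\dfrac{d\mathcal{E}_{F^2}}{dt}=\dfrac{dx^i}{dt}F_i$, which was established in Theorem 2.2.5 directly from the Lagrange equations (2.2.6). The whole argument hinges on recognizing that the right-hand side of (2.2.9) is exactly the scalar product $\langle\C,Fe\rangle$ evaluated along the evolution curve.

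First I would rewrite the right-hand side of (2.2.9) in intrinsic form. Along an evolution curve we have $y^i=\dfrac{dx^i}{dt}$, so $\dfrac{dx^i}{dt}F_i=y^i F_i$. Using the definition (2.2.4) of the covariant components, $F_i=g_{ij}F^j$, this becomes $y^i g_{ij}F^j=g_{ij}y^i F^j$. The Liouville vector field is $\C=y^i\dfrac{\partial}{\partial y^i}$ and the external force is $Fe=F^j\dfrac{\partial}{\partial y^i}$, both vertical vector fields; their scalar product with respect to the fundamental tensor $g_{ij}$ is precisely $\langle\C,Fe\rangle=g_{ij}y^i F^j$. Hence
\begin{equation}
\dfrac{d\mathcal{E}_{F^2}}{dt}=\langle\C,Fe\rangle,\ \ y^i=\dfrac{dx^i}{dt}.\nonumber
\end{equation}

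With this identity in hand, the equivalence is purely logical. If $Fe$ is dissipative then by definition $\langle\C,Fe\rangle<0$ at every point of the evolution curve, so $\dfrac{d\mathcal{E}_{F^2}}{dt}<0$ and the kinetic energy strictly decreases. Conversely, if $\mathcal{E}_{F^2}$ decreases along every evolution curve, then $\dfrac{d\mathcal{E}_{F^2}}{dt}<0$, whence $\langle\C,Fe\rangle<0$; since through every point $(x,y)\in\widetilde{TM}$ there passes an evolution curve with that initial velocity, the inequality holds at every point, so $Fe$ is dissipative. I would remark that the quantifier in the converse direction — passing from ``decreasing along each evolution curve'' to ``$\langle\C,Fe\rangle<0$ pointwise on $\widetilde{TM}$'' — is the only place requiring a small amount of care, and it rests on the local existence and uniqueness of solutions to the Lagrange equations (equivalently the SODE (2.2.7)) with prescribed initial data, which guarantees that every admissible initial point is realized on some evolution curve. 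The main ``obstacle'' is therefore not computational but a matter of stating this realizability precisely; everything else follows directly from the already-proven formula (2.2.9) and the definition of dissipative forces.
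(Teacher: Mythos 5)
Your proof is correct and follows the paper's own route exactly: the paper likewise obtains the theorem as an immediate consequence of the variation formula (2.2.9), identifying $\dfrac{dx^i}{dt}F_i=g_{ij}y^iF^j=\langle\C,Fe\rangle$ and invoking the definition of dissipative forces. Your extra remark on realizing every point of $\widetilde{TM}$ on an evolution curve via existence of solutions of the SODE is a sound refinement the paper leaves implicit (note only the harmless index slip $Fe=F^j\dfrac{\partial}{\partial y^j}$, not $\dfrac{\partial}{\partial y^i}$).
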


\section*{Some examples of Finslerian mechanical systems}
\setcounter{equation}{0}\setcounter{theorem}{0}\setcounter{definition}{0}\setcounter{proposition}{0}\setcounter{remark}{0}

$1^{\circ}$ The systems $\Sigma_F=(M,{\cal E}_{F^2},Fe)$ given by $F^n=(M,\alpha+\beta)$ as a Randers space and $Fe=\beta\C=\beta y^i\dfrac{\partial}{\partial y^i}$. Evidently $Fe$ is $2$-homogeneous with respect to  $y^i$.

$2^{\circ}$ $\Sigma_F$ determined by $F^n=(M,\alpha+\beta)$ and $Fe=\alpha\C$.

$3^{\circ}$ $\Sigma_F$ with $F^n=(M,\alpha+\beta)$ and $Fe=(\alpha+\beta)\C$.

$4^{\circ}$ $\Sigma_F$ defined by a Finsler space $F^n=(M,F)$ and $Fe=a^i_{jk}(x)y^jy^k\dfrac{\partial}{\partial y^i}$, $a^i_{jk}(x)$ being a symmetric tensor on the configuration space $M$ of type $(1,2)$.

\section{The evolution semispray of the system $\Sigma_F$}\index{Semisprays! of mechanical system $\Sigma_F$}
\setcounter{equation}{0}\setcounter{theorem}{0}\setcounter{definition}{0}\setcounter{proposition}{0}\setcounter{remark}{0}

The Lagrange equations (2.2.6) give us the integral curves of a remarkable semispray on the velocity space $TM$, which governed the geometry of Finslerian mechanical system $\Sigma_F$. So, if the external forces $Fe$ are global defined on the manifold $TM$, we obtain:
\begin{theorem}
[Miron, \cite{Miron}] For the Finslerian mechanical systems $\Sigma_F$, the following properties hold good:

$1^{\circ}$ The operator $S$ defined by
\begin{equation}
S=y^i\dfrac{\partial}{\partial x^i}-\left( 2 \stackrel{\circ}{G}\!{}^i-\dfrac{1}{2}F^i\right) \dfrac{\partial}{\partial y^i};\ \ \ \  2\stackrel{\circ}{G}\!{}^i=\gamma^i_{jk}(x,y)y^jy^k
\end{equation}
is a vector field, global defined on the phase space $TM$.

$2^{\circ}$ $S$ is a semispray which depends only on $\Sigma_F$ and it is a spray if $Fe$ is $2$-homogeneous with respect to $y^i$.

$3^{\circ}$ The integral curves of the vector field $S$ are the evolution curves given by the Lagrange equations (2.2.7) of $\Sigma_F$.
\end{theorem}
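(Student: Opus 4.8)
The plan is to establish the three claims in turn, treating them as direct analogues of the Riemannian case (Theorem 1.3.1 of the present part) with the only new ingredient being the velocity dependence of the Finsler data. First I would prove claim $1^\circ$, that $S$ is a globally defined vector field on $TM$. The natural approach is to write $S$ in the split form
\begin{equation}
S=\stackrel{\circ}{S}+\dfrac{1}{2}Fe,
\tag{2.3.1'}
\end{equation}
where $\stackrel{\circ}{S}=y^i\dfrac{\partial}{\partial x^i}-2\stackrel{\circ}{G}\!{}^i\dfrac{\partial}{\partial y^i}$ is the canonical spray of the Finsler space $F^n$ from property $12^\circ$ of Section 2.1, with $2\stackrel{\circ}{G}\!{}^i=\gamma^i_{jk}(x,y)y^jy^k$. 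Since $\stackrel{\circ}{S}$ is already known to be a global vector field on $\widetilde{TM}$ (indeed a spray, by $13^\circ$) and $Fe=F^i\dfrac{\partial}{\partial y^i}$ is by hypothesis a globally defined vertical vector field on $TM$, their sum $S$ is a globally defined vector field. The only point requiring care is that the addition of a vertical field does not disturb the defining property $J(S)=\C$, which I would check from (1.1.4') since $J$ annihilates the vertical part $Fe$.

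Next, for claim $2^\circ$ I would verify that $S$ is a semispray, i.e. that $J(S)=\C$, which follows immediately from the splitting: $J(\stackrel{\circ}{S})=\C$ because $\stackrel{\circ}{S}$ is a semispray, and $J(Fe)=0$ because $Fe$ is vertical. That $S$ depends only on $\Sigma_F$ is clear because $\stackrel{\circ}{S}$ is built canonically from $F$ (through $\gamma^i_{jk}$ and hence $g_{ij}$) and $Fe$ is part of the data of $\Sigma_F$. For the homogeneity assertion, I would compute the coefficients $2G^i=2\stackrel{\circ}{G}\!{}^i-\dfrac{1}{2}F^i$ and recall that $\stackrel{\circ}{G}\!{}^i=\dfrac{1}{2}\gamma^i_{00}$ is $2$-homogeneous in $y$ (property $9'$, $13^\circ$); hence $S$ is $2$-homogeneous, i.e. a spray, precisely when the extra term $F^i$ is $2$-homogeneous in $y^i$, using the Euler characterization (1.1.6) via ${\cal L}_{\C}$.

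Finally, for claim $3^\circ$ I would write out the integral curves of $S$ in local coordinates. A curve $u(t)=(x^i(t),y^i(t))$ is an integral curve of $S$ exactly when
\begin{equation}
\dfrac{dx^i}{dt}=y^i,\qquad \dfrac{dy^i}{dt}=-2\stackrel{\circ}{G}\!{}^i(x,y)+\dfrac{1}{2}F^i(x,y).
\tag{2.3.1''}
\end{equation}
Substituting $y^i=\dfrac{dx^i}{dt}$ into the second equation and using $2\stackrel{\circ}{G}\!{}^i=\gamma^i_{jk}y^jy^k$ yields exactly the second order system (2.2.7), which by Theorem 2.2.1 is equivalent to the Lagrange equations (2.2.6) of $\Sigma_F$. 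Thus the projections of the integral curves of $S$ are the evolution curves of the system.

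I expect no genuine obstacle here, since the argument is structurally identical to the Riemannian proof already given; the only substantive difference is that the Christoffel symbols $\gamma^i_{jk}=\gamma^i_{jk}(x,y)$ now depend on the velocities, and $g_{ij}$, $F_i$ likewise. The main point demanding attention is therefore the invariance (global well-definedness) of the two pieces: I would lean on the established transformation rule (1.2.3) for the spray coefficients $\stackrel{\circ}{G}\!{}^i$ and on the hypothesis that $Fe$ is a genuine vertical vector field (so that $F^i$ transforms tensorially as in (2.2.4)–(2.2.5)), which together guarantee that $2G^i=2\stackrel{\circ}{G}\!{}^i-\tfrac12 F^i$ obeys (1.2.3) and hence defines $S$ globally on $TM$.
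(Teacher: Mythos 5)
Your proof is correct and follows essentially the same route as the paper: the decomposition $S=\stackrel{\circ}{S}+\dfrac12 Fe$, the observation that adding the vertical field $Fe$ preserves $J(S)=\C$, and the coordinate computation identifying the integral curves of $S$ with the system (2.2.7). Your added details (the Euler homogeneity argument for the spray claim and the transformation-rule check for global well-definedness) merely make explicit what the paper leaves implicit.
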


\begin{proof}
$1^{\circ}$ Let us consider the canonical semispray $\stackrel{\circ}{S}$ of the Finsler space $F^n$. Thus from (2.3.1) we have
\begin{equation}
S=\stackrel{\circ}{S} +\dfrac{1}{2}Fe.
\end{equation}
It follows that $S$ is a vector field on $TM$.
\end{proof}

$2^{\circ}$ Since $Fe$ is a vertical vector field, then $S$ is a semispray. Evidently, $S$ depends on $\Sigma_F$, only.

$3^{\circ}$ The integral curves of $S$ are given by:
\begin{equation}
\dfrac{dx^i}{dt}=y^i;\ \ \ \  \dfrac{dy^i}{dt}+2\stackrel{\circ}{G^i}(x,y)=\dfrac{1}{2}F^i(x,y).
\end{equation}

The previous system of differential equations is equivalent to system (2.2.7).

In the book of I. Bucataru and R. Miron \cite{BuMi}, one proves the following important result, which extend a known J. Klein theorem:
\begin{theorem}
The semispray $S$, given by the formula $(2.3.1)$, is the unique vector field on $\widetilde{TM}$, solution of the equation:
\begin{equation}
i_S\stackrel{\circ}{\omega}=-dT+\sigma,
\end{equation}
where $\stackrel{\circ}{\omega}$ is the symplectic structure of the Finsler space $F^n=(M,F)$, $T=\dfrac{1}{2}F^2=\dfrac{1}{2}g_{ij}\dfrac{dx^i}{dt}\dfrac{dx^j}{dt}$ and $\sigma$ is the 1-form of external forces:
\begin{equation}
\sigma=F_i(x,y)dx^i.
\end{equation}
\end{theorem}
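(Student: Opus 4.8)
The plan is to read $\stackrel{\circ}{\omega}$ as the canonical (Cartan) symplectic form $\theta=d\omega=dp_i\wedge dx^i$ of the Finsler space, where $p_i=\tfrac12\,\partial F^2/\partial y^i=\partial T/\partial y^i$ (items $5^\circ$–$6^\circ$ of \S2.1); by part $3^\circ$ of Proposition 3.2 this $\theta$ is a symplectic structure, hence nondegenerate on $\widetilde{TM}$. Uniqueness is then immediate: nondegeneracy means the $\mathcal{F}(\widetilde{TM})$–linear map $X\mapsto i_X\theta$ is an isomorphism onto $\mathcal{X}^*(\widetilde{TM})$, so the equation $i_S\stackrel{\circ}{\omega}=-dT+\sigma$ has exactly one solution $S$, the right-hand side being a genuine $1$–form ($dT$ is global and $\sigma=F_i\,dx^i$ is semibasic, since $Fe$ is a globally defined vertical field). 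For existence and identification I would substitute the candidate $S$ from (2.3.1) into $i_S\theta$ and verify the identity by a direct coordinate computation.

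First I would compute, for an arbitrary $X=X^i\,\partial/\partial x^i+Y^i\,\partial/\partial y^i$,
\[
 i_X\theta=(Xp_i)\,dx^i-X^i\,dp_i ,
\]
and split this into its $dx^i$– and $dy^i$–components, using $\partial p_i/\partial y^j=g_{ij}$ together with the identity $p_i=g_{ij}y^j$ (which follows from the total symmetry of $C_{ijk}$ and $C_{0jk}=0$, item $4^\circ$ of \S2.1). The $dy^j$–component of $i_X\theta$ is $-(g_{ij}X^i)\,dy^j$, whereas that of $-dT+\sigma$ is $-p_j\,dy^j=-(g_{jk}y^k)\,dy^j$; equating them and using $\det(g_{ij})\neq0$ forces $X^i=y^i$. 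Thus every solution is automatically a semispray and its horizontal part is pinned down, which already delivers the ``$S$ is a semispray'' assertion and half of the uniqueness.

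It then remains to match the $dx^i$–components, which yields an expression for $g_{ij}Y^j$ in terms of $\partial T/\partial x^i$, $y^j\partial p_i/\partial x^j$, $y^m\partial p_m/\partial x^i$ and the external–force contribution. The forceless part is exactly the classical Finsler geodesic identity: after inserting $p_i=g_{ij}y^j$ and $T=\tfrac12 g_{jk}y^jy^k$ and collecting terms, it collapses, by means of
\[
 g_{il}\gamma^l_{jk}y^jy^k=\tfrac12\Big(\dfrac{\partial g_{ik}}{\partial x^j}+\dfrac{\partial g_{ij}}{\partial x^k}-\dfrac{\partial g_{jk}}{\partial x^i}\Big)y^jy^k ,
\]
to $-2g_{ij}\stackrel{\circ}{G}{}^j$ with $2\stackrel{\circ}{G}{}^i=\gamma^i_{jk}y^jy^k$, so the forceless solution is the canonical spray. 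Adding back the external–force term reproduces precisely the vertical coefficients $-(2\stackrel{\circ}{G}{}^i-\tfrac12 F^i)$ of $S$ in (2.3.1). The main obstacle is this Christoffel/geodesic identity together with the careful bookkeeping of the factors of $\tfrac12$: the external force enters $i_S\theta$ linearly, and the normalization of its contribution in $-dT+\sigma$ must be matched against the coefficient $\tfrac12 F^i$ appearing in (2.3.1), which fixes the precise constant carried by $\sigma$ (and, dually, the chosen scaling of $\stackrel{\circ}{\omega}$). Once this verification is complete, $S$ from (2.3.1) satisfies the equation and, by the nondegeneracy argument of the first paragraph, is its unique solution on $\widetilde{TM}$.
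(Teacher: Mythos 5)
Your overall strategy is sound, and it is in fact the only route available for comparison: the paper offers no proof of this theorem at all — it is quoted from the Bucataru--Miron book \cite{BuMi} as an extension of Klein's theorem — so a direct verification of the kind you outline (uniqueness from nondegeneracy of the symplectic form; the vertical components of the equation forcing $X^i=y^i$, hence the semispray property; the $dx^i$-components collapsing through the Christoffel identity to the canonical spray plus a force term) is exactly what a proof must look like. Your intermediate steps are correct: $i_X\theta=(Xp_i)\,dx^i-X^i\,dp_i$, the identity $p_i=g_{ij}y^j$ via $C_{0jk}=0$, and the geodesic identity all check out.

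The genuine gap is the normalization, which you flagged at the end but left unresolved — and, as left, it contradicts the conclusion you assert. With the scaling you committed to, $\stackrel{\circ}{\omega}=dp_i\wedge dx^i=g_{ij}\delta y^j\wedge dx^i$, $T=\tfrac12F^2$ and $\sigma=F_i\,dx^i$, the $dx^i$-matching in your own scheme gives
\[
2g_{ij}G^j \;=\; y^j\frac{\partial p_i}{\partial x^j}-\frac{\partial T}{\partial x^i}-F_i \;=\; 2g_{ij}\stackrel{\circ}{G}{}^j-F_i,
\]
so the unique solution has vertical coefficients $-(2\stackrel{\circ}{G}{}^i-F^i)$, \emph{not} $-(2\stackrel{\circ}{G}{}^i-\tfrac12F^i)$ as in $(2.3.1)$; your claim that the computation ``reproduces precisely'' the coefficients of $(2.3.1)$ fails by a factor $2$ in the force term. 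The scaling consistent with $(2.3.1)$, and with this very chapter's conventions (equation $(2.5.8)$ and item $2^\circ$ of Theorem 2.1.2 give $\stackrel{\circ}{\omega}=2g_{ij}\stackrel{\circ}{\delta}y^j\wedge dx^i$), requires \emph{simultaneously} replacing $-dT$ on the right-hand side by $-d{\cal E}_{F^2}$ with ${\cal E}_{F^2}=F^2=2T$: the $\delta y$-components of $i_S\stackrel{\circ}{\omega}$ are then $-2p_j\,\delta y^j$, and only $-dF^2$ balances them; the horizontal matching then yields $4g_{ij}G^j=4g_{ij}\stackrel{\circ}{G}{}^j-F_i$, i.e.\ exactly $2G^i=2\stackrel{\circ}{G}{}^i-\tfrac12F^i$. (Equivalently one may keep $\theta=dp_i\wedge dx^i$ and $T$ but take $\tfrac12\sigma$; the ``$-dT$'' in the printed statement is itself a misprint under either coherent choice.) Your proof must therefore fix one consistent scaling at the outset and verify the identity with it, rather than letting the desired coefficient $\tfrac12F^i$ ``fix the constant'' a posteriori — otherwise what your computation actually establishes is that the $S$ of $(2.3.1)$ does \emph{not} satisfy the equation with the data you chose.
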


In the terminology of J. Klein, \cite{Klein1}, $S$ is the dynamical system of $\Sigma_F$, defined on the tangent manifold $TM$. We will say that $S$ is the evolution semispray of $\Sigma_F$.

By means of semispray $S$ (or spray $S$) we can develop the geometry of the Finslerian mechanical system $\Sigma_F$. So, all geometrical notion derived from $S$, as nonlinear connections, N-linear connections etc. will be considered as belong to the system $\Sigma_F$.

\section{The canonical nonlinear connection of the Finslerian mechanical systems $\Sigma_F$}\index{Connections! Nonlinear}
\setcounter{equation}{0}\setcounter{theorem}{0}\setcounter{definition}{0}\setcounter{proposition}{0}\setcounter{remark}{0}

The evolution semispray  $S$ (2.3.1) has the coefficients $G^i$ expressed by
\begin{equation}
2G^i(x,y)=2\stackrel{\circ}{G^i}(x,y)-\dfrac{1}{2}F^i(x,y)
=\gamma^i_{jk}(x,y)y^jy^k-\dfrac{1}{2}F^i(x,y).
\end{equation}

Thus, the evolution nonlinear connection $N$ (or canonical nonlinear connection) of the Finslerian mechanical system $\Sigma_F$ has the coefficients:
\begin{equation}
N^i{}_j=\dfrac{\partial G^i}{\partial y^j}=\stackrel{\circ}{N^i}\!{}_j-\dfrac{1}{4}\dfrac{\partial F^i}{\partial y^j}
\end{equation}
where $\stackrel{\circ}{N}$ with coefficients $\stackrel{\circ}{N^i}\!{}_j$ is the Cartan nonlinear connection of Finsler space $F^n =(M,F(x,y))$.

$N$ depends on the mechanical system $\Sigma_F$, only. It is called canonical for $F^n$.

The nonlinear connection $N$ determines the horizontal distribution, denoted by $N$ too, with the property
\begin{equation}
T_uTM=N_u\oplus V_u,\ \ \ \forall u \in TM,
\end{equation}
$V_u$ being the natural vertical distribution on the tangent manifold $TM$.

A local adapted basis to the horizontal and vertical vector spaces $N_u$ and $V_u$ is given by $\left(\dfrac{\delta}{\delta x^i},\dfrac{\partial}{\partial y^i}\right)$,  $(i=1,..,n)$, where
\begin{equation}
\dfrac{\delta}{\delta x^i}=\dfrac{\partial }{\partial x^i}-N^j{}_i\dfrac{\partial }{\partial y^j}=\dfrac{\partial}{\partial x^i} -
\left( \stackrel{\circ}{N}\!\!{}^j{}_i-
\frac{1}{4}\dfrac{\partial F^j}{\partial y^i}\right) \dfrac{\partial }{\partial y^j},\ \ i=1,..,n,
\end{equation}
and the adapted cobasis $(dx^i, \delta y^i)$ with
\begin{equation}
\delta y^i=dy^i+N^i{}_j dx^j=dy^i+\left(\stackrel{\circ}{N}\!\!{}^i{}_j-\frac{1}{4}\dfrac{\partial F^i}{\partial y^j}\right)dx^j,\ \ i=1,..,n.
\tag{2.4.4'}
\end{equation}

From (2.4.4) and (2.4.4') it follows
\begin{equation}
\left\{\begin{array}{l}
\dfrac{\delta }{\delta x^i}=\dfrac{\stackrel{\circ}{\delta}}{\delta x^i}+\frac{1}{4}\dfrac{\partial F^j}{\partial y^i}\dfrac{\partial }{\partial y^j},\\
\\
\delta y^i=\stackrel{\circ}{\delta}\!\!y^i-\dfrac{1}{4}\dfrac{\partial F^i}{\partial y^j}dx^j.
\end{array}\right.
\end{equation}

Now we easy determine the curvature ${\cal R}^i{}_{jk}$ and torsion $t^i{}_{jk}$ of the canonical nonlinear connection $N$. One obtains
\begin{equation}
\begin{array}{l}
{\cal R}^i{}_{jk}=\dfrac{\delta N^i\!{}_j}{\delta x^k}-\dfrac{\delta N^i\!{}_k}{\delta x^j}=\( \dfrac{\delta}{\delta x^k}\dfrac{\partial}{\partial y^j}-\dfrac{\delta}{\delta x^j}\dfrac{\partial}{\partial y^k}\)(\stackrel{\circ}{G^i}-\frac{1}{2}F^i),\\
\\
t^i{}_{jk}=\dfrac{\partial N^i\!{}_j}{\partial y^k}-\dfrac{\partial N^i\!{}_k}{\partial y^j}=0
\end{array}
\end{equation}
such that:

$1^{\circ}$ The torsion of the canonical nonlinear connection $N$ vanishes.

$2^{\circ}$ The condition ${\cal R}^i{}_{jk}=0$ is necessary and sufficient for $N$ to be integrable.

Another important geometric object field determined by the canonical nonlinear connection $N$ is the Berwald connection
$B\Gamma(N)=(B^i{}_{jk},0)=\(\dfrac{\partial N^i{}_j}{\partial y^k},0\)$. Its coefficients are:
\begin{equation}
B^i{}_{jk}=\stackrel{\circ}{B}\!\!{}^i{}_{jk}-\dfrac{1}{4}\dfrac{\partial^2 F^i}{\partial y^j \partial y^k},
\end{equation}
where $\stackrel{\circ}{B}\!\!{}^i{}_{jk}$ are coefficients of Berwald connection of Finsler space $F^n$. The coefficients (2.4.7) are symmetric.

We can prove:
\begin{theorem}
The autoparallel curves of the evolution nonlinear connection $N$ are given by the following SODE:
\begin{equation}
y^i=\dfrac{dx^i}{dt},\ \ \
\dfrac{\delta y^i}{dt}=\dfrac{\stackrel{\circ}{\delta}y^i}{dt} -\dfrac{1}{4}\dfrac{\partial F^i}{\partial y^j}\dfrac{dx^j}{dt}=0.
\end{equation}
\end{theorem}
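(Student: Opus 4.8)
The plan is to reduce the statement to the general definition of an autoparallel curve of a nonlinear connection, recorded in Part I, Chapter~1, Section~3 (formulas (1.3.15$'$) and (1.3.16)) and reused in Part~II, and then to substitute the coefficients of the evolution connection $N$ given in (2.4.2). Recall that for a nonlinear connection with coefficients $N^i{}_j$, a curve $c:t\mapsto (x^i(t),y^i(t))$ in $TM$ is \emph{horizontal} when its vertical component $\dfrac{\delta y^i}{dt}$ vanishes, where, by the adapted cobasis relation $\delta y^i=dy^i+N^i{}_j\,dx^j$ from (2.4.4$'$), one has along $c$
\[
\dfrac{\delta y^i}{dt}=\dfrac{dy^i}{dt}+N^i{}_j\dfrac{dx^j}{dt}.
\]
An autoparallel curve is by definition a horizontal curve which in addition satisfies $y^i=\dfrac{dx^i}{dt}$. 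Thus the first equation in the statement is precisely the autoparallelism requirement, and the content to be proved is the explicit form of the condition $\dfrac{\delta y^i}{dt}=0$.

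First I would write $\dfrac{\delta y^i}{dt}=0$ using the canonical coefficients $N^i{}_j=\stackrel{\circ}{N}{}^i{}_j-\dfrac14\dfrac{\partial F^i}{\partial y^j}$ of the evolution nonlinear connection. Splitting off the Cartan part $\stackrel{\circ}{N}{}^i{}_j$ and recognizing that $\dfrac{\stackrel{\circ}{\delta}y^i}{dt}=\dfrac{dy^i}{dt}+\stackrel{\circ}{N}{}^i{}_j\dfrac{dx^j}{dt}$ is exactly the vertical component with respect to the Cartan nonlinear connection $\stackrel{\circ}{N}$ of $F^n$, the desired identity follows immediately. Equivalently, and this is the most economical route, the relation is nothing but (2.4.5), namely $\delta y^i=\stackrel{\circ}{\delta}y^i-\dfrac14\dfrac{\partial F^i}{\partial y^j}\,dx^j$, evaluated on the tangent vector $\dfrac{dc}{dt}$ of the curve; imposing $\dfrac{\delta y^i}{dt}=0$ then produces exactly the second group of equations in the statement.

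The argument is therefore essentially a substitution, and there is no serious obstacle; the only point requiring care is bookkeeping. Concretely, I must keep the symbol $\dfrac{\delta y^i}{dt}$ consistently interpreted as the vertical component \emph{relative to} $N$ (not to $\stackrel{\circ}{N}$), so that invoking (2.4.5) genuinely records the difference between the two horizontal lifts rather than silently reintroducing it. Finally, I would note that (2.4.8) is a well-posed system of second order differential equations: with $y^i=\dfrac{dx^i}{dt}$ it reads $\dfrac{d^2x^i}{dt^2}+\stackrel{\circ}{N}{}^i{}_j\bigl(x,\tfrac{dx}{dt}\bigr)\dfrac{dx^j}{dt}-\dfrac14\dfrac{\partial F^i}{\partial y^j}\dfrac{dx^j}{dt}=0$, so the usual existence and uniqueness theorem applies once initial data $\bigl(x_0,(\tfrac{dx}{dt})_0\bigr)$ are prescribed, in parallel with the corresponding result for the Riemannian mechanical system treated in \S1.4.
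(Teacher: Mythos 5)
Your proposal is correct and follows exactly the route the paper intends: the theorem is an immediate consequence of the definition of autoparallel curves from Part I, Chapter 1 (horizontal curve plus $y^i=\dfrac{dx^i}{dt}$) together with the substitution $N^i{}_j=\stackrel{\circ}{N}{}^i{}_j-\dfrac14\dfrac{\partial F^i}{\partial y^j}$, i.e.\ the cobasis relation (2.4.5) evaluated along the curve, which is why the paper states it without further argument. Your added remarks on keeping $\dfrac{\delta y^i}{dt}$ interpreted relative to $N$ and on existence and uniqueness of solutions are consistent with the parallel treatment of the Riemannian case in \S 1.4.
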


\begin{corollary}
If the external forces $Fe$ vanish, then the evolution nonlinear connection $N$ is the Cartan nonlinear connection of Finsler space $F^n.$
\end{corollary}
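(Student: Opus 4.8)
The plan is to read off the conclusion directly from the explicit formula already established for the coefficients of the evolution nonlinear connection. The essential ingredient is equation (2.4.2), which expresses these coefficients as
\begin{equation}
N^i{}_j=\dfrac{\partial G^i}{\partial y^j}=\stackrel{\circ}{N^i}\!{}_j-\dfrac{1}{4}\dfrac{\partial F^i}{\partial y^j},\nonumber
\end{equation}
where $\stackrel{\circ}{N}$ with coefficients $\stackrel{\circ}{N^i}\!{}_j$ is, by definition, the Cartan nonlinear connection of the Finsler space $F^n=(M,F(x,y))$. Since a nonlinear connection is completely determined by its coefficients, it suffices to show that the two sets of coefficients coincide once the external forces are switched off.

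First I would recall that the hypothesis $Fe\equiv 0$ means precisely $F^i(x,y)=0$ for every index $i$ on the velocity space $TM$, where $F^i$ are the contravariant components of $Fe$ entering the coefficient decomposition (2.4.1): $2G^i=2\stackrel{\circ}{G^i}-\tfrac12 F^i$. Differentiating this identity with respect to $y^j$ and using that $F^i$ vanishes identically (hence so does every partial derivative $\partial F^i/\partial y^j$), the correction term in (2.4.2) drops out. This immediately yields $N^i{}_j=\stackrel{\circ}{N^i}\!{}_j$, so the evolution nonlinear connection reduces to the Cartan nonlinear connection of $F^n$.

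There is essentially no obstacle here: the result is a direct specialization of the already-derived coefficient formula, and the only point worth stating carefully is that the vanishing of the vertical vector field $Fe$ as a global object on $TM$ forces the pointwise vanishing of all its components $F^i$, and therefore of their fibre derivatives $\partial F^i/\partial y^j$ appearing in the deformation term. I would present the argument in a single short displayed computation to make explicit that the $-\tfrac14\,\partial F^i/\partial y^j$ term is annihilated, and then invoke the uniqueness of a nonlinear connection given by its coefficients to conclude that $N$ and $\stackrel{\circ}{N}$ are one and the same geometrical object field.
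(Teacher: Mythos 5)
Your proof is correct and follows exactly the route the paper intends: the corollary is an immediate specialization of the coefficient formula (2.4.2), where $Fe\equiv 0$ forces $F^i\equiv 0$ and hence $\partial F^i/\partial y^j\equiv 0$, leaving $N^i{}_j=\stackrel{\circ}{N^i}\!{}_j$. The paper states this corollary without further argument precisely because this one-line reduction is the whole proof, so your write-up matches it in substance.
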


\begin{corollary}
If the external forces $Fe$ are $2$-homogeneous with respect to velocities $y^i,$ then the equations $(2.4.8)$ coincide with the evolution equations of the Finslerian mechanical system $\Sigma_F$.
\end{corollary}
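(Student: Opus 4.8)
The plan is to derive the differential equations of the autoparallel curves of the evolution nonlinear connection $N$ explicitly and then collapse them onto the Lagrange equations by invoking the two homogeneity facts at our disposal. First I would recall that the autoparallel curves of a nonlinear connection $N$ with coefficients $N^i{}_j$ are characterized by $\dfrac{\delta y^i}{dt}=0$ together with $y^i=\dfrac{dx^i}{dt}$; writing out $\dfrac{\delta y^i}{dt}=\dfrac{dy^i}{dt}+N^i{}_j\dfrac{dx^j}{dt}$, this is precisely the system
$$\dfrac{d^2x^i}{dt^2}+N^i{}_j\Big(x,\dfrac{dx}{dt}\Big)\dfrac{dx^j}{dt}=0,\quad y^i=\dfrac{dx^i}{dt},$$
which is the content of $(2.4.8)$. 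Substituting the canonical coefficients $N^i{}_j=\overset{\circ}{N}{}^i{}_j-\dfrac14\dfrac{\partial F^i}{\partial y^j}$ from $(2.4.2)$ splits the left-hand side into a purely Finslerian part and an external-force part, each of which I would contract against $y^j$ separately.

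Next I would handle the Finslerian part by Euler's theorem (the homogeneity criterion $(1.1.6)$ applied fibrewise). Since $\overset{\circ}{N}{}^i{}_j=\dfrac{\partial \overset{\circ}{G}{}^i}{\partial y^j}$ and the spray coefficients $\overset{\circ}{G}{}^i$ of the Finsler space $F^n$ are $2$-homogeneous in $y$ (property $13^\circ$ of Section $2.1$, as $2\overset{\circ}{G}{}^i=\gamma^i_{jk}y^jy^k$), the contraction gives $\overset{\circ}{N}{}^i{}_j\,y^j=2\overset{\circ}{G}{}^i$. This is exactly the identity $\overset{\circ}{N}{}^i{}_0=\gamma^i_{00}=2\overset{\circ}{G}{}^i$ recorded in $(2.1.7)$, so this step needs no fresh computation and recovers the geodesic (spray) term of the evolution equations.

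The decisive step is the treatment of the force term $\dfrac14\dfrac{\partial F^i}{\partial y^j}\,y^j$. Here I would invoke the hypothesis that $Fe$, hence each covariant-lifted component $F^i$, is $2$-homogeneous with respect to $y$; Euler's theorem then yields $\dfrac{\partial F^i}{\partial y^j}\,y^j=2F^i$, so the force term reduces to $\dfrac12 F^i$. Substituting both simplifications transforms the autoparallel system $(2.4.8)$ into
$$\dfrac{d^2x^i}{dt^2}+2\overset{\circ}{G}{}^i=\dfrac12 F^i,\quad y^i=\dfrac{dx^i}{dt},$$
which is precisely the evolution equation $(2.2.7)$ of $\Sigma_F$ once one rewrites $2\overset{\circ}{G}{}^i=\gamma^i_{jk}y^jy^k$. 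Comparing the two systems term by term then closes the argument.

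I do not foresee a genuine obstacle: the proof is a short verification rather than a deep result. The only point requiring emphasis is exactly where the hypothesis enters, since the identification hinges entirely on the $2$-homogeneity of $F^i$. Without it, the contraction $\dfrac{\partial F^i}{\partial y^j}\,y^j$ does not reduce to $2F^i$, the surviving force term in $(2.4.8)$ differs from $\dfrac12 F^i$, and the autoparallel curves of $N$ no longer coincide with the evolution curves of $\Sigma_F$. Accordingly I would state both Euler-theorem applications explicitly and flag that the general (non-homogeneous) case produces a genuinely distinct pair of systems.
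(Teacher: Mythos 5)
Your proof is correct and is exactly the argument the paper intends (the paper states the corollary without proof as an immediate consequence of Theorem 2.4.1): contracting $N^i{}_j=\overset{\circ}{N}{}^i{}_j-\tfrac14\,\partial F^i/\partial y^j$ with $y^j$, using $\overset{\circ}{N}{}^i{}_j y^j=2\overset{\circ}{G}{}^i$ from the $2$-homogeneity of the Cartan spray and Euler's theorem $\bigl(\partial F^i/\partial y^j\bigr)y^j=2F^i$ from the hypothesis, reduces the autoparallel system $(2.4.8)$ to the evolution equations $(2.2.7)$ of $\Sigma_F$. The only trivial slip is a cross-reference: the identity $\overset{\circ}{N}{}^i{}_0=\gamma^i_{00}=2\overset{\circ}{G}{}^i$ appears as $(2.1.11)$, not $(2.1.7)$.
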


It is not difficult to determine the evolution nonlinear connection $N$ from examples in section 2.3.

\begin{lemma}
The exterior differential of the $1$-forms $\delta y^i$ are given by formula:
\begin{equation}
d(\delta y^i)=d N^i{}_j\wedge dx^j=\dfrac{1}{2}{\cal R}^i{}_{kj}dx^j \wedge dx^k+B^i{}_{kj}\delta y^j \wedge dx^k.
\end{equation}
\end{lemma}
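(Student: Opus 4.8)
The plan is to compute $d(\delta y^i)$ directly from the defining relation $\delta y^i = dy^i + N^i{}_j\,dx^j$ of (2.4.4') and then to re-express the outcome in the adapted cobasis $(dx^i,\delta y^i)$, in complete analogy with the first-order Berwald lemma (Lemma 1.5.1, formula (1.5.7)). First I would apply the exterior differential: since $d(dy^i)=0$, one obtains at once $d(\delta y^i)=dN^i{}_j\wedge dx^j$, which already establishes the first equality asserted in the statement. All the remaining content lies in computing the $1$-form $dN^i{}_j$ and sorting it into its horizontal and vertical components.

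Next I would expand $dN^i{}_j$ in the natural cobasis, namely $dN^i{}_j=\frac{\partial N^i{}_j}{\partial x^k}\,dx^k+\frac{\partial N^i{}_j}{\partial y^k}\,dy^k$, and substitute $dy^k=\delta y^k-N^k{}_l\,dx^l$ to pass to the adapted cobasis. Collecting the coefficients of $dx^k$ produces precisely the $\delta$-derivative $\frac{\delta N^i{}_j}{\delta x^k}=\frac{\partial N^i{}_j}{\partial x^k}-N^l{}_k\frac{\partial N^i{}_j}{\partial y^l}$ determined by the frame $\frac{\delta}{\delta x^k}$ of (2.4.4), while the coefficient of $\delta y^k$ is the Berwald coefficient $B^i{}_{kj}=\frac{\partial N^i{}_j}{\partial y^k}$ of (2.4.7). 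Hence $dN^i{}_j=\frac{\delta N^i{}_j}{\delta x^k}\,dx^k+B^i{}_{kj}\,\delta y^k$, and wedging with $dx^j$ yields the two pieces $\frac{\delta N^i{}_j}{\delta x^k}\,dx^k\wedge dx^j+B^i{}_{kj}\,\delta y^k\wedge dx^j$.

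Finally I would antisymmetrize the purely horizontal piece, writing $\frac{\delta N^i{}_j}{\delta x^k}\,dx^k\wedge dx^j=\frac{1}{2}\bigl(\frac{\delta N^i{}_j}{\delta x^k}-\frac{\delta N^i{}_k}{\delta x^j}\bigr)\,dx^k\wedge dx^j$ and identifying the bracket with the curvature tensor ${\cal R}^i{}_{jk}$ of (2.4.5); a relabelling of the dummy indices $j\leftrightarrow k$ then recasts this term as $\frac{1}{2}{\cal R}^i{}_{kj}\,dx^j\wedge dx^k$, matching the stated convention. For the mixed term I would relabel $j\leftrightarrow k$ and invoke the symmetry $B^i{}_{kj}=B^i{}_{jk}$ of the Berwald coefficients recorded after (2.4.7) to bring it into the form $B^i{}_{kj}\,\delta y^j\wedge dx^k$, which completes the identity. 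No genuine obstacle arises: the computation is entirely formal, and the only points demanding care are the bookkeeping of the dummy indices and the sign in the $\delta$-derivative, so that the curvature appears with exactly the convention fixed in (2.4.5) and the Berwald symmetry is applied to the correct pair of lower indices.
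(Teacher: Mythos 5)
Your proof is correct and follows essentially the same route as the paper: the paper's proof of the analogous Lemma 3.4.1 is exactly this computation, $d(\delta y^i)=dN^i{}_j\wedge dx^j=\dfrac{\delta N^i{}_j}{\delta x^h}\,dx^h\wedge dx^j+\dfrac{\partial N^i{}_j}{\partial y^h}\,\delta y^h\wedge dx^j$, obtained by expanding $dN^i{}_j$ in the adapted cobasis. Your additional care with the antisymmetrization producing $\frac{1}{2}\mathcal{R}^i{}_{kj}$ via (2.4.6) and the appeal to the symmetry $B^i{}_{jk}=B^i{}_{kj}$ from (2.4.7) is exactly what is needed to match the stated index convention, so there is nothing to add.
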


\section{The dynamical derivative determined by the evolution nonlinear connection $N$}\index{Dynamical derivative}
\setcounter{equation}{0}\setcounter{theorem}{0}\setcounter{definition}{0}\setcounter{proposition}{0}\setcounter{remark}{0}

The dynamical covariant derivation induced by the evolution nonlinear connection $N$ is expressed by (?) using the coefficients $N^i{}_j$ from (2.4.2). It is given by
\begin{equation}
\nabla\left(X^i\dfrac{\partial}{\partial y^i}\right)=\left( SX^i+X^jN^i{}_j\right)\dfrac{\partial}{\partial y^i}.
\end{equation}

Applied to a $d$-vector field $X^i(x,y)$, we have the formula:
\begin{equation}
\nabla X^i=SX^i{}_{|}=SX^i+X^jN^i{}_j
\end{equation}
and for a $d$-tensor $g_{ij}$, we have
\begin{equation}
\nabla g_{ij}=g_{ij\ |}=Sg_{ij}-g_{sj}N^s{}_i-g_{is}N^s{}_j,
\tag{2.5.2'}
\end{equation}
where
\be
\begin{array}{l}
S=\stackrel{\circ}{S} +\dfrac{1}{2}Fe\\
\\
N^i{}_j=\stackrel{\circ}{N}\!{}^i{}_j-\dfrac{1}{4}\dfrac{\partial F^i}{\partial y^j}.
\end{array}
\ee

Remarking that $\dfrac{\partial F^i}{\partial y^j}$ is a $d$-tensor field of type (1,1), one can introduce two $d$-tensors, important in the geometrical theory of the Finslerian mechanical systems $\Sigma_F:$
\be
P_{ij}=\dfrac{1}{2}\(\dfrac{\partial F_i}{\partial y^j}-\dfrac{\partial F_j}{\partial y^i}\),\ \ \
Q_{ij}=\dfrac{1}{2}\(\dfrac{\partial F_i}{\partial y^j}+\dfrac{\partial F_j}{\partial y^i}\).
\ee

The first one $P_{ij}$ is called the helicoidal tensor of the Finslerian mechanical system $\Sigma_F$, \cite{miranabuc}.

Also, on the phase space $TM$, the elicoidal $d$-tensor $P_{ij}$ give rise to the $2$-form
\be
P=P_{ij}\ dx^i\wedge dx^j
\ee
and $Q_{ij}$ allows to consider the symmetric tensor
\be
Q=Q_{ij}\ dx^i\otimes dx^j.
\ee
The following Bucataru-Miron theorem holds:
\begin{theorem}
For a Finslerian mechanical system $\Sigma_F=(M,{\cal E}_{F^2},Fe)$ the evolution nonlinear connection $N$ is the unique nonlinear connection that satisfies the conditions:
\be
\nabla g=-\dfrac{1}{2}\ Q,\ \ \
\stackrel{\circ}{\omega}(hX,hY)=\dfrac{1}{2}P(X,Y),\ \ \forall X,Y\in \chi(TM),
\ee
where $\stackrel{\circ}{\omega}$ is the symplectic structure of the Finsler space $F^n$:
\begin{equation}
\stackrel{\circ}{\omega}=2g_{ij}\stackrel{\circ}{\delta}y^j\wedge dx^i.
\end{equation}
\end{theorem}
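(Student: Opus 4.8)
The plan is to exploit the fact that a nonlinear connection on $TM$ is completely determined by its coefficients $N^i{}_j$ (equivalently, by its horizontal projector $h$), so that the assertion amounts to showing that the two displayed conditions admit exactly one solution $N^i{}_j$, and that this solution is the evolution nonlinear connection $(2.4.2)$. First I would write an arbitrary nonlinear connection in the form $N^i{}_j=\stackrel{\circ}{N}^i{}_j+X^i{}_j$, where $\stackrel{\circ}{N}$ is the Cartan nonlinear connection of $F^n$ and $X^i{}_j$ is an unknown $d$-tensor field of type $(1,1)$; lowering an index I set $X_{ij}=g_{is}X^s{}_j$. The whole problem then reduces to determining $X_{ij}$.

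Two auxiliary facts about the Cartan nonlinear connection are the backbone of the argument, and I would establish them first. Using $\stackrel{\circ}{S}=y^k\dfrac{\stackrel{\circ}{\delta}}{\delta x^k}$ (which follows from $y^k\stackrel{\circ}{N}^s{}_k=2\stackrel{\circ}{G}^s$) together with the relation $\stackrel{\circ}{N}^i{}_k=y^sF^i{}_{sk}$ coming from $y^i{}_{|k}=0$, the $h$-metricity $g_{ij|k}=0$ of the Cartan connection $(2.1.14)$ contracts with $y^k$ to give the dynamical metricity $\stackrel{\circ}{\nabla}g_{ij}=0$. In the same spirit, evaluating $\stackrel{\circ}{\omega}=2g_{kl}\stackrel{\circ}{\delta}y^l\wedge dx^k$ on the Cartan horizontal fields shows that its horizontal–horizontal part vanishes.

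With these in hand the two conditions split cleanly. Writing $\dfrac{\delta}{\delta x^i}=\dfrac{\stackrel{\circ}{\delta}}{\delta x^i}-X^s{}_i\dfrac{\partial}{\partial y^s}$ and substituting into $(2.5.2')$, a direct computation using $\dfrac{\partial g_{ij}}{\partial y^k}=2C_{ijk}$, $F^kC_{ijk}=C_{ijs}F^s$ and $\stackrel{\circ}{\nabla}g_{ij}=0$ reduces the condition $\nabla g=-\frac12 Q$ to an equation for the symmetric part $X_{(ij)}=\frac12(X_{ij}+X_{ji})$. Likewise, using $\stackrel{\circ}{\delta}y^l(\frac{\delta}{\delta x^i})=-X^l{}_i$ one finds $\stackrel{\circ}{\omega}(\frac{\delta}{\delta x^i},\frac{\delta}{\delta x^j})=2(X_{ij}-X_{ji})$, so the condition $\stackrel{\circ}{\omega}(hX,hY)=\frac12 P(X,Y)$ becomes an equation for the antisymmetric part $X_{[ij]}=\frac12(X_{ij}-X_{ji})$. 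Since the symmetric and antisymmetric parts together recover $X_{ij}$, and hence $X^i{}_j=g^{is}X_{sj}$, this proves uniqueness. For existence I would substitute $X^i{}_j=-\frac14\dfrac{\partial F^i}{\partial y^j}$ and verify both equations, the key identity being $g_{is}\dfrac{\partial F^s}{\partial y^j}=\dfrac{\partial F_i}{\partial y^j}-2C_{ijs}F^s$, which produces precisely $Q_{ij}$ in the symmetric combination and $P_{ij}$ in the antisymmetric one, thereby identifying the solution with the evolution nonlinear connection $(2.4.2)$.

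The main obstacle I anticipate is the careful bookkeeping in the two auxiliary identities and in matching signs and numerical factors: the Cartan-tensor correction terms $2C_{ijs}F^s$ must cancel exactly so that the symmetric combination yields $Q$ and the antisymmetric one yields $P$, and the overall normalizations of $\stackrel{\circ}{\omega}$, of the wedge products and of the definitions of $P$ and $Q$ must be tracked consistently with the conventions in $(2.4.5)$ so that the coefficients $-\frac12$ and $\frac12$ appear as stated.
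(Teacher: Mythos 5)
Your strategy is sound, and in fact it is the only proof available anywhere in this context: the monograph states the result as ``the Bucataru--Miron theorem'' with no proof at all, referring implicitly to \cite{BuMi}, \cite{BuMi1}, so your reconstruction supplies the missing argument rather than competing with one. The skeleton is exactly right: write $N^i{}_j=\stackrel{\circ}{N}^i{}_j+X^i{}_j$, note that both conditions are affine in $X_{ij}=g_{is}X^s{}_j$, and show that the metric condition pins down the symmetric part while the symplectic condition pins down the antisymmetric part, which gives uniqueness. Your two auxiliary lemmas are correct and correctly derived: $\stackrel{\circ}{S}=y^k\,\stackrel{\circ}{\delta}/\delta x^k$ follows from $2$-homogeneity of $\stackrel{\circ}{G}^i$, the vanishing deflection of the Cartan metrical connection gives $\stackrel{\circ}{N}^s{}_i=y^kF^s{}_{ki}$, and contracting $g_{ij|k}=0$ with $y^k$ then yields $\stackrel{\circ}{\nabla}g=0$; the vanishing of the $hh$-part of $\stackrel{\circ}{\omega}$ on Cartan-horizontal vectors is immediate. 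The uniqueness half of your argument is airtight.

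The one step that fails as written is the very one you flagged as the anticipated obstacle: the final sign verification does not come out ``as stated'' under this paper's printed conventions. With $S=\stackrel{\circ}{S}+\frac12 Fe$, $N^i{}_j=\stackrel{\circ}{N}^i{}_j-\frac14\dot{\partial}_jF^i$ from (2.4.2), and $P,Q$ as in (2.5.4), your own key identity $g_{is}\dot{\partial}_jF^s=\dot{\partial}_jF_i-2C_{ijs}F^s$ gives
$$X_{ij}=-\dfrac14\,\dot{\partial}_jF_i+\dfrac12\,C_{ijs}F^s,$$
hence $X_{(ij)}=-\frac14Q_{ij}+\frac12C_{ijs}F^s$ and $X_{[ij]}=-\frac14P_{ij}$. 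Substituting into $\nabla g_{ij}=Sg_{ij}-g_{sj}N^s{}_i-g_{is}N^s{}_j$, where $Sg_{ij}=\stackrel{\circ}{S}g_{ij}+C_{ijs}F^s$ and $\stackrel{\circ}{\nabla}g=0$, the Cartan-tensor terms cancel exactly as you predicted, but the residue is $\nabla g_{ij}=+\frac12Q_{ij}$; likewise $\stackrel{\circ}{\delta}y^l(\delta/\delta x^i)=-X^l{}_i$ gives $\stackrel{\circ}{\omega}(hX,hY)=-\frac12P(X,Y)$, a ratio independent of the wedge normalization. Both signs are therefore opposite to the printed statement: your method proves the theorem with $(+\frac12Q,\,-\frac12P)$ in place of $(-\frac12Q,\,+\frac12P)$, and the printed signs correspond to the convention in which the external force enters the Lagrange equations with the opposite sign ($Fe\to -Fe$); the same flip is present in the Riemannian analogue of Chapter 1, so this is a convention slip in the monograph rather than in your decomposition. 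Uniqueness and the structure of your proof are unaffected, but your assertion that the substitution ``produces precisely'' the printed coefficients $-\frac12$ and $\frac12$ would not survive the computation; carry the corrected signs or state the convention explicitly.
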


If $Fe$ does not depend on velocities $y^i=\dfrac{dx^i}{dt}$ we have $P\equiv 0,\  Q\equiv 0$. One obtains the the case of Riemannian mechanical systems $\Sigma_R$, studied in the previous chapter.

\section{Metric N-linear connection of $\Sigma_F$}\index{Connections! $N-$linear}
\setcounter{equation}{0}\setcounter{theorem}{0}\setcounter{definition}{0}\setcounter{proposition}{0}\setcounter{remark}{0}

The metric, or canonic, $N$-linear connection $D,$ with coefficients\break $C\Gamma(N)=(F^i{}_{jk},C^i{}_{jk})$ of the Finslerian mechanical system $\Sigma_F$ is uniquely determined by the following axioms, (Miron \cite{mir11}):

$A_1.$ $N$ is the canonical nonlinear connection of $\Sigma_F.$

$A_2.$ $D$ is $h$-metric, i.e $g_{ij\ |k}=0.$

$A_3.$ $D$ is $h$-symmetric, i.e. $T^i{}_{jk}=F^i{}_{jk}-F^i{}_{kj}=0.$

$A_4.$ $D$ is $v$-metric, i.e. $g_{ij}|_k=0.$

$A_5.$ $D$ is $v$-symmetric, i.e. $S^i{}_{jk}=C^i{}_{jk}-C^i{}_{kj}=0.$

The following important result holds:
\begin{theorem}
The local coefficients $D\Gamma(N)=(F^i{}_{jk},C^i{}_{jk})$ of the canonical $N$-connection $D$ of the Finslerian mechanical system $\Sigma_F$ are given by the generalized Christoffel symbols:
\begin{equation}
\left\{
\begin{array}{l}
F^i\!{}_{jk}=\dfrac{1}{2}g^{is}\(\dfrac{\delta g_{sk}}{\delta x^j}+
\dfrac{\delta g_{js}}{\delta x^k}-\dfrac{\delta g_{jk}}{\delta x^s}\),\\
\\
C^i\!{}_{jk}=\dfrac{1}{2}g^{is}\(\dfrac{\partial g_{sk}}{\partial y^j}+
\dfrac{\partial g_{js}}{\partial y^k}-\dfrac{\partial g_{jk}}{\partial y^s}
\).
\end{array}
\right.
\end{equation}
\end{theorem}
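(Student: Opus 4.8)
The plan is to treat this statement as the fundamental lemma of (pseudo\nobreakdash-)Riemannian geometry transplanted to the adapted frame $\left(\dfrac{\delta}{\delta x^i},\dfrac{\partial}{\partial y^i}\right)$ of the canonical nonlinear connection $N$ of $\Sigma_F$. Axiom $A_1$ pins down $N$, and hence the frame, the operators $\dfrac{\delta}{\delta x^k}$ and $\dfrac{\partial}{\partial y^k}$, and the splitting of the covariant derivative into its $h$- and $v$-parts via the formulas (1.4.7) of Part~I. The structural observation that organizes everything is that the horizontal coefficients $F^i_{jk}$ enter only through the condition $g_{ij|k}=0$ (axioms $A_2$, $A_3$), while the vertical coefficients $C^i_{jk}$ enter only through $g_{ij}|_k=0$ (axioms $A_4$, $A_5$). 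The two linear systems decouple completely, so I would solve each separately by the classical Christoffel manipulation, obtaining uniqueness, and then establish existence by direct substitution.

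For the horizontal part I would expand $g_{ij|k}=\dfrac{\delta g_{ij}}{\delta x^k}-F^s_{ik}g_{sj}-F^s_{jk}g_{is}=0$, write the three cyclic copies in $(i,j,k)$, and form the combination (add the copies rooted at $i$ and $j$, subtract the one rooted at $k$). Using the $h$-symmetry $F^s_{ab}=F^s_{ba}$ from $A_3$ together with $g_{ab}=g_{ba}$, the mixed terms cancel pairwise and one is left with
\begin{equation}
2g_{sk}F^s_{ij}=\dfrac{\delta g_{jk}}{\delta x^i}+\dfrac{\delta g_{ki}}{\delta x^j}-\dfrac{\delta g_{ij}}{\delta x^k},
\end{equation}
whence raising the index with $g^{hk}$ yields exactly the stated formula for $F^i_{jk}$. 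This computation forces the coefficients uniquely, given that $g_{ij}$ is nonsingular.

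The vertical part is word\nobreakdash-for\nobreakdash-word identical with $\dfrac{\partial}{\partial y^k}$ in place of $\dfrac{\delta}{\delta x^k}$ and $A_4$, $A_5$ in place of $A_2$, $A_3$, producing the stated $C^i_{jk}$. For existence I would note that both displayed formulas are manifestly symmetric in their lower indices, so $A_3$ and $A_5$ hold automatically; reinserting them into the covariant\nobreakdash-derivative expressions recovers $A_2$ and $A_4$, while $A_1$ is built in. The one genuinely non\nobreakdash-routine point, and the step I would treat most carefully, is checking that these generalized Christoffel symbols actually define an $N$\nobreakdash-linear connection: that $F^i_{jk}$ obeys the transformation rule (1.4.2) of a linear connection on $M$ and that $C^i_{jk}$ is a $d$\nobreakdash-tensor of type $(1,2)$. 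This follows from the transformation law $\widetilde g_{ij}=\dfrac{\partial x^h}{\partial \widetilde x^i}\dfrac{\partial x^k}{\partial \widetilde x^j}g_{hk}$ of the fundamental tensor combined with the covariance of $\dfrac{\delta}{\delta x^k}$ under the coordinate changes (4.1.1)\nobreakdash-type on $TM$, exactly as in the corresponding theorems for $L^n$, $F^n$ and $H^n$; everything else is a routine $C^\infty$ computation.
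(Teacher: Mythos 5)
Your proof is correct and takes essentially the same route as the paper: the theorem there is obtained by the standard generalized-Christoffel-symbols argument in the adapted basis of the canonical nonlinear connection (the paper simply invokes the axiomatic uniqueness/existence result, cf. Theorem 2.5.1 of Part I and its Finslerian analogue), which is precisely your decoupled cyclic-permutation computation for the $h$-part from $g_{ij|k}=0$, $T^i{}_{jk}=0$ and for the $v$-part from $g_{ij}|_k=0$, $S^i{}_{jk}=0$. You merely make explicit the details the paper leaves to the reader, including the correct final check that $F^i{}_{jk}$ obeys the inhomogeneous transformation rule (1.4.2) while $C^i{}_{jk}$ is a $(1,2)$ $d$-tensor, so that the formulas indeed define an $N$-linear connection.
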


Using the expression (?) of the operator $\dfrac{\delta}{\delta x^i}$, one obtains:
\begin{theorem}
The coefficients $F^i{}_{jk},\ C^i{}_{jk}$ of canonical $N$-connection $D$ have the expressions:
\be
\begin{array}{l}
F^i{}_{jk}=\stackrel{\circ}{F}\!{}^i{}_{jk}+\check{C}^i{}_{jk}, \ \ \ C^i{}_{jk}=\stackrel{\circ}{C}\!{}^i{}_{jk}\\
\\
\check{C}^i{}_{jk}=\dfrac{1}{4}g^{is}\(\stackrel{\circ}{C}_{skh}\dfrac{\partial F^h}{\partial y^j}+\stackrel{\circ}{C}_{jsh}\dfrac{\partial F^h}{\partial y^k}-
\stackrel{\circ}{C}_{jkh}\dfrac{\partial F^h}{\partial y^s}\),
\end{array}
\ee
where $C\Gamma(\stackrel{\circ}{N})=(\stackrel{\circ}{F}\!{}^i{}_{jk}, \stackrel{\circ}{C}\!{}^i{}_{jk})$ are the coefficients of canonic Cartan metric connection of Finsler space $F^n$.
\end{theorem}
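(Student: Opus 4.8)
The plan is to derive the decomposition directly from the explicit generalized Christoffel formulas for the canonical $N$-connection $D$ given in Theorem 2.6.1, by comparing the horizontal frame of $\Sigma_F$ with that of the underlying Finsler space $F^n$. The point is that $\Sigma_F$ and $F^n$ share the very same fundamental tensor $g_{ij}$ and the same vertical distribution, and differ only through the extra vertical term that the external forces introduce into the horizontal vector fields.

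First I would dispose of the vertical coefficients. Since
$$C^i{}_{jk}=\dfrac{1}{2}g^{is}\left(\dfrac{\pp g_{sk}}{\pp y^j}+\dfrac{\pp g_{js}}{\pp y^k}-\dfrac{\pp g_{jk}}{\pp y^s}\right)$$
involves only the vertical derivatives $\pp/\pp y^i$ and the tensor $g_{ij}$, and neither of these has changed in passing from $F^n$ to $\Sigma_F$, the right-hand side is verbatim the formula for the vertical coefficients of the Cartan metric connection $C\Gamma(\stackrel{\circ}{N})$. Hence $C^i{}_{jk}=\stackrel{\circ}{C}{}^i{}_{jk}$ with no computation beyond this observation.

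The substantive step is the horizontal coefficient. Here I would invoke the relation (2.4.5) between the two horizontal frames,
$$\dfrac{\delta}{\delta x^i}=\dfrac{\stackrel{\circ}{\delta}}{\delta x^i}+\dfrac{1}{4}\dfrac{\pp F^h}{\pp y^i}\dfrac{\pp}{\pp y^h},$$
which exhibits the evolution horizontal derivative as the Cartan horizontal derivative plus a purely vertical operator. Applying this to each term of
$$F^i{}_{jk}=\dfrac{1}{2}g^{is}\left(\dfrac{\delta g_{sk}}{\delta x^j}+\dfrac{\delta g_{js}}{\delta x^k}-\dfrac{\delta g_{jk}}{\delta x^s}\right),$$
I would write $\dfrac{\delta g_{sk}}{\delta x^j}=\dfrac{\stackrel{\circ}{\delta} g_{sk}}{\delta x^j}+\dfrac{1}{4}\dfrac{\pp F^h}{\pp y^j}\dfrac{\pp g_{sk}}{\pp y^h}$, and likewise for the other two terms. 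The three $\stackrel{\circ}{\delta}$-pieces reassemble into exactly $\stackrel{\circ}{F}{}^i{}_{jk}$. For the remaining pieces I would use the identity $\dfrac{\pp g_{sk}}{\pp y^h}=2\stackrel{\circ}{C}_{skh}$ from Theorem 1.1.1 applied to the fundamental tensor of $F^n$, turning the correction into $\dfrac{1}{2}\dfrac{\pp F^h}{\pp y^j}\stackrel{\circ}{C}_{skh}$, and symmetrically for the $k$ and $s$ terms. Carrying the prefactor $\dfrac{1}{2}g^{is}$ through produces precisely $\check{C}^i{}_{jk}=\dfrac{1}{4}g^{is}\big(\stackrel{\circ}{C}_{skh}\,\pp F^h/\pp y^j+\stackrel{\circ}{C}_{jsh}\,\pp F^h/\pp y^k-\stackrel{\circ}{C}_{jkh}\,\pp F^h/\pp y^s\big)$, with the coefficient $\tfrac14=\tfrac12\cdot\tfrac12$ arising from the $\tfrac14$ in (2.4.5) together with the factor $2$ in the Cartan identity.

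The main obstacle is not conceptual but one of bookkeeping: I must track the index placement and signs correctly across the three cyclic terms and verify that the vertical corrections assemble into the stated symmetric combination of Cartan tensors rather than spurious asymmetric ones. I would confirm that the $\stackrel{\circ}{C}_{skh}$ that appear are genuinely the Cartan tensors of $F^n$ (automatic, since $g_{ij}$ and therefore $C_{ijk}$ are common to $F^n$ and $\Sigma_F$), and that the resulting $\check{C}^i{}_{jk}$ is symmetric in its lower indices $j,k$, which is consistent with both $F^i{}_{jk}$ and $\stackrel{\circ}{F}{}^i{}_{jk}$ being $h$-torsion free.
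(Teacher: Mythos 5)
Your proposal is correct and is essentially the paper's own proof: the text introduces the theorem with the words ``Using the expression of the operator $\dfrac{\delta}{\delta x^i}$, one obtains,'' i.e.\ it derives the result exactly as you do, by substituting the frame relation (2.4.5), $\dfrac{\delta}{\delta x^i}=\dfrac{\stackrel{\circ}{\delta}}{\delta x^i}+\dfrac{1}{4}\dfrac{\partial F^h}{\partial y^i}\dfrac{\partial}{\partial y^h}$, into the generalized Christoffel symbols of Theorem 2.6.1 and using $\dfrac{\partial g_{ij}}{\partial y^h}=2\stackrel{\circ}{C}_{ijh}$, the vertical coefficients being unchanged since $\Sigma_F$ and $F^n$ share the same fundamental tensor. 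Your tracking of the factor $\tfrac{1}{4}$ and your check that $\check{C}^i{}_{jk}$ is symmetric in $j,k$ (by total symmetry of the Cartan tensor) are both sound and complete the argument.
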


A consequence of the previous formulas is the following relation:
\be
y^j\check{C}^i{}_{jh}=\dfrac{1}{4}\stackrel{\circ}{C}\!{}^i_{hk}y^s\dfrac{\partial F^k}{\partial y^s}.
\ee

Let $\omega^i{}_j$ be the connection forms of $C\Gamma(N)$:
\be
\omega^i{}_j=F^i{}_{jk}dx^k+C^i{}_{jk}\delta y^k.
\ee

Then, we have
\begin{theorem}
The structure equations of the canonical connection $C\Gamma(N)$ are given by:
\begin{equation}
\left\{
\begin{array}{ll}
d(dx^i)-dx^k\wedge \omega^i\!{}_k & =-\stackrel{1}{\Omega}\!{}^i,\\
d(\delta y^i)-\delta y^k\wedge \omega^i\!{}_k & =-\stackrel{2}{\Omega}\!{}^i,\\
d \omega^i\!{}_j\ \  -\ \omega^k\!{}_j\wedge\omega^i\!{}_k & =-\stackrel{}{\Omega}\!{}^i{}_j,
\end{array}
\right.
\end{equation}
where the $2$-forms of torsion $\stackrel{1}{\Omega}\!{}^i,\ \stackrel{2}{\Omega}\!{}^i$ are as follows
\begin{equation}
\begin{array}{ll}
\stackrel{1}{\Omega}\!{}^i=C^i\!{}_{jk}dx^j\wedge \delta y^k,\\
\\
\stackrel{2}{\Omega}\!{}^i= \dfrac{1}{2}R^i\!{}_{jk}dx^j\wedge dx^k+P^i\!{}_{jk}dx^j\wedge \delta y^k,
\end{array}
\end{equation}
with
\be
R^i\!{}_{jk}=\dfrac{\delta N^i{}_{j}}{\delta x^k}-\dfrac{\delta N^i{}_{k}}{\delta x^j},\ \ \ \ P^i\!{}_{jk}=\dfrac{\partial N^i{}_j}{\partial y^k} -F^i\!{}_{kj}
\ee
and the $2$-form of curvature $\Omega^i{}_j$ is given by
\begin{equation}
\Omega^i{}_j=\dfrac{1}{2}R_j^{\ i}\!{}_{kh}\ dx^k\wedge dx^h+P_j^{\ i}\!{}_{kh}\ dx^k\wedge \delta y^h+\dfrac{1}{2}S_j^{\ i}\!{}_{kh}\ \delta y^k\wedge \delta y^h,
\end{equation}
where
\begin{equation}
\left\{
\begin{array}{l}
R_j^{\ i}{}_{kh}=\dfrac{\delta F^i\!{}_{jk}}{\delta x^h}-\dfrac{\delta F^i\!{}_{jh}}{\delta x^k}+F^s\!{}_{jk}F^i\!{}_{sh}-F^s\!{}_{jh}F^i\!{}_{sk}+
C^i\!{}_{hs}R^s\!{}_{kh},\\
\\
P_j^{\ i}{}_{kh}=\dfrac{\partial F^i\!{}_{jk}}{\partial y^h}-C^i_{jh\ |k}+C^i{}_{js}P^s{}_{kh},\\
\\
S_j^{\ i}{}_{kh}=\dfrac{\partial C^i\!{}_{jk}}{\partial y^h}-\dfrac{\partial C^i\!{}_{jh}}{\partial y^k}+C^s_{jk}C^i_{sh}-C^s_{jh}C^i_{jk}.
\end{array}
\right.
\end{equation}
\end{theorem}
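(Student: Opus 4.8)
The plan is to derive these structure equations as the specialization to the canonical metric connection $C\Gamma(N)=(F^i{}_{jk},C^i{}_{jk})$ of $\Sigma_F$ of the general structure equations for an $N$-linear connection on $TM$ established in Part~I, Chapter~1. First I would recall the connection $1$-forms $\omega^i{}_j=F^i{}_{jk}dx^k+C^i{}_{jk}\delta y^k$ introduced just above the statement, together with the adapted coframe $(dx^i,\delta y^i)$ of the evolution nonlinear connection $N$ and the exterior differential of $\delta y^i$ supplied by the Lemma on the exterior differential of $\delta y^i$ established earlier, namely $d(\delta y^i)=\frac12{\cal R}^i{}_{kj}\,dx^j\wedge dx^k+B^i{}_{kj}\,\delta y^j\wedge dx^k$, where $B^i{}_{kj}=\partial N^i{}_j/\partial y^k$ are the Berwald coefficients and ${\cal R}^i{}_{kj}$ is the curvature of $N$. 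The three structure equations are then checked one at a time by computing each left-hand side in the adapted cobasis and reading off its $dx\wedge dx$, $dx\wedge\delta y$ and $\delta y\wedge\delta y$ components.

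For the first equation I would use that $dx^i$ is exact, so $d(dx^i)=0$ and $d(dx^i)-dx^k\wedge\omega^i{}_k=-F^i{}_{kl}\,dx^k\wedge dx^l-C^i{}_{kl}\,dx^k\wedge\delta y^l$. The $dx\wedge dx$ part is the skew part of $F^i{}_{kl}$, i.e.\ $\tfrac12 T^i{}_{kl}\,dx^k\wedge dx^l$, which vanishes by the $h$-symmetry axiom $A_3$ ($T^i{}_{jk}=0$); only $C^i{}_{jk}\,dx^j\wedge\delta y^k=\stackrel{1}{\Omega}{}^i$ survives. For the second equation I would substitute the Lemma and $\omega^i{}_k$ into $d(\delta y^i)-\delta y^k\wedge\omega^i{}_k$; the term $-C^i{}_{kl}\,\delta y^k\wedge\delta y^l$ is the skew part of $C^i{}_{kl}$, hence $\tfrac12 S^i{}_{kl}\,\delta y^k\wedge\delta y^l$, which vanishes by the $v$-symmetry axiom $A_5$ ($S^i{}_{jk}=0$), so no $\delta y\wedge\delta y$ torsion appears. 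Collecting the surviving terms identifies $R^i{}_{jk}={\cal R}^i{}_{jk}=\delta N^i{}_j/\delta x^k-\delta N^i{}_k/\delta x^j$ and $P^i{}_{jk}=B^i{}_{jk}-F^i{}_{kj}=\partial N^i{}_j/\partial y^k-F^i{}_{kj}$, yielding the stated $\stackrel{2}{\Omega}{}^i$.

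The substantive step, and the main obstacle, is the third (curvature) equation. Here I would expand $d\omega^i{}_j=d(F^i{}_{jk}\,dx^k)+d(C^i{}_{jk}\,\delta y^k)$, writing $dF^i{}_{jk}=\frac{\delta F^i{}_{jk}}{\delta x^h}dx^h+\frac{\partial F^i{}_{jk}}{\partial y^h}\delta y^h$ and similarly for $dC^i{}_{jk}$, and reinserting the Lemma for $d(\delta y^k)$; the $\tfrac12{\cal R}$ part of this last differential is precisely what produces the mixed $C$--$R^s$ contribution appearing in $R_j{}^i{}_{kh}$. Subtracting $\omega^k{}_j\wedge\omega^i{}_k=(F^k{}_{jl}dx^l+C^k{}_{jl}\delta y^l)\wedge(F^i{}_{km}dx^m+C^i{}_{km}\delta y^m)$ and regrouping by the three bidegrees then reads off the curvature $d$-tensors $R_j{}^i{}_{kh}$, $P_j{}^i{}_{kh}$ and $S_j{}^i{}_{kh}$, where the commutator-type terms $\delta_h F^i{}_{jk}-\delta_k F^i{}_{jh}$ arise from the non-holonomy of the frame $\delta/\delta x^i$. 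The difficulty is essentially combinatorial: keeping careful track of the many wedge terms, their signs, and which products contribute to which bidegree. Finally, once the three curvature components are identified, I would note that the Bianchi identities follow, as usual, by exterior differentiation of the structure equations modulo the same system, so that these equations characterize $C\Gamma(N)$ completely.
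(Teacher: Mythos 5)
Your proposal is correct and follows essentially the paper's own route: the paper obtains these equations by particularizing the general structure equations of an $N$-linear connection (Part I, Ch.~1, Theorem on structure equations), i.e.\ by exactly the computation you sketch — the connection $1$-forms $\omega^i{}_j=F^i{}_{jk}dx^k+C^i{}_{jk}\delta y^k$ in the adapted coframe, the lemma $d(\delta y^i)=\frac12{\cal R}^i{}_{kj}\,dx^j\wedge dx^k+B^i{}_{kj}\,\delta y^j\wedge dx^k$, and the axioms $T^i{}_{jk}=S^i{}_{jk}=0$ eliminating the $dx\wedge dx$ part of $\stackrel{1}{\Omega}{}^i$ and the $\delta y\wedge\delta y$ part of $\stackrel{2}{\Omega}{}^i$. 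Your bidegree bookkeeping for $d\omega^i{}_j-\omega^k{}_j\wedge\omega^i{}_k$, including the $C$--$R$ cross term produced by $d(\delta y^k)$ and the nonholonomy terms $\delta_h F^i{}_{jk}-\delta_k F^i{}_{jh}$, reproduces the paper's curvature components, so no gap remains.
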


Taking into account that the coefficients $F^i\!{}_{jk}$ and $C^i\!{}_{jk}$ are expressed in the formulas (2.6.2), the calculus of curvature tensors is not difficult.

Exterior differentiating (2.6.5) and using them again one obtains the Bianchi identities of $C\Gamma(N)$.

The $h$- and $v$-covariant derivatives, denoted by ``${}_{|}$'' and ``$|$'', with respect to canonical connection $D$ have the properties given by the axioms $A_1-A_4$.

So, we obtain
\be
\left\{
\begin{array}{l}
g_{ij\ |k}=\dfrac{\delta g_{ij}}{\delta x^k}-g_{sj}F^s\!{}_{ik}-g_{is}F^s\!{}_{jk}=0,\\
\\
g_{ij}|_k=\dfrac{\partial g_{ij}}{\partial y^k}
-g_{sj}C^s\!{}_{ik}-g_{is}C^s\!{}_{jk}=0.
\end{array}\right.
\ee

The Ricci identities applied to the fundamental tensor $g_{ij}$ give us:
$$
R_{ijkh}+R_{jikh}=0,\ \ \ \ P_{ijkh}+P_{jikh}=0,\ \ \ \ S_{ijkh}+S_{jikh}=0,
$$
where $R_{ijkh}=g_{js}R_i{}^s{}_{kh},$ etc.

Also, $P_{ijk}=g_{is}P^s{}_{jk}$ is totally symmetric.

The deflection tensors of $D$ are
\be
D^i\!{}_j=y^i_{\ |j}=\dfrac{\delta y^i}{\delta x^j}+y^sF^i\!{}_{sj};\ \ \
d^i\! {}_j=y^i|_j.
\ee
Taking into account (2.6.11), we obtain:
\begin{equation}
D^i\! {}_j=y^s\check{C}{}^i_{sj}+\dfrac{1}{4}\dfrac{\partial F^i}{\partial y^j},\ \ \ \ \ \ d^i\! {}_j=\delta^i_j.
\end{equation}

\section{The electromagnetism in the theory of the Finslerian mechanical systems $\Sigma_{F}$}\index{Electromagnetism and gravitational fields}
\setcounter{equation}{0}\setcounter{theorem}{0}\setcounter{definition}{0}\setcounter{proposition}{0}\setcounter{remark}{0}

For a Finslerian mechanical system $\Sigma_{F}=( M,{\cal E}_{F^2},Fe)$ whose external forces $Fe$ depend on the material point $x\in M$ and on velocity $y^i=\dfrac{dx^i}{dt},$ the electromagnetic phenomena appears because the deflection tensors $D^i\! {}_j$ and $d^i\! {}_j$ nonvanish.

Setting $D_{ij}=g_{ih}D^h\! {}_j,\ d_{ij}=g_{ih}\delta^h_j,$
the $h$-electromagnetic tensor ${\cal F}_{ij}$ and the $v$-electromagnetic tensor $f_{ij}$ are defined by
\begin{equation}
{\cal F}_{ij}=\dfrac{1}{2}(D_{ij}-D_{ji}),\ \ \ \ f_{ij}=\dfrac{1}{2}(d_{ij}-d_{ji}).
\end{equation}

By using the equalities (2.6.11), we have
\begin{equation}
{\cal F}_{ij}=\dfrac{1}{4} P_{ij},\ \ \ \ f_{ij}=0.
\end{equation}

If we denote $R_{ijk}:=g_{ih}R^h\!{}_{jk}$, then
one proves:
\begin{theorem}
The electromagnetic tensor ${\cal F}_{ij}$ of the Finslerian mechanical system $\Sigma_{F}=( M,T,Fe )$ satisfies the following generalized Maxwell equations:
\begin{equation}
\begin{array}{l}
{\cal F}_{ij\ |k}+{\cal F}_{jk\ |i}+{\cal F}_{ki\ |j}=
\dfrac{1}{2}\{ y^s(R_{sijk}+R_{sjki}+R_{skij})-\\ \\ \hfill
-(R_{ijk}+R_{jki}+R_{kij})\},\\
\\
{\cal F}_{ij} |_k+{\cal F}_{jk} |_{i}+{\cal F}_{ki} |_{j}=
\dfrac{1}{2}\{ y^s [(P_{sijk}-P_{sikj})+\\ \\ \hfill +(P_{sjki}-P_{sjik})+(P_{skij}-P_{skji})]\}
\end{array}
\end{equation}
\end{theorem}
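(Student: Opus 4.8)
The plan is to reproduce, in the Finslerian setting, the mechanism that gave the Maxwell equations in the Riemannian case (Theorem 1.6.1), with the flat curvature $\mathbf{r}$ replaced by the full $h$- and $v$-curvature $d$-tensors $R^{\ i}_{s\ jk}$ and $P^{\ i}_{s\ jk}$ of the canonical metric connection $C\Gamma(N)$. First I would apply the Ricci identities of $D$ to the Liouville $d$-vector field $y^i$, expressing everything through the deflection tensors $D^i{}_j=y^i{}_{|j}$ and $d^i{}_j=y^i|_j=\delta^i_j$ computed in (2.6.14). Because the canonical connection is $h$- and $v$-symmetric (axioms $A_3$, $A_5$), so that $T^i{}_{jk}=0$ and $S^i{}_{jk}=0$, the three Ricci identities collapse to the deflection identities $D^i{}_{j|k}-D^i{}_{k|j}=y^s R^{\ i}_{s\ jk}-d^i{}_s R^s{}_{jk}$ and $D^i{}_j|_k-d^i{}_{k|j}=y^s P^{\ i}_{s\ jk}-D^i{}_s C^s{}_{jk}-d^i{}_s P^s{}_{jk}$. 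Lowering the free index $i$ with $g_{ij}$ (permissible since $D$ is $h$- and $v$-metrical, $A_2$, $A_4$) and using $d_{ij}=g_{ij}$ yields the covariant forms $D_{ij|k}-D_{ik|j}=y^s R_{sijk}-R_{ijk}$, with $R_{ijk}:=g_{is}R^s{}_{jk}$, and $D_{ij}|_k-g_{ik|j}=y^s P_{sijk}-D_{is}C^s{}_{jk}-P_{ijk}$.

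For the first (horizontal) Maxwell equation I would take the completely cyclic sum of the $h$-identity. The key algebraic observation is that, because ${\cal F}_{ij}=\tfrac12(D_{ij}-D_{ji})$, the cyclic sum ${\cal F}_{ij|k}+{\cal F}_{jk|i}+{\cal F}_{ki|j}$ coincides, term by term, with $\tfrac12$ times the cyclic sum of $(D_{ij|k}-D_{ik|j})$; the symmetric ($Q$-type) part of $D_{ij}$ drops out automatically, with no appeal to any special structure. Substituting the $h$-identity then gives directly $\sum_{(ijk)}{\cal F}_{ij|k}=\tfrac12\{y^s(R_{sijk}+R_{sjki}+R_{skij})-(R_{ijk}+R_{jki}+R_{kij})\}$, which is the asserted first equation. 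Here, unlike the Riemannian case, the curvature cyclic sums are \emph{not} killed by a first Bianchi identity, which is precisely why the right-hand side is nonzero; the skew-symmetry $R_{ijkh}+R_{jikh}=0$ noted in Section 2.6 is used only to organize the $y^s$-terms into the displayed form.

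For the second (vertical) Maxwell equation I would cyclically antisymmetrize the mixed identity in the same way. Two simplifications must be verified: $g_{ik|j}=0$ because $D$ is $h$-metrical, and $d_{is}P^s{}_{jk}=g_{is}P^s{}_{jk}=P_{ijk}$ is totally symmetric (Section 2.6), hence it cancels under the skew-symmetrization producing ${\cal F}$. The delicate point, which I expect to be the main obstacle, is the term $D_{is}C^s{}_{jk}$: it couples the non-symmetric deflection tensor $D_{is}$ with the symmetric $v$-coefficients $C^s{}_{jk}$, and one must check carefully that, after the cyclic/antisymmetric operations and the $\tfrac12$ bookkeeping inherited from ${\cal F}_{ij}=\tfrac12(D_{ij}-D_{ji})$, these contributions (together with the totally symmetric $P_{ijk}$) cancel, leaving only $y^s P_{sijk}$. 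Regrouping the surviving cyclic sum into the antisymmetrized blocks $P_{sijk}-P_{sikj}$, $P_{sjki}-P_{sjik}$ and $P_{skij}-P_{skji}$ then produces the second equation. Once the sign and factor conventions are pinned down and the symmetric terms are shown to drop, the result follows, and the Bianchi identities of $C\Gamma(N)$ provide an alternative route of verification if needed.
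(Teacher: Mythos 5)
Your proposal is correct and follows essentially the same route as the paper, which proves the Riemannian case (Theorem 1.6.1) exactly this way and leaves the Finslerian case as the analogous computation: apply the Ricci identities to the Liouville vector field, pass to the covariant deflection tensors via $h$- and $v$-metricity with $d_{ij}=g_{ij}$ and $T^i{}_{jk}=S^i{}_{jk}=0$, and take cyclic antisymmetrized sums, using the total symmetry of $P_{ijk}$. The one step you flag as delicate, the $D_{is}C^s{}_{jk}$ contribution, is in fact immediate: the cyclic sum over $(i,j,k)$ of $D_{is}C^s{}_{jk}-D_{js}C^s{}_{ik}$ vanishes identically by the symmetry $C^s{}_{jk}=C^s{}_{kj}$ alone, with no information about $D_{is}$ needed.
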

where ${\cal F}_{ij}$ is expressed in (2.7.2).

\begin{corollary}
If the external forces $Fe$ does not depend on the velocity $y^i$ then the electromagnetic fields ${\cal F}_{ij}$ and ${f}_{ij}$ vanish.
\end{corollary}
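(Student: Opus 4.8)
The plan is to reduce the corollary entirely to the representation \eqref{2.7.2}, which already packages all the geometry. By that formula the two electromagnetic fields are ${\cal F}_{ij}=\tfrac14 P_{ij}$ and $f_{ij}=0$, where $P_{ij}$ is the helicoidal tensor of $\Sigma_F$. Thus one half of the claim, the vanishing of $f_{ij}$, requires no hypothesis at all and I would dispose of it first: from \eqref{2.6.12} the $v$-deflection tensor is $d^i{}_j=\delta^i_j$, so $d_{ij}=g_{is}d^s{}_j=g_{ij}$ is symmetric, whence $f_{ij}=\tfrac12(d_{ij}-d_{ji})=0$ identically. Only ${\cal F}_{ij}=\tfrac14 P_{ij}$ carries content, so the whole problem becomes: show $P_{ij}=0$ when $Fe$ is velocity-independent.

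The one point deserving care is the meaning of the hypothesis. The external force is $Fe=F^i(x,y)\dfrac{\partial}{\partial y^i}$, so ``$Fe$ does not depend on velocity'' should read $F^i=F^i(x)$. However, the helicoidal tensor is built from the \emph{covariant} components $F_i=g_{ij}F^j$, and in a Finsler space the fundamental tensor $g_{ij}(x,y)$ genuinely depends on $y$. Hence $F_i$ need not be $y$-independent even when $F^i$ is, and one cannot simply assert $\partial F_i/\partial y^k=0$. This is the only real obstacle, and it is conceptual rather than computational: it is resolved by the total symmetry of the Cartan tensor $C_{ijk}=\tfrac12\,\partial g_{ij}/\partial y^k$ recorded in Section~2.1.

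Concretely, I would compute, using $F^j=F^j(x)$,
\[
\frac{\partial F_i}{\partial y^k}=\frac{\partial g_{ij}}{\partial y^k}F^j=2C_{ijk}F^j,
\]
and substitute into the definition of the helicoidal tensor:
\[
P_{ik}=\frac12\left(\frac{\partial F_i}{\partial y^k}-\frac{\partial F_k}{\partial y^i}\right)=\left(C_{ijk}-C_{kji}\right)F^j=0,
\]
the last equality holding because $C_{ijk}$ is totally symmetric, so $C_{ijk}=C_{kji}$. Therefore ${\cal F}_{ij}=\tfrac14 P_{ij}=0$, and together with $f_{ij}=0$ this proves the corollary. (If one prefers the stronger reading of the hypothesis in which the covariant components $F_i=F_i(x)$ are themselves velocity-free, then $P_{ij}=0$ is immediate and the Cartan-tensor argument is not needed; I would remark on this, since it shows the conclusion is robust to the interpretation of ``velocity-independent.'') Finally, I would note that this exactly parallels Corollary~1.6.1 of the Riemannian case, the Finslerian feature being precisely the extra $y$-dependence of the metric that the symmetry of $C_{ijk}$ neutralizes.
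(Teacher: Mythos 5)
Your proof is correct, and in essence it takes the only route available: the paper states this corollary without argument immediately after formula (2.7.2), the implicit reasoning being that velocity-independence of $Fe$ forces $\partial F_i/\partial y^j=0$, hence $P_{ij}=0$, hence ${\cal F}_{ij}=\frac14 P_{ij}=0$, while $f_{ij}=0$ holds unconditionally. What you do differently — and better — is to notice that this implicit reasoning is too quick in the Finslerian setting: the natural reading of the hypothesis is $F^i=F^i(x)$ for the contravariant components, and since the helicoidal tensor is built from $F_i=g_{ij}(x,y)F^j$ with a genuinely $y$-dependent fundamental tensor, one cannot simply assert $\partial F_i/\partial y^k=0$. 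Your computation $\partial F_i/\partial y^k=2C_{ijk}F^j$, followed by
$$P_{ik}=\frac12\left(\frac{\partial F_i}{\partial y^k}-\frac{\partial F_k}{\partial y^i}\right)=\left(C_{ijk}-C_{kji}\right)F^j=0$$
by the total symmetry of the Cartan tensor recorded in Section 2.1, closes exactly this gap and identifies the genuinely Finslerian mechanism at work; in the Riemannian case of Chapter 1 the issue never arises because $\partial g_{ij}/\partial y^k=0$ there. Your disposal of $f_{ij}$ via $d^i{}_j=\delta^i_j$, so that $d_{ij}=g_{ij}$ is symmetric and $f_{ij}=\frac12(d_{ij}-d_{ji})=0$ with no hypothesis needed, matches the paper's own unconditional statement in (2.7.2). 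Your closing remark that the stronger reading $F_i=F_i(x)$ makes the claim immediate is a fair observation: what your argument buys, relative to the paper's tacit one, is that the corollary holds under the weaker and more natural interpretation of the hypothesis.
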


\begin{remark}
$1^{\circ}$ The application of the previous theory to the examples from section 2.2 is immediate.

$2^{\circ}$ The theory of the gravitational field $g_{ij}$ and the Einstein equations can be realized by same method as in the papers, \cite{miranabuc}.
\end{remark}

\section{The almost Hermitian model on the tangent manifold $TM$ of the Finslerian mechanical systems $\Sigma_{F}$}\index{Almost Hermitian model}
\setcounter{equation}{0}\setcounter{theorem}{0}\setcounter{definition}{0}\setcounter{proposition}{0}\setcounter{remark}{0}

Consider a Finslerian mechanical system $\Sigma_F=(M,F(x,y),Fe(x,y))$ endowed with the evolution nonlinear connection $N$ and also endowed with the canonical $N-$metrical connection having the coefficients $C\Gamma(N)= (F^i\!{}_{jk},C^i\!{}_{jk})$. On the velocity manifold $\widetilde{TM}=TM\setminus\{ 0\}$ we can see that the previous geometrical object fields determine an almost Hermitian structure $H^{2n}=(\widetilde{TM},\G,\F)$. Moreover, the theory of gravitational and electromagnetic fields can be geometrically studied much better on such model, since the symplectic structure $\theta$, the almost complex structure $\F$ and the Riemannian structure $\G$ on $\widetilde{TM}$ are well determined by the Finslerian mechanical system $\Sigma_F$.

One obtains:
\begin{equation}
\begin{array}{l}
\G= g_{ij} dx^i\otimes dx^j +g_{ij}\delta y^i \otimes \delta y^j\\
\\
\F= -\dfrac{\partial}{\partial y^i}\otimes dx^i+\dfrac{\delta }{\delta x^i}\otimes \delta y^i\\
\\
\P= -\dfrac{\partial}{\partial y^i}\otimes \delta y^i+\dfrac{\delta }{\delta x^i}\otimes dx^i,
\end{array}
\end{equation}
where
\begin{equation}
\left\{
\begin{array}{l}
\dfrac{\delta }{\delta x^i}=\dfrac{\stackrel{\circ}{\delta} }{\delta x^i}+\dfrac{1}{4}\dfrac{\partial F^s}{\partial y^i}\dfrac{\partial }{\partial y^s}\\
\\
\delta y^i=\stackrel{\circ}{\delta}\! y^i-\dfrac{1}{4}\dfrac{\partial F^i}{\partial y^s}dx^s.
\end{array} \right.
\end{equation}

Thus, the following theorem holds:
\begin{theorem}
We have:

$1^{\circ}$  The pair $(\widetilde{TM},\G)$ is a pseudo Riemannian space.

$2^{\circ}$  The tensor $\G$ depends on $\Sigma_F,$ only.

$3^{\circ}$  The distributions $N$ and $V$ are orthogonal with respect to $\G$.
\end{theorem}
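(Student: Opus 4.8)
The plan is to work throughout in the adapted basis $\left(\dfrac{\delta}{\delta x^i},\dfrac{\partial}{\partial y^i}\right)$ determined by the evolution nonlinear connection $N$ constructed in Section 2.4, together with its dual cobasis $(dx^i,\delta y^i)$, where $\delta y^i=dy^i+N^i{}_j\,dx^j$ and $N^i{}_j=\stackrel{\circ}{N}{}^i{}_j-\dfrac14\dfrac{\partial F^i}{\partial y^j}$. All three assertions then reduce to three elementary facts: the duality relations between this basis and cobasis, the block structure of $\G$ in $(2.8.1)$, and the regularity of the Finsler fundamental tensor $g_{ij}$.

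First I would dispatch $3^\circ$, which is the quickest. Since the cobasis $(dx^i,\delta y^i)$ is by construction dual to $\left(\dfrac{\delta}{\delta x^i},\dfrac{\partial}{\partial y^i}\right)$, we have $dx^i\!\left(\dfrac{\partial}{\partial y^j}\right)=0$ and $\delta y^i\!\left(\dfrac{\delta}{\delta x^j}\right)=0$. Evaluating $\G$ from $(2.8.1)$ on a horizontal vector $\dfrac{\delta}{\delta x^i}\in N$ and a vertical vector $\dfrac{\partial}{\partial y^j}\in V$ kills both the $dx\otimes dx$ term and the $\delta y\otimes\delta y$ term, so $\G\!\left(\dfrac{\delta}{\delta x^i},\dfrac{\partial}{\partial y^j}\right)=0$. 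Hence $N$ and $V$ are $\G$-orthogonal and the splitting $T_u\widetilde{TM}=N_u\oplus V_u$ is orthogonal.

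For $1^\circ$ I would verify symmetry, global definition, and nondegeneracy with constant signature. Symmetry is immediate from $g_{ij}=g_{ji}$. Global definition on $\widetilde{TM}$ follows from the transformation laws $d\tilde x^i=\dfrac{\partial\tilde x^i}{\partial x^j}\,dx^j$ and $\delta\tilde y^i=\dfrac{\partial\tilde x^i}{\partial x^j}\,\delta y^j$ of the adapted cobasis, combined with the $d$-tensor rule $\tilde g_{ij}=\dfrac{\partial x^h}{\partial\tilde x^i}\dfrac{\partial x^k}{\partial\tilde x^j}g_{hk}$ for the fundamental tensor; substituting these into $(2.8.1)$ reproduces $\G$ unchanged, so $\G$ is a well-defined tensor field. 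Finally, in the adapted basis $\G$ is block-diagonal with both diagonal blocks equal to $(g_{ij})$, whence $\det\G=(\det(g_{ij}))^2\neq 0$ on $\widetilde{TM}$ by the nonsingularity axiom of the Finsler metric, and the signature of $\G$ is twice that of $g_{ij}$, hence constant. Therefore $(\widetilde{TM},\G)$ is a (pseudo-)Riemannian space.

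Assertion $2^\circ$ is then a bookkeeping matter: $g_{ij}$ is the fundamental tensor of $F^n$ and so is built from $F$ alone, while the $1$-forms $\delta y^i$ are built from the coefficients $N^i{}_j$ of the evolution nonlinear connection, shown in Section 2.4 to depend only on $\Sigma_F$ through the canonical semispray $S=\stackrel{\circ}{S}+\dfrac12 Fe$. Thus every ingredient of $\G$ is determined by $\Sigma_F$. The only point requiring genuine care — the main obstacle — is the coordinate invariance in $1^\circ$: one must confirm that $\delta y^i$ really transforms tensorially as $\delta\tilde y^i=\dfrac{\partial\tilde x^i}{\partial x^j}\,\delta y^j$, which rests on the transformation rule of the $N^i{}_j$ established earlier; once this is granted the remaining computations are purely algebraic and parallel the Riemannian case (Theorem 1.7.1) and the Finslerian lift treated in Section 2.1.
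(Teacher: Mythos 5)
Your proof is correct and follows exactly the route the paper leaves implicit: the book states this theorem without demonstration ("Thus, the following theorem holds"), relying on precisely the standard verifications you carry out — duality of the adapted basis and cobasis for the orthogonality of $N$ and $V$, the tensorial law $\delta\tilde y^i=\dfrac{\partial\tilde x^i}{\partial x^j}\delta y^j$ (valid here since $N^i{}_j$ differs from the Cartan nonlinear connection $\stackrel{\circ}{N}{}^i{}_j$ by the $d$-tensor $\dfrac14\dfrac{\partial F^i}{\partial y^j}$) for global definedness, and the block-diagonal form of $\G$ with $\det(g_{ij})\neq 0$ and constant signature for the pseudo-Riemannian property. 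Nothing is missing; your filled-in details match the paper's intended argument, which parallels the Riemannian case of Theorem 1.7.1.
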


\begin{theorem}
$1^{\circ}$  The pair $(\widetilde{TM},\F)$ is an almost complex space.

$2^{\circ}$  $\F$ depends on $\Sigma_R,$ only.

$3^{\circ}$  $\F$ is integrable on $\widetilde{TM}$ iff the tensors $R^i{}_{jk}$ vanishes.
\end{theorem}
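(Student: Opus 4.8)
The plan is to establish the three assertions in turn, working throughout in the adapted basis $\left(\dfrac{\delta}{\delta x^i},\dfrac{\partial}{\partial y^i}\right)$ of the evolution nonlinear connection $N$ and its dual cobasis $(dx^i,\delta y^i)$. For $1^\circ$ I would first read off the frame action of $\F$ directly from its defining expression $\F=-\dfrac{\partial}{\partial y^i}\otimes dx^i+\dfrac{\delta}{\delta x^i}\otimes\delta y^i$. Using the duality relations $dx^i\left(\dfrac{\delta}{\delta x^j}\right)=\delta^i_j$, $dx^i\left(\dfrac{\partial}{\partial y^j}\right)=0$, $\delta y^i\left(\dfrac{\delta}{\delta x^j}\right)=0$, $\delta y^i\left(\dfrac{\partial}{\partial y^j}\right)=\delta^i_j$, one obtains $\F\left(\dfrac{\delta}{\delta x^i}\right)=-\dfrac{\partial}{\partial y^i}$ and $\F\left(\dfrac{\partial}{\partial y^i}\right)=\dfrac{\delta}{\delta x^i}$. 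Composing these two on each frame vector gives $\F^2=-I$, so $\F$ is an almost complex structure. Its global definiteness follows because the adapted frame $\dfrac{\delta}{\delta x^i}$ and coframe $\delta y^i$ transform tensorially under the change of coordinates on $TM$ (the $d$-object transformation rules established in Part I, Chapter 1), whence the expression for $\F$ is coordinate-independent on $\widetilde{TM}$.

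For $2^\circ$ the observation is that every ingredient of $\F$, namely the horizontal fields $\dfrac{\delta}{\delta x^i}$ and the $1$-forms $\delta y^i$, is built solely from the coefficients $N^i{}_j$ of the evolution nonlinear connection, and these were shown in Section 2.4 to depend only on the Finslerian mechanical system $\Sigma_F$. Hence $\F$ depends only on $\Sigma_F$ (the reference to $\Sigma_R$ in the statement being a misprint for $\Sigma_F$).

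The substantive part is $3^\circ$. I would use the criterion that $\F$ is integrable if and only if its Nijenhuis tensor vanishes, $N_\F(X,Y)=\F^2[X,Y]+[\F X,\F Y]-\F[\F X,Y]-\F[X,\F Y]$, as already employed for the order-one structures in Part I, Chapter 1. Since $N_\F$ is a skew $d$-tensor, it suffices to evaluate it on the three pairs of adapted frame vectors. The brackets needed are $\left[\dfrac{\delta}{\delta x^i},\dfrac{\delta}{\delta x^j}\right]=R^h{}_{ij}\dfrac{\partial}{\partial y^h}$, $\left[\dfrac{\delta}{\delta x^i},\dfrac{\partial}{\partial y^j}\right]=B^h{}_{ij}\dfrac{\partial}{\partial y^h}$ and $\left[\dfrac{\partial}{\partial y^i},\dfrac{\partial}{\partial y^j}\right]=0$, with $B^h{}_{ij}$ the Berwald coefficients of $N$. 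Substituting the frame action of $\F$ from the first step, each of $N_\F\left(\dfrac{\delta}{\delta x^i},\dfrac{\delta}{\delta x^j}\right)$, $N_\F\left(\dfrac{\delta}{\delta x^i},\dfrac{\partial}{\partial y^j}\right)$ and $N_\F\left(\dfrac{\partial}{\partial y^i},\dfrac{\partial}{\partial y^j}\right)$ collapses to a term proportional to $R^h{}_{ij}$ plus a term proportional to the weak torsion $t^h{}_{ij}=B^h{}_{ij}-B^h{}_{ji}$. The decisive simplification is that for $\Sigma_F$ the torsion of the evolution nonlinear connection vanishes, $t^h{}_{ij}=0$, as recorded in (2.4.5). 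With this, every component reduces to $\pm R^h{}_{ij}$ times a frame vector, so $N_\F=0$ if and only if $R^h{}_{ij}=0$. Equivalently, one may simply invoke Theorem 1.3.3 of Part I, Chapter 1 ($\F$ integrable iff $N$ is symmetric and integrable), combined with the facts from Section 2.4 that $N$ is symmetric and that $N$ is integrable iff $R^i{}_{jk}=0$ (here $R^i{}_{jk}$ is the curvature ${\cal R}^i{}_{jk}$ of the evolution connection).

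The main obstacle I anticipate is purely bookkeeping in $3^\circ$: keeping the bracket signs and the index order in $B^h{}_{ij}$ straight so that the torsion contributions are correctly identified and seen to drop out under $t^h{}_{ij}=0$. No genuinely new estimate is required; the result is a faithful transcription of the order-one integrability theorem to the evolution nonlinear connection of the Finslerian mechanical system $\Sigma_F$.
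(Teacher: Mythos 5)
Your proposal is correct and follows essentially the same route as the paper: the paper also verifies $\F^2=-I$ on the adapted frame and settles $3^\circ$ by computing the Nijenhuis tensor ${\cal N}_\F$ in the adapted basis, finding ${\cal N}_\F=0$ iff $R^i{}_{jk}=0$ and $t^i{}_{jk}=0$, and then noting that the torsion $t^i{}_{jk}$ of the evolution nonlinear connection vanishes (paper's display (2.4.6), not (2.4.5)). Your reading of $\Sigma_R$ in $2^\circ$ as a misprint for $\Sigma_F$ is also the right one.
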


\begin{theorem}
$1^{\circ}$  The pair $(\widetilde{TM},\P)$ is an almost product space.

$2^{\circ}$  $\P$ depends on $\Sigma_F,$ only.

$3^{\circ}$  $\P$ is integrable iff the tensors $R^i{}_{jk}$ vanishes.
\end{theorem}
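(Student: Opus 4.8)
The plan is to handle the three assertions in turn, reducing each to properties of the horizontal and vertical projectors $h,v$ of the evolution nonlinear connection $N$ of $\Sigma_F$, exactly as was done for the tangent manifold in Part I, Chapter 1. Throughout I would work in the adapted basis $\left(\dfrac{\delta}{\delta x^i},\dfrac{\partial}{\partial y^i}\right)$ of (2.8.2), whose dual is $(dx^i,\delta y^i)$.

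For $1^\circ$ I would exhibit $\P$ as the difference of the complementary projectors, $\P=h-v$, just as in (1.3.9), where $h=\dfrac{\delta}{\delta x^i}\otimes dx^i$ and $v=\dfrac{\partial}{\partial y^i}\otimes\delta y^i$. Using the standard projector relations $h+v=I$, $h^2=h$, $v^2=v$, $h\circ v=v\circ h=0$, a one-line computation gives $\P\circ\P=(h-v)^2=h^2+v^2-h\circ v-v\circ h=h+v=I$, so $\P$ is an almost product structure. Evaluating on the adapted basis confirms $\P\left(\dfrac{\delta}{\delta x^i}\right)=\dfrac{\delta}{\delta x^i}$ and $\P\left(\dfrac{\partial}{\partial y^i}\right)=-\dfrac{\partial}{\partial y^i}$, so the $+1$ and $-1$ eigendistributions are precisely $N$ and $V$, both nonzero, whence $\P\neq\pm I$. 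For $2^\circ$, the structure $\P=h-v$ is entirely determined by the splitting $T_u\widetilde{TM}=N_u\oplus V_u$. The vertical distribution $V$ is canonical, while $N$ is the evolution nonlinear connection, whose coefficients $N^i{}_j=\stackrel{\circ}{N^i}\!{}_j-\dfrac{1}{4}\dfrac{\partial F^i}{\partial y^j}$ of (2.4.2) depend only on the Finsler fundamental function $F$ and on the external forces $Fe$, that is, only on the data $\Sigma_F=(M,F^2,Fe)$. Hence $\P$ depends on $\Sigma_F$ alone.

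The substance lies in $3^\circ$. Here I would invoke the Nijenhuis tensor $N_{\P}(X,Y)=\P^2[X,Y]+[\P X,\P Y]-\P[\P X,Y]-\P[X,\P Y]$, which (since $\P^2=I$) measures the integrability of $\P$, and use the standard fact that an almost product structure is integrable precisely when both eigendistributions $N$ and $V$ are involutive. The vertical distribution $V$ is always integrable, so the only surviving constraint is the involutivity of $N$, i.e. whether $\left[\dfrac{\delta}{\delta x^i},\dfrac{\delta}{\delta x^j}\right]$ has vanishing vertical part. By the bracket formula (1.3.13), $\left[\dfrac{\delta}{\delta x^i},\dfrac{\delta}{\delta x^j}\right]=R^h{}_{ij}\dfrac{\partial}{\partial y^h}$, where $R^h{}_{ij}$ is the curvature of $N$, which for $\Sigma_F$ is the evolution curvature tensor ${\cal R}^i{}_{jk}=\dfrac{\delta N^i{}_j}{\delta x^k}-\dfrac{\delta N^i{}_k}{\delta x^j}$ of (2.4.5). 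Thus $N$ is integrable iff $R^i{}_{jk}=0$, and therefore $\P$ is integrable iff $R^i{}_{jk}=0$.

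The care point, and the step I would check most carefully, is verifying that the evaluation of $N_{\P}$ on pairs from the adapted basis contributes nothing beyond the involutivity of $N$ and $V$: specifically that the mixed brackets $\left[\dfrac{\delta}{\delta x^i},\dfrac{\partial}{\partial y^j}\right]$ and the vertical-vertical brackets produce no extra obstruction in $N_{\P}$, so that the unique genuine condition is $R^i{}_{jk}=0$. This mirrors the computation for $\F$ in Theorem 2.8.2 and for $\P$ in Part I, Theorem 1.3.3, part $1^\circ$. The conceptual point distinguishing this from the almost complex case is that $\P$ requires only the integrability of $N$ and not its symmetry (weak torsion), so no hypothesis on $t^i{}_{jk}$ is needed; indeed here the evolution connection is already torsion-free by (2.4.5), but that plays no role in the integrability of $\P$.
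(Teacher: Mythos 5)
Your proposal is correct and takes essentially the same route as the paper, which proves this statement (only in sketch, via Theorem 1.3.3 and Theorem 1.3.2 of Part I) by writing $\P=h-v$, invoking the Nijenhuis tensor ${\cal N}_{\P}$, and reducing integrability to the involutivity of the eigendistributions $N$ and $V$ through the bracket formula $\left[\dfrac{\delta}{\delta x^i},\dfrac{\delta}{\delta x^j}\right]=R^h{}_{ij}\dfrac{\partial}{\partial y^h}$. The one step you flagged does check out: since $\P^2=I$, ${\cal N}_{\P}$ vanishes identically on mixed horizontal--vertical pairs, so the sole obstruction is indeed $R^i{}_{jk}=0$, and your observation that no torsion condition $t^i{}_{jk}=0$ is needed (in contrast with $\F$) agrees exactly with the paper's remark following the theorem.
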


The integrability of $\F$ and $\P$ are studied by means of Nijenhuis tensors ${\cal N}_{\P}$ and ${\cal N}_{\F}:$
$$
{\cal N}_{\F}(X,Y)=\F^2(X,Y)+\left[\F X, \F Y \right]-\F \left[\F X,Y\right]-$$ $$-\F \left[X,\F Y\right],\ \ \forall X,Y \in\chi(TM).
$$
In the adapted basis $\left(\dfrac{\delta}{\delta x^i},\dfrac{\partial}{\partial y^i}\right)$ ${\cal N}_{\F}=0$ if, and only if $R^i{}_{jk}=0,\ t^i{}_{jk}=0.$ But the torsion tensor $t^i{}_{jk}$ vanishes.

It is not difficult to prove the following results:
\begin{theorem}
$1^{\circ}$  The triple $\left( \widetilde{TM},\G,\F\right)$ is an almost Hermitian space.

$2^{\circ}$  $\left(\widetilde{TM},\G,\F\right)$ depends on Finslerian mechanical system $\Sigma_F,$ only.

$3^{\circ}$  The almost symplectic structure $\theta$ of the space $\left(\widetilde{TM},\G,\F\right)$ is

\begin{equation}
\theta=g_{ij} \delta y^i\wedge dx^j,
\end{equation}
with $\delta y^i$ from (2.8.2).
\end{theorem}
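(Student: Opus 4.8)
The plan is to establish the three assertions of the theorem by direct verification in the adapted basis, closely paralleling the almost Hermitian model already constructed for the Finsler space $F^n$ in property $23^\circ$ of Section 2.1 and for the Riemannian mechanical system in Section 1.7. First I would note that all the required ingredients are already in place: the evolution nonlinear connection $N$ with coefficients $N^i{}_j = \stackrel{\circ}{N}\!{}^i{}_j - \frac14 \frac{\partial F^i}{\partial y^j}$ gives the splitting $T_u TM = N_u \oplus V_u$, the adapted basis $\left(\frac{\delta}{\delta x^i}, \frac{\partial}{\partial y^i}\right)$ and its dual $(dx^i, \delta y^i)$ are globally well-defined objects on $\widetilde{TM}$ determined only by $\Sigma_F$, and the transformation rules (1.3.7') guarantee that $dx^i \otimes dx^j$, $\delta y^i \otimes \delta y^j$, and the mixed expressions in $\F$ and $\P$ transform tensorially. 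Since $g_{ij}(x,y)$ is a $d$-tensor of type $(0,2)$, the expressions (2.8.1) for $\G$, $\F$, $\P$ are invariant under coordinate change, hence globally defined on $\widetilde{TM}$ and built solely from the data of $\Sigma_F$.

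For assertion $1^\circ$, I would verify that $\G$ is symmetric and nondegenerate: symmetry is immediate from $g_{ij}=g_{ji}$, and since $g_{ij}$ has constant signature and is nonsingular on $\widetilde{TM}$ (axioms $4^\circ$, $5^\circ$ of Definition 2.1.1), the block-diagonal form of $\G$ in the adapted basis shows $\G$ is a pseudo-Riemannian metric depending only on $\Sigma_F$. Orthogonality of $N$ and $V$ follows because $\G$ has no mixed $dx^i \otimes \delta y^j$ terms; this is Theorem 2.8.1. The heart of the proof is assertion $3^\circ$, the computation of the associated almost symplectic form $\theta$. The plan is to compute $\theta(X,Y) = \G(\F X, Y)$ on pairs of adapted basis vectors. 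Using $\F\left(\frac{\delta}{\delta x^i}\right) = -\frac{\partial}{\partial y^i}$ and $\F\left(\frac{\partial}{\partial y^i}\right) = \frac{\delta}{\delta x^i}$ together with the block form of $\G$, one finds $\theta\left(\frac{\delta}{\delta x^i}, \frac{\partial}{\partial y^j}\right) = \G\left(-\frac{\partial}{\partial y^i}, \frac{\partial}{\partial y^j}\right) = -g_{ij}$ and the horizontal-horizontal and vertical-vertical components vanish, giving precisely $\theta = g_{ij}\, \delta y^i \wedge dx^j$, which is (2.8.3) with $\delta y^i$ from (2.8.2).

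To complete assertion $1^\circ$ I would check the compatibility $\G(\F X, \F Y) = \G(X, Y)$, which reduces on basis elements to $g_{ij} = g_{ij}$ after applying $\F$, confirming that $(\G, \F)$ is genuinely almost Hermitian; combined with $\F \circ \F = -\mathrm{Id}$ (which holds by the same basis computation as in (2.1.22')) this yields the almost Hermitian structure. The main obstacle, and the only place where genuine care is needed rather than routine bookkeeping, is keeping track of the modification $\delta y^i = \stackrel{\circ}{\delta}\!y^i - \frac14 \frac{\partial F^i}{\partial y^j}dx^j$ coming from the external forces: unlike the pure Finsler case, the adapted cobasis is twisted by the $Fe$-dependent term, so I would verify that the invariance and the symplectic computation go through with the evolution connection $N$ in place of the Cartan connection $\stackrel{\circ}{N}$. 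Because $\G$, $\F$, and the splitting are all defined through the same twisted adapted frame, the algebraic identities are formally identical to those of the Finsler model and the twisting term does not obstruct any step; the dependence on $\Sigma_F$ only (rather than on a choice of connection) follows since $N$ itself is canonical for $\Sigma_F$ by Section 2.4. I expect no essential difficulty beyond this consistent substitution, so the proof is a verification in the adapted basis exactly analogous to Theorem 2.1.2 and Theorem 1.7.3.
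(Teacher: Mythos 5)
Your proof is correct and follows essentially the same route as the paper, which treats this theorem as the immediate analogue of Theorem 2.7.1 (Part I) and of Theorem 1.7.3 for $\Sigma_{\cal R}$: an adapted-basis verification that $\G(\F X,\F Y)=\G(X,Y)$, that $\theta(X,Y)=\G(\F X,Y)$ reduces to $\theta=g_{ij}\,\delta y^i\wedge dx^j$, and that all objects depend only on $\Sigma_F$ because the evolution nonlinear connection $N$ is canonical. Your attention to the $Fe$-twisted cobasis $\delta y^i$ from (2.8.2) is exactly the substitution the paper makes and introduces no divergence from its argument.
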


If the almost symplectic structure $\theta$ is a symplectic one (i.e. $d\theta=0$), then the space $H^{2n}=\left(\widetilde{TM},\G, \F\right)$ is almost K\"ahlerian.

But, using the formulas (2.8.3) and (2.4.9) one obtains:
\begin{equation}
\begin{array}{l}
d\theta=\dfrac{1}{3!}(R_{ijk}+R_{jki}+R_{kij})dx^i\wedge dx^j \wedge dx^k+  \\
 \ \ \ \ \ \ \ \ \ \ \ \ \ \ \ \ \ \ \ \ \ \ \ \  +\dfrac{1}{2} (g_{is}B^s{}_{jk}-g_{js}B^s{}_{ik})\delta y^k\wedge dx^j\wedge dx^i.
\end{array}
\end{equation}

Therefore we deduce
\begin{theorem}
The space $H^{2n}=\left(\widetilde{TM},\G, \F \right)$ is almost K\"ahlerian if, and only if the following equations hold:
\begin{equation}
R_{ijk}+R_{jki}+R_{kij}=0;\ \ \ \ g_{is}B^s{}_{jk}-g_{js}B^s{}_{ik}=0.
\end{equation}
\end{theorem}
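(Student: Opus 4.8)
The plan is to reduce the assertion to the closedness of the fundamental two-form. By the theorem preceding the statement, $(\G,\F)$ is an almost Hermitian structure on $\widetilde{TM}$ whose associated almost symplectic form is $\theta=g_{ij}\delta y^i\wedge dx^j$ (formula (2.8.3)). By the very definition of an almost Kählerian space, an almost Hermitian manifold is almost Kählerian precisely when this form is symplectic, that is, when $d\theta=0$. Hence the whole proof amounts to computing $d\theta$ explicitly in the adapted cobasis $(dx^i,\delta y^i)$ and reading off exactly when it vanishes.

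First I would differentiate $\theta=g_{ij}\delta y^i\wedge dx^j$ by the Leibniz rule, obtaining $d\theta=dg_{ij}\wedge\delta y^i\wedge dx^j+g_{ij}\,d(\delta y^i)\wedge dx^j$, using $d(dx^j)=0$. Then I would expand $dg_{ij}=\dfrac{\delta g_{ij}}{\delta x^k}dx^k+\dfrac{\partial g_{ij}}{\partial y^k}\delta y^k$ in the adapted cobasis and substitute the exterior differential of $\delta y^i$ from (2.4.9), namely $d(\delta y^i)=\dfrac12{\cal R}^i{}_{kj}dx^j\wedge dx^k+B^i{}_{kj}\delta y^j\wedge dx^k$. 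The resulting families of basis $3$-forms I would then sort by type. The purely vertical contribution $\dfrac{\partial g_{ij}}{\partial y^k}\delta y^k\wedge\delta y^i\wedge dx^j$ drops out because $\dfrac{\partial g_{ij}}{\partial y^k}=2C_{ijk}$ is totally symmetric while $\delta y^k\wedge\delta y^i$ is skew. The horizontal terms $\dfrac12 g_{ij}{\cal R}^i{}_{kl}dx^l\wedge dx^k\wedge dx^j$ produce, upon full antisymmetrization, the cyclic curvature sum $R_{ijk}+R_{jki}+R_{kij}$. Finally the mixed $\delta y\wedge dx\wedge dx$ terms, arising partly from $dg_{ij}$ and partly from the Berwald piece $g_{ij}B^i{}_{kl}\delta y^l\wedge dx^k\wedge dx^j$, I would collect and, invoking the $h$-metricity of $g_{ij}$ together with the symmetry $B^s{}_{jk}=B^s{}_{kj}$, reduce to the antisymmetrized coefficient $g_{is}B^s{}_{jk}-g_{js}B^s{}_{ik}$. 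This reproduces exactly the expression for $d\theta$ displayed immediately before the statement.

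The concluding step is purely linear-algebraic. At each point of $\widetilde{TM}$ the exterior forms $dx^i\wedge dx^j\wedge dx^k$ (for $i<j<k$) and $\delta y^k\wedge dx^j\wedge dx^i$ are linearly independent elements of $\Lambda^3$ of the cotangent space, since $dx$ and $\delta y$ together form the adapted cobasis. Consequently $d\theta=0$ holds identically if and only if each coefficient $d$-tensor vanishes separately, giving $R_{ijk}+R_{jki}+R_{kij}=0$ and $g_{is}B^s{}_{jk}-g_{js}B^s{}_{ik}=0$, which are precisely the asserted equations. I expect the main obstacle to be the index bookkeeping in the second paragraph: one must organize the four families of terms by form-type and apply the total symmetry of $C_{ijk}$ together with metricity and the symmetry of the Berwald coefficients to cancel the superfluous cross terms and to bring the surviving $\delta y\wedge dx\wedge dx$ coefficient into exactly the antisymmetrized shape $g_{is}B^s{}_{jk}-g_{js}B^s{}_{ik}$; the linear-independence argument itself is then immediate.
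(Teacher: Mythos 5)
Your overall route is exactly the paper's: the proof in the text consists of nothing more than computing $d\theta$ from (2.8.3) and the lemma giving $d(\delta y^i)$ (formula (2.4.9)), arriving at the display (2.8.4), and reading off the vanishing of the two coefficient $d$-tensors; your use of the total symmetry of $C_{ijk}$ to kill the $\delta y\wedge\delta y\wedge dx$ terms and your closing linear-independence argument in the adapted cobasis are precisely what is done (or left implicit) there.

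There is, however, a genuine gap in the one step you yourself flag as delicate: the reduction of the mixed terms. The contribution of $dg_{ij}$ to the $\delta y\wedge dx\wedge dx$ part is $\frac12\left(\dfrac{\delta g_{ij}}{\delta x^k}-\dfrac{\delta g_{ik}}{\delta x^j}\right)\delta y^i\wedge dx^j\wedge dx^k$, and it cannot be disposed of by ``$h$-metricity'': the relation $g_{ij|k}=0$ for $C\Gamma(N)$ expresses $\dfrac{\delta g_{ij}}{\delta x^k}$ through the coefficients $F^s{}_{jk}$ of (2.6.1), not through the Berwald coefficients $B^s{}_{jk}$ of the evolution connection, so metricity together with $B^s{}_{jk}=B^s{}_{kj}$ does not land on $g_{is}B^s{}_{jk}-g_{js}B^s{}_{ik}$. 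Worse, if you bridge the two via $P^i{}_{jk}=B^i{}_{jk}-F^i{}_{jk}$ and the total symmetry of $P_{ijk}$ asserted in Section 2.6, you obtain $\dfrac{\delta g_{ij}}{\delta x^k}-\dfrac{\delta g_{ik}}{\delta x^j}=g_{sj}B^s{}_{ik}-g_{sk}B^s{}_{ij}$, which cancels the Berwald piece identically; pushed to the end, your argument would make the mixed coefficient of $d\theta$ vanish, contradicting the very display you aim to reproduce. The honest bookkeeping yields the mixed part in the invariant form $\frac12\left(g_{ij\|k}-g_{ik\|j}\right)\delta y^i\wedge dx^j\wedge dx^k$, where $\|$ is the $h$-covariant derivative of the Berwald connection $B\Gamma(N)$, with $g_{ij\|k}=\dfrac{\delta g_{ij}}{\delta x^k}-g_{sj}B^s{}_{ik}-g_{is}B^s{}_{jk}$; the display (2.8.4), stated in the paper without computation, coincides with this only when the $\dfrac{\delta g_{ij}}{\delta x^k}$-terms are tacitly dropped. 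So to close your proof you must either carry the $\delta g$-terms through and present the second condition as $g_{ij\|k}-g_{ik\|j}=0$, or justify separately — and not by metricity of $C\Gamma(N)$ — why for the evolution connection of $\Sigma_F$ the antisymmetrized horizontal derivative of $g_{ij}$ reproduces, rather than cancels, the Berwald expression.
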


The space $H^{2n}=\left(\widetilde{TM},\G, \F \right)$ is called the almost Hermitian model of the Finslerian Mechanical System $\Sigma_F$.

\medskip

\noindent {\bf Remark.} We can study the space $H^{2n}=\left( \widetilde{TM},\G, \P \right)$ by similar way.

One can use the model $H^{2n}=\left(\widetilde{TM},\G, \F \right)$ to study the geometrical theory of Finslerian Mechanical system $\Sigma_F=\left( M, F(x,y), Fe(x,y)\right)$. For instance, the Einstein equations of the pseudo Riemannian space $(\widetilde{TM}, \G)$ can be considered as the Einstein equations of the Finslerian Mechanical system $\Sigma_F$.

\medskip

\noindent {\it Remark.} G.S. Asanov showed, \cite{As}, that the metric $\G$ given by the lift (2.8.1) does not satisfy the principle of the Post-Newtonian calculus. This is due to the fact that the horizontal and vertical terms of the metric $\G$ do not have the same physical dimensions. This is the reason for the author to introduce a new lift, \cite{MHS}, of the fundamental tensor $g_{ij}(x,y)$ of $\Sigma_F$ that can be used in a gauge theory. This lift is similar with that introduced in section 2.1, adapted to $\Sigma_R$.

It is expressed by
\begin{equation}
\widetilde{G}(x,y)=g_{ij}(x,y)dx^i\otimes dx^j+\dfrac{a^2}{F^2}g_{ij}(x,y)\delta y^i\otimes \delta y^j,
\end{equation}
where $\delta y^i=\stackrel{\circ}{\delta}\! y^i+\dfrac{1}{4}\dfrac{\partial F^i}{\partial y^j}dx^j$ and $a>0$ is a constant imposed by applications. This is to preserve the physical dimensions to the both terms of $\widetilde{G}$.

Let us consider also the tensor field on $\widetilde{TM}$
\begin{equation}
\widetilde F=-\dfrac{F}{a}\dfrac{\partial}{\partial y^i}\otimes dx^i+\dfrac{a}{F} \dfrac{\delta}{\delta x^i}\otimes \delta y^i
\end{equation}
and the $2$-form
\begin{equation}
\widetilde{\theta}=\dfrac{a}{F}\theta.
\end{equation}

Then, we have:
\begin{theorem}${}$

$1^{\circ}$ The triple $\left(\widetilde{TM},\widetilde{\G},\widetilde{\F}\right)$ is an almost Hermitian space.

$2^{\circ}$ The $2$-form $\widetilde{\theta}$ given by (2.8.8) is the almost symplectic structure determined by $\left(\widetilde{\G},\widetilde{\F}\right)$.

$3^{\circ}$ $\widetilde{\theta}$ is conformal to $\theta$.
\end{theorem}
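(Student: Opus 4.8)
The plan is to verify the three claims by direct computation in the adapted basis, reducing everything to the already-established almost Kähler structure $(\widetilde{\G},\widetilde{\F})$ from the earlier Theorem in this section and to the conformal rescaling by the positive scalar $a/F$. Throughout I would exploit that $F>0$ on $\widetilde{TM}$, so that $a/F$ is a nowhere-vanishing smooth function, and that the Sasaki-type lift $\G$ together with $\F$ form an almost Hermitian structure with associated $2$-form $\theta=g_{ij}\,\delta y^i\wedge dx^j$ (Theorem~2.8.5).

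For part $1^\circ$, I would first check that $\widetilde{\F}$ is an almost complex structure. From the defining formula
\begin{equation*}
\widetilde{\F}=-\dfrac{F}{a}\dfrac{\partial}{\partial y^i}\otimes dx^i+\dfrac{a}{F}\dfrac{\delta}{\delta x^i}\otimes\delta y^i,
\end{equation*}
a direct application to the adapted basis gives $\widetilde{\F}\bigl(\tfrac{\delta}{\delta x^i}\bigr)=-\tfrac{F}{a}\tfrac{\partial}{\partial y^i}$ and $\widetilde{\F}\bigl(\tfrac{\partial}{\partial y^i}\bigr)=\tfrac{a}{F}\tfrac{\delta}{\delta x^i}$, whence the scalar factors $F/a$ and $a/F$ cancel and $\widetilde{\F}\circ\widetilde{\F}=-I$. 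Then I would show the Hermitian compatibility $\widetilde{\G}(\widetilde{\F}X,\widetilde{\F}Y)=\widetilde{\G}(X,Y)$: on horizontal vectors the factor $a/F$ from $\widetilde{\F}$ meets the weight $a^2/F^2$ carried on the vertical block of $\widetilde{\G}$, producing exactly the horizontal term $g_{ij}$, and symmetrically on vertical vectors, so the rescalings balance. This is the computation that must be carried out honestly but is routine given the explicit block forms.

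For part $2^\circ$, I would compute the associated $2$-form of $(\widetilde{\G},\widetilde{\F})$, namely $X,Y\mapsto\widetilde{\G}(\widetilde{\F}X,Y)$. Evaluating on the adapted basis and using $\widetilde{\G}=g_{ij}dx^i\otimes dx^j+\tfrac{a^2}{F^2}g_{ij}\delta y^i\otimes\delta y^j$, the horizontal input is rotated by $a/F$ into the vertical block weighted by $a^2/F^2$, leaving a net factor $a/F$ multiplying $g_{ij}\delta y^i\wedge dx^j$; recalling $\theta=g_{ij}\delta y^i\wedge dx^j$ yields $\widetilde{\theta}=\tfrac{a}{F}\theta$, matching~(2.8.8). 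Part $3^\circ$ is then immediate: by definition $\widetilde{\theta}=(a/F)\theta$ with $a/F$ a strictly positive smooth function on $\widetilde{TM}$, which is precisely the statement that $\widetilde{\theta}$ is conformal to $\theta$.

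The main obstacle I anticipate is not conceptual but bookkeeping: one must keep careful track of which $\delta y^i$ is meant, since in the $\Sigma_F$ setting $\delta y^i=\stackrel{\circ}{\delta}\!y^i+\tfrac14\tfrac{\partial F^i}{\partial y^j}dx^j$ involves the evolution nonlinear connection rather than the pure Cartan one. As long as all objects $\G$, $\F$, $\widetilde{\G}$, $\widetilde{\F}$, $\theta$ are expressed in the same adapted cobasis $(dx^i,\delta y^i)$, the homogeneity degree $0$ of $g_{ij}$ and the scalar nature of $F$ guarantee that the factors $a/F$ and $a^2/F^2$ combine cleanly, and no cross terms between horizontal and vertical blocks survive. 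I would therefore organize the proof as: (i) confirm $\widetilde{\F}^2=-I$ and Hermitian compatibility to establish the almost Hermitian structure; (ii) compute $\widetilde{\theta}$ directly to identify it with $(a/F)\theta$; (iii) read off conformality from the positivity of $a/F$.
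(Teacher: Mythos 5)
Your proposal is correct and follows exactly the route the paper intends: the paper states this theorem without proof (as it does for the analogous Theorem 2.7.3 in Part I), and the intended verification is precisely your direct adapted-basis computation showing $\widetilde{\F}^2=-I$, the cancellation of the factors $F/a$ and $a/F$ against the weight $a^2/F^2$ in $\widetilde{\G}$, the identification $\widetilde{\G}(\widetilde{\F}X,Y)$ with $\dfrac{a}{F}\,g_{ij}\,\delta y^i\wedge dx^j=\dfrac{a}{F}\,\theta$, and conformality from the positivity of $a/F$. Your bookkeeping caveat about which $\delta y^i$ is in play is well taken, since the paper itself wobbles on the sign of the $\dfrac14\dfrac{\partial F^i}{\partial y^j}dx^j$ term between (2.8.2) and the line preceding (2.8.6).
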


The space $H^{2n}=\left(\widetilde{TM},\widetilde{\G},\widetilde{\F}\right)$ can be used to study the geometrical theory of the Finslerian mechanical system $\Sigma_F$, too.

\newpage

\chapter{Lagrangian Mechanical systems}\index{Analytical Mechanics of! Lagrangian systems}

A natural extension of the notion of the Finslerian mechanical system is that of the Lagrangian mechanical system. It is defined as a triple $\Sigma_L=(M,L(x,y),Fe(x,y))$ where: $M$ is a real $n-$dimensional $C^\9$ manifold called the configuration space; $L(x,y)$ is a regular Lagrangian with the property that the pair $L^n=(M,L(x,y))$ is a Lagrange space; $Fe(x,y)$ is an a priori given vertical vector field on the velocity space $TM$ called the external forces. The number $n$ is the number of freedom degree of $\Sigma_L$. The equations of evolution, or fundamental equations of Lagrangian mechanical system $\Sigma_L$ are the Lagrange equations:
$$\dfrac{d}{dt}\dfrac{\pp L}{\pp y^i}-\dfrac{\pp L}{\pp x^i}=F_i(x,y),\ \ y^i=\dfrac{dx^i}{dt}\ \ (i=1,2,...,n)\leqno(I)$$ where $F_i$ are the covariant components of the external forces $Fe$. These equations determine the integral curves of a canonical semispray $S$. Thus, the geometrical theory of the semispray $S$ is the geometrical theory of the Lagrangian mechanical system $\Sigma_L$. The Lagrangian $L(x,y)$ and the external forces $Fe(x,y)$ do not explicitly depend on the time $t$. Therefore $\Sigma_L$ is a scleronomic Lagrangian mechanical system.

\setcounter{section}{0}
\section{Lagrange Spaces. Preliminaries}
\setcounter{equation}{0}\setcounter{theorem}{0}\setcounter{definition}{0}\setcounter{proposition}{0}\setcounter{remark}{0}

Let $L^n=(M,L(x,y))$ be a Lagrange space (see ch. 2, part I). $L:TM\to R$ being a regular Lagrangian for which
\begin{equation}
g_{ij}(x,y)=\dfrac12\dot{\pp}_i\dot{\pp}_j L(x,y)
\end{equation}
is the fundamental tensor. Thus $g_{ij}(x,y)$ is a covariant of order 2 symmetric $d-$tensor field of constant signature and non singular:
\be
\det(g_{ij}(x,y))\neq 0\ {\rm{on\ }}\wt{TM}=TM\setminus\{0\}.
\ee

The Euler-Lagrange equations of the space $L^n$ are:
\begin{equation}
\dfrac{d}{dt}\dfrac{\pp L}{\pp y^i}-\dfrac{\pp L}{\pp x^i}=0,\ y^i=\dfrac{dx^i}{dt}.
\end{equation}

As we know, the system of differential equations (3.1.3) can be written in the equivalent form:
\be
\dfrac{d^2 x^i}{dt^2}+2\overset{\circ}{G^i}\(x,\dfrac{dx}{dt}\)=0
\ee
with
\be
2\overset{\circ}G^i=\dfrac12 g^{is}\{(\dot{\pp}_s\pp_h L)y^h-\pp_s L\}\tag{3.1.4'}
\ee
$\(\dot{\pp}_i=\dfrac{\pp}{\pp y^i},\pp_i=\dfrac{\pp}{\pp x^i}\)$.

$\overset{\circ}G^i$ are the coefficients of a semispray $\overset{\circ}S$:
\be
\overset{\circ}{S}=y^i\pp_i-2\overset{\circ}G^i(x,y)\dot{\pp}_i
\ee
and $\overset{\circ}S$ depend on the space $L^n$, only.

$\oci{S}$ is called the canonical semispray of Lagrange space $L^n$.

The energy of the Lagrangian $L(x,y)$ is:
\be
{\cal E}_L=y^i\dot{\pp}_i L-L.
\ee

In the chapter 2, part I, we prove the Law of conservation energy:
\begin{theorem}
Along the solution curves of Euler-Lagrange equations $(3.1.3)$ the energy ${\cal E}_L$ is conserved.
\end{theorem}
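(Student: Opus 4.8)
The plan is to show that the total time derivative of the energy $\mathcal{E}_L$ vanishes whenever the curve $c(t)=(x^i(t))$ satisfies the Euler--Lagrange equations (3.1.3). First I would compute $\dfrac{d\mathcal{E}_L}{dt}$ directly from the definition (3.1.6), namely $\mathcal{E}_L = y^i\dot{\pp}_i L - L$ along the extension $\widetilde{c}$ with $y^i=\dfrac{dx^i}{dt}$. Differentiating gives
\begin{equation}
\dfrac{d\mathcal{E}_L}{dt}=\dfrac{dy^i}{dt}\dot{\pp}_i L + y^i\dfrac{d}{dt}\(\dot{\pp}_i L\) - \dfrac{dL}{dt}.\nonumber
\end{equation}
The key observation is that the last term expands by the chain rule as $\dfrac{dL}{dt}=\pp_i L\,\dfrac{dx^i}{dt}+\dot{\pp}_i L\,\dfrac{dy^i}{dt}=y^i\pp_i L+\dfrac{dy^i}{dt}\dot{\pp}_i L$, so that the two copies of $\dfrac{dy^i}{dt}\dot{\pp}_i L$ cancel.

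After this cancellation the derivative reduces to
\begin{equation}
\dfrac{d\mathcal{E}_L}{dt}=y^i\dfrac{d}{dt}\(\dot{\pp}_i L\)-y^i\pp_i L = y^i\(\dfrac{d}{dt}\dfrac{\pp L}{\pp y^i}-\dfrac{\pp L}{\pp x^i}\).\nonumber
\end{equation}
This is exactly the contraction of the velocity $y^i$ with the Euler--Lagrange operator $E_i(L)=\dfrac{\pp L}{\pp x^i}-\dfrac{d}{dt}\dfrac{\pp L}{\pp y^i}$, up to a sign; indeed one recovers the relation $\dfrac{d\mathcal{E}_L}{dt}=-y^i E_i(L)$, which is the analogue of Theorem 2.3 (formula (2.7)) of Chapter 2, Part I, now read in the present notation. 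I would then invoke the hypothesis that $c$ is a solution curve of the Euler--Lagrange equations (3.1.3), so that each $E_i(L)=0$ along $c$, whence $\dfrac{d\mathcal{E}_L}{dt}=0$ and $\mathcal{E}_L$ is constant along $c$.

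The computation is entirely routine; the only point requiring a little care is organizing the chain-rule expansion so that the cross term cancels cleanly, and making explicit that $\dfrac{d}{dt}$ denotes the total derivative along $\widetilde{c}$ (treating $x^i$ and $y^i$ as functions of $t$ with $y^i=\dot{x}^i$), not a partial derivative on $TM$. There is no genuine obstacle here: the result is a direct consequence of the structure of the energy function and the Euler--Lagrange equations, and in fact the identity $\dfrac{d\mathcal{E}_L}{dt}=-y^iE_i(L)$ holds for \emph{every} smooth curve, the conservation being merely the specialization to extremals. I would present the proof in two lines of display math followed by the one-sentence invocation of (3.1.3), mirroring the proof of Theorem 2.3 already given in the first part of the monograph.
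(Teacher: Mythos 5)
Your proof is correct and is essentially the paper's own argument: the monograph proves this statement by invoking Theorem 2.3 of Chapter 2, Part I, i.e.\ the identity $\dfrac{d{\cal E}_L}{dt}=-\dfrac{dx^i}{dt}E_i(L)$ valid along every smooth curve, which is exactly the computation you carry out, followed by the same specialization $E_i(L)=0$ on extremals. Your chain-rule cancellation and sign bookkeeping are accurate, so nothing needs to change.
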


The following properties hold:
\begin{itemize}
  \item[$1^\circ$] The canonical nonlinear connection $\oci{N}$ of Lagrange space $L^n$ has the coefficients
  \be
  \oci{N}^i_j=\pp_j\oci{G}^i
  \ee
  \item[$2^\circ$] $\oci{N}$ determine a differentiable distribution on the velocity space $TM$ supplementary to the vertical distribution $V$:
  \be
  T_u TM=\oci{N}_u\oplus V_u,\ \ \forall u\in TM.
  \ee
  \item[$3^\circ$] An adapted basis to (3.1.8) is $(\oci{\de}_i,\dot{\pp}_i)_u$, $(i=1,...,n)$, where
  \be
  \oci{\de}_i=\pp_i-\oci{N}^j_i(x,y)\dot{\pp}_j
  \ee
  and an adapted cobasis $(dx^i,\oci{\de}y^i)_u$, $(i=1,...,n)$ with
  \be
  \oci{\de}y^i=dy^i+N^i_j(x,y)dx^j\tag{3.1.9'}
  \ee
  \item[$4^\circ$] The canonical metrical $\oci{N}-$connection $C\Gamma(\oci{N})=(\oci{L}^i_{jk},\oci{C}^i_{jk})$ has the coefficients expressed by the generalized Christoffel symbols:
      \be
      \begin{array}{l}
      \oci{L}^i_{jh}=\dfrac12 g^{is}(\oci{\de}_j g_{sh}+\oci{\de}_h g_{js}-\oci{\de}_s g_{jh})\\ \\
      \oci{C}^i_{jh}=\dfrac12 g^{is}(\dot{\pp}_j g_{sh}+\dot{\pp}_h g_{js}-\dot{\pp}_s g_{jh})
      \end{array}
      \ee
  \item[$5^\circ$] The Cartan 1-form of $L^n$ is
  \be
  \oci{\oo}=\dfrac12(\dot{\pp}_i L)dx^i
  \ee
  \item[$6^\circ$] The Cartan-Poincar\'{e} 2-form of $L^n$ is
  \be
  \oci{\theta}=d\oci{\oo}=g_{ij}(x,y)\oci{\de}y^i\wedge dx^j
  \ee
  \item[$7^\circ$] The 2-form $\oci{\theta}$ determine a symplectic structure on the velocity manifold $TM$.
\end{itemize}

\medskip

\noindent{\bf Example 3.1.1.} The function
\be
L(x,y)=mc\g_{ij}(x)y^i y^j+\dfrac{2e}{m}A_i(x)y^i+{\cal U}(x)
\ee
with $m,c,e$ the known physical constants, $\g_{ij}(x)$ are the gravitational potentials, $\g_{ij}(x)$ being a Riemannian metric, $A_i(x)$ are the electromagnetic potentials and ${\cal U}(x)$ is a potential function. Thus, $L^n=(M,L(x,y))$ is a Lagrange space. It is the Lagrange space of electrodynamics (cd. ch. 2, part I).

\section{Lagrangian Mechanical systems, $\Sigma_L$}
\setcounter{equation}{0}\setcounter{theorem}{0}\setcounter{definition}{0}\setcounter{proposition}{0}\setcounter{remark}{0}

\begin{definition}
A Lagrangian mechanical system is a triple:
\be
\Sigma_L=(M,L(x,y),Fe(x,y)),
\ee
where $L^n=(M,L(x,y))$ is a Lagrange space, $Fe(x,y)$ is an a priori given vertical vector field:
\be
Fe(x,y)=F^i(x,y)\dot{\pp}_i,
\ee
the number $n=\dim M$ is the number of freedom degree of $\Sigma_L$; the manifold $M$ is real $C^\9$ differentiable manifold called the configuration space; $Fe(x,y)$ is the external forces and $F^i(x,y)$ are the contravariant components of $Fe$. Of course, $F^i(x,y)$ is a vertical vector field and
\be
F_i(x,y)=g_{ij}(x,y)F^j(x,y)
\ee
are the covariant components of $Fe$. $F_i(x,y)$ is a $d-$covector field.

The 1-form:
\be
\sigma=F_i(x,y)dx^i
\ee
is a vertical 1-form on the velocity space $TM$.
\end{definition}

The Riemannian mechanical systems $\Sigma_{\cal R}$ and the Finslerian mechanical systems $\Sigma_F$ are the particular Lagrangian mechanical systems $\Sigma_L$.

\begin{remark}
The notion of Lagrangian mechanical system $\Sigma_L$ is different of that of ``Lagrangian mechanical system'' defined by Joseph Klein \cite{Klein1}, which is a Riemannian conservative mechanical system.
\end{remark}

The fundamental equations of $\Sigma_L$ are an extension of the Lagrange equations of a Finslerian mechanical system $\Sigma_F$, from the previous chapter.

So, we introduce the following Postulate:

\medskip

\noindent{\bf Postulate.} {\it The evolution equations of the Lagrangian mechanical system $\Sigma_L=(M,L,Fe)$ are the following Lagrange equations:
\be
\dfrac{d}{dt}\(\dfrac{\pp L}{\pp y^i}\)-\dfrac{\pp L}{\pp x^i}=F_i(x,y),\ y^i=\dfrac{dx^i}{dt}.
\ee}

But the both members of the Lagrange equations (3.2.5) are $d-$co\-vec\-tors. Consequently, we have:

\begin{theorem}
The Lagrange equations $(3.2.5)$ of a Lagrangian mechanical system $\Sigma_L=(M,L,Fe)$ have a geometrical meaning.
\end{theorem}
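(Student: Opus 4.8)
The plan is to prove the assertion by showing that both members of equation (3.2.5) are the local components of $d$-covector fields on $\widetilde{TM}$, so that the equation is invariant under the coordinate changes (1.1.1) induced on $TM$ by coordinate changes on the base manifold $M$. Invariance under (1.1.1) is precisely what is meant by ``having a geometrical meaning'': the equation then defines an intrinsic object field along the evolution curves, independent of the chosen chart.

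First I would rewrite the left-hand side of (3.2.5) as
$$\dfrac{d}{dt}\dfrac{\partial L}{\partial y^i}-\dfrac{\partial L}{\partial x^i}=-E_i(L),$$
where $E_i(L)=\dfrac{\partial L}{\partial x^i}-\dfrac{d}{dt}\dfrac{\partial L}{\partial y^i}$ is the Euler--Lagrange operator of the Lagrange space $L^n=(M,L)$, evaluated along the extension $\widetilde{c}$ of the curve $c$. By Theorem 2.2.2, property $1^\circ$, $E_i(L)$ is a $d$-covector field; hence under a change (1.1.1) it obeys the rule $E_j(L)=\dfrac{\partial\widetilde{x}^i}{\partial x^j}\widetilde{E}_i(\widetilde{L})$, and the same transformation law holds for $-E_i(L)$. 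The one delicate point in the whole argument lies here: although neither $\dfrac{\partial L}{\partial x^i}$ nor $\dfrac{d}{dt}\dfrac{\partial L}{\partial y^i}$ transforms tensorially (each produces extra terms involving the second derivatives of the coordinate transformation), these non-tensorial contributions cancel exactly in their difference. This cancellation is the substance of the cited theorem, and it is the only verification that is not purely formal; I would invoke Theorem 2.2.2 rather than redo it.

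Next I would treat the right-hand side. By the definition of the Lagrangian mechanical system $\Sigma_L$, the external forces $Fe=F^i\dot{\partial}_i$ form a vertical vector field on $TM$, so the contravariant components $F^i(x,y)$ constitute a $d$-vector field, transforming by $\widetilde{F}^i=\dfrac{\partial\widetilde{x}^i}{\partial x^j}F^j$. Since the fundamental tensor $g_{ij}(x,y)$ of $L^n$ is a symmetric $(0,2)$ $d$-tensor field, the covariant components $F_i=g_{ij}F^j$ introduced in (3.2.3) are a $d$-covector field, obeying the very same rule $F_j=\dfrac{\partial\widetilde{x}^i}{\partial x^j}\widetilde{F}_i$ as the left-hand side.

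Finally, since both members of (3.2.5) transform by one and the same factor $\dfrac{\partial\widetilde{x}^i}{\partial x^j}$ under (1.1.1), the equation written in one chart, $\dfrac{d}{dt}\dfrac{\partial L}{\partial y^i}-\dfrac{\partial L}{\partial x^i}=F_i$, is equivalent to the corresponding equation $\dfrac{d}{dt}\dfrac{\partial\widetilde{L}}{\partial\widetilde{y}^i}-\dfrac{\partial\widetilde{L}}{\partial\widetilde{x}^i}=\widetilde{F}_i$ in any overlapping chart. Hence the Lagrange equations are independent of the choice of local coordinates, which establishes their geometrical meaning. I expect no serious obstacle beyond the covariance of $E_i(L)$; the only care needed is to express the invariance through the induced transformation (1.1.1), rather than through an arbitrary coordinate change on $TM$, since it is exactly this induced structure under which $E_i(L)$ and $F_i$ are covariant.
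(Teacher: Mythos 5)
Your proposal is correct and takes essentially the same route as the paper, which disposes of the theorem in one line by observing that both members of (3.2.5) are $d$-covectors: the left-hand side being $-E_i(L)$, covariant by Theorem 2.2.2 of Part I, and $F_i=g_{ij}F^j$ a $d$-covector by the definition of $\Sigma_L$. Your write-up merely makes explicit the transformation laws under the induced changes (1.1.1) that the paper leaves implicit.
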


\begin{theorem}
The trajectories without external forces of the Lagrangian mechanical system $\Sigma_L=(M,L,Fe)$ are the geodesics of the Lagrange space $L^n=(M,L)$.
\end{theorem}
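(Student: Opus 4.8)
The plan is to specialize the evolution equations (3.2.5) to the case of vanishing external forces and then to recognize the resulting system as the one that governs the geodesics of $L^n$. First I would impose $Fe\equiv 0$; by (3.2.2)--(3.2.3) this is equivalent to the vanishing of all covariant components, $F_i(x,y)=0$. Substituting into the Lagrange equations (3.2.5) leaves
\[
\dfrac{d}{dt}\left(\dfrac{\partial L}{\partial y^i}\right)-\dfrac{\partial L}{\partial x^i}=0,\quad y^i=\dfrac{dx^i}{dt},
\]
which is precisely the system of Euler--Lagrange equations (3.1.3) of the Lagrange space $L^n=(M,L)$.

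Next I would invoke the material recalled in Section 3.1: the Euler--Lagrange equations (3.1.3) can be put into the normal form (3.1.4)--(3.1.4'), namely $\dfrac{d^2x^i}{dt^2}+2\oci{G}^i\left(x,\dfrac{dx}{dt}\right)=0$ with the coefficients (3.1.4'), so that their solution curves are exactly the integral curves of the canonical semispray $\oci{S}$ of $L^n$. Since these integral curves are, by definition, the geodesics of the Lagrange space $L^n$, the trajectories of $\Sigma_L$ obtained for $Fe\equiv 0$ coincide with the geodesics of $L^n$. This mirrors, step for step, the argument already used for the Finslerian system $\Sigma_F$ (Theorem 2.2.2), the Finsler energy $F^2$ being replaced here by the regular Lagrangian $L$.

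There is essentially no analytic obstacle in this statement: the whole content is the direct identification of two systems of ordinary differential equations once the forcing term is switched off. The only point that genuinely deserves attention is that both sides of (3.2.5) are $d$-covector fields, so that setting $Fe\equiv 0$ is a coordinate-free operation and the reduced equations (3.1.3) retain their intrinsic geometric meaning (Theorem 3.2.1). As this has already been established, the conclusion follows immediately, and I would record the proof as a one-line reduction rather than a computation.
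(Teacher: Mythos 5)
Your reduction is exactly the paper's argument: the author's entire proof is the remark that $F_i(x,y)\equiv 0$ collapses the Lagrange equations (3.2.5) to the Euler--Lagrange equations of $L^n$, whose solution curves are by definition the geodesics of the Lagrange space. Your version merely spells out the intermediate identification with the canonical semispray, so it is correct and takes essentially the same route.
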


Indeed, $F_i(x,y)\equiv 0$ implies the previous affirmations.

For a regular Lagrangian $L(x,y)$ (in this case (3.1.2) holds), the Lagrange equations (3.2.5) are equivalent to the second order differential equations (SODE):
\be
\dfrac{d^2 x^i}{dt^2}+2\oci{G}^i\(x,\dfrac{dx}{dt}\)=\dfrac12 F^i\(x,\dfrac{dx}{dt}\),
\ee
where the functions $\oci{G}^i(x,y)$ are the local coefficients (3.1.4') of canonical semispray $\oci{S}$ of Lagrange space $L^n=(M,L(x,y))$.

The equations (3.2.6) are called fundamental equations of $\Sigma_L$, too.

\section{The evolution semispray of $\Sigma_L$}\index{Semisprays! of mechanical system $\Sigma_L$}
\setcounter{equation}{0}\setcounter{theorem}{0}\setcounter{definition}{0}\setcounter{proposition}{0}\setcounter{remark}{0}

The Lagrange equations (3.2.6) determine a semispray $S$ which depend on the Lagrangian mechanical system $\Sigma_L$, only.

Indeed, the vector field on $TM$:
\be
S=y^i\pp_i-2G^i(x,y)\dot{\pp}_i
\ee
with
\be
2G^i(x,y)=2\oci{G}^i(x,y)-\dfrac12 F^i(x,y)
\ee
has the property $JS=\C$. So it is a semispray depending only on $\Sigma_L$.

\begin{theorem}[Miron]
For a Lagrangian mechanical system $\Sigma_L$ the following properties hold:
\begin{itemize}
  \item[$1^\circ$] The semispray $S$ is given by
  \be
  S=\oci{S}+\dfrac12 Fe\tag{3.3.1'}
  \ee
  \item[$2^\circ$] $S$ is a dynamical system on the velocity space $TM$.
  \item[$3^\circ$] The integral curves of $S$ are the evolution curves $(3.2.6)$ of $\sil$.
\end{itemize}
\end{theorem}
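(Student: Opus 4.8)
The plan is to follow the pattern already used for the Riemannian system in Theorem 1.3.1 and for the Finslerian system in Theorem 2.3.1, since $\Sigma_L$ generalizes both. The three assertions amount to a decomposition formula, a structural property, and an integration statement, and I would dispatch them in that order. I begin with $1^\circ$ by a direct substitution: starting from the local form $(3.3.1)$, namely $S=y^i\pp_i-2G^i\dot{\pp}_i$, I insert the coefficients $(3.3.2)$, $2G^i=2\oci{G}^i-\frac{1}{2}F^i$, and regroup,
\[
S=y^i\pp_i-\Bigl(2\oci{G}^i-\tfrac{1}{2}F^i\Bigr)\dot{\pp}_i=\bigl(y^i\pp_i-2\oci{G}^i\dot{\pp}_i\bigr)+\tfrac{1}{2}F^i\dot{\pp}_i=\oci{S}+\tfrac{1}{2}Fe,
\]
which is exactly $(3.3.1')$. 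Since the canonical semispray $\oci{S}$ of the Lagrange space $L^n$ is globally defined on $TM$ and depends only on $L$ (Section 3.1), while $Fe$ is by hypothesis a globally defined vertical vector field on $TM$, the sum $S$ is a well-defined vector field on the whole velocity space.

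For $2^\circ$ I would invoke the characterization of a semispray through the tangent structure, $J(S)=\C$. Because $J$ is ${\cal F}(TM)$-linear and $\oci{S}$ is a semispray, one has $J(\oci{S})=\C$; and because $Fe$ is vertical, $Fe\in VTM=\ker J$, so $J(Fe)=0$. Hence $J(S)=J(\oci{S})+\frac{1}{2}J(Fe)=\C$, so $S$ is a semispray, i.e. in Klein's terminology a dynamical system on $TM$. Its dependence on $\Sigma_L$ alone is immediate from $(3.3.1')$: $\oci{S}$ is determined by $L^n$ and $Fe$ is the external force field of $\Sigma_L$.

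Finally, for $3^\circ$ I would write the integral-curve system of $S$ in natural coordinates and reduce it. For a curve $t\mapsto(x^i(t),y^i(t))$ the condition $\frac{d}{dt}=S$ gives $\frac{dx^i}{dt}=y^i$ and $\frac{dy^i}{dt}=-2G^i=-2\oci{G}^i+\frac{1}{2}F^i$. Substituting $y^i=\frac{dx^i}{dt}$ into the second group yields precisely the fundamental equations $(3.2.6)$, the evolution curves of $\Sigma_L$.

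I expect no serious obstacle here, as the computation is routine once the vertical nature of $Fe$ is exploited to kill the $J$-image. The one point genuinely deserving care is the equivalence between the second-order form $(3.2.6)$ and the Lagrange equations $(3.2.5)$: this rests on the regularity of $L$, that is the nonsingularity of the fundamental tensor $g_{ij}$, which allows the Euler--Lagrange expression to be solved for the accelerations and repackaged into the coefficients $\oci{G}^i$ via $(3.1.4')$. I would make this equivalence explicit rather than leave it implicit, since it is exactly the place where the Lagrange-space structure enters the proof.
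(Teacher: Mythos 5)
Your proof is correct and follows essentially the same route as the paper: the decomposition $S=\oci{S}+\frac12 Fe$ read off from $(3.3.1)$ and $(3.3.2)$, the semispray property via $J(\oci{S})=\C$ together with $Fe\in\ker J$, and the reduction of the integral-curve system $\frac{dx^i}{dt}=y^i$, $\frac{dy^i}{dt}+2G^i=0$ to the fundamental equations $(3.2.6)$. Your additional care in spelling out $J(Fe)=0$ and in flagging that the equivalence of $(3.2.6)$ with the Lagrange equations rests on the regularity of $L$ (already established via $(3.1.4')$ in the paper) only makes explicit what the paper leaves implicit.
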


In fact, $1^\circ$ derives from the formulas (3.3.1) and (3.3.2).\newline $2^\circ$ $S$ being a vector field on $TM$, compatible with the geometric structure of $TM$, (i.e. $JS=\C$), it is a dynamical system on the manifold $TM$. \newline $3^\circ$ The integral curves of $S$ are determined by the system of differential equations
\be
\dfrac{dx^i}{dt}=y^i,\ \ \dfrac{dy^i}{dt}+2G^i(x,y)=0.
\ee
By means of the expression (3.3.2) of the coefficients $G^i(x,y)$ the system (3.3.4) is coincident to (3.2.6).

The vector field $S$ is called the {\it evolution} (or {\it canonical}) semispray of the Lagrangian mechanical system $\sil$. Exactly as in the case of Riemannian or Finslerian mechanical systems, one can prove:

\begin{theorem} [\cite{BuMi}]
The evolution semispray $S$ is the unique vector field, on the velocity space $TM$, solution of the equation
\be
i_S\oci{\theta}=-d{\cal E}_L+\sigma
\ee
with ${\cal L}$ from (3.1.6) and $\sigma$ from $(3.2.4)$.
\end{theorem}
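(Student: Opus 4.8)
The plan is to verify that the evolution semispray $S$, defined in (3.3.1) with coefficients (3.3.2), satisfies the intrinsic equation (3.3.5), and then to establish uniqueness using the nondegeneracy of the symplectic form $\oci{\theta}$. The key structural fact is the decomposition $S=\oci{S}+\frac12 Fe$ from (3.3.1'), which reduces the verification to two pieces: the contribution of the canonical semispray $\oci{S}$ of the underlying Lagrange space $L^n$, and the contribution of the vertical external-force field $Fe$.

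First I would treat the Lagrange-space part. The canonical semispray $\oci{S}$ is characterized (as in the Finslerian case, Theorem 2.3.2, and the general Klein-type result) by the equation $i_{\oci{S}}\oci{\theta}=-d{\cal E}_L$, where $\oci{\theta}=g_{ij}\oci{\de}y^i\wedge dx^j=d\oci{\oo}$ is the Cartan--Poincar\'{e} symplectic structure (3.1.13) and ${\cal E}_L$ is the energy (3.1.6). I would recall this by computing $i_{\oci{S}}\oci{\theta}$ in the adapted basis $(\oci{\de}_i,\dot{\pp}_i)$ and matching it term-by-term with $-d{\cal E}_L$, using that $\oci{S}=y^i\oci{\de}_i-2\oci{G}^i\dot{\pp}_i$ modulo the nonlinear connection and that the horizontal/vertical components of $d{\cal E}_L$ reproduce exactly the Euler--Lagrange expressions that define $\oci{G}^i$ in (3.1.4').

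Next I would compute $i_{\frac12 Fe}\oci{\theta}$. Since $Fe=F^i\dot{\pp}_i$ is vertical and $\oci{\theta}=g_{ij}\oci{\de}y^i\wedge dx^j$, contraction of a vertical vector against $\oci{\theta}$ kills the $\oci{\de}y$-factor only when paired correctly: one gets $i_{\frac12 Fe}\oci{\theta}=\frac12 F^i g_{ij}\,dx^j=\frac12 F_j\,dx^j=\frac12\sigma$, using the definition (3.2.3) of the covariant components $F_j=g_{ij}F^i$ and the one-form (3.2.4) $\sigma=F_i\,dx^i$. Adding the two contributions by linearity of the interior product gives
\begin{equation}
i_S\oci{\theta}=i_{\oci{S}}\oci{\theta}+\tfrac12\, i_{Fe}\oci{\theta}=-d{\cal E}_L+\tfrac12\sigma .
\end{equation}
A small point to reconcile is the factor: I would check whether the intended $\sigma$ in (3.3.5) absorbs the $\frac12$ (as the external-force normalization in the Lagrange equations (3.2.5)--(3.2.6) suggests), so that the stated form $i_S\oci{\theta}=-d{\cal E}_L+\sigma$ holds with the book's conventions; this is purely a bookkeeping matter tied to the $\frac12 F^i$ appearing in (3.3.2).

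Finally, for uniqueness I would invoke that $\oci{\theta}$ is a symplectic structure on $TM$ (property $7^\circ$ of Section 3.1), hence nondegenerate. If two vector fields $S_1,S_2$ both satisfy (3.3.5), then $i_{S_1-S_2}\oci{\theta}=0$, and nondegeneracy forces $S_1=S_2$. The main obstacle I anticipate is not conceptual but computational: correctly expanding $i_{\oci{S}}\oci{\theta}$ and $d{\cal E}_L$ in the adapted basis while tracking the nonlinear-connection coefficients $\oci{N}^i_j$, since the identification of the horizontal part of $d{\cal E}_L$ with the Euler--Lagrange tensor requires the precise relation between $\oci{G}^i$, $g_{ij}$, and $\oci{N}^i_j$; once that Lagrange-space identity $i_{\oci{S}}\oci{\theta}=-d{\cal E}_L$ is granted (it is the content of the cited Klein-type theorem specialized to $L^n$), the addition of the vertical term and the uniqueness argument are immediate.
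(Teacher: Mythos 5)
Your proposal is correct and follows essentially the argument the paper intends (the text gives no proof of its own, citing \cite{BuMi} and the analogy with the Riemannian and Finslerian cases): decompose $S=\oci{S}+\frac12 Fe$, invoke the Klein-type identity $i_{\oci{S}}\theta=-d{\cal E}_L$ for the canonical semispray of $L^n$, contract the vertical field $Fe$ against the $\oci{\de}y$-factor of $\theta$ to produce $\sigma$, and obtain uniqueness from the nondegeneracy of the symplectic structure. The factor you flagged is a genuine normalization slip in the monograph rather than a gap in your argument: with $\oci{\theta}=g_{ij}\,\oci{\de}y^i\wedge dx^j$ one actually gets $i_S\oci{\theta}=-\frac12 d{\cal E}_L+\frac12\sigma$, so the stated equation holds verbatim for $2\oci{\theta}$ (the normalization used in \cite{BuMi} and matching $\stackrel{\circ}{\omega}=2g_{ij}\stackrel{\circ}{\delta}y^j\wedge dx^i$ of the Finslerian chapter), and since both terms acquire the same common factor your decomposition and uniqueness argument go through unchanged.
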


But $S$ being a solution of the previous equations, we get: $$S(\sil)=d{\cal E}_L(S)=\sigma(S)=F_i y^i=g_{ij}F^i y^j.$$ So, we have:

\begin{theorem}
The variation of energy ${\cal E}_L$ along the evolution curves of mechanical system $\sil$ is given by
\be
\dfrac{d{\cal E}_L}{dt}=F_i\(x,\dfrac{dx}{dt}\)\dfrac{dx^i}{dt}.
\ee
\end{theorem}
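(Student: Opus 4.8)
The plan is to obtain the identity (3.3.6) by a direct computation along the evolution curves, invoking the definition (3.1.6) of the energy together with the Lagrange equations (3.2.5); the argument runs parallel to that of Theorem 2.3 in Part I, Chapter 2. First I would recall that ${\cal E}_L=y^i\dot{\pp}_i L-L$, and evaluate this along a lifted evolution curve $\widetilde{c}$, on which $y^i=\dfrac{dx^i}{dt}$. Differentiating with respect to $t$ and applying the Leibniz rule gives
$$\dfrac{d{\cal E}_L}{dt}=\dfrac{dy^i}{dt}\dot{\pp}_i L+y^i\dfrac{d}{dt}(\dot{\pp}_i L)-\dfrac{dL}{dt}.$$
Next I would expand the last term by the chain rule, $\dfrac{dL}{dt}=\dfrac{\pp L}{\pp x^i}\dfrac{dx^i}{dt}+\dot{\pp}_i L\,\dfrac{dy^i}{dt}=\dfrac{\pp L}{\pp x^i}y^i+\dot{\pp}_i L\,\dfrac{dy^i}{dt}$, using $y^i=\dfrac{dx^i}{dt}$.

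The key step is the cancellation: the term $\dfrac{dy^i}{dt}\dot{\pp}_i L$ from the derivative of $y^i\dot{\pp}_i L$ cancels against $\dot{\pp}_i L\,\dfrac{dy^i}{dt}$ coming from $\dfrac{dL}{dt}$, leaving
$$\dfrac{d{\cal E}_L}{dt}=y^i\left(\dfrac{d}{dt}\dot{\pp}_i L-\dfrac{\pp L}{\pp x^i}\right).$$
Then I would invoke the evolution equations of $\sil$, namely the Lagrange equations (3.2.5), which hold precisely along the evolution curves: $\dfrac{d}{dt}\dot{\pp}_i L-\dfrac{\pp L}{\pp x^i}=F_i(x,y)$. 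Substituting, and reading $y^i=\dfrac{dx^i}{dt}$, yields $\dfrac{d{\cal E}_L}{dt}=y^iF_i=F_i\!\left(x,\dfrac{dx}{dt}\right)\dfrac{dx^i}{dt}$, which is (3.3.6).

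As an alternative I would use the symplectic characterization of the evolution semispray from Theorem 3.3.3, namely $i_S\oci{\theta}=-d{\cal E}_L+\sigma$. Evaluating both sides on $S$ and exploiting the antisymmetry of the $2$-form $\oci{\theta}$, which forces $\oci{\theta}(S,S)=0$, gives $d{\cal E}_L(S)=\sigma(S)$; since $dx^i(S)=y^i$ one has $\sigma(S)=F_i y^i$, and along integral curves $\dfrac{d{\cal E}_L}{dt}=S({\cal E}_L)=F_i\dfrac{dx^i}{dt}$. This is exactly the computation already sketched in the text just before the statement. There is no serious obstacle here: the result is a routine on-shell calculation, and the only points requiring care are the correct sign convention in the Lagrange equations (3.2.5) and the fact that the identity is valid only along the evolution curves (where those equations hold), not for an arbitrary curve in $TM$; in the symplectic route the sole subtlety is the vanishing $\oci{\theta}(S,S)=0$ by skew-symmetry.
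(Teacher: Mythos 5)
Your proposal is correct, and your alternative symplectic argument --- evaluating $i_S\oci{\theta}=-d{\cal E}_L+\sigma$ on $S$, using the skew-symmetry $\oci{\theta}(S,S)=0$ to get $d{\cal E}_L(S)=\sigma(S)=F_iy^i$ --- is precisely the paper's own derivation of the theorem from Theorem 3.3.2. Your primary direct on-shell computation (differentiate ${\cal E}_L=y^i\dot{\pp}_iL-L$, cancel the $\dot{\pp}_iL\,\dfrac{dy^i}{dt}$ terms, and substitute the Lagrange equations (3.2.5)) is equally valid and is the same calculation the paper uses for the analogous Riemannian and Finslerian statements (Theorems 1.3.2 and 2.2.3), so nothing is missing.
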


The external forces field $Fe$ is called dissipative if $g(\C,Fe)=g_{ij}F^i y^j\leq 0$. Thus, the previous theorem implies:

\begin{theorem}
The energy of Lagrange space $L^n=(M,L)$ is decreasing along the evolution curves of the mechanical system $\sil$ if and only if the external forces field $Fe$ is dissipative.
\end{theorem}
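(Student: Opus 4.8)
The plan is to obtain the asserted equivalence as an immediate corollary of the energy-variation formula proved in the preceding theorem. First I would invoke that result, which asserts that along every evolution curve of $\sil$ the energy ${\cal E}_L$ obeys $\dfrac{d{\cal E}_L}{dt}=F_i\(x,\dfrac{dx}{dt}\)\dfrac{dx^i}{dt}$, where the $F_i$ are the covariant components of the external forces and $y^i=\dfrac{dx^i}{dt}$. This reduces the whole statement to analysing the sign of the scalar $F_i y^i$ along the trajectories.

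Next I would identify $F_i y^i$ with the scalar product $g(\C,Fe)$. Using the relation $F_i=g_{ij}F^j$ between the covariant and contravariant components of $Fe$, the right-hand side of the energy formula becomes $g_{ij}(x,y)F^j y^i$. Since the Liouville vector field is $\C=y^i\dot{\pp}_i$ and the external force field is $Fe=F^i\dot{\pp}_i$, this is exactly $g(\C,Fe)$. Hence along any evolution curve one has the pointwise identity $\dfrac{d{\cal E}_L}{dt}=g(\C,Fe)$, which is the crux of the argument.

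The conclusion then follows by unwinding the two definitions. By definition, ${\cal E}_L$ is (non-strictly) decreasing along the evolution curves precisely when $\dfrac{d{\cal E}_L}{dt}\leq 0$ on them, while $Fe$ is dissipative precisely when $g(\C,Fe)=g_{ij}F^i y^j\leq 0$. Because the two expressions coincide at each point, the first inequality holds if and only if the second does, which is the claim.

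The only genuinely delicate point---rather than a true obstacle---is the logical quantification: the energy formula is a statement along curves, whereas the dissipativity condition is a pointwise condition on the velocity space. To bridge the two directions I would use the evolution semispray $S$: by the earlier theorem its integral curves are exactly the evolution curves, and through every point $(x,y)\in\wt{TM}$ there passes such a curve with prescribed velocity, so demanding $\dfrac{d{\cal E}_L}{dt}\leq 0$ along all evolution curves is equivalent to demanding $g(\C,Fe)\leq 0$ at every point of $\wt{TM}$. This makes the equivalence exact and completes the argument.
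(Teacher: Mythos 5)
Your proposal is correct and follows essentially the same route as the paper: the result is deduced directly from the preceding energy-variation theorem $\dfrac{d{\cal E}_L}{dt}=F_i y^i=g_{ij}F^j y^i=g(\C,Fe)$ together with the definition of dissipativity ($g(\C,Fe)\leq 0$), which is exactly how the text argues ("the previous theorem implies"). Your closing remark on bridging the along-curves statement with the pointwise condition via integral curves of the evolution semispray is a careful quantifier check that the paper leaves implicit, but it does not change the argument.
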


Evidently, the semispray $S$ being a dynamical system on the velocity space $TM$ it can be used for study the important problems, as the stability of evolution curves of $\sil$, the equilibrium points etc.

\section{The evolution nonlinear connection of $\sil$}\index{Connections! Nonlinear}
\setcounter{equation}{0}\setcounter{theorem}{0}\setcounter{definition}{0}\setcounter{proposition}{0}\setcounter{remark}{0}

The geometrical theory of the Lagrangian mechanical system $\sil$ is based on the evolution semispray $S$, with the coefficients $G^i(x,y)$ expressed in formula (3.3.2). Therefore, all notions or properties derived from $S$ will be considered belonging to the mechanical system $\sil$. Such that, the {\it evolution nonlinear connection} $N$ of $\sil$ is characterized by the coefficients:
\be
N^i_j(x,y)=\dot{\pp}_j G^i(x,y)=\dot{\pp}\oci{G}(x,y)-\dfrac14\dot{\pp}F^i(x,y)=\oci{N}^i_j(x,y)-\dfrac14\dot{\pp}_j F^i(x,y).
\ee

Since $\dot{\pp}_j F_i$ is a $d-$tensor field, consider its symmetric and skewsymmetric parts:
\be
P_{ij}=\dfrac12 (\dot{\pp}_j F_i+\dot{\pp}_i F_j),\ \ F_{ij}=\dfrac12(\dot{\pp}_j F_i-\dot{\pp}_i F_j)
\ee
The $d-$tensor $F_{ij}$ is the {\it elicoidal tensor} field of $\sil$, \cite{miranabuc}.

The evolution nonlinear connection $N$ allows to determine the dynamic derivative of the fundamental tensor $g_{ij}(x,y)$ of $\sil$:
\be
g_{ij|}=S(g_{ij})-g_{i|s}N^s_j-g_{sj}N^s_i.
\ee

It is not difficult to prove the following formula:
\be
g_{ij|}=\dfrac12 P_{ij}.
\ee
$N$ is metric nonlinear connection if $g_{ij|}=0$. So, we have

\begin{proposition}
The evolution nonlinear connection $N$ is metric if, and only if the $d-$tensor field $P_{ij}$ vanishes.
\end{proposition}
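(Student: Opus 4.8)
The plan is to reduce the equivalence to the already-established identity $(3.4.4)$, namely $g_{ij|}=\dfrac12 P_{ij}$, read together with the defining convention that $N$ is a metric nonlinear connection exactly when $g_{ij|}=0$. Granting $(3.4.4)$, the argument is immediate: since $P_{ij}$ is a genuine $d$-tensor field, the vanishing of $\dfrac12 P_{ij}$ is equivalent to the vanishing of $P_{ij}$ itself, so $g_{ij|}=0$ holds if and only if $P_{ij}=0$. Thus the whole content of the proposition sits in $(3.4.4)$, and the only real task, if one wants a self-contained argument rather than simply invoking $(3.4.4)$, is to exhibit that identity.

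To establish $(3.4.4)$ I would start from the definition $(3.4.3)$ of the dynamical derivative, $g_{ij|}=S(g_{ij})-g_{is}N^s_j-g_{sj}N^s_i$, and insert the canonical decompositions $S=\oci{S}+\dfrac12 Fe$ from $(3.3.1')$ and $N^s_i=\oci{N}^s_i-\dfrac14\dot{\pp}_i F^s$ from $(3.4.1)$. After collecting terms, $g_{ij|}$ splits as the dynamical derivative $\oci{g}_{ij|}$ formed with the canonical pair $(\oci{S},\oci{N})$ of the underlying Lagrange space $L^n$, plus the correction $\dfrac12 F^s\dot{\pp}_s g_{ij}+\dfrac14\bigl(g_{is}\dot{\pp}_j F^s+g_{sj}\dot{\pp}_i F^s\bigr)$. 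Differentiating $F_i=g_{ij}F^j$ gives $\dot{\pp}_j F_i=2C_{ijs}F^s+g_{is}\dot{\pp}_j F^s$, where $2C_{ijk}=\dot{\pp}_k g_{ij}$; substituting this into the correction, the Cartan-tensor contributions $C_{ijs}F^s$ produced by the two $\dfrac14$-terms cancel precisely against $\dfrac12 F^s\dot{\pp}_s g_{ij}=F^s C_{ijs}$, and what survives is the symmetric part $\dfrac12(\dot{\pp}_j F_i+\dot{\pp}_i F_j)=P_{ij}$ of $(3.4.2)$. This clean cancellation is what forces the correction to be $\dfrac12 P_{ij}$ rather than something carrying $C_{ijk}$.

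The main obstacle is the verification that the circle part $\oci{g}_{ij|}$ vanishes, i.e.\ that the canonical nonlinear connection of $L^n$ is itself metric in the dynamical sense. This is the genuine computation: one must use $\oci{N}^s_i=\dot{\pp}_i\oci{G}^s$, write $\oci{S}$ in the adapted basis $(\oci{\de}_i,\dot{\pp}_i)$, and combine this with the $h$-metric property of the canonical metrical connection $C\Gamma(\oci{N})$ recorded through $(3.1.10)$, so that $\oci{g}_{ij|}$ reorganizes into a contraction of $g_{ij|k}=0$. Once $\oci{g}_{ij|}=0$ is confirmed, $(3.4.4)$ follows and the equivalence is complete; alternatively, since $(3.4.4)$ is stated just above, the proof of the proposition collapses to the single chain $g_{ij|}=0\iff \dfrac12 P_{ij}=0\iff P_{ij}=0$, and no further calculation is needed.
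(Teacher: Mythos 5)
Your proof is correct and coincides with the paper's own: the paper likewise deduces the proposition immediately from the identity $(3.4.4)$, $g_{ij|}=\tfrac12 P_{ij}$, read against the convention that $N$ is metric exactly when $g_{ij|}=0$, and, like you, it leaves $(3.4.4)$ itself as a routine verification (``it is not difficult to prove''). Your supplementary sketch of $(3.4.4)$ is also essentially sound --- the cancellation of the $C_{ijs}F^s$ contributions is exactly right, and $\oci{g}_{ij|}=0$ does hold --- though be aware that merely contracting the $h$-metricity $g_{ij|k}=0$ of $C\Gamma(\oci{N})$ with $y^k$ leaves over the symmetric deflection part $D_{ij}+D_{ji}$ together with the term $2\bigl(\oci{N}^s_ky^k-2\oci{G}^s\bigr)C_{ijs}$, which still require separate vanishing arguments; a quicker self-contained route is the direct computation $g_{sj}\dot{\pp}_i\oci{G}^s+g_{is}\dot{\pp}_j\oci{G}^s=\dot{\pp}_i\oci{G}_j+\dot{\pp}_j\oci{G}_i-4C_{ijs}\oci{G}^s$ with $\oci{G}_i=g_{is}\oci{G}^s=\tfrac14\bigl(\dot{\pp}_i\pp_hL\,y^h-\pp_iL\bigr)$, which gives $\dot{\pp}_i\oci{G}_j+\dot{\pp}_j\oci{G}_i=y^h\pp_hg_{ij}$ and hence $\oci{g}_{ij|}=0$ at once.
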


The adapted basis $(\de_i,\dot{\pp}_i)$ to the distributions $N$ and $V$ has the operators $\de_i$ of the form:
\be
\de_i=\pp_i-N^j_i\dot{\pp}_j=\oci{\de}_i+\dfrac14\dot{\pp}_i F^s\dot{\pp}_s.
\ee
The dual adapted cobasis $(dx^i,\de y^i)$ has 1-forms $\de y^i$:
\be
\de y^i=dy^i+N^i_j dx^j=\oci{\de}y^i-\dfrac14\dot{\pp}_j F^i dx^j\tag{3.4.5'}
\ee

In the adapted basis the evolution semispray $S$ has the expression:
\be
S=y^i\dfrac{\de}{\de x^i}-\[2\oci{G}^i-\oci{N}^i_j y^j-\dfrac12\(F^i-\dfrac12\dot{\pp}_j F^i y^j\)\]\dfrac{\pp}{\pp y^i}.\tag{3.4.5''}
\ee

Consequences:
\begin{itemize}
  \item[$1^\circ$] $S$ cannot be a vertical vector field.
  \item[$2^\circ$] If the coefficients $\oci{G}^i$ are 2-homogeneous with respect to the vertical variables $y^i$ and the contravariant components $F^i(x,y)$ are 2-homogeneous in $y^i$, then the canonical semispray $S$ belongs to the horizontal distribution $N$.
\end{itemize}

The tensor of integrability of the distribution $N$ is
\be
R^i_{jh}=\de_h N^i_j-\de_j N^i_h=(\de_h\dot{\pp}_j-\de_j\dot{\pp}_h)\(\oci{G}^i-\dfrac14 F^i\)
\ee
Thus: $N$ is integrable iff $R^i_{jh}=0$.

The $d-$tensor of torsion of the nonlinear connection $N$ vanishes.

Indeed:
\be
t^i_{jh}=\dot{\pp}_h N^i_j-\dot{\pp}_j N^i_h=0.
\ee

The Berwald connection $B\Gamma(N)=(B^i_{jk},0)$ of the canonical nonlinear connection $N$ has the coefficients
\be
B^i_{jh}=\oci{B}^i_{jh}-\dfrac14\dot{\pp}_j\dot{\pp}_h F^i,
\ee
where $(\oci{B}^i_{jh},0)$ are the coefficients of Berwald connection of Lagrange space $L^n$.

It follows that $B\Gamma(N)$ is symmetric: $B^i_{jh}-B^i_{hj}=t^i_{jh}=0$.

In the following section we need:

\begin{lemma}
The exterior differential of 1-forms $\de y^i$ are given by
\be
d\de y^i=\dfrac12 R^i_{jh}dx^h\wedge dx^j+ B^i_{jh}\de y^j\wedge dx^h.
\ee
\end{lemma}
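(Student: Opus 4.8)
<br>

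The statement to prove is Lemma 3.4.1: the exterior differential of the adapted 1-forms $\de y^i = dy^i + N^i_j dx^j$ satisfies
$$d\de y^i = \dfrac12 R^i_{jh}dx^h\wedge dx^j + B^i_{jh}\de y^j\wedge dx^h.$$
This is the direct higher-structure analogue of Lemma 1.5.1 from Chapter 1 (equation (1.5.7)) and of Lemma 2.4.1 (equation (2.4.9)) for the Finslerian systems, so the plan is to imitate those computations with the evolution nonlinear connection $N$ of $\sil$ whose coefficients are $N^i_j = \oci{N}^i_j - \tfrac14\dot{\pp}_j F^i$.

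My plan is to compute $d\de y^i$ directly. Since $\de y^i = dy^i + N^i_j\,dx^j$ and $d(dy^i)=0$, we have $d\de y^i = dN^i_j\wedge dx^j$. The next step is to expand the differential $dN^i_j$ in the adapted cobasis $(dx^h,\de y^h)$ rather than the natural cobasis. Using the decomposition $d f = (\de_h f)\,dx^h + (\dot{\pp}_h f)\,\de y^h$ valid for any function $f$ on $TM$ (which follows from $\dfrac{\partial}{\partial x^h}=\de_h + N^s_h\dot{\pp}_s$ together with $\de y^h = dy^h + N^h_j dx^j$), I would write
$$dN^i_j = (\de_h N^i_j)\,dx^h + (\dot{\pp}_h N^i_j)\,\de y^h.$$
Recognizing $\dot{\pp}_h N^i_j = B^i_{jh}$, the Berwald coefficients (this is exactly formula (1.5.7') adapted to the present setting, and $B^i_{jh}=\dot{\pp}_h N^i_j$ by definition), the wedge with $dx^j$ gives
$$d\de y^i = (\de_h N^i_j)\,dx^h\wedge dx^j + B^i_{jh}\,\de y^h\wedge dx^j.$$
After relabeling the summation indices in the second term ($h\leftrightarrow j$) to match the stated form $B^i_{jh}\de y^j\wedge dx^h$, it remains only to treat the first term.

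For the horizontal term I would antisymmetrize: $(\de_h N^i_j)dx^h\wedge dx^j = \tfrac12(\de_h N^i_j - \de_j N^i_h)\,dx^h\wedge dx^j$, and then invoke the definition of the curvature tensor of the evolution nonlinear connection, $R^i_{jh}=\de_h N^i_j - \de_j N^i_h$ (equation (3.4.6)), to identify this with $\tfrac12 R^i_{jh}\,dx^h\wedge dx^j$. This yields precisely the asserted identity. The computation is essentially formal once the cobasis expansion of $dN^i_j$ is in hand.

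The only genuinely delicate point — and the step I would be most careful about — is the correct expansion of $dN^i_j$ in the adapted cobasis, specifically that the vertical derivative coefficient is exactly $B^i_{jh}=\dot{\pp}_h N^i_j$ and that no extra terms arise from the nonholonomy of the frame. I expect this to be the main obstacle only in the bookkeeping sense: one must verify that $\dot{\pp}_h N^i_j$ is genuinely the Berwald coefficient (which holds here since $B^i_{jh}=\oci{B}^i_{jh}-\tfrac14\dot{\pp}_j\dot{\pp}_h F^i = \dot{\pp}_h\bigl(\oci{N}^i_j - \tfrac14\dot{\pp}_j F^i\bigr)=\dot{\pp}_h N^i_j$ by (3.4.8)) and that the index placement in $R^i_{jh}$ and in the wedge products is consistent with the conventions fixed in (3.4.6). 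Since the structure of the argument is identical to the proof underlying (1.5.7) and (2.4.9), and those lemmas are already available in the excerpt, the present lemma follows by the same reasoning with no new ideas required.
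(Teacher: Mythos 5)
Your proof is correct and follows essentially the same route as the paper's: the paper likewise computes $d\de y^i=dN^i_j\wedge dx^j=\de_h N^i_j\, dx^h\wedge dx^j+\dot{\pp}_h N^i_j\,\de y^h\wedge dx^j$ from (3.4.5') and reads off (3.4.9), with the antisymmetrization giving $\tfrac12 R^i_{jh}$ and $\dot{\pp}_h N^i_j=B^i_{jh}$ giving the Berwald term. The one detail worth making explicit is that your relabeling $h\leftrightarrow j$ in the vertical term yields $B^i_{hj}\,\de y^j\wedge dx^h$, which agrees with the stated $B^i_{jh}\,\de y^j\wedge dx^h$ precisely because $t^i_{jh}=\dot{\pp}_h N^i_j-\dot{\pp}_j N^i_h=0$ by (3.4.7).
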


Indeed, from (3.4.5') we have:
$$d\de y^i=dN^i_j\wedge dx^j=\de_h N^i_j dx^h\wedge dx^j+\dot{\pp}_h N^i_j\de y^h\wedge dx^j,$$
which are exactly (3.4.9).

By means of the formula (3.4.5') one gets:

\begin{theorem}
The autoparallel curves of the canonical nonlinear connection $N$ are given by the differential system of equations:
\be
y^i=\dfrac{dx^i}{dt},\ \dfrac{\de y^i}{dt}=\dfrac{\oci{\de}y^i}{dt}-\dfrac14\dot{\pp}_j F^i\dfrac{dx^j}{dt}=0.
\ee
\end{theorem}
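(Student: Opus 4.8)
The plan is to reduce the statement to the general characterization of autoparallel curves of a nonlinear connection, established in Part~I, \S1.3 (equations (1.3.15)--(1.3.16)), and then to insert the dual-basis relation (3.4.5'). Recall that a curve $c:t\mapsto(x^i(t),y^i(t))$ in $TM$ is \emph{horizontal} with respect to $N$ precisely when the vertical part of its tangent vector vanishes, i.e. $\dfrac{\de y^i}{dt}=0$, and that it is an \emph{autoparallel} curve of $N$ when, in addition, $y^i=\dfrac{dx^i}{dt}$. Thus the only thing to compute is the vertical component $\dfrac{\de y^i}{dt}$ along $c$ and to rewrite it in terms of the canonical objects of the underlying Lagrange space $L^n$.

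First I would write out the vertical component from the definition of the adapted dual cobasis. Since $\de y^i=dy^i+N^i_j\,dx^j$, evaluation along $c$ gives $\dfrac{\de y^i}{dt}=\dfrac{dy^i}{dt}+N^i_j(x,y)\dfrac{dx^j}{dt}$. Next I would substitute the coefficients (3.4.1) of the evolution nonlinear connection, namely $N^i_j=\oci{N}^i_j-\dfrac14\dot{\pp}_j F^i$; equivalently, and more compactly, I would invoke the already-established relation (3.4.5'),
\[
\de y^i=\oci{\de}y^i-\dfrac14\dot{\pp}_j F^i\,dx^j ,
\]
and evaluate it on the tangent vector $\dot{c}$. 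This yields at once $\dfrac{\de y^i}{dt}=\dfrac{\oci{\de}y^i}{dt}-\dfrac14\dot{\pp}_j F^i\dfrac{dx^j}{dt}$, where $\dfrac{\oci{\de}y^i}{dt}=\dfrac{dy^i}{dt}+\oci{N}^i_j\dfrac{dx^j}{dt}$ is the vertical derivative relative to the canonical Lagrange nonlinear connection $\oci{N}$.

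Imposing the autoparallel conditions $y^i=\dfrac{dx^i}{dt}$ and $\dfrac{\de y^i}{dt}=0$ then gives exactly the announced system. The argument is a direct substitution, so I do not expect any genuine obstacle; the only points requiring care are the correct invocation of the definition of autoparallelism (horizontality together with the canonical-lift condition $y^i=dx^i/dt$) and the fact that (3.4.5') has already been proved, so that the passage from $N^i_j$ to its splitting into the Lagrange part and the $-\dfrac14\dot{\pp}_j F^i$ correction is legitimate. Finally, I would remark for consistency that when the external forces vanish ($Fe\equiv0$) one has $N=\oci{N}$ and the system reduces to the autoparallel equations of the canonical nonlinear connection of the Lagrange space $L^n$, while a local existence and uniqueness theorem for these curves follows from the standard ODE theory once initial data $\big(x^i_0,(dx^i/dt)_0\big)$ are prescribed.
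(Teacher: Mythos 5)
Your proof is correct and follows exactly the route the paper takes: the text derives the theorem ``by means of the formula (3.4.5')'', i.e.\ by evaluating $\de y^i=\oci{\de}y^i-\dfrac14\dot{\pp}_j F^i\,dx^j$ on the tangent vector of the curve and imposing the autoparallelism conditions $y^i=\dfrac{dx^i}{dt}$, $\dfrac{\de y^i}{dt}=0$ from Part~I, \S1.3. Your version merely spells out the substitution $N^i_j=\oci{N}^i_j-\dfrac14\dot{\pp}_j F^i$ and the $Fe\equiv 0$ consistency check, which the paper leaves implicit.
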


Evidently, if $Fe=0$, then the canonical nonlinear connection $N$ coincides with the canonical nonlinear connection $\oci{N}$ of the Lagrange space $L^n$.

In particular, if $\sil$ is a Finslerian mechanical system $\Sigma_F$, then the previous theory reduces to that studied in the previous chapter.

\section{Canonical $N-$metrical connection of $\sil$. Structure equations}\index{Connections! $N-$linear}
\setcounter{equation}{0}\setcounter{theorem}{0}\setcounter{definition}{0}\setcounter{proposition}{0}\setcounter{remark}{0}

Taking into account the results from part I, the canonical $N-$metrical connection $C\Gamma(N)$ is characterized by:

\begin{theorem}
The local coefficients $D\Gamma(N)=(L^i_{jk},C^i_{jk})$ of the canonical $N-$metrical connection of Lagrangian mechanical system $\sil$ are given by the generalized Christoffel symbols:
\be
\begin{array}{l}
L^i_{jh}=\dfrac12 g^{is}(\de_j g_{sh}+\de_h g_{sj}-\de_s g_{jk})\\ \\
C^i_{jh}=\dfrac12 g^{is}(\dot{\pp}_j g_{sh}+\dot{\pp}_h g_{sj}-\dot{\pp}_s g_{jh}).
\end{array}
\ee
\end{theorem}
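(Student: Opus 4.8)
The plan is to obtain this result as a direct specialization of the existence-and-uniqueness theory for metrical $N$-linear connections developed in Part I (Theorem 2.5.1 together with the structural results of \S1.4), applied to the present data. By the definition adopted here following Part I, the canonical $N$-metrical connection of $\sil$ is the unique $N$-linear connection $D=(L^i_{jk},C^i_{jk})$ whose nonlinear part $N$ is the evolution nonlinear connection of $\sil$ determined in \S3.4, and which satisfies the metricity axioms $g_{ij|k}=0$, $g_{ij}|_k=0$ together with the torsion-freeness axioms $T^i_{jk}=L^i_{jk}-L^i_{kj}=0$ and $S^i_{jk}=C^i_{jk}-C^i_{kj}=0$. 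First I would point out that this existence-and-uniqueness statement is purely structural: it requires only a nonlinear connection, with its adapted frame $(\de_i,\dot{\pp}_i)$, and a symmetric nondegenerate $d$-tensor $g_{ij}$. Consequently the argument of Part I transfers verbatim, the only change being that the horizontal frame vectors $\de_i$ are now those of the evolution connection $N$ of $\sil$, namely $\de_i=\pp_i-N^j_i\dot{\pp}_j$ with $N^i_j=\oci{N}^i_j-\frac{1}{4}\dot{\pp}_j F^i$.

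Next I would carry out the Christoffel-type computation for the horizontal coefficients. Expanding $h$-metricity in the adapted frame gives $g_{ij|k}=\de_k g_{ij}-L^s_{ik}g_{sj}-L^s_{jk}g_{is}=0$. Writing the two further cyclic permutations of the indices $i,j,k$, forming the combination $\de_k g_{ij}+\de_j g_{ik}-\de_i g_{jk}$, and invoking the $h$-torsion-free symmetry $L^s_{ik}=L^s_{ki}$ to cancel the mixed terms, one is left with $2L^s_{jk}g_{is}$; contracting with $g^{is}$ and relabelling yields $L^i_{jh}=\frac{1}{2} g^{is}(\de_j g_{sh}+\de_h g_{sj}-\de_s g_{jh})$. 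The derivation of the vertical coefficients is formally identical, with the vertical operators $\dot{\pp}_k$ replacing $\de_k$ and the $v$-torsion-free symmetry $C^s_{ik}=C^s_{ki}$ replacing the $h$-symmetry, producing $C^i_{jh}=\frac{1}{2} g^{is}(\dot{\pp}_j g_{sh}+\dot{\pp}_h g_{sj}-\dot{\pp}_s g_{jh})$. Neither computation presents any difficulty once the adapted frame is fixed, since each is the classical solving of the metricity identities by cyclic permutation.

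There is no deep obstacle, but the point I would handle with care — and which distinguishes this statement from its Lagrange-space analogue — is the role of the external forces $Fe$. They do not appear explicitly in the final formulae; they enter solely through the operators $\de_i$, which for $\sil$ carry the extra term $\frac{1}{4}(\dot{\pp}_i F^s)\dot{\pp}_s$ coming from $N^i_j=\oci{N}^i_j-\frac{1}{4}\dot{\pp}_j F^i$. Thus the only thing requiring explicit verification is that these modified $\de_i$ are genuine horizontal frame vectors of an honest nonlinear connection, which is exactly the content of \S3.4 where $N$ was established as the evolution nonlinear connection of $\sil$; this guarantees, as in Part I, that the transformation rules making $L^i_{jk}$ behave as the coefficients of a linear connection and $C^i_{jk}$ as a $(1,2)$-type $d$-tensor remain valid. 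Finally I would note, as a consistency check, that when $Fe\equiv 0$ the operators $\de_i$ reduce to $\oci{\de}_i$ and the formulae collapse to the canonical metrical connection $C\Gamma(\oci{N})$ of the Lagrange space $L^n$ recorded in \S3.1.
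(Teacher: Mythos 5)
Your proposal is correct and follows essentially the same route as the paper, which establishes this theorem simply by invoking the existence-and-uniqueness result for metrical $N$-linear connections from Part I (Theorem 2.5.1 and \S 1.4) with $N$ taken to be the evolution nonlinear connection of $\Sigma_L$, so that the generalized Christoffel symbols emerge from the standard cyclic-permutation argument in the adapted frame $(\de_i,\dot{\pp}_i)$. Your additional remarks --- spelling out the cyclic-permutation cancellation explicitly, noting that $Fe$ enters only through the modified operators $\de_i$, and checking the collapse to $C\Gamma(\oci{N})$ when $Fe\equiv 0$ --- are sound and merely make explicit what the paper leaves implicit (indeed the paper's very next theorem records the decomposition $L^i_{jk}=\oci{L}^i_{jk}+\check{C}^i_{jk}$, confirming your observation about where the external forces hide).
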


Using the expression (3.4.5) of the operator $\de_k$ and developing the terms $\de_h g_{ij}$ from (3.5.1), we get:

\begin{theorem}
The coefficients $L^i_{jh},C^i_{jh}$ of $D\Gamma(N)$ can be set in the form
\be
\begin{array}{l}
L^i_{jk}=\oci{L}^i_{jk}+\check{C}^i_{jk},\ \ C^i_{jk}=\oci{C}^i_{jk},\\ \\
\check{C}^i_{jk}=\dfrac14 g^{is}\(\oci{C}_{skh}\dot{\pp}F^h+\oci{C}_{jsh}\dot{\pp}F^h-\oci{C}_{jkh}\dot{\pp}F^h\),
\end{array}
\ee
where $C\Gamma(\oci{N})=(\oci{L}^i_{jk},\oci{C}^i_{jk})$ is the canonical $N-$metrical connection of the Lagrange space $L^n=(M,L(x,y))$.
\end{theorem}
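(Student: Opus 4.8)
The plan is to derive the decomposition directly from the generalized Christoffel expressions of Theorem 3.5.1, feeding into them the explicit relation (3.4.5) between the adapted frame $\de_i$ of the evolution nonlinear connection $N$ and the adapted frame $\oci{\de}_i$ of the canonical nonlinear connection $\oci{N}$ of the underlying Lagrange space $L^n$. No new geometric input is needed: everything follows from the additive splitting $N=\oci{N}-\tfrac14\dot{\pp}F$ recorded in (3.4.1).

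First I would dispose of the vertical coefficients. Since the fundamental tensor $g_{ij}$ of $\sil$ is by definition that of $L^n$, and the vertical derivation operators $\dot{\pp}_i=\pp/\pp y^i$ are natural (they do not involve any nonlinear connection), the formula for $C^i_{jk}$ in Theorem 3.5.1 is term-for-term the very expression defining $\oci{C}^i_{jk}$ in (3.1.10). Hence $C^i_{jk}=\oci{C}^i_{jk}$ with no computation at all.

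The substance lies in the horizontal coefficients $L^i_{jk}=\tfrac12 g^{is}(\de_j g_{sk}+\de_k g_{sj}-\de_s g_{jk})$. I would substitute $\de_a=\oci{\de}_a+\tfrac14(\dot{\pp}_a F^m)\dot{\pp}_m$ into each of the three terms. Every substitution splits into a ``ring'' piece $\oci{\de}_a g_{\cdots}$ plus a correction $\tfrac14(\dot{\pp}_a F^m)(\dot{\pp}_m g_{\cdots})$. Applying the Cartan-tensor identity $\dot{\pp}_m g_{jk}=2\oci{C}_{jkm}$ from item $2^\circ$ of Theorem 1.1.1 turns each correction into $\tfrac12(\dot{\pp}_a F^m)\oci{C}_{\cdots m}$. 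The three ring pieces then reassemble into exactly $\oci{L}^i_{jk}=\tfrac12 g^{is}(\oci{\de}_j g_{sk}+\oci{\de}_k g_{sj}-\oci{\de}_s g_{jk})$, the canonical horizontal coefficient of $L^n$, while the three correction terms combine (the overall factor being $\tfrac12\cdot\tfrac12=\tfrac14$) into the announced $\check{C}^i_{jk}$.

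I expect no genuine obstacle; the only care required is index bookkeeping, keeping the summation index $m$ distinct from the free index $s$ carried by $g^{is}$, and using the total symmetry of the Cartan tensor $\oci{C}_{jkm}$ (again item $2^\circ$ of Theorem 1.1.1) to replace $\oci{C}_{sjm}$ by $\oci{C}_{jsm}$ so that the middle term takes the displayed shape. Relabelling $m\to h$ then yields $\check{C}^i_{jk}=\tfrac14 g^{is}(\oci{C}_{skh}\dot{\pp}_j F^h+\oci{C}_{jsh}\dot{\pp}_k F^h-\oci{C}_{jkh}\dot{\pp}_s F^h)$. I would note in passing the evident typographical slip in the statement of the theorem: the three vertical derivatives of $F^h$ must carry the distinct subscripts $j$, $k$, $s$ respectively, precisely as in the entirely analogous Finslerian formula (2.6.2), rather than the undifferentiated $\dot{\pp}F^h$ printed above.
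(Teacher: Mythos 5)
Your proposal is correct and follows exactly the paper's own route: substituting the decomposition $\de_i=\oci{\de}_i+\tfrac14(\dot{\pp}_i F^h)\dot{\pp}_h$ from (3.4.5) into the generalized Christoffel symbols (3.5.1), using $\dot{\pp}_h g_{jk}=2\oci{C}_{jkh}$ to convert each correction term, and observing that the vertical coefficients coincide trivially. Your remark that the displayed $\check{C}^i_{jk}$ omits the subscripts $j,k,s$ on the vertical derivations $\dot{\pp}F^h$ is also right, as confirmed by the analogous Finslerian formula (2.6.2).
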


Let $\oo^i_j$ be the connection 1-forms of the canonical $N-$metrical connection $D\Gamma(N)$:
\be
\oo^i_j=L^i_{jk}dx^k+C^i_{jk}\de y^k.
\ee

Then, we have:

\begin{theorem}
The structure equations of canonical $N-$metrical connection $D\Gamma(N)$ of Lagrangian mechanical system $\sil$ are given by:
\be
\begin{array}{l}
d(dx^i)-dx^k\wedge \oo^i_k=-\overset{1}\O^i\\ \\
d(\de y^i)-\de y^k\wedge \oo^i_k=-\overset{2}\O^i\\ \\
d\oo^i_j-\oo^k_j\wedge\oo^i_k=-\O^i,
\end{array}
\ee
where the 1-form of torsions $\overset{1}\Omega^i$ and $\overset{2}\Omega^i$ are:
\be
\begin{array}{l}
\overset{1}\O^i=C^i_{jk}dx^j\wedge \de y^k\\ \\
\overset{2}\O^i=\dfrac12 R^i_{jk} dx^j\wedge dx^k+P^i_{jk} dx^j\wedge \de y^k
\end{array}
\ee
and the 2-forms of curvature $\O^i_j$ is as follows:
\be
\O^i_j=\dfrac12 R^i_{jkh}dx^k\wedge dx^h+P^{\ i}_{j\ kh}dx^k\wedge \de y^h+\dfrac12 S^i_{j\ kh}\de y^k\wedge \de y^h
\ee
where the $d-$tensors of torsions are:
\be
\begin{array}{l}
T^i_{jk}=L^i_{jk}-L^i_{kj}=0;\ S^i_{jk}=C^i_{jk}-C^i_{kj}=0,\\ \\
C^i_{jk}=\oci{C}^i_{jk},\ \ R^i_{jk}=\de_k N^i_j-\de_j N^i_k,\ \ P^i_{jk}=B^i_{jk}-L^i_{jk}
\end{array}
\ee
and the $d-$tensors of curvature are:
\be
\begin{array}{l}
R^{\ i}_{j\ kh}=\de_k L^i_{jk}-\de_k L^i_{jh}+L^s_{jk}L^i_{sh}-L^s_{jh}L^i_{sk}+C^i_{js}R^s_{kh}\\ \\
P^{\ i}_{j\ kh}=\dot{\pp}_h L^i_{jk}-C^i_{jh|k}+C^i_{js}P^s_{\ kh}\\ \\
S^{\ i}_{j\ kh}=\dot{\pp}_h C^i_{jk}-\dot{\pp}_k C^i_{jh}+C^s_{jk}C^i_{sh}-C^s_{jh}C^i_{sk}.
\end{array}
\ee
Here $C^i_{jk|h}$ is the $h-$covariant derivative of the tensor $C^i_{jk}$ with respect to $D\Gamma(N)$.
\end{theorem}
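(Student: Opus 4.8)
The plan is to recognize that $D\Gamma(N)=(L^i_{jk},C^i_{jk})$ is an $N$-linear connection on $TM$ in the precise sense of Chapter~1, so that the general structure equations of Theorem~1.5.3 apply directly; what then remains is only to specialize the torsion and curvature $2$-forms to the present coefficients. First I would invoke Lemma~1.5.2, which certifies that the three $2$-forms $d(dx^i)-dx^s\wedge\oo^i_s$, $d(\de y^i)-\de y^s\wedge\oo^i_s$ and $d\oo^i_j-\oo^s_j\wedge\oo^i_s$ are, respectively, the components of a $d$-vector field and of a $d$-tensor field of type $(1,1)$. This guarantees in advance that the right-hand members $-\overset{1}{\O}^i$, $-\overset{2}{\O}^i$, $-\O^i_j$ are globally well defined, and reduces the theorem to a single local computation in the adapted cobasis $(dx^i,\de y^i)$ using the connection forms $\oo^i_k=L^i_{km}dx^m+C^i_{km}\de y^m$ of (3.5.3).

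For the first equation I would use $d(dx^i)=0$ and substitute the connection forms. The $dx\wedge dx$ part produces $\tfrac12(L^i_{km}-L^i_{mk})=\tfrac12 T^i_{km}$, which vanishes by the $h$-symmetry $T^i_{jk}=0$ of (3.5.7), leaving exactly $\overset{1}{\O}^i=C^i_{jk}dx^j\wedge\de y^k$. For the second equation the decisive ingredient is Lemma~3.4.1, which supplies $d(\de y^i)=\tfrac12 R^i_{jh}dx^h\wedge dx^j+B^i_{jh}\de y^j\wedge dx^h$. Subtracting $\de y^k\wedge\oo^i_k$ and sorting by form type, the $dx\wedge dx$ coefficient reproduces $\tfrac12 R^i_{jk}$, the $\de y\wedge dx$ coefficient becomes $B^i_{jk}-L^i_{jk}=P^i_{jk}$, and the $\de y\wedge\de y$ coefficient reduces to $S^i_{jk}$, which vanishes by the $v$-symmetry $S^i_{jk}=0$ of (3.5.7). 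A point worth isolating is that identifying the mixed term with $P^i_{jk}dx^j\wedge\de y^k$ is consistent only because $P^i_{jk}$ is symmetric in $j,k$; this symmetry is not automatic but follows from the two vanishing torsions $t^i_{jk}=0$ (which makes $B^i_{jk}=\dot{\pp}_k N^i_j$ symmetric, by (3.4.7)) together with $T^i_{jk}=0$.

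The curvature equation is the main obstacle, since here one must differentiate $\oo^i_j$ itself. I would write $d\oo^i_j=dL^i_{jk}\wedge dx^k+dC^i_{jk}\wedge\de y^k+C^i_{jk}\,d(\de y^k)$, expand each total differential in the adapted cobasis by $df=(\de_m f)dx^m+(\dot{\pp}_m f)\de y^m$, and once more use Lemma~3.4.1 for $d(\de y^k)$. After subtracting $\oo^s_j\wedge\oo^i_s$ and collecting the $dx\wedge dx$, $dx\wedge\de y$ and $\de y\wedge\de y$ components, the coefficients assemble into the three $d$-tensors of curvature $R^{\ i}_{j\ kh}$, $P^{\ i}_{j\ kh}$, $S^{\ i}_{j\ kh}$ of (3.5.8). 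Two contributions require genuine care: the term $C^i_{js}R^s_{kh}$ inside $R^{\ i}_{j\ kh}$, which arises precisely from pairing $C^i_{jk}$ against the horizontal part $\tfrac12 R^k_{mh}dx^h\wedge dx^m$ of $d(\de y^k)$ supplied by Lemma~3.4.1; and the $h$-covariant derivative $C^i_{jh|k}$ appearing in $P^{\ i}_{j\ kh}$, which emerges only after reorganizing $\de_k C^i_{jh}$ together with its linear-connection corrections, the residual Berwald contributions being absorbed into $C^i_{js}P^s_{kh}$ with $P^s_{kh}=B^s_{kh}-L^s_{kh}$. Keeping track of exactly how the Berwald coefficients $B^i_{jk}$ from Lemma~3.4.1 are distributed between the $R$- and $P$-curvatures is the only delicate bookkeeping; everything else is a verbatim transcription of the proof of Theorem~1.5.3. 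Finally, since Lemma~1.5.2 has already shown the three left-hand members to be tensorial, the locally computed expressions (3.5.5)--(3.5.6) define global $2$-forms, which completes the proof.
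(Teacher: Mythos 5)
Your proposal is correct and takes essentially the same route as the paper: the text obtains this theorem precisely by particularizing the general structure equations of an $N$-linear connection (Theorem 1.5.3, proved via Lemma 1.5.2 for tensoriality) together with the formula for $d(\delta y^i)$ of Lemma 3.4.1, and then reading off the torsion and curvature $2$-forms from the coefficients (3.5.2), exactly as you do. Your added remark that the identification of the mixed term requires the symmetry of $P^i_{jk}$ (which follows from $B^i_{jk}=B^i_{kj}$ and $T^i_{jk}=0$) is a correct refinement of the same bookkeeping.
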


So, we have
\be
\begin{array}{l}
g_{ij|h}=\de_h g_{ij}-g_{sj}L^s_{ih}-g_{is} L^s_{jh}=0,\\ \\
g_{ij}|_h=\dot{\pp}_h g_{ij}-g_{sj}C^s_{ih}-g_{is}C^s_{jh}=0.
\end{array}
\ee
Applying the Ricci identities to the fundamental tensor $g_{ij}$, it is not difficult to prove the identities
\be
R_{ijkh}+R_{jikh}=0,\ P_{ijkh}+P_{jikh}=0,\ S_{ijkh}+S_{jikh}=0,
\ee
where $R_{ijkh}=g_{js}R_{i\ kh}^{\ s}$, etc.

Also, $P_{ijk}=g_{is}P^s_{\ kh}$ is totally symmetric.

Taking into account the form (3.5.2) of the coefficients of $D\Gamma(N)$, the calculus of $d-$tensors of torsion (3.5.7) and of $d-$tensors of curvature (3.5.6) can be obtained.

The exterior differentiating the structure equations (3.5.4) modulo the same system of equations (3.5.4) and using Lemma (3.4.1) one obtains the Bianchi identities of the $N-$metrical connection $D\Gamma(N)$.

The $h-$ and $v-$covariant derivative of Liouville vector field $\C=y^i\dfrac{\pp}{\pp y^i}$ lead to the deflection tensors of $D\Gamma(N)$:
\be
D^i_j=y^i_{|j}=\de_j y^i+y^s L^i_{sj},\ d^{i}_{\ j}=y^i |_j=\dot{\pp}_j y^i+y^s C^{\ i}_{s\ j}.
\ee

\begin{theorem}
The expression of $h-$tensor of deflection $D^i_j$ and $v-$ ten\-sor of deflection $d^i_j$ are given by
\be
D^i_{\ j}=y^s L^i_{sj}-N^i_j,\ \ d^i_{\ j}=\de^i_{\ j}+y^s \oci{C}^{\ i}_{s\ j}.
\ee
\end{theorem}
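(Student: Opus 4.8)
The plan is to obtain both formulas by a direct computation straight from the defining relations $D^i_{\ j}=\de_j y^i+y^s L^i_{sj}$ and $d^i_{\ j}=\dot{\pp}_j y^i+y^s C^{\ i}_{s\ j}$, exploiting the fact that the fibre coordinates $y^i$ behave trivially under the natural differential operators on $TM$. First I would record the two elementary identities $\pp_j y^i=0$ and $\dot{\pp}_s y^i=\de^i_s$, which hold because $y^i$, viewed as a coordinate function on $TM$, is independent of the base coordinates $x^j$ and is linear in the fibre coordinates.

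For the $h$-deflection, the only nontrivial step is the evaluation of $\de_j y^i$. Substituting the expression $\de_j=\pp_j-N^s_j\dot{\pp}_s$ of the adapted frame field from (3.4.5) and applying the two identities above, I obtain $\de_j y^i=\pp_j y^i-N^s_j\dot{\pp}_s y^i=-N^s_j\de^i_s=-N^i_j$. Inserting this into the definition yields $D^i_{\ j}=y^s L^i_{sj}-N^i_j$, which is the first claimed formula.

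For the $v$-deflection, the term $\dot{\pp}_j y^i=\de^i_j$ is immediate from the second identity. The point that requires attention is the coefficient $C^{\ i}_{s\ j}$ appearing in the remaining term: here I would invoke the relation $C^i_{jk}=\oci{C}^i_{jk}$ established in (3.5.2), which states that the vertical coefficients of the canonical $N$-metrical connection of $\Sigma_L$ coincide with the vertical coefficients $\oci{C}^i_{jk}$ of the underlying Lagrange space $L^n$. Replacing $C^{\ i}_{s\ j}$ by $\oci{C}^{\ i}_{s\ j}$ then gives $d^i_{\ j}=\de^i_{\ j}+y^s\oci{C}^{\ i}_{s\ j}$, as asserted.

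Since each formula collapses to a one-line substitution, I do not expect a genuine analytic obstacle. The only delicate point — and the sole place where the specific structure of the system $\Sigma_L$ enters — is the identification of the vertical coefficients in (3.5.2); without it the $v$-deflection could not be expressed purely in terms of the Lagrange-space datum $\oci{C}^{\ i}_{s\ j}$. I would therefore make sure to cite that relation explicitly before performing the $v$-covariant computation, after which the argument is purely mechanical.
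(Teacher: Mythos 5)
Your computation is correct and is exactly the argument the paper intends: from the definitions (3.5.11), the identities $\pp_j y^i=0$, $\dot{\pp}_s y^i=\de^i_s$ give $\de_j y^i=-N^i_j$ and $\dot{\pp}_j y^i=\de^i_j$, and the identification $C^i_{jk}=\oci{C}^i_{jk}$ from (3.5.2) converts the vertical coefficients into those of the Lagrange space. No gaps; this matches the paper's (implicit) one-line substitution proof.
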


Finally, we determine the horizontal paths, vertical paths of $D\Gamma(N)$ and its autoparallel curves.

The {\it horizontal paths} of metrical connection $D\Gamma(N)=(L^i_{jk},C^i_{jk})$ are given by
\be
\dfrac{d^2 x^i}{dt^2}+L^i_{jk}\(x,\dfrac{dx}{dt}\)\dfrac{dx^j}{dt}\dfrac{dx^k}{dt}=0,\ \ y^i=\dfrac{dx^i}{dt},
\ee
where the coefficients $L^i_{jk}$ are expressed by formula (3.5.2).

In the initial conditions $x^i=x^i_0$, $y^i=\(\dfrac{dx^i}{dt}\)_0$, local, this system of differential equation has a unique solution $x^i=x^i(t)$, $y^i=y^i(t)$, $t\in I$.

The {\it vertical paths}, at a point $(x^i_0)\in M$ are characterized by system of differential equations:
\be
\dfrac{dy^i}{dt}+\oci{C}^i_{jk}(x_0,y)y^j y^k=0,\ \ x^i=x^i_0.
\ee
In the initial conditions $x^i=x^i_0, y^i=y^i_0$ the previous system of differential equations, locally, has a unique solution.

\section{Electromagnetic field}\index{Electromagnetism and gravitational fields}
\setcounter{equation}{0}\setcounter{theorem}{0}\setcounter{definition}{0}\setcounter{proposition}{0}\setcounter{remark}{0}

For Lagrangian mechanical system $\sil=(M,L,Fe)$ the electromagnetic phenomena appear because the deflection tensor $D^i_j$ nonvanishes.

The covariant components $D_{ij}=g_{is}D^s_j$, $d_{ij}=g_{is}d^s_j$ determine $h-$e\-lec\-tro\-magnetic field ${\cal F}_{ij}$ and $v-$electromagnetic field $f_{ij}$ as follows:
\be
{\cal F}_{ij}=\dfrac12(D_{ij}-D_{ji}),\ f_{ij}=\dfrac12(d_{ij}-d_{ji}).
\ee

Taking into account the formula (3.5.11) one obtains:
\be
{\cal F}_{ij}=\oci{\cal F}_{ij}+\dfrac14 F_{ij}+\dfrac14\check{F}_{ij}
\ee
where $\oci{\cal F}_{ij}$ is the electromagnetic tensor field of the Lagrange space $L^n$, $F_{ij}$ is the elicoidal tensor field of mechanical system $\sil$ and $\check{F}_{ij}$ is the skewsymmetric tensor
\be
\check{\cal F}_{ij}=\{\dot{\pp}_j F^s C_{irs}-\dot{\pp}_i F^r C_{jrs}\}y^s.
\ee

In the case of Finslerian mechanical system $\Sigma_F$ the electromagnetic tensor ${\cal F}_{ij}$ is equal to $\dfrac14 F_{ij}$.

Applying the method given in the chapter 2, part I, one gets the Maxwell equations of the Lagrangian mechanical system $\sil$.

\section{The almost Hermitian model of the Lagrangian mechanical system $\sil$}\index{Almost Hermitian model}
\setcounter{equation}{0}\setcounter{theorem}{0}\setcounter{definition}{0}\setcounter{proposition}{0}\setcounter{remark}{0}

Let $N$ be the canonical nonlinear connection of the mechanical system $\sil$ and $(\de_i,\dot{\pp}_i)$ the adapted basis to the distribution $N$ and $V$. Its dual basis is $(dx^i,\de y^i)$. The $N-$lift $\G$ of fundamental tensor $g_{ij}$ on the velocity space $\wt{TM}=TM\setminus\{0\}$ is given by
\be
\G=g_{ij}dx^i\otimes dx^j+g_{ij}\de y^i\otimes \de y^j.
\ee

The almost complex structure $\F$ determined by the nonlinear connection $N$ is expressed by
\be
\F=\de_i\otimes dx^j-\dot{\pp}_i\otimes dx^i.
\ee
Thus, one proves:

\begin{theorem}
We have:
\begin{itemize}
  \item[$1^\circ$] $\G$ is a pseudo-Riemannian structure and $\F$ is an almost complex structure on the manifold $TM$. They depend only on the Lagrangian mechanical system $\sil$.
  \item[$2^\circ$] The pair $(\G,\F)$ is an almost Hermitian structure.
  \item[$3^\circ$] The associated 2-form of $(\G,\F)$ is given by $$\theta=g_{ij}\de y^i\wedge dx^j.$$
  \item[$4^\circ$] $\theta$ is an almost symplectic structure on the velocity space $\wt{TM}$.
  \item[$5^\circ$] The following equality holds: $$\theta=\oci{\theta}-\dfrac14 F_{ij}dx^i\wedge dx^j,$$ $\oci\theta$ being the symplectic structure of the Lagrange space $L^n$, i.e. $\oci{\theta}=g_{ij}\oci\de y^i\wedge dx^j$.
\end{itemize}
\end{theorem}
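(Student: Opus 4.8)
The plan is to follow the template established by Theorem 2.7.1 for the almost K\"{a}hlerian model of a Lagrange space, adapting each step to the evolution nonlinear connection $N$ of $\sil$ and recording carefully the correction terms that distinguish $N$ from the canonical connection $\oci{N}$ of $L^n$. First I would settle $1^\circ$. The tensor $\G$ of (3.7.1) is symmetric, and since both of its blocks are governed by the nonsingular fundamental tensor $g_{ij}$ of constant signature, it is nondegenerate of constant signature; hence it is a pseudo-Riemannian structure on $\wt{TM}$. For $\F$ I would read off its action on the adapted basis, namely $\F(\de_i)=-\dot{\pp}_i$ and $\F(\dot{\pp}_i)=\de_i$ (consistent with (2.7.1')), and then compute $\F^2(\de_i)=-\de_i$ and $\F^2(\dot{\pp}_i)=-\dot{\pp}_i$, so that $\F\circ\F=-\mathrm{Id}$ and $\F$ is an almost complex structure. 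Both object fields are built solely from $g_{ij}$ and from the adapted basis $(\de_i,\dot{\pp}_i)$ and cobasis $(dx^i,\de y^i)$, all of which are determined by $N$ and $g_{ij}$; since $N$ depends only on $\sil$ (Section 3.4), so do $\G$ and $\F$.

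For $2^\circ$, $3^\circ$ and $4^\circ$ I would verify the Hermitian compatibility by evaluation on the adapted basis: for any pair of horizontal or vertical basis vectors one finds $\G(\F X,\F Y)=\G(X,Y)$, because $\F$ merely interchanges the two $g$-blocks up to sign, whence $(\G,\F)$ is an almost Hermitian structure. The associated $2$-form is then defined by $\theta(X,Y)=\G(\F X,Y)$; evaluating it on the basis and using the orthogonality of $N$ and $V$ with respect to $\G$ (immediate from the block form of (3.7.1)) makes the horizontal-horizontal and vertical-vertical components vanish and leaves $g_{ij}$ on the mixed ones, so that $\theta=g_{ij}\de y^i\wedge dx^j$. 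Nondegeneracy of $\theta$ follows at once from $\det(g_{ij})\neq 0$, hence $\theta$ is an almost symplectic structure (note that closedness is not claimed here).

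The substantive step is $5^\circ$. Here I would substitute the relation (3.4.5') between the two adapted cobases, $\de y^i=\oci{\de}y^i-\frac14\dot{\pp}_k F^i\,dx^k$, into $\theta=g_{ij}\de y^i\wedge dx^j$, obtaining $\theta=\oci{\theta}-\frac14 g_{ij}\dot{\pp}_k F^i\,dx^k\wedge dx^j$ with $\oci{\theta}=g_{ij}\oci{\de}y^i\wedge dx^j$. Because $dx^k\wedge dx^j$ is skew, only the part of $g_{ij}\dot{\pp}_k F^i$ antisymmetric in $(j,k)$ survives; writing $g_{ij}\dot{\pp}_k F^i=\dot{\pp}_k F_j-2C_{ijk}F^i$ (using $F_j=g_{ij}F^i$ and $\dot{\pp}_k g_{ij}=2C_{ijk}$) and invoking the total symmetry of the Cartan tensor $C_{ijk}$ to cancel the $C$-terms under antisymmetrization, the surviving contribution is exactly the elicoidal tensor $F_{jk}=\frac12(\dot{\pp}_k F_j-\dot{\pp}_j F_k)$ of (3.4.2). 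After relabelling the summation indices this produces the announced decomposition $\theta=\oci{\theta}-\frac14 F_{ij}\,dx^i\wedge dx^j$.

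The main obstacle is precisely this last symmetrization together with the attendant sign bookkeeping: one must handle the ordering of the wedge factors and the relabelling of summation indices consistently, and it is essential that the Cartan tensor contributions drop out, since this is what forces the correction term to be built from the skew elicoidal tensor $F_{ij}$ rather than from its symmetric counterpart $Q_{ij}$. Everything else reduces to the routine basis evaluations already performed for the Lagrange and Finsler models, so the genuine content of the theorem lies in identifying the discrepancy between the evolution symplectic form $\theta$ and the canonical form $\oci{\theta}$ of $L^n$ with the electromagnetic-type data of $\sil$.
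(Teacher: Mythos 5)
Your proposal is correct and is essentially the argument the paper intends: the text states this theorem with no written proof (``Thus, one proves''), deferring to the same routine basis verifications used for Theorem 2.7.1 of Part I and for the Finslerian model, and your treatment of $5^\circ$ --- substituting $\de y^i=\oci{\de}y^i-\frac14(\dot{\pp}_k F^i)dx^k$ into $\theta=g_{ij}\de y^i\wedge dx^j$, lowering the index via $g_{ij}\dot{\pp}_k F^i=\dot{\pp}_k F_j-2C_{ijk}F^i$, and letting total symmetry of $C_{ijk}$ kill the Cartan terms under antisymmetrization --- is exactly the intended computation. One caveat on the point you yourself flagged as the crux: with the paper's convention (3.4.2), $F_{ij}=\frac12(\dot{\pp}_j F_i-\dot{\pp}_i F_j)$, scrupulous wedge relabeling yields $\theta=\oci{\theta}+\frac14 F_{ij}\,dx^i\wedge dx^j$ (check the constant-$g_{ij}$ example $F^1=y^2$, $F^2=0$), so the minus sign in the stated formula corresponds to the opposite index ordering $F_{ji}$ --- a typo-level inconsistency of the text rather than a defect of your method, but your claim that the relabeling ``produces the announced decomposition'' silently absorbs this sign.
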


Since $\oci\theta$ is the symplectic structure of Lagrange space $L^n$, we have $d\oci\theta=0$. So the exterior differential $d\theta$ is as follows:
\be
d\theta=-\dfrac14 dF_{ij}\wedge dx^i\wedge dx^j.
\ee
But $dF_{ij}=\de_k F_{ij}dx^k+\dot{\pp}_k F_{ij}\de y^k$. Consequently one obtain from (2.2):
\be
d\theta=-\dfrac1{12}(F_{ij|k}+F_{jk|i}+F_{ki|j})dx^k\wedge dx^i\wedge dx^j-\dfrac14\dot{\pp}_k F_{ij}\de y^k\wedge dx^i\wedge dx^j.
\ee

Consequently:

$1^\circ$ $\theta=\oci{\theta}$, if and only if the helicoidal tensor $F_{ij}$ vanish.

$2^\circ$ The almost symplectic structure $\theta$ of the Lagrangian mechanical system $\sil$ is integrable, if and only if the helicoidal tensor $F_{ij}$ satisfies the following tensorial equations:
\be
F_{ij|k}+F_{jk|i}+F_{ki|j}=0,\ \ \dot{\pp}_k F_{ij}=0,
\ee
where the operator ``$_|$'' is $h-$covariant derivation with respect to canonical $N-$linear connection $D\Gamma(N)$.

Now we observe that some good applications of this theory can be done for the Lagrangian mechanical systems $\sil=(M,L(x,y),Fe(x,y))$ when the external forces $Fe$ are of Liouville type: $Fe=a(x,y)\C$.

\section{Generalized Lagrangian mechanical systems}
\setcounter{equation}{0}\setcounter{theorem}{0}\setcounter{definition}{0}\setcounter{proposition}{0}\setcounter{remark}{0}

As we know, a generalized Lagrange space $GL^n=(M,g_{ij})$ is given by the configuration space $M$ and by a $d-$tensor field $g_{ij}(x,y)$  on the velocity space $\wt{TM}=TM\setminus\{0\}$, $g_{ij}$ being symmetric, nonsingular and of constant signature.

There are numerous examples of spaces $GL^n$ given by Miron R. \cite{Miron}, Anastasiei M. \cite{AnShi}, R.G. Beil \cite{Beil2}, \cite{Beil3}, T. Kawaguchi \cite{MiKa} etc.

$1^\circ$ The $GL^n$ space with the fundamental tensor
\be
g_{ij}(x,y)=\a_{ij}(x)+y_i y_j,\ \ y_=\a_j y^j
\ee
and $\a_{ij}(x)$ a semidefinite Riemann tensor.

$2^\circ$ $g_{ij}(x,y)=\a_{ij}(x)+\(1-\dfrac1{n(x,y)}\)y_i y_j$, $y_i=\a_{ij}y^j$ where $n(x,y)\geq 1$ is a $C^\9$ function (the refractive index in Relativistic optics), \cite{MiKa}.

$3^\circ$ $g_{ij}(x,y)=\a_{ij}(x,y)+a y_i y_j$, $a\in R_+$, $y_i=\a_{ij}y^j$ and where $\a_{ij}(x,y)$ is the fundamental tensor of a Finsler space.

The space $GL^n=(M,g_{ij}(x,y))$ is called reducible to a Lagrange space if there exists a regular Lagrangian $L(x,y)$ such that
\be
\dfrac12\dfrac{\pp^2 L}{\pp y^i\pp y^j}=g_{ij}(x,y).
\ee

A necessary condition that the space $GL^n=(M,g_{ij}(x,y))$ be reducible to a Lagrange space is that the $d-$tensor field
\be
C_{ijk}=\dfrac12\dfrac{\pp g_{ij}}{\pp y^k}
\ee
be totally symmetric.

The following property holds:

\begin{theorem}
If we have a generalized Lagrange space $GL^n$ for which the fundamental tensor $g_{ij}(x,y)$ is 0-homogeneous with respect to $y^i$ and it is reducible to a Lagrange space $L^n=(M,L)$, then the function $L(x,y)$ is given by:
\be
L(x,y)=g_{ij}(x,y)y^i y^j+2A_i(x)y^i+{\cal U}(x).
\ee
\end{theorem}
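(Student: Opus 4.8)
The plan is to show that the quadratic expression $\varphi(x,y):=g_{ij}(x,y)y^iy^j$ already has $g_{ij}$ as its (half-)fibre-Hessian, so that the difference $L-\varphi$ is forced to be affine in the fibre coordinates $y^i$. First I would record the reducibility hypothesis in the form $\dfrac12\dfrac{\pp^2 L}{\pp y^i\pp y^j}=g_{ij}(x,y)$ and recall that, by the necessary condition already established, the Cartan tensor $C_{ijk}=\dfrac12\dfrac{\pp g_{ij}}{\pp y^k}$ is totally symmetric. The $0$-homogeneity of $g_{ij}$ then enters through Euler's theorem: $y^k\dfrac{\pp g_{ij}}{\pp y^k}=0$, equivalently $C_{ij0}=0$, where the index $0$ denotes contraction with $y^k$; by total symmetry the contraction of $C_{ijk}$ with $y$ on any slot vanishes, so $C_{i0j}=C_{0ij}=0$ as well.

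The core computation is the Hessian of $\varphi$. Differentiating once gives $\dfrac{\pp\varphi}{\pp y^i}=\dfrac{\pp g_{ab}}{\pp y^i}y^ay^b+2g_{ib}y^b=2C_{iab}y^ay^b+2g_{ib}y^b$, and the first term equals $2C_{i00}=0$ by the vanishing contractions above, leaving $\dfrac{\pp\varphi}{\pp y^i}=2g_{ib}y^b$. Differentiating a second time yields $\dfrac{\pp^2\varphi}{\pp y^i\pp y^j}=2\dfrac{\pp g_{ib}}{\pp y^j}y^b+2g_{ij}=4C_{ibj}y^b+2g_{ij}=4C_{i0j}+2g_{ij}=2g_{ij}$, again using $C_{i0j}=0$. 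Hence $\varphi$ and $L$ share the same fibre-Hessian, i.e. $\dfrac12\dfrac{\pp^2\varphi}{\pp y^i\pp y^j}=g_{ij}$.

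Consequently $\psi:=L-\varphi$ satisfies $\dfrac{\pp^2\psi}{\pp y^i\pp y^j}=0$ identically on $\wt{TM}$. From this each $\dfrac{\pp\psi}{\pp y^i}$ is independent of the $y$'s, hence equal to a function $2A_i(x)$ on the base, and a further integration in $y$ gives $\psi=2A_i(x)y^i+{\cal U}(x)$ with ${\cal U}$ depending on $x$ only; this is exactly the claimed form $L=g_{ij}y^iy^j+2A_i(x)y^i+{\cal U}(x)$. I expect the main obstacle to be the Hessian computation of the second paragraph: one must be careful to invoke both the total symmetry of $C_{ijk}$ and the $0$-homogeneity in order to cancel every Cartan-tensor contribution, since dropping either hypothesis leaves residual terms $C_{iab}y^ay^b$ or $C_{i0j}$ that would spoil the identity $\dfrac12\dfrac{\pp^2\varphi}{\pp y^i\pp y^j}=g_{ij}$. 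A minor technical point is justifying that a function with vanishing fibre-Hessian is genuinely affine in $y^i$ with $x$-dependent coefficients, which follows by integrating along the fibre directions.
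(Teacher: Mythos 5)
Your proof is correct and is essentially the intended argument: the paper itself omits the proof (remarking only that it ``does not present difficulties'' and citing the literature), and your computation --- total symmetry of $C_{ijk}$ forced by reducibility, combined with Euler's theorem for the $0$-homogeneity of $g_{ij}$ so that every contraction of $C_{ijk}$ with $y$ vanishes, whence $g_{ij}(x,y)y^iy^j$ has fibre-Hessian $2g_{ij}$ and $L-g_{ij}y^iy^j$ is fibrewise affine --- is exactly the verification implicit in Proposition 2.8.1 of Part I, Ch.~2, now run in the uniqueness direction. The only point worth making explicit is that integrating the vanishing fibre-Hessian to get coefficients depending on $x$ alone uses connectedness of the fibres of $\widetilde{TM}=TM\setminus\{0\}$ (i.e. $n\ge 2$), after which $2A_i(x)$ and ${\cal U}(x)$ are well defined on the base.
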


The prove does not present some difficulties, \cite{miranabuc}.

Also, we remark that in some conditions the fundamental tensor $g_{ij}(x,y)$ of a space $GL^n$ determine a non linear connection, \cite{Szila}, \cite{Szila2}.

For instance, in the cases $1^\circ, 2^\circ, 3^\circ$ of the previous examples we can take: $1^\circ,2^\circ$, $\oci{N}^i_j=\dfrac12\g^{i}_{jk}(x)y^j y^k$, $\g^i_{jk}(x)$ being the Christoffel symbols of the Riemannian metric $\g_{ij}(x)$. For example $3^\circ$ we take $N^i_j=\dfrac12\g^i_{jk}(x,y)y^j y^k$ where $\g^i_{jk}(x,y)$ are the Christoffel symbols of $\g_{ij}(x,y)$ (see ch. 2, part I).

Let us consider the function ${\cal E}(x,y)$ on $\wt{TM}$:
\be
{\cal E}(x,y)=g_{ij}(x,y)y^i y^j
\ee
called the {\it absolute energy} \cite{mir11} of the space $GL^n$. The fundamental tensor $g_{ij}(x,y)$ is say to be weakly regular if the absolute energy ${\cal E}(x,y)$ is a regular Lagrangian. That means: the tensor $\check{g}_{ij}(x,y)=\dfrac12\dot{\pp}_i\dot{\pp}_j{\cal E}$ is nonsingular. Thus the pair $(M,{\cal E}(x,y))$ is a Lagrange space.

\begin{definition}
A generalized Lagrangian mechanical system is a triple $\Sigma_{GL}=(M,g_{ij},Fe)$, where $GL=(M,g_{ij})$ is a generalized Lagrange space and $Fe$ is the vector field of external forces. $Fe$ being a vertical vector field we can write:
\be
Fe=F^i(x,y)\dot{\pp}.
\ee
\end{definition}

In the following we assume that the fundamental tensor $g_{ij}(x,y)$ of $\Sigma_{GL}$ is weakly regular. Therefore we can give the following Postulate \cite{BuMi}:

\noindent{\bf Postulate.} {\it The evolution equations (or Lagrange equations) of a generalized Lagrange mechanical system $\Sigma_{GL}$ are:
\be
\dfrac{d}{dt}\dfrac{\pp{\cal E}}{\pp y^i}-\dfrac{\pp{\cal E}}{\pp x^i}=F_i(x,y),\ \ y^i=\dfrac{dx^i}{dt}
\ee
with the covariant components of $\F e$:}
\be
F_i(x,y)=g_{ij}(x,y) F^j(x,y).
\ee

The Lagrange equations (3.8.7) are equivalent to the following system of second order differential equations:
\be
\left\{
\begin{array}{l}
\dfrac{d^2 x^i}{dt^2}+2\check{G}^i\(x,\dfrac{dx}{dt}\)=\dfrac12 F^i\(x,\dfrac{dx}{dt}\)\\ \\
2\check{G}^i=\dfrac12 g^{is}\(\dfrac{\pp^2{\cal E}}{\pp y^s\pp x^j}y^j-\dfrac{\pp{\cal E}}{\pp x^s}\).
\end{array}
\right.
\ee
Therefore, one can apply the theory from this chapter for the Lagrangian mechanical system $\sil=(M,{\cal E}(x,y),Fe)$. So we have:

\begin{theorem}
\begin{itemize}
\item[$1^\circ$] The operator
\be
\check{S}=y^i\pp_i-2\(\check{G}^i-\dfrac12 F^i\)\dot{\pp}_i
\ee
is a semispray on the velocity space $\wt{TM}$.
\item[$2^\circ$] The integral curves of $S$ are the evolution curves of $\Sigma_{GL}$.
\item[$3^\circ$] $\check{S}$ is determined only by the generalized mechanical system $\Sigma_{GL}$.
\end{itemize}
\end{theorem}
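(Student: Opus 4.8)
The plan is to reduce everything to the already-established theory of the Lagrange space attached to the absolute energy. By the weak regularity hypothesis, the pair $(M,{\cal E})$ with ${\cal E}(x,y)=g_{ij}(x,y)y^i y^j$ is a Lagrange space $L^n$ in the sense of Definition 1.3; hence by Theorem 2.3.1 its Euler--Lagrange equations take the form $\dfrac{d^2x^i}{dt^2}+2\check{G}^i=0$ with $\check{G}^i$ the canonical coefficients appearing in (3.8.9), and the operator $\check{S}_0=y^i\pp_i-2\check{G}^i\dot{\pp}_i$ is the canonical semispray of $L^n$. The key observation is then the splitting $\check{S}=\check{S}_0+\dfrac12 Fe$, which I would verify by direct comparison of coefficients: the coefficient of $\dot{\pp}_i$ in $\check{S}$ is $\check{G}^i-\dfrac12 F^i$. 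From this decomposition all three assertions follow, echoing the proofs of Theorems 1.3.1, 2.3.1 and 3.3.1.

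For $1^\circ$ I would argue that $\check{S}$ is a globally defined vector field on $\wt{TM}$ and satisfies $J(\check{S})=\C$. Global definition follows because $\check{S}_0$ is globally defined, being the canonical semispray of $L^n$, and $Fe=F^i\dot{\pp}_i$ is, by hypothesis, an a priori given vertical vector field on $TM$; a sum of two globally defined vector fields is globally defined. Equivalently, one checks that the coefficients $G^i=\check{G}^i-\dfrac14 F^i$ obey the semispray transformation rule (1.2.3): the $\check{G}^i$ obey it since they come from the Euler--Lagrange equations of a regular Lagrangian, while the $F^i$ transform as the components of a $d$-vector field, $\wt{F}^i=\dfrac{\pp\wt{x}^i}{\pp x^j}F^j$, and therefore do not disturb the inhomogeneous term. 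Finally, since $J$ annihilates vertical fields, $J(\check{S})=J(\check{S}_0)+\dfrac12 J(Fe)=\C+0=\C$, so $\check{S}$ is indeed a semispray.

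For $2^\circ$ I would write out the integral curves of $\check{S}$, namely $\dfrac{dx^i}{dt}=y^i$ and $\dfrac{dy^i}{dt}+2\check{G}^i(x,y)=\dfrac12 F^i(x,y)$; eliminating $y^i$ reproduces the second order system (3.8.9), which was shown to be equivalent to the Lagrange equations (3.8.7) defining the evolution of $\Sigma_{GL}$. Hence the projections to $M$ of the integral curves of $\check{S}$ are precisely the evolution curves. For $3^\circ$ I would note that $\check{G}^i$ is constructed solely from ${\cal E}$, and ${\cal E}$ is built solely from the fundamental tensor $g_{ij}$ of the generalized Lagrange space; together with $Fe$, which is part of the datum $\Sigma_{GL}=(M,g_{ij},Fe)$, this shows $\check{S}=\check{S}_0+\dfrac12 Fe$ depends on nothing beyond $\Sigma_{GL}$.

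The main obstacle is entirely concentrated in the well-definedness statement of $1^\circ$, and it is the weak regularity hypothesis that makes it go through: only when ${\cal E}$ is a regular Lagrangian, so that $\check{g}_{ij}=\dfrac12\dot{\pp}_i\dot{\pp}_j{\cal E}$ is invertible, do the coefficients $\check{G}^i$ of (3.8.9) exist and satisfy the correct transformation law. I would also flag a minor notational point to be reconciled: the contraction in (3.8.9) should be read with the inverse $\check{g}^{is}$ of the energy's own fundamental tensor rather than with $g^{is}$, since it is $(M,{\cal E})$, and not the original $g_{ij}$, that supplies the semispray; once this identification is made, the rest is the routine verification already carried out for $\Sigma_{\cal R}$, $\Sigma_F$ and $\sil$.
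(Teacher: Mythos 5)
Your proposal is correct and takes essentially the same route as the paper, which proves this theorem simply by applying the $\Sigma_L$ theory (Theorem 3.3.1, with the decomposition $S=\oci{S}+\dfrac12 Fe$ and the check $J(S)=\C$) to the Lagrangian mechanical system $(M,{\cal E}(x,y),Fe)$ — legitimate precisely because of the weak-regularity hypothesis you invoke — and your remark that the contraction in (3.8.9) must be taken with the inverse $\check{g}^{is}$ of the energy's own Hessian agrees with formula (2.8.7) of Part I. One bookkeeping point to settle: the printed coefficient $-2\(\check{G}^i-\dfrac12 F^i\)\dot{\pp}_i$ is inconsistent both with your decomposition $\check{S}=\check{S}_0+\dfrac12 Fe$ and with the evolution equations (3.8.9) you correctly recover (it would give $\dfrac{dy^i}{dt}+2\check{G}^i=F^i$), whereas comparison with (1.3.1), (2.3.1') and (3.3.1)--(3.3.2) shows the intended reading is $\check{S}=y^i\pp_i-\(2\check{G}^i-\dfrac12 F^i\)\dot{\pp}_i$, so you should fix one normalization rather than asserting both.
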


\begin{theorem}
The variation of energy $E_{\cal E}=y^i\dfrac{\pp{\cal E}}{\pp y^i}-{\cal E}$ are given by
\be
\dfrac{dE_{\cal E}}{dt}=\dfrac{dx^i}{dt}F_i\(x,\dfrac{dx}{dt}\).
\ee
\end{theorem}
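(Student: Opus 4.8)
The plan is to follow the same route that establishes the energy-balance formulae in the earlier chapters, namely the variation of the kinetic energy in the Finslerian case (Theorem 2.2.3) and the variation of energy for a Lagrangian mechanical system (Theorem 3.3.3): differentiate the energy $E_{\cal E}$ along an evolution curve and then invoke the Lagrange equations (3.8.7). First I would write the energy in its defining form $E_{\cal E}=y^i\dfrac{\pp{\cal E}}{\pp y^i}-{\cal E}$ and compute its total derivative along a solution $c(t)$ of the evolution equations, where throughout $y^i=\dfrac{dx^i}{dt}$. Applying the product rule to the first term gives $\dfrac{d}{dt}\(y^i\dfrac{\pp{\cal E}}{\pp y^i}\)=\dfrac{dy^i}{dt}\dfrac{\pp{\cal E}}{\pp y^i}+y^i\dfrac{d}{dt}\dfrac{\pp{\cal E}}{\pp y^i}$.

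Next I would expand the total derivative of the absolute energy by the chain rule, $\dfrac{d{\cal E}}{dt}=\dfrac{\pp{\cal E}}{\pp x^i}\dfrac{dx^i}{dt}+\dfrac{\pp{\cal E}}{\pp y^i}\dfrac{dy^i}{dt}$, and subtract it. The key observation is that the two terms carrying $\dfrac{\pp{\cal E}}{\pp y^i}\dfrac{dy^i}{dt}$ cancel, leaving $\dfrac{dE_{\cal E}}{dt}=y^i\dfrac{d}{dt}\dfrac{\pp{\cal E}}{\pp y^i}-\dfrac{\pp{\cal E}}{\pp x^i}\dfrac{dx^i}{dt}$. Using $y^i=\dfrac{dx^i}{dt}$ I can factor out $\dfrac{dx^i}{dt}$ to obtain $\dfrac{dE_{\cal E}}{dt}=\dfrac{dx^i}{dt}\(\dfrac{d}{dt}\dfrac{\pp{\cal E}}{\pp y^i}-\dfrac{\pp{\cal E}}{\pp x^i}\)$.

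The final step is to substitute the evolution equations. Since $c(t)$ is by hypothesis a solution of the Lagrange equations (3.8.7), the parenthesized expression equals $F_i\(x,\dfrac{dx}{dt}\)$, whence $\dfrac{dE_{\cal E}}{dt}=\dfrac{dx^i}{dt}F_i\(x,\dfrac{dx}{dt}\)$, the desired identity. I do not expect any serious obstacle: the computation is a direct transcription of the classical energy argument, relying only on the definition of $E_{\cal E}$ and the form of the Lagrange equations. The one point worth a remark is the role of weak regularity of $g_{ij}$, which is what guarantees that ${\cal E}$ is a regular Lagrangian and hence that the evolution equations (3.8.7)---equivalently the second order system (3.8.9) governed by the semispray $\check{S}$ of Theorem 3.8.2---are well posed; this regularity is not itself invoked in the cancellation, which is purely formal.
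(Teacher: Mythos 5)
Your computation is correct: the product rule plus the chain rule makes the $\dfrac{\pp{\cal E}}{\pp y^i}\dfrac{dy^i}{dt}$ terms cancel identically, and substituting the Lagrange equations (3.8.7) into $\dfrac{dE_{\cal E}}{dt}=\dfrac{dx^i}{dt}\Bigl(\dfrac{d}{dt}\dfrac{\pp{\cal E}}{\pp y^i}-\dfrac{\pp{\cal E}}{\pp x^i}\Bigr)$ yields the stated identity; your closing remark is also accurate, since weak regularity of $g_{ij}$ enters only to make ${\cal E}$ a regular Lagrangian so that the evolution equations and the semispray $\check{S}$ are well defined, not in the cancellation itself. Your route, however, differs from the paper's. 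The paper gives no independent proof in Section 3.8: it reduces $\Sigma_{GL}$ to the Lagrangian mechanical system $(M,{\cal E}(x,y),Fe)$ and invokes the general energy-variation theorem of Chapter 3 (Theorem 3.3.3), whose proof is symplectic rather than computational — the evolution semispray $S$ is characterized as the unique solution of $i_S\oci{\theta}=-d{\cal E}_L+\sigma$ (Theorem 3.3.2), and evaluating on $S$ gives $S({\cal E}_L)=d{\cal E}_L(S)=\sigma(S)=F_iy^i$ directly. Your bare-hands differentiation is more elementary and self-contained, using only the definition of $E_{\cal E}$ and the form of (3.8.7), and it incidentally clarifies that no homogeneity of ${\cal E}$ is needed (unlike the Riemannian case, Theorem 1.3.2, where $2$-homogeneity of $T$ is exploited); the paper's route buys uniformity, exhibiting the formula as one instance of the general identity $\dfrac{dE}{dt}=\sigma(S)$ along integral curves of the dynamical system, at the cost of the symplectic machinery.
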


$Fe$ are called dissipative if $g_{ij}F^i y^j\leq 0$.

\begin{theorem}
The energy $E_{\cal E}$ is decreasing on the evolution curves $(3.8.9)$ of system $\Sigma_{GL}$ if and only if the external forces $Fe$ are dissipative.
\end{theorem}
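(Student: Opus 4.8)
The plan is to read the result off directly from the variation-of-energy formula established in the previous theorem, so that the argument reduces to a single substitution followed by a sign observation. First I would recall that along an evolution curve of $\Sigma_{GL}$ one has $y^i=\dfrac{dx^i}{dt}$, and that the absolute energy $E_{\cal E}=y^i\dfrac{\pp{\cal E}}{\pp y^i}-{\cal E}$ satisfies
$$\dfrac{dE_{\cal E}}{dt}=\dfrac{dx^i}{dt}F_i\(x,\dfrac{dx}{dt}\),$$
where $F_i$ are the covariant components of the external forces $Fe$. This formula is valid at each point of an evolution curve, which is precisely what I will need in order to interpret the word ``decreasing'' pointwise.

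Next I would substitute the definition of the covariant components, $F_i(x,y)=g_{ij}(x,y)F^j(x,y)$, and use $y^i=\dfrac{dx^i}{dt}$ to rewrite the right-hand side as
$$\dfrac{dE_{\cal E}}{dt}=y^i F_i=g_{ij}(x,y)F^j(x,y)\,y^i=g_{ij}F^i y^j,$$
the last equality following from the symmetry of the fundamental tensor $g_{ij}$. Thus at every instant along an evolution curve the derivative of the energy coincides with the scalar product $g(\C,Fe)=g_{ij}F^i y^j$ of the Liouville vector field $\C$ and the external forces $Fe$. Finally I would invoke the definition of dissipative external forces, namely $g_{ij}F^i y^j\leq 0$: to say that $E_{\cal E}$ is decreasing (non-increasing) along every evolution curve means exactly $\dfrac{dE_{\cal E}}{dt}\leq 0$ at each point, and by the displayed identity this holds if and only if $g_{ij}F^i y^j\leq 0$, i.e. if and only if $Fe$ is dissipative. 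Since the equivalence is obtained at the level of the integrand, both implications are delivered simultaneously.

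There is no genuine obstacle here once the variation formula of the previous theorem is in hand; the entire content is the sign translation $\dfrac{dE_{\cal E}}{dt}\leq 0\iff g_{ij}F^i y^j\leq 0$. The only point deserving care is the interpretation of ``decreasing'', which must be read pointwise along each evolution curve (the derivative being non-positive at every instant); this is exactly what the preceding theorem guarantees, as its formula holds at each point of the curve. The weak regularity hypothesis on $g_{ij}$ is used only upstream, to ensure via the semispray $\check{S}$ that the evolution equations $(3.8.9)$ actually define the curves along which the monotonicity of $E_{\cal E}$ is being asserted.
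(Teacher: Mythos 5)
Your proof is correct and takes essentially the same route as the paper, which states this theorem as an immediate consequence of the preceding variation formula $\dfrac{dE_{\cal E}}{dt}=\dfrac{dx^i}{dt}F_i\left(x,\dfrac{dx}{dt}\right)$ combined with the definition of dissipative forces $g_{ij}F^iy^j\leq 0$, exactly as in the analogous results for $\Sigma_{\cal R}$, $\Sigma_F$ and $\Sigma_L$ earlier in the book. Your added remarks on reading ``decreasing'' pointwise along the curves and on the role of weak regularity in defining the evolution curves via the semispray $\check{S}$ are consistent with the paper's treatment.
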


Other results:
\begin{itemize}
  \item[$4^\circ$] The canonical nonlinear connection $\check{N}$ of $\Sigma_{G,L}$ has the local coefficients
  \be
  \check{N}^i_j=\dot{\pp}_j\oci{\G}^i-\dfrac14\dot{\pp}_j F^i.
  \ee
  \item[$5^\circ$] The canonical $\check{N}-$linear metric connection of mechanical system $\Sigma_{GL}$, $D\Gamma(\check{N})=(L_{jk},C_{jk})$ has the coefficients:
      \be
      \left\{
      \begin{array}{l}
      \check{L}^i_{jk}=\dfrac12 g^{is}\(\dfrac{\check{\de}g_{js}}{\de x^k}+\dfrac{\check{\de}g_{sk}}{\de x^j}-\dfrac{\check{\de}g_{jk}}{\de x^s}\)\\ \\
      \dot{C}^i_{jk}=\dfrac12 g^{is}\(\dfrac{\pp g_{js}}{\pp y^k}+\dfrac{\pp g_{sk}}{\pp y^j}-\dfrac{\pp g_{jk}}{\pp y^s}\).
      \end{array}
      \right.
      \ee
\end{itemize}

By using the geometrical object fields $g_{ij}$, $\check{S}$, $\check{N}$, $D\Gamma(\check{N})$ we can study the theory of Generalized Lagrangian mechanical systems $\Sigma_{G,L}$.

\newpage

\chapter{Hamiltonian and Cartanian mechanical systems}

The theory of Hamiltonian mechanical systems can be constructed step by step following the theory of Lagrangian mechanical system, using the differential geometry of Hamiltonian spaces expound in part I. The legality of this theory is proved by means of Legendre duality between Lagrange and Hamilton spaces. The Cartanian mechanical system appears as a particular case of the Hamiltonian mechanical systems. We develop here these theories using the author's papers \cite{mir4} and the book of R. Miron, D. Hrimiuc, H. Shimada and S. Sabau \cite{MHS}.

\setcounter{section}{0}
\section{Hamilton spaces. Preliminaries}
\setcounter{equation}{0}
\setcounter{definition}{0}
\setcounter{theorem}{0}
\setcounter{lemma}{0}
\setcounter{remark}{0}
\setcounter{proposition}{0}

Let $M$ be a $C^\9-$real $n-$dimensional manifold, called configuration space and $(T^*M,\pi^*,M)$ be the cotangent bundle, $T^*M$ is called momentum (or phase) space. A point $u^*=(x,p)\in T^*M$, $\pi^*(u^*)=x$, has the local coordinate $(x^i,p_i)$.

As we know from the previous chapter, a changing of local coordinate $(x,p)\to (\wt{x},\wt{p})$ of point $u^*$ is given by
\begin{equation}
\left\{
\begin{array}{l}
\wt{x}^i=\wt{x}^i(x^1,...,x^n),\ \det\(\dfrac{\pp\wt{x}^i}{\pp x^j}\)\neq 0\\ \\
\wt{p}_i=\dfrac{\pp x^j}{\pp\wt{x}^i}p_j
\end{array}
\right.
\end{equation}

The tangent space $T^*_{u^*}T^*M$ has a natural basis $\left(\hspace*{-0.7mm}\dfrac{\pp}{\pp x^i}=\pp_i, \dfrac{\pp}{\pp p_i}=\dot{\pp}^i\hspace*{-0.7mm}\right)$ with respect to (4.1.1) this basis is transformed as follows
\begin{equation}
\begin{array}{l}
\dfrac{\pp}{\pp x^i}=\dfrac{\pp\wt{x}^j}{\pp x^i}\dfrac{\pp}{\pp\wt{x}^j}+\dfrac{\pp\wt{p}_j}{\pp x^i}\dfrac{\pp}{\pp\wt{p}_j}\\ \\
\dot{\pp}^i=\dfrac{\pp x^i}{\pp\wt{x}^j}\wt{\dot{\pp}}^j
\end{array}
\end{equation}
Thus we have:

$1^\circ$ On $T^*M$ there are globally defined the Liouville 1-form:
\be
\check{p}=p_i dx^i
\ee
and the natural symplectic structure
\be
\oci{\theta}=d\check{p}=dp_i\wedge dx^i.
\ee

$2^\circ$ The vertical distribution $V$ has a local basis $(\dot{\pp}^1,...,\dot{\pp}^n)$. It is of dimension $n$ and is integrable.

$3^\circ$ A supplementary distribution $N$ to the distribution $V$ is given by a splitting:
\be
T_{u^*}T^*M=N_{u^*}\oplus V_{u^*},\ \ \forall u^*\in T^*M.
\ee
$N$ is called a horizontal distribution or a {\it nonlinear connection} on the momentum space $T^*M$. The dimension of $N$ is $n$.

$4^\circ$ A local adapted basis to $N$ and $V$ are given by $(\de_i,\dot{\pp}^i)$, $(i=1,...,n)$, with
\be
\de_i=\pp_i+N_{ji}\dot{\pp}^j.
\ee
The system of functions $N_{ji}(x,p)$ are the coefficients of the nonlinear connection $N$.

$5^\circ$
\be
t_{ij}=N_{ij}-N_{ji}
\ee
is a $d-$tensor on $T^*M$ - called the torsion tensor of the nonlinear connection $N$. If $t_{ij}=0$ we say that $N$ is a symmetric nonlinear connection.

$6^\circ$ The dual basis $(dx^i,\de p_i)$ of the adapted basis $(\de_i,\dot{\pp}^i)$ has 1-form $\de p_i$ expressed by
\be
\de p_i=dp_i-N_{ij}dx^j.
\ee

$7^\circ$ \begin{proposition}
If $N$ is a symmetric nonlinear connection then the symplectic structure $\oci{\theta}$ can be written:
\be
\oci\theta=\de p_i\wedge dx^i.
\ee

$8^\circ$ The integrability tensor of the horizontal distribution $N$ is
\be
R_{kij}=\de_i N_{kj}-\de_j N_{ki}
\ee
and the equations $R_{kij}=0$ gives the necessary and sufficient condition for integrability of the distribution $N$.
\end{proposition}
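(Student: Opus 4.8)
The plan is to treat the two assertions of the proposition separately, since they concern different geometric objects attached to the nonlinear connection $N$.

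For $7^\circ$ the argument is purely algebraic. Starting from $\oci{\theta}=dp_i\wedge dx^i$ (formula (4.1.4)) and the adapted $1$-forms $\de p_i=dp_i-N_{ij}dx^j$ (formula (4.1.8)), first I would substitute and expand the candidate expression
\be
\de p_i\wedge dx^i=dp_i\wedge dx^i-N_{ij}\,dx^j\wedge dx^i.
\ee
The point is that only the skew part of $N_{ij}$ survives against the antisymmetric factor $dx^j\wedge dx^i$, so that $N_{ij}\,dx^j\wedge dx^i=\dfrac12(N_{ij}-N_{ji})\,dx^j\wedge dx^i=\dfrac12 t_{ij}\,dx^j\wedge dx^i$, where $t_{ij}$ is the torsion tensor (4.1.7). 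When $N$ is symmetric, $t_{ij}=0$, the extra term drops out, and one obtains $\de p_i\wedge dx^i=dp_i\wedge dx^i=\oci{\theta}$, which is the stated identity. No obstacle is expected here.

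For $8^\circ$ the essential content is the Frobenius criterion for the regular $n$-dimensional distribution $N$, which is spanned by the adapted fields $\de_i=\pp_i+N_{ji}\dot{\pp}^j$ (formula (4.1.6)). Since $[\de_i,\de_j]$ is an intrinsically defined vector field, I would compute it directly, using $[\pp_i,\pp_j]=0$, the commutation of $\pp_i$ with $\dot{\pp}^k$, and the Leibniz rule on the vertical terms. Collecting the four contributions with a common free index $k$ gives
\be
[\de_i,\de_j]=\{\pp_i N_{kj}-\pp_j N_{ki}+N_{mi}\dot{\pp}^m N_{kj}-N_{mj}\dot{\pp}^m N_{ki}\}\dot{\pp}^k.
\ee
Recognizing that $\de_i N_{kj}=\pp_i N_{kj}+N_{mi}\dot{\pp}^m N_{kj}$, the brace is exactly $\de_i N_{kj}-\de_j N_{ki}=R_{kij}$, whence $[\de_i,\de_j]=R_{kij}\dot{\pp}^k$; in particular the bracket of any two horizontal basis fields is purely vertical, with coefficients $R_{kij}$.

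To finish, I would invoke the Frobenius theorem: $N$ is integrable if and only if it is involutive, i.e. $[\de_i,\de_j]\in N$ for all $i,j$. By the splitting (4.1.5) one has $N\cap V=\{0\}$, so the vertical field $R_{kij}\dot{\pp}^k$ can belong to $N$ only if it vanishes identically; hence involutivity is equivalent to $R_{kij}=0$. That $R_{kij}$ is a $d$-tensor follows at once, since $[\de_i,\de_j]$ is globally defined and $\dot{\pp}^k$ transforms as a vertical $d$-vector basis by (4.1.2). The main obstacle, such as it is, lies in the bracket computation: keeping track of the two mixed brackets $[\pp_i,N_{lj}\dot{\pp}^l]$ and $[N_{ki}\dot{\pp}^k,\pp_j]$ together with the purely vertical one $[N_{ki}\dot{\pp}^k,N_{lj}\dot{\pp}^l]$, and relabelling the summation indices so that every term carries the common index $k$, is where sign or index errors would most easily creep in; the remaining steps are routine.
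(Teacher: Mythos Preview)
Your proof is correct and follows essentially the same approach as the paper. For $7^\circ$ the paper merely records the identity (cf.\ (4.3.5) in Part~I, Ch.~4) without argument, and your substitution of $\de p_i=dp_i-N_{ij}dx^j$ into $\de p_i\wedge dx^i$ and use of $t_{ij}=0$ is the standard justification; for $8^\circ$ the paper likewise just states $[\de_j,\de_h]=R_{ijh}\dot{\pp}^i$ (Part~I, (4.3.8)) and invokes Frobenius, which is exactly what you carry out in detail.
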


The notion of $N-$linear connection can be taken from the ch. 2, part I.

\begin{definition}
A Hamilton space is a pair $H^n=(M,H(x,p))$ where $H(x,p)$ is a real scalar function on the momentum space $T^*M$ having the following properties:
\begin{itemize}
  \item[$1^\circ$] $H:T^*M\to R$ is differentiable on $\wt{T^*M}=T^*M\setminus\{0\}$ and continuous on the null section of projection $\pi_*$.
  \item[$2^\circ$] The Hessian of $H$, with the elements
  \be
  g^{ij}=\dfrac12\dot{\pp}^i\dot{\pp}^j H
  \ee
  is nonsingular, i.e.
  \be
  \det(g^{ij})\neq 0\ {\rm{on\ }} \wt{T^*M}.
  \ee
  \item[$3^\circ$] The 2-form $g^{ij}(x,p)\eta_i\eta_j$ has a constant signature on $\wt{T^*M}$.
\end{itemize}
\end{definition}

In the ch. 2, part I it is proved the following Miron's result:

\begin{theorem}
$1^\circ$ In a Hamilton space $H^n=(M,H(x,p))$ for which $M$ is a paracompact manifold, there exist nonlinear connections determined only by the fundamental function $H(x,p)$.

$2^\circ$ One of the, $\oci{N}$, has the coefficients
\be
\oci{N}_{ij}=-\dfrac12 g_{jh}\[\dfrac14 g_{ik}\dot{\pp}^k\{H,\dot{\pp}^h H\}+\dot{\pp}^h\pp_i H\].
\ee

$3^\circ$ The nonlinear connection $\oci{N}$ is symmetric.
\end{theorem}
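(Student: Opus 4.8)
The plan is to build the canonical nonlinear connection on the momentum space $T^*M$ in three stages, transporting to $T^*M$ the construction already available for Lagrange spaces on $TM$. The mere existence of \emph{some} nonlinear connection when $M$ is paracompact is known; the content of $1^\circ$ is that a \emph{canonical} one can be manufactured out of $H$ alone. First I would invoke the $\mathcal{L}$-duality developed in the Cartan-spaces chapter: since $H$ is a regular Hamiltonian, the dual Lagrange space $L^{*n}=\mathrm{Leg}\,H^n$ is determined by $H$ only, and its canonical nonlinear connection $\overset{\circ}{N}{}^{L}$ — which, by the theorem of the Lagrange chapter, depends only on the fundamental function of $L^{*n}$, hence only on $H$ — is carried by the Legendre diffeomorphism $\varphi=\psi^{-1}$ into a horizontal distribution supplementary to $V$ on $T^*M$. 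This at once produces a nonlinear connection and guarantees it is assembled from $H$ alone, which proves $1^\circ$.

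Second, to pin down the explicit coefficients of $2^\circ$, I would verify directly that the functions $\overset{\circ}{N}_{ij}$ given by the stated formula obey the transformation law of nonlinear-connection coefficients on $T^*M$; equivalently, that the local fields $\delta_i=\pp_i+\overset{\circ}{N}_{ji}\dot{\pp}^j$ transform tensorially, $\widetilde{\delta}_i=\frac{\pp x^j}{\pp\widetilde{x}^i}\delta_j$. Into this computation I would feed the change-of-frame rules $(4.1.2)$, the tensoriality of $g^{ij}$ and of its inverse $g_{ij}$, and the Poisson bracket $\{\cdot,\cdot\}$ of $T^*M$. The heart of the step is tracking the \emph{inhomogeneous} contribution: the only non-tensorial piece arises from $\pp_i H$ (through the term carrying the second derivatives of the coordinate change in the transformation of $\pp_i$), and one must check that, after contraction with $g_{jh}$ and $g_{ik}$, it reproduces exactly the affine inhomogeneity required of a nonlinear connection. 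Because every building block of the formula is a function of $H$ and its partial derivatives, this stage simultaneously re-proves $1^\circ$ concretely.

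Third, for the symmetry $3^\circ$ I would argue by duality: the weak torsion of the canonical nonlinear connection of any Lagrange space vanishes (the corresponding proposition of Part I), so $\overset{\circ}{N}{}^{L}$ is symmetric, and the Legendre transformation sends this property to the vanishing of $t_{ij}=\overset{\circ}{N}_{ij}-\overset{\circ}{N}_{ji}$ on $T^*M$. As an independent check I would reduce the coordinate formula to a manifestly $i\leftrightarrow j$ symmetric form: expanding $\dot{\pp}^k\{H,\dot{\pp}^h H\}$ by the definition of the bracket, using $\dot{\pp}^i\dot{\pp}^j H=2g^{ij}$ together with the total symmetry of the Cartan tensor $\dot{\pp}^k g^{ij}$, and contracting with $g_{jh}g^{mh}=\delta_j^m$, collapses $\overset{\circ}{N}_{ij}$ to the symmetric expression $\tfrac14\{g_{ij},H\}-\tfrac14\bigl(g_{ik}\dot{\pp}^k\pp_j H+g_{jk}\dot{\pp}^k\pp_i H\bigr)$, whose invariance under interchange of $i$ and $j$ is evident.

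I expect the main obstacle to be the second stage. The transformation-rule verification is a long computation in which the genuinely non-tensorial terms — the second derivatives of the coordinate change acting on the momenta $p_i$ — must assemble into precisely the affine term of a nonlinear connection; controlling these, rather than the Poisson-algebra manipulations (routine once $g^{ij}=\tfrac12\dot{\pp}^i\dot{\pp}^j H$ is substituted), is the delicate point, and it is exactly what separates a bona fide nonlinear connection from an arbitrary symmetric object field on $T^*M$.
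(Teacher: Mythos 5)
Your proposal is correct and follows essentially the paper's own route: the theorem is stated here without demonstration, with a pointer back to the Hamilton-space chapter, where the entire justification is your stage two (a ``straightforward computation'' checking that the coefficients obey the nonlinear-connection transformation rule under the coordinate change (4.1.1), whence dependence on $H$ alone), symmetry is read off the manifestly symmetrized equivalent form $\frac14\{g_{ij},H\}-\frac14\bigl(g_{ik}\dot{\pp}^k\pp_j H+g_{jk}\dot{\pp}^k\pp_i H\bigr)$ --- precisely your collapsed expression, which is the form (5.2.1) the paper actually records --- and the Legendre-duality transport of the Lagrangian canonical connection is the ``direct method suggested by Mechanics'' the paper itself invokes, matching your stages one and three. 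One caveat worth recording: if you run your stage-three expansion with the bracket convention (4.1.6), $\{f,g\}=\dot{\pp}^i f\,\pp_i g-\dot{\pp}^i g\,\pp_i f$, the stated coefficients reduce (via $\dot{\pp}^i\dot{\pp}^j H=2g^{ij}$, total symmetry of $\dot{\pp}^k g^{ij}$, and $g_{jh}g_{ik}\dot{\pp}^k g^{sh}=-\dot{\pp}^s g_{ij}$) to an expression carrying mismatched weights $-\frac34$ and $+\frac14$ on the two mixed-derivative terms, and the clean collapse to the symmetric form holds exactly under the opposite sign convention (the one used for the Poisson structure on $T^{*k}M$ in Part II), so the identity you rely on is correct modulo the book's own inconsistent bracket sign rather than any flaw in your argument.
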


In the formula (4.1.13), $\{,\}$ is the Poisson brackets. $\oci{N}$ is called the canonical nonlinear connection of $H^n$.

The variational problem applied to the integral of action of $H^n$:
\be
I(c)=\dd\int^1_0\[p_i(t)\dfrac{dx^i}{dt}-{\cal H}(x(t),p(t))\]dt,
\ee
where
\be
{\cal H}(x,p)=\dfrac12 H(x,p)
\ee
leads to the following results:

\begin{theorem}
The necessary conditions as the functional $I(c)$ be an extremal value of the functionals $I(\wt{c})$ imply that the curve $c(t)=(x^i(t),p_i(t))$ is a solution of the Hamilton--Jacobi equations:
\be
\dfrac{dx^i}{dt}-\dfrac{\pp{\cal H}}{\pp p_i}=0,\ \ \dfrac{dp_i}{dt}+\dfrac{\pp{\cal H}}{\pp x^i}=0.
\ee

If $\oci{N}$ is the canonical nonlinear connection then the Hamilton--Jacobi equations can be written in the form
\be
\dfrac{dx^i}{dt}-\dfrac{\pp {\cal H}}{\pp p_i}=0,\ \ \dfrac{\de p_i}{dt}+\dfrac{\de {\cal H}}{\de x^i}=0.
\ee
\end{theorem}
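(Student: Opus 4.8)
The plan is to reproduce, almost verbatim, the variational argument already carried out in \S4.2 of the cotangent--manifold chapter (Theorem \ref{t4.2.1}), now reading ${\cal H}=\dfrac12 H$ as the Hamiltonian of the space $H^n$. First I would fix a curve $c(t)=(x^i(t),p_i(t))$ on a local chart $\pi^{*-1}(U)$ and introduce its variations $\ov{c}(\varepsilon_1,\varepsilon_2)$ by $\ov{x}^i=x^i+\varepsilon_1 V^i$, $\ov{p}_i=p_i+\varepsilon_2\eta_i$, where $V^i(t)$ is a vector field and $\eta_i(t)$ a covector field on $U$ vanishing, together with $\dfrac{dV^i}{dt}$, at the endpoints $t=0,1$. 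Substituting into the integral of action \eqref{4.1.14} gives
$$I(\ov{c})=\int_0^1\left[(p_i+\varepsilon_2\eta_i)\left(\dfrac{dx^i}{dt}+\varepsilon_1\dfrac{dV^i}{dt}\right)-{\cal H}(x+\varepsilon_1 V,\,p+\varepsilon_2\eta)\right]dt,$$
and the extremality conditions to impose are $\dfrac{\pp I}{\pp\varepsilon_1}=\dfrac{\pp I}{\pp\varepsilon_2}=0$ at $\varepsilon_1=\varepsilon_2=0$.

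Next, since under our smoothness hypotheses differentiation commutes with integration, I would compute the two partials separately. The $\varepsilon_2$--derivative yields directly $\int_0^1\left(\dfrac{dx^i}{dt}-\dfrac{\pp{\cal H}}{\pp p_i}\right)\eta_i\,dt=0$, and the arbitrariness of $\eta_i$ gives the first equation $\dfrac{dx^i}{dt}=\dfrac{\pp{\cal H}}{\pp p_i}$. The $\varepsilon_1$--derivative yields $\int_0^1\left(p_i\dfrac{dV^i}{dt}-\dfrac{\pp{\cal H}}{\pp x^i}V^i\right)dt=0$; integrating the first term by parts and using $V^i(0)=V^i(1)=0$ converts it into $-\int_0^1\left(\dfrac{dp_i}{dt}+\dfrac{\pp{\cal H}}{\pp x^i}\right)V^i\,dt=0$, whence the arbitrariness of $V^i$ gives the second equation $\dfrac{dp_i}{dt}+\dfrac{\pp{\cal H}}{\pp x^i}=0$. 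This establishes the Hamilton--Jacobi system \eqref{4.1.16}.

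Finally, for the covariant reformulation \eqref{4.1.17} I would pass to the adapted frame of the canonical nonlinear connection $\oci{N}$, using $\dfrac{\de p_i}{dt}=\dfrac{dp_i}{dt}-\oci{N}_{ij}\dfrac{dx^j}{dt}$ from \eqref{4.1.8} and $\dfrac{\de{\cal H}}{\de x^i}=\dfrac{\pp{\cal H}}{\pp x^i}+\oci{N}_{ji}\dfrac{\pp{\cal H}}{\pp p_j}$ from \eqref{4.1.6}. Adding these and substituting the already--derived first equation $\dfrac{dx^j}{dt}=\dfrac{\pp{\cal H}}{\pp p_j}$, the two connection terms collapse to $(\oci{N}_{ji}-\oci{N}_{ij})\dfrac{\pp{\cal H}}{\pp p_j}=-t_{ij}\dfrac{\pp{\cal H}}{\pp p_j}$, with $t_{ij}=\oci{N}_{ij}-\oci{N}_{ji}$ the torsion \eqref{4.1.7} of $\oci{N}$. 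Here lies the only genuinely delicate point: this residual term vanishes precisely because the canonical nonlinear connection is symmetric (Theorem \ref{t4.1.2}, part $3^\circ$), so that $\dfrac{\de p_i}{dt}+\dfrac{\de{\cal H}}{\de x^i}=\dfrac{dp_i}{dt}+\dfrac{\pp{\cal H}}{\pp x^i}$ and \eqref{4.1.17} is equivalent to \eqref{4.1.16}. The main obstacle is therefore not the variational computation, which is routine, but recognizing that the passage to the intrinsic form hinges on $t_{ij}=0$ --- which is exactly why the statement singles out the \emph{canonical} nonlinear connection rather than an arbitrary one.
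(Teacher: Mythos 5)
Your proof is correct and takes essentially the same route as the paper: the paper obtains the Hamilton--Jacobi system by exactly this two-parameter variational argument (the computation of Theorem 4.2.1 in the cotangent-bundle chapter, with the same endpoint conditions on $V^i$, $\eta_i$ and the same integration by parts), here specialized to ${\cal H}=\frac12 H$. Your derivation of the covariant form --- passing to the adapted frame via $\de_i=\partial_i+N_{ji}\dot{\partial}^j$ and $\de p_i=dp_i-N_{ij}dx^j$, substituting the first equation, and noting that the residual term $-t_{ij}\,\partial{\cal H}/\partial p_j$ vanishes by the symmetry of the canonical nonlinear connection $\oci{N}$ --- is precisely the step the paper leaves implicit, so nothing is missing.
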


Evidently, the equations (4.1.16) have a geometrical meaning.

The curves $c(t)$ which verify the Hamilton--Jacobi equations are called the extremal curves (or geodesics) of $H^n$.

Other important results:

\begin{theorem}
The fundamental function $H(x,p)$ of a Hamilton space $H^n=(M,H)$ is constant along to every extremal curves.
\end{theorem}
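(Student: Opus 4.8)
The plan is to show directly that the total $t$-derivative of $H$ vanishes along any solution of the Hamilton--Jacobi equations (4.1.16). The only input needed is the chain rule together with the evolution equations themselves, so this is a short computation rather than a structural argument; the presentation will simply make the cancellation transparent.

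First I would write, along a curve $c(t)=(x^i(t),p_i(t))$,
\begin{equation}
\dfrac{dH}{dt}=\dfrac{\pp H}{\pp x^i}\dfrac{dx^i}{dt}+\dfrac{\pp H}{\pp p_i}\dfrac{dp_i}{dt}.\nonumber
\end{equation}
Next, using that $c$ is an extremal curve, i.e. a solution of the Hamilton--Jacobi equations (4.1.16) with ${\cal H}=\tfrac12 H$, I substitute $\dfrac{dx^i}{dt}=\dfrac{\pp{\cal H}}{\pp p_i}=\tfrac12\dfrac{\pp H}{\pp p_i}$ and $\dfrac{dp_i}{dt}=-\dfrac{\pp{\cal H}}{\pp x^i}=-\tfrac12\dfrac{\pp H}{\pp x^i}$. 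This gives
\begin{equation}
\dfrac{dH}{dt}=\dfrac12\dfrac{\pp H}{\pp x^i}\dfrac{\pp H}{\pp p_i}-\dfrac12\dfrac{\pp H}{\pp p_i}\dfrac{\pp H}{\pp x^i}=0,\nonumber
\end{equation}
so $H$ is constant along $c$.

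A cleaner, coordinate-free way to phrase the same fact, which I would prefer to record, is via the Poisson bracket defined in (4.1.6): since $\dfrac{d{\cal H}}{dt}=\{{\cal H},{\cal H}\}$ along the integral curves of the Hamiltonian vector field (as already established in Theorem 4.1.2 of Chapter 4, Part I), and the Poisson bracket is antisymmetric by property $1^\circ$, we have $\{{\cal H},{\cal H}\}=0$, hence $\dfrac{dH}{dt}=2\dfrac{d{\cal H}}{dt}=0$. This is really the same computation, but it ties the statement to the conservation result already proved for general Hamiltonian systems $(T^*M,\theta,{\cal H})$ and makes clear that the conclusion is nothing but the law of conservation of the Hamiltonian specialized to the fundamental function of $H^n$.

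There is essentially no obstacle here: the identity is an immediate consequence of the skew structure of Hamilton's equations. The only point requiring a word of care is the factor $\tfrac12$ relating ${\cal H}$ and $H$ in (4.1.15), which must be tracked so that the two terms cancel exactly; since $H=2{\cal H}$ the constant factor does not affect the vanishing of the derivative. I would close by remarking that this shows the fundamental function $H$ is a first integral of the Hamilton--Jacobi flow, in perfect analogy with the conservation of energy ${\cal E}_L$ along the Euler--Lagrange equations established in Part I.
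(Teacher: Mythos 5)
Your proof is correct and follows essentially the same route as the paper: the text establishes the result via $\dfrac{d{\cal H}}{dt}=\{{\cal H},{\cal H}\}=0$ along the integral curves of the Hamilton--Jacobi equations, which is exactly your Poisson-bracket formulation, and your explicit chain-rule cancellation is just this identity written out in coordinates. You also correctly track the factor $\tfrac12$ relating ${\cal H}$ and $H$, so nothing is missing.
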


\begin{theorem}
The following properties hold:

$1^\circ$ For a Hamilton space $H^n$ there exists a vector field $\oci\xi\in{\cal X}(\wt{T^*M})$ with the property
\be
i_{\oci\xi}\oci\theta=-dH.
\ee

$2^\circ$ $\oci\xi$ is given by
\be
\oci\xi=\dfrac12(\dot{\pp}^i H\pp_i-\pp_i H\dot{\pp}^i).
\ee

$3^\circ$ The integral curve of $\oci\xi$ is given by the Hamilton--Jacobi equations $(4.1.16)$.
\end{theorem}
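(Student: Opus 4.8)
The plan is to identify $\oci\xi$ with the Hamiltonian vector field of the fundamental function and to derive everything from the nondegeneracy of the canonical symplectic structure $\oci\theta=dp_i\wedge dx^i$ of (4.1.4). For $1^\circ$ I would invoke Proposition 4.1.1, which states that the ${\cal F}(T^*M)$-linear map $S_\theta(X)=i_X\oci\theta$ is an isomorphism of $\chi(T^*M)$ onto $\chi^*(T^*M)$. Since $H$ is differentiable on $\wt{T^*M}$, the $1$-form $-d{\cal H}$ (with ${\cal H}=\tfrac12 H$ as in (4.1.15)) is a globally defined element of $\chi^*(\wt{T^*M})$, whence $\oci\xi:=S_\theta^{-1}(-d{\cal H})$ exists, is unique, and is globally defined on $\wt{T^*M}$. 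This settles the existence-uniqueness claim, the defining relation being the correctly normalised $i_{\oci\xi}\oci\theta=-d{\cal H}$.

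For $2^\circ$ I would read off the local expression in the natural frame $(\pp_i,\dot{\pp}^i)$. Writing a general field as $\xi=\xi^i\pp_i+\xi_i\dot{\pp}^i$ and using $i_\xi(dp_i\wedge dx^i)=\xi_i\,dx^i-\xi^i\,dp_i$, the equation $i_\xi\oci\theta=-d{\cal H}$ splits, upon comparing the coefficients of the cobasis $(dx^i,dp_i)$, into
\[
\xi^i=\dfrac{\pp{\cal H}}{\pp p_i}=\dfrac12\dot{\pp}^i H,\qquad \xi_i=-\dfrac{\pp{\cal H}}{\pp x^i}=-\dfrac12\pp_i H.
\]
Substituting back gives $\oci\xi=\tfrac12(\dot{\pp}^i H\,\pp_i-\pp_i H\,\dot{\pp}^i)$, which is the stated formula; the factor $\tfrac12$ is exactly the one carried by the normalisation ${\cal H}=\tfrac12 H$.

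For $3^\circ$ I would write the integral curves of $\oci\xi$ along a curve $c(t)=(x^i(t),p_i(t))\in\wt{T^*M}$ as the system $\dfrac{dx^i}{dt}=\xi^i$, $\dfrac{dp_i}{dt}=\xi_i$. Inserting the components computed in $2^\circ$ yields directly
\[
\dfrac{dx^i}{dt}=\dfrac{\pp{\cal H}}{\pp p_i},\qquad \dfrac{dp_i}{dt}=-\dfrac{\pp{\cal H}}{\pp x^i},
\]
that is, the Hamilton--Jacobi equations (4.1.16); local existence and uniqueness of these integral curves then follow from the classical theory of ordinary differential equations, exactly as in Theorem 4.1.2.

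The argument is essentially routine once the symplectic isomorphism $S_\theta$ is available, so I do not anticipate any analytic difficulty. The two points deserving genuine care are, first, the global well-definedness of $\oci\xi$: this rests on $\oci\theta$ being invariant under the coordinate changes (4.1.1) and on $S_\theta$ being a chart-independent isomorphism, which together guarantee that the locally defined components of $2^\circ$ patch into a single vector field on $\wt{T^*M}$. Second, one must keep the factor $\tfrac12$ consistent throughout, so that the defining relation, the local formula in $2^\circ$, and the Hamilton--Jacobi equations in $3^\circ$ all agree; this bookkeeping is the one place where a normalisation slip (the defining equation is naturally $-d{\cal H}$ rather than $-dH$) would propagate, and I expect verifying this consistency --- rather than any hard estimate --- to be the main thing to get right.
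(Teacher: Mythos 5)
Your proposal is correct and follows essentially the same route as the paper, which obtains this theorem by applying the isomorphism $S_\theta$ of Proposition 4.1.1 (Part I, Ch.\ 4) and Theorem 4.1.2 there: existence and uniqueness via $\oci\xi=S_\theta^{-1}(-d{\cal H})$, the local expression by comparing coefficients in the natural cobasis, and the Hamilton--Jacobi equations as the integral-curve system. Your attention to the normalisation is well placed: the defining relation must indeed read $i_{\oci\xi}\oci\theta=-d{\cal H}$ with ${\cal H}=\tfrac12 H$ for consistency with $2^\circ$ and with equations (4.1.16), the factor $\tfrac12$ being silently absorbed in the theorem's statement.
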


The vector field $\oci\xi$ from the formula (4.1.19) is called the Hamilton vector of the space $H^n$.

\begin{proposition}
In the adapted basis of the canonical nonlinear connection $\oci{N}$ the Hamilton vector $\oci\xi$ is expressed by
\be
\oci\xi=\dfrac12(\dot{\pp}^i H\de_i-\de_i H\dot{\pp}^i).
\ee
\end{proposition}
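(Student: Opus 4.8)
The plan is to prove the stated formula by a direct computation that re-expresses the Hamilton vector $\oci\xi$, given in the natural frame by (4.1.19), in terms of the adapted frame $(\de_i,\dot{\pp}^i)$. The only ingredients required are the defining relation (4.1.6) of the adapted horizontal field, $\de_i=\pp_i+N_{ji}\dot{\pp}^j$, which I invert to $\pp_i=\de_i-N_{ji}\dot{\pp}^j$, and the symmetry of the canonical nonlinear connection $\oci{N}$ guaranteed by the third assertion of Theorem 4.1.1.

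First I would substitute $\pp_i=\de_i-N_{ji}\dot{\pp}^j$ into both occurrences of $\pp_i$ in (4.1.19): once where $\pp_i$ acts as the differential operator on $H$, producing $\pp_i H=\de_i H-N_{ji}\dot{\pp}^j H$, and once where $\pp_i$ figures as a basis vector field. Collecting the resulting four terms then yields the desired leading part $\dfrac12(\dot{\pp}^i H\,\de_i-\de_i H\,\dot{\pp}^i)$ together with two correction terms that are purely vertical, each of the shape $N_{ji}\,(\dot{\pp}H)\,\dot{\pp}$.

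The central step is to observe that, after relabeling the summation indices $i\leftrightarrow j$ in one of them, these two correction terms combine into $-t_{ij}\,\dot{\pp}^j H\,\dot{\pp}^i$, where $t_{ij}=N_{ij}-N_{ji}$ is precisely the torsion $d$-tensor (4.1.7) of the nonlinear connection. Invoking the symmetry of $\oci{N}$, that is $t_{ij}=0$, makes these correction terms vanish identically, and the expression collapses exactly to (4.1.20).

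The whole argument is elementary; the only place demanding attention is the index bookkeeping in the two correction terms, and the one genuinely structural fact is that their failure to cancel would be measured exactly by the torsion $t_{ij}$. For this reason I would single out the use of the symmetry of $\oci{N}$ as the heart of the proof, even though, once that symmetry is in hand, no real obstacle remains.
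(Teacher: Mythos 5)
Your proof is correct and is exactly the argument the paper intends: the proposition is stated there without proof, and your computation --- inverting (4.1.6) to $\pp_i=\de_i-N_{ji}\dot{\pp}^j$, substituting into the natural-frame expression (4.1.19) of $\oci\xi$, and killing the purely vertical remainder by the symmetry $t_{ij}=N_{ij}-N_{ji}=0$ of the canonical $\oci{N}$ (Theorem 4.1.1, $3^\circ$) --- is the evident route. One immaterial slip: the combined correction term is $-\dfrac12\,t_{ij}\,\dot{\pp}^jH\,\dot{\pp}^i$, with a factor $\dfrac12$ you dropped in the prose, which of course does not matter since the term vanishes.
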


By means of the Theorem 4.1.4 we can say that the Hamilton vector field $\xi$ is a dynamical system of the Hamilton space $H^n$.

\section{The Hamiltonian mechanical systems}\index{Analytical Mechanics of! Hamiltonian systems}
\setcounter{equation}{0}
\setcounter{definition}{0}
\setcounter{theorem}{0}
\setcounter{lemma}{0}
\setcounter{remark}{0}
\setcounter{proposition}{0}

Following the ideas from the first part, ch. ..., we can introduce the next definition:
\begin{definition}
A Hamiltonian mechanical system is a triple:
\begin{equation}
\Sigma_H=(M,H(x,p),Fe(x,p)),
\end{equation}
where $H^n=(M,H(x,p))$ is a Hamilton space and
\begin{equation}
Fe(x,p)=F_i(x,p)\dot{\partial}^i
\end{equation}
is a given vertical vector field on the momenta space $T^*M$.

$Fe$ is called the external forces field.
\end{definition}

The evolution equations of $\Sigma_H$ can be defined by means of equations (4.1.16) from the variational problem.

\medskip

\noindent{\bf Postulate 4.2.1.} {\it The evolution equations of the Hamiltonian mechanical system $\Sigma_H$ are the following {\bf Hamilton equations}:
\begin{equation}
\dfrac{dx^i}{dt}-\dot{\partial}^i{
cal H}=0,\ \dfrac{dp_i}{dt}+\partial_i{\cal H}=\dfrac12 F_i(x,p), \ \ {\cal H}=\dfrac12 H.
\end{equation}}

Evidently, for $Fe=0$, the equations (4.2.3) give us the geodesics of the Hamilton space $H^n$.

Using the canonical nonlinear connection $\buildrel{\circ}\over{N}$ we can write in an invariant form the
Hamilton equations, which allow to prove the geometrical meaning of these equations.

\noindent{\bf Examples.} $1^\circ$ Consider $H^n=(M,H(x,p))$ the Hamilton
spaces of electrodynamics, [19]:
$$H(x,p)=\dfrac{1}{mc}\gamma^{ij}(x)p_ip_j-\dfrac{2e}{mc^2}A^i(x)p_i+\dfrac{e^3}{mc^3}A_i(x)A^i(x)$$
and $Fe=p_i\dot{\partial}^i$. Then $\Sigma_H$ is a Hamiltonian
mechanical system determined only by $H^n$.

$2^\circ$ $H^n=(M,K^2(x,p))$ is a Cartan space and $Fe=p_i\dot{\partial}^i$.

$3^\circ$ $H^n=(M,{\cal E}(x,p))$ with ${\cal E}(x,p)=\gamma^{ij}(x)p_ip_j$ and
$Fe=a(x)p_i\dot{\partial}^i$.

Returning to the general theory, we can prove:

\begin{theorem}
The following properties hold:

$1^\circ$ $\xi$ given by
\begin{equation}
\xi=\dfrac12[\dot{\partial}^i H\partial_i-(\partial_i H-F_i)\dot{\partial}^i]
\end{equation}
is a vector field on $\widetilde{T^*M}$.

$2^\circ$ $\xi$ is determined only by the Hamiltonian mechanical system $\Sigma_H$.

$3^\circ$ The integral curves of $\xi$ are given by the Hamilton equation $(4.2.3)$.
\end{theorem}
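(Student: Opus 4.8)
The plan is to reduce the whole statement to Theorem 4.1.4 together with the defining properties of the external force field, by exhibiting $\xi$ as the sum of the canonical Hamilton vector and half of the external forces. First I would observe that the expression defining $\xi$ splits as
$$\xi=\oci\xi+\dfrac12 Fe,$$
where $\oci\xi=\dfrac12(\dot{\partial}^i H\,\partial_i-\partial_i H\,\dot{\partial}^i)$ is the Hamilton vector field of the space $H^n$ given by $(4.1.19)$ and $Fe=F_i\dot{\partial}^i$ is the external force field of $(4.2.2)$. Indeed, separating the $\partial_i$ and $\dot{\partial}^i$ terms in the stated formula and using $-\dfrac12(\partial_i H-F_i)\dot{\partial}^i=-\dfrac12\partial_i H\,\dot{\partial}^i+\dfrac12 F_i\dot{\partial}^i$ yields this decomposition at once.

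For $1^\circ$ I would invoke Theorem 4.1.4, according to which $\oci\xi\in{\cal X}(\wt{T^*M})$ is globally defined on the momentum space, and Definition 4.2.1, by which $Fe$ is a (vertical) vector field on $T^*M$. Since $\dfrac12 Fe$ is again a vector field and the sum of two vector fields is a vector field, $\xi$ is a globally defined vector field on $\wt{T^*M}$. Should a direct verification be preferred, the same conclusion follows by checking the transformation rule of the components under $(4.1.1)$--$(4.1.2)$: the factors $\dot{\partial}^i H$ transform with $\partial\wt{x}^i/\partial x^j$, the frame $(\partial_i,\dot{\partial}^i)$ transforms by $(4.1.2)$, and the non-tensorial term generated by $\partial_i$ cancels in the skew combination exactly as it does for $\oci\xi$; but the decomposition route is the cleaner one.

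For $2^\circ$ I would note that $\oci\xi$ is constructed solely from the fundamental function $H$ and its partial derivatives, hence depends only on the Hamilton space $H^n$, while $Fe$ is by definition part of the data of the triple $\Sigma_H$. Consequently $\xi=\oci\xi+\dfrac12 Fe$ is determined by $H$ and $Fe$ alone, that is, only by the Hamiltonian mechanical system $\Sigma_H$.

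For $3^\circ$ I would read off the components of $\xi$ in the natural basis $(\partial_i,\dot{\partial}^i)$. Writing $\xi=X^i\partial_i+X_i\dot{\partial}^i$, the stated formula gives $X^i=\dfrac12\dot{\partial}^i H=\dot{\partial}^i{\cal H}$ and $X_i=-\dfrac12(\partial_i H-F_i)=-\partial_i{\cal H}+\dfrac12 F_i$, where ${\cal H}=\dfrac12 H$. The integral curves $t\mapsto(x^i(t),p_i(t))$ of $\xi$ are then the solutions of
$$\dfrac{dx^i}{dt}=\dot{\partial}^i{\cal H},\qquad \dfrac{dp_i}{dt}=-\partial_i{\cal H}+\dfrac12 F_i,$$
which are precisely the Hamilton equations $(4.2.3)$. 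There is essentially no serious obstacle in this argument; the only points requiring care are the global-definedness in $1^\circ$, which genuinely rests on Theorem 4.1.4, and the bookkeeping of the factor $\dfrac12$ coming from ${\cal H}=\dfrac12 H$ together with the sign in the $\dot{\partial}^i$ component, so that $\dfrac{dp_i}{dt}+\partial_i{\cal H}=\dfrac12 F_i$ reproduces $(4.2.3)$ exactly.
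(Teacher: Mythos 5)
Your proposal is correct and follows exactly the paper's own route: the paper proves this theorem precisely by remarking the decomposition $\xi=\oci{\xi}+\dfrac12 Fe$ and appealing to Theorem 4.1.4 for the canonical part. Your added details (the coefficient bookkeeping with ${\cal H}=\dfrac12 H$ and the componentwise reading of the integral curves) are accurate fillings-in of what the paper leaves to the reader.
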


The previous Theorem is not difficult to prove if we remark the following expression of $\xi$:
\begin{equation}
\xi=\oci{\xi}+\dfrac12 Fe.
\end{equation}
Also we have:
\begin{proposition}
The variation of the Hamiltonian $H(x,p)$ along the evolution curves of $\Sigma_H$
is given by:
\begin{equation}
\dfrac{dH}{dt}=F_i\dfrac{dx^i}{dt}.
\end{equation}
\end{proposition}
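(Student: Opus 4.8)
The plan is to establish the identity by a direct computation of the total derivative $\dfrac{dH}{dt}$ along an evolution curve $c(t)=(x^i(t),p_i(t))$ of $\Sigma_H$, and then to eliminate the partial derivatives of $H$ by substituting the Hamilton equations $(4.2.3)$. First I would write, by the chain rule,
\[
\dfrac{dH}{dt}=\dfrac{\pp H}{\pp x^i}\dfrac{dx^i}{dt}+\dfrac{\pp H}{\pp p_i}\dfrac{dp_i}{dt}=\pp_i H\,\dfrac{dx^i}{dt}+\dot{\pp}^i H\,\dfrac{dp_i}{dt}.
\]
This formula is valid along any curve in $T^*M$; it is at this stage that the evolution equations of $\Sigma_H$ must be brought in.

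Next I would insert the Hamilton equations. Since ${\cal H}=\frac12 H$, the first equation of $(4.2.3)$ gives $\dfrac{dx^i}{dt}=\dot{\pp}^i{\cal H}=\frac12\dot{\pp}^i H$, and the second gives $\dfrac{dp_i}{dt}=-\pp_i{\cal H}+\frac12 F_i=-\frac12\pp_i H+\frac12 F_i$. The only point requiring care is the consistent bookkeeping of the factors $\frac12$ arising from the convention ${\cal H}=\frac12 H$; nothing else is subtle. Substituting the two expressions into the chain-rule formula yields
\[
\dfrac{dH}{dt}=\frac12\,\pp_i H\,\dot{\pp}^i H+\dot{\pp}^i H\Big(-\frac12\,\pp_i H+\frac12 F_i\Big).
\]

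Finally I would observe that the two symmetric terms $\frac12\,\pp_i H\,\dot{\pp}^i H$ and $-\frac12\,\dot{\pp}^i H\,\pp_i H$ cancel identically, leaving $\dfrac{dH}{dt}=\frac12\,\dot{\pp}^i H\,F_i$. Using once more the first Hamilton equation in the form $\frac12\,\dot{\pp}^i H=\dfrac{dx^i}{dt}$, this becomes $\dfrac{dH}{dt}=F_i\,\dfrac{dx^i}{dt}$, which is precisely the claimed formula. There is no genuine obstacle: the result is an immediate algebraic consequence of the Hamilton equations, and it is the exact dual, via Legendre transformation, of the energy-variation formula $(2.2.9)$ proved for the Finslerian (and Lagrangian) mechanical systems. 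As an alternative I could derive it invariantly from the decomposition $\xi=\oci\xi+\frac12 Fe$ of $(4.2.5)$ together with the defining relation $i_{\oci\xi}\oci\theta=-dH$ of $(4.1.18)$, evaluating $dH(\xi)$; but the direct computation above is the most transparent route.
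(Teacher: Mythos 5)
Your computation is correct: the chain rule plus substitution of the Hamilton equations $(4.2.3)$ with the factor convention ${\cal H}=\frac12 H$ gives exactly the cancellation you describe, and this is the argument the paper intends (it states the proposition without writing out the proof, in direct analogy with the energy-variation theorems proved by the same substitution method in the Riemannian, Finslerian and Lagrangian cases). Your remark that the identity also follows invariantly from $\xi=\oci{\xi}+\frac12 Fe$ together with $i_{\oci{\xi}}\oci{\theta}=-dH$ matches the route the paper itself uses for the Lagrangian system $\Sigma_L$, so nothing is missing.
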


As we know, the external forces $Fe$ are dissipative if $\langle Fw,\C\rangle\geq 0$.

Looking at the formula (4.2.6) one can say:

\begin{proposition}
The fundamental function $H(x,p)$ of the Hamiltonian mechanical system $\Sigma_H$ is decreasing on the evolution curves of $\Sigma_H$, if and only if, the external forces $Fe$ are dissipative.
\end{proposition}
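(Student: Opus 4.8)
The plan is to obtain the statement as an immediate sign analysis of the variation formula already established in Proposition 4.2.2. Recall that, along any evolution curve of $\Sigma_H$, that proposition gives $\dfrac{dH}{dt}=F_i\dfrac{dx^i}{dt}$. Since ``$H$ is decreasing along the evolution curves'' means precisely that $\dfrac{dH}{dt}\le 0$ at every point of every such curve, the whole proof reduces to recognizing the right-hand side $F_i\dfrac{dx^i}{dt}$ as a fixed-sign multiple of the scalar product $\langle Fe,\C\rangle$ that defines dissipativity.

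First I would substitute the first group of Hamilton equations (4.2.3), namely $\dfrac{dx^i}{dt}=\dot{\pp}^i{\cal H}$, into the variation formula, obtaining $\dfrac{dH}{dt}=F_i\dot{\pp}^i{\cal H}$ along the evolution curves. Next I would express this contraction through the fundamental tensor: using the vertical metric $g^{ij}$ carried by the $N$-lift $\G$ of $H^n$, the external force field $Fe=F_i\dot{\pp}^i$ and the Liouville--Hamilton vector field $\C^*=p_i\dot{\pp}^i$ pair as $\langle Fe,\C^*\rangle=g^{ij}F_i p_j$. The point is then that $F_i\dot{\pp}^i{\cal H}$ coincides with this pairing, because $\dot{\pp}^i{\cal H}=\tfrac12\dot{\pp}^iH$ plays, on the phase space, the role dual to the momenta $p_i$; this identification is exact in the $2$-homogeneous (Cartan) situation, where $\tfrac12\dot{\pp}^iH=p^i=g^{ij}p_j$.

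With the identification $\dfrac{dH}{dt}=\pm\langle Fe,\C\rangle$ in hand, the equivalence follows at once: $H$ is non-increasing along every evolution curve if and only if $\dfrac{dH}{dt}\le 0$ everywhere, which holds if and only if $\langle Fe,\C\rangle\ge 0$, i.e. if and only if $Fe$ is dissipative in the sense adopted here.

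The main obstacle I expect is not computational but one of precise bookkeeping of signs and pairings: one must fix, once and for all, that the scalar product $\langle Fe,\C\rangle$ entering the definition of dissipativity is the $g^{ij}$-pairing of vertical vectors induced by $\G$, and then check that the Hamiltonian sign convention (${\cal H}=\tfrac12 H$, and the placement of the minus sign in the second Hamilton equation) makes ``$\dfrac{dH}{dt}\le 0$'' correspond to ``$\langle Fe,\C\rangle\ge 0$'' rather than to the opposite inequality. A secondary point worth isolating is that the clean identity $\dfrac{dx^i}{dt}=p^i$ used above is automatic only when $H$ is $2$-homogeneous; for a general regular Hamiltonian one should argue directly with $F_i\dfrac{dx^i}{dt}$ and read off its sign from (4.2.6), which is exactly the route the short proof indicated by the text takes.
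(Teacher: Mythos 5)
Your proposal is correct and takes essentially the same route as the paper: the paper's entire proof consists of ``looking at the formula (4.2.6)'', i.e.\ the variation formula $\dfrac{dH}{dt}=F_i\dfrac{dx^i}{dt}$ of the preceding proposition, and comparing its sign with the dissipativity convention, which is exactly your argument. Your two cautionary remarks --- that the identification $\tfrac12\dot{\pp}^iH=g^{ij}p_j$ is exact only for $2$-homogeneous (Cartan-type) Hamiltonians, and that the unresolved $\pm$ in $\dfrac{dH}{dt}=\pm\langle Fe,\C\rangle$ traces back to the paper's own inconsistent sign convention for ``dissipative'' ($\langle \C,Fe\rangle\le 0$ in the Lagrangian chapters versus $\langle Fe,\C\rangle\ge 0$ as stated in this one) --- are refinements of the same proof, not a different approach.
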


The vector field $\xi$ on $\widetilde{T^*M}$ is called the canonical dynamical system of the Hamilton mechanical system
$\Sigma_H$.

Therefore we can say: The geometry of $\Sigma_H$ is the geometry of pair $(H^n,\xi)$.

\section{Canonical nonlinear connection of $\Sigma_H$}\index{Connections! Nonlinear}
\setcounter{equation}{0}
\setcounter{definition}{0}
\setcounter{theorem}{0}
\setcounter{lemma}{0}
\setcounter{remark}{0}
\setcounter{proposition}{0}

The fundamental tensor $g^{ij}(x,p)$ of the Hamilton space $H^n$ is the fundamental
or metric tensor of the mechanical system $\Sigma_H$. But others fundamental geometric
notions, as the canonical nonlinear connection of $\Sigma_H$ cannot be introduced in a straightforward manner. They will be
defined by means of ${\cal L}-$duality between the Lagrangian and the Hamiltonian mechanical systems $\Sigma_L$ and $\Sigma_H$ (see ch. ..., part I).

Let $\Sigma_L=(M,L(x,y),\underset{1}{Fe}(x,y))$, $\underset{1}{Fe}=\underset{1}{F}^i(x,y)\dot{\partial}_i$ be a
Lagrangian mechanical system. The mapping $$\varphi:(x,y)\in TM\to (x,p)\in T^*M,\ p_i=\dfrac12\dot{\partial}_i L$$ is a local
diffeomorphism. It is called the {\it Legendre transformation}.

Let $\psi$ be the inverse of $\varphi$ and
\begin{equation}
H(x,p)=2p_i y^i-L(x,y),\ y=\psi(x,p).
\end{equation}
One prove that $H^n=(M,H(x,p))$ is an Hamilton space. It is the ${\cal L}-$ dual of Lagrange space $L^n=(M,L(x,y))$.

One proves that $\varphi$ transform:

$1^\circ$ The canonical semispray $\buildrel{\circ}\over{S}$ of $L^n$ in the Hamilton vector $\buildrel{\circ}\over{\xi}$ of
$H^n$.

$2^\circ$ The canonical nonlinear connection $\buildrel{\circ}\over{N}_L$ of $L^n$ into the canonical nonlinear
connection $\buildrel{\circ}\over{N}_H$ of $H^n$.

$3^\circ$ The external forces $\underset{1}{Fe}$ of $\Sigma_L$ into external forces $Fe$ of $\Sigma_H$, with
$F_i(x,p)=g_{ij}(x,p)F_1^j(x,\psi(x,p))$.

$4^\circ$ The canonical nonlinear connection $N_L$ of $\Sigma_L$ with coefficients into the canonical nonlinear connection
$N_H$ of $\Sigma_H$ with the coefficients
\begin{equation}
N_{ij}(x,p)=\buildrel{\circ}\over{N}_{ij}(x,p)+\dfrac14 g_{ih}\dot{\partial}^h
F_j.
\end{equation}
$\buildrel{\circ}\over{N}_{ij}$ are given by (...., ch. ...).

Therefore we have:

\begin{proposition}
The canonical nonlinear connection $N$ of the Hamiltonian mechanical system $\Sigma_H$ has the
coefficients $N_{ij}$, $(4.3.2)$.
\end{proposition}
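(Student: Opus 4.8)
The plan is to deduce this statement directly from the $\mathcal{L}$-duality between Lagrangian and Hamiltonian mechanical systems, exploiting the transformation rules $1^\circ$--$4^\circ$ already listed for the Legendre diffeomorphism $\varphi$. The coefficients $N_{ij}$ in (4.3.2) are, by definition, those of the canonical nonlinear connection of $\Sigma_H$, and $\Sigma_H$ is itself the $\mathcal{L}$-dual of a Lagrangian mechanical system $\Sigma_L=(M,L,\underset{1}{Fe})$ whose external forces satisfy $F_i(x,p)=g_{ij}(x,p)\underset{1}{F}^j(x,\psi(x,p))$. Thus the task reduces to showing that $\varphi$ carries the evolution nonlinear connection $N_L$ of $\Sigma_L$ onto the nonlinear connection on $T^*M$ whose coefficients are exactly $\oci{N}_{ij}+\tfrac14 g_{ih}\dot{\pp}^h F_j$.

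First I would recall from (3.4.1) that the coefficients of $N_L$ split as $N^i_j=\oci{N}^i_j-\tfrac14\dot{\pp}_j F^i$, and from (3.4.4) that the adapted horizontal field splits correspondingly as $\de_i=\oci{\de}_i+\tfrac14(\dot{\pp}_i F^s)\dot{\pp}_s$. Since item $2^\circ$ guarantees that $\varphi$ sends the canonical part $\oci{N}_L$ onto $\oci{N}_H$ (with coefficients $\oci{N}_{ij}$), it suffices to determine the image of the single correction term $\tfrac14(\dot{\pp}_i F^s)\dot{\pp}_s$.

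The key computational input is the behaviour of the vertical frame under $\varphi$. Because $p_i=\tfrac12\dot{\pp}_i L$, differentiation gives $\dfrac{\pp p_i}{\pp y^j}=g_{ij}$, so that $d\varphi(\dot{\pp}_j)=g_{ij}\dot{\pp}^i$; equivalently, a vertical vector carrying a lower index on $TM$ is exchanged for one carrying an upper index on $T^*M$ contracted with the metric. Feeding this into the correction term and using $F_j=g_{js}\underset{1}{F}^s$ to pass from the contravariant components $F^s$ to the covariant components $F_j$, the operator $\tfrac14(\dot{\pp}_i F^s)\dot{\pp}_s$ is transported into $\tfrac14 g_{ih}\dot{\pp}^h F_j$ on the phase space, the sign reversal being absorbed by the opposite sign conventions for $\de_i$ on $TM$ and on $T^*M$. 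Adding back $\oci{N}_{ij}$ then yields (4.3.2).

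The main obstacle I anticipate is precisely the index and sign bookkeeping in this last transport step: one must reconcile the differing conventions for the adapted basis $\de_i$ used on the velocity and momentum spaces, verify that $d\varphi$ really maps the horizontal distribution $N_L$ onto a genuine $V$-complementary distribution on $T^*M$ rather than a merely formal expression, and check that raising the index together with the substitution $\dot{\pp}_s\mapsto g_{hs}\dot{\pp}^h$ reproduces exactly the factor $g_{ih}\dot{\pp}^h F_j$ and not some contracted variant. Once these identifications are carried out carefully, the proposition follows at once from the duality.
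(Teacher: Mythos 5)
Your strategy coincides with the paper's: the paper's entire justification of this proposition is the list of transport properties $1^\circ$--$4^\circ$ of the Legendre map $\varphi$, of which $4^\circ$ is verbatim the claim that $\varphi$ carries $N_L$ into the connection with coefficients (4.3.2), followed by ``Therefore we have''; those properties are asserted there, not proved. The difference is that you undertake to actually derive $4^\circ$, and it is exactly at the step you flag that the argument does not close. Carry the computation out: from $p_i=\frac12\dot{\pp}_i L$ one gets $d\varphi(\dot{\pp}_s)=g_{js}\dot{\pp}^j$ (as you say) and, by $2^\circ$, $d\varphi(\oci{\de}_i)=\oci{\de}_i$; so by (3.4.5) the pushed-forward horizontal frame is $\oci{\de}_i+\frac14 g_{js}(\dot{\pp}_i F^s)\dot{\pp}^j$, i.e.\ the transported correction carries the vertical derivative on the \emph{base} index $i$ and the metric contraction on the \emph{force} index $s$. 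On the other hand, translating the correction in (4.3.2) back through $\psi$ (using $F_i=g_{is}F^s\circ\psi$, $g_{ij}(x,p)=g_{ij}\circ\psi$ and the chain rule $\dot{\pp}^h(f\circ\psi)=g^{hm}(\dot{\pp}_m f)\circ\psi$) gives $g_{jh}\dot{\pp}^h F_i=\dot{\pp}_j F_i$ with $F_i=g_{is}F^s$ the Lagrangian covariant force. The two corrections thus differ by $\frac14\,[\,g_{js}\dot{\pp}_i F^s-\dot{\pp}_j F_i\,]=\frac14\,[\,\dot{\pp}_i F_j-\dot{\pp}_j F_i-2C_{ijs}F^s\,]=-\frac12\,(F_{ij}+C_{ijs}F^s)$, where $F_{ij}$ is the helicoidal tensor of (3.4.2) and $C_{ijs}$ the Cartan tensor. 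There is in fact no sign-convention issue to absorb (both (3.4.5) and (4.3.4) carry $+\frac14$ in the frame); the obstruction is the transposition of which index carries the vertical derivative together with the $\dot{\pp}g$ terms, and no relabelling removes it. Even for $n=1$ the discrepancy $-2C F^1$ survives unless the Lagrangian is quadratic in $y$. So ``the proposition follows at once'' is unjustified: the transport identity you need holds only modulo the generically nonvanishing $d$-tensor $F_{ij}+C_{ijs}F^s$.

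What actually secures the proposition --- and what your proposal omits --- is the paper's remark immediately after it: (4.3.2) can be verified \emph{directly}. Since $F_j$ is a $d$-covector on $T^*M$ and the change of momenta is linear in $p$ with $x$-dependent coefficients, $\dot{\pp}^h F_j$ is again a $d$-tensor, so $\frac14 g_{ih}\dot{\pp}^h F_j$ is a $(0,2)$ $d$-tensor; adding such a tensor to the coefficients $\oci{N}_{ij}$ preserves the inhomogeneous transformation law of nonlinear-connection coefficients, hence (4.3.2) defines a nonlinear connection, and it is canonical because it is built from $H$ and $Fe$ alone, i.e.\ from $\Sigma_H$ only. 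That two-line check is the rigorous content behind the statement; the $\mathcal{L}$-duality transport serves as motivation, and if you want it exact you must either restrict to systems with $F_{ij}+C_{ijs}F^s=0$ (e.g.\ the Riemannian case with $\dot{\pp}_j F_i$ symmetric) or, as the paper effectively does, take (4.3.2) as the definition of the canonical connection on the momentum side.
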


Of course, directly we can prove that $N_{ij}$ are the coefficients of a nonlinear connection. It is canonical for
$\Sigma_H$, since $N$ depend only on the mechanical system $\Sigma_H$.

The torsion of $N$ is
\begin{equation}
t_{ij}=\dfrac14(g_{ih}\dot{\partial}^h
F_j-g_{jh}\dot{\partial}^h F_i).
\end{equation}
Evidently $\dot{\partial}^i F_j=0$, implies that $N$ is symmetric nonlinear connection on the momenta space $T^*M$.

Let $(\delta_i,\dot{\partial}^i)$ be the adapted basis to $N$ and
$V$ and $(dx^i,\delta p_i)$ its adapted cobasis:
\begin{equation}
\begin{array}{l}
\delta_i=\partial_i+N_{ji}\dot{\partial}^j=\de_i+\dfrac14 g_{jh}\dot{\pp}^h F_i\dot{\pp}^j,\\ \\
\delta p_i=dp_i-N_{ij}dx^j=\oci{\de}p_i-\dfrac14 g_{ih}\dot{\pp}^h F_i dx^j.
\end{array}
\end{equation}
The tensor of integrability of the canonical nonlinear connection $N$ is
\begin{equation}
R_{kij}=\delta_i N_{kj}-\delta_j N_{ki}.
\end{equation}
$R_{kij}=0$ characterize the integrability of the horizontal distribution $N$.

The canonical $N-$metrical connection $C\Gamma(N)=(H^i_{jk},C^{jk}_i)$ of the Hamiltonian mechanical
system $\Sigma_H$ is given by the following Theorem:

\begin{theorem}
The following properties hold:

$1)$ There exists only one $N-$linear connection $C\Gamma=(N_{ij},H^i_{jk},C_i^{jk})$ which depend on the
Hamiltonian system $\Sigma_H$ and satisfies the axioms:

$1^\circ$ $N_{ij}$ from $(4.3.2)$ is the canonical nonlinear connection.

$2^\circ$ $C\Gamma$ is $h-$ metric:
\begin{equation}
g^{ij}_{|k}=0.
\end{equation}
$3^\circ$ $C\Gamma$ is $v-$metric:
\begin{equation}
g^{ij}|^k=0.
\tag{4.3.6'}
\end{equation}
$4^\circ$ $C\Gamma$ is $h-$torsion free:
\begin{equation}
T^i_{jk}=H^i_{jk}-H^i_{kj}=0.
\end{equation}
$5^\circ$ $C\Gamma$ is $v-$torsion free
\begin{equation}
S^{jk}_i=C^{jk}_i-C^{kj}_i=0.
\tag{4.3.7'}
\end{equation}

$2)$ The coefficients of $C\Gamma$ are given by the generalized Christoffel symbols:
\begin{equation}
\begin{array}{l}
H^i_{jk}=\dfrac12 g^{is}(\delta_j g_{sk}+\delta_k g_{js}-\delta_s g_{jk})\\ \\
C_i^{jk}=-\dfrac12 g_{is}(\dot{\partial}^j g^{sk}+\dot{\partial}^k
g^{js}-\dot{\partial}^s g^{jk})
\end{array}
\end{equation}
\end{theorem}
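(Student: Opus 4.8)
The plan is to establish this theorem in exactly the same way as the metrical-connection uniqueness results already proved for Lagrange and Hamilton spaces (Theorem 5.1 of Ch.~2, Part~I, and the analogous theorem of the preceding chapter), adapting the index bookkeeping to the cotangent setting. First I would prove uniqueness by assuming an $N$-linear connection $C\Gamma=(N_{ij},H^i_{jk},C_i^{jk})$ satisfying the five axioms exists and deriving closed formulas for its coefficients. Since $g^{ij}g_{js}=\de^i_s$ is constant, the $h$-metrical axiom $g^{ij}_{|k}=0$ is equivalent to $g_{ij|k}=0$, which by the definition of the $h$-covariant derivative reads
$$\de_k g_{ij}-H^s_{ik}g_{sj}-H^s_{jk}g_{is}=0.$$
Forming the usual signed combination of the three cyclic permutations $(i,j,k)\to(j,k,i)\to(k,i,j)$ of this identity and using the $h$-torsion-free axiom $H^s_{ik}=H^s_{ki}$ to collapse the symmetric pairs, I would isolate $H^i_{jk}$ and recover the first line of the generalized Christoffel symbols in (4.3.8).

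Second, I would repeat the argument in the vertical sector. Writing the $v$-metrical axiom $g^{ij}|^k=0$ out by means of the $v$-covariant derivative formula,
$$\dot{\pp}^k g^{ij}+g^{mj}C_m^{ik}+g^{im}C_m^{jk}=0,$$
I would cyclically permute the upper indices $i,j,k$ and use the $v$-torsion-free axiom $C_m^{ik}=C_m^{ki}$ (that is, $S^{jk}_i=0$) to solve for $C_i^{jk}$, obtaining the second line of (4.3.8). This shows at most one such connection exists, and since $N_{ij}$ is pinned down by axiom $1^\circ$ to be the canonical nonlinear connection (4.3.2) of $\Sigma_H$, which depends only on $\Sigma_H$, the whole triple $C\Gamma$ depends on $\Sigma_H$ alone.

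For existence, I would take the coefficients furnished by (4.3.8) as a definition and check that they genuinely constitute an $N$-linear connection satisfying the five axioms. Axioms $2^\circ$--$5^\circ$ hold by construction: the symmetry of the Christoffel expressions gives $T^i_{jk}=0$ and $S^{jk}_i=0$ at once, while substituting (4.3.8) back into the metric derivatives reproduces $g^{ij}_{|k}=0$ and $g^{ij}|^k=0$. The point that demands genuine verification is that $H^i_{jk}$ obeys the inhomogeneous transformation rule (4.4.14) and that $C_i^{jk}$ transforms as a $d$-tensor of type $(2,1)$; here one exploits that $\de_k$ and $\dot{\pp}^k$ are adapted to the nonlinear connection $N$, whose transformation law is already known, so the non-tensorial terms generated by $\de_k g_{ij}$ cancel precisely against those required by (4.4.14).

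The routine but delicate part --- and the main obstacle --- will be keeping the sign conventions consistent with the cotangent-bundle connection (4.4.13), in which $D$ acts on the coframe vector $\dot{\pp}^i$ with a minus sign, so that the raised-index derivative formulas carry the correct signs; an error here would spoil both the Christoffel manipulation and the tensoriality check. An alternative, equally valid route would be to invoke the ${\cal L}$-duality set up earlier in this chapter: transport the canonical $N$-metrical connection of the Legendre-dual Lagrangian mechanical system $\Sigma_L$, whose coefficients are the generalized Christoffel symbols (3.5.1), through the diffeomorphism $\varphi$, which by the structure of the Legendre transformation carries $\oci{N}_L$, the metric, and the $h$- and $v$-metricity conditions into their Hamiltonian counterparts; uniqueness on the Lagrangian side then yields uniqueness here as well.
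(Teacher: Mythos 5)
Your proposal is correct and follows essentially the same route as the paper, which states this theorem without detailed proof by invoking the generalized Christoffel-symbol argument already developed for $N$-linear connections on $T^*M$ (Ch.~4--5, Part~I): the cyclic-permutation trick on $g_{ij|k}=0$ and $g^{ij}|^k=0$ combined with the torsion-free axioms, plus the tensoriality check for existence. Your closing remark about transporting the connection via ${\cal L}$-duality also matches the paper's stated strategy in this chapter, where the canonical objects of $\Sigma_H$ are obtained from $\Sigma_L$ through the Legendre transformation.
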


Now, we are in possession of all data to construct the geometry of Hamilton mechanical system $\Sigma_H$. Such that we can investigate the electromagnetic and gravitational fields of $\Sigma_H$.

\section{The Cartan mechanical systems}\index{Analytical Mechanics of! Cartanian systems}
\setcounter{equation}{0}
\setcounter{definition}{0}
\setcounter{theorem}{0}
\setcounter{lemma}{0}
\setcounter{remark}{0}
\setcounter{proposition}{0}

An important class of systems $\Sigma_H$ is obtained when the Hamiltonian $H(x,p)$ is 2-homogeneous with respect to momenta
$p_i$.

\begin{definition}
A Cartan mechanical system is a set
\begin{equation}
\Sigma_{\cal C}=(M,K(x,p),Fe(x,p))
\end{equation}
where
${\cal C}^n=(M,K(x,p))$ is a Cartan space and
\begin{equation}
Fe=F_i(x,p)\dot{\partial}^i
\end{equation}
are the external forces.
\end{definition}

The fact that ${\cal C}^n$ is a Cartan spaces implies:

$1^\circ$ $K(x,p)$ is a positive scalar function on $T^*M$.

$2^\circ$ $K(x,p)$ is a positive 1-homogeneous with respect to momenta $p_i$.

$3^\circ$ The pair $H^n=(M,K^2(x,p))$ is a Hamilton space.

Consequently:

a. The fundamental tensor $g^{ij}(x,p)$ of ${\cal C}^n$ is
\begin{equation}
g^{ij}=\dfrac12\dot{\partial}^i\dot{\partial}^j K^2.
\end{equation}

b. We have
\begin{equation}
K^2=g^{ij} p_i p_j.
\end{equation}

c. The Cartan tensor is
$$C^{ijk}=\dfrac14\dot{\partial}^i\dot{\partial}^j\dot{\partial}^k K^2,\ p_i C^{ijk}=0.$$

d. ${\cal C}^n=(M,K(x,p))$ is ${\cal L}-$ dual of a
Finsler space $F^n=(M,$ $F(x,y))$.

e. The canonical nonlinear connection $\buildrel{\circ}\over{N}$,
established by R. Miron \cite{Miron}, has the coefficients
\begin{equation}
\buildrel{\circ}\over{N}_{ij}=\gamma^h_{ij}p_h-\dfrac12(\gamma^h_{sr}p_h
p^r)\dot{\partial}^s g_{ij},
\end{equation}
$\gamma^i_{jk}(x,p)$ being the Christoffel symbols of $g_{ij}(x,p)$.

Clearly, the geometry of $\Sigma_{\cal C}$ is obtained from
the geometry of $\Sigma_H$ taking $H=K^2(x,p)$.

So, we have:

\medskip

\noindent{\bf Postulate 4.4.1.} {\it The evolution equations of the Cartan
mechanical system $\Sigma_{\cal C}$ are the Hamilton
equations:
\begin{equation}
\dfrac{dx^i}{dt}-\dfrac12\dot{\partial}^i K^2=0,\ \
\dfrac{dp_i}{dt}+\dfrac12\partial_i K^2=\dfrac12 F_i(x,p).
\end{equation}}

A first result is given by
\begin{proposition}
$1^\circ$ The energy of the Hamiltonian $K^2$ is given by
${\cal E}_{K^2}=p_i\dot{\partial}^i K^2-K^2=K^2$.

$2^\circ$ The variation of energy ${\cal E}_{K^2}=K^2$ along to
every evolution curve $(4.4.6)$ is
\begin{equation}
\dfrac{dK^2}{dt}=F_i\dfrac{dx^i}{dt}.
\end{equation}
\end{proposition}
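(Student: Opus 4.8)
The plan is to treat the two assertions separately, both reducing to short computations that invoke results already available in this chapter and in Chapter~4 of Part~I. Assertion $1^\circ$ is in essence a homogeneity argument, while $2^\circ$ is the specialization to $H=K^2$ of the variation formula for a Hamiltonian system (Proposition~4.2.1).

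For $1^\circ$, I would first read the first equality $\mathcal{E}_{K^2}=p_i\dot{\pp}^i K^2-K^2$ as the defining formula for the energy attached to the Hamiltonian $K^2$, in perfect analogy with the Lagrangian energy $\mathcal{E}_L=y^i\dot{\pp}_i L-L$ used throughout Part~III and dual to it under the Legendre transformation. The actual content is the second equality. Since $K$ is positively $1$-homogeneous in the momenta $p_i$ (axiom~$3$ in the definition of a Cartan space), the function $K^2$ is $2$-homogeneous in $p_i$. Applying the Euler theorem for homogeneous functions on $T^*M$ in the form $(4.2.11)$, namely $\mathcal{L}_{\C^*}f=p_i\dot{\pp}^i f=rf$ for an $r$-homogeneous $f$, with $r=2$ and $f=K^2$, gives $p_i\dot{\pp}^i K^2=2K^2$, whence $\mathcal{E}_{K^2}=2K^2-K^2=K^2$. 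Equivalently, one may invoke the identities of the Cartan space theory: from $p^i=\tfrac12\dot{\pp}^i K^2$ together with $K^2=p_ip^i$ (formula $(6.1.14)$) one obtains $p_i\dot{\pp}^i K^2=2p_ip^i=2K^2$, the same conclusion.

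For $2^\circ$, I would differentiate $K^2(x(t),p(t))$ along an evolution curve and eliminate the derivatives by means of the Hamilton equations $(4.4.6)$. Writing $\dfrac{dK^2}{dt}=\pp_i K^2\,\dfrac{dx^i}{dt}+\dot{\pp}^i K^2\,\dfrac{dp_i}{dt}$ and substituting $\dfrac{dx^i}{dt}=\tfrac12\dot{\pp}^i K^2$ and $\dfrac{dp_i}{dt}=-\tfrac12\pp_i K^2+\tfrac12 F_i$ from $(4.4.6)$, the two terms involving $\pp_i K^2\,\dot{\pp}^i K^2$ cancel, leaving $\dfrac{dK^2}{dt}=\tfrac12 F_i\,\dot{\pp}^i K^2$. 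Recognizing $\tfrac12\dot{\pp}^i K^2=\dfrac{dx^i}{dt}$ from the first Hamilton equation yields exactly $\dfrac{dK^2}{dt}=F_i\dfrac{dx^i}{dt}$. The quickest route, however, is simply to note that $(4.4.6)$ are the Hamilton equations $(4.2.3)$ in the case $H=K^2$ (since $\mathcal{H}=\tfrac12 H$), so $2^\circ$ is the immediate corollary of Proposition~4.2.1 specialized to $H=K^2$.

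Neither step presents a genuine obstacle; the only point requiring care is the consistent bookkeeping of the factors $\tfrac12$ arising from $\mathcal{H}=\tfrac12 H$ and $\mathcal{K}=\tfrac12 K^2$, which relate the Hamilton equations $(4.4.6)$ to the raw partial derivatives of $K^2$. Ensuring that these factors are handled uniformly between the homogeneity identity $p_i\dot{\pp}^i K^2=2K^2$ and the substitution into $\dfrac{dK^2}{dt}$ is precisely what produces the cancellation in $2^\circ$ and the clean value $K^2$ in $1^\circ$.
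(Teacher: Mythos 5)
Your proof is correct and follows essentially the paper's own route: the first equality is the definition of energy, the second follows from Euler's theorem applied to the $2$-homogeneous function $K^2$ (equivalently, from $p^i=\tfrac12\dot{\pp}^i K^2$ and $K^2=p_ip^i$), and $2^\circ$ is exactly the specialization $H=K^2$ of Proposition~4.2.2 for Hamiltonian mechanical systems, as the paper itself indicates when it says the geometry of $\Sigma_{\cal C}$ is obtained from that of $\Sigma_H$ by taking $H=K^2$. Your explicit cancellation computation and the careful tracking of the factors $\tfrac12$ coming from ${\cal K}=\tfrac12 K^2$ match the intended argument.
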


\noindent{\bf Example.} $\Sigma_{\cal C}$, with
$K(x,p)=\{\gamma^{ij}(x)p_ip_j\}^{1/2}$, $(M,\gamma_{ij}(x))$
being a Riemann spaces and $Fe=a(x,p)p_i\dot{\partial}^i$.

Theorem ... of part ... can be particularized in:
\begin{theorem}
The following properties hold good:

$1^\circ$ $\xi$ given by
\begin{equation}
\xi=\dfrac12[\dot{\partial}^i K^2\partial_i-(\partial_i K^2-F_i)\dot{\partial}^i]
\end{equation}
is a vector field on $\widetilde{T^*M}$.

$2^\circ$ $\xi$ is determined only by the Cartan mechanical system
$\Sigma_{\cal C}$.

$3^\circ$ The integral curves of $\xi$ are given by the evolution
equations $(4.4.6)$ of $\Sigma_{\cal C}$.
\end{theorem}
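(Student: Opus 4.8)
The plan is to derive Theorem 4.4.2 as a direct specialization of the corresponding theorem for the Hamiltonian mechanical system $\Sigma_H$ (Section 4.2), exploiting the fact that every Cartan space generates an associated Hamilton space. First I would recall that the pair $H^n_{\cal C}=(M,K^2)$ is a Hamilton space whenever ${\cal C}^n=(M,K)$ is a Cartan space; consequently the Cartan mechanical system $\Sigma_{\cal C}=(M,K,Fe)$ is nothing but the Hamiltonian mechanical system $\Sigma_H=(M,H,Fe)$ with the choice $H=K^2$. This identification reduces all three assertions to statements already proved for $\Sigma_H$.

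For $1^\circ$, I would use the decomposition $\xi=\oci{\xi}+\dfrac12 Fe$ established in (4.2.5), where $\oci{\xi}=\dfrac12(\dot{\pp}^i H\,\pp_i-\pp_i H\,\dot{\pp}^i)$ is the Hamilton vector (4.1.19). Since $H=K^2$ is a globally defined scalar on $\wt{T^*M}$ and the frames $\pp_i,\dot{\pp}^i$ obey the transformation rules (4.1.2), the field $\oci{\xi}$ is globally defined; adding the vertical vector field $Fe=F_i\dot{\pp}^i$ preserves this property. Substituting $H=K^2$ into the expression for $\oci{\xi}$ then produces exactly formula (4.4.8), so $\xi$ is a well-defined vector field on $\wt{T^*M}$.

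For $2^\circ$, I would observe that $\oci{\xi}$ is manufactured solely from the derivatives of the fundamental function $K^2$ and therefore depends only on the Cartan space ${\cal C}^n$, while $Fe$ belongs to the data of $\Sigma_{\cal C}$; hence $\xi$ is determined by $\Sigma_{\cal C}$ alone. For $3^\circ$, reading the integral-curve system of $\xi$ directly off (4.4.8) yields $\dfrac{dx^i}{dt}=\dfrac12\dot{\pp}^i K^2$ and $\dfrac{dp_i}{dt}=-\dfrac12(\pp_i K^2-F_i)$, which after rearrangement are precisely the evolution (Hamilton) equations (4.4.6) of $\Sigma_{\cal C}$.

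The argument is entirely routine; the only point deserving attention is the global well-definedness claimed in $1^\circ$, namely that the right-hand side of (4.4.8) transforms as a vector field under the coordinate change (4.1.1). This is, however, guaranteed in advance by the Hamilton-space theory, since $\oci{\xi}$ is the symplectic gradient of $H$ relative to the canonical structure $\oci{\theta}=dp_i\wedge dx^i$ and $Fe$ is vertical. Thus no essential obstacle appears, and the theorem follows by specialization of the Hamiltonian case to $H=K^2$.
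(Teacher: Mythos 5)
Your proposal is correct and follows essentially the same route as the paper: the paper explicitly obtains this theorem by particularizing the Hamiltonian result (Theorem 4.2.1) to $H=K^2$, and that result in turn rests on the same decomposition $\xi=\oci{\xi}+\dfrac12 Fe$ from (4.2.5) that you invoke. Your verification of $1^\circ$--$3^\circ$ (global well-definedness via the Hamilton vector $\oci{\xi}$ plus the vertical field $Fe$, dependence only on $K^2$ and $Fe$, and the rearrangement of the integral-curve system into (4.4.6)) just spells out the details the paper leaves implicit.
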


The vector $\xi$ is Hamiltonian vector field on $T^*M$ or of the
Cartan mechanical system $\Sigma_{\cal C}$.

Therefore, the geometry of $\Sigma_{\cal C}$ is the differential geometry
of the pair $({\cal C}^n,\xi)$.

The fundamental object fields of this geometry are ${\cal C}^n,\xi,Fe$, the canonical nonlinear connection $N$ with the
coefficients
\begin{equation}
N_{ij}=\buildrel{\circ}\over{N}_{ij}+\dfrac14 g_{ih}\dot{\partial}^h F_j,
\end{equation}
Taking into account that the vector fields $\delta_i=\partial_i-N_{ji}\dot{\partial}^j$ determine an adapted
basis to the nonlinear connection $N$, we can get the canonical $N-$ metrical connection
$C\Gamma(N)$ of $\Sigma_{\cal C}$.

\begin{theorem}
The canonical $N-$ metrical connection $C\Gamma(N)$ of the Cartan
mechanical system $\Sigma_{\cal C}$ has the coefficients
\begin{equation}
H^i_{jk}=\dfrac12 g^{is}(\delta_j g_{sk}+\delta_k
g_{js}-\delta_s g_{jk}),\ C^{jk}_i=g_{is}C^{sjk}.
\end{equation}
\end{theorem}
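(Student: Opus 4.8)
The plan is to obtain Theorem 4.4.5 as a direct specialization of the general theory of Hamiltonian mechanical systems, exploiting the fact that the Cartan mechanical system $\Sigma_{\cal C}=(M,K(x,p),Fe)$ is precisely the Hamiltonian mechanical system $\Sigma_H$ whose fundamental function is $H=K^2$. First I would invoke the existence-and-uniqueness part of Theorem 4.3.2: since $(M,K^2)$ is a Hamilton space and the nonlinear connection $N$ with coefficients (4.4.9) is the canonical one of $\Sigma_{\cal C}$, there is exactly one $N$-linear connection $C\Gamma(N)=(N_{ij},H^i_{jk},C_i^{jk})$ satisfying the five axioms ($h$- and $v$-metricity together with $h$- and $v$-torsion-freeness), and its coefficients are the generalized Christoffel symbols (4.3.8). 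It therefore suffices to rewrite those two expressions in the Cartanian setting, using the adapted operators $\de_i=\pp_i-N_{ji}\dot{\pp}^j$ of the canonical nonlinear connection of $\Sigma_{\cal C}$.

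For the horizontal coefficient there is nothing to prove: the formula for $H^i_{jk}$ in (4.3.8) is already the generalized Christoffel symbol $\frac{1}{2}g^{is}(\de_j g_{sk}+\de_k g_{js}-\de_s g_{jk})$ built from the fundamental tensor $g^{ij}$ of ${\cal C}^n$ and the operators $\de_i$, so the first claimed expression follows at once. The entire content of the theorem thus lies in simplifying the vertical coefficient $C_i^{jk}=-\frac{1}{2}g_{is}(\dot{\pp}^j g^{sk}+\dot{\pp}^k g^{js}-\dot{\pp}^s g^{jk})$.

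The key algebraic step uses the $2$-homogeneity of $K$. Since $g^{ij}=\frac{1}{2}\dot{\pp}^i\dot{\pp}^j K^2$ and the Cartan tensor is $C^{ijk}=-\frac{1}{4}\dot{\pp}^i\dot{\pp}^j\dot{\pp}^k K^2$ (which is totally symmetric, being a third-order symmetric vertical derivative of $K^2$), differentiating the metric gives the identity $\dot{\pp}^i g^{jk}=-2C^{ijk}$. Substituting this into the expression for $C_i^{jk}$ yields $C_i^{jk}=g_{is}(C^{jsk}+C^{kjs}-C^{sjk})$. Invoking the total symmetry of $C^{ijk}$, the three terms $C^{jsk}$, $C^{kjs}$, $C^{sjk}$ all coincide, so two of them cancel against the remaining one and one is left with $C_i^{jk}=g_{is}C^{sjk}$, which is the second claimed expression. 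This is exactly the relation already recorded in (6.1.16) for the Cartan space ${\cal C}^n$, confirming consistency.

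I expect the only delicate point to be the index and sign bookkeeping in this last computation: keeping track of which vertical derivative $\dot{\pp}$ acts on which contravariant slot of $g^{ij}$, verifying the sign in $\dot{\pp}^i g^{jk}=-2C^{ijk}$, and applying the total symmetry of $C^{ijk}$ to the correct index triples. Once that identity is in hand, the simplification is forced and requires no homogeneity hypothesis beyond the one already guaranteeing that $(M,K^2)$ is a Hamilton space, so that Theorem 4.3.2 applies.
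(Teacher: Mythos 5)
Your proposal is correct and follows essentially the same route the paper intends: the theorem is just Theorem 4.3.2 specialized to the Hamilton space $H^n=(M,K^2)$, with the vertical coefficients simplified via $\dot{\pp}^i g^{jk}=-2C^{ijk}$ and the total symmetry of the Cartan tensor, exactly as recorded in (6.1.16) of Part I. Note only that, contrary to your phrasing, this last identity is purely definitional and uses no homogeneity, a point your final paragraph already concedes.
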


Using the canonical connections $N$ and $C\Gamma(N)$ one can study
the electromagnetic and gravitational fields on the momenta space
$T^*M$ of the Cartan mechanical systems $\Sigma_{\cal C}$, as
well as the dynamical system $\xi$ of $\Sigma_{\cal C}$.

\newpage

\chapter{Lagrangian, Finslerian and Hamiltonian mechanical systems of order $k\geq 1$}

The notion of Lagrange or Finsler spaces of higher order allows us to introduce the Lagrangian and Finslerian mechanical systems of order $k\geq 1$. They will be defined as an natural extension of the Lagrangian and Finslerian mechanical systems. Now, the geometrical theory of these analytical mechanics is constructed by means of the differential geometry of the manifold $T^kM-$space of accelerations of order $k$ and of the notion of $k-$semispray, presented in the part II.

\setcounter{section}{0}
\section{Lagrangian Mechanical systems of order $k\geq 1$}
\setcounter{equation}{0}
\setcounter{definition}{0}
\setcounter{theorem}{0}
\setcounter{lemma}{0}
\setcounter{remark}{0}
\setcounter{proposition}{0}

We repeat shortly some fundamental notion about the differential geometry of the acceleration space $T^kM$.

Let $T^kM$ be the acceleration space of order $k$ (see ch. 1, part II) and $u\in T^kM$, $u=(x,y^{(1)},...,y^{(k)})$ a point with the local coordinates $(x^i,y^{(1)i},...,y^{(k)i})$.

A smooth curve $c:I\to M$ represented on a local chart ${\cal U}\subset M$ by $x^i=x^i(t)$, $t\in I$ can be extended to $(\pi^k)^{-1}({\cal U})\subset T^kM$ by $\wt{c}:I\to T^kM$, given as follows:
\be
x^i=x^i(t),\ \ y^{(1)i}=\dfrac{1}{1!}\dfrac{dx^i(t)}{dt},...,y^{(k)i}=\dfrac{1}{k!}\dfrac{d^k x^i}{dt^k}(t),\ \ t\in I.
\ee

Consider the vertical distribution $V_1,V_2,...,V_k$. They are integrable and have the properties $V_1\supset V_2\supset ...\supset V_k$. The dimension of $V_k$ is $n$, dim$V_{k-1}=2n$, ..., dim$V_1=kn$.

The Liouville vector fields on $T^kM$ are:
\be
\begin{array}{l}
\overset{1}\Gamma=y^{(1)i}\dfrac{\pp}{\pp y^{(k)i}}\\ \\
\overset{2}\Gamma=y^{(1)i}\dfrac{\pp}{\pp y^{(k-1)i}}+2y^{(2)i}\dfrac{\pp}{\pp y^{(k)i}}\\
.............\\
\overset{k}\Gamma=y^{(1)i}\dfrac{\pp}{\pp y^{(1)i}}+2y^{(2)i}\dfrac{\pp}{\pp y^{(2)i}}+...+ky^{(k)i}\dfrac{\pp}{\pp y^{(k)i}}.
\end{array}
\ee
Thus $\overset{1}\Gamma\in V_k$, $\overset{2}\Gamma\in V_{k-1}$, ..., $\overset{k}\Gamma\in V_1$.

The following operator:
\be
\Gamma=y^{(1)i}\dfrac{\pp}{\pp x^i}+2y^{(2)i}\dfrac{\pp}{\pp y^{(1)i}}+...+ky^{(k)i}\dfrac{\pp}{\pp y^{(k-1)i}}
\ee
is not a vector field on $T^k<$, but it is frequently used. On $T^kM$ there exists a $k$ tangent structure $J$ defined in ch. 2. $J$ is integrable and has the properties
\be
{\rm{Im}}J=V,\ \ \ker J=V_k,\ \ {\rm{rank}}J=k^n
\ee
and $J(\overset{1}\Gamma)=0$, $J(\overdoi\Gamma)=\overunu\Gamma$, ..., $J(\overk\Gamma)=\overset{k-1}\Gamma$, $J\circ J\circ ...\circ J=0$ ($k$ times).

A $k-$semispray is a vector field $S$ on the acceleration space $T^kM$ with the property
\be
JS=\overk\Gamma.
\ee
Also, $S$ is called a {\it dynamical system} on $T^kM$.

The local expression of a $k-$semispray $S$ is (ch. 1, part II)
\be
S=\Gamma-(k+1)G^i\dfrac{\pp}{\pp y^{(k)i}}.
\ee
The system of functions $G^i(x,y^{(1)},...,y^{(k)})$ are the {\it coefficients} of $S$.

The integral curves of the $k-$semispray $S$ are given by the following system of differential equations
\be
\left\{
\begin{array}{l}
\dfrac{d^{k-1}x^i}{dt^{k-1}}+(k+1)G^i(x,y^{(1)},...,y^{(k)})=0\\ \\
y^{(1)i}=\dfrac{dx^i}{dt},...,y^{(k)i}=\dfrac{1}{k!}\dfrac{d^k x^i}{dt^k}.
\end{array}
\right.
\ee

If $Fe=F^i(x,y^{(1)},...,y^{(k)})\dfrac{\pp}{\pp y^{(k)i}}$ is an arbitrary vertical vector field into the vertical distribution $V_k$ and $S$ is a $k-$semispray, thus
\be
S'=S+Fe
\ee
is a $k-$semispray, because $JS'=JS=\overk\Gamma$.

Recall the notion of nonlinear connection (ch. 1, part II):

A nonlinear connection on the manifold $T^kM$ is a distribution $N$ supplementary to the vertical distribution $V=V_1$:
\be
T_u T^kM=N(u)+V(u),\ \ \forall u\in T^kM.
\ee

An adapted basis to the distribution $N$ is (ch. 1, part II)
\be
\dfrac{\de}{\de x^i}=\dfrac{\pp}{\pp x^i}-\underset{1}{N}^j_i\dfrac{\pp}{\pp y^{(1)j}}-...-\underset{k}{N}^j_i\dfrac{\pp}{\pp y^{(k)j}}.
\ee
The systems of functions $\underset{1}{N}^j_i,...,\underset{k}{N}^j_i$ are the coefficients of nonlinear connection $N$.

Let $N_1,...,N_{k-1},V_k$ the vertical distributions defined by
\be
N_1=J(N),\ N_2=J(N_1),..., N_{k-1}=J(N_{k-2}),\ V_k=J(N_{k-1})\ {\rm{and\ }}N_0=N.
\ee
Thus, dim$N_0=$dim$N_1=...=$dim$N_k=n$ and we have
\be
T_uT^kN=N_0(u)\oplus N_1(u)\oplus...\oplus N_{k-1}(u)\oplus V_k,\ \ \forall u\in T^kM.
\ee
The adapted basis to the direct decomposition (5.1.12) is as follows:
\be
\(\dfrac{\de}{\de x^i},\dfrac{\de}{\de y^{(1)i}},...,\dfrac{\de}{\de y^{(k)i}}\),
\ee
where $\dfrac{\de}{\de x^i}$ is given by (5.1.10) and
\be
\begin{array}{c}
\dfrac{\de}{\de y^{(1)i}}=J\(\dfrac{\de}{\de x^i}\), \dfrac{\de}{\de y^{(2)i}}=J\(\dfrac{\de}{\de y^{(1)i}}\),...,\dfrac{\de}{\de y^{(k)i}}=\\ \\ =J\(\dfrac{\de}{\de y^{(k-1)i}}\)=\dfrac{\pp}{\pp y^{(k)i}}.
\end{array}
\ee
The expressions of the basis (5.1.13) can be easily obtained from the formulas (5.1.10), (5.1.14) (ch. 1, part II).

The dual basis of the adapted basis (5.1.13) is:
\be
(\de x^i,\de y^{(1)i},...,\de y^{(k)i}),
\ee
where
\be
\begin{array}{l}
\de x^i=dx^i\\ \\
\de y^{(1)i}=dy^{(1)i}+\underset{1}{M}^i_j dx^j\\ \\
\de y^{(2)i}=dy^{(2)i}+\underset{1}{M}^i_j dy^{(1)j}+\underset{2}{M}^i_j dx^j\\
.......................\\
\de y^{(k)i}=dy^{(k)i}+\underset{1}{M}^i_j dy^{(k-1)j}+...+\underset{k}{M}^i_j dx^j
\end{array}
\ee
and the systems of functions $\underset{1}{M}^i_j,...,\underset{k}{M}^i_j$ are the dual coefficients of the nonlinear connection $N$. The relations between dual coefficients $\underset{1}{M}^i_j,...,\underset{k}{M}^i_j$ and (primal) coefficients $\underset{1}{N}^i_j,...,\underset{k}{N}^i_j$ are (ch. 1, part II):
\be
\begin{array}{l}
\underset{1}{M}^i_j=\underset{1}{N}^i_j,\\ \\
\underset{2}{M}^i_j=\underset{2}{N}^i_j+\underset{1}{N}^m_j\underset{1}{M}^i_m,\\
..................................\\
\underset{k}{M}^i_j=\underset{k}{N}^i_j+\underset{k-1}{N}^m_j\underset{1}{M}^i_m+...+\underset{1}{N}^m_j\underset{k-1}{N}^i_m.
\end{array}
\ee
This recurrence formulas allow us to calculate the primal coefficients $\underset{1}{N}^i_j,...,\underset{k}{N}^i_j$ functions by the dual coefficients $\underset{1}{M}^i_j,...,\underset{k}{N}^i_j$.

In the adapted basis (5.1.13) the Liouville vector fields $\overunu\Gamma,...,\overk\Gamma$ can be expressed:
\begin{equation}
\begin{array}{l}
\overunu\Gamma=z^{(1)i}\dfrac{\de}{\de y^{(k)i}}\\ \\
\overdoi\Gamma=z^{(1)i}\dfrac{\de}{\de y^{(k-1)i}}+2z^{(2)i}\dfrac{\de}{\de y^{(k)i}}\\
................................\\
\overk\Gamma=z^{(1)i}\dfrac{\de}{\de y^{(1)i}}+2z^{(2)i}\dfrac{\de}{\de y^{(2)i}}+...+kz^{(k)i}\dfrac{\de}{\de y^{(k)i}},
\end{array}
\end{equation}
where $z^{(1)i},...,z^{(k)i}$ are {\it the d-Liouville vector fields}:
\be
\begin{array}{l}
z^{(1)i}=y^{(1)i}\\ \\
2z^{(2)i}=2y^{(2)i}+\underset{1}{M}^i_m y^{(1)m}\\
................................\\
kz^{(k)i}=ky^{(k)i}+\overset{(k-1)}{\underset{(1)}{M}}^i_m y^{(k-1)m}+...+\overset{(k-1)}{\underset{(1)}{M}}^i_m y^{(1)m}.
\end{array}
\ee
To a change of local coordinates (ch. 1, part II) on the acceleration manifold $T^kM$, every one of the Liouville $d-$vectors transform on the rule:
\be
\wt{z}^{(\a)i}=\dfrac{\pp\wt{x}^i}{\pp x^j}z^{(\a)j},\ \ (\a=1,...,k).
\ee
So, they have a geometric meaning.

\section{Lagrangian mechanical system of order $k$, $\Sigma_{L^k}$}\index{Analytical Mechanics of! Lagrangian of order $k$}
\setcounter{equation}{0}
\setcounter{definition}{0}
\setcounter{theorem}{0}
\setcounter{lemma}{0}
\setcounter{remark}{0}
\setcounter{proposition}{0}

\begin{definition}
A Lagrangian mechanical system of order $k\geq 1$ is a triple:
\be
\Sigma_{L^k}=(M,L(x,y^{(1)},...,y^{(k)}),Fe(x,y^{(1)},...,y^{(k)})),
\ee
where
\be
L^{(k)n}=(M,L(x,y^{(1)},...,y^{(k)}))
\ee
is a Lagrange space of order $k\geq 1$, and
\be
Fe(x,y^{(1)},...,y^{(k)})=F^i(x,y^{(1)},...,y^{(k)})\dfrac{\pp}{\pp y^{(k)i}}
\ee
is an a priori given vertical vector field.
\end{definition}

Clearly, $Fe$ is a vector field belonging to the vertical distribution $V_k$ and its contravariant components $F^i$ is a distinguished vector field. $Fe$ or $F^i$ are called the external forces of $\Sigma_{L^n}$.

Applying the theory of Lagrange spaces of order $k\geq 1$ to the space $L^{(k)n}$ we have the fundamental (or metric) tensor field of $\Sigma_{L^k}$:
\be
g_{ij}=\dfrac12\dfrac{\pp^2 L}{\pp y^{(k)i}\pp y^{(k)j}},
\ee
and the Euler--Lagrange equations
\be
\begin{array}{l}
\oci{E}_i(L)\overset{det}{=}\dfrac{\pp L}{\pp x^i}-\dfrac{d}{dt}\dfrac{\pp L}{\pp y^{(1)i}}+...+(-1)^k\dfrac{d^k}{dt^k}\dfrac{\pp L}{\pp y^{(k)i}}=0\\ \\
y^{(1)i}=\dfrac{dx^i}{dt},...,y^{(k)1}=\dfrac{1}{k!}\dfrac{d^k x^i}{dt^k}.
\end{array}
\ee
This is a system of differential equations of order $2k$, autoadjoint. It allows us to determine a canonical semispray of order $k$ on $T^kM$.

Indeed, since$\oci{E}_i(L)$ is a $d-$covector field on $T^kM$, we can calculate $\oci{E}(\phi(t)L(x,y^{(1),...,y^{(k)}}))$ and obtain (ch. 1, part II):
\be
\oci{E}_i(\phi L)=\phi\oci{E}_i+\dfrac{d\phi}{dt}\overset{1}{E}_i(L)+...+\dfrac{d^k\phi}{dt^k}\overk{E}_i(L).
\ee
The coefficients $\overunu{E}_i(L),...,\overk{E}_i(L)$ are $d-$covector fields discovered by Craig and Synge (we refer to books \cite{mir11}).

So, we have:
\be
\overset{k-1}{E}_i(L)=(-1)^{k-1}\dfrac{1}{(k-1)!}\[\dfrac{\pp L}{\pp y^{(k-1)i}}-\dfrac{d}{dt}\dfrac{\pp L}{\pp y^{(k)i}}\].
\ee

\begin{theorem}
For any Lagrange spaces $L^{(k)n}=(M,L)$ the following properties hold:

$1^\circ$ the system of differential equations $g^{ij}\overset{k-1}{E}_i(L)=0$ has the form
\be
\dfrac{d^{k+1}x^i}{dt^{k+1}}+(k+1)!\oci{G}^i\(x,\dfrac{dx}{dt},...,\dfrac{1}{k!}\dfrac{d^k x}{dt^k}\)=0
\ee
with
\be
(k+1)\oci{G}^i=\dfrac12 g^{ij}\[\Gamma\dfrac{\pp L}{\pp y^{(k)j}}-\dfrac{\pp L}{\pp y^{(k-1)j}}\].
\ee

$2^\circ$ The system of functions $\oci{G}^i$ from (5.2.9) are the coefficients of $k-$semispray $\oci{S}$ which depends only on the Lagrangian $L(x,y^{(1)}$, ..., $y^{(k)})$.
\end{theorem}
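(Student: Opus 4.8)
The plan is to establish both assertions by a direct computation in the Craig--Synge formalism, exploiting the identity \eqref{5.2.7} that governs how $\oci{E}_i$ acts on a product $\phi L$. First I would observe that the fundamental tensor $g_{ij}$ from \eqref{5.2.4} is nonsingular by the definition of the Lagrange space $L^{(k)n}$, so the contracted system $g^{ij}\overset{k-1}{E}_i(L)=0$ is meaningful; the inverse $g^{ij}$ exists and is a $d$-tensor field. The whole argument then reduces to extracting the leading differential term of the covector $\overset{k-1}{E}_i(L)$ given in \eqref{5.2.8} and rewriting the equation $g^{ij}\overset{k-1}{E}_i(L)=0$ as a scalar second-order-in-the-top-variable equation of the announced shape.

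For part $1^\circ$, I would start from the explicit form
$$
\overset{k-1}{E}_i(L)=(-1)^{k-1}\dfrac{1}{(k-1)!}\left[\dfrac{\partial L}{\partial y^{(k-1)i}}-\dfrac{d}{dt}\dfrac{\partial L}{\partial y^{(k)i}}\right].
$$
The key step is to expand the total time derivative $\dfrac{d}{dt}\dfrac{\partial L}{\partial y^{(k)i}}$ along an extension $\widetilde c$ using the chain rule in the natural coordinates $(x,y^{(1)},\dots,y^{(k)})$. Since $y^{(k)i}=\dfrac{1}{k!}\dfrac{d^kx^i}{dt^k}$, the term $\dfrac{\partial^2 L}{\partial y^{(k)i}\partial y^{(k)j}}\dfrac{dy^{(k)j}}{dt}$ contributes $\dfrac{2}{k!}g_{ij}\dfrac{d^{k+1}x^j}{dt^{k+1}}$, which is the only term carrying the top derivative $\dfrac{d^{k+1}x}{dt^{k+1}}$. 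Collecting all the remaining (lower-order) terms into the combination $\Gamma\dfrac{\partial L}{\partial y^{(k)j}}-\dfrac{\partial L}{\partial y^{(k-1)j}}$, using the operator $\Gamma$ from \eqref{5.1.3}, and then multiplying by $g^{ij}$ and clearing the homogeneity/factorial constants, one arrives precisely at \eqref{5.2.9} with the coefficients $(k+1)\oci{G}^i$ given by \eqref{5.2.9}(formula for $\oci G^i$). I would carry out this bookkeeping carefully, since matching the factor $(k+1)!$ against the factors $\tfrac1{(k-1)!}$ and $\tfrac{2}{k!}$ is where sign and normalization errors most easily creep in; this is the main obstacle of the calculation.

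For part $2^\circ$, the task is to show that the functions $\oci G^i$ so obtained are genuinely the coefficients of a $k$-semispray on $\widetilde{T^kM}$, i.e. that they obey the transformation rule \eqref{1.4.3} (the rule $(k+1)\widetilde G^i=(k+1)G^j\frac{\partial\widetilde x^i}{\partial x^j}-\Gamma\widetilde y^{(k)i}$). The clean way is to invoke the geometric character of the objects already established: $\overset{k-1}{E}_i(L)$ is a $d$-covector field (stated just after \eqref{5.2.6}, and proved in \cite{mir11}), $g^{ij}$ is a $d$-tensor field, hence $g^{ij}\overset{k-1}{E}_i(L)$ is a $d$-vector field, so the vanishing equation \eqref{5.2.9} is tensorial. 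Because the equation \eqref{5.2.9} has the canonical SODE form $\dfrac{d^{k+1}x^i}{dt^{k+1}}+(k+1)!\,\oci G^i=0$ and its left-hand side transforms as a $d$-vector, the $\oci G^i$ must satisfy exactly the semispray transformation law; this is the order-$k$ analogue of the reasoning in Section~1.2 for $k=1$. Finally, since every ingredient ($g_{ij}$, the partial derivatives of $L$, and $\Gamma$) is built solely from the fundamental function $L$, the coefficients $\oci G^i$ depend only on $L$, and therefore $\oci S=\Gamma-(k+1)\oci G^i\dfrac{\partial}{\partial y^{(k)i}}$ is the canonical $k$-semispray of $L^{(k)n}$, completing the proof.
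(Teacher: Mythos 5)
Your proposal is correct and follows essentially the same route as the paper: the chain-rule identity $\dfrac{d}{dt}\dfrac{\pp L}{\pp y^{(k)i}}=\Gamma\dfrac{\pp L}{\pp y^{(k)i}}+\dfrac{2}{k!}g_{ij}\dfrac{d^{k+1}x^j}{dt^{k+1}}$ that you isolate is precisely the computation the paper carries out (it appears explicitly in the proof of Theorem 5.3.1, which is this statement with external forces $F_i$ added), and contracting with $g^{ij}$ and matching the factors $\frac{1}{(k-1)!}$, $\frac{2}{k!}$, $(k+1)!$ yields the stated coefficients exactly as you describe. Your argument for $2^\circ$ --- the $d$-covector character of $\overset{k-1}{E}_i(L)$ together with the SODE form forcing the semispray transformation rule for $\oci{G}^i$, everything being built from $L$ alone --- is likewise the paper's reasoning, cited to \cite{mir11} and modeled on the $k=1$ case of Section 1.2.
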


So, $\oci{S}$ is the {\bf canonical semispray} of $L^{(k)n}$. But it leads to a canonical nonlinear connection $\oci{N}$ for $L^{(k)n}$.

Indeed, we have

\begin{theorem} [Miron \cite{mir11}]
$\oci{S}$ being the canonical $k-$semispray of Lagrange space $L^{(k)n}$, there exists a nonlinear connection $\oci{N}$ of the space $L^{(k)n}$ which depend only on the fundamental function $L$. The non-connection $\oci{N}$ has the dual coefficients:
\be
\begin{array}{l}
\underset{1}{M}^i_j=\dfrac{\pp G^i}{\pp y^{(k)j}},\\ \\
\underset{2}{M}^i_j=\dfrac12(\oci{S}\underset{1}{M}^i_j+\underset{1}{M}^i_m\underset{1}{M}^m_j), \\
........................................\\
\underset{k}{M}^i_j=\dfrac{1}{k}(\oci{S}\underset{k-1}{M}^i_j+\underset{k-1}{M}^i_m\underset{k-1}{M}^m_j).
\end{array}
\ee
\end{theorem}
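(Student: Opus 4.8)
The plan is to deduce this statement as a direct specialization of the general result on dual coefficients established in Part II, Chapter 1, Section 1.5 (the theorem of Miron asserting that for \emph{any} $k$-semispray $S$ with coefficients $G^i$ the recursively defined functions $\underset{1}{M}{}^i_j,\dots,\underset{k}{M}{}^i_j$ are the dual coefficients of a nonlinear connection depending on $S$ alone). Since Theorem 5.2.1 already furnishes, for the Lagrange space $L^{(k)n}=(M,L)$, a canonical $k$-semispray $\oci{S}$ whose coefficients $\oci{G}^i$ are given by (5.2.10) and depend only on $L$, the strategy is simply to apply that general theorem to the particular semispray $S=\oci{S}$.

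First I would recall that $\oci{S}$ is globally defined on $\widetilde{T^kM}$ and that $\oci{G}^i$ are determined solely by the fundamental function $L$; this is precisely the content of Theorem 5.2.1, which I am entitled to assume. Setting $\underset{1}{M}{}^i_j=\pp\oci{G}^i/\pp y^{(k)j}$ and then defining $\underset{2}{M}{}^i_j,\dots,\underset{k}{M}{}^i_j$ by the recurrence (5.2.11), in which the semispray acts through the operator $\oci{S}$, I obtain exactly the functions whose transformation behaviour is governed by the general theorem. Because each $\underset{\alpha}{M}{}^i_j$ is built out of $\oci{G}^i$ and of iterated applications of $\oci{S}$, and both of these are fabricated from $L$ only, the resulting nonlinear connection $\oci{N}$ inherits the property of depending only on $L$; the passage from these dual coefficients to the primal coefficients $\underset{1}{N}{}^i_j,\dots,\underset{k}{N}{}^i_j$ is then effected by the inversion formulae (1.5.3).

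The genuine mathematical content, which lies inside the general theorem being invoked, is the verification that the recursively defined $\underset{\alpha}{M}{}^i_j$ actually obey the transformation law that characterizes dual coefficients of a nonlinear connection (the system displayed after (1.5.3)). I would carry this out by induction on $\alpha$. The base case $\alpha=1$ follows by differentiating the semispray transformation rule $(k+1)\widetilde{G}^i=(k+1)G^j\frac{\partial\widetilde{x}^i}{\partial x^j}-\Gamma\widetilde{y}^{(k)i}$ with respect to $y^{(k)j}$ and using the chain-rule identities (1.1.5') that equate the various partial derivatives $\partial\widetilde{y}^{(\alpha)i}/\partial y^{(\beta)j}$. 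For the inductive step I would apply the global operator $\oci{S}$ to the transformation law valid at level $\alpha-1$, commute $\oci{S}$ past the Jacobian factors $\partial\widetilde{x}^i/\partial x^j$ (producing the extra lower-order terms of the type $\underset{\beta}{\widetilde{M}}{}^i_m\,\partial\widetilde{y}^{(\cdot)m}/\partial x^j$), and reorganize using the definition of the next coefficient.

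The main obstacle I anticipate is precisely this inductive bookkeeping: keeping track of how the non-tensorial remainder terms generated at each level combine to reproduce the exact inhomogeneous part of the characterizing transformation law at the next level. This requires careful use of the identities (1.1.5') and of the fact that $\Gamma$ and $\oci{S}$ differ only by the vertical term $-(k+1)\oci{G}^i\partial/\partial y^{(k)i}$, so that their actions on the lower Liouville data coincide where needed. Once these transformation laws are confirmed, the existence of $\oci{N}$ and its dependence on $L$ alone are immediate, which completes the proof.
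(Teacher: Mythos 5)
Your proposal is correct and takes essentially the same route as the paper: the text obtains this theorem by specializing the general result of Part II, Ch.\ 1 (Theorem 1.5.1, that the recursively defined $\underset{1}{M}{}^i_j,\dots,\underset{k}{M}{}^i_j$ built from any $k$-semispray are dual coefficients of a nonlinear connection depending on that semispray alone) to the canonical semispray $\oci{S}$ furnished by Theorem 5.2.1, whose coefficients depend only on $L$. Your inductive verification of the characterizing transformation laws via the semispray rule and the identities (1.1.5') is precisely the content the paper delegates to \cite{mir11}, so it fills in, rather than deviates from, the paper's argument.
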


Now, we can develop the geometry of Lagrange space $L^{(k)n}$ by means of the geometrical object fields $L,\oci{S},\oci{N}$ and $g_{ij}$ (ch. 2, part II).
We apply these considerations to the geometrization of Lagrange mechanical system $\Sigma_{L^k}$.

\section{Canonical $k-$semispray of mechanical system $\Sigma_{L^k}$}\index{$k-$semisprays! of mechanical system $\Sigma_{L^k}$}
\setcounter{equation}{0}
\setcounter{definition}{0}
\setcounter{theorem}{0}
\setcounter{lemma}{0}
\setcounter{remark}{0}
\setcounter{proposition}{0}

The fundamental equations of mechanical system $\Sigma_k$ are given by:

\noindent{\bf Postulate 5.3.1.} {\it The evolution equations of the Lagrangian mechanical system of order $k$, $\Sigma_{L^k}=(M,L,Fe)$ are the following Lagrange equations:
\be
\left\{
\begin{array}{l}
\dfrac{d}{dt}\dfrac{\pp L}{\pp y^{(k)i}}-\dfrac{\pp L}{\pp y^{(k-1)i}}=F_i,\\ \\
y^{(1)i}=\dfrac{dx^i}{dt},...,y^{(k)i}=\dfrac{1}{k!}\dfrac{d^k x^i}{dt^k},
\end{array}
\right.
\ee
where
\be
F_i(x,y^{(1)},...,y^{(k)})=g_{ij}(x,y^{(1)},...,y^{(k)})F^i(x,y^{(1)},...,y^{(k)})
\ee
are the covariant components of external forces $Fe$.}

Evidently, in the case $k=1$, the previous equations are the evolution equations of Lagrangian mechanical system $\Sigma_L=(M,L(x,y),Fe(x,y))$. Also, the equation (5.3.1) for $k=1$ are the Lagrange equations for Riemannian or Finslerian mechanical systems. These arguments justify the introduction of previous Postulate.

\bigskip

\noindent{\bf Examples.} $1^\circ$ Let $L^{(k)n}=(M,L(x,y^{(1)},...,y^{(k)}))$ be a Lagrange space of order $k\geq 1$, and $\oci{z}^{(k)i}$ be its Liouville $d-$vector field of order $k$. Consider the external forces
\be
Fe=h\oci{z}^{(k)i}\dfrac{\pp}{\pp y^{(k)i}},\ \ h\in\R^*.
\ee
Thus, $\Sigma_{L^k}=(M,L(x,y^{(1)},...,y^{(k)}))$, $Fe(x,y^{(1)},...,y^{(k)})$ is a Lagrangian mechanical system of order $k$.

The covariant component $F_i$ of $Fe$ is $F_i=hg_{ij}\oci{z}^{(k)j}$, which substitute in (5.3.1) give us the Lagrange equations of order $k$ for the considered mechanical system.

In the general case of mechanical systems $\Sigma_{L^k}$  is important to prove the following property:

\begin{theorem}
The Lagrange equations $(5.3.1)$ are equivalent to the following system of differential equations of order $k+1$:
\be
\dfrac{d^{k+1}x^i}{dt^{k+1}}+(k+1)!\oci{G}^i\(x,\dfrac{dx}{dt},...,\dfrac{1}{k}\dfrac{d^k_x}{dt^k}\)=\dfrac{k!}{2}F^i
\ee
with the coefficients $\oci{G}^i(x,y^{(1)},...,y^{(k)})$ from $(5.2.9)$.
\end{theorem}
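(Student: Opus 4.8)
The plan is to prove the equivalence by a direct pointwise computation along the extension $\wt{c}$ of a curve $c$, showing that the left-hand side of the Lagrange equations (5.3.1) coincides with the expression in (5.3.4) up to an invertible contraction by the fundamental tensor. No existence theorem is needed: (5.3.1) and (5.3.4) will be tied together by an algebraic identity valid along extensions, so reversibility of each step gives the equivalence for free.

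First I would establish the key auxiliary identity relating the total time derivative along $\wt{c}$ to the operator $\Gamma$ of (5.1.3). Along $\wt{c}$, where $y^{(\alpha)i}=\tfrac{1}{\alpha!}\tfrac{d^\alpha x^i}{dt^\alpha}$, one has $\dot{y}^{(\alpha)i}=(\alpha+1)y^{(\alpha+1)i}$ for $\alpha<k$, while $\dot{y}^{(k)i}=\tfrac{1}{k!}\tfrac{d^{k+1}x^i}{dt^{k+1}}$. Substituting these into the chain rule for $\tfrac{d}{dt}f(x,y^{(1)},\dots,y^{(k)})$ and comparing term by term with the explicit form of $\Gamma$ shows that, for every $f\in{\cal F}(T^kM)$,
$$\frac{df}{dt}=\Gamma f+\frac{1}{k!}\frac{d^{k+1}x^i}{dt^{k+1}}\frac{\partial f}{\partial y^{(k)i}}$$
holds along $\wt{c}$. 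This identity is the heart of the argument; the remaining work is bookkeeping.

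Next I would apply this with $f=\partial L/\partial y^{(k)i}$ and use $\partial^2 L/\partial y^{(k)i}\partial y^{(k)j}=2g_{ij}$, which is (5.2.4), to obtain
$$\frac{d}{dt}\frac{\partial L}{\partial y^{(k)i}}=\Gamma\frac{\partial L}{\partial y^{(k)i}}+\frac{2}{k!}g_{ij}\frac{d^{k+1}x^j}{dt^{k+1}}.$$
Subtracting $\partial L/\partial y^{(k-1)i}$ and invoking the definition (5.2.10) of $\oci{G}^i$, rewritten by contraction as $\Gamma\tfrac{\partial L}{\partial y^{(k)i}}-\tfrac{\partial L}{\partial y^{(k-1)i}}=2(k+1)g_{ij}\oci{G}^j$, turns the left side of (5.3.1) into $2(k+1)g_{ij}\oci{G}^j+\tfrac{2}{k!}g_{ij}\tfrac{d^{k+1}x^j}{dt^{k+1}}$. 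Setting this equal to $F_i=g_{ij}F^j$, contracting with the inverse fundamental tensor $g^{li}$, and multiplying through by $k!/2$ (noting $k!(k+1)=(k+1)!$) yields
$$\frac{d^{k+1}x^l}{dt^{k+1}}+(k+1)!\,\oci{G}^l=\frac{k!}{2}F^l,$$
which is precisely (5.3.4). Since $g_{ij}$ is nonsingular by regularity of $L^{(k)n}$, the contraction step is invertible and every implication can be read in both directions.

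The delicate points, rather than genuine obstacles, will be verifying that $\Gamma$ reproduces exactly the lower-order terms of the chain rule (a straightforward index-shift check between the coefficients $\alpha+1$ and the weights in $\Gamma$), and keeping the factorial and metric-contraction bookkeeping consistent. Because the whole argument reduces to the invertible algebraic relation displayed above along extensions, the equivalence of (5.3.1) and (5.3.4) follows with no further analytic input.
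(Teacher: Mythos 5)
Your proof is correct and takes essentially the same route as the paper: the paper's opening identity is exactly your chain-rule computation $\frac{d}{dt}\frac{\partial L}{\partial y^{(k)i}}=\Gamma\frac{\partial L}{\partial y^{(k)i}}+\frac{2}{k!}g_{ij}\frac{d^{k+1}x^j}{dt^{k+1}}$ along the extension $\widetilde{c}$ (which you usefully make explicit), after which both arguments contract with the inverse fundamental tensor, substitute $(k+1)\oci{G}^i=\frac{1}{2}g^{ij}\left[\Gamma\frac{\partial L}{\partial y^{(k)j}}-\frac{\partial L}{\partial y^{(k-1)j}}\right]$, and use $k!\,(k+1)=(k+1)!$, with invertibility of $g_{ij}$ giving the equivalence in both directions. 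The only slip is a citation typo: the defining formula for $\oci{G}^i$ that you invoke is (5.2.9), not (5.2.10), which instead lists the dual coefficients of the canonical nonlinear connection.
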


\begin{proof}
We have $$\dfrac{d}{dt}\dfrac{\pp L}{\pp y^{(k)i}}-\dfrac{\pp L}{\pp y^{(k-1)i}}=\Gamma\dfrac{\pp L}{\pp y^{(k)i}}-\dfrac{\pp L}{\pp y^{(k-1)i}}+\dfrac{2}{k!}g_{ij}\dfrac{d^{k+1}x^j}{dt^{k+1}}-F_i.$$ It follows
\be
\dfrac{d^{k+1}x^i}{dt^{k+1}}+\dfrac{k!}{2}g^{ij}\(\Gamma\dfrac{\pp L}{\pp y^{(k)j}}-\dfrac{\pp L}{\pp y^{(k-1)j}}\)=\dfrac{k!}{2}F^i.\tag{5.3.4'}
\ee
But the previous equations is exactly (5.3.4), with the coefficients $(k+1)\oci{G}^i$ given by (5.2.9).
\end{proof}

In initial conditions: $(x^i)_0=x^i_0$, $\(\dfrac{dx^i}{dt}\)_0=y^{(1)i}_0,...,\dfrac{1}{k!}\(\dfrac{d^k x^i}{dt^k}\)_0=y^{(k)i}_0$, locally there exists a unique solution of evolution equations (5.3.4): $x^i=x^i(t)$, $y^{(1)i}=y^{(1)i}(t),...,y^{(k)i}=y^{(k)i}(t)$, $t\in (a,b)$ which express the moving of the mechanical system $\Sigma_{L^k}$.

But, it is convenient to write the $k-$Lagrange equations (5.3.4) in the form
\be
\begin{array}{l}
y^{(1)i}=\dfrac{dx^i}{dt},\ y^{(2)i}=\dfrac12\dfrac{d^2 x^i}{dt^2},...,y^{(k)i}=\dfrac{1}{k!}\dfrac{d^k x^i}{dt^k}\\ \\
\dfrac{dy^{(k)i}}{dt}+(k+1)\oci{G}^i(x,y^{(1)},...,y^{(k)})=\dfrac12 F^i(x,y^{(1)},...y^{(k)}).
\end{array}
\ee
These equations determine the trajectories of the vector field $S$ on the acceleration space $T^kM$:
\be
\begin{array}{l}
S=y^{(1)i}\dfrac{\pp}{\pp x^i}+2y^{(2)i}\dfrac{\pp}{\pp y^{(1)i}}+...+ky^{(k)i}\dfrac{\pp}{\pp y^{(k-1)i}}-\\ \\ -(k+1)\oci{G}^i\dfrac{\pp}{\pp y^{(k)i}}+\dfrac12 F^i\dfrac{\pp}{\pp y^{(k)i}}
\end{array}
\ee
or
\be
S=\oci{S}+\dfrac12 Fe\tag{5.3.6'}
\ee
where $\oci{S}$ is the canonical $k-$semispray of the Lagrange space of order $k$, $L^{(k)n}=(M,L)$.
Consequently, we have:

\begin{theorem}
$1^\circ$ For the Lagrangian mechanical system of order $k$, $\Sigma_{L^k}$ the operator $S$ from $(5.3.6)$ is a $k-$semispray which depend only on the mechanical system $\Sigma_{L^k}$.

$2^\circ$ The integral curves of $S$ are the evolution curves given by Lagrange equations $(5.3.5)$.
\end{theorem}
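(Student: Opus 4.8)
The plan is to treat the two assertions separately, since the first is purely structural while the second is essentially a restatement of the equivalence already established in Theorem 5.3.2. Both reduce to facts recorded in the preliminaries together with the defining decomposition $S=\oci{S}+\tfrac12 Fe$ of (5.3.6').

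For part $1^\circ$ I would start from the defining property of a $k$-semispray, $JS=\overk\Gamma$, recalled in (5.1.5). Because $\oci{S}$ is the canonical $k$-semispray of the Lagrange space $L^{(k)n}=(M,L)$ (Theorem 5.2.1, $2^\circ$), one already has $J\oci{S}=\overk\Gamma$. The external force $Fe$ is, by hypothesis (5.2.3), a vertical vector field lying in the distribution $V_k$, and from (5.1.4) we have $V_k=\ker J$; hence $J(Fe)=0$. Applying the ${\cal F}(T^kM)$-linearity of the $k$-tangent structure $J$ to $S=\oci{S}+\tfrac12 Fe$, I obtain $JS=J\oci{S}+\tfrac12 J(Fe)=\overk\Gamma$, which is exactly (5.1.5). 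This is precisely the argument recorded in (5.1.8), now applied with the vertical field $\tfrac12 Fe$. The dependence of $S$ on $\slk$ alone is then immediate: $\oci{S}$ depends only on the fundamental function $L$ by Theorem 5.2.1, and $Fe$ is part of the defining data of the system.

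For part $2^\circ$ I would write out the system of ordinary differential equations satisfied by an integral curve $t\mapsto(x^i(t),y^{(1)i}(t),\dots,y^{(k)i}(t))$ of $S$ in the local expression (5.3.6). The components along $\pp/\pp x^i$ and along $\pp/\pp y^{(\alpha)i}$ for $\alpha=1,\dots,k-1$ give the chain $\tfrac{dx^i}{dt}=y^{(1)i}$, $\tfrac{dy^{(1)i}}{dt}=2y^{(2)i},\dots,\tfrac{dy^{(k-1)i}}{dt}=ky^{(k)i}$, and a short induction on $\alpha$ (using $y^{(\alpha)i}=\tfrac1\alpha\tfrac{dy^{(\alpha-1)i}}{dt}$) shows these force $y^{(\alpha)i}=\tfrac{1}{\alpha!}\tfrac{d^\alpha x^i}{dt^\alpha}$, i.e. the integral curve is the extension $\wt c$ of its base projection $c$. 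The remaining component, along $\pp/\pp y^{(k)i}$, reads $\tfrac{dy^{(k)i}}{dt}+(k+1)\oci{G}^i=\tfrac12 F^i$, so that the integral curves of $S$ are characterized exactly by the system (5.3.5). It then remains to connect (5.3.5) to the evolution equations: this is where I would invoke Theorem 5.3.2, which asserts the equivalence of the Lagrange equations (5.3.1) with the $(k+1)$-order system (5.3.4), whose first-order reduction is precisely (5.3.5) once the coefficients $(k+1)\oci{G}^i$ of (5.2.9') are substituted.

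I expect the main obstacle to be purely bookkeeping rather than conceptual: one must track the factorial normalizations carefully, reconciling the coefficient $(k+1)\oci{G}^i$ entering $S$ with the factor $\tfrac{k!}{2}F^i$ appearing in (5.3.4), and verify that the velocity relations $y^{(\alpha)i}=\tfrac{1}{\alpha!}\tfrac{d^\alpha x^i}{dt^\alpha}$ are consistently propagated through the induction so that the last equation of (5.3.5) genuinely coincides with the Lagrange equation (5.3.1). No deeper geometric input is required; both claims rest on the structural identity $V_k=\ker J$ from (5.1.4) and on the already-proven equivalence of Theorem 5.3.2.
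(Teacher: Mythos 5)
Your proof is correct and takes essentially the same route as the paper: the decomposition $S=\oci{S}+\dfrac12 Fe$ of (5.3.6'), with $J(Fe)=0$ because $Fe$ lies in $V_k=\ker J$ (the observation already recorded at (5.1.8)), gives $JS=J\oci{S}=\overk{\Gamma}$, and the componentwise integral-curve computation yields exactly the first-order system (5.3.5), whose equivalence with the Lagrange equations (5.3.1)--(5.3.4) is the earlier equivalence theorem. The only blemishes are labels, not mathematics: that equivalence is Theorem 5.3.1 (not 5.3.2), and the coefficients $(k+1)\oci{G}^i$ are given in (5.2.9); your factorial bookkeeping, via $y^{(k)i}=\dfrac{1}{k!}\dfrac{d^k x^i}{dt^k}$, correctly reconciles (5.3.4) with (5.3.5).
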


$S$ is called {\it the canonical semispray of} $\Sigma_{L^k}$ or it is called the {\it dynamical system of} $\Sigma_{L^k}$, too.

The coefficients $G^i$ of the canonical $k-$semispray $S$ are:
\be
(k+1)G^i=(k+1)\oci{G}^i-\dfrac12 F^i.
\ee

In the example (5.3.3) the canonical $k-$semispray has the coefficients
\be
(k+1)G^i=(k+1)\oci{G}^i-\dfrac{h}{2}z^{(k)i}.
\ee
Of course, the vector field $S$ on the manifold of the acceleration of order $k\geq 1$, $T^kM$ being a dynamical system it can be used for investigate the qualitative problems, as stability of solutions, equilibrium points etc.

Now, we can study the canonical nonlinear connection of the mechanical system $\Sigma_{L^k}$.

\section{Canonical nonlinear connection of mechanical system $\Sigma_{L^k}$}\index{Connections! Nonlinear}
\setcounter{equation}{0}
\setcounter{definition}{0}
\setcounter{theorem}{0}
\setcounter{lemma}{0}
\setcounter{remark}{0}
\setcounter{proposition}{0}

Let $G^i$ be the coefficients of the canonical semispray $S$ of mechanical system $\Sigma_{L^{k}}$. Thus
\be
G^i=\oci{G}^i-\dfrac{1}{2(k+1)}F^i,
\ee
where
\be
\oci{G}^i=\dfrac{1}{2(k+1)}g^{ij}\[\Gamma\dfrac{\pp L}{\pp y^{(k)j}}-\dfrac{\pp L}{\pp y^{(k-1)j}}\].
\ee
The dual coefficients $\underset{1}{\oci{M}}^i_j,...,\underset{k}{\oci{M}}^i_j$ of the canonical nonlinear connection $\oci{N}$ of Lagrange space $L^{(k)n}=(M,L)$ are given by (5.2.10):
\be
\begin{array}{l}
\underset{1}{\oci{M}}^i_j=\dfrac{\pp\oci{G}^i}{\pp y^{(k)j}}, \underset{2}{\oci{M}}^i_j=\dfrac12(\oci{S}\underset{1}{\oci{M}}^i_j+\underset{1}{\oci{M}}^i_m\underset{1}{\oci{M}}^m_j),...,\\ \\
\underset{k}{\oci{M}}^i_j=\dfrac{1}{k}(\oci{S}\underset{k-1}{\oci{M}}^i_j+\underset{1}{\oci{M}}^i_m\underset{k-1}{\oci{M}}^m_j).
\end{array}
\ee
By virtue of (5.3.6'):
\be
S=\oci{S}+\dfrac12 Fe
\ee
the dual coefficients $\underset{1}{M}^i_j,...,\underset{k}M^i_j$ of canonical nonlinear connection $N$ of $\Sigma_{L^k}$ can be written:
\be
\begin{array}{l}
\underset{1}{M}^i_j=\underset{1}{\oci{M}}^i_j-\dfrac{1}{2(k+1)}\dfrac{\pp F^i}{\pp y^{(k+1)j}}\\ \\
\underset{2}{M}^i_j=\dfrac12(S\underset{1}{M}^i_j+\underset{1}{M}^i_m\underset{1}{M}^m_j)\\
........................\\
\underset{(k)}{M}^i_j=\dfrac{1}{k}(S\underset{k-1}{M}^i_j+\underset{1}{M}^i_m\underset{k-1}{M}^m_j)
\end{array}
\ee
where
\be
S\underset{\a}{M}^i_j=\oci{S}\underset{\a}{M}^i_j+\dfrac12 Fe(\underset{\a}{M}^i_j),\ \ \a=1,2,...,k-1.
\ee

The coefficients $\underset{1}{N}^i_j,...,\underset{(k)}{N}^i_j$ of $N$ are given by the formulas
\be
\underset{1}{N}^i_j=\underset{1}{M}^i_j,\ \underset{(\a)}{N}^i_j=\underset{\a}{M}^i_j
-\underset{(\a-1)}{N}^m_j\underset{(1)}{M}^i_m-...-\underset{(1)}{N}^m_j\underset{(\a-1)}{M}^i_m,\ (\a=2,...,k).
\ee
The adapted basis $\dfrac{\de}{\de x^i}$ ($i=1,...,n$) to the distribution $N$ and the adapted basis to the vertical distributions $V_1,...,V_k$:
\be
\(\dfrac{\de}{\de x^i},\dfrac{\de}{\de y^{(1)i}},...,\dfrac{\de}{\de y^{(k)i}}\)
\ee
can be explicitly written.

Its dual adapted basis
\be
(\de x^i,\de y^{(1)i},...,\de y^{(k)i})
\ee
has the following expression:
\be
\begin{array}{c}
\de x^i=dx^i,\ \de y^{(1)i}=dy^{(1)i}+\underset{(1)}{M}^i_j dx^j,...,\de y^{(k)i}=dy^{(k)i}+\\ \\ +\underset{(1)}{M}^i_jdy^{(k-1)j}+...+\underset{(k)}{M}^i_j dx^j.\tag{5.4.9'}
\end{array}
\ee
By using the Definition from part II, equations (1.5.9), we have

\begin{theorem}
The autoparallel curves of the canonical nonlinear connection $N$ of the mechanical system $\Sigma_{L^k}$ are given by the following system of differential equation
\be
\begin{array}{l}
\dfrac{\de y^{(1)i}}{dt}=0,...,\dfrac{\de y^{(k)i}}{dt}=0\\ \\
\dfrac{dx^i}{dt}=y^{(1)i},...,\dfrac{1}{k!}\dfrac{d^k x^i}{dt^k}=y^{(k)i}.
\end{array}
\ee
\end{theorem}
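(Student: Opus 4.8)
The plan is to derive this statement as a direct specialization, to the canonical nonlinear connection $N$ of $\slk$ built in the preceding section, of the general notion of autoparallel curve of a nonlinear connection on $T^kM$ established in Part II, Chapter 1 (equations (1.5.9) and (1.5.9')). First I would recall that for any nonlinear connection $N$ on $T^kM$, with dual coefficients $\underset{1}{M}^i_j,\dots,\underset{k}{M}^i_j$ and associated adapted cobasis $(\de x^i,\de y^{(1)i},\dots,\de y^{(k)i})$ of the form (5.4.9'), the tangent vector field of a parametrized curve $\gamma:I\to T^kM$ decomposes in the adapted basis as
\[
\dfrac{d\gamma}{dt}=\dfrac{dx^i}{dt}\dfrac{\de}{\de x^i}+\dfrac{\de y^{(1)i}}{dt}\dfrac{\de}{\de y^{(1)i}}+\dots+\dfrac{\de y^{(k)i}}{dt}\dfrac{\de}{\de y^{(k)i}}.
\]
Consequently $\gamma$ is a horizontal curve exactly when its vertical components vanish, that is $\dfrac{\de y^{(\a)i}}{dt}=0$ for every $\a=1,\dots,k$.

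Next I would impose the second part of the definition: an autoparallel curve of $N$ is a horizontal curve $\gamma$ which coincides with the $k$-extension $\wt{c}$ of a base curve $c:I\to M$. Using the formula (5.1.1) for the extension $\wt{c}$, this requirement fixes the fibre coordinates of $\gamma$ as $y^{(1)i}=\dfrac{dx^i}{dt},\ y^{(2)i}=\dfrac{1}{2!}\dfrac{d^2x^i}{dt^2},\dots,y^{(k)i}=\dfrac{1}{k!}\dfrac{d^kx^i}{dt^k}$. Substituting these into the horizontality equations $\dfrac{\de y^{(\a)i}}{dt}=0$ and writing $\dfrac{\de y^{(\a)i}}{dt}$ out by means of the $1$-forms (5.4.9') of the canonical nonlinear connection $N$ of $\slk$ yields exactly the announced system. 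The covariant character of the Liouville $d$-vector fields (5.1.21) and of the forms $\de y^{(\a)i}$ guarantees that the resulting system has an intrinsic, chart-independent meaning.

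The only point requiring care — rather than a genuine obstacle — is to confirm that the specific dual coefficients (5.4.5) of $N$, which differ from the canonical dual coefficients $\underset{\a}{\oci{M}}^i_j$ of the Lagrange space $L^{(k)n}$ by the external-force corrections coming from $Fe$, enter the argument solely through the intrinsic $1$-forms $\de y^{(\a)i}$; since the notion of horizontality is uniform across all nonlinear connections, the final form of the equations is insensitive to the detailed expression of these coefficients, and no separate computation beyond substituting (5.4.5) into (5.4.9') is needed. I would close by remarking that, exactly as in the first-order case, the local existence and uniqueness of the autoparallel curves through prescribed initial data follows from the standard theory applied to the resulting system of ordinary differential equations.
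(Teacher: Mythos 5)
Your proposal is correct and follows essentially the same route as the paper: the monograph justifies this theorem simply by invoking the general characterization of horizontal and autoparallel curves from Part II, Ch.~1 (equations (1.5.9) and (1.5.9')), specialized to the canonical nonlinear connection of $\Sigma_{L^k}$, which is exactly your decomposition of $\dfrac{d\gamma}{dt}$ in the adapted basis plus the extension condition $y^{(\alpha)i}=\dfrac{1}{\alpha!}\dfrac{d^{\alpha}x^i}{dt^{\alpha}}$. Your closing observation --- that the external-force corrections in the coefficients (5.4.5) enter only through the intrinsic $1$-forms $\de y^{(\alpha)i}$ and so do not alter the form of the equations --- is a correct and slightly more explicit account of why the specialization is immediate.
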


\begin{theorem}
The variation of the Lagrangian $L$ along the autoparallel curves of the canonical nonlinear connection $N$ are given by
\be
\dfrac{dL}{dt}=\dfrac{\de L}{\de x^i}\dfrac{dx^i}{dt}.
\ee
\end{theorem}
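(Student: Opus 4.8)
The plan is to read the statement as the assertion that, along an autoparallel curve of $N$, the total time derivative of $L$ collapses to its purely horizontal component, because the vertical components of the velocity of the prolonged curve vanish. First I would take the evolution curve $c:I\to M$ whose prolongation $\gamma=\wt{c}$ is autoparallel for the canonical nonlinear connection $N$ of $\Sigma_{L^k}$, and recall from Part II the expression of its tangent vector in the adapted basis $\left(\dfrac{\de}{\de x^i},\dfrac{\de}{\de y^{(1)i}},\dots,\dfrac{\de}{\de y^{(k)i}}\right)$ attached to $N$:
$$\dfrac{d\gamma}{dt}=\dfrac{dx^i}{dt}\dfrac{\de}{\de x^i}+\dfrac{\de y^{(1)i}}{dt}\dfrac{\de}{\de y^{(1)i}}+\cdots+\dfrac{\de y^{(k)i}}{dt}\dfrac{\de}{\de y^{(k)i}}.$$
Here the quantities $\dfrac{\de y^{(\a)i}}{dt}$ are precisely the ones appearing in the characterization $(5.4.10)$ of autoparallel curves, computed with the dual coefficients $\underset{(\a)}{M^i_j}$ of $N$ determined in $(5.4.5)$.

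Next I would observe that since $L:T^kM\to\R$ is a function, its variation along $\gamma$ is simply $\dfrac{dL}{dt}=\dfrac{d\gamma}{dt}(L)$, so that applying the vector field $\dfrac{d\gamma}{dt}$ to $L$ and using that the adapted frame acts on $L$ as the differential operators $\dfrac{\de L}{\de x^i}$ and $\dfrac{\de L}{\de y^{(\a)i}}$ (equivalently, decomposing $dL=(dL)^H+\sum_{\a=1}^k (dL)^{V_\a}$ in the adapted cobasis as in Part II, Chapter 1), I obtain the identity
$$\dfrac{dL}{dt}=\dfrac{\de L}{\de x^i}\dfrac{dx^i}{dt}+\sum_{\a=1}^k\dfrac{\de L}{\de y^{(\a)i}}\dfrac{\de y^{(\a)i}}{dt}.$$
This holds for an arbitrary prolonged curve and is just the invariant rewriting of the classical chain rule $\dfrac{dL}{dt}=\dfrac{\pp L}{\pp x^i}\dfrac{dx^i}{dt}+\sum_\a\dfrac{\pp L}{\pp y^{(\a)i}}\dfrac{dy^{(\a)i}}{dt}$ after passing to the $\de$-derivatives via $(5.1.10)$.

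Finally, I would impose that $\gamma$ is autoparallel: by Theorem $5.4.1$, equations $(5.4.10)$, one has $\dfrac{\de y^{(1)i}}{dt}=\cdots=\dfrac{\de y^{(k)i}}{dt}=0$ together with $y^{(\a)i}=\dfrac{1}{\a!}\dfrac{d^\a x^i}{dt^\a}$. Substituting these vanishing vertical derivatives into the previous identity annihilates the entire sum $\sum_{\a=1}^k\dfrac{\de L}{\de y^{(\a)i}}\dfrac{\de y^{(\a)i}}{dt}$ and leaves exactly $\dfrac{dL}{dt}=\dfrac{\de L}{\de x^i}\dfrac{dx^i}{dt}$, which is the claimed formula. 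There is no genuine analytic obstacle here; the only point requiring care is bookkeeping, namely making sure that the vertical components $\dfrac{\de y^{(\a)i}}{dt}$ entering the tangent-vector decomposition are literally the same ones set to zero by the autoparallel condition $(5.4.10)$, so that the horizontal term is the sole survivor.
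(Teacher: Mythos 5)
Your proof is correct and is essentially the argument the paper intends: the theorem is stated in the text without an explicit demonstration, and the standard route you follow --- decomposing $dL$ in the adapted cobasis $(dx^i,\de y^{(1)i},\dots,\de y^{(k)i})$ so that $\dfrac{dL}{dt}=\dfrac{\de L}{\de x^i}\dfrac{dx^i}{dt}+\sum_{\a=1}^{k}\dfrac{\de L}{\de y^{(\a)i}}\dfrac{\de y^{(\a)i}}{dt}$, then annihilating the vertical terms by the autoparallel equations $\dfrac{\de y^{(\a)i}}{dt}=0$ of (5.4.10) --- is precisely the computation implicit in the paper. Your closing bookkeeping remark correctly isolates the only point requiring care, namely that the vertical components of the tangent vector of the prolonged curve are literally the quantities set to zero by the autoparallel condition.
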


\begin{corollary}
The Lagrangian $L$ of the space $L^{(k)n}=(M,L)$ of a mechanical system $\Sigma_{L^k}$ is conserved along the autoparallel curves of the canonical nonlinear connection $N$ iff the scalar product $\dfrac{\de L}{\de x^i}\dfrac{dx^i}{dt}$ vanishes.
\end{corollary}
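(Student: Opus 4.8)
The plan is to obtain the corollary as an immediate consequence of Theorem 5.4.2, which records the variation of the Lagrangian along the autoparallel curves of the canonical nonlinear connection $N$. The starting observation is that ``$L$ is conserved along a curve'' means exactly that $L$ is constant there, i.e. $\dfrac{dL}{dt}=0$ along every autoparallel curve of $N$. Hence the whole statement reduces to reading off when the expression for $\dfrac{dL}{dt}$ supplied by Theorem 5.4.2 vanishes.

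First I would recall the derivation of the variation formula itself, since this is where the actual content sits. For $L\in{\cal F}(\widetilde{T^kM})$ the differential splits in the adapted cobasis $(dx^i,\de y^{(1)i},\dots,\de y^{(k)i})$ as
$$dL=\dfrac{\de L}{\de x^i}dx^i+\sum_{\alpha=1}^{k}\dfrac{\de L}{\de y^{(\alpha)i}}\de y^{(\alpha)i},$$
which is the decomposition $dL=(dL)^H+(dL)^{V_1}+\dots+(dL)^{V_k}$ established in the section on the dual coefficients of a nonlinear connection. Next I would evaluate this $1$-form on the tangent field of an autoparallel curve $\gamma$, written in the adapted basis as
$$\dfrac{d\gamma}{dt}=\dfrac{dx^i}{dt}\dfrac{\de}{\de x^i}+\sum_{\alpha=1}^{k}\dfrac{\de y^{(\alpha)i}}{dt}\dfrac{\de}{\de y^{(\alpha)i}}.$$
Since $dL\left(\dfrac{d\gamma}{dt}\right)=\dfrac{dL}{dt}$, the pairing yields the general identity $\dfrac{dL}{dt}=\dfrac{\de L}{\de x^i}\dfrac{dx^i}{dt}+\sum_{\alpha=1}^{k}\dfrac{\de L}{\de y^{(\alpha)i}}\dfrac{\de y^{(\alpha)i}}{dt}$.

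The key step is then to invoke Theorem 5.4.1: along an autoparallel curve of $N$ one has $\dfrac{\de y^{(\alpha)i}}{dt}=0$ for every $\alpha=1,\dots,k$. Substituting these conditions into the identity above kills all the vertical contributions and leaves precisely $\dfrac{dL}{dt}=\dfrac{\de L}{\de x^i}\dfrac{dx^i}{dt}$, which is the content of Theorem 5.4.2. From here the corollary is a one-line equivalence: $L$ is conserved along the autoparallel curves of $N$ if and only if $\dfrac{dL}{dt}=0$ on each of them, and by the last formula this is equivalent to the vanishing of the scalar product $\dfrac{\de L}{\de x^i}\dfrac{dx^i}{dt}$.

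There is no genuine obstacle at the level of the corollary, which is purely a restatement of Theorem 5.4.2. The only point that demands care lies in the variation formula: I must take the horizontal/vertical splitting of $dL$ and the adapted-basis expansion of $\dfrac{d\gamma}{dt}$ in the same frame attached to $N$, so that the pairing produces exactly the quantities $\dfrac{\de y^{(\alpha)i}}{dt}$ that the autoparallel condition annihilates, rather than the natural-basis derivatives $\dfrac{dy^{(\alpha)i}}{dt}$, which do not vanish in general.
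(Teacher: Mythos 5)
Your proposal is correct and follows essentially the route the paper intends: the corollary is read off as an immediate equivalence from the variation formula $\dfrac{dL}{dt}=\dfrac{\delta L}{\delta x^i}\dfrac{dx^i}{dt}$ of Theorem 5.4.2, which you rederive exactly as the paper's framework dictates, by pairing the adapted-cobasis splitting $dL=(dL)^H+\sum_{\alpha=1}^{k}(dL)^{V_\alpha}$ with the adapted-basis expansion of $\dfrac{d\gamma}{dt}$ and killing the vertical terms via the autoparallel conditions $\dfrac{\delta y^{(\alpha)i}}{dt}=0$ of Theorem 5.4.1. Your closing caution about using the adapted quantities $\dfrac{\delta y^{(\alpha)i}}{dt}$ rather than the natural-basis derivatives $\dfrac{dy^{(\alpha)i}}{dt}$ is precisely the point on which the argument hinges, and it is handled correctly.
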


\begin{remark}
The Bucataru's nonlinear connection $\overset{b}{N}$ of the mechanical system $\Sigma_{L^k}$ has the coefficients
\be
\begin{array}{l}
\overset{b}{\underset{(1)}{N}}^i_j=\dfrac{\pp\oci{G}^i}{\pp y^{(k)j}}+\dfrac{1}{2(k+1)}\dfrac{\pp F^i}{\pp y^{(k)j}}\\ \\
\underset{(2)}{N}^i_j=\dfrac{\pp\oci{G}^i}{\pp y^{(k-1)j}}+\dfrac{1}{2(k+1)}\dfrac{\pp F^i}{\pp y^{(k-1)j}}\\
.........................\\
\underset{(k)}{N}^i_j=\dfrac{\pp\oci{G}^i}{\pp y^{(1)j}}+\dfrac{1}{2(k+1)}\dfrac{\pp F^i}{\pp y^{(1)j}}.
\end{array}
\ee
\end{remark}
This nonlinear connection depend only on the mechanical system $\Sigma_{L^k}$, too. Sometime $\underset{b}{N}$ is more convenient to solve the problems of the geometrization of mechanical system $\Sigma_{L^k}$.

\section{Canonical metrical $N-$connection}\index{Connections! $N-$linear}
\setcounter{equation}{0}
\setcounter{definition}{0}
\setcounter{theorem}{0}
\setcounter{lemma}{0}
\setcounter{remark}{0}
\setcounter{proposition}{0}

Let $N$ be the canonical nonlinear connection with the coefficients (5.4.7) or $N$ being the Bucataru's connection $\overset{b}{N}$ with the coefficients (5.4.12). An $N-$linear connection $D$ with the coefficients $D\Gamma(N)=(L^i_{jh},\underset{(1)}{C}^i_{jh},...,\underset{(k)}{C}^i_{jh})$ is metrical with respect to the metric tensor $g_{ij}$ of the mechanical system $\slk$ (cf. ch. 2, part II) if the following equations hold:
\be
g_{ij|h}=0,\ \ g_{ij}\overset{(\a)}{|}_h=0,\ \ (\a=1,...,k).
\ee
Thus, the theorem ... implies:

\begin{theorem}
The we have:

\noindent $({\rm{I}})$ There exists a unique $N-$linear connection $D$ on $\wt{T^kM}$ satisfying the axioms:

$(1)$ $N$ is the canonical nonlinear connection of the Lagrange space $L^{(k)n}$ (or $N$ is Bucataru nonlinear connection);

$(2)$ $g_{ij|h}=0$, ($D$ is $h-$metrical);

$(3)$ $g_{ij}\overset{(\a)}{|}_h=0$, $(\a=1,...,k)$ ($D$ is $v_\a-$metrical);

$(4)$ $T^i_{jh}=0$\ \ ($D$ is $h-$torsion free);

$(5)$ $\underset{(\a)}{S}^i_{jh}=0$\ \ ($D$ is $v_\a-$torsion free).

\noindent $({\rm{II}})$ The coefficients $C\Gamma(N)=(L^h_{ij},\underset{(1)}{C}^h_{ij},...,\underset{(k)}{C}^h_{ij})$ of $D$ are given by the generalized Christoffel symbols $$\begin{array}{l}L^h_{ij}=\dfrac12 g^{hs}\(\dfrac{\de g_{is}}{\de x^j}+\dfrac{\de g_{sj}}{\de x^i}-\dfrac{\de g_{ij}}{\de x^s}\)\\ \\
\underset{(\a)}{C^h_{ij}}=\dfrac12 g^{hs}\(\dfrac{\de g_{is}}{\de y^{(\a)j}}+\dfrac{\de g_{sj}}{\de y^{(\a)i}}-\dfrac{\de g_{ij}}{\de y^{(\a)s}}\),\ (\a=1,...,k).\end{array}$$

\noindent $({\rm{III}})$ $D$ depends only on the mechanical system $\slk$.
\end{theorem}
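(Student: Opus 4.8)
The plan is to reduce the statement entirely to the general theory of $N$-linear connections on $T^kM$ developed in Part II, Chapter 1, Section 1.7, proceeding exactly as in the corresponding theorem for a pure Lagrange space of order $k$. The essential observation is that axiom $(1)$ fixes the nonlinear connection $N$ once and for all—whether it is the canonical nonlinear connection with dual coefficients (5.4.7) or Bucataru's connection (5.4.12)—so that the adapted frame $\left(\dfrac{\de}{\de x^i},\dfrac{\de}{\de y^{(1)i}},\dots,\dfrac{\de}{\de y^{(k)i}}\right)$ and its dual are completely determined by $\slk$. Consequently the problem is no longer one of choosing a frame but only of determining the horizontal coefficient $L^h_{ij}$ and the $k$ vertical coefficients $\underset{(\alpha)}{C}^h_{ij}$ subject to the metricity axioms $(2)$, $(3)$ and the symmetry axioms $(4)$, $(5)$. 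I will establish uniqueness and existence by the Koszul--Christoffel process applied separately in the horizontal sector and in each of the $k$ vertical sectors.

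For uniqueness, suppose $D$ with coefficients $C\Gamma(N)=(L^h_{ij},\underset{(1)}{C}^h_{ij},\dots,\underset{(k)}{C}^h_{ij})$ satisfies all five axioms. Writing the $h$-metricity condition $(2)$ explicitly in the adapted frame gives
\[
\dfrac{\de g_{ij}}{\de x^h}=L^s_{ih}g_{sj}+L^s_{jh}g_{is}.
\]
Performing the two cyclic permutations of $(i,j,h)$, adding two of the resulting identities and subtracting the third, and then using the $h$-torsion-free axiom $(4)$, i.e. $L^s_{ih}=L^s_{hi}$, one solves algebraically for $L^h_{ij}$ and obtains precisely the generalized Christoffel symbol of $(II)$. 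The same argument, carried out with $\dfrac{\de}{\de y^{(\alpha)h}}$ in place of $\dfrac{\de}{\de x^h}$ and with the $v_\alpha$-torsion-free axiom $(5)$, i.e. $\underset{(\alpha)}{C}^s_{ih}=\underset{(\alpha)}{C}^s_{hi}$, determines each $\underset{(\alpha)}{C}^h_{ij}$, $(\alpha=1,\dots,k)$, in the stated form. Since the nondegeneracy of $g_{ij}$ makes these solutions unique, $D$ is unique.

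For existence, I would define $L^h_{ij}$ and $\underset{(\alpha)}{C}^h_{ij}$ by the formulas of $(II)$ and verify that these functions transform, under a change of coordinates on $T^kM$, by the laws required of the coefficients of an $N$-linear connection (the horizontal part like a linear connection on the base manifold, each vertical part like a $(1,2)$ $d$-tensor, cf. Part II, Section 1.7). This follows from the fact that $g_{ij}$ is a symmetric $(0,2)$ $d$-tensor together with the transformation laws of the adapted operators $\dfrac{\de}{\de x^i}$ and $\dfrac{\de}{\de y^{(\alpha)i}}$ established in Part II. Once the correct transformation laws are confirmed, the general form of an $N$-linear connection in the adapted basis yields a genuine connection $D$, and a direct substitution shows that the defining formulas verify axioms $(2)$--$(5)$, while axiom $(1)$ holds by construction. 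Finally, for $(III)$, the coefficients in $(II)$ are built solely from $g_{ij}$ and from the adapted frame of $N$; since $g_{ij}$ is the fundamental tensor of $\slk$ and $N$ depends only on $\slk$ by Section 5.4, the connection $D$ depends only on the mechanical system $\slk$.

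The main obstacle is organizational rather than conceptual: one must carry the Koszul manipulation through $k+1$ independent sectors (one horizontal and $k$ vertical) while keeping track of the nested vertical distributions $V_1\supset\cdots\supset V_k$. Because the metricity and torsion axioms decouple these sectors—each $v_\alpha$-covariant derivative involves only the single coefficient $\underset{(\alpha)}{C}^h_{ij}$—the higher-order case reduces to $k+1$ copies of the familiar order-one computation, so no genuinely new difficulty arises beyond the bookkeeping and the routine check of the transformation rules.
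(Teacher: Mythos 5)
Your proposal is correct and follows essentially the same route as the paper: the paper obtains this theorem by invoking the general existence--uniqueness theorem for metrical $N$-linear connections on $T^kM$ (Theorem 2.3.1 of Part II, itself the generalized Christoffel construction), which is exactly your Koszul-type argument carried out sector by sector in the adapted frame, with uniqueness from nondegeneracy of $g_{ij}$ and existence from the transformation laws of $\dfrac{\de}{\de x^i}$ and $\dfrac{\de}{\de y^{(\a)i}}$. Your proof simply writes out the details the paper leaves to citation, including the correct observation that the metricity and torsion axioms decouple the horizontal and the $k$ vertical sectors.
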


The $N-$metrical connection $D$ is called the {\it canonical metrical connection} of the Lagrangian mechanical system of order $k\geq 1$ $\slk$.

Now, one can affirm: {\it the geometrization of $\slk$ can be developed by means of the canonical semispray $S$ and by the geometrical object fields: $L,g_{ij},N$, and $C\Gamma(N)$.}

\section{The Riemannian $(k-1)n$ almost contact model of the Lagrangian mechanical system of order $k$, $\slk$}
\setcounter{equation}{0}
\setcounter{definition}{0}
\setcounter{theorem}{0}
\setcounter{lemma}{0}
\setcounter{remark}{0}
\setcounter{proposition}{0}

$N$ being the nonlinear connection with the coefficients  (5.4.7) (or with the coefficients (5.4.12)) and $(\de x^i,\de y^{(1)i},...,\de y^{(k)i})$ be the adapted cobasis to the direct decomposition (5.1.12).

Consider the $N-$lift $\G$ of the metric tensor $g_{ij}$:
\be
\G=g_{ij}dx^i\otimes dx^j+g_{ij}\de y^{(1)i}\otimes\de y^{(1)j}+...+g_{ij}\de y^{(k)j}\otimes\de y^{(k)i}.
\ee
Thus, $\G$ is a semidefinite Riemannian structure on the manifold $\wt{T^kM}$ depending only on $\slk$.

The distributions $\{\underset{0}{N},\underset{1}{N},...,\underset{k-1}{N},V_k\}$ are two by two orthogonal with respect to $\G$.

The nonlinear connection $N$ uniquely determines the $(k-1)n-$almost contact structure $\F$ by:
\be
\F\(\dfrac{\de}{\de x^i}\)=-\dfrac{\pp}{\pp y^{(k)i}};\ \F\(\dfrac{\de}{\de y^{(\a)i}}\)=0,\ \a=1,...,k-1;\ \F\(\dfrac{\pp}{\pp y^{(k)i}}\)=\dfrac{\de}{\de x^i}.
\ee

Indeed, it follows:

\begin{theorem}
\begin{itemize}
  \item[$(1)$] \ $\F$ is globally defined on the manifold $\wt{T^kM}$ and depend on $\slk$, only.
  \item[$(2)$] \ $\ker\F=N_1\oplus N_2\oplus ...\oplus N_{k-2}$, ${\rm{Im}}\F=N_0\oplus V_k$.
  \item[$(3)$] \ ${\rm{rank}}\|\F\|=2n$.
  \item[$(4)$] \ $\F^3+\F=0$.
\end{itemize}
So, $\F$ is an almost $(k-1)n-$contact structure on the bundle of acceleration of order $k$, $T^kM$.
\end{theorem}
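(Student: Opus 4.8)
The plan is to realize $\F$ as a genuine tensor field of type $(1,1)$ on $\wt{T^kM}$ and then to read off all four properties from its action on the adapted basis. First I would observe that the three defining relations are equivalent to the single global expression
\be
\F=-\dfrac{\pp}{\pp y^{(k)i}}\otimes dx^i+\dfrac{\de}{\de x^i}\otimes \de y^{(k)i},
\ee
because contracting with the adapted cobasis $(dx^i,\de y^{(1)i},\dots,\de y^{(k)i})$ reproduces exactly $\F(\de/\de x^i)=-\pp/\pp y^{(k)i}$, $\F(\pp/\pp y^{(k)i})=\de/\de x^i$, and $\F(\de/\de y^{(\alpha)i})=0$ for $\alpha=1,\dots,k-1$ (here one uses that $\de y^{(k)i}$ annihilates $\de/\de y^{(\alpha)j}$ when $\alpha\neq k$). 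To settle property (1) I would invoke the transformation rules already established in Part~II: both the horizontal field $\de/\de x^i$ and the top vertical field $\pp/\pp y^{(k)i}=\de/\de y^{(k)i}$ transform through the factor $\pp x^j/\pp\wt{x}^i$, while the one-forms $dx^i$ and $\de y^{(k)i}$ transform through $\pp\wt{x}^i/\pp x^j$. Hence each of the two terms above is invariant, so $\F$ is globally defined on $\wt{T^kM}$; and since the adapted basis is constructed from the canonical nonlinear connection $N$ of $\Sigma_{L^k}$, which in Section~5.4 was shown to depend only on the mechanical system, $\F$ depends only on $\Sigma_{L^k}$.

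For property (2) I would argue directly from the definition. The vector fields $\de/\de y^{(\alpha)i}$, $\alpha=1,\dots,k-1$, which span $N_1\oplus\cdots\oplus N_{k-1}$, are precisely those annihilated by $\F$, whereas $\F$ carries $\de/\de x^i$ to $-\pp/\pp y^{(k)i}$ and $\pp/\pp y^{(k)i}$ to $\de/\de x^i$, thereby mapping $N_0$ and $V_k$ isomorphically onto $V_k$ and $N_0$. Consequently $\ker\F=N_1\oplus\cdots\oplus N_{k-1}$ and $\mathrm{Im}\,\F=N_0\oplus V_k$. Property (3) is then immediate: $\dim N_0=\dim V_k=n$ and these two distributions are supplementary inside $T_u\wt{T^kM}$, so $\mathrm{rank}\,\F=\dim(N_0\oplus V_k)=2n$.

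Property (4) I would obtain by computing $\F^2$ and $\F^3$ on the adapted basis. One finds $\F^2(\de/\de x^i)=\F(-\pp/\pp y^{(k)i})=-\de/\de x^i$ and $\F^2(\pp/\pp y^{(k)i})=-\pp/\pp y^{(k)i}$, while $\F^2(\de/\de y^{(\alpha)i})=0$; thus $\F^2=-\mathrm{Id}$ on $N_0\oplus V_k$ and $\F^2=0$ on $\ker\F$. Applying $\F$ once more yields $\F^3=-\F$ on $N_0\oplus V_k$ and $\F^3=0$ on $\ker\F$, so that $(\F^3+\F)$ annihilates every element of the adapted basis, whence $\F^3+\F=0$. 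Assembling (1)--(4) exhibits $\F$ as an almost $(k-1)n$-contact structure on $T^kM$.

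The computations are entirely routine; the only step requiring genuine care is property (1), the global consistency of the definition, and this is not really an obstacle since it reduces completely to the tensorial transformation laws of the adapted (co)basis proved in Part~II. I would also remark, to reconcile the indexing, that the defining relation $\F(\de/\de y^{(\alpha)i})=0$ ranges over $\alpha=1,\dots,k-1$, so the kernel is honestly $N_1\oplus\cdots\oplus N_{k-1}$, in agreement with the analogous structure $\F$ of Theorem~2.4.1 for the Lagrange space $L^{(k)n}$.
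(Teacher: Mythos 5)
Your verification on the adapted basis is correct and is exactly the routine computation the paper leaves implicit (it only says ``Indeed, it follows''), realizing $\F$ as the tensor $-\dfrac{\pp}{\pp y^{(k)i}}\otimes dx^i+\dfrac{\de}{\de x^i}\otimes\de y^{(k)i}$ and checking each claim on the frame, in the same way as the analogous Theorem 2.4.1 of Part II. You are also right to flag the indexing: since $\F$ annihilates $\de/\de y^{(\alpha)i}$ for $\alpha=1,\dots,k-1$, the kernel is $N_1\oplus\cdots\oplus N_{k-1}$ of dimension $(k-1)n$ (consistent with $\mathrm{rank}\,\F=2n$ on the $(k+1)n$-dimensional manifold $\wt{T^kM}$), so the paper's ``$N_{k-2}$'' in item (2) is a misprint.
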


Let $\(\underset{1a}{\xi},\underset{2a}{\xi},...,\underset{(k-1)a}{\xi}\)$, $a=1,...,n$, be a local basis adapted to the direct decomposition $\underset{1}{N}\oplus ...\oplus\underset{k-1}{N}$ and $\overset{1a}{\eta},\overset{2a}{\eta},...,\overset{(k-1)a}{\eta}$ its dual basis.

Thus the set
\be
\(\F,\underset{1a}{\xi},...,\underset{(k-1)a}{\xi},\overset{1a}{\eta},...,\overset{(k-1)a}{\eta}\), (a=1,...,n)
\ee
is a $(k-1)n-$almost contact structure.

Indeed, we have
\be
\F\(\underset{\a a}{\xi}\)=0, \overset{a\ a}{\eta}\(\underset{\b b}{\xi}\)=\left\{\begin{array}{ll}\de^a_b, & {\rm{for\ }}\a=\b\\ 0,& {\rm{for\ }}\a\neq\b,\ (\a,\b=1,...,n-1)\end{array}\right.
\ee
and
\be
\left\{\begin{array}{l}\F^2(X)=-X+\dd\sum^{n}_{a=1}\dd\sum^{k-1}_{\a=1}\overset{a\ a}{\eta}(X)\underset{\a a}{\xi},\ \forall X\in{\cal X}(T^kM)\\ \\
\overset{a\ a}{\eta}\circ\F=0.
\end{array}\right.
\ee

The Nijenhuis tensor $N_\F$ is expressed by
\be
N_\F(X,Y)=[\F X,\F Y]+\F^2(X,Y)-\F[\F X,Y]-\F[X,\F Y].
\ee
The structure (5.6.5) is {\it normal} if:
\be
N_{\F}(X,Y)+\dd\sum^{n}_{a=1}\dd\sum^{k-1}_{\a=1}d\overset{a\ a}{\eta}(X,Y)=0,\ \forall X,Y\in{\cal X}(\wt{T^kM}).
\ee

A characterization of the normality of structure (4.6.7) is as follows:

\begin{theorem}
The almost $(k-1)n-$contact structure $\(\F,\underset{\a a}{\xi},\overset{a\ b}{\eta}\)$ is normal, if and only if, for any vector fields $X,Y$ on $\wt{T^kM}$ we have
\be
N_\F(X,Y)+\dd\sum^{n}_{a=1}\dd\sum^{k-1}_{\a=1}d(\dd y^{(\a)a})(X,Y)=0.
\ee
\end{theorem}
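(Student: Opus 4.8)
The plan is to reduce the asserted characterization to the very definition of normality (5.6.7) by exhibiting a \emph{canonical} adapted basis of the distribution $N_1\oplus\cdots\oplus N_{k-1}$ whose dual $1$-forms are exactly the $\de y^{(\a)a}$. First I would choose
\[
\underset{\a a}{\xi}=\dfrac{\de}{\de y^{(\a)a}},\quad \a=1,\dots,k-1,\ a=1,\dots,n,
\]
as the local basis adapted to $N_1\oplus\cdots\oplus N_{k-1}$; these span $N_\a$ because $N_\a=J^{\a}(N_0)$ and, by (5.1.14), $J\!\left(\dfrac{\de}{\de y^{(\a-1)i}}\right)=\dfrac{\de}{\de y^{(\a)i}}$. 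Since the frame (5.1.13) and the coframe (5.1.15) are biorthogonal adapted (co)bases of the direct decomposition (5.1.12), the $1$-forms dual to the $\underset{\a a}{\xi}$ inside $N_1\oplus\cdots\oplus N_{k-1}$ are precisely $\overset{\a a}{\eta}=\de y^{(\a)a}$.

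Next I would check that $\left(\F,\underset{\a a}{\xi},\overset{\a a}{\eta}\right)$ really is the almost $(k-1)n$-contact structure of (5.6.3). The relation $\F\!\left(\underset{\a a}{\xi}\right)=0$ is immediate from the definition (5.6.2) of $\F$, which kills $\dfrac{\de}{\de y^{(\a)i}}$ for $\a=1,\dots,k-1$; the duality relations $\overset{\a a}{\eta}\!\left(\underset{\b b}{\xi}\right)=\de^a_b$ for $\a=\b$ and $0$ otherwise follow from the biorthogonality just noted; and the identities $\F^2(X)=-X+\sum_{a,\a}\overset{\a a}{\eta}(X)\,\underset{\a a}{\xi}$ together with $\overset{\a a}{\eta}\circ\F=0$ are obtained by evaluating both sides on each element of the adapted frame, using $\F\!\left(\dfrac{\de}{\de x^i}\right)=-\dfrac{\pp}{\pp y^{(k)i}}$ and $\F\!\left(\dfrac{\pp}{\pp y^{(k)i}}\right)=\dfrac{\de}{\de x^i}$ from (5.6.2). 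This confirms that the objects entering the normality condition (5.6.7) are exactly those of the canonical structure, and then substituting $\overset{\a a}{\eta}=\de y^{(\a)a}$ turns (5.6.7) into
\[
N_{\F}(X,Y)+\dd\sum^{n}_{a=1}\dd\sum^{k-1}_{\a=1}d(\de y^{(\a)a})(X,Y)=0,
\]
which is the claimed equation; the two statements then coincide verbatim.

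The only point requiring genuine care, and the one I expect to be the main obstacle, is the well-definedness and globality of this identification. A general adapted basis of $N_1\oplus\cdots\oplus N_{k-1}$ differs from the canonical one by a fibrewise $GL$-transformation, under which $\sum_{a,\a}d\overset{\a a}{\eta}$ need not equal $\sum_{a,\a}d(\de y^{(\a)a})$; hence one must insist on the canonical choice $\underset{\a a}{\xi}=\de/\de y^{(\a)a}$ and show it is coordinate independent. This is supplied by the transformation rule of the adapted coframe established in Part II, namely $\de\wt{y}^{(\a)i}=\dfrac{\pp\wt{x}^i}{\pp x^j}\de y^{(\a)j}$, which shows that the $\de y^{(\a)a}$ (hence the $\overset{\a a}{\eta}$) and their exterior differentials patch together into global objects on $\wt{T^kM}$, so that both members of the equivalence are globally defined and the argument is complete.
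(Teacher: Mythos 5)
Your identification of the canonical adapted coframe, $\underset{\a a}{\xi}=\de/\de y^{(\a)a}$ with dual $\overset{\a a}{\eta}=\de y^{(\a)a}$, is precisely the paper's (implicit) argument: the theorem is nothing more than the normality condition (5.6.7) specialized to this choice, the structure relations you verify being exactly those the paper records in (5.6.4)--(5.6.5), so your proof coincides with the intended one. One minor caution on your closing remark: since $d(\de\wt{y}^{(\a)a})=\dfrac{\pp\wt{x}^a}{\pp x^j}\,d(\de y^{(\a)j})+d\Bigl(\dfrac{\pp\wt{x}^a}{\pp x^j}\Bigr)\wedge\de y^{(\a)j}$, the trace $\sum_a d(\de y^{(\a)a})$ is not literally a coordinate-independent $2$-form, but this convention issue belongs to the paper's formulation of normality itself and does not affect your reduction.
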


Thus, we can prove:

\begin{theorem}
The pair $(\G,\F)$ is a Riemannian $(k-1)n-$almost contact structure on $\wt{T^kM}$, depending only on the Lagrangian mechanical system of order $k\geq 1$, $\slk$.
\end{theorem}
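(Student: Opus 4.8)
The plan is to verify that the pair $(\G,\F)$ satisfies the defining compatibility of a metric (Riemannian) almost $(k-1)n$-contact structure. Two ingredients are already in hand and may be quoted directly. First, Theorem 5.6.1 shows that $\F$ is an almost $(k-1)n$-contact structure on $\wt{T^kM}$, globally defined, depending only on $\slk$, with $\ker\F=N_1\oplus\cdots\oplus N_{k-1}$, $\mathrm{Im}\,\F=N_0\oplus V_k$, $\mathrm{rank}\,\|\F\|=2n$ and $\F^3+\F=0$. Second, the lift $\G$ of (5.6.1) is a semidefinite Riemannian metric for which the distributions $N_0,N_1,\dots,N_{k-1},V_k$ are mutually orthogonal, as noted immediately after (5.6.1). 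Since both $\G$ and $\F$ are constructed only from the canonical nonlinear connection $N$ (section 5.4) and the fundamental tensor $g_{ij}$ of $\slk$, and each of these depends solely on the mechanical system, the resulting structure $(\G,\F)$ depends only on $\slk$; this settles the ``depending only on $\slk$'' clause at once.

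It then remains to establish the metric compatibility, for which I would prove the skew-adjointness of $\F$ with respect to $\G$, namely
\[
\G(\F X,Y)=-\,\G(\F Y,X),\quad \forall X,Y\in{\cal X}(\wt{T^kM}),
\]
equivalently the almost-contact metric identity
\[
\G(\F X,\F Y)=\G(X,Y)-\sum_{a=1}^{n}\sum_{\a=1}^{k-1}\overset{\a a}{\eta}(X)\,\overset{\a a}{\eta}(Y).
\]
Both being $\R$-bilinear, I would check them on the adapted basis $\left(\dfrac{\de}{\de x^i},\dfrac{\de}{\de y^{(1)i}},\dots,\dfrac{\de}{\de y^{(k)i}}\right)$, recalling $\dfrac{\de}{\de y^{(k)i}}=\dfrac{\pp}{\pp y^{(k)i}}$. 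Using (5.6.2) together with orthogonality and the symmetry of $g_{ij}$, the only nonzero contributions come from the pair $\left(\dfrac{\de}{\de x^i},\dfrac{\pp}{\pp y^{(k)j}}\right)$, where $\F\left(\dfrac{\de}{\de x^i}\right)=-\dfrac{\pp}{\pp y^{(k)i}}$ gives $\G(\F X,Y)=-g_{ij}$ while $\F\left(\dfrac{\pp}{\pp y^{(k)j}}\right)=\dfrac{\de}{\de x^j}$ gives $\G(\F Y,X)=g_{ij}$, so the skew identity holds; on $N_0\times N_0$ and $V_k\times V_k$ both sides vanish by orthogonality, and whenever one argument lies in $\ker\F=N_1\oplus\cdots\oplus N_{k-1}$, the relation $\F X=0$ together with $\mathrm{Im}\,\F=N_0\oplus V_k\perp N_\a$ kills both sides.

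The main obstacle I anticipate is not the skew-adjointness (which is the routine basis computation above) but pinning down the exact normalization in the $\eta\otimes\eta$ correction term, i.e. confirming that the dual coframe $\left\{\overset{\a a}{\eta}\right\}$ of the kernel frame $\left\{\underset{\a a}{\xi}\right\}$ is calibrated so that $\sum_{a,\a}\overset{\a a}{\eta}(X)\overset{\a a}{\eta}(Y)=\G(X,Y)$ for $X,Y\in N_1\oplus\cdots\oplus N_{k-1}$. This requires relating the frame $\underset{\a a}{\xi}$ to the fundamental tensor $g_{ij}$ on each $N_\a$, since $\G$ restricted to $N_\a$ is $g_{ij}\,\de y^{(\a)i}\otimes\de y^{(\a)j}$ rather than Euclidean; once this calibration is fixed, the second identity follows from the first exactly as in the case $k=1$ (Theorem 2.4.4 and its Lagrange-space analogue). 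Finally I would assemble these facts and invoke the compatibility property $\G(\F X,Y)=-\G(\F Y,X)$ to conclude, as in the model of Theorem 2.4.4, that $(\G,\F)$ is a Riemannian $(k-1)n$-almost contact structure on $\wt{T^kM}$ determined by $\slk$ alone.
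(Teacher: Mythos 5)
Your proposal is correct and follows essentially the paper's own route: the paper's entire proof consists of asserting the skew-compatibility $\G(\F X,Y)=-\G(\F Y,X)$ (its condition (5.6.9)), on top of Theorem 5.6.1 for $\F$ and the $\G$-orthogonality of the distributions $N_0,N_1,\dots,N_{k-1},V_k$; your adapted-basis computation simply supplies the verification the paper omits, and it is carried out correctly --- the only nonzero pairing is between $N_0$ and $V_k$, where $\F\left(\dfrac{\de}{\de x^i}\right)=-\dfrac{\pp}{\pp y^{(k)i}}$ and $\F\left(\dfrac{\pp}{\pp y^{(k)j}}\right)=\dfrac{\de}{\de x^j}$ give $-g_{ij}$ versus $g_{ij}$, while any argument in $\ker\F=N_1\oplus\cdots\oplus N_{k-1}$ annihilates both sides because ${\rm Im}\,\F=N_0\oplus V_k$ is $\G$-orthogonal to each $N_\a$. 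The ``depends only on $\slk$'' clause is likewise settled as you say, since $N$ and $g_{ij}$, hence $\G$ and $\F$, are built solely from the mechanical system.

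One caution about the step you single out as the main obstacle: the calibrated identity $\G(\F X,\F Y)=\G(X,Y)-\sum_{a,\a}\overset{\a a}{\eta}(X)\,\overset{\a a}{\eta}(Y)$ is not part of the compatibility the paper uses; its notion of a Riemannian $(k-1)n$-almost contact structure requires only the algebraic frame properties (5.6.4)--(5.6.5) together with the skew condition (5.6.9). With an arbitrary adapted frame $\underset{\a a}{\xi}$ the stronger identity generally fails, and the $g_{ij}$-orthonormalization you propose is not always available, since $g_{ij}$ is only assumed of constant signature (possibly semidefinite), so insisting on that calibration could actually obstruct your argument. Dropping it entirely, your skew computation already completes the proof in the paper's sense.
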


Indeed, the following condition holds:
\be
\G(\F X,Y)=-\G(\F Y,X),\ \ \forall X,Y\in{\cal X}(\wt{T^kM}).
\ee
Therefore, the triple $(T^kM,\G,\F)$ is called the Riemannian $(k-1)n-$ almost contact model of the mechanical system $\slk$. It can be used to solve the problem of geometrization of mechanical system $\slk$.

\section{Classical Riemannian mechanical system with external forces depending on higher order accelerations}
\setcounter{equation}{0}
\setcounter{definition}{0}
\setcounter{theorem}{0}
\setcounter{lemma}{0}
\setcounter{remark}{0}
\setcounter{proposition}{0}

Let us consider the classical Riemannian mechanical systems
\be
\Sigma_{\cal R}=(M,T,Fe),
\ee
with $T=\dfrac12\g_{ij}(x)\dfrac{dx^i}{dt}\dfrac{dx^j}{dt}$ as kinetic energy and the external forces depending on the acceleration of order $1,2,...,k$, i.e. $Fe(x,y^{(1)},...,y^{(k)})$. The problem is {\it what kind of evolution equations can be postulate in order to study the movings of these systems}?

Of course, in nature we have such kind of mechanical systems. For instance, the friction forces, which action on a rocket in the time of its moving are the external forces depending by higher order accelerations. But, it is clear that the classical Lagrange equations:
$$\dfrac{d}{dt}\(\dfrac{\pp(2T(x,y^{(1)}))}{\pp y^{(1)i}}\)-\dfrac{\pp(2T(x,y^{(1)}))}{\pp x^i}=F_i(x,y^{((1))},...,y^{(1)})$$ have the form
$$\dfrac{d^2 x^i}{dt^2}+\g^i_{jh}(z)\dfrac{dx^j}{dt}\dfrac{dx^h}{dt}=\dfrac12\g^{is}(x)F_s\(x,\dfrac{dx}{dt},...,\dfrac{1}{k!}\dfrac{d^k x}{dt^k}\)$$ and for $k>2$ it is not a differential system of equations of order 2, like in the classical Lagrange equations.

We solve this problem by means of the prolongation of order $k$ of the Riemannian space ${\cal R}^n=(M,\g_{ij}(x))$.

Let $\Sigma_{\cal R}=(M,2T,Fe)$ a mechanical system of form (5.7.1) with $T=\dfrac12\g_{ij}(x)y^{(1)i}y^{(1)j}$, $\(y^{(1)i}=\dfrac{dx^i}{dt}\)$ as kinetic energy. Thus the Riemannian space ${\cal R}^n=(M,\g_{ij}(x))$ can be prolonged to the Riemannian space of order $k\geq 1$ Prol$^k{\cal R}^n=(T^kM,\G)$, where
\be
\G=\g_{ij}(x)\de x^i\otimes \de x^j+\g_{ij}(x)\de y^{(1)i}\otimes\de y^{(1)j}+...+\g_{ij}(x)\de y^{(k)i}\otimes\de y^{(k)j}.
\ee
The 1-forms $\de x^i,\de y^{(1)i},...,\de y^{(k)i}$ being determined as follows:
\be
\begin{array}{l}
\de x^i=dx^i,\ \de y^{(1)i}=dy^{(1)i}+\underset{(1)}{M}^i_j dx^j,...,\de y^{(k)i}=dy^{(k)i}+\\ \\
+\underset{(1)}{M}^i_j dy^{(k-1)i}+...+\underset{k}{M}^i_j dx^j.
\end{array}
\ee
In these formulas $\underset{(\a)}{M}^i_j$ ($\a=1,...,k$) are the dual coefficients of a nonlinear connection $\oci{N}$ determined only on the Riemannian structure $\g_{ij}(x)$. That are
\be
\begin{array}{l}
\underset{1}{M}^i_j(x,y^{(1)})=\g_{jh}^i(x)y^{(1)m},\\ \\
\underset{2}{M}^i_j(x,y^{(1)},y^{(2)})=\dfrac12\(\Gamma\underset{(1)}{M}^i_j+\underset{(1)}{M}^i_m\underset{(1)}{M}^m_j\)\\
..................................\\
\underset{k}{M}^i_j(x,y^{(1)},...,y^{(k)})=\dfrac12\(\Gamma\underset{(k-1)}{M}^i_j+\underset{(1)}{M}^i_m\underset{(k-1)}{M}^m_j\),
\end{array}
\ee
$\Gamma$ being the nonlinear operator:
\be
\Gamma=y^{(1)i}\dfrac{\pp}{\pp x^i}+2y^{(2)i}\dfrac{\pp}{\pp y^{(1)i}}+...+ky^{(k)i}\dfrac{\pp}{\pp y^{(k-1)i}}.
\ee
Thus, (cf. ch. ...) the dual coefficients $\underset{(\a)}{M}$ (5.7.4) and the primal coefficients $\underset{(\a)}{N}$ (5.4.7) depend on the Riemannian structure $\g_{ij}(x)$, only.

Now the Lagrangian mechanical system of order $k$:
\be
\begin{array}{l}
\Sigma_{Prol^k{\cal R}^n}=(M,L(x,y^{(1)},...,y^{(k)})),\ Fe(x,y^{(1)},y^{(k)}),\\ \\
L=\g_{ij}(x)z^{(k)i}z^{(k)j},\\ \\
kz^{(k)i}=ky^{(k)i}+\underset{(1)}{M}^i_m y^{(k-1)m}+...+\underset{(k-1)}{M}^i_m y^{(1)m},
\end{array}
\ee
depend only on $\Sigma_{{\cal R}^n}$. It is the prolongation of order $k$ of the Riemannian mechanical system (4.7.1), $\Sigma_{{\cal R}^n}$.

We can postulate:

{\it The Lagrange equations of $\Sigma_{{\cal R}^n}$ are the Lagrange equations of the prolonged Lagrange equations of order $k$, $\Sigma_{Prol^k{\cal R}^n}$:}
\be
\begin{array}{l}
\dfrac{d}{dt}\(\dfrac{\pp L(x,y^{(1)},...,y^{(k)})}{\pp y^{(k)i}}\)-\dfrac{\pp L(x,y^{(1)},...,y^{(k)})}{\pp y^{(k-1)i}}=F_i(x,y^{(1)},...,y^{(k)})\\ \\
y^{(1)i}=\dfrac{dx^i}{dt},...,y^{(k)i}=\dfrac{1}{k!}\dfrac{d^k x^i}{dt^k}.
\end{array}
\ee

Applying the general theory from this chapter, we have

\begin{theorem}
The Lagrange equations of the classical Riemannian systems $\Sigma_{\cal R}$ with external forces depending on higher order accelerations are given by
\be
\begin{array}{l}
\dfrac{d^{k+1}x^i}{dt^{k+1}}+(k+1)!\oci{G}^i(x,y^{(1)},...,y^{(k)})=\dfrac12 k! F^i(x,y^{(1)},...,y^{(k)})\\ \\
(k+1)\oci{G}^i=\dfrac12 \g^{ij}\[\Gamma\dfrac{\pp L}{\pp y^{(k)j}}-\dfrac{\pp L}{\pp y^{(k-1)}}\]\\ \\
L=\g_{ij}(x)z^{(k)i}z^{(k)j}.
\end{array}
\ee
\end{theorem}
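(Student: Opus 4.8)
The plan is to derive the stated evolution equations by reducing the classical Riemannian mechanical system $\Sigma_{\cal R}$ to the already-solved theory of Lagrangian mechanical systems of order $k$. The essential device is the prolongation construction Prol$^k{\cal R}^n$: starting from $\Sigma_{\cal R}=(M,2T,Fe)$ with $2T=\gamma_{ij}(x)y^{(1)i}y^{(1)j}$, I first prolong the Riemannian metric $\gamma_{ij}(x)$ to the order-$k$ Riemannian structure $\G$ on $T^kM$ via (5.7.2), using the canonically determined dual coefficients $\underset{(\a)}{M}^i_j$ from (5.7.4). By the earlier prolongation theorem (Part II, Ch.~1, Theorem on Prol$^k{\cal R}^n$), these coefficients depend only on $\gamma_{ij}(x)$, hence the entire prolonged structure is intrinsic to $\Sigma_{\cal R}$. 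The Liouville $d$-vector field $z^{(k)i}$ is then built from these coefficients, and the prolonged Lagrangian $L=\gamma_{ij}(x)z^{(k)i}z^{(k)j}$ in (5.7.7) is a regular order-$k$ Lagrangian determined solely by the original data.

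Next I would invoke the Postulate adopted in Section 5.7, which declares the Lagrange equations of $\Sigma_{\cal R}$ to be those of the prolonged system $\Sigma_{\mathrm{Prol}^k{\cal R}^n}$, namely (5.7.8). At this point the result is almost entirely a consequence of Theorem 5.3.2 (the equivalence of the order-$k$ Lagrange equations with the second-order-in-top-variable form): that theorem already establishes that
\begin{equation}
\dfrac{d^{k+1}x^i}{dt^{k+1}}+(k+1)!\,\oci{G}^i=\dfrac{k!}{2}\,F^i,
\nonumber
\end{equation}
with the coefficients $(k+1)\oci{G}^i=\dfrac12 g^{ij}\left[\Gamma\dfrac{\pp L}{\pp y^{(k)j}}-\dfrac{\pp L}{\pp y^{(k-1)j}}\right]$ supplied by (5.2.9). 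So the main work is simply to specialize the generic fundamental tensor $g_{ij}$ to the prolonged metric tensor, which for $L=\gamma_{ij}(x)z^{(k)i}z^{(k)j}$ reduces (because $z^{(k)i}$ is linear in $y^{(k)}$ with leading coefficient $\delta^i_j$) to $g_{ij}=\gamma_{ij}(x)$. Substituting $g^{ij}=\gamma^{ij}$ into the coefficient formula yields exactly the expressions asserted in (5.7.9).

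I would then carry out the verification that $g_{ij}=\gamma_{ij}(x)$ for this Lagrangian, which is the one genuinely computational step: differentiating $L=\gamma_{ij}z^{(k)i}z^{(k)j}$ twice with respect to $y^{(k)}$ and using $\dfrac{\pp z^{(k)i}}{\pp y^{(k)j}}=\delta^i_j$ gives $\dfrac12\dfrac{\pp^2 L}{\pp y^{(k)i}\pp y^{(k)j}}=\gamma_{ij}(x)$. This confirms that the fundamental tensor is $\gamma_{ij}(x)$ and simultaneously that $L$ is regular, so $\Sigma_{\mathrm{Prol}^k{\cal R}^n}$ is a legitimate Lagrangian mechanical system of order $k$ and Theorem 5.3.2 applies verbatim.

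The step I expect to be the main obstacle is not the final substitution but the justification that the prolonged Lagrangian and its derived objects really depend \emph{only} on $\gamma_{ij}(x)$ and on the external forces, i.e.\ that the whole reduction is canonical and not an artifact of a chosen nonlinear connection. This rests on the intrinsic character of the dual coefficients $\underset{(\a)}{M}^i_j$ in (5.7.4) and on the remark (Part II, Ch.~1) that $\Gamma$ may be replaced by any $k$-semispray without affecting $\underset{(1)}{M},\dots,\underset{(k-1)}{M}$. Once this intrinsicality is granted, the equivalence theorem does all the remaining heavy lifting, and the asserted equations (5.7.9) follow by direct specialization. The homogeneity/linearity of $z^{(k)i}$ in the top variable is what makes the collapse $g_{ij}=\gamma_{ij}$ possible, and I would flag it explicitly as the structural reason the final equations retain the clean Christoffel-symbol shape familiar from the order-one case.
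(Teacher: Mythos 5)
Your proposal is correct and follows essentially the same route as the paper: postulate the evolution equations to be those of the prolonged system $\Sigma_{\mathrm{Prol}^k{\cal R}^n}$ with $L=\gamma_{ij}(x)z^{(k)i}z^{(k)j}$, then apply the order-$k$ equivalence theorem of Section 5.3 with the fundamental tensor specialized to $\gamma_{ij}(x)$. Your explicit Hessian computation showing $g_{ij}=\gamma_{ij}(x)$ (via the linearity of $z^{(k)i}$ in $y^{(k)}$) is merely left implicit in the paper, having already been recorded in the examples of Part II, Ch.\ 2.
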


The canonical semispray of $\Sigma_{\cal R}$ is expressed by
\be
S=y^{(1)i}\dfrac{\pp}{\pp x^i}+...+ky^{(k)i}\dfrac{\pp}{\pp y^{(k-1)i}}-(k+1)\oci{G}^i\dfrac{\pp}{\pp y^{(k)i}}+\dfrac12 F^i\dfrac{\pp}{\pp y^{(k)i}}.
\ee
The integral curves of $S$ are given by the Lagrange equations (5.7.15).

The canonical semispray $S$ is the dynamical system of the classical Lagrangian mechanical system $\Sigma_{\cal R}$ with the external forces depending on the higher order accelerations.

\section{Finslerian mechanical systems of order $k$, $\sfk$}\index{Analytical Mechanics of! Finslerian of order $k$}
\setcounter{equation}{0}
\setcounter{definition}{0}
\setcounter{theorem}{0}
\setcounter{lemma}{0}
\setcounter{remark}{0}
\setcounter{proposition}{0}

The theory of the Finslerian mechanical systems of order $k$ is a natural particularization of that of Lagrangian mechanical systems of order $k\geq 1$, described in the previous sections of the present chapter.

\begin{definition}
A Finslerian mechanical system of order $k\geq 1$ is a triple:
\be
\sfk=(M,F(x,y^{(1)},...,y^{(k)})), Fe(x,y^{(1)},...,y^{(k)}),
\ee
where
\be
F^{(k)n}=(M,F(x,y^{(1)},...,y^{(k)}))
\ee
is a Finsler space of order $k\geq 1$, and
\be
Fe(x,y^{(1)},...,y^{(k)})=F^i(x,y^{(1)},...,y^{(k)})\dfrac{\pp}{\pp y^{(k)i}}
\ee
is an a priori given vertical vector field.
\end{definition}

$F(x,y^{(1)},...y^{(k)})$ is the fundamental function of $\sfk$,
\be
g_{ij}=\dfrac12\dfrac{\dot{\pp}}{\pp y^{(k)i}}\dfrac{\pp}{\pp y^{(k)j}}F^2
\ee
is the fundamental tensor of $\sfk$ and $Fe$ are the external forces.

The Euler--Lagrange equations of $F^{(k)n}$ are given by (5.2.5) $\oci{E}_i(F^2)=0$, $y^{(1)i}=\dfrac{dx^i}{dt},...,y^{(k)i}=\dfrac{1}{k!}\dfrac{d^k x^i}{dt^k}$ and the Craig--Synge covector field is $E^{k-1}(F^2)$ from (5.2.7). Theorem 5.2.1 for $L=F^2$ can be applied.

Thus, the canonical semispray $\oci{S}$ of $F^{(k)n}$ is:
\be
\oci{S}=y^{(1)i}\dfrac{\pp}{\pp x^i}+2y^{(2)i}\dfrac{\pp}{\pp y^{(1)i}}+...+ky^{(k)i}\dfrac{\pp}{\pp y^{(k-1)i}}-(k+1)\oci{G}\dfrac{\pp}{\pp y^{(k)i}},
\ee
with the coefficients:
\be
(k+1)\oci{G}^i=\dfrac12 g^{ij}\[\Gamma\dfrac{\pp F^2}{\pp y^{(k)i}}-\dfrac{\pp F^2}{\pp y^{(k-1)i}}\]
\ee
and with nonlinear operator
\be
\Gamma=y^{(1)i}\dfrac{\pp}{\pp x^i}+2y^{(2)i}\dfrac{\pp}{\pp y^{(1)i}}+...+ky^{(k)i}\dfrac{\pp}{\pp y^{(k-1)i}}.
\ee
The canonical nonlinear connection $\oci{N}$ of $F^{(k)n}$ has dual coefficients (5.2.10) particularized in the following form:
\be
\begin{array}{l}
\oci{\underset{1}{M}}^i_j=\dfrac{\pp\oci{G}^i}{\pp y^{(k)j}}\\ \\
\oci{\underset{2}{M}}^i_j=\dfrac12(\oci{S}\oci{\underset{1}{M}}^i_j+\oci{\underset{1}{M}}^i_m\oci{\underset{1}{M}}^m_j),\\
.................................\\
\oci{\underset{k}{M}}^i_j=\dfrac{1}{k}(\oci{S}\oci{\underset{k-1}{M}}^i_j+\oci{\underset{k-1}{M}}^i_m\oci{\underset{k-1}{M}}^m_j).
\end{array}
\ee

The fundamental equations of the Finslerian mechanical systems of order $n\geq 1$, $\Sigma_{F^n}$ are given by

\medskip

\noindent{\bf Postulate 4.8.1.} {\it The evolution equations of the Finslerian mechanical system $\sfk=(M,F,Fe)$ are the following Lagrange equations}
\be
\begin{array}{l}
\dfrac{d}{dt}\dfrac{\pp F^2}{\pp y^{(k)i}}-\dfrac{\pp F^2}{\pp y^{(k-1)i}}=F_i,\ \ F_i=g_{ij}F^j,\\ \\
y^{(1)i}=\dfrac{dx^i}{dt},...,y^{(k)i}=\dfrac{1}{k!}\dfrac{d^k x^i}{dt^k}.
\end{array}
\ee

\begin{remark}
For $k=1$, (5.8.8) are the Lagrange equations of a Finslerian mechanical system $\Sigma_F=(M,F,Fe)$.
\end{remark}

The Lagrange equations are equivalent to the following system of differential equations of order $k+1$:
\be
\dfrac{d^{k+1}x^i}{dt^{k+1}}+\dfrac{k!}{2}g^{ij}\[\Gamma\dfrac{\pp F^2}{\pp y^{(k)i}}-\dfrac{\pp F^2}{\pp y^{(k-1)}}\]=\dfrac{k!}{2}F^i.
\ee
Integrating this system in initial conditions we obtain an unique solution $x^i=x^i(t)$, which express the moving of Finslerian mechanical system $\sfk$.

But it is convenient to write the system (5.8.9) in the form (5.3.5):
\be
\begin{array}{l}
y^{(1)i}=\dfrac{dx^i}{dt},\ y^{(2)i}=\dfrac12\dfrac{d^2 x^i}{dt^2},...,y^{(k)i}=\dfrac{1}{k!}\dfrac{d^k x^i}{dt^k}\\ \\
\dfrac{dy^{(k)i}}{dt}+(k+1)\oci{G}^i(x,y^{(1)},...,y^{(k)})=\dfrac12 F^i(x,y^{(1)},...,y^{(k)}).
\end{array}
\ee
These equations determine the trajectories of the vector field
\be
S=\oci{S}+\dfrac12 Fe.
\ee
Consequently, we have:

\begin{theorem}
$1^\circ$ For the Finslerian mechanical system of order $k\geq 1$, $\sfk$ the operator $S$ from $(5.8.12)$ is a $k-$semispray which depend only on $\sfk$.

$2^\circ$ The integral curves of $S$ are the evolution curves of $\sfk$.
\end{theorem}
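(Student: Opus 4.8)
The plan is to prove Theorem 5.8.1 by direct specialization of the general theory already established for Lagrangian mechanical systems of order $k$, taking the regular Lagrangian to be $L = F^2$, where $F$ is the fundamental function of the Finsler space of order $k$, $F^{(k)n}=(M,F)$. Since axiom $4^\circ$ of a Finsler space of order $k$ guarantees that the Hessian $g_{ij}=\dfrac12\dfrac{\partial^2 F^2}{\partial y^{(k)i}\partial y^{(k)j}}$ is nonsingular (indeed positively defined on $\widetilde{T^kM}$), the pair $(M,F^2)$ is a bona fide Lagrange space of order $k$. Therefore every structure constructed in Sections 5.1--5.3 for a general $\Sigma_{L^k}$ applies verbatim with $L=F^2$, and the two assertions of the theorem become the corresponding assertions of Theorem 5.3.3, transcribed to the Finslerian setting.

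First I would establish assertion $1^\circ$. The coefficients $\oci{G}^i$ in (5.8.5') are exactly the coefficients (5.2.9') of the canonical $k$-semispray of the Lagrange space of order $k$ associated to $L=F^2$, so by Theorem 5.2.1 the operator $\oci{S}$ in (5.8.5) is a genuine $k$-semispray depending only on $F$. Next, the external force field $Fe = F^i(x,y^{(1)},\dots,y^{(k)})\dfrac{\partial}{\partial y^{(k)i}}$ is by definition a vertical vector field lying in the distribution $V_k$. Recalling from Section 5.1 that for any $k$-semispray $S$ and any vertical vector field $Fe\in V_k$ the sum $S=\oci{S}+\tfrac12 Fe$ again satisfies $JS = J\oci{S}=\overk\Gamma$ (because $J$ annihilates $V_k$, by property $Ker\,J=V_k$ of the $k$-tangent structure), it follows that the operator $S$ defined by (5.8.12) is itself a $k$-semispray. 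Since both $\oci{S}$ and $Fe$ depend only on the data $(M,F,Fe)$ of $\Sigma_{F^k}$, so does $S$; this gives the first claim.

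For assertion $2^\circ$ I would unwind the expression (5.8.12) in the natural basis. Writing $S=\oci{S}+\tfrac12 Fe$ and using the explicit form (5.8.5) of $\oci{S}$ together with (5.8.3), one obtains
\begin{equation}
S=y^{(1)i}\dfrac{\partial}{\partial x^i}+2y^{(2)i}\dfrac{\partial}{\partial y^{(1)i}}+\cdots+ky^{(k)i}\dfrac{\partial}{\partial y^{(k-1)i}}-(k+1)\oci{G}^i\dfrac{\partial}{\partial y^{(k)i}}+\dfrac12 F^i\dfrac{\partial}{\partial y^{(k)i}}.
\end{equation}
By the integration scheme for $k$-semisprays recalled in (5.1.7), the integral curves of $S$ are characterized by $y^{(1)i}=\dfrac{dx^i}{dt},\dots,y^{(k)i}=\dfrac{1}{k!}\dfrac{d^k x^i}{dt^k}$ together with $\dfrac{dy^{(k)i}}{dt}+(k+1)\oci{G}^i=\tfrac12 F^i$, which is precisely the system (5.8.11). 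By the equivalence between (5.8.11) and the fundamental equations (5.8.9) -- itself the Finslerian instance of the equivalence between (5.3.5) and (5.3.4) proved in Theorem 5.3.1 -- these integral curves coincide with the evolution curves of $\Sigma_{F^k}$ prescribed by the Lagrange equations (5.8.8).

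The only genuine point requiring care -- and hence the step I expect to be the main obstacle -- is the passage from the Lagrange equations (5.8.8), which involve only the single covector equation $\dfrac{d}{dt}\dfrac{\partial F^2}{\partial y^{(k)i}}-\dfrac{\partial F^2}{\partial y^{(k-1)i}}=F_i$, to the explicit order $k+1$ form (5.8.9). This requires the identity $\dfrac{d}{dt}\dfrac{\partial F^2}{\partial y^{(k)i}}-\dfrac{\partial F^2}{\partial y^{(k-1)i}}=\Gamma\dfrac{\partial F^2}{\partial y^{(k)i}}-\dfrac{\partial F^2}{\partial y^{(k-1)i}}+\dfrac{2}{k!}g_{ij}\dfrac{d^{k+1}x^j}{dt^{k+1}}$ established in the proof of Theorem 5.3.1, followed by contraction with $g^{ij}$ using the regularity $\det(g_{ij})\neq0$. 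Because $F^2$ is homogeneous of degree $2k$ on the fibres, one should check that the homogeneity of $g_{ij}$ and of the $\oci{G}^i$ is compatible with this reduction; but this is exactly the content already handled in the Lagrangian case, and no new computation beyond substituting $L=F^2$ is needed. Thus the proof reduces entirely to citing Theorems 5.2.1 and 5.3.1--5.3.3 with this substitution.
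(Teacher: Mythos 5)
Your proposal is correct and follows essentially the same route as the paper: the paper also treats $\sfk$ as the Lagrangian system $\Sigma_{L^k}$ with $L=F^2$, writes $S=\oci{S}+\dfrac12 Fe$ so that $JS=J\oci{S}=\overk{\Gamma}$ (since $Fe$ lies in $V_k$ and $\ker J=V_k$), and identifies the integral curves of $S$ with the first-order form of the Lagrange equations exactly as in Theorems 5.2.1 and 5.3.1--5.3.2. Your minor slips in equation numbering (e.g.\ citing (5.8.8) for the Lagrange equations) are immaterial, the more so as the paper's own cross-references in this section are equally loose.
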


$S$ is the canonical semispray of the mechanical system $\sfk$. It is named the dynamical system of $\Sigma_{F^n}$, too.

The geometrical theory of the Finslerian mechanical system $\sfk$ can be developed by means of the canonical semispray $S$, applying the theory from the sections 5.4, 5.5, 5.6 from this chapter.

\begin{remark}
Let $\Sigma_F=(M,F,Fe)$ be a Finslerian mechanical system, where $Fe$ depend on the higher order accelerations. What kind of Lagrange equations we have for $\Sigma_F$?
\end{remark}

The problem can be solved by means of the prolongation of Finsler space $F^n=(M,F)$ to the manifold $T^kM$, following the method used in the section 5.7 of the present chapter.

\section{Hamiltonian mechanical systems of order $k\geq 1$}\index{Analytical Mechanics of! Hamiltonian of order $k$}
\setcounter{equation}{0}
\setcounter{definition}{0}
\setcounter{theorem}{0}
\setcounter{lemma}{0}
\setcounter{remark}{0}
\setcounter{proposition}{0}

The Hamiltonian mechanical systems of order $k\geq 1$ can be studied as a natural extension of that of Hamiltonian mechanical systems of order $k=1$, given by $\Sigma_H=(M,H(x,p),Fe(x,p))$, ch. 4, part II.

So, a Hamiltonian mechanical system of order $k\geq 1$ is a triple
\be
\shk=(M,H(x,y^{(1)},...,y^{(k-1)}),p), Fe(x,y^{(1)},...,y^{(k-1)},p)
\ee
where
\be
H^{(k)n}=(M,H(x,y^{(1)},...,y^{(k-1)}),p)
\ee
is a Hamilton space of order $k\geq 1$ (see definition 5.1.1) and
\be
Fe(,y^{(1)},...,y^{(k-1)},p)=F_i(x,y^{(1)},...,y^{(k-1)},p)\dot{\pp}^i
\ee
are the external forces.

Of course, $\dot{\pp}^i=\dfrac{\pp}{\pp p_i}$.

Looking to the Hamilton--Jacobi equation of the space $H^{(k)n}$ we can formulate

\medskip

\noindent{\bf Postulate 5.9.1.} {\it The Hamilton equations of the Hamilton mechanical system $\shk$ are as follows:}
\be
\left\{
\begin{array}{l}
{\cal H}=\dfrac12 H\\ \\
\dfrac{dx^i}{dt}=\dfrac{\pp{\cal H}}{\pp p_i}\\ \\
\dfrac{dp_i}{dt}=-\dfrac{\pp{\cal H}}{\pp x_i}+\dfrac{d}{dt}\dfrac{\pp{\cal H}}{\pp y^{(1)i}}+...+(-1)\dfrac{1}{(k-1)!}\dfrac{d}{dt}\dfrac{d^{k-1}}{dt^{k-1}}\dfrac{\pp{\cal H}}{\pp y^{k-1}}=\dfrac12 F_i\\ \\
y^{(1)i}=\dfrac{dx^i}{dt},\ y^{(2)i}=\dfrac12\dfrac{d^2 x^i}{dt^2},...,y^{(k-1)}=\dfrac{1}{(k-1)!}\dfrac{d^{k-1}x^i}{dt^{k-1}}.
\end{array}
\right.
\ee

It is not difficult to see that these equations have a geometric meaning.

For $k=1$ the equations (5.9.4) are the Hamilton equations of a Hamiltonian mechanical system $\Sigma_H=(M,H(x,p),Fe(x,p))$ .

These reasons allow us to say that the Hamilton equations (5.9.4) are {\it the fundamental equations} for the evolution of the Hamiltonian mechanical systems of order $k\geq 1$.

\medskip

\noindent{\bf Example 5.9.1.} The mechanical system $\shk$ with
\be
H(x,y^{(1)},...,y^{(k-1)},p)=\a\g^{ij}(x,y^{(1)},...,y^{(k-1)})p_i p_j,
\ee
$\g^{ij}$ being a Riemannian metric tensor on the manifold $T^{(k-1)n}$ and with
\be
Fe=\b_i(x,y^{(1)},...,y^{(k-1)})\dot{\pp}^i\tag{5.9.5'}
\ee
$\b_i$ is a $d-$covector field on $T^{k-1}M$ and $\a>0$ a constant.

Let us consider the energy of order $k-1$ of
\be
\begin{array}{c}
{\cal E}^{k-1}(H)=I^{k-1}(H)-\dfrac{1}{2!}\dfrac{d}{dt}I^{k-2}(H)+...+\\ \\ +(-1)^{k-2}\dfrac{1}{(k-1)!}\dfrac{d^{k-2}}{dt^{k-2}}(H)-H,
\end{array}
\ee
where $$I^{(1)}H={\cal L}_{\overset{1}\Gamma}H,...,I^{k-1}H={\cal L}_{\overset{{k-1}}\Gamma}.$$

\begin{theorem}
The variation of energy ${\cal E}^{k-1}(H)$ along the evolution curves $(5.9.4)$ can be calculate without difficulties.
\end{theorem}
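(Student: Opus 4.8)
The plan is to establish the theorem by a direct differentiation of $\mathcal{E}^{k-1}(H)$ along the evolution curves, showing that the telescoping structure built into the definition $(5.9.6)$ collapses all interior terms and leaves only a contribution of the external forces. First I would rewrite $\mathcal{E}^{k-1}(H)$ in the fully expanded form
\begin{equation*}
\mathcal{E}^{k-1}(H)=\sum_{\alpha=1}^{k-1}(-1)^{\alpha-1}\dfrac{1}{\alpha!}\dfrac{d^{\alpha-1}}{dt^{\alpha-1}}I^{k-\alpha}(H)-H,\qquad I^{\beta}(H)=\mathcal{L}_{\overset{\beta}{\Gamma}}H,
\end{equation*}
so that its total time derivative is an alternating sum of iterated total derivatives of the $I^{\beta}(H)$. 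Because each $\tfrac{d}{dt}\tfrac{d^{\alpha-1}}{dt^{\alpha-1}}=\tfrac{d^{\alpha}}{dt^{\alpha}}$ raises the order of differentiation by one, successive terms of the sum cancel pairwise, exactly as in the Lagrangian identity $(1.3.9)$–$(1.3.13)$ of Part II, leaving a single boundary term together with the derivative $\tfrac{dH}{dt}$ coming from the last summand.

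Next I would compute $\tfrac{dH}{dt}$ along a solution by the chain rule, $\tfrac{dH}{dt}=\partial_i H\,\tfrac{dx^i}{dt}+\sum_{\alpha=1}^{k-1}\tfrac{\partial H}{\partial y^{(\alpha)i}}\tfrac{dy^{(\alpha)i}}{dt}+\dot{\partial}^i H\,\tfrac{dp_i}{dt}$, and substitute the Hamilton equations $(5.9.4)$. Writing $H=2\mathcal{H}$ turns $\partial_i H$, $\dot{\partial}^i H$ into $2\partial_i\mathcal{H}$, $2\dot{\partial}^i\mathcal{H}$; the relation $\tfrac{dx^i}{dt}=\dot{\partial}^i\mathcal{H}$ together with the Ostrogradski-type expression for $\tfrac{dp_i}{dt}$ (the alternating sum of total derivatives of $\tfrac{\partial\mathcal{H}}{\partial y^{(\alpha)i}}$ plus $\tfrac12 F_i$) is precisely what is needed for the $\mathcal{H}$-terms from $\tfrac{dH}{dt}$ to combine with the surviving boundary term of the telescoping and annihilate, the only residue being the force term. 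An equivalent and conceptually cleaner route is to transport the already-established conservation law for the higher-order Lagrangian energy $\mathcal{E}^{k}(L)$ through the order-$k$ Legendre transformation, using that $\mathcal{H}^{*}(x,p)=p_i y^i-\mathcal{L}(x,y)$ interchanges the Euler–Lagrange data with the Hamilton–Jacobi data and maps the external forces of the Lagrangian system to those of $\Sigma_{H^k}$.

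I expect the computation to yield $\tfrac{d\mathcal{E}^{k-1}(H)}{dt}=-F_i\tfrac{dx^i}{dt}$, which I would check against the degenerate case $k=1$: there the sum in $(5.9.6)$ is empty, so $\mathcal{E}^{0}(H)=-H$, and $\tfrac{d}{dt}(-H)=-F_i\tfrac{dx^i}{dt}$ reproduces exactly the first-order formula $\tfrac{dH}{dt}=F_i\tfrac{dx^i}{dt}$ of Chapter 4. The main obstacle is purely the bookkeeping of the nested total derivatives: one must verify that the alternating coefficients $(-1)^{\alpha-1}/\alpha!$ in $\mathcal{E}^{k-1}(H)$ match those appearing in the Ostrogradski form of $\tfrac{dp_i}{dt}$ so that every interior term cancels and no boundary term survives beyond the force contribution. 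This is delicate precisely because along the curve the $y^{(\alpha)i}$ are constrained to equal $\tfrac{1}{\alpha!}\tfrac{d^{\alpha}x^i}{dt^{\alpha}}$, so the partial derivatives $\tfrac{\partial H}{\partial y^{(\alpha)i}}$ and the total derivatives $\tfrac{d}{dt}$ must be reconciled carefully; organizing the cancellation through the Legendre-dual form of the Lagrangian identity $(1.3.9)$ of Part II is the safest way to keep the signs and coefficients under control.
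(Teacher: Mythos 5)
You cannot be faulted for failing to match the paper's proof, because the paper contains none: in the source, the ``theorem'' is itself the entire content --- it merely asserts that the variation of ${\cal E}^{k-1}(H)$ along (5.9.4) ``can be calculated without difficulties'', and the text then passes immediately to the canonical nonlinear connection of $\Sigma_{H^k}$ --- so your proposal supplies strictly more than the source does. Both of your routes are the natural ones and are consistent with how the author argues elsewhere: the direct route is the Hamiltonian transcription of the identity (1.3.9) and Theorem 1.3.2 of Part II (conservation of ${\cal E}^k(L)$ along Euler--Lagrange solutions), with the forced Hamilton equations (5.9.4) replacing $\overset{\circ}{E}_i(L)=0$; the Legendre-duality route mirrors the mechanism used in Chapter 4 of Part III to carry energy-variation formulas between $\Sigma_L$ and $\Sigma_H$. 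Your base-case calibration is correct and is in fact the only calibration available: for $k=1$ the sum in (5.9.6) is empty, ${\cal E}^0(H)=-H$, and the order-one formula $dH/dt=F_i\,dx^i/dt$ of Chapter 4 gives $d{\cal E}^0(H)/dt=-F_i\,dx^i/dt$, in agreement with the formula you expect in general. Two honest caveats. First, your argument remains a plan rather than a completed computation: the pairwise cancellation of the alternating coefficients $(-1)^{\alpha-1}/\alpha!$ against the Ostrogradski terms in $dp_i/dt$ is asserted by analogy with (1.3.9) but not carried out, and the genuinely delicate point --- which you correctly flag --- is that on $T^{*k}M$ the highest velocity coordinate is $y^{(k-1)i}$, so $dy^{(k-1)i}/dt$ is not itself a phase-space coordinate and the surviving boundary term must be absorbed exactly by the $\frac{d^{k-1}}{dt^{k-1}}\frac{\partial{\cal H}}{\partial y^{(k-1)i}}$ contribution in (5.9.4). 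Second, the display of (5.9.4) in the source is typographically garbled (a chain of relations ending in $=\frac12 F_i$), so the overall sign of your final formula cannot be verified against the paper; it should be regarded as fixed by your own $k=1$ normalization rather than by the source, which is a defensible and indeed the only available choice.
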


\bigskip

\bigskip

Thus, the canonical nonlinear connection $N^*$ of $\shk$ is given by the direct decomposition (5.1.11), and has the coefficients $\underset{(1)}{N}^i_j$,...,$\underset{(k-1)}{N}^i_j$, $N_{ij}$ from (5.1.12') and its dual coefficients $\underset{(1)}{M}$,...,$\underset{(k-1)}{M}$ from (5.1.14). We can calculate the Liouville vector fields $z^{(1)},...,z^{(k-1)}$ from (5.1.17).

The metrical $N^*-$linear connection has the coefficients $D\Gamma(N^*)=(H^s_{ij},\underset{(k)}{C}^i_{jk},C^{js}_i)$ given by (5.5.2).

Now we can say: The geometries of the Hamiltonian mechanical systems of order $k\geq 1$, $\shk$ can be constructed only by the fundamental equations (5.9.4), by the nonlinear connection of $\Sigma_{H^k}$ and by the $N^*-$metrical connection $C\Gamma(N^*)$.

\begin{remark}
The homogeneous case of Cartanian mechanical systems of order $k$, $\Sigma_{{\cal C}^k}=(M,K(x,y^{(1)},...,y^{(k-1)}),p)$, $Fe(x,y^{(1)}$, ...,\break $y^{(k-1)}p)$ is a direct particularization of the previous theory for $H(x,y^{(1)}$, ..., $y^{(k-1)},p)=K^2(x,y^{(1)},$ ..., $y^{(k-1)},p)$, where $K(x,y^{(1)},...,y^{(k-1)},p)$ is $k-$homogeneous with respect to variables $(y^{(1)},...,y^{(k-1)},p)$.
\end{remark}

\newpage

\backmatter
\printindex


\end{document}